\documentclass[10pt]{amsart}
\usepackage{amssymb,latexsym,amsfonts,multicol}
\usepackage{amsthm}
\usepackage{amscd}
\usepackage{amsmath,mathrsfs}
\usepackage{fontenc}
\usepackage{amsmath}
\usepackage{geometry}
\usepackage{graphicx}
\usepackage[all]{xy}
\usepackage[T1]{fontenc}
\usepackage{textcomp}
\pagestyle{plain}
\bibliographystyle{plain}
\DeclareMathOperator{\Pro}{Pro}
\DeclareMathOperator{\Cat}{Cat}
\newcommand{\om}{\omega}
\newcommand{\on}{\operatorname}
\DeclareMathOperator{\pr}{pr}

\DeclareMathOperator{\colim}{colim}

\DeclareMathOperator{\fSets}{fSets}
\DeclareMathOperator{\PShv}{PShv}
\DeclareMathOperator{\Shv}{Shv}
\DeclareMathOperator{\Ob}{Ob}
\DeclareMathOperator{\Mor}{Mor}

\DeclareMathOperator{\map}{map}
\DeclareMathOperator{\Mod}{Mod}
\DeclareMathOperator{\Fl}{\mathfrak{Fl}}
\DeclareMathOperator{\Prest}{PreSt}
\DeclareMathOperator{\Coker}{Coker}

\DeclareMathOperator{\St}{St}
\DeclareMathOperator{\ren}{ren}
\DeclareMathOperator{\flim}{flim}
\DeclareMathOperator{\cont}{cont}
\DeclareMathOperator{\op}{op}

\DeclareMathOperator{\qcqs}{qcqs}
\DeclareMathOperator{\cL}{\mathcal{L}}

\DeclareMathOperator{\Spec}{Spec}
\DeclareMathOperator{\Sets}{Sets}

\DeclareMathOperator{\tn}{tn}

\DeclareMathOperator{\Id}{Id}

\DeclareMathOperator{\PrCat}{PrCat_{st,\ell}}
\DeclareMathOperator{\Catst}{Cat_{st,\ell}}
\DeclareMathOperator{\Sch}{Sch}
\DeclareMathOperator{\Alg}{AlgSp}
\DeclareMathOperator{\Aff}{Aff}
\DeclareMathOperator{\Hom}{Hom}

\DeclareMathOperator{\End}{End}

\DeclareMathOperator{\Ima}{Im}
\DeclareMathOperator{\rs}{rs}
\DeclareMathOperator{\codim}{codim}

\DeclareMathOperator{\Fun}{Funct}

\DeclareMathOperator{\EExt}{\mathscr{E}\text{\kern -3pt {\calligra\large xt}}\,}
\DeclareMathOperator{\Ind}{Ind}
\DeclareMathOperator{\Ad}{Ad}

\DeclareMathOperator{\Aut}{Aut}

\DeclareMathOperator{\ad}{ad}

\newcommand{\pg}[1]{\pageref{#1}}
\newcommand{\cal}[1]{\mathcal{#1}}
\newcommand{\C}[1]{\cal#1}

\newcommand{\B}[1]{\mathbb#1}
\DeclareMathOperator{\Perv}{Perv}

\DeclareMathOperator{\Ker}{Ker}
\DeclareMathOperator{\can}{can}

\DeclareMathOperator{\perf}{perf}

\DeclareMathOperator{\Funct}{Funct}

\DeclareMathOperator{\X}{X_{\text{proיt}}}

\DeclareMathOperator{\Gal}{Gal}

\DeclareMathOperator{\id}{id}

\DeclareMathOperator{\Cov}{Cov}

\DeclareMathOperator{\cT}{\mathcal{T}}

\DeclareMathOperator{\pt}{pt}

\DeclareMathOperator{\SL}{SL}
\DeclareMathOperator{\Lie}{Lie}

\DeclareMathOperator{\cl}{cl}

\DeclareMathOperator{\res}{res}
\DeclareMathOperator{\red}{red}

\DeclareMathOperator{\ev}{ev}

\DeclareMathOperator{\ins}{ins}

\newtheorem{theorem}{Theorem}[subsection]

\newtheorem{Thm}[theorem]{Theorem}
\newtheorem{Prop}[theorem]{Proposition}
\newtheorem{Lem}[theorem]{Lemma}
\newtheorem{Cor}[theorem]{Corollary}

\newtheorem{Cl}[theorem]{Claim}

\theoremstyle{definition}
\newtheorem{Def}[theorem]{Definition}

\newtheorem{Emp}[theorem]{}

\numberwithin{equation}{section}

\newcommand{\form}[1]{(\ref{Eq:#1})}
\newcommand{\rsec}[1]{Section \ref{S:#1}}
\newcommand{\rl}[1]{Lemma \ref{L:#1}}

\newcommand{\rcl}[1]{Claim \ref{C:#1}}
\newcommand{\rp}[1]{Proposition \ref{P:#1}}

\newcommand{\re}[1]{\ref{E:#1}}
\newcommand{\rco}[1]{Corollary \ref{C:#1}}

\newcommand{\rt}[1] {Theorem \ref{T:#1}}

\newcommand{\rd}[1]{Definition \ref{D:#1}}

\newcommand{\Dt}{\Delta}
\newcommand{\Si}{\Sigma}
\newcommand{\dt}{\delta}
\newcommand{\un}{\underline}

\newcommand{\clp}{\mathcal{L}^{+}}

\newcommand{\cM}{\mathcal{M}}
\newcommand{\cQ}{\mathcal{Q}}

\newcommand{\cA}{\mathcal{A}}
\newcommand{\bP}{\mathbb{P}}

\newcommand{\bF}{\mathbb{F}}

\newcommand{\co}{\mathcal{O}}
\newcommand{\cO}{\mathcal{O}}

\newcommand{\cS}{\mathcal{S}}

\newcommand{\cB}{\mathcal{B}}

\newcommand{\f}{\phi}
\newcommand{\g}{\gamma}
\newcommand{\lan}{\langle}
\newcommand{\ran}{\rangle}

\newcommand{\si}{\sigma}
\newcommand{\ft}{\frak{t}}
\newcommand{\fC}{\frak{C}}

\newcommand{\fg}{\frak{g}}
\newcommand{\fc}{\frak{c}}

\newcommand{\isom}{\overset {\thicksim}{\to}}

\newcommand{\cY}{\mathcal{Y}}
\newcommand{\la}{\lambda}

\newcommand{\gm}{\mathbb{G}_{m}}

\newcommand{\kt}{\mathfrak{t}}
\newcommand{\kb}{\mathfrak{b}}

\newcommand{\al}{\alpha}
\newcommand{\br}{\textbf{r}}
\newcommand{\hra}{\hookrightarrow}
\newcommand{\sm}{\smallsetminus}
\newcommand{\cC}{\mathcal{C}}

\newcommand{\ov}{\overline}
\newcommand{\bql}{\overline{\mathbb{Q}}_{\ell}}

\newcommand{\kD}{\mathfrak{D}}
\newcommand{\kc}{\mathfrak{c}}
\newcommand{\kg}{\mathfrak{g}}

\newcommand{\bu}{_{\bullet}}

\newcommand{\overbar}[1]{\mkern 1.5mu\overline{\mkern-1.5mu#1\mkern-1.5mu}\mkern 1.5mu}

\newcommand{\NN}{\mathbb{N}}
\newcommand{\bG}{\mathbb{G}}

\newcommand{\Gm}{\Gamma}

\newcommand{\qlbar}{\ov{\mathbb{Q}}_l}

\newcommand{\wt}{\widetilde}

\newcommand{\cX}{\mathcal{X}}
\newcommand{\cD}{\mathcal{D}}
\newcommand{\cZ}{\mathcal{Z}}
\newcommand{\cI}{\mathcal{I}}
\newcommand{\La}{\Lambda}
\newcommand{\cU}{\mathcal{U}}
\newcommand{\lra}{\longrightarrow}
\newcommand{\lar}{\leftarrow}
\newcommand{\lla}{\longleftarrow}
\newcommand{\wh}{\widehat}
\newcommand{\Affft}{\Aff^{\on{ft}}}

\newcommand{\Algft}{\Alg^{\on{ft}}}

\newcommand{\e}{\par \noindent}

\def\dar[#1]{\ar@<2pt>[#1]\ar@<-2pt>[#1]}
\def\tar[#1]{\ar@<3pt>[#1]\ar@<-3pt>[#1]\ar@<0pt>[#1]}
\setcounter{tocdepth}{2}

\title[Perverse sheaves and affine Springer theory]
{Perverse sheaves on infinite-dimensional stacks, and affine Springer theory}
\author{Alexis Bouthier, David Kazhdan, Yakov Varshavsky}

\begin{document}
\date\today
\maketitle
\newcommand{\Addresses}{{
  \bigskip
  \footnotesize

  (A.\ Bouthier) \textsc{Institut de Mathematiques de Jussieu,
4 place Jussieu, 75005 Paris, France}\par\nopagebreak
  \textit{E-mail address}: \texttt{alexis.bouthier@imj-prg.fr}
	
	\medskip
	
	(D.\ Kazhdan) \textsc{Einstein Institute of Mathematics, Hebrew University, Givat Ram, Jerusalem,	91904, Israel}\par\nopagebreak
  \textit{E-mail address}: \texttt{kazhdan@math.huji.ac.il}
	\medskip
	
	(Y.\ Varshavsky) \textsc{Einstein Institute of Mathematics, Hebrew University, Givat Ram, Jerusalem,	91904, Israel}\par\nopagebreak
  \textit{E-mail address}: \texttt{vyakov@math.huji.ac.il}
}}

\begin{abstract}
The goal of this work is to construct a perverse $t$-structure on the $\infty$-category of $\ell$-adic $\cL G$-equivariant sheaves on the loop Lie algebra $\cL\fg$ and to show that the affine Grothendieck--Springer sheaf $\C{S}$ is perverse. Moreover, $\cS$ is an intermediate extension of its restriction to the locus of ``compact'' elements with regular semi-simple reduction. Note that classical methods do not apply in our situation  because $\cL G$ and $\cL\fg$ are infinite-dimensional ind-schemes.
\end{abstract}
\tableofcontents
\section*{Introduction}

\subsection{Motivation and brief outline}
\begin{Emp}
{\bf The finite-dimensional case.}
Let $k$ be an algebraically closed field, $G$ a connected reductive group over $k$, $\kg$ the Lie algebra of $G$, $B$ a Borel subgroup of $G$, $\kb$ the Lie algebra of $B$, $W$ the Weyl group of $G$, and 
$\cB:=G/B$ the flag variety of $G$. Consider the variety
\begin{center}
$\tilde{\kg}:=\{(gB,\g)\in \cB\times\kg~ \vert \Ad_{g^{-1}}(\g)\in\kb\}.$
\end{center}
The projection $\frak{p}^{\on{fin}}:\tilde{\kg}\rightarrow\kg$ is known as the Grothendieck--Springer resolution, and its fibers $\C{B}_{\g}$ are known as Springer fibers.

Lusztig observed (see \cite[$\S$3]{Lus1}) that $\frak{p}^{\on{fin}}$ is an $\Ad G$-equivariant small projective morphism, whose source is smooth and whose restriction to the regular semisimple locus is a Galois cover with Galois group $W$. Therefore the derived pushforward $\C{S}^{\on{fin}}:=\frak{p}^{\on{fin}}_{*}\bql[\dim\tilde{\kg}]$ is an $\Ad G$-equivariant semisimple perverse sheaf on $\kg$.\footnote{$\ell$ is a prime, invertible in $k$.} Moreover, $\C{S}^{\on{fin}}$ is equal to the intermediate extension of its restriction to the regular semisimple locus and it is  equipped with an action of $W$. In particular, the action of $W$ on  $\C{S}^{\on{fin}}$ induces an action of $W$ on the cohomology of each $\C{B}_{\g}$.

For each irreducible representation $V$ of $W$, we denote by $\C{S}^{\on{fin}}_{V}$ the $V$-isotypical component of $\C{S}^{\on{fin}}$. Each $\C{S}^{\on{fin}}_{V}$ is
an $\Ad G$-equivariant irreducible perverse sheaf on $\kg$, and these sheaves are (Lie algebra analogs of) special cases of Lusztig's character sheaves \cite{Lus3}. Character sheaves play a central role in the Lusztig's classification of irreducible characters of $G(\bF_{q})$ (see \cite{Lu5, Lu6}).
\end{Emp}

The goal of this paper is to develop an affine analogue of this theory. Lusztig's works \cite{Lu3, Lu4} suggest that there exist affine analogs of character sheaves, and that these objects are closely related to characters of representations of $p$-adic groups.

\begin{Emp} {\bf The affine case.} The Grothendieck--Springer fibration has a natural affine analog, which we are going to describe now. Let $\clp (G)$ and $\cL G$ be the
arc group and the loop group of $G$, respectively. Namely, $\clp (G)$ is a group scheme over $k$, whose group of $k$-points is $G(k[[t]])$, and
$\cL G$ is a group ind-scheme, whose group of $k$-points is $G(k((t)))$. We denote by
$\ev_{G}:\clp(G)\rightarrow G$ the projection, corresponding to the projection $G(k[[t]])\to G(k):t\mapsto 0$,
and let $I:=\ev_{G}^{-1}(B)\subset \clp(G)$ be the Iwahori subgroup scheme.

Let $\Fl:=\cL G/I$ be the affine flag variety, and let $\fC\subset \cL\kg$ be the locus of ``compact elements'' $\g\in\cL \kg$, that is, those $\g$, whose ``characteristic polynomial'' has integral coefficients. More precisely, we define $\fC\subset \cL\kg$ to be the preimage $\fC:=(\cL\chi)^{-1}(\clp(\fc))$, where $\fc:=\kg//\Ad G$ is the Chevalley space of $\kg$, and $\cL\chi:\cL\kg\to\cL\fc$ is the morphism, induced by the projection
$\chi:\kg\to\fc$.

Consider the ind-scheme
\begin{center}
$\wt{\fC}:=\{(gI,\g)\in\Fl \times \fC\,\vert\,\Ad_{g^{-1}}(\g)\in\Lie(I)\}$,
\end{center}
which is the affine analog of $\wt{\kg}$. Then the projection $\frak{p}:\wt{\fC}\to\fC$ is an affine analog of the Grothendieck--Springer fibration, while fibers $\Fl_{\g}$ of $\frak{p}$ are the affine Springer fibers (see \cite{KL}). Lusztig \cite{Lus2} constructed an action of the extended affine Weyl group $\widetilde{W}$ of the cohomology of the $\Fl_{\g}$'s, and a natural question is whether other aspects of classical Springer  theory can be extended to this setting as well.

Note that it is impossible to study the fibration $\frak{p}$ using classical algebro-geometric tools,
because the source and the target are infinite-dimensional ind-schemes.

\end{Emp}




\begin{Emp}
{\bf Letter of MacPherson.}
This project has begun with a letter from the second author to MacPherson in Summer 2009, in which he asks if, by considering the affine Grothendieck--Springer fibration, an appropriate counting of dimensions will tell us that this map is small. MacPherson formulated the notion of smallness which is applicable in our case, and provided the necessary computation which implies that $\frak{p}$ is small (compare \rp{codim}). 
Nevertheless, he concluded his letter by the following sentence:
\begin{center}
``We don't have a theory of intersection homology that works in this context, so the general idea that the map is small doesn't help in constructing a Weyl group action, or reproducing the rest of Springer theory''.
\end{center}
\end{Emp}

The main goal of this work is to establish such a theory.\footnote{In August 2022 (after the paper was already published online) we learned from Mark Goresky that the idea that there should be a version of Springer theory and intersection homology in this infinite dimensional setting is not new. Namely, in June 2009 he gave a talk on a conference in honor of De Concini’s 60th birthday, outlining ideas on why such a theory should exist, and formulated a claim that the affine Grothendieck--Springer resolution is a small map.  This had been the result of his discussions with 
Kottwitz and MacPherson.} In addition, we generalize Lusztig's observation \cite[$\S$3]{Lus1} and provide a supporting evidence of Lusztig's conjectures \cite{Lu3, Lu4}.

\begin{Emp}
{\bf The affine Grothendieck--Springer sheaf.} By a {\em stack (over $k$}), we mean a stack in groupoids in the \'etale topology.

To every stack $\cX$, we associate a presentable stable $\infty$-category $\C{D}(\C{X})$ of $\ell$-adic sheaves on $\C{X}$, and to every morphism $f:\cX\to\cY$ of stacks, we associate a pullback functor $f^!:\cD(\cY)\to \cD(\cX)$ (see \re{intrelladic}). In particular, for every stack $\cX$ we have a dualizing sheaf $\om_{\cX}\in\cD(\cX)$, defined to be the !-pullback of $\qlbar\in\cD(\pt)$.

Since the projection $\frak{p}$ is $\cL G$-equivariant, it induces a morphism $\ov{\frak{p}}:[\wt{\fC}/\cL G]\to [\fC/\cL G]$ of quotient stacks.
The projection $\ov{\frak{p}}$ is ind-fp-proper (see \re{intrlocind}), therefore the pullback $\ov{\frak{p}}^!$ has
a left adjoint $\ov{\frak{p}}_!$ (see \rp{locindpr}). We set
\[
\C{S}:=\ov{\frak{p}}_!(\om_{[\wt{\fC}/\cL G]})\in \cD([\fC/\cL G]),
\]
and call it the {\em affine Grothendieck--Springer sheaf}.
\end{Emp}

\begin{Emp}
{\bf The generically regular semisimple locus.} Let $\fC_{\bullet}\subset \fC$ be the open $\cL G$-invariant substack
of generically regular semisimple elements. We denote by $\ov{\frak{p}}_{\bullet}:[\wt{\fC}_{\bullet}/\cL G]\to [\fC_{\bullet}/\fC G]$ the restriction of $\ov{\frak{p}}$ to $[\fC_{\bullet}/\fC G]$, and let $\cS_{\bullet}\in \cD([\fC_{\bullet}/\cL G])$ be the $!$-pullback of $\cS$.

We let $\ov{\frak{p}}_{\tn,\bullet}:[\wt{\fC}_{\tn,\bullet}/\cL G]\to [{\fC}_{\tn,\bullet}/\cL G]$ be the restriction of $\ov{\frak{p}}_{\bullet}$ to the topologically nilpotent locus (see \re{affsprfib}),  denote by
$\cS_{\tn,\bullet}\in \cD([\fC_{\tn,\bullet}/\cL G])$ the $!$-pullback of $\cS_{\bullet}$ and call it the {\em affine Springer sheaf}.
\end{Emp}

\begin{Emp} \label{E:mainresults}
{\bf Main results.} Assume that the order of $W$ is prime to the characteristic of $k$. The main goals of this work are

$\bullet$ to develop a dimension theory in the infinite-dimensional setting;

$\bullet$ to define perverse $t$-structures on a certain class of stacks\footnote{Actually, $\infty$-stacks.}, which includes the quotient stack $[\fC_{\bullet}/\cL G]$;

$\bullet$ to show that the affine Grothendieck--Springer sheaf $\C{S}_{\bullet}$ is a perverse sheaf, which is
an intermediate extension of its restriction to a locus with regular semisimple reduction;

$\bullet$ to show that the algebra of endomorphisms $\End(\C{S}_{\bullet})$ is the group algebra $\qlbar[\wt{W}]$ of the extended affine group $\wt{W}$ of $G$;

$\bullet$ to show that the affine Springer sheaf $\cS_{\tn,\bullet}$ is perverse.

\vskip 5truept
\noindent In order to establish the properties of $\C{S}_{\bullet}$ and $\cS_{\tn,\bullet}$ stated above, we

$\bullet$ introduce a class of (semi)-small morphisms;

$\bullet$ show that the fibration $\ov{\frak{p}}_{\bullet}:[\wt{\fC}_{\bullet}/\cL G]\to [\fC_{\bullet}/\cL G]$ is small;

$\bullet$ show that  $\ov{\frak{p}}_{\tn,\bullet}$ is semi-small.
\end{Emp}

\begin{Emp}
{\bf Remarks.}
(a) Contrary to the finite-dimensional case, it is crucial for our approach that we divide by the action $\cL G$. Namely, we don't know a framework in which the non-equivariant Grothendieck--Springer fibration $\frak{p}_{\bullet}$ is small, and our perverse $t$-structure on $\cD([\fC_{\bullet}/\cL G])$ does not come from a $t$-structure on a non-equivariant category $\cD(\fC_{\bullet})$.

(b) Also it is essential that we restrict ourselves to the generically regular semisimple locus. Namely, we don't know a framework in which the full Grothendieck--Springer fibration $\ov{\frak{p}}$ is small, and we don't know whether there exists a $t$-structure on $\cD([\fC/\cL G])$ such that the Grothendieck--Springer sheaf $\cS$ lies in its heart.
\end{Emp}

In the next three subsections we describe our constructions and results in detail\footnote{To simplify the exposition, in the introduction we are going to work in a much smaller generality than in the main part of the paper, which nevertheless suffices for the Affine Springer theory.} and outline proofs of results described in \re{mainresults}.

\subsection{Placid stacks, dimension theory, and (semi)-small morphisms.} First we are going to introduce a class of infinite-dimensional geometric objects, called {\em placid stacks}, which behave in many respects like schemes of finite type over $k$.\footnote{A much more general class of placid {\em $\infty$-stacks} satisfying this property is introduced and studied in the main part of the paper.} This will allow us to define a notion of (weakly)-equidimensional morphisms in this context and introduce a class of semi-small morphisms, which includes the affine
Grothendieck--Springer fibration $\ov{\frak{p}}_{\bullet}$.




\begin{Emp} \label{E:intrsimpson}
{\bf Placid stacks.}
(a) We say that an (affine) scheme $X$ {\em admits a placid presentation}, if it has a presentation $X\simeq\lim_{\al}X_{\al}$ as a filtered limit of (affine) schemes of finite type over $k$ with smooth affine transition maps. Moreover, we say that $X$ is {\em strongly pro-smooth}, if in addition each $X_{\al}$ is smooth over $k$.


(b) For the purpose of the introduction, by a {\em placid stack}, we mean a quotient stack $\cX$ of the form $[X/G]$, where $X$ is an affine scheme  admitting a placid presentation, and $G$ is a group scheme, whose connected component $G^0$ is strongly pro-smooth in the sense of (a). Furthermore, we say that a placid stack $\cX$ is {\em smooth}, if it has a presentation $[X/G]$, where $X$ is strongly pro-smooth.
\end{Emp}
\begin{Emp} \label{E:basic}
{\bf Ind-placid ind-schemes and quotient stacks.}
(a) For the purpose of the introduction, we say that $X$ is an {\em ind-placid ind-scheme}, if $X$ can be represented as a filtered colimit $X\simeq\colim_i X_i$, where each $X_i$ is a scheme, admitting a placid presentation, and all transition maps are fp-closed embeddings, where {\em fp} stands for {\em finitely presented}.

(b) All stacks appearing in the introduction are of the form $[X/H]$, where $X$ is open subfunctor of an ind-placid ind-scheme, and $H$ is an {\em ind-placid group} (that is, a group object in the category of ind-placid ind-schemes).
\end{Emp}

\begin{Emp} \label{E:redintr}
{\bf Reduced stacks.}
To every stack $\cX$, one can associate its reduction $\cX_{\red}$ (see Section \ref{S:red}). Though a priori $\cX_{\red}$ is an $\infty$-stack, it turns out  to be a stack in all cases we are interesting in. For example, if $X$ is an (affine) scheme, then $X_{\red}$ is the reduced (affine) scheme associated to $X$ in the classical sense. In particular, $X_{\red}$ admits a placid presentation, if $X$ admits one.
 Moreover, if $\cX\simeq[X/G]$ is a placid stack, then  $\cX_{\red}\simeq[X_{\red}/G]$ is a placid stack as well. Also, if $X$ is an ind-placid ind-scheme with presentation  $X\simeq\colim_i X_{i}$, then $X_{\red}$ is also an ind-placid ind-scheme with presentation $X_{\red}\simeq\colim_i X_{i,\red}$. Furthermore, if $\cX$ is a quotient stack $[X/H]$ as in  \re{basic}(b), then $\cX_{\red}\simeq [X_{\red}/H_{\red}]$ is also of this form.
\end{Emp}

\begin{Emp} \label{E:dimension}
{\bf Dimension function and (weakly) equidimensional morphisms.}

(a) To every morphism $f:X\to Y$ of schemes of finite type over $k$ we associate a dimension function $\un{\dim}_f:X\to\B{Z}$ defined by $\un{\dim}_f(x):=\dim_x(X)-\dim_{f(x)}(Y)$.

(b) It is not difficult to see (see \rco{pullback}) that for every Cartesian diagram
\begin{equation*}
\xymatrix{X'\ar[d]_{g}\ar[r]^{\psi}& Y' \ar[d]^{f}\\X\ar[r]^{\phi}&Y}
\end{equation*}
such that either $f$ or $\phi$ is universally open, we have an equality $\un{\dim}_{\psi}=g^*(\un{\dim}_{\phi})$.

(c) We call a morphism $f$ {\em weakly equidimensional}, if $\un{\dim}_f$ is locally constant, that is, constant on each connected component. Moreover, we call $f$ {\em equidimensional}, if in addition we have an equality $\un{\dim}_f(x)=\dim_x f^{-1}(f(x))$. Notice that every open weakly equidimensional morphism is automatically equidimensional (see \rco{equidim}). Furthermore, we call a (weakly) equidimensional morphism $f$ to be of {\em relative dimension $d$}, if $\un{\dim}_f$ is a constant function with value $d$.

(d) By the property (b), all classes in (c) are stable under all pullbacks with respect to universally open morphisms. Moreover, the class
of universally open   equidimensional morphisms is stable under all pullbacks.
\end{Emp}





\begin{Emp} \label{E:classes}
{\bf Classes of morphisms.}
We say that a morphism $f:\cX\to \cY$ of stacks is {\em (locally) fp-representable} (resp. {\em fp-open/closed/locally closed embedding}), if for the every morphism $Y\to\cY$ from an affine scheme $Y$, the pullback $f\times_{\cY}Y:\cX\times_{\cY}Y\to Y$ is a  (locally) finitely presented morphism of algebraic spaces (resp. fp-open/closed/locally closed embedding of schemes).
\end{Emp}


\begin{Emp}
{\bf (Weakly) equidimensional morphisms of placid stacks.}
(a) Note that if  $f:X\to Y$ is an fp-morphism of affine schemes, where  $Y$ has a placid presentation $Y\simeq\lim_{\al}Y_{\al}$, then there exists an index $\al$ and a morphism $f_{\al}:X_{\al}\to Y_{\al}$ of affine schemes of finite type over $k$ such that $f\simeq f_{\al}\times_{Y_{\al}}Y$.

(b) We say that a morphism $f$ from (a) is {\em (weakly) equidimensional of relative dimension $d$}, if for some
placid presentation $Y\simeq\lim_{\al}Y_{\al}$, there exists $\al$ and $f_{\al}$ as in (a), such that $f_{\al}$ is (weakly) equidimensional of relative dimension $d$. Notice that it then follows from standard limit theorems and \re{dimension}(b) that the same would happen for every placid presentation of $Y$.

(c) Let $f:\cX\to \cY$ be a locally fp-representable morphism of stacks, where $\cY$ is a placid stack. Then for every presentation
$\cY\simeq[Y/H]$ we have a presentation $\cX\simeq[X/H]$, where $X:=\cX\times_{\cY}Y$ is an algebraic space locally finitely presented over $Y$.
In particular, for every \'etale covering $\{X_{\al}\to X\}_{\al}$ by affine schemes, each composition $f_{\al}:X_{\al}\to X\to Y$ is
finitely presented.

(d) We say that morphism   $f$ from (c) is {\em (weakly) equidimensional of relative dimension $d$}, if there exists a presentation
$\cY\simeq[Y/H]$ of $\cY$ and an \'etale covering of $X$ as in (c), each $f_{\al}$ is (weakly) equidimensional of relative dimension $d$. Again, in this case
the same would happen for every presentation of $\cY$ and every \'etale covering of $X$.

(e) We say that a locally closed substack $Y\subset X$ is of {\em pure codimension $d$}, if the inclusion map $Y\hra X$ is weakly equidimensional of relative dimension $-d$.
\end{Emp}

\begin{Emp} \label{E:plstrst}
{\bf Placidly stratified stacks.}
For the purpose of the introduction, by a {\em placidly stratified $\infty$-stack}, we mean an $\infty$-stack $\cY$ of the form
$[Y/H]$ as in \re{basic}(b), equipped with a {\em constructible stratification} $\{\cY_{\al}\}_{\al\in\cI}$ by placid stacks (see \re{adapted} and \re{consstr} for the definition of a constructible stratification).
In particular, we require that each $\cY_{\al}$ is a reduction (see \re{redintr}) of an fp-locally closed substack of $\cY$.
\end{Emp}

Now we are ready to define a class of (semi)-small morphisms of stacks. Following a suggestion of MacPherson, we do it using codimensions in the source rather than in the target.\footnote{See \re{remsemismall} for comparison with the classical notion.}

\begin{Emp} \label{E:inrtsemismall}
{\bf (Semi)-small morphisms.} (a) Let $f:\cX\to\cY$ be a morphism of stacks such that $\cX$ is placid and $(\cY,\{\cY_{\al}\}_{\al\in\cI})$ a placidly stratified. For every $\al\in \cI$, we set $\cX_{\al}:=(f^{-1}(\cY_{\al}))_{\red}$. Then $\cX_{\al}\subset\cX$ is an fp-locally closed substack. Assume that

$\quad\bullet$ every $\cX_{\al}\subset\cX$ is of pure codimension $b_{\al}$;

$\quad\bullet$ every $f_{\al}:\cX_{\al}\to\cY_{\al}$ is locally fp-representable, equidimensional of relative dimension $\dt_{\al}$.

(b) We say that $f$ is {\em semi-small}, if for every $\al\in \cI$ we have an inequality $\dt_{\al}\leq b_{\al}$.

(c) Moreover, let $\cU\subset\cY$ be an fp-open substack, which is a union of strata $\{\cY_{\al}\}_{\al}$. We say that a semi-small $f$ is {\em $\cU$-small}, if for every $\al\in\cI$ such that $\cX_{\al}\subset\cX\sm\cU$, we have a strict inequality $\dt_{\al}< b_{\al}$.
\end{Emp}

\subsection{$\ell$-adic sheaves on stacks and perverse $t$-structures}

\begin{Emp} \label{E:intrelladic}
{\bf $\ell$-adic sheaves on stacks.} (a) To every stack $\C{X}$, we associate a presentable stable $\infty$-category $\C{D}(\C{X})$ of $\ell$-adic sheaves on $\C{X}$, and to every morphism $h:\cX\to\cY$ of stacks, we associate a pullback functor $f^!:\cD(\cY)\to \cD(\cX)$ as follows (compare \cite{Ra, RS}):

$\bullet$ When $X$ is an affine scheme of finite type over $k$, we denote by $\C{D}_c(X)$ the bounded derived $\infty$-category
$\cD^b_c(X,\qlbar)$ of constructible $\ell$-adic  sheaves on $X$, and by $\C{D}(X)$ the ind-category $\Ind\cD_c(X)$.

$\bullet$ When $X$ is an arbitrary affine scheme over $k$, we write $X$ as a filtered limit $X\simeq\lim_{\al}X_{\al}$ of affine schemes of finite type and denote by $\C{D}(X)$ the colimit $\colim_{\al}\C{D}(X_{\al})$, taken with respect to $!$-pullbacks. It is easy to see that the resulting
$\infty$-category is independent of the presentation.

$\bullet$ For an arbitrary stack $\C{X}$, we denote by  $\cD(\C{X})$ the limit category $\lim_{X\to\cX}\C{D}(X)$, taken over all morphisms $X\to\cX$, where $X$ is an affine scheme.


\end{Emp}

\begin{Emp}
{\bf Perverse $t$-structures on placid stacks.}
For every placid stack $\cX$, we equip the $\infty$-category $\cD(\cX)$ with a perverse $t$-structure. We carry out the construction in five steps:

(a) For an equidimensional affine scheme $X$ of finite type over $k$, we equip $\cD(\cX)$ with the perverse $t$-structure $({}^p\cD_c^{\leq 0}(X),{}^p\cD_c^{\geq 0}(X))$ obtained from the classical (middle dimensional) perverse $t$-structure by homological shift by $\dim X$ to the left. In other words, an object $K\in\cD(X)$ is perverse in our $t$-structure
if and only if $K[-\dim X]$ is perverse in the classical $t$-structure.

(b) Next, an arbitrary affine scheme $X$ of finite type over $k$ has a constructible stratification $\{X_i\}_i$ by locally closed equidimensional subschemes, where $X_i$ is the set of all $x\in X$ such that $\dim_x(X)=i$. We denote by $\eta_i:X_i\hra X$ the inclusion,
and let ${}^p\cD_c^{\leq 0}(X)\subset\cD_c(X)$ (resp. ${}^p\cD_c^{\geq 0}(X)\subset\cD_c(X)$) be the full subcategory of all $K\in\cD_c(\cX)$ such that
$\eta_i^*(K)\in {}^p\cD_c^{\leq 0}(X_i)$ (resp. $\eta_i^!(K)\in {}^p\cD^{\geq 0}_c(X_i)$). Now the fact that $({}^p\cD_c^{\leq 0}(X),{}^p\cD_c^{\geq 0}(X))$ is indeed a $t$-structure follows from the gluing lemma of \cite{BBD}.

(c) For an affine scheme  $X$ of finite type over $k$, we equip $\cD(\cX)$ with the unique $t$-structure such that ${}^p\cD^{\leq 0}(X)=\Ind {}^p\cD_c^{\leq 0}(X)$ and similarly for ${}^p\cD^{\geq 0}(X)$. The main property of the $t$-structure we just constructed is that for every smooth morphism $f:X\to Y$ of affine schemes of finite type, the
pullback $f^!:\cD(Y)\to\cD(X)$ is $t$-exact.

(d) We show that for every affine scheme $X$ with a placid presentation $X\simeq\lim_{\al}X_{\al}$, there exists a unique $t$-structure  on $\cD(X)$ such that every pullback $\pi_{\al}^!:\cD(X)\to\cD(X_{\al})$ is $t$-exact. Moreover, this $t$-structure is independent of the presentation.

(e) Finally, we show that for every placid $\infty$-stack $\cX$, there exists a unique $t$-structure  on $\cD(\cX)$ such that for every
presentation $\cX\simeq[X/H]$ as in \re{intrsimpson}(b), the pullback $\pi^!:\cD(\cX)\to\cD(X)$, corresponding to the projection
$\pi:X\to \cX$ is $t$-exact.
\end{Emp}

\begin{Emp} \label{E:intrpervstr}
{\bf Perverse $t$-structures on placidly stratified stacks.} Let $(\cY,\{\cY_{\al}\}_{\al\in\cI})$ be a placidly stratified stack.

(a) For every embedding $\eta_{\al}:\cY_{\al}\hra\cY$ we have two pullback functors
$\eta_{\al}^*,\eta_{\al}^!:\cD(\cY)\to\cD(\cY_{\al})$.

(b) By a {\em perversity} of $\cY=\{\cY_{\al}\}_{\al\in\cI}$ we mean a function $p_{\nu}:\cI\to\B{Z}$, or, what is the same, a collection of integers $\{\nu_{\al}\}_{\al\in\cI}$.

(c) Using gluing lemma, for every perversity $p_{\nu}$, there exists a unique $t$-structure on $\cD(\cY)$ such that ${}^p\cD^{\geq 0}(\cY)$ is the collection of all $K\in \cD(\cY)$ such that $\eta_{\al}^!K\in {}^p\cD^{\geq -\nu_{\al}}(\cY_{\al})$ for all $\al$. Moreover, if the stratification is {\em bounded} (see \re{consstr} what bounded means), then ${}^p\cD^{\leq 0}(\cY)$ is the collection of all $K\in \cD(\cY)$ such that $\eta_{\al}^*K\in {}^p\cD^{\leq -\nu_{\al}}(\cY_{\al})$ for all $\al$.

(d) Let $\cU\subset\cY$ be an fp-open $\infty$-substack, and let $j:\cU\hra\cY$ be the inclusion map.
Then $\cU=\{\cU\cap \cY_{\al}\}_{\al\in\cI}$ is also a placidly stratified stack, and
the pullback $j^!:\cD(\cX)\to\cD(\cU)$ is $t$-exact. Then by usual procedure one can define the intermediate extension functor
$j_{!*}:\Perv(\cU)\to\Perv(\cX)$.
\end{Emp}

\begin{Emp} \label{E:intrlocind}
{\bf Ind-fp-proper morphisms.} (a) Let $Y$ be an affine scheme. We say that a morphism $f:\cX\to Y$ of stacks is {\em ind-fp-proper}, if $\cX$ has a presentation as a filtered colimit $\cX\simeq\colim_{\al}\cX_{\al}$, where each $\cX_{\al}$ is an algebraic space, fp-proper over $Y$ (see \re{classes}), and all transition maps are fp-closed embeddings.

(b) More generally, we say that a morphism $f:\cX\to\cY$ is {\em ind-fp-proper}, if for every morphism
$Y\to\cY$ from an affine scheme $Y$, the pullback $f\times_{\cY} Y:\cX\times_{\cY}Y\to Y$ of $f$ is ind-fp-proper.
\end{Emp}

\begin{Emp} \label{E:fmainthm}
{\bf First Main Theorem.}
To every semi-small morphism $f:\cX\to\cY$ (see \re{inrtsemismall}), one associates a perversity $p_{f}:=\{\nu_{\al}\}_{\al\in\cI}$ on $\cY$, defined by $\nu_{\al}:=b_{\al}+\dt_{\al}$.

We prove (see \rt{small}) that if $f:\cX\to\cY$ is an ind-fp-proper semi-small morphism of stacks and $\cX$ is smooth, then the pushforward $K:=f_!(\om_{\cX})$ is $p_f$-perverse. Moreover, if $f$ is $\cU$-small, and $j:\cU\hra\cY$ is an open embedding, then we have an isomorphism $K\simeq j_{!*}j^!(K)$.
\end{Emp}

\subsection{Affine Springer theory}

\begin{Emp} \label{E:gkm}
{\bf The GKM stratification.} (a) Note that the discriminant function $\frak{D}\in k[\fc]$ induced a morphism $\clp(\fc)\to\clp(\B{A}^1)$ of arc spaces, which we denote again by $\frak{D}$. Consider the constructible stratification $\{\clp(\fc)_d\}_{d\geq 0}$ of the regular part $\clp(\fc)_{\bullet}:=\frak{D}^{-1}(\clp(\B{A}^1)\sm\{0\})\subset \clp(\fc)$ of $\clp(\fc)$, where $\clp(\fc)_{d}(k)$ consists of points $x\in \clp(\fc)(k)=\fc(k[[t]])$ such that the valuation of the discriminant $\frak{D}(x)\in k[[t]]$ equals $d$.

(b)  Following \cite{GKM}, every stratum $\clp(\fc)_{d}$ decomposes as a disjoint union $\clp(\fc)_d=\sqcup_{(w,\br)}\fc_{w,\br}$ of connected components, parameterized by $W$-orbits of pairs $(w,\br)$, where $w$ is an element of $W$, $R$ is the set of roots of $G$, and $\br$ is a function $R\to\B{Q}_{\geq 0}$ such that $\sum_{\al\in R}\br(\al)=d$.

(c) Every stratum $\fc_{w,\br}$ a strongly pro-smooth connected affine scheme. Moreover, $\fc_{w,\br}\subset\clp(\fc)$ is a locally
fp-closed subscheme of pure codimension $b_{w,\br}$, whose explicit formula was obtained in \cite{GKM} (see \rp{codim}).

(d) For a GKM stratum $(w,\br)$, we let  ${\fC}_{w,\br}\subset{\fC}, \wt{\fC}_{w,\br}\subset\wt{\fC}$ and $\Lie(I)_{w,\br}\subset\Lie(I)$ be the preimages of $\fc_{w,\br}\subset\clp(\fc)$. Both ${\fC}_{w,\br}$ and $\wt{\fC}_{w,\br}$ are $\cL G$-invariant, and we let  $\ov{\frak{p}}_{w,\br}:[\wt{\fC}_{w,\br}/\cL G]\to[\fC_{w,\br}/\cL G]$ be the restriction of $\ov{\frak{p}}$.

(e) Let ${\fC}_{0}\subset{\fC}$ and $\wt{\fC}_{0}\subset\wt{\fC}$ be the preimages of $\clp(\fc)_0\subset\clp(\fc)$,
and let $\ov{\frak{p}}_{0}:[\wt{\fC}_{0}/\cL G]\to[\fC_{0}/\cL G]$ be the restriction of $\ov{\frak{p}}$.
Notice that $\fC_0\subset\fC$ is the locus of points with regular semisimple reduction.
\end{Emp}

\begin{Emp} \label{E:geometryasf}
{\bf Geometry of the affine Springer fibration.} We show that:

(a) The projection  $v:\Lie(I)\to\clp(\fc)$ is flat (see \rco{drinfeld}).

(b) The fibration $\frak{p}:\wt{\fC}\to\fC$ is ind-fp-proper (see \rl{affspr}), therefore the induced map
$\ov{\frak{p}}:[\wt{\fC}/\cL G]\to[\fC/\cL G]$ is ind-fp-proper (see \re{pspr}). Moreover, the stack
$[\wt{\fC}/\cL G]\simeq [\Lie(I)/I]$ is a smooth placid stack.

(c) For every GKM stratum $(w,\br)$,

\quad (i) the reduction $[\fC_{w,\br}/\cL G]_{\red}$ is a placid stack (see \rco{stratum2});

\quad (ii) the restriction
\[
\ov{\frak{p}}_{w,\br,\red}:[\wt{\fC}_{w,\br}/\cL G]_{\red}\to[\fC_{w,\br}/\cL G]_{\red}
\]
is a locally fp-representable morphism of placid stacks, which is universally open equidimensional of explicit relative
dimension $\dt_{w,\br}$ (see \rco{topproper}(b));

\quad (iii) let $W_{w,\br}\subset W$ be the stabilizer of $(w,\br)$, let
$\La:=X_*(T)$ be the group of cocharacters of the Cartan group $T$ of $G$, and let $\La_w:=X_*(T)^w$ be the group of $w$-fixed points.
Then for a certain explicit $W_{w,\br}$-torsor $\kt_{w,\br}\to\fc_{w,\br}$,

$\bullet$ there is a natural action of $\La_w$ on $\wt{\fC}_{\kt,w,\br}:=\wt{\fC}_{w,\br}\times_{\fc_{w,\br}}\kt_{w,\br}$ over
${\fC}_{\kt,w,\br}:={\fC}_{w,\br}\times_{\fc_{w,\br}}\kt_{w,\br}$, which commutes with the action of $\cL G$, and

$\bullet$ the induced morphism
\[
[\wt{\fC}_{\kt,w,\br}/\cL G\times\La_w]_{\red}\to[\fC_{\kt, w,\br}/\cL G]_{\red}
\]
is fp-proper (see \rco{topproper}(d)).

(d) The projection
\[
\ov{\frak{p}}_{0,\red}:[\wt{\fC}_{0}/\cL G]_{\red}\to[\fC_{0}/\cL G]_{\red}
\]
is a $\widetilde{W}$-torsor (see \rco{openstr}).
\end{Emp}



Now we are ready to outline proofs of results described in \re{mainresults}.

\begin{Emp} \label{E:strategy}
{\bf Outline of proofs.}

(a) Since $\fC$ is an ind-placid ind-scheme, while $\cL G$ is an ind-placid group, we conclude from \re{geometryasf}(c)(i) that $[\fC_{\bullet}/\cL G]$ is a placidly stratified $\infty$-stack (see \rl{sprsmall}(a)).

(b) Combining \re{gkm}(c), \re{geometryasf}(a) and isomorphism $[\wt{\fC}_{w,\br}/\cL G]\simeq [\Lie(I)_{w,\br}/I]$, we deduce that
each $[\wt{\fC}_{w,\br}/\cL G]\subset [\wt{\fC}/\cL G]$ is of pure codimension $b_{w,\br}$ (see \rl{sprsmall}(b)).

(c) Combining (a),(b) with \re{geometryasf}(b),(c)(ii) and the explicit formula for $b_{w,\br}$, we conclude
that the projection $\ov{\frak{p}}_{\bullet}$ is $[\fC_0/\cL G]_{\red}$-small (see \rl{sprsmall}(c)).

(d) Combining (c) with \re{geometryasf}(b) and \re{fmainthm}, we conclude that
the affine Grothendieck--Springer sheaf $\cS_{\bullet}$ is $p_{\ov{\frak{p}}_{\bullet}}$-perverse. Moreover, it is isomorphic to the intermediate extension of its restriction $\C{S}_0$ to $[\fC_0/\cL G]$ (see \rt{main}(a)).

(e) Using \re{geometryasf}(d), we see that $\C{S}_0$ is equipped with a $\wt{W}$-action. Thus, by (d), the $\wt{W}$-action on $\C{S}_0$ uniquely extends to an action on $\C{S}_{\bullet}$.
Furthermore, we have natural algebra isomorphisms $\End(\cS_{\bullet})\simeq \End(\cS_0)\simeq \qlbar[\wt{W}]$ (see \rt{main}(b)).

(f) Finally, arguing as in (c) and (d), we conclude that   $\ov{\frak{p}}_{\tn,\bullet}$ is semi-small and therefore the affine Springer sheaf $\cS_{\tn,\bullet}$ is perverse (see \rl{sprsm}(c) and \rt{perv3}).
\end{Emp}

\subsection{Possible extensions, generalizations, analogs and applications}
\begin{Emp}{\bf The derived coinvariants.}
For every representation $V$ of $\wt{W}$, we can consider the derived $V$-isotypical component $\C{S}_V\in\cD([\fC_{\bullet}/\cL G])$.

(a) We expect that every $\C{S}_V$ is perverse. Moreover, we can show this result assuming purity of the homology of affine Springer fibers and a slight strengthening of a theorem of Yun \cite{YunII} about the compatibility of the $\widetilde{W}$-action on the affine Springer fibers and the action group of connected components of the centralizer (compare also \cite[Th\`eor\'eme 5.3.1]{Bo}). On the other hand, the $\C{S}_V$'s are not intermediate extensions of their restrictions to $[\fC_0/\cL G]$ in general.

(b) If $V$ is finite-dimensional, then we can show that the corresponding $\C{S}_V$ is ``constructible'', by which we mean in particular that all of its $!$-stalks are constructible (compare \cite[Th\`eor\'eme 4.3.15]{Bo}).
\end{Emp}

\begin{Emp}{\bf Distributions.}
In this work we only construct $t$-structure on the category $\cD([\fC_{\bullet}/\cL G])$, while the affine Grothendieck--Springer sheaf $\C{S}$ naturally lives in a larger category $\cD([\fC/\cL G])$. A natural problem would be to try to construct a t-structure on the whole of $\cD([\fC/\cL G])$ and to show that $\C{S}$ is an intermediate extension of its restriction to $[\fC_{\bullet}/\cL G]$ . This would be a categorical analog
of the well-known fact that many important invariant distributions on a $p$-adic group $G(F)$ are locally $L^1$, and therefore can be reconstructed from their restrictions to $G(F)^{\on{rss}}$.
\end{Emp}

\begin{Emp}{\bf Mixed characteristic case.}
We expect that our results and techniques can be extended to the mixed characteristic case. In order to do this, one needs to use the Witt vector  version of the Affine Grassmannian, introduced by Zhu \cite{Zhu} and studied further by Bhatt--Scholze (\cite{BS}).
\end{Emp}

\begin{Emp} {\bf Application to affine $S$-cells.}
As it is shown in \cite{FKV}, applying results and techniques of this work, one can establish some of Lusztig conjectures \cite{Lus2} on $S$-cells in affine Weyl groups.
\end{Emp}

\subsection{Plan of the paper}
This work consists of four main parts and two appendices.

In the first part we introduce our main players: placid $\infty$-stacks and (weakly) equidimensional morphisms. Namely, in Section 1 we introduce placid algebraic $\infty$-stacks and study their properties. Then, in Section 2 we develop dimension theory, which is interesting for its own and is crucially used for the definition of the perverse $t$-structures. In particular, we introduce placidly stratified $\infty$-stacks and define (semi-)small morphisms in this setting.

In the second part we study the geometry of the affine Grothendieck--Springer fibration.
Namely, in Section 3, we study the Goresky--Kottwitz--MacPherson stratification: first on the arc space of the Chevalley space, following very closely the results of \cite{GKM}, and then on $\Lie(I)$. Next, in Section 4, we use the GKM stratification to study the geometry of the affine Grothendieck--Springer fibration. For example, we show that each reduced GKM stratum $[\fC_{w,\br}/\cL G]_{\red}$ is topologically placid, and study the structure of the fibration over each GKM stratum. We also show that the quotient stack $[\fC_{\bullet}/\cL G]$ is placidly stratified, the affine Grothendieck--Springer fibration is small, while the affine Springer fibration is semi-small.

In the third part we study $\infty$-categories of $\ell$-adic sheaves on $\infty$-stacks (in Section 5), and introduce perverse $t$-structures on placidly stratified  $\infty$-stacks (in Section 6). Finally, in the fourth part we apply constructions and results, obtained in the first three parts, to define the perverse $t$-structures on $[\fC_{\bullet}/\cL G]$ and $[\fC_{\tn,\bullet}/\cL G]$ and to show the perversity of $\cS_{\bullet}$ and $\cS_{\tn,\bullet}$.

We finish this work by two appendices. In Appendix A we discuss a generalization of Simpson construction of $n$-geometric stacks, which provides a categorical framework for our construction of placid $\infty$-stacks. Finally, in Appendix B we provide proofs to some of results from the second part.

\subsection{Acknowledgments}
We express our warm thanks to G. Laumon, with whom discussions are always enlightening and fruitful and to N. Rozenblyum who taught us a lot about $\infty$-categories over the years.
We also thank K. Cesnavicius, B. Hennion, S. Raskin and anonymous referee for their remarks and suggestions, and P. Scholze for stimulating conversations.

The research of Y.V. was partially supported by the ISF grant 822/17. The project has received funding from ERC under grant agreement No. 669655.

\part{Placid $\infty$-stacks and dimension theory}

\section{Placid $\infty$-stacks} In this section we are going to introduce and study our basic geometric objects, namely
$\infty$-stacks, and their important subclass of placid $\infty$-stacks.

\subsection{Algebraic spaces admitting placid presentations} Let $k$ be an algebraically closed field.\label{N:k}

\begin{Emp} \label{E:plpres}
{\bf Notation.}
(a) We say that a morphism of algebraic spaces $f:X\to Y$ over $k$ is {\em strongly pro-smooth}, \label{I:strongly pro-smooth}
if  $X$ has a presentation as a filtered limit
$X\simeq\lim_{\al} X_{\al}$ over $Y$ such that all projections $X_{\al}\to Y$  are fp-smooth, where {\em fp} \label{I:fp} means {\em finitely presented}, while all the transition maps $X_{\al}\to X_{\beta}$ are affine fp-smooth.

(b) We say that an algebraic space $X$ over $k$ {\em admits a placid presentation}, \label{I:placid presentation}
if $X$ can be written as a filtered limit
$X\simeq\lim_{\al} X_{\al}$ of algebraic spaces of finite type over $k$, where all the transition maps $X_{\al}\to X_{\beta}$ are smooth affine. Such a presentation is called {\em placid}.

(c) We say that a scheme (resp. an affine scheme) $X$ {\em admits a placid presentation}, if it admits a placid presentation as an algebraic space.

(d) We say that an algebraic space $X$ is {\em strongly pro-smooth}, if it has a placid presentation $X\simeq\lim_{\al} X_{\al}$ such that all the
$X_{\al}$'s are smooth (over $k$).
\end{Emp}

\begin{Emp} \label{E:remplpres}
{\bf Remarks.} (a) Assume that a scheme (resp. an affine scheme)  $X$ has a presentation $X\simeq\lim_{\al}X_{\al}$ as a filtered limit of algebraic spaces of finite type with affine transition maps. Then $X_{\al}$ is an scheme (resp. affine scheme) for all sufficiently large $\al$ (see, for example, \cite[Proposition 6.2 and Corollary 6.3]{Ry2}). In particular, when we talk about placid (resp. strongly pro-smooth) presentation $X\simeq\lim_{\al}X_{\al}$ of $X$, we can always assume that each $X_{\al}$ is a scheme (resp. affine scheme).

(b) By definition, $X$ admits a placid presentation if and only if it admits a strongly pro-smooth morphism $X\to X'$, where $X'$ is an algebraic space of finite type over $k$.
\end{Emp}

\begin{Emp} \label{E:placidprop}
{\bf Properties.} Notice that the construction \re{plpres} is a particular case of construction \re{general} (see \re{catplinfst}).
Therefore the class of strongly pro-smooth morphisms is closed under pullbacks and compositions (see \rl{pro}) and contains all isomorphisms. In particular, if $f:X\to Y$ is a strongly pro-smooth morphism of algebraic spaces such that $Y$ admits a placid presentation, then $X$ admits a
placid presentation as well.
\end{Emp}

The following lemma will be needed to show that various constructions are independent of a presentation.

\begin{Lem} \label{L:flat}
(a) Let $g:X\to Y$ be a flat map between algebraic spaces with placid presentations $X\simeq \lim_{\al}X_{\al}$ and $Y\simeq \lim_{\beta}Y_{\beta}$. Then for every $\beta$ and every sufficiently large $\al$ the composition $X\overset{g}{\lra}Y \overset{\pr_{\beta}}{\lra} Y_{\beta}$ factors as $X\overset{\pr_{\al}}{\lra} X_{\al}\overset{g_{\al,\beta}}{\lra}Y_{\beta}$ with $g_{\al,\beta}$  flat.

(b) Furthermore, if $g$ is strongly pro-smooth, then for every sufficiently large $\al$, the morphism $g_{\al,\beta}$ is smooth.
\end{Lem}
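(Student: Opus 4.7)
The plan is to reduce both claims to standard limit theorems for morphisms together with descent of flatness/smoothness along filtered limits, applied to the placid presentations of $X$ and $Y$.

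For part (a), I would proceed in three steps. First, since $Y_{\beta}$ is of finite presentation over $k$ and $X \simeq \lim_{\al} X_{\al}$ is a filtered limit with affine transitions, the standard limit theorems (e.g.\ EGA IV.8.14) produce, for every sufficiently large $\al$, a factorization $X \to X_{\al} \xrightarrow{g_{\al,\beta}} Y_{\beta}$ of the composition $\pr_{\beta} \circ g$. Second, $\pr_{\beta}:Y \to Y_{\beta}$ is flat as a filtered limit of fp-smooth (hence flat) affine morphisms, so $\pr_{\beta} \circ g$ is flat. Third, I view $\{X_{\al}\}_{\al \geq \al_{0}}$ as a filtered system of affine schemes of finite presentation over $Y_{\beta}$ with affine transitions and apply descent of flatness in limits (EGA IV.11.2.6.1) to conclude that $g_{\al_{1},\beta}$ is flat for some $\al_{1}$. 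For any $\al \geq \al_{1}$, flatness propagates because the transition $X_{\al} \to X_{\al_{1}}$ is smooth (hence flat) and $g_{\al,\beta}$ is its composition with $g_{\al_{1},\beta}$.

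For part (b), assuming $g$ is strongly pro-smooth, I would fix a presentation $X \simeq \lim_{\gamma} Z_{\gamma}$ over $Y$ with $Z_{\gamma} \to Y$ fp-smooth and smooth affine transitions. By the limit theorems, each $Z_{\gamma} \to Y$ descends to an fp-smooth morphism $Z_{\gamma,\beta'} \to Y_{\beta'}$ for some $\beta' \geq \beta$, and composing with the smooth transition $Y_{\beta'} \to Y_{\beta}$ yields a smooth morphism $\phi_{\gamma}: Z_{\gamma,\beta'} \to Y_{\beta}$. By comparing the two placid presentations of $X$, for large $\al$ there is a $Y_{\beta}$-morphism $X_{\al} \to Z_{\gamma,\beta'}$ compatible with the projections from $X$. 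Applying part (a) to the pro-smooth morphism $X \to Z_{\gamma,\beta'}$ shows that $X_{\al} \to Z_{\gamma,\beta'}$ is flat, and a fiber analysis (using the pro-smoothness of $X \to X_{\al}$ and of $X \to Z_{\gamma,\beta'}$) upgrades flatness to smoothness; composing with the smooth $\phi_{\gamma}$ then yields smoothness of $g_{\al,\beta}$.

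The hard part, I expect, is the descent-of-flatness step in part (a): one must invoke EGA IV.11.2.6 in a form adapted to the present setting, and be careful that the transition maps $X_{\al'}\to X_\al$, while flat, need not be surjective, so flatness at points outside the image of $X$ in $X_{\al}$ may force one to enlarge $\al$. The analogous difficulty in part (b) is that the strongly pro-smooth presentation $\{Z_{\gamma}\}$ is not a priori cofinal with the fixed placid presentation $\{X_{\al}\}$; the cleanest fix is to combine the cofinality-type comparison above with the descent of smoothness along fp-smooth covers (EGA IV.17.7.7).
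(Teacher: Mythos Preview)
Your outline for part (a) has the right shape, but the citation EGA~IV.11.2.6 does not do what you claim. That result spreads out flatness when the \emph{base} varies in a filtered limit; here the base $Y_\beta$ is fixed and it is the source $\{X_\alpha\}$ that varies, so the theorem is not applicable. The actual mechanism---which the paper supplies and you correctly flag as the hard part without resolving---is a direct pointwise argument: for $x\in X$ with images $x_\alpha\in X_\alpha$ and $y_\beta\in Y_\beta$, the local ring $\mathcal{O}_{X,x}$ is flat over $\mathcal{O}_{Y_\beta,y_\beta}$ (since $\pr_\beta\circ g$ is flat) and faithfully flat over $\mathcal{O}_{X_\alpha,x_\alpha}$ (since $\pr_\alpha$ is pro-smooth and local flat maps of local rings are faithfully flat). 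Faithfully flat descent of flatness (\cite[Tag~02JZ]{Sta}) then forces $\mathcal{O}_{X_\alpha,x_\alpha}$ to be flat over $\mathcal{O}_{Y_\beta,y_\beta}$. Once you know the image of $X$ lands in the flat locus $X'_\alpha\subset X_\alpha$, the factorization of $X\to X'_\alpha$ through some $X_{\alpha'}$ is the standard limit argument you cite.

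For part (b) your approach diverges substantially from the paper's, and the step ``a fiber analysis upgrades flatness to smoothness'' is a genuine gap. To carry it out you would need to know that a finite-type scheme admitting a strongly pro-smooth covering by something strongly pro-smooth is itself smooth; but this is essentially \rco{prosmooth}, which is deduced \emph{from} part (b) of the present lemma, so invoking it here is circular. The paper instead argues via cotangent complexes: since $X\to X_\alpha$ and $X\to Y_\beta$ are both strongly pro-smooth, the complexes $L_{X/X_\alpha}$ and $L_{X/Y_\beta}$ are flat modules in degree zero (using that cotangent complexes commute with filtered colimits), and the distinguished triangle $\pr_\alpha^*L_{X_\alpha/Y_\beta}\to L_{X/Y_\beta}\to L_{X/X_\alpha}$ forces $\pr_\alpha^*L_{X_\alpha/Y_\beta}$ to be flat in degree zero as well. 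Flatness of $\pr_\alpha$ then shows the stalk of $L_{X_\alpha/Y_\beta}$ at $x_\alpha$ is flat in degree zero, whence $g_{\alpha,\beta}$ is smooth at $x_\alpha$. This cotangent-complex route is both shorter and avoids the circularity.
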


\begin{proof}
Since  $X\simeq \lim_{\al}X_{\al}$ and $Y_{\beta}$ is of finite type over $k$, there exists $\al$ such that $\pr_{\beta}\circ g:X\to Y_{\beta}$  factors as $X\overset{\pr_{\al}}{\lra} X_{\al}\overset{g_{\al,\beta}}{\lra}Y_{\beta}$. Thus $\pr_{\beta}\circ g=g_{\al',\beta}\circ\pr_{\al'}$ for every $\al'\geq\al$, and it suffices to show that there exists $\al'\geq \al$ such that $g_{\al',\beta}$ is flat (resp. smooth).

Let $X'_{\al}\subset X_{\al}$ be the largest open subscheme such that $g'_{\al,\beta}:=g_{\al,\beta}|_{X'_{\al}}$ is flat (resp. smooth). It suffices to show that the image of $\pr_{\al}$ is contained in $X'_{\al}$. Indeed, in this case, projection $\pr_{\al}:X\to X_{\al}$ induces a morphism $\pr'_{\al}:X\to X'_{\al}$. Since $X\simeq\lim_{\al'\geq\al} X_{\al'}$, there exists $\al'\geq\al$ such that $\pr'_{\al}:X\to X'_{\al}\subset X_{\al}$ factors as $X\overset{\pr_{\al'}}{\lra} X_{\al'}\overset{\pr'_{\al',\al}}{\lra}X'_{\al}\subset X_{\al}$, and $\pr'_{\al',\al}$ is smooth. Therefore $g_{\al',\beta}=g'_{\al,\beta}\circ \pr'_{\al',\al}$ is flat (resp. smooth), as claimed.

Fix a point $x\in X$, and set $x_{\al}:=\pr_{\al}(x)\in X_{\al}$. We want to show that $g_{\al,\beta}$ is flat (resp. smooth) at $x_{\al}$.

(a) Set $y:=g(x)$ and $y_{\beta}:=\pr_{\beta}(y)\in Y_{\beta}$. Notice that both $\pr_{\al}:X\to X_{\al}$ and $Y\to Y_{\beta}$ are strongly pro-smooth, thus flat. Therefore the composition $X\to Y\to Y_{\beta}$ is flat, thus $\C{O}_{X,x}$ is faithfully flat both as an $\C{O}_{X_{\al},x_{\al}}$-algebra and an $\C{O}_{Y_{\beta},y_{\beta}}$-algebra. Therefore $\C{O}_{X_{\al},x_{\al}}$ is a flat $\C{O}_{Y_{\beta},y_{\beta}}$-algebra, thus
$g_{\al,\beta}$ is flat at $x_{\al}$ (compare \cite[Tag 02JZ]{Sta}).

(b) First we claim that since $X\to X_{\al}$ is strongly pro-smooth, its cotangent complex $L_{X/X_{\al}}$ (\cite[Tag 08UQ]{Sta}) is a flat module concentrated in degree zero. Indeed, since cotangent complexes commute with filtered colimits (see \cite[Tag 08S9]{Sta}), we get $L_{X/X_{\al}}\simeq\colim_{\al'}\pr^*_{\al'}L_{X_{\al'}/X_{\al}}$. Since each $\pr_{\al'}$ is flat, and a filtered colimit of flat modules is flat, it suffices to show that each $L_{X_{\al'}/X_{\al}}$ is a flat module concentrated in degree zero. But this follows from the fact that
$X_{\al'}\to X_{\al}$ is smooth (see \cite[Tag 08R4]{Sta}).

Next, since $g$ and $\pr_{\beta}:Y\to Y_{\beta}$ are strongly pro-smooth, the composition $\pr_{\beta}\circ g:X\to Y_{\beta}$ is
strongly pro-smooth as well (see \re{placidprop}). Thus, by the proven above, the cotangent complex $L_{X/Y_{\beta}}$ is also a flat module concentrated in degree zero. Using a distinguished triangle
\[
\pr^*_{\al}L_{X_{\al}/Y_{\beta}}\to L_{X/Y_{\beta}}\to  L_{X/X_{\al}}
\]
(see \cite[Tag 08QR]{Sta}), we conclude that $\pr^*_{\al}L_{X_{\al}/Y_{\beta}}$
is a flat module concentrated in degree zero as well. Since $\pr_{\al}$ is flat, we thus conclude that the stalk of $L_{X_{\al}/Y_{\beta}}$ at
$x_{\al}=\pr_{\al}(x)$ is a flat module concentrated in degree zero. Therefore $g_{\al,\beta}:X_{\al}\to Y_{\beta}$ smooth at $x_{\al}$
(use, for example, \cite[Tag 08RB and 01V4]{Sta}).
\end{proof}

Applying \rl{flat} to the identity map, we get the following consequence:

\begin{Cor} \label{C:indep}
Let $X$ be an algebraic space with two placid presentations $X\simeq \lim_{\al}X_{\al}$ and $X\simeq \lim_{\al}X'_{\beta}$.
Then for every $\beta$ and every sufficiently large $\al$ the projection $\pr_{\beta}:X\to X'_{\beta}$ factors as $X\overset{\pr_{\al}}{\lra} X_{\al}\overset{g_{\al,\beta}}{\lra}X'_{\beta}$ with $g_{\al,\beta}$ smooth.
\end{Cor}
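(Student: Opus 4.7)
The plan is to deduce the corollary directly from \rl{flat}(b), applied to the identity morphism $\id_X \colon X \to X$, where the source is equipped with the placid presentation $X \simeq \lim_\al X_\al$ and the target with the placid presentation $X \simeq \lim_\beta X'_\beta$.

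First, I would verify that $\id_X$ is strongly pro-smooth, so that the stronger conclusion (b) of the lemma --- smoothness rather than mere flatness of the factoring map --- becomes available. This is immediate: take the trivial filtered indexing category consisting of a single object, so that $X$ is presented over itself as the one-term filtered limit whose unique projection is $\id_X$. Since $\id_X$ is an isomorphism, it is automatically fp-smooth, and there are no nontrivial transition maps to check. In particular, $\id_X$ is flat, so both hypotheses of \rl{flat} are in force.

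Next, I would apply \rl{flat}(b) with $g = \id_X$ and with the two given placid presentations as the presentations of source and target. The conclusion is that for every $\beta$ and every sufficiently large $\al$, the composition
\[
X \overset{\id_X}{\lra} X \overset{\pr_\beta}{\lra} X'_\beta
\]
factors as $X \overset{\pr_\al}{\lra} X_\al \overset{g_{\al,\beta}}{\lra} X'_\beta$ with $g_{\al,\beta}$ smooth. Since the displayed composition is literally $\pr_\beta$, this is exactly the assertion of the corollary.

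I anticipate essentially no obstacle, since the corollary is a direct specialization of \rl{flat}(b) to the case where source and target coincide and $g = \id$; the only thing to observe is that the identity qualifies as strongly pro-smooth via its trivial one-term presentation.
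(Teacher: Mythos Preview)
Your proposal is correct and follows exactly the same approach as the paper, which simply states that the corollary follows by applying \rl{flat} to the identity map. Your additional verification that $\id_X$ is strongly pro-smooth via the trivial one-term presentation is a helpful elaboration of what the paper leaves implicit.
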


\begin{Emp} \label{E:canpres}
{\bf The canonical placid presentation.} \rco{indep} implies that every algebraic space $X$ admitting a placid presentation, in fact admits a  canonical presentation $X\simeq\lim_{X\to Y}Y$, \label{I:can placid presentation}
whether the limit runs over all strongly pro-smooth morphisms $X\to Y$, where $Y$ is of finite type over $k$, and all transition maps are smooth affine (see \re{proQrem}(b)).
\end{Emp}

\begin{Cor} \label{C:prosmooth}
Let $f:X\to Y$ be an fp-smooth covering of algebraic spaces over $k$ admitting placid presentations such that $X$ is strongly pro-smooth. Then $Y$ is strongly pro-smooth.
\end{Cor}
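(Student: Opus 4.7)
The plan is to exhibit a strongly pro-smooth presentation of $Y$ by refining an arbitrary placid presentation $Y \simeq \lim_{\beta} Y_{\beta}$ using the strongly pro-smooth presentation of $X$ supplied by hypothesis. First I would fix a strongly pro-smooth presentation $X \simeq \lim_{\alpha} X_{\alpha}$, with each $X_{\alpha}$ smooth over $k$, and a placid presentation $Y \simeq \lim_{\beta} Y_{\beta}$. Since $f\colon X \to Y$ is fp-smooth, it is tautologically strongly pro-smooth (take the trivial one-object presentation of $X$ over $Y$), so \rl{flat}(b) applies to it: for every $\beta$ and every sufficiently large $\alpha$, the composition $X \to Y \to Y_{\beta}$ factors as $X \overset{\pr_{\alpha}}{\to} X_{\alpha} \overset{g_{\alpha,\beta}}{\to} Y_{\beta}$ with $g_{\alpha,\beta}$ smooth.

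The main step is to show that the image of $Y \to Y_{\beta}$ lies inside the smooth locus $Y_{\beta}^{sm} \subseteq Y_{\beta}$ of $Y_{\beta}$ over $k$. Since $g_{\alpha,\beta}$ is smooth, its image $U_{\beta} \subseteq Y_{\beta}$ is open, and the resulting smooth surjection $X_{\alpha} \twoheadrightarrow U_{\beta}$ from a smooth-over-$k$ source forces $U_{\beta}$ itself to be smooth over $k$; this can be seen directly via faithfully flat descent in the cotangent-complex triangle for $X_{\alpha} \to U_{\beta} \to \Spec k$. Thus $U_{\beta} \subseteq Y_{\beta}^{sm}$. The covering hypothesis on $f$ now enters decisively: by surjectivity of $X \to Y$, the image of $Y \to Y_{\beta}$ coincides with the image of $X \to Y_{\beta}$, which is contained in $U_{\beta}$, hence in $Y_{\beta}^{sm}$.

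Finally, I would replace each $Y_{\beta}$ by $Y_{\beta}^{sm}$. Because every transition map $Y_{\beta} \to Y_{\beta'}$ is itself smooth, smoothness of $Y_{\beta}$ over $k$ at a point $y$ is equivalent to smoothness of $Y_{\beta'}$ over $k$ at its image; hence $Y_{\beta}^{sm} = (Y_{\beta} \to Y_{\beta'})^{-1}(Y_{\beta'}^{sm})$, and the induced transition maps $Y_{\beta}^{sm} \to Y_{\beta'}^{sm}$, being restrictions of affine maps to preimages of opens, remain smooth affine. A direct universal-property argument, using that each $Y \to Y_{\beta}$ factors through $Y_{\beta}^{sm}$, then yields $Y \simeq \lim_{\beta} Y_{\beta}^{sm}$, which is the desired strongly pro-smooth presentation.

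The only genuine obstacle is the middle step; both the strong pro-smoothness of $X$ (to factor through something smooth over $k$) and the surjectivity of $f$ (to transport the image constraint from $X$ to $Y$) are needed there. The remaining steps are formal consequences of \rl{flat}, standard limit arguments, and the basic behaviour of smoothness under composition and restriction.
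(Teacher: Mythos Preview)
Your proof is correct. The paper takes a slightly different tack: rather than passing to the smooth loci $Y_\beta^{sm}$, it first descends the fp-smooth covering $f$ to a smooth covering $f_\alpha\colon Z_\alpha \to Y_\alpha$ at some finite level, obtaining a second placid presentation $X \simeq \lim_{\alpha'>\alpha}(Z_\alpha \times_{Y_\alpha} Y_{\alpha'})$ of $X$. Comparing this with the given strongly pro-smooth presentation via \rco{indep} produces, for some $\alpha'$, a smooth map $Z_\alpha \times_{Y_\alpha} Y_{\alpha'} \to X_\beta$; since $X_\beta$ is smooth over $k$, so is $Z_\alpha \times_{Y_\alpha} Y_{\alpha'}$, and the smooth covering $Z_\alpha \times_{Y_\alpha} Y_{\alpha'} \to Y_{\alpha'}$ then forces $Y_{\alpha'}$ itself to be smooth over $k$. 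Thus a single term of the original placid presentation is already smooth, and no passage to open subschemes is needed. Your argument avoids descending $f$ to finite level, at the cost of the extra bookkeeping with $Y_\beta^{sm}$ and the verification that $\lim_\beta Y_\beta^{sm} \simeq Y$; both approaches ultimately rest on \rl{flat}(b).
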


\begin{proof}
Choose a placid presentation $Y\simeq\lim_{\al}Y_{\al}$ of $Y$ and a strongly pro-smooth presentation $X\simeq\lim_{\beta}X_{\beta}$ of $X$. We want to show that some $Y_{\al}$ is smooth. Since $f$ is finitely presented, there exists an index $\al$ and a smooth covering $f_{\al}:Z_{\al}\to Y_{\al}$ such that $f\simeq f_{\al}\times_{Y_{\al}}Y$. Then $X\simeq\lim_{\al'>\al}(Z_{\al}\times_{Y_{\al}}Y_{\al'})$ is another placid presentation of $X$, so it follows from \rco{indep} that for every $\beta$ there exists $\al'$ such that the projection $X\to X_{\beta}$ factors through a smooth map  $Z_{\al}\times_{Y_{\al}}Y_{\al'}\to X_{\beta}$. Since $X_{\beta}$ is smooth, we deduce that $Z_{\al}\times_{Y_{\al}}Y_{\al'}$ is smooth. Since $Z_{\al}\to Y_{\al}$ and hence also $Z_{\al}\times_{Y_{\al}}Y_{\al'}\to Y_{\al'}$ is a smooth covering, we conclude that $Y_{\al'}$ is smooth as well.
\end{proof}

\subsection{Infinity-stacks}

\begin{Emp} \label{E:infst}
{\bf Notation.}
(a) Let $\Aff_k$ \label{N:affk}
be the category of affine schemes over $k$, equipped with the \'etale topology. In this case, the category of presheaves
$\PShv(\Aff_k)$ (compare \re{shv}) is usually called the category of {\em $\infty$-prestacks} \label{I:infty prestacks}
(over $k$) and denoted by $\Prest_k$ \label{N:prestk}.

(b) We call the category  $\Shv(\Aff_k)$ the category of {\em $\infty$-stacks} \label{I:infty stacks} over $k$ and denote it by $\St_k$.
\label{N:stk} Notice that every $\cX\in\St_k$ can be written as a colimit of affine schemes. More precisely, the canonical morphism $(\colim_{X\to\cX}X)\to\cX$, where the colimit is taken over all morphisms $X\to\cX$ with $X\in\Aff_k$, is an isomorphism (see, for example, \cite[Lemma 5.1.5.3]{Lu}).

(c) We call an epimorphism $\cX\to\cY$ of $\infty$-stacks a {\em covering} \label{I:covering} (compare \re{cons2}(a)).
\end{Emp}

\begin{Emp}
{\bf Remarks.} (a) While all $\infty$-stacks, appearing in our applications, are actually usual $1$-stacks, the introduction of $\infty$-stacks is necessary, because $1$-stacks are not closed under homotopy colimits. For example, $\infty$-stacks are the natural setting to define reduced stacks (see \re{red} below).

(b) In principle, one might consider stacks for the fppf topology instead of \'etale. On the other hand, to work with
\'etale topology is easier.

\end{Emp}

\begin{Emp} \label{E:rem infst}
{\bf Remark.} Let $\Sch_k$ \label{N:schk} (resp. $\Alg_k$ \label{N:algk}) be the categories of schemes (resp. algebraic spaces) over $k$, respectively, equipped with \'etale topology. Then we have natural embeddings $\iota:\Aff_k\to \Sch_k$ (resp. $\iota:\Aff_k\to \Alg_k$), and
the resulting pullback functors $\iota^*:\Shv(\Sch_k)\to \St_k$ and $\iota^*:\Shv(\Alg_k)\to \St_k$ are equivalences of $\infty$-categories. In particular, in order to construct the $\infty$-category $\St_k$ we could also use category  $\Sch_k$ or  $\Alg_k$ instead of $\Aff_k$ in \re{infst}. The same applies to categories $\Sch_k^{\qcqs}$ \label{N:qcqssch} of qcqs schemes or $\Alg_k^{\qcqs}$ \label{N:qcqsalgsp} of qcqs algebraic spaces.
\end{Emp}

\begin{Emp} \label{E:clmor}
{\bf Classes of morphisms.}
Let $(P)$ be a class of morphisms $f:\cX\to Y$ from an $\infty$-stack $\cX$ to an affine scheme $Y$, which is closed under pullbacks.

(a) We say that a morphism $f:\cX\to\cY$ of $\infty$-stacks is {\em $(P)$-representable}, \label{I:prepresentable} if for every morphism $Y\to \cY$, where $Y\in\Aff_k$, the pullback $\cX\times_{\cY}Y\to Y$ belongs to $(P)$. 

(b) We call a morphism $f:\cX\to \cY$  of $\infty$-stacks {\em representable/schematic/affine}, \label{I:representable morpism}
\label{I:schematic morpism}\label{I:affine morpism}
(resp. {\em (locally) fp-representable/(locally) fp-schematic/(locally) fp-affine}, \label{I:fp representable morpism}
\label{I:fp schematic morpism}\label{I:fp affine morpism} \label{I:loc fp representable morpism}
\label{I:loc fp schematic morpism}\label{I:loc fp affine morpism}
resp. {\em an (fp) open/closed/locally closed embedding}), \label{I:open embedding} \label{I:closed embedding} \label{I:locally closed embedding}
\label{I:fp-open embedding} \label{I:fp-closed embedding} \label{I:fp-locally closed embedding}
if it is $(P)$-representable, where $(P)$ is the class of all morphisms $\cX\to Y$, where $\cX$ is an algebraic space/scheme/affine scheme,
(resp. a subclass of (locally) fp-morphisms $\cX\to Y$ where $\cX$ is as above, resp. a class of (fp) open/closed/locally closed embeddings of schemes).
\end{Emp}

\begin{Emp} \label{E:indsch}
{\bf Spaces over $k$ and ind-algebraic spaces/ind-schemes.}
(a) We say that an $\infty$-stack $\cX$ is a {\em space over $k$}, \label{I:infty space} if for every $U\in\Aff_k$, the space $\cX(U)$ is isomorphic to a set, that is, each connected component of $\cX(U)$ is contractible.

(b) We call a space $X$ over $k$ an {\em ind-algebraic space/ind-scheme}, \label{I:ind-algebraic space} \label{I:ind-scheme}
if it has a presentation as a filtered colimit $X\simeq\colim_{\al} X_{\al}$ of qcqs algebraic spaces/schemes, where all of the transition maps are fp-closed embeddings.
\end{Emp}

\begin{Emp} \label{E:tors}
{\bf $H$-torsors.} (a) Let $H$ be a group space over $k$ acting on an $\infty$-stack $\cX$. In this case, we can form the quotient
$[\cX/H]\in\St_k$.

(b) In the situation of (a), we say that a morphism $f:\cX\to \cY$ of $\infty$-stacks is an {\em $H$-torsor}, \label{I:torsor} if $f$ is epimorphism in the \'etale topology, and the natural map $a:H\times \cX\to \cX\times_{\cY} \cX:(h,x)\mapsto (h(x),x)$ is an isomorphism.

(c) As in the classical case, if $f:\cX\to \cY$ is an $H$-torsor, then the morphism $\ov{f}:[\cX/H]\to \cY$, induced by $f$, is an isomorphism.
Indeed, the isomorphism  $a:H\times \cX\to \cX\times_{\cY} \cX$ is $(H\times H)$-equivariant, and taking quotient by
$H\times H$, we get an isomorphism  $[\cX/H]\to [\cX/H]\times_{\cY} [\cX/H]$. This implies that $\ov{f}$ is a monomorphism, that is,
$\ov{f}(U):[\cX/H](U)\to \cY(U)$ is a monomorphism of spaces for each $U\in\Aff_k$. On the other hand, since $f$ is an epimorphism,
we conclude that $\ov{f}$ is a monomorphism and an epimorphism, thus an isomorphism.

(d) As in the classical case, the quotient map $\cX\to[\cX/H]$ is an $H$-torsor, and has the property that for every $U\in\Aff_k$, the $\infty$-stack $[\cX/H](U)$ classifies pairs $(\wt{U}, \wt{\phi})$, where $\wt{U}\to U$ is an $H$-torsor, and $\wt{\phi}:\wt{U}\to \cX$ is an $H$-equivariant map.

Indeed, consider $\cY\in\Prest_k$ such that $\cY(U)$ classifies pairs $(\wt{U},\wt{\phi})$ as above. Then we have natural map $f:\cX\to\cY$, which sends $h:U\to \cX$ to a pair $(\wt{U},\wt{\phi})$, where $\wt{U}=H\times U$ is a trivial $H$-torsor, and  $\wt{\phi}(h,u)=h(u)$. One checks that $\cY$ is actually an $\infty$-stack, and $f$ is an $H$-torsor. Then, by (c), the induced map $[\cX/H]\to\cY$ is an isomorphism, and the assertion is proven.
\end{Emp}


\begin{Lem} \label{L:etalelocal}
Let $(P)$ be a class of morphisms as in \re{clmor}, which is \'etale local on the base.

(a) Supposed we are given a morphism $f:\cX\to\cY$ of $\infty$-stacks and a covering $\cY'\to\cY$ of $\infty$-stacks such that the pullback
$f\times_{\cY}\cY'$ is $(P)$-representable.  Then $f$ is $(P)$-representable.

(b) The class of $(P)$-representable morphisms is {\em stable under quotients}, that is, if $f:\cY\to \cX$ is a $(P)$-representable morphism of $\infty$-stacks, equivariant with respect to an action of a group space $H$ over $k$, then the induced map  $[f]:[\cY/H]\to [\cX/H]$ is $(P)$-representable.
\end{Lem}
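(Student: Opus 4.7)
The plan is: part (a) is a descent argument using the étale locality of $(P)$, while part (b) reduces to (a) after identifying the pullback of $[f]$ along the $H$-torsor quotient $\cX\to[\cX/H]$ with $f$ itself. Throughout, the main subtlety is that fibre products of $\infty$-stacks are homotopy fibre products, so some care is required when extracting $1$-categorical data from the fibre product appearing in (b).

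For (a), fix a morphism $Y\to\cY$ from an affine scheme; I need to show that $g\colon\cX\times_{\cY}Y\to Y$ lies in $(P)$. Since $\cY'\to\cY$ is a covering, so is its base change $\cY'\times_{\cY}Y\to Y$, and unwinding the definition of the étale topology on $\Aff_k$ I can refine this by an étale covering $\wt Y\to Y$ from an affine scheme admitting a factorization through $\cY'\times_{\cY}Y$. The pullback $g\times_Y\wt Y$ then coincides with the base change of $f\times_{\cY}\cY'$ along the composition $\wt Y\to\cY'$, and hence belongs to $(P)$ by hypothesis. Because $(P)$ is étale local on the base, $g$ itself lies in $(P)$, establishing (a).

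For (b), the quotient map $q\colon\cX\to[\cX/H]$ is an $H$-torsor by \re{tors}(d), hence a covering. I claim that the square with horizontal arrows the quotient maps $\cY\to[\cY/H]$ and $q$, left vertical arrow $f$, and right vertical arrow $[f]$, is Cartesian; granted this, part (a) applied to $[f]$ with the covering $q$ and the $(P)$-representable pullback $f$ yields the claim. To verify the Cartesian property I use the description of the quotient in \re{tors}(d): a $U$-point of $[\cY/H]\times_{[\cX/H]}\cX$ is a datum $\bigl((\wt U,\wt\psi),\,g,\,\alpha\bigr)$ consisting of an object of $[\cY/H](U)$, a morphism $g\colon U\to\cX$, and an isomorphism $\alpha$ in $[\cX/H](U)$ between $(\wt U,f\circ\wt\psi)$ and the trivial-torsor pair $(H\times U,(h,u)\mapsto h\cdot g(u))$. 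The torsor component of $\alpha$ canonically trivialises $\wt U$; the equivariant map $\wt\psi$ is then determined by a point $\phi\in\cY(U)$, and the remaining part of $\alpha$ becomes an identification $f\circ\phi\simeq g$ that determines $g$ up to contractible choice. Hence the fibre product is equivalent to $\cY$, as required. The only non-routine point is this last identification, which is the place where the $\infty$-categorical setting could a priori introduce higher coherences; it is handled cleanly by the explicit torsor description from \re{tors}(d).
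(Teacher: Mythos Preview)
Your proof is correct and follows essentially the same route as the paper: part (a) is the identical descent argument, and part (b) reduces to (a) via the Cartesian square identifying the pullback of $[f]$ along the $H$-torsor $\cX\to[\cX/H]$ with $f$ itself. The paper dispatches the Cartesian claim in one line by citing \re{tors}, whereas you spell out the torsor description explicitly; your added detail is accurate and does not diverge from the paper's strategy.
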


\begin{proof}
(a) We have to show that for every morphism  $Y\to\cY$ with affine $Y$, the pullback $f\times_{\cY}Y$ is in $(P)$. By definition, there exists an affine \'etale covering $Y'\to Y$ such that the composition $Y'\to Y\to\cY$ lifts to a morphism $Y'\to\cY'$. Then $f\times_{\cY}Y'$ is in $(P)$, because $f\times_{\cY}\cY'$ is $(P)$-representable, thus $f\times_{\cY}Y$ is in $(P)$, because $(P)$ is \'etale local on the base.

(b) As in the classical case, we have an isomorphism $\ov{f}\times_{[\cY/H]}\cY\simeq f$ (use \re{tors}). Since $\cY\to [\cY/H]$ is a covering,
the assertion follows from (a).
\end{proof}

\begin{Def} \label{D:pspr}
(a) We say that a morphism $f:\cX\rightarrow \cY$ from ind-algebraic space $\cX$ to an affine scheme $Y$ is {\em ind-fp-proper}\label{I:ind-fp-proper},
if $\cX$ has a presentation a filtered colimit $\cX\simeq\colim_{\al}X_{\al}$ such that
each $X_{\al}$ is fp-proper over $Y$.


(b) Notice that class of morphisms in (a) is stable under all pullbacks, therefore the construction of \re{clmor}(a) applies.
In particular, we can talk about ind-fp-proper morphisms of $\infty$-stacks.
\end{Def}

\begin{Emp} \label{E:pspr}
{\bf Remarks.} (a) Arguing as in \cite[Lemma 3.12]{HR}, we see that the class of morphisms in \rd{pspr}(a) is \'etale local on the base.
Namely, assume that $f:\cX\rightarrow Y$ is a morphism from an $\infty$-stack $\cX$ to an affine scheme $Y$ such that there exists an affine \'etale covering $Y'\to Y$ such that the pullback $f':\cX':=\cX\times_{Y}Y'\to Y'$ satisfies property of \rd{pspr}(a). We want to show that $f$ satisfies property \rd{pspr}(a) as well.

For completeness, we sketch the argument. First we observe that $\cX'$ is a space over $k$, therefore $\cX$ is a space over $k$ as well. Next, choose a presentation $\cX'\simeq\colim_{\al}X'_{\al}$ of $\cX'$, where each $X'_{\al}$ is an algebraic space, fp-proper over $Y'$. We claim that for every $\al$ there exists a closed subspace $X_{\al}\subset \cX$ such that $X'_{\al}\subset\cX'$ is the preimage of $X_{\al}\subset \cX$. Namely, arguing as in  \cite[Lemma 3.12]{HR} we take $X_{\al}$ be the schematic image of the composition $X'_{\al}\hra \cX'\to\cX$. Finally, by  standard descent arguments we see that $\cX\simeq\colim_{\al}X_{\al}$, and each $X_{\al}$ is an algebraic space, fp-proper over $Y$.

(b) Let $f:X\to Y$ be an ind-fp-proper morphism between spaces over $k$, which is equivariant with respect to an action of
a group space $H$ over $k$. Then the induced morphism $\ov{f}:[X/H]\to[Y/H]$ is ind-fp-proper. Indeed, by (a),
the assertion follows from \rl{etalelocal}(b).
\end{Emp}

\subsection{Placid $\infty$-stacks}

\begin{Emp} \label{E:plinfst}
{\bf Construction.} (a) We say that

$\bullet$  an $\infty$-stack $\cX$ is {\em $0$-placid}\label{I:0placid}, if it has a decomposition $\cX\simeq \sqcup_{\al}X_{\al}$, where each $X_{\al}$ is an affine scheme admitting a placid presentation;

$\bullet$ a morphism $f:\cX\to \cY$ of $\infty$-stacks is {\em $0$-smooth}\label{I:0smooth}, if for every morphism $Y\to \cY$, where $Y$ is an affine scheme admitting a placid presentation,  there exists a decomposition $\cX\times_{\cY} Y\simeq\sqcup_{\al}X_{\al}$ such that each $X_{\al}$ is an affine scheme, and the composition $X_{\al}\hra \cX\times_{\cY} Y\to Y$ is strongly pro-smooth. In particular, each $X_{\al}$ admits a placid presentation (see \re{placidprop}).

(b) Next, let $n\in\B{N}$ and assume that classes of $n$-placid $\infty$-stacks \label{I:nplacid} and $n$-smooth \label{I:nsmooth} morphisms of $\infty$-stacks are constructed. Then we say that

$\bullet$ an $\infty$-stack $\cX$ is {\em $(n+1)$-placid}, if there exists an $n$-smooth covering  $g:X\to \cX$ such that $X$ is $0$-placid;

$\bullet$ a morphism  $f:\cX\to \cY$ of $\infty$-stacks is {\em $(n+1)$-smooth}, if for every morphism $Y\to \cY$ of $\infty$-stacks with $0$-placid  $Y$, the pullback
$\cX\times_{\cY} Y$ is $(n+1)$-placid, and there exists an $n$-smooth covering $X\to \cX\times_{\cY} Y$ such that $X$ is $0$-placid and the composition $X\to \cX\times_{\cY} Y\to Y$ is $0$-smooth.

(c) Finally, we say that an $\infty$-stack is {\em placid}\label{I:placid infty stack}, if it is $n$-placid for some $n$, and a morphism of $\infty$-stacks is {\em smooth}\label{I:smooth morphism}, if it is $n$-smooth for some $n$.
\end{Emp}

\begin{Emp}
{\bf Convention.} Notice that in our terminology \re{plinfst}(c) smooth morphisms are not assumed to be locally fp. On the other hand, we always assume that \'etale morphisms are  locally fp.
\end{Emp}

\begin{Emp} \label{E:propplst}
{\bf Properties.}
Construction \re{plinfst} is a particular case of construction \re{cons2} (which is applicable because of \re{catplinfst}(b)) in
the case when $\cA=\Aff_k$, $\Ob_0(\cA)\subset \cA$ is the class of affine schemes admitting placid presentations, and $\Mor_0(\cA)\subset\Mor(\cC)$ is the class of strongly pro-smooth morphisms (see \re{plpres}). In particular,

(a) To a placid $\infty$-stack $\cX$ one associates the $\infty$-category $J$, whose objects are smooth morphisms $X\to \cX$, where $X$ is an affine scheme admitting a placid presentation, and morphisms are strongly pro-smooth morphisms $X\to X'$ over $\cX$. Then the canonical morphism $(\colim_{X\to\cX\in J}X)\to\cX$ is an isomorphism (see \rt{colim}).

(b) A morphism $\cX\to\cY$ between $\infty$-stacks is ($n$-)smooth, if there exists a covering $\cZ\to\cY$ such that the pullback
$\cX\times_{\cY}\cZ\to \cZ$ is ($n$-)smooth (see \rl{covering}).

(c) Smooth morphisms are closed under compositions and pullbacks (see \re{rempb} and \rl{simpson}(b))

(d) If $f:\cX\to\cY$ is a smooth morphism of $\infty$-stacks, and $\cY$ is placid, then $\cX$ is placid (see \rl{simpson}(a)).
If $f:\cX\to\cY$ is a smooth covering of $\infty$-stacks, and $\cX$ is placid, then $\cY$ is placid (see \rl{simpson}(c)).

(e) If $f:X\to Y$ is a smooth morphism between affine schemes admitting placid presentations, then there exists a strongly pro-smooth covering
$Z\to X$ such that the composition $Z\to X\to Y$ is strongly pro-smooth (see \rco{mor0}(a)). In particular, every smooth morphism $f$ is flat.

(f) If $X$ is an affine scheme admitting a placid presentation, and $X\simeq\sqcup_{\al}{X_{\al}}$ is a decomposition of $X$ in $\St_k$, then each
$X_{\al}$ is an affine scheme admitting a placid presentation (see \re{catplinfst}(b)).
\end{Emp}

\begin{Emp} \label{E:explinfst}
{\bf Examples.} (a) Note that every Artin stack of finite type  $\cX$ over $k$ is a placid $\infty$-stack. Moreover, every smooth morphism $f:\cX\to\cY$ between Artin stacks of finite type over $k$ in the classical sense is also smooth in the sense of \re{plinfst}(c).

Indeed, denote by $\Affft_k$\label{N:affft} the category of affine schemes of finite type over $k$. Since $\cX$ has a (classically) smooth covering $X\to \cX$ from an $X\in\Affft_k$, and $X$ is $0$-placid, the first assertion follows from the second one. Note that a smooth morphism in $\Affft_k$ is $0$-smooth by definition, hence every affine smooth morphism $f:\cX\to\cY$ between Artin stacks of finite type over $k$ is $0$-smooth. Next, by a standard argument we see that every quasi-affine smooth morphism (between Artin stacks of finite type over $k$) is $1$-smooth, then every schematic smooth morphism is $2$-smooth, hence every representable smooth morphism is $3$-smooth, and finally every smooth morphism is $4$-smooth.

(b) More generally, any locally fp-morphism $f:\cX\to\cY$ of algebraic stacks, which is smooth in the classical sense is also smooth in the sense of \re{plinfst}(c). Indeed, as in (a), one reduces to the case of
a smooth fp-morphism $f:X\to Y$ of affine schemes. In this case, $f$ is a pullback of a smooth morphism in
$\Affft_k$, and the assertion is clear.
\end{Emp}

\begin{Def}  \label{D:placid}
(a) We call an affine scheme/scheme/algebraic space {\em ($n$-)placid}, if it is ($n$-)placid as an $\infty$-stack.\label{I:placid scheme}
\label{I:placid algebraic space}

(b) We say that an ind-algebraic space/ind-scheme is {\em ind-placid}, \label{I:ind-placid}
 if it admits a presentation $X\simeq\colim_{\al}X_{\al}$ as in \re{indsch}(b) such that every algebraic space/scheme $X_{\al}$ is placid.
\end{Def}

\begin{Lem} \label{L:glplacid}
(a) An algebraic space admitting a placid presentation is placid.

(b) Let $f:X\to Y$ be an fp-morphism between algebraic spaces such that $Y$ admits a placid presentation. Then $X$ admits a placid presentation.

(c) Let $f:\cX\to\cY$ be a locally fp-representable morphism of $\infty$-stacks such that $\cY$ is placid. Then $\cX$ is placid.
\end{Lem}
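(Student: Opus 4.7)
The plan is to prove the three parts in order, using each as input to the next.

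For (a), given a placid presentation $X \simeq \lim_\al X_\al$ of the algebraic space $X$, I would first observe that $X \to X_{\al_0}$ is affine for any fixed index $\al_0$, being a filtered limit of affine morphisms. I would then choose an étale surjective cover $U \to X_{\al_0}$ with $U$ an affine scheme of finite type, so that the base change $V := U \times_{X_{\al_0}} X$ is affine (being affine over the affine $U$) and inherits the placid presentation $V \simeq \lim_{\al \geq \al_0}(U \times_{X_{\al_0}} X_\al)$. The map $V \to X$ is then étale, surjective, and affine; I would verify directly from the definition that it is a $0$-smooth covering, since for any $Z \to X$ with $Z$ an affine scheme admitting a placid presentation the pullback $V \times_X Z$ is an affine scheme étale over $Z$, which is strongly pro-smooth as a one-term filtered limit. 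Thus $X$ is $1$-placid, and in particular placid.

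For (b), I would invoke standard approximation for fp morphisms in filtered systems of algebraic spaces (e.g.\ Rydh): given the placid presentation $Y \simeq \lim_\al Y_\al$, there exists an index $\al_0$ and an fp morphism $f_{\al_0} : X_{\al_0} \to Y_{\al_0}$ with $X \simeq X_{\al_0} \times_{Y_{\al_0}} Y$. The resulting expression $X \simeq \lim_{\al \geq \al_0}(X_{\al_0} \times_{Y_{\al_0}} Y_\al)$ is then the desired placid presentation: each term is of finite type over $k$, and the transition maps, being base changes of the smooth affine transitions of $Y$, are themselves smooth affine.

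For (c), by the definition of placid there exists a smooth covering $W \to \cY$ with $W$ being $0$-placid, namely $W \simeq \sqcup_i W_i$ with each $W_i$ an affine scheme admitting a placid presentation. I would form $W' := \cX \times_\cY W$; by stability of smoothness under pullback (\re{propplst}(c)) the induced map $W' \to \cX$ is a smooth covering, so by \re{propplst}(d) it suffices to show $W'$ is placid. Each summand $W'_i := \cX \times_\cY W_i \to W_i$ is a locally fp morphism of algebraic spaces, hence admits an étale covering $\sqcup_\beta U_{i\beta} \to W'_i$ by affine schemes $U_{i\beta}$ each mapping to $W_i$ via a qcqs, locally fp, hence fp morphism. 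Applying (b) yields that each $U_{i\beta}$ admits a placid presentation; applying (a) then shows each is placid. A verification analogous to the one in (a) identifies $\sqcup_{i,\beta} U_{i\beta} \to W'$ as a $0$-smooth covering by a $0$-placid $\infty$-stack, yielding that $W'$ is placid.

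The main technical subtlety will be the direct verifications of $0$-smoothness in parts (a) and (c), which rest on the observation that affine étale morphisms between affine schemes admitting placid presentations are strongly pro-smooth as one-term filtered limits, so that disjoint unions of such are $0$-smooth in the sense of \re{plinfst}; once this is in place, all three parts follow by organizing the bookkeeping above.
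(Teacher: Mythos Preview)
Your overall strategy matches the paper's --- parts (b) and the reduction in (c) to an algebraic space locally fp over a $0$-placid affine base are exactly what the paper does. The gap is in your direct verification that the \'etale cover $V \to X$ in (a), and the analogous cover in (c), is $0$-smooth.

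You assert that for any $0$-placid affine $Z$ mapping to $X$, the pullback $V \times_X Z$ is an affine scheme. But $V \times_X Z \simeq U \times_{X_{\al_0}} Z$ is a fiber product of two affines over the algebraic space $X_{\al_0}$, and without separatedness of $X_{\al_0}$ this need not be affine, nor even a disjoint union of affines as the definition of $0$-smooth requires. For instance, if $X_{\al_0}$ is the affine plane with doubled origin, $U$ is the two standard charts, and $Z$ is one chart, then $U \times_{X_{\al_0}} Z \simeq \B{A}^2 \sqcup (\B{A}^2 \sm \{0\})$, and $\B{A}^2 \sm \{0\}$ is connected but not affine. So an fp-\'etale morphism is in general only $n$-smooth for some $n>0$; this is exactly the ladder (affine $\Rightarrow 0$-smooth, quasi-affine $\Rightarrow 1$-smooth, schematic $\Rightarrow 2$-smooth, \ldots) that the paper spells out in \re{explinfst}(a).

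The fix is to replace your direct verification by an appeal to \re{explinfst}(b): any fp-\'etale covering is smooth in the sense of \re{plinfst}(c), hence $X$ is placid (possibly $n$-placid for some $n>1$, which suffices). With that one substitution in (a) and its analogue in (c), your argument goes through and coincides with the paper's.
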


\begin{proof}
(a) Let $X$ be an algebraic space with a placid presentation $X\simeq\lim_{\al}X_{\al}$, and let
$X'_{\al_0}\to X_{\al_0}$ be an \'etale covering with an affine scheme $X'_{\al_0}$. Then $X':=X\times_{X_{\al_0}}X'_{\al_0}$ is an affine scheme with a placid presentation
$X'\simeq \lim_{\al>\al_0}(X_{\al}\times_{X_{\al_0}}X'_{\al_0})$. Thus $X'$ is a placid affine scheme, and $X'\to X$ is an fp-\'etale covering.
Thus $X$ is placid (use \re{explinfst}(b)), as claimed.

(b) Choose a placid presentation $Y\simeq\lim_{\al}Y_{\al}$. Since $X\to Y$ is finitely presented, it is a pullback of a morphism
$X_{\al_0}\to Y_{\al_0}$ of algebraic spaces of finite type over $k$. Then $X\simeq \lim_{\al>\al_0}(Y_{\al}\times_{Y_{\al_0}}X_{\al_0})$ is a placid
presentation of $X$.

(c) Choose a smooth covering $Y\to \cY$ from a $0$-placid $Y$. Then the pullback $\cX\times_{\cY}Y\to\cX$ is a smooth covering, hence
it suffices to show that $\cX\times_{\cY}Y$ is placid (see \re{propplst}(d)). Thus we can assume that $\cY$ is $0$-placid.
Then we have a decomposition $\cY\simeq\sqcup_{\al}Y_{\al}$, where each $Y_{\al}$ is an affine scheme admitting a placid presentation, which induces a decomposition $\cX\simeq\sqcup_{\al}(\cX\times_{\cY}Y_{\al})$. Therefore we can assume that $\cY$ is an affine scheme admitting a placid presentation. In this case, $\cX$ is an algebraic space, locally fp over $\cY$, so the assertion follows from a combination of (b) and (a).
\end{proof}

\begin{Emp} \label{E:placidrem}
{\bf Remarks.}
(a) Notice that if $X'$ and $X''$ are affine schemes with placid presentations $X'\simeq\lim_{\al}X'_{\al}$ and $X'\simeq\lim_{\beta}X'_{\beta}$, then their disjoint union $X:=X'\sqcup X''$ is an affine scheme  with a placid presentation $X\simeq \lim_{\al,\beta}(X'_{\al}\sqcup X''_{\beta})$.

(b) By definition, every affine scheme admitting a placid presentation is $0$-placid. Conversely, every $0$-placid affine scheme admits a placid presentation. Indeed, by definition, every $0$-placid affine $X$ is a disjoint union $\sqcup_{\al}X_{\al}$ of affine schemes admitting placid presentations. Moreover, this disjoint union is finite, because $X$ is quasi-compact. Hence $X$ admits a placid presentation by (a).
On the other hand, we do not expect that every placid affine scheme admits a placid presentation.

(c) By \re{explinfst}(b), if a scheme/algebraic space $X$ has a Zariski/\'etale covering by $0$-placid affine schemes, then $X$ is placid.
Again, we do not expect that the converse is true.

(d) In the construction \re{plinfst}, one can replace affine schemes by either schemes or algebraic spaces. Then, using a variant of
\rl{glplacid} one can show that though the classes of $n$-placid $\infty$-stacks and $n$-smooth morphisms would be slightly different,
the resulting classes of placid $\infty$-stacks and smooth morphisms will not change.

(e) It follows from \rl{glplacid}(c) that for every ind-placid ind-algebraic space/ind-scheme $X$ and {\em every} presentation $X\simeq\colim_{\al}X_{\al}$ as in  \re{indsch}(b), each  $X_{\al}$ is placid.
\end{Emp}

\begin{Emp}
{\bf Convention}. Using \re{placidrem}(b), from now on we will often be using a shorter {\em $0$-placid affine scheme} term instead of a longer {\em affine scheme admitting a placid presentation}.
\end{Emp}

\begin{Emp} \label{E:quot}
{\bf Example.} Let $H$ be a group-scheme acting on a $0$-placid affine scheme $X$. Assume that $H$ is $0$-smooth, that is,
the projection $H\to\pt$ is $0$-smooth. Then the quotient stack $\cX:=[X/H]$ is a $1$-placid $\infty$-stack, and the projection $\pi:X\to\cX$ is $0$-smooth.

Indeed, since $\pi$ is a covering, it remains to show that it is $0$-smooth. By \re{propplst}(b), it suffices to show that the projection $X\times_{\cX}X\to X$ is $0$-smooth. Since $X\times_{\cX}X\simeq H\times X$, and $H\to\pt$ is $0$-smooth, the assertion follows.
\end{Emp}

\begin{Emp} \label{E:decomp}
{\bf Locally closed embeddings}. Recall that for every locally closed embedding $\eta:Y\hra X$ of quasi-compact schemes there exists a unique decomposition
$Y\overset{j}{\hra}Z\overset{i}{\hra}X$, where $j$ is a schematically dense open embedding, and $i$ is a closed embedding.
\end{Emp}


\begin{Lem} \label{L:fplocl}
(a) For every fp-locally closed embedding $\eta:\cY\hra\cX$ of placid $\infty$-stacks, there exists a unique decomposition
$\cY\overset{j}{\hra}\cZ\overset{i}{\hra}\cX$, where $j$ (resp. $i$) is an fp-open (resp. an fp-closed) embedding, and
for every smooth morphism $X\to \cX$, where $X$ is a $0$-placid scheme, the pullback $\cY\times_{\cX}X\hra\cZ\times_{\cX}X\hra X$ is the
canonical decomposition from \re{decomp}.

(b) Moreover, the decomposition of (a) is compatible with smooth pullbacks, that is, for every smooth morphism $\cX'\to\cX$, the pullback
$\cY\times_{\cX}\cX'\hra\cZ\times_{\cX}\cX'\hra\cX'$ is the decomposition of the pullback $\cY\times_{\cX}\cX'\hra\cX'$.
\end{Lem}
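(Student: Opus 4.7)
The plan is to construct the decomposition by smooth descent from a $0$-placid affine cover. Choose a smooth covering $\pi:X\to\cX$ with $X$ a $0$-placid affine scheme (which exists since $\cX$ is placid; see \re{propplst}(a)). Pulling back $\eta$ yields an fp-locally closed embedding $\eta_X:\cY\times_\cX X\hra X$ of schemes. Fixing a placid presentation $X\simeq\lim_\alpha X_\alpha$, the fp-hypothesis lets us descend $\eta_X$ to a locally closed embedding $\eta_\alpha:Y_\alpha\hra X_\alpha$ between finite-type schemes at some level $\alpha$. Applying \re{decomp} to $\eta_\alpha$ produces $Y_\alpha\hra Z_\alpha\hra X_\alpha$ with $Y_\alpha\hra Z_\alpha$ schematically dense open and $Z_\alpha\hra X_\alpha$ closed, and pulling back along the (flat) projection $X\to X_\alpha$ yields a factorization $\cY\times_\cX X\overset{j_X}{\hra} Z_X\overset{i_X}{\hra}X$ with $j_X$ fp-open and $i_X$ fp-closed.

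The crucial technical step is to verify that this construction is stable under smooth base change among $0$-placid affine schemes: for every smooth morphism $f:X'\to X$ of $0$-placid affines, the pullback $Z_X\times_X X'$ must agree with the canonical middle term $Z_{X'}$ of the decomposition of $\eta_{X'}$. Since smooth morphisms of $0$-placid affines are flat by \re{propplst}(e), and schematic closure of a qcqs subscheme commutes with flat base change, this reduces via \rl{flat} to the standard compatibility statement for morphisms of finite-type schemes. Independence of the chosen level $\alpha$ then follows as a special case (comparing two placid presentations), so the construction is independent of all choices.

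With smooth-pullback compatibility in hand, descent along $\pi$ applies: the two pullbacks of $Z_X$ to $X\times_\cX X$ via the two projections coincide (each equals the canonical middle term of the pullback decomposition), so $Z_X$ descends to an fp-closed substack $\cZ\hra\cX$, giving the factorization $\cY\overset{j}{\hra}\cZ\overset{i}{\hra}\cX$. Uniqueness is immediate, since any factorization with the required pullback property pulls back to the canonical one on $X$ and is determined by smooth descent. Part (b) is then formal: for a smooth morphism $\cX'\to\cX$, pick a smooth covering $X'\to\cX'$ by a $0$-placid affine scheme together with a compatible $0$-placid smooth cover $X\to\cX$ and a smooth morphism $X'\to X$ over $\cX$ (using \re{propplst}(b)); the required compatibility on $X'$ is exactly the crucial step of (a), and smooth descent to $\cX'$ yields the claimed pullback formula. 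The main obstacle is the flat-base-change argument in the second paragraph, which requires carefully tracking through placid presentations to ensure that schematic density is preserved under all the relevant pullbacks.
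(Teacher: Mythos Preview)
Your approach is correct but proceeds differently from the paper. The paper does not choose a single cover and descend; instead it uses the canonical presentation $\cX\simeq\colim_{X\to\cX}X$ from \re{propplst}(a), defines $\cZ:=\colim_{X\to\cX}Z_X$ directly as a colimit over \emph{all} smooth morphisms from $0$-placid affines (with transition maps the strongly pro-smooth ones), and then checks that $\cZ\times_\cX X'\simeq Z_{X'}$ by a colimit manipulation. The key technical input is the same as yours---the canonical decomposition of \re{decomp} is stable under flat pullback, and smooth morphisms between $0$-placid affines are flat by \re{propplst}(e)---but the paper packages it as an isomorphism $Z_{X_1}\times_\cX X_2\simeq X_1\times_\cX Z_{X_2}$ over $X_1\times_\cX X_2$ rather than as a descent datum. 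Your descent argument is more classical; the paper's colimit argument integrates more smoothly with its general framework and avoids having to verify cocycle conditions.

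One small gap in your writeup: in the descent step you assert that ``each equals the canonical middle term of the pullback decomposition'' on $X\times_\cX X$, but $X\times_\cX X$ need not be a scheme (it is only a placid $\infty$-stack), so \re{decomp} does not apply to it directly. You should instead choose a smooth cover $X''\to X\times_\cX X$ by a $0$-placid affine, observe that both compositions $X''\to X$ are smooth morphisms of $0$-placid affines, and apply your ``crucial technical step'' to conclude that both pullbacks of $Z_X$ to $X''$ equal $Z_{X''}$; since fp-closed substacks are determined \'etale-locally, this suffices. On the positive side, your explicit passage through a placid presentation and a finite-type level makes the fp-property of $i$ and $j$ transparent, a point the paper's proof leaves implicit.
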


\begin{proof}
Notice first that if $\cX'\to\cX$ and $X\to \cX'$ are smooth, then the composition $X\to\cX$ is smooth.
Thus (b) follows immediately from (a).

First we claim that the decomposition in (a) is unique, if exists. Indeed, by \re{propplst}(a) we have a canonical isomorphism $(\colim_{X\to\cX}X)\to\cX$, where the colimit runs over smooth morphisms $X\to\cX$ with $0$-placid affine schemes $X$, and transition maps are strongly pro-smooth.

Therefore for every morphism $\cZ\to\cX$ the canonical morphism $\colim_{X\to\cX}(\cZ\times_{\cX}X)\to\cZ$ is an isomorphism,
from which the uniqueness follows.

Next, for every  $X\to\cX$ as above, consider the canonical decomposition $\cY\times_{\cX}X\hra Z_X\hra X$ from \re{decomp}. Since this
decomposition is functorial in $X\to\cX$, we can form a colimit
$\cY\overset{j}{\to}\cZ\overset{i}{\to}\cX$ with $\cZ:=\colim_{X\to\cX}Z_{X}$.

Since the classes of fp-open and fp-closed embeddings are \'etale local on the base (see \cite[Tags 041V, 041X, 0420]{Sta}), it suffices to show that for every smooth morphism $X'\to\cX$ as above, the pullback $\cY\times_{\cX}X'\to  \cZ\times_{\cX}X'\to X'$ is isomorphic to $\cY\times_{\cX}X'\to Z_{X'}\to X'$ (use \rl{etalelocal}(a)).

We claim that for every pair of smooth morphisms $X_1\to\cX$ and $X_2\to\cX$ as above, we have a canonical isomorphism
$Z_{X_1}\times_{\cX}X_2\simeq X_1\times_{\cX}Z_{X_2}$ over $\wt{\cX}:=X_1\times_{\cX}X_2$. Indeed, by the uniqueness of the decomposition mentioned above, is suffices to show
that for every smooth morphism $f:X\to\wt{\cX}$ with $0$-placid affine $X$, the pullback of $(Z_{X_1}\times_{\cX}X_2)\times_{X_2}X$
is isomorphic to  $Z_X$ (and similarly for $X_1\times_{\cX}Z_{X_2}$). To see this, notice that this pullback is the pullback
$Z_{X_1}\times_{X_1}X$ with respect to the smooth morphism $X\to \wt{\cX}\to X_1$. Since the decomposition of \re{decomp} is compatible with flat pullbacks, while smooth morphisms are flat (see \re{propplst}(e)), the assertion follows.

Now the assertion follows from the canonical isomorphism
\[
\cZ\times_{\cX}X'\simeq(\colim_{X\to\cX}Z_X)\times_{\cX}X'\simeq\colim_{X\to\cX}(Z_X\times_{\cX}X')\simeq \]
\[
\simeq\colim_{X\to\cX}(X\times_{\cX}Z_{X'})\simeq (\colim_{X\to\cX}X)\times_{\cX}Z_{X'}\simeq Z_{X'},
\]
where the middle isomorphism $Z_X\times_{\cX}X'\simeq X\times_{\cX}Z_{X'}$ was constructed above.
\end{proof}

\subsection{Reduced $\infty$-stacks} \label{S:red}

\begin{Emp} \label{E:red}
{\bf The reduced $\infty$-substack.}
(a) Let $\Aff_{\red,k}\subset\Aff_k$ be the category of reduced affine schemes over $k$. Then the inclusion $\iota:\Aff_{\red,k}\hra\Aff_k$ has a right adjoint $X\mapsto X_{\red}$.

(b) Recall that if $f:X\to Y$ is an \'etale morphism of affine schemes, and $Y$ is reduced, then $X$ is reduced as well (see \cite[Tag 03PC(8)]{Sta}).
Therefore the \'etale topology on $\Aff_k$ restricts to the \'etale topology on $\Aff_{\red,k}$, thus the assumption \re{subcategory} is satisfied.
In particular, we can consider the $\infty$-category $\St_{\red,k}:=\Shv(\Aff_{\red,k})$, have the restriction map $\iota^*:\St_k\to \St_{\red,k}$ with a fully faithful left adjoint $\iota_!:\St_{\red,k}\to\St_k$ (see \rl{ff}).

(c) By (a), for every $X\in\Aff_k\subset\St_k$, the pullback $\iota^*X\in\St_{\red,k}$ is
$X_{\red}\in\Aff_{\red,k}\subset \St_{\red,k}$,  thus $\iota_!\iota^*X\in\St_k$ is $X_{\red}\in \Aff_{\red,k}\subset\Aff_k\subset\St_k$.

(d) For every $\cX\in\St_k$, we set  $\cX_{\red}:=\iota_!\iota^*\cX$\label{N:xred} and call it the {\em reduced} $\infty$-stack
of $\cX$ (see remark \re{remred} below). By adjointness, we have a natural counit map $\pi=\pi_{\cX}:\cX_{\red}\to\cX$.

(e) Since $\iota_!$ is fully faithful (see \rl{ff}), while $\iota^*$ commutes with limits, we have a canonical isomorphism  $\iota^*\cZ_{\red}\simeq\iota^*\cZ$, hence isomorphisms
$(\cX\times_{\cY}\cZ_{\red})_{\red}\simeq (\cX\times_{\cY}\cZ)_{\red}$ and $(\cX_{\red}\times_{\cY_{\red}}\cZ_{\red})_{\red}\simeq (\cX\times_{\cY}\cZ)_{\red}$.

(f) We call an $\infty$-stack $\cX\in\St_k$ {\em reduced}\label{I:reduced infty stack}, if the counit map $\cX_{\red}\to\cX$ is an isomorphism, and let $(\St_k)_{\red}\subset\St_k$ be the full subcategory of reduced $\infty$-stacks.
\end{Emp}

\begin{Emp} \label{E:remred}
{\bf Remark.} By \re{red}(c), for every affine scheme $X$, the reduced $\infty$-stack $X_{\red}$ in the sense of \re{red}(d)
is the classical reduced scheme, corresponding to $X$. The same is also true for schemes, algebraic spaces and Artin stacks
(see \rl{redclass}(b) below).
\end{Emp}

\begin{Lem} \label{L:redclass}
(a) For every fp-smooth representable morphism $f:\cX\to\cY$ of $\infty$-stacks, the induced
morphism $\cX_{\red}\to \cX\times_{\cY}\cY_{\red}$ is an isomorphism.

(b) If $\cX$ is a scheme (resp. algebraic space/Artin stack), then $\cX_{\red}$ is the classical reduced scheme (resp. algebraic space/Artin stack) corresponding to $\cX$.
\end{Lem}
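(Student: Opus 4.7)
The plan is to prove (b) first and then leverage it, together with \re{red}(e), to prove (a).

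For (b): the case of an affine scheme $\cX$ is recorded in \re{red}(c). For a general scheme (resp.\ algebraic space, Artin stack) $\cX$, I would choose a Zariski (resp.\ \'etale, smooth) cover $\{U_i\}$ by affine schemes and present $\cX$ as the colimit of the associated \v{C}ech nerve in $\St_k$. Since $(-)_{\red} = \iota_!\iota^*$ is a composition of two left adjoints and hence commutes with arbitrary colimits, $\cX_{\red}$ is identified with the colimit of the reductions of the terms of the \v{C}ech nerve. This colimit coincides with the classical gluing construction defining the classical reduction, yielding the claim.

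For (a): by the canonical isomorphism in \re{red}(e), we have $(\cX \times_\cY \cY_{\red})_{\red} \simeq \cX_{\red}$, and under this identification the natural map $\cX_{\red} \to \cX \times_\cY \cY_{\red}$ becomes the counit. It therefore suffices to show that $\cX \times_\cY \cY_{\red}$ is already reduced. Presenting $\cY \simeq \colim_{Y\to\cY} Y$ as a colimit over morphisms from affine schemes, and using universality of colimits in $\St_k$ together with the commutation of $(-)_{\red}$ with colimits, one obtains
\[
\cX \times_\cY \cY_{\red} \simeq \colim_{Y\to\cY} \bigl((\cX \times_\cY Y) \times_Y Y_{\red}\bigr).
\]
Because the class of reduced $\infty$-stacks is closed under colimits (an immediate consequence of $(-)_{\red}$ preserving colimits), it suffices to show each term is reduced.

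Fix $Y\to\cY$. Then $\cX \times_\cY Y$ is an algebraic space, fp-smooth over the affine scheme $Y$ (since $f$ is representable and fp-smooth), so its pullback along the closed embedding $Y_{\red}\hookrightarrow Y$ is an algebraic space fp-smooth over the reduced affine scheme $Y_{\red}$. Such an algebraic space is classically reduced (fp-smooth base change preserves reducedness, checked \'etale-locally on source and target), so by (b) it is reduced as an $\infty$-stack. The main subtlety in this plan lies in carefully justifying the colimit compatibilities used in reducing (a) to the affine case; these are formal consequences of $\iota_!$ and $\iota^*$ both being left adjoints, combined with universality of colimits in the $\infty$-topos $\St_k$. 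The remaining geometric content — fp-smooth base change of a reduced base is reduced — is classical.
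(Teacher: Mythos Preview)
Your proposal is correct but inverts the paper's order: the paper proves (a) first and then deduces (b), whereas you prove (b) first and then deduce (a).

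For (a), the paper writes $\cY$ as a colimit of affine schemes and uses that both $(-)_{\red}$ and pullbacks commute with colimits to reduce to the case where $\cY$ is affine and $\cX$ is an algebraic space; it then passes to a \v{C}ech nerve of an \'etale schematic cover by a scheme, and repeats twice more to reach the affine case, where the statement is classical. With (a) in hand, (b) follows immediately: for an Artin stack $\cX$, an fp-smooth cover $X\to\cX$ by a scheme gives, via (a), an fp-smooth cover $X_{\red}\to\cX_{\red}$, so one descends to the scheme case, and then an open affine cover plus (a) again reduces to affines.

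Your route also works, but note a small gap in your treatment of (b): the terms $U^{[m]}$ of the \v{C}ech nerve of a Zariski/\'etale/smooth cover by affines are \emph{not} affine in general (e.g.\ intersections of affine opens in a non-separated scheme, or fiber products over an algebraic space), so identifying their $\infty$-stack reduction with the classical one already requires (b) at a lower level. You should therefore set this up as an explicit induction (affine $\Rightarrow$ separated scheme $\Rightarrow$ scheme $\Rightarrow$ algebraic space $\Rightarrow$ Artin stack); this is exactly the same chain the paper implicitly uses in (a) when it says ``applying the same argument twice''. Once (b) is established, your argument for (a) via reducedness of $\cX\times_\cY\cY_{\red}$ and universality of colimits is clean and correct. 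The two approaches are essentially mirror images; the paper's order has the mild advantage that the inductive reduction is done once, inside (a), where both sides of the map reduce simultaneously.
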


\begin{proof}
(a) Since $\cY$ is a colimit of affine schemes, while both the pullback and the reduction functor $(\cdot)_{\red}=\iota_!\iota^*$  commute with colimits, we immediately reduce to the case when $\cY$ is an affine scheme, and $\cX$ is an algebraic space.

Next, choose an \'etale (schematic) covering $f:X\to \cX$ by a scheme $X$. Then $\cX\simeq \colim_{[m]}X^{[m]}$ is a colimit of its \v{C}ech nerve (see \re{chech}), and each $X^{[n]}$ is a scheme. Thus, we reduce to the case when $\cX$ is a scheme. Applying the same argument twice, we reduce to the case when $\cX$ is an affine scheme. In this case, $\cX_{\red}$ and $\cY_{\red}$ are the classical reduced affine schemes (see remark \re{remred}), so the assertion is standard.

(b) Assume that $\cX$ is an Artin stack, and let $f:X\to \cX$ be an fp-smooth representable covering by a scheme. Then the induced morphism
$X_{\red}\to \cX_{\red}$ is an fp-smooth representable covering (by (a)), hence the assertion for $\cX_{\red}$ follows from that for $X_{\red}$, thus we can assume that $\cX$ is a scheme. Next, choose an open covering of $\cX$ by affine schemes $X_{\al}$. Then affine schemes $X_{\al,\red}$
form an open covering of $\cX_{\red}$ (by (a)), and the assertion follows.
\end{proof}

\begin{Lem} \label{L:reduction}
Let $f:\cX\to\cY$ be a smooth morphism of placid $\infty$-stacks (see \re{plinfst}(c)). Then for every morphism of $\infty$-stacks
$g:\cY'\to\cY$, the induced morphism $(\cX\times_{\cY}\cY')_{\red}\to \cX\times_{\cY}\cY'_{\red}$ is an isomorphism. In particular,
the morphism $\cX_{\red}\to \cX\times_{\cY}\cY_{\red}$ is an isomorphism.
\end{Lem}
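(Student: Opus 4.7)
The plan is to derive the ``in particular'' clause formally from the main assertion, and then to prove the main assertion by reducing to an affine situation and inducting on the smoothness level of $f$ from Definition \re{plinfst}.

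Applying the main statement with $\cY'=\cY_{\red}$ and using that $\iota_!$ is fully faithful (so $(\cY_{\red})_{\red}\simeq \cY_{\red}$) shows that $\cX\times_\cY\cY_{\red}$ is reduced. Both $\cX_{\red}$ and $\cX\times_\cY\cY_{\red}$ are then reduced $\infty$-stacks representing the same functor on reduced test objects: mapping any reduced $\cZ$ into either is equivalent to giving a map $\cZ\to\cX$, since any compatible map $\cZ\to\cY$ factors uniquely through $\cY_{\red}$. Yoneda then yields the canonical isomorphism. For the main assertion, observe that $(\cdot)_{\red}=\iota_!\iota^*$ preserves colimits: $\iota_!$ is a left adjoint, and $\iota^*$ has both adjoints in the \'etale topos (the right adjoint being $\iota_*$, well-defined since $\iota$ is continuous for the \'etale topology). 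Combined with the fact that fiber products in $\St_k$ commute with colimits in each variable, writing $\cY'$ as a colimit of affines reduces to $\cY'$ affine; using a smooth cover $Y\to\cY$ by a $0$-placid affine and descending along its \v{C}ech nerve (each piece is smooth over $\cY$ by \re{propplst}) further reduces to $\cY$ being a $0$-placid affine.

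Now induct on $n$ with $f$ being $n$-smooth. For the base case $n=0$, the definition decomposes $\cX\simeq\sqcup_\al X_\al$ with each $X_\al\to\cY$ strongly pro-smooth of affines; distributivity of reductions and fiber products over disjoint unions reduces to a single strongly pro-smooth map $\cX=\lim_\al \cX_\al\to\cY$ of affines with each $\cX_\al\to\cY$ fp-smooth. On rings, $\cX\times_\cY\cY'$ becomes a filtered colimit, \rl{redclass}(a) gives $(\cX_\al\times_\cY\cY')_{\red}\simeq \cX_\al\times_\cY\cY'_{\red}$ for each $\al$, and the conclusion follows by passing to the filtered colimit (using that a filtered colimit of reduced rings is reduced and that reduction commutes with filtered colimits of rings).

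For the inductive step $n\to n+1$, using $(n+1)$-smoothness of $f$ over the $0$-placid $\cY$, take an $n$-smooth cover $\pi:X\to\cX$ with $X$ a $0$-placid affine and $X\to\cY$ $0$-smooth. Form the \v{C}ech nerve $X^\bullet\to\cX$: each map $X^{[m]}\to X^{[m-1]}$ is a pullback of $\pi$ hence $n$-smooth, and since $0$-smooth morphisms are $n$-smooth and the class of $n$-smooth morphisms is closed under composition (from the Simpson-type setup of \re{propplst}), each $X^{[m]}\to\cY$ is $n$-smooth. The inductive hypothesis applies to each $X^{[m]}\to\cY$; since $\cX\simeq |X^\bullet|$ and $(\cdot)_{\red}$ and fiber products commute with this colimit, the lemma for $\cX\to\cY$ follows. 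The principal obstacle is this level-bookkeeping in the inductive step: it requires the precise closure properties of $n$-smooth morphisms under pullback and composition in the Simpson framework underpinning \re{plinfst}.
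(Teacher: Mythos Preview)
Your overall strategy matches the paper's: reduce to affine test objects, then to $\cY$ a $0$-placid affine scheme, and finally handle the source via a \v{C}ech-nerve/colimit argument. The base case (strongly pro-smooth between affines) is fine, and your induction on the smoothness level is a clean repackaging of the paper's steps.

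There is, however, a genuine gap in your reduction of $\cY$. You write that a smooth cover $Y\to\cY$ by a $0$-placid affine scheme together with its \v{C}ech nerve ``reduces to $\cY$ being a $0$-placid affine''. But the higher terms $Y^{[m]}=Y\times_\cY\cdots\times_\cY Y$ are only placid $\infty$-stacks (by \re{propplst}(d)); they are neither affine nor $0$-placid in general, since $Y\to\cY$ is merely smooth, not affine. Writing $\cY\simeq\colim_{[m]}Y^{[m]}$ therefore only reduces you to the case where $\cY$ is one of the $Y^{[m]}$, which is not what your subsequent induction assumes: both your base case (which unpacks the definition of $0$-smooth by taking $Y=\cY$) and your inductive step (which invokes the $(n+1)$-smooth definition over a $0$-placid $\cY$) require $\cY$ to be a $0$-placid affine scheme.

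The fix is exactly what the paper does: instead of the \v{C}ech nerve of a single cover, use the canonical presentation $\cY\simeq\colim_{Y\to\cY}Y$ of \re{propplst}(a), where the colimit runs over \emph{all} smooth morphisms $Y\to\cY$ with $Y$ a $0$-placid affine scheme (and transition maps strongly pro-smooth). Every index in this colimit is already $0$-placid affine, so the reduction goes through: for each such $Y$ one replaces $f$ and $g$ by their pullbacks $\cX\times_\cY Y\to Y$ and $\cY'\times_\cY Y\to Y$, and the two sides of the desired isomorphism are recovered as colimits over this diagram. The paper similarly uses the canonical presentation of $\cX$ (rather than a single \v{C}ech nerve) to reduce the source side, reserving the \v{C}ech nerve for the step where both $\cX$ and $\cY$ are already $0$-placid affine and the cover is strongly pro-smooth --- so that all nerve terms stay affine.

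A minor point: the ``in particular'' clause is immediate by taking $\cY'=\cY$ in the main statement; your detour through $\cY'=\cY_{\red}$ and a Yoneda argument is correct but unnecessary.
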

\begin{proof}
We are going to use the assertion that reduction functors $\cX\mapsto \cX_{\red}$ and pullbacks commute with colimits.

First we show the assertion when $\cX$ and $\cY$ are affine schemes admitting placid presentations, and $f$ is strongly pro-smooth.
Since $\cY'$ is a colimit of affine schemes, we can assume that $\cY'$ is an affine scheme, thus
$\cX':=\cX\times_{\cY}\cY'\to \cY'$ is strongly pro-smooth. In this case we want to show that the induced map
$\cX'_{\red}\to \cX'\times_{\cY'}\cY'_{\red}$ is an isomorphism. Taking pullback to $\cY'_{\red}$, we can assume
that $\cY'$ is reduced (use \re{red}(e)). Thus, we have to show that if $\cX'\to \cY'$ is strongly pro-smooth and $\cY'$ is reduced, then
$\cX'$ is reduced. But this is standard.

Assume now that $\cX$ and $\cY$ are affine schemes admitting placid presentations, but $f$ is only smooth. By \re{propplst}(f) there is a strongly pro-smooth covering $p:X\to\cX$ such that the composition $X\to\cX\to\cY$ is strongly pro-smooth. By the observation \re{chech}, $p$ gives rise to a presentation of $\cX$ as a colimit of the \v{C}ech complex $\cX\simeq\colim_{[m]}X^{[m]}$, where all transition maps are strongly pro-smooth.
Then each composition $f^{[m]}:X^{[m]}\to\cX\to\cY$ is strongly pro-smooth as well. Since both functors $\cdot_{\red}$ and pullbacks commute with colimits, the assertion for $f$ follows from that for $f^{[m]}$, shown before.

Assume now that only $Y$ is an affine scheme admitting a placid presentation. Next, recall (see \re{propplst}(a)) that we have a canonical isomorphism $(\colim_{X\to\cX}X)\to\cX$, where the colimit runs over smooth morphisms $X\to\cX$, where $X$ is an affine scheme admitting a placid presentation. Since all functors commute with colimits, we can replace $f$ by $X\to\cX\to\cY$, thus reducing to the case, shown above.

In the general case, recall that $\cY$ has a canonical presentation (see \re{propplst}(a)) $\cY\simeq \colim_{Y\to\cY}Y$, where each
$Y$ is an affine scheme admitting a placid presentation.
This presentation induces a presentation $\cX\simeq \colim_{Y\to\cY}(\cX\times_{\cY} Y)$ and similarly for $\cY'$. Hence we can replace
$f$ and $g$ by their pullbacks with respect to $Y\to\cY$, thus reducing to the case when $\cY$ is an affine scheme admitting a placid presentation,
shown above.
\end{proof}


\begin{Cor} \label{C:red}
(a) If $f:\cX\to\cY$ is a smooth morphism (resp. covering) of placid $\infty$-stacks, then the induced
morphism $f_{\red}:\cX_{\red}\to\cY_{\red}$ is a smooth morphism (resp. covering) as well.

(b) If $\cX$ is a placid $\infty$-stack, then $\cX_{\red}$ is a placid $\infty$-stack as well, and the morphism $\cX_{\red}\to\cX$ is an fp-closed embedding.

(c) If $X$ is an ind-placid ind-algebraic space/ind-scheme, then so is $X_{\red}$.
\end{Cor}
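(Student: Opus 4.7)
The plan is to derive all three parts directly from \rl{reduction} and the adjunction properties of the reduction functor from \re{red}, combined with the characterization of placid $\infty$-stacks via smooth coverings by $0$-placid affine schemes.

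Part (a) is immediate: applying \rl{reduction} to the smooth morphism $f:\cX\to\cY$ (with $\cY'=\cY$) gives an isomorphism $\cX_{\red}\simeq\cX\times_{\cY}\cY_{\red}$, under which $f_{\red}$ becomes the base change of $f$ along the counit $\cY_{\red}\to\cY$. Smoothness and coverings are both stable under pullback by \re{propplst}(c), whence the conclusion.

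For part (b), I first choose a smooth covering $\pi:X\to\cX$ with $X$ a $0$-placid affine scheme, and fix a placid presentation $X\simeq\lim_{\alpha}X_{\alpha}$. Applying \rl{reduction} to $\pi$ with $\cY'=\cX$ identifies the pullback $\cX_{\red}\times_{\cX}X$ with $X_{\red}$. Writing $X=\Spec A$ with $A\simeq\colim_{\alpha}A_{\alpha}$, the compatibility of nilradicals with filtered colimits of rings gives $X_{\red}\simeq\lim_{\alpha}(X_{\alpha})_{\red}$, and \rl{redclass}(a) identifies each transition map $(X_{\alpha})_{\red}\to(X_{\beta})_{\red}$ (for $\alpha\geq\beta$) with the base change of $X_{\alpha}\to X_{\beta}$ along $(X_{\beta})_{\red}\hookrightarrow X_{\beta}$, hence smooth affine. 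Thus $X_{\red}$ is $0$-placid, and by part (a), the map $X_{\red}\to\cX_{\red}$ is a smooth covering, so $\cX_{\red}$ is placid. Moreover, $X_{\red}\hookrightarrow X$ is an fp-closed embedding, as it is cut out by the ideal $\Nil(A_{\alpha})\cdot A\subset A$ (using flatness of $A_{\alpha}\to A$ to reduce $\Nil(A)$ to the extension of $\Nil(A_{\alpha})$), which is finitely generated since $A_{\alpha}$ is Noetherian, and this property persists under any further base change. Since fp-closed embeddings are étale-local on the base, \rl{etalelocal}(a) applied to the covering $X\to\cX$ shows that $\cX_{\red}\to\cX$ is itself an fp-closed embedding.

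For part (c), let $X\simeq\colim_{i}X_{i}$ be an ind-placid presentation with fp-closed embedding transition maps. I claim $X_{\red}\simeq\colim_{i}(X_{i})_{\red}$ as $\infty$-stacks. The adjunction of \re{red} yields $\Hom(T,\cY_{\red})\simeq\Hom(T_{\red},\cY)$ for every $T\in\Aff_{k}$ and every $\cY\in\St_{k}$, so
\[
\Hom(T,X_{\red})\simeq\Hom(T_{\red},X)\simeq\colim_{i}\Hom(T_{\red},X_{i})\simeq\colim_{i}\Hom(T,(X_{i})_{\red}),
\]
the middle isomorphism using that $T_{\red}$ is qcqs and hence any map to $X$ factors through some $X_{i}$. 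Each $(X_{i})_{\red}$ is a qcqs placid algebraic space (resp. scheme) by the argument of (b) applied étale-locally to a placid presentation of $X_{i}$, and the induced maps $(X_{i})_{\red}\hookrightarrow(X_{i+1})_{\red}$ remain fp-closed embeddings. The main delicate point, shared across (b) and (c), is the verification that reduction commutes with placid presentations and preserves smoothness of transition maps; this rests crucially on \rl{redclass}(a) to reidentify $(X_{\alpha})_{\red}$ as a base change of the smooth morphism $X_{\alpha}\to X_{\beta}$.
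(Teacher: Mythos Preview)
Your proof is correct and follows the same overall strategy as the paper, but the execution differs in a few places and contains two minor imprecisions worth flagging.

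For (b), the paper is more economical: it observes via \rl{glplacid}(c) that once $\cX_{\red}\to\cX$ is known to be an fp-closed embedding, placidness of $\cX_{\red}$ is automatic, so only the fp-closed assertion needs proof. You instead prove placidness separately by exhibiting an explicit placid presentation $X_{\red}\simeq\lim_{\alpha}(X_{\alpha})_{\red}$. Both are fine, but your route duplicates work. More substantively, your justification ``using flatness of $A_{\alpha}\to A$'' for $\Nil(A)=\Nil(A_{\alpha})\cdot A$ is not quite right: flatness alone does not give this (consider $k\to k[\epsilon]/(\epsilon^2)$). What you need is that the map is strongly pro-smooth, so that \rl{reduction} (or \rl{redclass}(a)) applied to $X\to X_{\alpha}$ gives $X_{\red}\simeq X\times_{X_{\alpha}}(X_{\alpha})_{\red}$ directly; this is exactly how the paper argues.

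For (c), the paper simply uses that $(\cdot)_{\red}=\iota_!\iota^*$ commutes with colimits (both factors being left adjoints), yielding $X_{\red}\simeq\colim_i(X_i)_{\red}$ in one line. Your Yoneda computation works too, but the formula $\Hom(T,\cY_{\red})\simeq\Hom(T_{\red},\cY)$ does not follow from the adjunction $\iota_!\dashv\iota^*$ of \re{red}(d) as you claim; rather, it comes from the adjunction $\iota\dashv(\cdot)_{\red}$ on affine schemes in \re{red}(a), which identifies $\iota_!$ with precomposition by $(\cdot)_{\red}$ at the sheaf level.
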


\begin{proof}
(a) Since smooth morphisms/covering are closed under pullbacks, the assertion follows from \rl{reduction}.

(b) By \rl{glplacid}(c), it suffices to show that  $\cX_{\red}\to\cX$ is an fp-closed embedding.
If $\cX\in\Affft_k$, the assertion is clear. Next, assume that $\cX$ is a $0$-placid affine scheme. Then $\cX$ admits a strongly pro-smooth morphism $\cX\to X$ with $X\in\Affft_k$. Hence  $\cX_{\red}\simeq\cX\times_{X}X_{\red}$ by \rl{reduction}, thus the assertion for
$\cX_{\red}\to\cX$ follows from that for $X_{\red}\to X$.

In the general case, choose a smooth covering $X\to\cX$ with $0$-placid $X$.
Since $X$ is a coproduct of $0$-placid affine schemes, we conclude that the map $\cX_{\red}\times_{\cX}X\simeq X_{\red}\to X$ is an fp-closed embedding (use \rl{reduction}). Since the class of fp-closed embeddings is \'etale local on the base (see \cite[Tags 041V, 0420]{Sta}), the assertion now follows from \rl{etalelocal}(a).

(c) By definition, $X$ has a presentation of the form $X\simeq\colim_{\al}X_{\al}$ such that every $X_{\al}$ is a placid algebraic space/scheme,  and all the transition maps are fp-closed embeddings. Then $X_{\red}$ has a presentation $X_{\red}\simeq\colim_{\al}X_{\al,\red}$, every $X_{\al,\red}$ is a placid algebraic space/scheme (by (b) and \rl{glplacid}(c)), and all the transition maps are fp-closed embeddings by (b).
%
\end{proof}


\subsection{Topological equivalences, and related notions}

\begin{Emp} \label{E:topeq}
{\bf Topological equivalences.} (a) We say that a morphism $\f:\cX\to\cY$ of $\infty$-stacks is a {\em topological equivalence}\label{I:topological equivalence},  if the morphism
$f_{\red}:\cX_{\red}\to\cY_{\red}$ is an isomorphism.

(b) Since $\cX\mapsto\cX_{\red}$ commutes with colimits, the class of (a) is closed under colimits. In particular, it is \'etale local on the base
and closed under quotients.

(c) Using   \re{red}(e), the class of (a) is closed under pullbacks.
\end{Emp}

\begin{Emp} \label{E:redP}
{\bf Classes of morphisms.} Let $(P)$ be a class of morphisms $f:\cX\to Y$ from an $\infty$-stack $\cX$ to an affine scheme $Y$, closed under pullbacks (see \re{clmor}).

(a) We denote by $(P_{\red})$ \label{N:pred} the smallest class of morphisms $f:\cX\to Y$ from an $\infty$-stack $\cX$ to an affine scheme $Y$, which is \'etale local on the base, contains $(P)$ and such that a morphism $f:\cX\to Y$ is in $(P_{\red})$ if and only if the induced map $\cX_{\red}\to Y$ is in $(P_{\red})$. Explicitly, it is the class of  morphisms $f:\cX\to Y$ such that there exists a morphism $\wt{f}:\wt{\cX}'\to Y$ from $(P)$ and an isomorphism $\wt{\cX}'_{\red}\simeq\cX_{\red}$ over $Y$.


(b) Using \re{red}(e), we conclude that the class $(P_{\red})$ of (a) is closed under pullbacks, so construction \re{clmor} applies. Thus we can talk about $(P_{\red})$-representable \label{I:pred representable} morphisms.

\end{Emp}

\begin{Emp} \label{E:exnilp}
{\bf Examples.} (a) In the situation of \re{redP}, assume that the morphism $\cX_{\red}\to \cY$ is $(P)$-representable (see \re{clmor}).
Then $\cX\to\cY$ is $(P_{\red})$-representable. Indeed, we have to show that every pullback $\cX\times_{\cY}Y\to Y$, where $Y$ is an affine scheme, is in $(P_{\red})$. But this follows from isomorphism  $(\cX_{\red}\times_{\cY}Y)_{\red}\simeq (\cX\times_{\cY}Y)_{\red}$
(see \re{red}(e)).

(b) Using properties \re{topeq} one can show that a morphism $f:\cX\to \cY$ of $\infty$-stacks is a topological equivalence if and only if it is $(P_{\red})$-representable, where
$(P)$ is the class of isomorphisms.
\end{Emp}

The following result asserts that in some cases the converse of \re{exnilp} also holds.

\begin{Lem} \label{L:red-fp}
Let $(P)$ be a class of morphisms as in \re{redP} such that

(i) every $f:X\to Y$ from $(P)$ is a locally fp-morphism of algebraic spaces;

(ii) for every morphism $f:X\to Y$ in $(P)$ and an fp-closed embedding $X'\hra X$ such that $X'_{\red}\isom X_{\red}$,
the composition $X'\to X\to Y$ is in $(P)$;

(iii) $(P)$ is \'etale local on the base.

Let $f:\C{X}\to\C{Y}$ be a $(P_{\red})$-representable morphism of $\infty$-stacks such that $\C{Y}$ is placid. Then  the induced map $\C{X}_{\red}\to\C{Y}$ is $(P)$-representable. Furthermore, if $(P)$ only satisfies (i) and (ii), then the assertion holds when
$\C{Y}$ is a placid affine scheme.
\end{Lem}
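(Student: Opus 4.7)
The plan is to first establish the ``Furthermore'' case where $\cY$ is a $0$-placid affine scheme (which only uses (i) and (ii)), and then bootstrap to the general case via a smooth covering of $\cY$ together with \rl{reduction}.

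\emph{Affine case.} For $\cY$ an affine scheme, taking the identity $\cY\to\cY$ in the definition of $(P_{\red})$-representability shows that $f:\cX\to\cY$ itself lies in $(P_{\red})$. The explicit description in \re{redP}(a) then produces a morphism $\wt{f}:\wt{X}\to\cY$ in $(P)$ together with an isomorphism $\wt{X}_{\red}\simeq\cX_{\red}$ over $\cY$. By hypothesis (i), $\wt{X}$ is an algebraic space locally fp over $\cY$, hence placid by \rl{glplacid}(c); \rco{red}(b) then gives that the embedding $\wt{X}_{\red}\hra\wt{X}$ is fp-closed, and clearly $(\wt{X}_{\red})_{\red}\simeq\wt{X}_{\red}$. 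Applying hypothesis (ii) to $\wt{f}$ and this embedding, I conclude that the composition $\wt{X}_{\red}\to\cY$ lies in $(P)$. Transporting along $\wt{X}_{\red}\simeq\cX_{\red}$ yields $\cX_{\red}\to\cY$ in $(P)$, whence $(P)$-representable by closure of $(P)$ under pullbacks.

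\emph{Passage to the general case.} For $\cY$ placid, I would choose a smooth covering $V\to\cY$ with $V\simeq\sqcup_\al V_\al$ and each $V_\al$ a $0$-placid affine scheme. Since $(P_{\red})$ is stable under pullbacks (\re{redP}(b)), each $\cX\times_{\cY}V_\al\to V_\al$ is $(P_{\red})$-representable; the affine case then yields that $(\cX\times_{\cY}V_\al)_{\red}\to V_\al$ is $(P)$-representable. The key identification comes from \rl{reduction}: applied to the smooth morphism of placid $\infty$-stacks $V_\al\to\cY$ and to the map $\cX\to\cY$ (which need not be placid), it produces the canonical isomorphism
\[
(\cX\times_{\cY}V_\al)_{\red}\simeq \cX_{\red}\times_{\cY}V_\al,
\]
so $\cX_{\red}\times_{\cY}V_\al\to V_\al$ is $(P)$-representable.

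\emph{Conclusion.} To check $(P)$-representability of $\cX_{\red}\to\cY$, take an arbitrary morphism $Y\to\cY$ with $Y$ affine. Because $V\to\cY$ is an epimorphism in the \'etale topology, there exists an affine \'etale cover $Y'\to Y$ such that $Y'\to\cY$ factors through $V$; quasi-compactness of $Y'$ together with the decomposition $V=\sqcup_\al V_\al$ allows a further refinement so that $Y'\to V_\al$ for a single $\al$. Then
\[
\cX_{\red}\times_{\cY}Y'\simeq (\cX_{\red}\times_{\cY}V_\al)\times_{V_\al}Y'
\]
lies in $(P)$ by the previous step and pullback closure, and hypothesis (iii) (\'etale locality of $(P)$ on the base) forces $\cX_{\red}\times_{\cY}Y\to Y$ into $(P)$. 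The main obstacle is precisely the identification $(\cX\times_{\cY}V_\al)_{\red}\simeq\cX_{\red}\times_{\cY}V_\al$: the two $\infty$-stacks differ in general (it is easy to construct counterexamples for non-smooth base changes), and it is exactly the smoothness of the covering map $V\to\cY$, via \rl{reduction}, that makes them coincide and permits the descent from the affine case to the general one.
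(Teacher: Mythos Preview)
Your proof is essentially correct and follows the same approach as the paper: first the placid affine case using (i), (ii), \rl{glplacid}(c) and \rco{red}(b); then the general case via a smooth covering, the key identification $(\cX\times_{\cY}V_\al)_{\red}\simeq\cX_{\red}\times_{\cY}V_\al$ from \rl{reduction}, and \'etale descent using (iii). The paper inserts a short intermediate step for $0$-placid $\cY$ (a coproduct of $0$-placid affines) and then invokes \rl{etalelocal}(a) directly, whereas you unwind the descent by hand; these are organizationally equivalent.

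One small imprecision: in your Conclusion you claim that after refinement $Y'$ maps to a \emph{single} $V_\al$. Quasi-compactness only gives that $Y'$ decomposes into finitely many clopen affine pieces, each mapping to one $V_\al$; either check $(P)$ piecewise, or observe (as the paper does) that a finite coproduct $\sqcup_i V_{\al_i}$ is again a $0$-placid affine scheme to which the affine case applies. With this minor correction your argument goes through.
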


\begin{proof}
Assume first that  $\C{Y}=Y$ is a placid affine scheme. Since $f$ is $(P_{\red})$-representable, there exists a locally fp morphism $X'\to Y$ of algebraic spaces, which belongs to $(P)$, and an isomorphism $\C{X}_{\red}\simeq X'_{\red}$ over $Y$ (by (i)). Then $X'$ is a placid algebraic space (by \rl{glplacid}(c)), thus $X'_{\red}\to X'$ is an fp-closed embedding (by \rco{red}). Hence, by our assumption (ii), the composition $\C{X}_{\red}\simeq X'_{\red}\hra X'\to Y$ is in $(P)$. 

Assume next that $\cY$ is $0$-placid, that is, $\cY$ decomposes as a coproduct  $\cY\simeq\sqcup_{\al}Y_{\al}$ of $0$-placid affine schemes. Then, any morphism $Y\to\cY$, where $Y$ is an affine scheme, factors through a finite coproduct $\sqcup_{\al_i}Y_{\al_i}$, which is a placid affine scheme. Thus, by the placid affine case, proven above, the map $\C{X}_{\red}\to\C{Y}$ is $(P)$-representable.

In the general case, choose a smooth covering $Y\to\C{Y}$ with $0$-placid  $Y$. Then we have an isomorphism
$\C{X}_{\red}\times_{\C{Y}}Y\simeq (\C{X}\times_{\C{Y}}Y)_{\red}$ (by \rl{reduction}). Therefore, by the proven above, the morphism
$\C{X}_{\red}\times_{\C{Y}}Y\to Y$ is $(P)$-representable. Hence the morphism $\C{X}_{\red}\to\C{Y}$ is $(P)$-representable
by (iii) and \rl{etalelocal}(a).
\end{proof}

\begin{Cor} \label{C:red-fp}
Let $(P)$ be either the class of locally fp morphisms of algebraic spaces, or of fp-proper morphisms, or
of fp-locally closed embeddings. Then the conclusion of \rl{red-fp} holds.
\end{Cor}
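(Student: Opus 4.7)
The plan is to check that each of the three classes $(P)$ listed in the corollary satisfies the three hypotheses (i), (ii), (iii) of \rl{red-fp}, and then simply invoke that lemma. Since the conclusion of \rl{red-fp} is exactly what we want to prove, no further work is needed once these verifications are carried out.

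First I would verify hypothesis (i), namely that every $f\colon X\to Y$ in $(P)$ is a locally fp morphism of algebraic spaces. This is immediate: locally fp morphisms of algebraic spaces form the class itself in the first case, fp-proper morphisms are in particular locally fp morphisms of algebraic spaces, and fp-locally closed embeddings are likewise fp morphisms into affine schemes, so certainly locally fp.

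Next I would verify hypothesis (ii), which requires that if $f\colon X\to Y$ is in $(P)$ and $i\colon X'\hra X$ is an fp-closed embedding, then the composition $f\circ i\colon X'\to Y$ is in $(P)$. In each case this reduces to the fact that $(P)$ is closed under composition with fp-closed embeddings. For locally fp morphisms of algebraic spaces this is clear, since fp-closed embeddings are locally fp and locally fp morphisms are closed under composition. For fp-proper morphisms, fp-closed embeddings are fp-proper, and fp-proper morphisms are stable under composition. For fp-locally closed embeddings, the composition of an fp-closed embedding with an fp-locally closed embedding is again an fp-locally closed embedding (the image of $X'$ in $Y$ is closed inside the locally closed image of $X$, hence still locally closed in $Y$).

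Finally, hypothesis (iii) asserts that $(P)$ is \'etale local on the base. This is a standard fact for each of the three classes: being locally fp, being fp-proper, and being an fp-open/closed/locally closed embedding all descend along fppf (and in particular \'etale) coverings of the target (see e.g.\ \cite[Tag.\ 041V, 041X, 0420]{Sta} for embeddings, and the usual descent theorems for the other two).

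Having verified (i), (ii), (iii) for each of the three classes, the corollary follows directly from \rl{red-fp}. I do not foresee any genuine obstacle here, as the whole proof is a checklist against the hypotheses of the previous lemma; the only mild care is required in verifying (ii) for fp-locally closed embeddings, where one must remember that a closed subspace of a locally closed subspace remains locally closed.
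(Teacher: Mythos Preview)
Your proposal is correct and follows exactly the same approach as the paper: verify hypotheses (i), (ii), (iii) of \rl{red-fp} for each of the three classes and then invoke the lemma. The paper's own proof is even terser (it just declares (i) and (ii) ``clear'' and cites Stacks Project tags for (iii)), so your more explicit verification is entirely appropriate; note only that hypothesis (ii) in \rl{red-fp} additionally assumes $X'_{\red}\isom X_{\red}$, though since you prove the stronger statement without using this, your argument goes through unchanged.
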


\begin{proof}
All these classes satisfy (iii) by \cite[Tags 041T, 041V, 0422, 0CFX]{Sta}, while assumptions (i) and (ii) are clear.
\end{proof}

\begin{Emp} \label{E:stopeq}
{\bf Notation.} (a) We call a morphism $f:\C{X}\to\C{Y}$ of $\infty$-stacks {\em topologically locally fp-representable/schematic} (resp. {\em topologically fp-proper representable/schematic})\label{I:topologically locally fp}\label{I:topologically fp-proper},
if $f$ is $(P_{\red})$-repre-sentable (\re{redP}(b)), where $(P)$ is the class of
locally fp-morphisms (resp. fp-proper morphisms) of algebraic spaces/schemes.


(b) By a straightforward argument, all classes of (a) are closed under compositions.
\end{Emp}

\begin{Emp} \label{E:remsttopeq}
{\bf Remarks.} (a) The notions in \re{topeq} and \re{stopeq} can be generalized. Namely, instead of considering of a category of reduced affine schemes $\Aff_{\red,k}$ one could consider a subcategory of {\em perfect} (or {perfectly reduced}) affine schemes $\Aff_{\perf,k}\subset\Aff_{\red,k}$, that is, affine schemes $X$ such that for every reduced scheme $X'$, a universal homeomorphism
$X'\to X$  is an isomorphism (compare \cite{BGH}). Then in definitions \re{red}, \re{topeq}, \re{redP} and \re{stopeq} one can replace $(\cdot)_{\red}$ by $(\cdot)_{\perf}$ in all places.

(b) Though the notions outlined in (a) are more general (and probably more standard) than those defined in \re{topeq} and \re{stopeq}, the perfectization is much less transparent operation than reduction from geometric point of view. For example, perfectization of a scheme of finite type is almost never of finite type, and  perfectization $\cX_{\perf}$ of a placid $\infty$-stack $\cX$ is almost never placid.
Moreover, with our notion we get much stronger geometric results in part two, and avoid introducing a new (more complicated) notion of perfectly placid $\infty$-stacks.

(c) A construction in the spirit of (a) is necessary, if one would like to extend the results of this work to the mixed characteristic. We are planning to do it in a future.
\end{Emp}

\section{Dimension function and equidimensional morphisms}

Our next goal is to introduce dimension function and an important class of equidimensional morphisms and its variants first for algebraic spaces of finite type, and then for placid $\infty$-stacks.

\subsection{The case of algebraic spaces of finite type}

\begin{Emp} \label{E:leq}
{\bf Equidimensional algebraic spaces and the canonical filtration.} Let $X$ be an algebraic space of finite type over $k$.

(a) To every $x\in X$, one associates the dimension $X$ at $x$, defined as $\dim_x(X):=\min_{x\in U}\dim U$, where $U$ runs over all open neighborhoods of $x$. Alternatively, $\dim_x(X)$ is the maximal of dimensions of irreducible components, containing $x$. We denote by $\un{\dim}_X:X\to\B{Z}$\label{N:dimx} the function $x\mapsto\dim_x(X)$.

(b) Recall that $X$ is called {\em equidimensional}, if all irreducible components of $X$ are of the same dimension. Equivalently, this happens if and only if the dimension function $\un{\dim}_X$ is constant.

(c) For every $i\in\B{N}$, we consider $X_{\geq i}:=\un{\dim}_X^{-1}(\{n\,|\,n\geq i\})$, $X_{\leq i}:=\un{\dim}_X^{-1}(\{n\,|\,n\leq i\})$ and $X_{i}:=\un{\dim}_X^{-1}(\{i\})$. By definition, each $X_{\leq i}\subset X$ is open, $X_{\geq i}=X\sm X_{\leq i-1}$ is closed, and $X_i=X_{\geq i}\cap X_{\leq i}$ is locally closed. Explicitly, each $X_{\geq i}$ is the union of all irreducible components of $X$ of dimensions $\geq i$, and $X_{i}=X_{\geq i}\sm X_{\geq i+1}$. In particular, $X_i$ is equidimensional of dimension $i$. Let $\eta_i:X_i\hra X$ be the embedding.

(d) We say that $X$ is {\em locally equidimensional}, if the dimension function $\un{\dim}_X$ is locally constant. This happens
if and only if each connected component of $X$ is equidimensional, or equivalently, if and only if each
$X_i\subset X$ from (d) is a union of (some of) connected components of $X$.
\end{Emp}

\begin{Emp} \label{E:locdim}
{\bf Dimension function and equidimensional morphisms.}

(a) To every morphism $f:X\to Y$ of algebraic spaces of finite type over $k$, we associate the dimension function
$\un{\dim}_{f}:=\un{\dim}_{X}-f^*(\un{\dim}_{Y}):X\to\B{Z}$.\label{N:dimf} Explicitly, for every $x\in X$ we have $\un{\dim}_f(x)=\dim_x(X)-\dim_{f(x)}(Y)$.

(b) We call $f$ {\em weakly equidimensional},\label{I:weakly equidimensional morphism} if the dimension function $\un{\dim}_f$ is locally constant.

(c) We call $f$ {\em equidimensional},\label{I:equidimensional morphism} if $f$ is weakly equidimensional, and  $\un{\dim}_f(x)=\dim_x f^{-1}(f(x))$ for all $x\in X$.

(d) We say that a locally closed algebraic subspace $X\subset Y$ is of {\em pure codimension $d$},\label{I:pure codimension} and write $\codim_X(Y)=d$, if the embedding
$X\hra Y$ is weakly equidimensional of constant dimension $-d$. For example, each stratum $Y_i\subset Y$ from \re{leq}(c) is of pure codimension $0$,
and $X\subset Y$ is of pure codimension $\dim Y-\dim X$, if both $Y$ and $X$ are equidimensional.

(e) For shortness, we will often call universally open equidimensional morphisms simply {\em uo-equidimensional}\label{I:uo-equidimensional}.
\end{Emp}

\begin{Emp} \label{E:loceq}
{\bf Remarks.}
(a) Our notion of an equidimensional morphism is slightly stronger than that of \cite{EGA}. For example, an embedding of an irreducible component
$i:X'\hra X$ is always equidimensional in the sense of \cite{EGA}, but it is not weakly equidimensional in our sense, if $\dim X'<\dim X$.
On the other hand, both notions coincide, if $f$ is dominant or open.

(b) Notice that $f$ is automatically weakly equidimensional, if $X$ and $Y$ are locally equidimensional. Also every morphism $\iota:\pt\to X$ is weakly equidimensional.

(c) Explicitly, $f$ is weakly equidimensional of dimension $d$ if and only if for every $i\in\B{N}$, we have $f(X_i)\subset Y_{i-d}$.

(d) Notice that an algebraic space $X$ is locally equidimensional if and only if the structure morphism $X\to\pt$ is equidimensional.
\end{Emp}

\begin{Lem} \label{L:open}
For a morphism $f:X\to Y$ of algebraic spaces of finite type over $k$, we have an inequality $\un{\dim}_f(x)\leq\dim_x f^{-1}(f(x))$.
Moreover, this inequality is an equality, if $f$ is an open map.
\end{Lem}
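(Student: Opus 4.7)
My plan is to reduce both statements to classical dimension theorems for morphisms between integral finite-type $k$-schemes, via \'etale localization and passage to a suitable irreducible component of $X$ through $x$. The two classical inputs I will invoke are: (i) for a dominant morphism $f':X'\to Y'$ of integral $k$-schemes of finite type and any $x'\in X'$ with $y'=f'(x')$, the inequality $\dim_{x'}(f')^{-1}(y')\geq \dim X'-\dim Y'$ (the standard dimension inequality for dominant morphisms of integral schemes); and (ii) Chevalley's fiber-dimension theorem for universally open morphisms: for $f$ locally of finite type between locally Noetherian schemes that is open at $x$, one has the equality $\dim_x f^{-1}(f(x))=\dim_x X-\dim_{f(x)} Y$ (EGA IV$_3$, 14.2). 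Since pointwise dimension, fiber dimension, and openness are all \'etale-local on source and target, I first pass to \'etale neighborhoods in order to reduce to the case where $X$ and $Y$ are affine schemes of finite type over $k$.

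For the general inequality, I choose an irreducible component $X'\subseteq X$ through $x$ realizing $\dim X'=\dim_x X$, and set $Y':=\overline{f(X')}$, which is an irreducible closed subset of $Y$ containing $y:=f(x)$, so that $\dim Y'\leq \dim_y Y$. Applying input (i) to the dominant morphism $f|_{X'}:X'\to Y'$ yields
\[
\dim_x f^{-1}(y)\;\geq\;\dim_x(f|_{X'})^{-1}(y)\;\geq\;\dim X'-\dim Y'\;\geq\;\dim_x X-\dim_y Y\;=\;\un{\dim}_f(x),
\]
as desired.

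For the equality when $f$ is open, I apply input (ii) directly to $f$ at $x$: this gives the reverse inequality $\dim_x f^{-1}(y)\leq \dim_x X-\dim_y Y$, and combined with the first inequality yields the equality.

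The main obstacle is input (ii): while the \emph{generic} fiber of a dominant morphism between integral finite-type $k$-schemes always has dimension $\dim X-\dim Y$, individual fibers can a priori jump, and openness of $f$ is precisely what rules out such jumps at the level of $\dim_x$. The classical proof of Chevalley's theorem combines upper semicontinuity of fiber dimension with the fact that an open morphism of Noetherian schemes sends generalizations to generalizations, which propagates the generic equidimensionality of fibers to every point. For our purposes, I intend simply to quote this classical theorem; the reductions above then give the lemma immediately.
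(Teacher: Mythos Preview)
Your proposal is correct and aligns with the paper's approach: the paper's proof is literally a one-line citation to \cite[14.2.1]{EGA} and \cite[0B2L]{Sta}, treating the entire statement as classical. Your write-up expands this into explicit reduction steps (\'etale localization to schemes, passing to an irreducible component through $x$ and the closure of its image) before invoking the same EGA IV$_3$, 14.2 input, but the substance is the same deferral to the standard dimension theory of morphisms.
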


\begin{proof}
The assertion is well-known (see, for example, \cite[14.2.1]{EGA} or \cite[Tag 0B2L]{Sta}).
\end{proof}




\rl{open} immediately implies the following corollary.

\begin{Cor} \label{C:equidim}
If $f$ is open and weakly equidimensional, then it is equidimensional.
\end{Cor}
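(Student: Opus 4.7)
The plan is to observe that this corollary is essentially a direct unpacking of definitions, given the prior Lemma \ref{L:open}. Recall from \re{locdim}(c) that a morphism $f:X\to Y$ of algebraic spaces of finite type is equidimensional if and only if two conditions hold: first, $f$ is weakly equidimensional (i.e.\ $\un{\dim}_f$ is locally constant), and second, the pointwise equality $\un{\dim}_f(x)=\dim_x f^{-1}(f(x))$ holds for every $x\in X$. The hypothesis of the corollary already supplies the first condition, so the only thing left to verify is the pointwise equality.

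This pointwise equality, however, is exactly the content of the second assertion of Lemma \ref{L:open}: the inequality $\un{\dim}_f(x)\leq\dim_x f^{-1}(f(x))$ that holds for arbitrary $f$ becomes an equality as soon as $f$ is an open map. Since openness is part of the hypothesis, the conclusion follows immediately with no additional work. There is no real obstacle here; the substantive content has already been absorbed into Lemma \ref{L:open}, and the role of the corollary is simply to package the combination of that lemma with the definition of equidimensionality into a convenient statement that will be invoked later when extending the notion to placid $\infty$-stacks.
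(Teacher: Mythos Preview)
Your proposal is correct and matches the paper's approach exactly: the paper simply states that \rl{open} immediately implies this corollary, and your unpacking of why---weakly equidimensional gives the first condition of \re{locdim}(c), and the equality case of \rl{open} for open $f$ gives the second---is precisely the intended reasoning.
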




\begin{Lem} \label{L:2outof3}
Let $X\overset{f}{\to}Y\overset{g}{\to}Z$ be morphisms of algebraic spaces of finite type over $k$.

(a) If $f$ is surjective, and $g\circ f$ is (universally) open, then $g$ is (universally) open.

(b) We have an equality $\un{\dim}_{g\circ f}=\un{\dim}_f+f^*(\un{\dim}_g)$.

(c) Assume that $g$ is weakly equidimensional. Then $f$ is weakly equidimensional if and only if $g\circ f$ is.

(d) Assume that $f$ is open surjective. If  $f$ and $g\circ f$ are weakly equidimensional, then so is $g$.

(e) Assume that $f$ and $g$ are open, and $f$ is surjective. If  $g\circ f$ are weakly equidimensional, then so are $f$ and $g$.
\end{Lem}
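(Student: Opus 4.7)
The first three parts are largely formal. For (a), if $U \subseteq Y$ is open then $f^{-1}(U)$ is open in $X$, and surjectivity of $f$ yields $g(U) = (g \circ f)(f^{-1}(U))$, which is open by openness of $g \circ f$; universal openness follows by the same argument after base change, since surjectivity is preserved. Part (b) is a direct unpacking of the definition in which the term $\dim_{f(x)}(Y)$ cancels. For (c), part (b) rewrites the identity as $\un{\dim}_{g\circ f} = \un{\dim}_f + f^*(\un{\dim}_g)$; since $\un{\dim}_g$ is locally constant on $Y$, so is its pullback $f^*(\un{\dim}_g)$, and the equivalence follows.

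For (d), the plan is to transport local constancy of $\un{\dim}_g$ from computations on $X$ along $f$. Given $y_0 \in Y$, I would pick $x_0 \in f^{-1}(y_0)$ using surjectivity of $f$, together with a connected open neighborhood $U \subseteq X$ of $x_0$ on which both $\un{\dim}_f$ and $\un{\dim}_{g\circ f}$ take constant values $a$ and $c$. Then $V := f(U)$ is an open neighborhood of $y_0$ by openness of $f$, and for any $y \in V$, writing $y = f(x)$ with $x \in U$, part (b) gives $\un{\dim}_g(y) = c - a$, independent of $y$; hence $\un{\dim}_g$ is locally constant on $Y$.

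Part (e) is the one I expect to be the main obstacle. The plan is to first establish that $\un{\dim}_f$ is locally constant, after which (d) yields the same for $\un{\dim}_g$. The key input is that $f$ being open forces, via \rl{open}, the equality $\un{\dim}_f(x) = \dim_x f^{-1}(f(x))$, and by Chevalley's upper semi-continuity theorem for fiber dimensions (\cite[14.2.1]{EGA}) this function is upper semi-continuous on $X$. By (b), $f^*(\un{\dim}_g) = \un{\dim}_{g\circ f} - \un{\dim}_f$ is then the difference of a locally constant function and a USC function, hence lower semi-continuous. The central step is the elementary observation that if $a \colon X \to \mathbb{Z}$ is USC, $b \colon X \to \mathbb{Z}$ is LSC, and $a + b$ is locally constant, then $a$ and $b$ are each locally constant: at any $x_0$ one has $a(x) - a(x_0) \leq 0$ and $b(x) - b(x_0) \geq 0$ on a neighborhood, while their sum vanishes by local constancy of $a + b$, so both differences must equal $0$. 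Applied to $a = \un{\dim}_f$ and $b = f^*(\un{\dim}_g)$, this shows that $f$ is weakly equidimensional, after which (d) completes the argument. The delicate point is that neither local constancy of $\un{\dim}_f$ nor of $\un{\dim}_g$ is visible a priori from the hypotheses; it is the semi-continuity estimates coming from openness plus Chevalley that decouple them from the locally constant sum $\un{\dim}_{g\circ f}$.
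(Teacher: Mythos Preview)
Your arguments for (a)--(d) are correct and essentially match the paper's. The issue is in (e): the ``elementary observation'' you invoke is false. Take $X=\mathbb{A}^1$, $a=\chi_{\{0\}}$ (upper semi-continuous), $b=-\chi_{\{0\}}$ (lower semi-continuous); then $a+b=0$ is constant, yet neither $a$ nor $b$ is locally constant. More explicitly, your pointwise argument says $u:=a(x)-a(x_0)\le 0$, $v:=b(x)-b(x_0)\ge 0$, and $u+v=0$; but $u=-1$, $v=1$ satisfies all three constraints. So the decoupling you want does not follow from the stated hypotheses.

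The gap in your setup is that you never use the hypothesis that $g$ is open. The paper's argument does: since $g$ is open, \rl{open} gives $\un{\dim}_g(y)=\dim_y g^{-1}(g(y))$, which is upper semi-continuous on $Y$ by Chevalley, and hence $f^*(\un{\dim}_g)$ is upper semi-continuous on $X$. Now \emph{both} summands $\un{\dim}_f$ and $f^*(\un{\dim}_g)$ are upper semi-continuous with locally constant sum, so each equals a locally constant function minus an upper semi-continuous function and is therefore also lower semi-continuous, hence locally constant. Your derivation that $f^*(\un{\dim}_g)$ is lower semi-continuous (as $\un{\dim}_{g\circ f}-\un{\dim}_f$) is correct but circular for the purpose at hand: it gives you no information beyond what you already had about $\un{\dim}_f$. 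Once you add the upper semi-continuity of $f^*(\un{\dim}_g)$ coming from openness of $g$, the rest of your plan for (e) goes through as you wrote it.
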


\begin{proof}
(a) and (b) are clear, and (c) follows from (b).

(d) By (b), the assumption implies that $f^*(\un{\dim}_g)=\un{\dim}_{g\circ f}-\un{\dim}_f$ is locally constant. Since $f$ is open and surjective, the function $\un{\dim}_g$ is locally constant as well.

(e) By \rl{open}, both functions $\un{\dim}_f$ and $\un{\dim}_g$ are upper semi-continuous, that is, the preimage of $\{n\,|\,n\geq i\}$ is closed for all $i$. Then $f^*(\un{\dim}_g)$ is upper semi-continuous as well. Since the sum $\un{\dim}_{g\circ f}=\un{\dim}_f+f^*(\un{\dim}_g)$ is locally constant, we conclude that both function $\un{\dim}_f$ and $f^*(\un{\dim}_g)$ are lower semi-continuous as well. This implies that both $\un{\dim}_f$ and $f^*(\un{\dim}_g)$ are locally constant, and hence (as in (d)), function $\un{\dim}_g$ is locally constant as well.
\end{proof}


\begin{Cor} \label{C:pullback}
Consider a Cartesian diagram of algebraic spaces of finite type over $k$
\begin{equation} \label{Eq:leq}
\begin{CD}
X' @>g>> Y'\\
@V\psi VV @VV\phi V\\
X @>f>> Y
\end{CD}
\end{equation}
such that either $f$ and $g$ are open or $\phi$ and $\psi$ are open.

(a) Then we have an equality $\un{\dim}_{g}=\psi^{*}(\un{\dim}_{f})$.

(b) If $f$ is weakly equidimensional, then $g$ also is.
\end{Cor}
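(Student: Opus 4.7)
Part (b) follows immediately from (a): once the equality $\un{\dim}_g=\psi^*(\un{\dim}_f)$ is established, the pullback of a locally constant function along a continuous map is locally constant. So the work is in proving (a). My starting point is \rl{2outof3}(b) applied to the two equal compositions $f\circ\psi=\phi\circ g$, which gives, for every $x'\in X'$,
\[
\un{\dim}_\psi(x')+\un{\dim}_f(\psi(x'))=\un{\dim}_g(x')+\un{\dim}_\phi(g(x')).
\]
Thus (a) is equivalent to the pointwise identity $\un{\dim}_\psi(x')=\un{\dim}_\phi(g(x'))$, together with the trivial case where I read off $\un{\dim}_g$ directly.

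Next I exploit the Cartesian property. Writing $x:=\psi(x')$, $y:=f(x)=\phi(g(x'))$ and $y':=g(x')$, there is a canonical isomorphism of scheme-theoretic fibers
\[
\psi^{-1}(x)\;\cong\;\phi^{-1}(y)\times_{\Spec\kappa(y)}\Spec\kappa(x)
\]
sending $x'$ to $g(x')$, and analogously
\[
g^{-1}(y')\;\cong\;f^{-1}(y)\times_{\Spec\kappa(y)}\Spec\kappa(y')
\]
sending $x'$ to $x$. Since the pointwise dimension of a finite-type scheme over a field is invariant under base change by an arbitrary field extension (a classical fact, e.g.\ \cite[0B2J]{Sta}), corresponding points in each of these isomorphisms have equal local dimension on the two sides.

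In Case 1 ($\phi$ and $\psi$ open), \rl{open} gives $\un{\dim}_\psi(x')=\dim_{x'}\psi^{-1}(x)$ and $\un{\dim}_\phi(g(x'))=\dim_{g(x')}\phi^{-1}(y)$, and the first fiber identification equates these, yielding (a). In Case 2 ($f$ and $g$ open), \rl{open} applied to $f$ and $g$ instead gives $\un{\dim}_g(x')=\dim_{x'}g^{-1}(y')=\dim_{x}f^{-1}(y)=\un{\dim}_f(x)=\psi^*(\un{\dim}_f)(x')$ via the second fiber identification, proving (a) in one line. The only nontrivial input beyond \rl{open} and \rl{2outof3} is the base-change invariance of pointwise dimension along field extensions; this is the mild technical step, and everything else is formal manipulation of the diagram.
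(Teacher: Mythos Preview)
Your proof is correct and uses essentially the same ingredients as the paper: the fiber identifications coming from the Cartesian square, \rl{open} to pass between $\un{\dim}$ and local fiber dimension, and the additivity formula from \rl{2outof3}(b). The only organizational difference is that the paper proves the $f,g$-open case first and then derives the $\phi,\psi$-open case by applying that result to the transposed Cartesian square (obtaining $\un{\dim}_\psi=g^*(\un{\dim}_\phi)$) together with additivity, whereas you handle both cases by a direct fiber computation; the underlying argument is the same.
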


\begin{proof}
(a) For every $x'\in X'$, we set $x:=\psi (x')\in X$, $y':=g(x')\in Y'$ and $y=f(x)=\phi(y')\in Y$.
We want to show that $\un{\dim}_{g}(x')=\un{\dim}_{f}(x)$.

When $f$ and $g$ are open, then we have to show the equality
$\dim_{x'}g^{-1}(y')=\dim_{x}f^{-1}(y)$ (by \rl{open}). Since diagram \form{leq} is Cartesian, $\psi$ induces an isomorphism $g^{-1}(y')\simeq f^{-1}(y)\times_y y'$, which implies the required equality of dimensions.

Assume now that $\phi$ and $\psi$ are open. Then, by the proven above, we have
$\un{\dim}_{\psi}=g^*(\un{\dim}_{\phi})$. On the other hand, using \rl{2outof3}(b) for $f\circ \psi=\phi\circ g$, we conclude that
\[
\un{\dim}_{\psi} + \psi^{*}(\un{\dim}_{f})=\un{\dim}_{g}+g^*(\un{\dim}_{\phi}),\text{ hence }
\un{\dim}_{\psi}-g^*(\un{\dim}_{\phi})= \un{\dim}_{g}-\psi^{*}(\un{\dim}_{f}).
\]
Since the LHS of the latter expression vanishes by the proven above, the RHS vanishes as well.

(b) The assertion follows immediately from (a).
\end{proof}

\begin{Emp} \label{E:rempullback}
{\bf Notation.} Since diagram \form{leq} is Cartesian, we can denote morphism $g$ by $\phi^*(f)$, and dimension function
$\psi^*(\un{\dim}_f):X'\to \B{Z}$ by $\phi^*(\un{\dim}_f)$. Therefore the equality of \rco{pullback}(a)
can we rewritten as $\phi^*(\un{\dim}_f)=\un{\dim}_{\phi^*(f)}$.
\end{Emp}

\begin{Cor} \label{C:leqop}
The class of universally open equidimensional morphisms is closed under compositions and base change.
\end{Cor}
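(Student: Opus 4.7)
The plan is to reduce both closure properties to facts already established: universal openness behaves well under composition and pullback essentially by definition, so the real content is to track the dimension functions via Lemma \ref{L:2outof3}(b) and Corollary \ref{C:pullback}(a), and then invoke Corollary \ref{C:equidim} to upgrade \emph{weakly} equidimensional $+$ open to equidimensional.

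For stability under base change, suppose $f:X\to Y$ is universally open and equidimensional, and let $\phi:Y'\to Y$ be an arbitrary morphism of algebraic spaces of finite type over $k$; set $g:=\phi^{*}(f):X'\to Y'$ and $\psi:X'\to X$ the second projection. Universal openness is preserved under pullback by the very definition of "universally open". To see that $g$ is equidimensional, I apply Corollary \ref{C:pullback}(a) (in its second case, since $\phi$ and $\psi$ are the open pair coming from the universal openness of $f$) to get $\un{\dim}_{g}=\psi^{*}(\un{\dim}_{f})$. Since $f$ is weakly equidimensional, $\un{\dim}_{f}$ is locally constant, hence so is its pullback $\psi^{*}(\un{\dim}_{f})$, and therefore $g$ is weakly equidimensional. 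As $g$ is also open, Corollary \ref{C:equidim} promotes it to equidimensional.

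For stability under composition, let $f:X\to Y$ and $g:Y\to Z$ both be universally open and equidimensional. The composition $g\circ f$ is universally open because the composition of open maps is open and pullback commutes with composition. For the dimension function, Lemma \ref{L:2outof3}(b) gives
\[
\un{\dim}_{g\circ f}=\un{\dim}_{f}+f^{*}(\un{\dim}_{g}).
\]
Since $\un{\dim}_{f}$ is locally constant by the weak equidimensionality of $f$, and $f^{*}(\un{\dim}_{g})$ is locally constant because it is the pullback by the continuous map $f$ of the locally constant function $\un{\dim}_{g}$, the sum is locally constant; thus $g\circ f$ is weakly equidimensional. Applying Corollary \ref{C:equidim} once more, openness upgrades this to equidimensionality.

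No genuine obstacle arises here: the whole argument is bookkeeping with the formulas $\un{\dim}_{g\circ f}=\un{\dim}_{f}+f^{*}(\un{\dim}_{g})$ and $\un{\dim}_{\phi^{*}(f)}=\psi^{*}(\un{\dim}_{f})$ together with the fact that local constancy is preserved by sums and by pullback along continuous maps. The only subtle point, which is already handled by Corollary \ref{C:equidim}, is that without the openness hypothesis one would only obtain weak equidimensionality, not the stronger pointwise fiber-dimension equality required by Definition \ref{E:locdim}(c).
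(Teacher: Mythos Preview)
Your approach is essentially identical to the paper's, which for composition cites Lemma~\ref{L:open} together with Lemma~\ref{L:2outof3}(c), and for base change cites Corollary~\ref{C:pullback}(b); you have simply unpacked these references.

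One small slip to fix: in the base-change argument you invoke Corollary~\ref{C:pullback}(a) ``in its second case, since $\phi$ and $\psi$ are the open pair coming from the universal openness of $f$.'' This is not right: $\phi:Y'\to Y$ is an \emph{arbitrary} morphism and need not be open, and universal openness of $f$ says nothing about $\phi$. What universal openness of $f$ does give you is that the pullback $g=\phi^{*}(f)$ is open, so you are actually in the \emph{first} case of Corollary~\ref{C:pullback} (the one where $f$ and $g$ are open). The conclusion $\un{\dim}_{g}=\psi^{*}(\un{\dim}_{f})$ is therefore still valid, and the rest of your argument goes through unchanged once this justification is corrected.
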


\begin{proof}
While the first assertion follows from \rl{open} and \rl{2outof3}(c), the second one follows from \rco{pullback}(b).
\end{proof}

\begin{Cor} \label{C:equidim2}
Let $X\overset{f}{\to}Y\overset{g}{\to}Z$ be morphisms of algebraic spaces of finite type over $k$.

(a) If $f$ and $g$ are equidimensional, then so is $g\circ f$.

(b) Assume that $f$ is open surjective. If  $f$ and $g\circ f$ are equidimensional, then so is $g$.
\end{Cor}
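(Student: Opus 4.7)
The plan is to derive both parts by combining the additivity formula of \rl{2outof3}(b), the fundamental inequality of \rl{open} (which becomes an equality when the map is open), and by analyzing the restriction of $f$ to the preimage of a fiber of $g$.

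For (a), the weak equidimensionality of $g\circ f$ is immediate from \rl{2outof3}(c), since $g$ is weakly equidimensional and so is $f$. For the fiber-dimension equality, I would fix $x\in X$, set $y:=f(x)$, $z:=g(y)$ and $W:=(g\circ f)^{-1}(z)=f^{-1}(g^{-1}(z))$. Additivity together with the equidimensionality hypotheses on $f$ and $g$ yields
\[
\un{\dim}_{g\circ f}(x)=\un{\dim}_f(x)+\un{\dim}_g(y)=\dim_x f^{-1}(y)+\dim_y g^{-1}(z).
\]
On the other hand, \rl{open} applied to the restriction $f|_W\colon W\to g^{-1}(z)$, whose fiber over $y$ is still $f^{-1}(y)$, gives $\dim_x W-\dim_y g^{-1}(z)\le \dim_x f^{-1}(y)$, so $\dim_x W\le \un{\dim}_{g\circ f}(x)$. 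Combined with the reverse inequality $\un{\dim}_{g\circ f}(x)\le \dim_x W$ coming from \rl{open} applied to $g\circ f$ itself, we obtain $\un{\dim}_{g\circ f}(x)=\dim_x W$, as required.

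For (b), \rl{2outof3}(d) applied to $f$ open surjective and $f$, $g\circ f$ weakly equidimensional gives that $g$ is weakly equidimensional. To verify the fiber-dimension equality for $g$, I would fix $y\in Y$, set $z:=g(y)$, and choose any $x\in f^{-1}(y)$ (possible by surjectivity). Additivity and the equidimensionality of $f$ and $g\circ f$ yield
\[
\un{\dim}_g(y)=\un{\dim}_{g\circ f}(x)-\un{\dim}_f(x)=\dim_x W-\dim_x f^{-1}(y),
\]
with $W:=(g\circ f)^{-1}(z)$ as above. The decisive observation is that the restriction $f|_W\colon W\to g^{-1}(z)$ remains open: if $U\subseteq W$ is open, write $U=U'\cap W$ with $U'\subseteq X$ open; then $f|_W(U)=f(U')\cap g^{-1}(z)$, which is open in $g^{-1}(z)$ since $f(U')$ is open in $Y$. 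The equality case of \rl{open} then applies to $f|_W$ and gives $\dim_x W-\dim_y g^{-1}(z)=\dim_x f^{-1}(y)$, so $\un{\dim}_g(y)=\dim_y g^{-1}(z)$.

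The only nontrivial step, and really the heart of the argument, is the openness of the restriction $f|_W$ in (b), which is what allows us to upgrade the one-sided inequality of \rl{open} to an equality; the rest is a purely formal chain of additivity identities and inequalities.
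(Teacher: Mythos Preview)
Your proof is correct and follows essentially the same route as the paper's: both parts rest on applying \rl{open} to the restriction $f_z=f|_W\colon (g\circ f)^{-1}(z)\to g^{-1}(z)$, combined with the additivity formula \rl{2outof3}(b) and the weak-equidimensionality statements \rl{2outof3}(c),(d). The one place where you are actually more careful than the paper is in part (b): the paper asserts that $f_z$ is open ``by \rco{pullback}(a)'', which is a bit oblique, whereas you verify directly that $f(U'\cap W)=f(U')\cap g^{-1}(z)$ for $U'\subseteq X$ open, which is exactly the elementary fact that an open map restricted to the preimage of a subset stays open.
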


\begin{proof}
For $x\in X$, we set $y:=f(x)\in Y$ and $z:=g(y)\in Z$, and let $f_z:(g\circ f)^{-1}(z)\to g^{-1}(z)$ be the restriction of $f$.

(a) Since $f$ and $g$ are equidimensional, we conclude from \rl{open} applied to $f_z$ that for every $x\in X$ we have inequality
\[
\dim_x (g\circ f)^{-1}(z)\leq\dim_x f^{-1}(y)+\dim_y(g^{-1}(z))=\un{\dim}_f(x)+\un{\dim}_g(y)=\un{\dim}_{g\circ f}(x).
\]
Now the assertion follows from \rl{2outof3}(c) and \rl{open}.

(b) Since $f$ is open equidimensional, its restriction $f_z$ is such as well (by \rco{pullback}(a)). Therefore we have equality
$\dim_y g^{-1}(z)=\dim_x (g\circ f)^{-1}(z)-\dim_x f^{-1}(y)$. This together with equidimensionality of $f$ and $g\circ f$ implies that
$\dim_y g^{-1}(z)$ equals $\un{\dim}_{g\circ f}(x)-\un{\dim}_f(x)=\un{\dim}_g(y)$.
Now the assertion follows from \rl{2outof3}(d).
\end{proof}

\begin{Emp} \label{E:smoothflat}
{\bf Examples.} (a) A flat morphism between algebraic spaces of finite type is automatically universally open, but not necessarily weakly  equidimensional. For example, consider the projection $X\to\pt$ from a not locally equidimensional algebraic space.

(b) Any smooth morphism $f:X\to Y$ between algebraic spaces of finite type is uo-equidimensional. Indeed, for every connected component $X'\subset X$, the restriction $f|_{X'}:X'\to Y$ is smooth of some relative dimension $n$. Then the restriction $\un{\dim}_f|_{X'}$ is a constant function with value $n$, thus $f$ is weakly equidimensional.
\end{Emp}





\subsection{Dimension function for fp-morphisms}
In this subsection we are going to define dimension function and introduce equidimensional morphisms and their variants first for  fp-morphisms of $0$-placid affine schemes, and then for locally fp-representable morphisms of placid $\infty$-stacks.

\begin{Emp} \label{E:topsp}
{\bf Underlying topological space.}
(a) Recall that to every $\infty$-stack $\C{X}\in\St_k$, one associates the underlying topological space $[\C{X}]$ \label{N:[x]} such that

$\bullet$ the underlying set is defined to be the set of equivalent classes of pairs $(K,[z])$, where $K/k$ is a field extension,
$[z]\in \pi_0(\C{X}(K))$, and $(K',[z'])\sim (K'',[z''])$, if there exists a larger field
$K\supset K',K''$ such that $[z']$ and $[z'']$ have the same image in $\pi_0(\C{X}(K))$.

$\bullet$ a subset $U\subset [\cX]$ is open, if $U=[\cU]$ for some  open $\infty$-substack $\cU\subset\cX$.

(b) Clearly, every morphism $f:\cX\to \cY$ of $\infty$-stacks induces a continuous map $[f]:[\cX]\to[\cY]$
of topological spaces.

(c) We call a morphism $f:\cX\to\cY$ of $\infty$-stacks {\em open}\label{I:open morphism}, if the induced map $[f]$ is open. We call $f$ {\em universally open}\label{I:universally open morphism},
if every pullback $\cX\times_{\cY}\cZ\to\cZ$ of $f$ is open.

(d) Notice that if $X$ is a scheme or (more generally) an algebraic space, then $[X]$ is nothing else but the underlying topological space.
To simplify the notation, we will often denote the topological space $[\C{X}]$ simply by $\C{X}$ and
the map $[f]$ by $f$.
\end{Emp}

The following lemma summarizes basic properties of the above construction.

\begin{Lem} \label{L:topequiv}
(a) A subset $U\subset[\cX]$ is open if and only if for every morphism $f:X\to \cX$ from an affine scheme $X$, the preimage
$[f]^{-1}(U)\subset [X]$ is open.

(b) An \'etale representable morphism of $\infty$-stacks is open.

(c) For an epimorphism $f:\cX\to\cY$ of $\infty$-stacks, the morphism $[f]:[\cX]\to[\cY]$ is {\em submersive}, that is,
$[f]$ is surjective, and a subset $U\subset[\cY]$ is open if and only if $[f]^{-1}(U)\subset[\cX]$ is open.

(d) For every topological equivalence $f:\cX\to\cY$, the induced map $[f]:[\cX]\to [\cY]$ is a homeomorphism. In particular, for every  $\infty$-stack $\cX$, the map $[\pi_{\cX}]:[\cX_{\red}]\to [\cX]$ is a homeomorphism.

(e) A morphism $f:\cX\to\cY$ of $\infty$-stacks is universally open if and only if the pullback $\cX\times_{\cY}Y\to Y$ is open for every
morphism $Y\to\cY$ from an affine scheme $Y$.
\end{Lem}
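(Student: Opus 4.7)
\textbf{Proof plan for Lemma \ref{L:topequiv}.}

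For part (a), the forward implication is immediate from the definition of $[\cX]$: if $U = [\cU]$ for an open substack $\cU \hookrightarrow \cX$, then for any affine test $f: X \to \cX$ the pullback $\cU \times_{\cX} X$ is an open subscheme of $X$, and $[f]^{-1}(U)$ is exactly its underlying set. For the converse, I would construct $\cU$ by specifying, for each morphism $g: Y \to \cX$ from an affine scheme, the open subscheme $\cU \times_{\cX} Y \subset Y$ corresponding to the open subset $[g]^{-1}(U) \subset [Y]$. The compatibility with pullbacks, which is what is needed to assemble $\cU$ into a substack, follows since open subschemes are determined by their underlying topological subspaces and the assignment $Y \mapsto [Y]$ sends Cartesian squares of schemes to Cartesian squares of topological spaces. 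Part (b) then follows from (a): given $V = [\cV] \subset [\cX]$ open and an affine test $h: Y \to \cY$, the pullback $\cX \times_{\cY} Y$ is an algebraic space \'etale over $Y$, on which $\cV$ pulls back to an open; the image under an \'etale map of algebraic spaces is open classically, and this image coincides with $[h]^{-1}([f](V))$.

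For part (c), surjectivity of $[f]$ follows because any $y \in [\cY]$ is represented by some $z: \Spec K \to \cY$, and by the defining property of an epimorphism of sheaves there exists an \'etale cover $\Spec L \to \Spec K$ (necessarily with $L/K$ a separable field extension) such that the composition factors through $\cX$, yielding a preimage of $y$. The submersive property is the key additional content: continuity gives one direction, so suppose $[f]^{-1}(U) \subset [\cX]$ is open; to show $U$ is open in $[\cY]$ I would apply the criterion from (a), taking an arbitrary affine $h: Y \to \cY$ and choosing an affine \'etale cover $Y' \to Y$ over which $h$ lifts to $\cX$. Then the preimage of $U$ in $[Y']$ is open by (a) applied to $\cX$, and openness descends along the \'etale surjection $[Y'] \to [Y]$ because \'etale surjections of schemes are submersive on underlying topological spaces.

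For part (d), it suffices to show the special case $[\pi_{\cX}]: [\cX_{\red}] \to [\cX]$ is a homeomorphism, since every topological equivalence fits into a commutative square with these reduction maps and the isomorphism $f_{\red}$. Bijectivity on points holds because a point $\Spec K \to \cX$ with $K$ a field factors uniquely through $\cX_{\red}$ by the defining adjunction between $\iota_!$ and $\iota^*$. The homeomorphism then follows from (a) together with \rl{redclass}(a): for any affine $Y \to \cX$ the pullback $Y \times_{\cX} \cX_{\red}$ identifies with $Y_{\red}$, whose underlying topological space is $[Y]$.

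For part (e), only the nontrivial direction needs proof: assuming openness is preserved by pullback to affine schemes mapping to $\cY$, I would verify it for an arbitrary test $\cZ \to \cY$. Given an open $V \subset [\cX \times_{\cY} \cZ]$ with image $W \subset [\cZ]$, I apply (a) to $W$: for each affine $Z \to \cZ$, the further pullback $\cX \times_{\cY} Z \to Z$ is open by hypothesis, and the preimage of $W$ in $[Z]$ is the image of $V$ pulled back to $[\cX \times_{\cY} Z]$, hence open. I expect the main obstacle to lie in part (c), where one must invoke the sheaf-theoretic meaning of epimorphism and reduce the submersive statement to a purely scheme-theoretic \'etale descent, threading it carefully through (a); the other parts are essentially formal consequences.
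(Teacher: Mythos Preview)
Your overall approach matches the paper's: everything is reduced to (a) and then to classical scheme-theoretic facts about \'etale maps and reductions, with (c) handled by lifting affine tests along an \'etale cover. The paper's proof is much terser (for instance, (d) is simply declared ``clear''), but the skeleton is the same.

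There is, however, a genuine error in your argument for (d). You assert that for any affine $Y \to \cX$ the pullback $Y \times_{\cX} \cX_{\red}$ identifies with $Y_{\red}$, invoking \rl{redclass}(a). But that lemma applies only to fp-smooth representable morphisms, and an arbitrary test map $Y \to \cX$ is neither. Worse, the claim itself is false: if $\cX$ is already reduced then $\cX_{\red} = \cX$, so $Y \times_{\cX} \cX_{\red} = Y$, which is not $Y_{\red}$ when $Y$ is non-reduced.

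The repair is easy and avoids the fiber product entirely. Once you know $[\pi_{\cX}]$ is a bijection, suppose $V \subset [\cX_{\red}]$ is open with image $U \subset [\cX]$. For any affine $f: Y \to \cX$, the composite $Y_{\red} \to Y \to \cX$ factors through some $g: Y_{\red} \to \cX_{\red}$ by the adjunction defining $\cX_{\red}$. Then $[g]^{-1}(V)$ is open in $[Y_{\red}] = [Y]$ by the easy direction of (a), and this set coincides with $[f]^{-1}(U)$ since the square commutes on points. Now (a) gives that $U$ is open.
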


\begin{proof}
(a) Clearly, if $\cU\subset\cX$ is an open $\infty$-substack, then for every morphism $f:X\to\cX$ from an affine scheme $X$ the preimage $[f]^{-1}([\cU])=[\cU\times_{\cX}X]\subset [X]$ is open.
Conversely, let $U\subset [\cX]$ be such that every $[f]^{-1}(U)\subset [X]$ is open. Define an
$\infty$-substack $\cU\subset\cX$ by the rule that for every affine scheme
$X$, the subspace $\cU(X)\subset\cX(X)=\Hom(X,\cX)$ consists of all morphisms $f:X\to \cX$ such that $[f]([X])\subset U$.
Unwinding the definitions, one gets that $\cU\subset\cX$ is an open $\infty$-substack, and $[\cU]=U$.

(b) follows from (a) and the corresponding assertion for schemes.

(c) Using (a), one reduces to the case when $\cY$ is an affine scheme $Y$. By assumption, there exists an \'etale covering $\pi:Y'\to Y$
and a morphism $s:Y'\to\cX$ such that $f\circ s=\pi$. Then the assertion follows from the fact that $\pi$ is open.

(d) It suffices to show the second assertion, but this is clear.

(e) follows from (a).
\end{proof}

The following observations will be used several times.

\begin{Emp} \label{E:obs}
{\bf Observations.}
(a) Let $f:X\to Y$ be an fp-morphism of affine schemes. Then there exists a Cartesian diagram $D_{\al}$
\[
\begin{CD}
X @>f>> Y\\
@V\pr_{\al}VV @VV\pr_{\al}V\\
X_{\al} @>f_{\al}>> Y_{\al},
\end{CD}
\]
where $X_{\al}$ and $Y_{\al}$ are affine schemes of finite type (see \cite[Tag 01ZM]{Sta}). Moreover, if $Y$ is $0$-placid, then the
projection $\pr_{\al}$ can be assumed to be strongly pro-smooth.

(b) Let $f:\cX\to\cY$ be a locally fp-representable morphism of placid $\infty$-stacks.  Then

\quad(i) for every smooth morphism $\phi:Y\to\cY$ from a $0$-placid affine scheme $Y$, the pullback $\cX\times_{\cY}Y$ is an algebraic space,
locally fp over $Y$;

\quad(ii) for every \'etale morphism $\psi: X\to \cX\times_{\cY}Y$ from a $0$-placid affine scheme $X$, the composition  $g:X\to \cX\times_{\cY}Y\to Y$ is an fp-morphism of $0$-placid affine schemes.

\quad(iii) The composition $\varphi:X\to \cX\times_{\cY}Y\to \cX$ is smooth (see \re{propplst}(c)).

\end{Emp}


\begin{Lem} \label{L:dim0pl}
(a) For every fp-morphism $f:X\to Y$ of $0$-placid affine schemes, there exists a unique function
$\un{\dim}_f:X\to\B{Z}$\label{N:dimf1} such that for every Cartesian diagram $D_{\al}$ as in \re{obs}(a), we have an equality
$\un{\dim}_f=\pr_{\al}^*(\un{\dim}_{f_{\al}})$.

(b)  The dimension function of (a) is {\em additive}, that is, for every diagram $X\overset{f}{\to} Y\overset{g}{\to} Z$, we have an equality $\un{\dim}_{gf}=\un{\dim}_f+f^*(\un{\dim}_g)$.

(c) If $f:X\to Y$ is an fp-\'etale or an fp-universal homeomorphism,
then $\un{\dim}_f=0$.

(d) For every Cartesian diagram \form{leq} such that $f$ is an fp-morphism of $0$-placid schemes and $\phi$ is strongly pro-smooth,
we have an equality $\un{\dim}_{g}=\psi^*(\un{\dim}_{f})$.
\end{Lem}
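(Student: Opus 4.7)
The plan is to treat the four parts in order, with the crux being well-definedness in (a); the other parts reduce routinely to the finite-type setting via the same refinement machinery.

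For (a), uniqueness is immediate from the defining formula, so the content is to show that $\pr_\al^*(\un{\dim}_{f_\al}) = \pr_\beta^*(\un{\dim}_{f_\beta})$ on $X$ for any two Cartesian diagrams $D_\al, D_\beta$ presenting $f$ as a pullback of finite-type morphisms. The key step is to compare two such presentations through a common refinement. Fixing any placid presentation $Y\simeq\lim_\gamma Y_\gamma$, I would invoke \rco{indep} to produce a single $\gamma$ such that both projections $Y\to Y_\al$ and $Y\to Y_\beta$ factor via smooth maps $Y_\gamma\to Y_\al$ and $Y_\gamma\to Y_\beta$. Pulling $f_\al$ and $f_\beta$ back along these smooth maps yields finite-type morphisms $X_{\gamma,\al}\to Y_\gamma$ and $X_{\gamma,\beta}\to Y_\gamma$ with the same base change to $Y$; by standard limit theorems (after possibly enlarging $\gamma$) they are canonically isomorphic over $Y_\gamma$. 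Two applications of \rco{pullback}(a) to the resulting smooth (hence universally open) Cartesian squares collapse both expressions to $\pr_\gamma^*(\un{\dim}_{f_\gamma})$, giving the required equality.

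For (b), I would use the same refinement strategy: realize $g$ as a pullback of a finite-type $g_\al:Y_\al\to Z_\al$ and $f$ as a pullback of a finite-type $f_\beta:X_\beta\to Y_\beta'$, then use \rco{indep} to find a single $\gamma$ and smooth factorizations through $Y_\gamma$. Pulling both back to $Y_\gamma$ produces a composable pair $X_\gamma\to Y_\gamma\to Z_\al$ of finite-type morphisms whose base change along $Y\to Y_\gamma$ is $X\to Y\to Z$. Additivity at finite type (\rl{2outof3}(b)) followed by pullback to $X$ and an application of (a) transports the identity $\un{\dim}_{g_\gamma\circ f_\gamma}=\un{\dim}_{f_\gamma}+f_\gamma^*(\un{\dim}_{g_\gamma})$ to the required additivity on $X$.

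Part (c) follows because both being \'etale and being a universal homeomorphism descend from $f$ to the finite-type approximation $f_\al$ for some sufficiently large $\al$ (for \'etale this is standard limit descent; for universal homeomorphisms one uses the characterization as integral, surjective, and universally injective, each of which descends), and in either case $\un{\dim}_{f_\al}\equiv 0$ in the finite-type setting. Part (d) is almost tautological from (a): choosing a presentation $f\simeq f_\al\times_{Y_\al}Y$ with $Y\to Y_\al$ strongly pro-smooth, the composition $Y'\to Y\to Y_\al$ is again strongly pro-smooth by \re{placidprop}, so $g\simeq f_\al\times_{Y_\al}Y'$ is itself a legitimate Cartesian presentation; the formula $\un{\dim}_g=\psi^*(\un{\dim}_f)$ then drops out of transitivity of pullback applied to the factorization $X'\xrightarrow{\psi}X\to X_\al$. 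The main obstacle is (a): tracking the dependence on the choice of finite-type approximation, which is exactly what \rco{indep} and \rco{pullback}(a) are designed to handle.
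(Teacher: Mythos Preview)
Your proposal is correct and follows essentially the same approach as the paper: for (a) you dominate any two presentations by a common refinement with smooth transition maps (the paper phrases this via the canonical placid presentation \re{canpres} rather than directly via \rco{indep}, but the content is identical) and then invoke \rco{pullback}; parts (b)--(d) likewise reduce to the finite-type case exactly as the paper does. The only cosmetic difference is that the paper is terser, citing \rl{2outof3}(b) for (b) and stating the descent of \'etale/universal homeomorphism without elaboration for (c).
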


\begin{proof}
(a) As the uniqueness is clear, it suffices to show the existence. In other words, we have to show that the pullback
$\pr_{\al}^*(\un{\dim}_{f_{\al}})$ is independent of the Cartesian diagram $D_{\al}$. Notice that using observation \re{canpres} and \cite[Tag 01ZM]{Sta}, any pair of Cartesian diagrams $D_{\al}$ and $D_{\beta}$ are dominated by a third Cartesian diagram $D_{\g}$ such that  the induced projections $Y_{\g}\to Y_{\al}$ and $Y_{\g}\to Y_{\beta}$ are smooth. Now the independence assertion follows from \rco{pullback}.

(b) follows from the corresponding property \rl{2outof3}(b) of schemes of finite type.

(c) follows from the fact that $f$ is a pullback of an \'etale morphism or a universal homeomorphism
of affine schemes of finite type.

(d) follows from the fact that the composition $\pr_{\al}\circ \phi:Y'\to Y\to Y_{\al}$ is strongly pro-smooth (see \re{placidprop}) and the characterization of the dimension function $\un{\dim}_g$, given in (a).
\end{proof}

\begin{Lem} \label{L:dimstacks}
(a) For every locally fp-representable morphism $f:\C{X}\to \C{Y}$ between placid $\infty$-stacks there exists a unique function
$\un{\dim}_f:[\C{X}]\to\B{Z}$\label{N:dimf2} such that in the situation of \re{obs}(b), for every two morphisms $\phi$ and $\psi$,
the pullback  $\varphi^*(\un{\dim}_f)$ equals $\un{\dim}_{g}$, defined in \rl{dim0pl}(a).

(b) The dimension function from (a) is additive.

(c) For every placid $\infty$-stack $\cX$, the canonical map $\pi_{\cX}:\cX_{\red}\to\cX$ is an fp-closed embedding, and
$\un{\dim}_{\pi_{\cX}}=0$.
\end{Lem}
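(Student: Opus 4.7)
The plan for (a) is to define $\un{\dim}_f$ by descent from a smooth chart. First I would use \re{propplst}(a) and \rl{glplacid}(c) to choose a smooth covering $\phi:Y\to\cY$ with $Y$ a $0$-placid affine scheme, together with an \'etale covering $\psi:X\to\cX\times_\cY Y$ with $X$ a $0$-placid affine scheme; this yields an fp-morphism $g:X\to Y$ between $0$-placid schemes (for which $\un{\dim}_g$ is already defined via \rl{dim0pl}(a)) and a smooth covering $\varphi:X\to\cX$, whose induced $[\varphi]:[X]\to[\cX]$ is surjective by \rl{topequiv}(c). The function $\un{\dim}_f$ will then be determined by the requirement $[\varphi]^{*}\un{\dim}_f=\un{\dim}_g$, so uniqueness is immediate; the existence and the chart-independence I will prove together.

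The hard part is the following comparison. Given two such pairs $(Y_i,X_i)$, $i=1,2$, I take a smooth covering $Y'\to Y_1\times_\cY Y_2$ by a $0$-placid affine scheme and, using \re{propplst}(e), refine so that each projection $q_i:Y'\to Y_i$ is strongly pro-smooth. I then take an \'etale covering $X'\to (X_1\times_{Y_1}Y')\times_{\cX\times_\cY Y'}(X_2\times_{Y_2}Y')$ by a $0$-placid affine scheme $X'$, yielding an fp-morphism $g':X'\to Y'$. Each factorization $X'\to X_i\times_{Y_i}Y'\to X_i$ consists of an \'etale map followed by the base change of the strongly pro-smooth $q_i$: applying \rl{dim0pl}(d) to the Cartesian square $X_i\times_{Y_i}Y'\to Y'$ above $g_i$, and then \rl{dim0pl}(b)--(c) for the \'etale refinement, gives $\un{\dim}_{g'}=(X'\to X_i)^{*}(\un{\dim}_{g_i})$ on $[X']$ for both $i$. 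Specializing to $(Y_1,X_1)=(Y_2,X_2)$ and using that any two points of $[X]$ above the same point of $[\cX]$ lift, after enlarging the residue field, to a common point of $[X\times_\cX X]$ (and hence to $[X']$ by surjectivity of the smooth covering) shows that $\un{\dim}_g$ descends to $[\cX]$; the general case gives independence of $\un{\dim}_f$ from the chart.

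For (b), I would use a tower of compatible charts: a smooth covering $Z\to\cZ$ by a $0$-placid affine scheme, then $Y\to\cY\times_\cZ Z$ \'etale by a $0$-placid affine scheme, then $X\to\cX\times_\cY Y$ \'etale by a $0$-placid affine scheme. This produces fp-morphisms $h_1:X\to Y$, $h_2:Y\to Z$ and their composition, whose dimension functions pull back from $\un{\dim}_{f_1}$, $\un{\dim}_{f_2}$ and $\un{\dim}_{f_2\circ f_1}$ respectively by (a); the additivity identity at the chart level from \rl{dim0pl}(b) then descends, via the surjective $[X]\to[\cX]$, to the required additivity on $[\cX]$.

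For (c), that $\pi_\cX$ is an fp-closed embedding is \rco{red}(b). For the vanishing of $\un{\dim}_{\pi_\cX}$, I apply (a) with a smooth covering $\phi:Y\to\cX$ by a $0$-placid affine scheme: by \rl{reduction}, $\cX_{\red}\times_\cX Y\simeq Y_{\red}$, and since $Y_{\red}\to Y$ is itself an fp-closed embedding of $0$-placid affine schemes by \rco{red}(b), I may take $\psi=\id_{Y_{\red}}$, so that the associated $g$ is just the inclusion $Y_{\red}\hookrightarrow Y$. This is an fp-universal homeomorphism between $0$-placid schemes, so \rl{dim0pl}(c) gives $\un{\dim}_{Y_{\red}\to Y}=0$, whence $\un{\dim}_{\pi_\cX}=0$ by the defining property established in (a).
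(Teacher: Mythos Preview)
Your proposal is correct and follows essentially the same approach as the paper's proof. Both arguments define $\un{\dim}_f$ by descent from smooth charts, establish well-definedness by dominating two charts $(Y_i,X_i)$ via a common refinement $Y'$ with strongly pro-smooth projections to each $Y_i$ (iterating \re{propplst}(e)), and then apply \rl{dim0pl}(b)--(d) to compare; parts (b) and (c) likewise reduce to \rl{dim0pl} via compatible charts and \rl{reduction} in the same way.
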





\begin{proof}
(a) Since a placid $\infty$-stack $\cY$ has a smooth covering $\{\phi:Y\to\cY\}_{\phi}$ by $0$-placid affine schemes $Y$, while each algebraic space  $\cX\times_{\cY}Y$ has an \'etale covering
 $\{\psi:X\to\cX\times_{\cY}Y\}_{\psi}$ by $0$-placid affine schemes $X$, we conclude that
morphisms $\{\varphi:X\to \cX\}_{\phi,\psi}$ form a covering of $\cX$. Therefore there exists at most one function $\un{\dim}_f:[\C{X}]\to\B{Z}$ such that for every pair $(\phi,\psi)$ as in \re{obs}(b), we have an equality
$\varphi^*(\un{\dim}_f)=\un{\dim}_{g}$. Explicitly, for every $\un{x}\in[\cX]$ there exists a pair $(\phi,\psi)$ and a point $x\in X$ such that $\varphi(x)=\un{x}$, and we have an equality
$\un{\dim}_f(\un{x})=\un{\dim}_{g}(x)$.

To show the existence, we have to show that the expression $\un{\dim}_{g}(x)$ is independent of all choices. Namely, we have to show that for every two triples $(\phi',\psi',x')$ and $(\phi'',\psi'',x'')$ as above, we have an equality $\un{\dim}_{g'}(x')=\un{\dim}_{g''}(x'')$. First we claim that these two triples are dominated by a third triple $(\phi,\psi,x)$ such that the induced morphisms $Y\to Y'$ and $Y\to Y''$ are strongly pro-smooth.

Indeed, since $\varphi'(x')=\un{x}=\varphi''(x'')$, the pair $(x',x'')$ gives rise to a point $\wt{x}$ of $\wt{X}:=X'\times_{\cX}X''$, whose image in $\wt{Y}:=Y'\times_{\cY}Y''$ we denote by $\wt{y}$. Then $\wt{Y}$ is a placid $\infty$-stack (use \re{propplst}(c),(d)), hence there exists a smooth covering $Y\to\wt{Y}$ of $\wt{y}$, where $Y$ is a $0$-placid affine scheme. Then both projections $p':Y\to Y'$ and $p'':Y\to Y''$ are smooth, thus we can replace $Y$ by its smooth covering such that both $p'$ and $p''$ are strongly pro-smooth (by \re{propplst}(e)). We denote by $\phi$ the induced morphism $Y\to\cY$.

Next, notice that since $\psi'$ and $\psi''$ are \'etale schematic morphisms, their fiber product $\wt{X}\to \cX\times_{\cY}\wt{Y}$ and hence its pullback $\wt{X}\times_{\wt{Y}}Y \to \cX\times_{\cY}Y$ are also such. Since the projection $\cX\times_{\cY}Y\to Y$ is locally fp-proper,
we conclude that $\wt{X}\times_{\wt{Y}}Y$ is a placid algebraic space (by \rl{glplacid}(c)).
Moreover, projection  $\wt{X}\times_{\wt{Y}}Y\to \wt{X}$ is a covering of $\wt{x}$ (because $Y\to \wt{Y}$ is a covering of $\wt{y}$), thus there exists an \'etale morphism $X\to\wt{X}\times_{\wt{Y}}Y$,  where $X$ is a $0$-placid affine scheme, and a point $x\in X$ such that $\wt{x}\in\wt{X}$ is the image of $x$ under the composition $X\to \wt{X}\times_{\wt{Y}}Y\to \wt{X}$. Finally, we set $\psi$ to be the composition $X\to \wt{X}\times_{\wt{Y}}Y \to \cX\times_{\cY}Y$.

It suffices to show the equality $\un{\dim}_{g'}(x')=\un{\dim}_{g}(x)=\un{\dim}_{g''}(x'')$. To show the first equality, notice that we have a commutative diagram
\[
\begin{CD}
X @>g>> Y\\
@Vq'VV @VVp'V \\
X' @>g'>> Y',
\end{CD}
\]
where $p'$ is strongly pro-smooth, $q'(x)=x'$, and the induced map $\wt{\psi}:X\to X'\times_{Y'}Y$ is \'etale.

It suffices to show the equality
$q'^*(\un{\dim}_{g'})=\un{\dim}_{g}$. But this follows from the fact $\un{\dim}_{g'}$
commutes with strongly pro-smooth pullbacks (see \rl{dim0pl}(d)), dimension function is additive (by \rl{dim0pl}(b)) and
$\un{\dim}_{\wt{\psi}}=0$ (by \rl{dim0pl}(c)).

(b) By (a), the assertion reduces to the corresponding assertion for fp-morphisms of $0$-placid affine schemes, so
the assertion follows from \rl{dim0pl}(b).

(c) The first assertion follows from \rco{red}(b). Next, by the characterization of (a) and \rl{reduction}, the assertion reduces to
the corresponding assertion for $0$-placid affine schemes, so the assertion follows from the second assertion in \rl{dim0pl}(c).
\end{proof}

\begin{Emp} \label{E:eqmor}
{\bf Equidimensional morphisms.}
A locally fp-representable morphism $f:\cX\to\cY$
of placid $\infty$-stacks is called

$\bullet$ {\em weakly equidimensional (of relative dimension $d$)}\label{I:we}, if the dimension function
$\un{\dim}_f:[\cX]\to\B{Z}$ from \rl{dimstacks}(a) is locally constant (constant with value $d$);

$\bullet$ {\em equidimensional}\label{I:eq}, if it is weakly equidimensional and satisfies
 $\un{\dim}_{f}(x)=\dim_x f^{-1}(f(x))$  for every $x\in [\cX]$;

$\bullet$ {\em uo-equidimensional}\label{I:uoeq}, if it weakly equidimensional and universally open (see \re{topsp}(c)).
\end{Emp}

\begin{Emp} \label{E:exfintype}
{\bf Examples.} Let $f:X\to Y$ be a morphism of algebraic spaces of finite type. Then $f$ is an fp-representable morphism of
placid $\infty$-stacks (by \re{explinfst}(a)). Moreover, since an \'etale morphism of algebraic spaces of finite type is
equidimensional of relative dimension zero, one deduces that the dimension function $\un{\dim}_f$ of \rl{dimstacks}(a) coincides with the dimension function $\un{\dim}_f$ of \re{locdim}(a). Therefore $f$ is (weakly) equidimensional in the sense of \re{eqmor} if and only if it is (weakly) equidimensional in the sense of \re{locdim}(b),(c).
\end{Emp}

\begin{Cor} \label{C:dimred}
Let $f:\C{X}\to \C{Y}$ be a locally fp-representable morphism of placid $\infty$-stacks. Then the restriction
$f|_{\C{X}_{\red}}:\cX_{\red}\to\cY$ and the reduction $f_{\red}:\C{X}_{\red}\to \C{Y}_{\red}$ are locally fp-representable morphisms of placid $\infty$-stacks, and we have equalities $\un{\dim}_{f|_{\C{X}_{\red}}}= \un{\dim}_{f_{\red}}=\pi^*_{\cX}(\un{\dim}_f)$.
In particular, $f$ is (weakly) equidimensional (of relative dimension $d$) if and only if $f_{\red}$ (resp. $f|_{\cX_{\red}}$) satisfies this property.
\end{Cor}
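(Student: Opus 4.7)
The plan is to reduce everything to the additivity of the dimension function (\rl{dimstacks}(b)) and the vanishing $\un{\dim}_{\pi_{\cZ}}=0$ for reduction maps (\rl{dimstacks}(c)).

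First I would handle $f|_{\cX_{\red}}=f\circ\pi_{\cX}$. By \rl{dimstacks}(c), $\pi_{\cX}$ is an fp-closed embedding and in particular locally fp-representable; since locally fp-representable morphisms are closed under composition (which is immediate from the definition and the fact that locally fp-morphisms of algebraic spaces compose), $f|_{\cX_{\red}}$ is locally fp-representable between placid $\infty$-stacks. Additivity gives
\[
\un{\dim}_{f|_{\cX_{\red}}}=\un{\dim}_{\pi_{\cX}}+\pi_{\cX}^{*}(\un{\dim}_{f})=\pi_{\cX}^{*}(\un{\dim}_{f}).
\]

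Next I would treat $f_{\red}:\cX_{\red}\to\cY_{\red}$. By \rco{red}(b), $\cY_{\red}$ is placid and $\pi_{\cY}:\cY_{\red}\hookrightarrow\cY$ is an fp-closed embedding, hence the pullback $p:\cX':=\cX\times_{\cY}\cY_{\red}\to\cX$ is an fp-closed embedding as well, and the projection $g:\cX'\to\cY_{\red}$ is locally fp-representable. By \re{red}(e) there is a canonical isomorphism $(\cX')_{\red}\simeq\cX_{\red}$, so the natural map $\cX_{\red}\to\cX'$ is nothing other than $\pi_{\cX'}$, and $f_{\red}$ factors as $\cX_{\red}\xrightarrow{\pi_{\cX'}}\cX'\xrightarrow{g}\cY_{\red}$, a composition of locally fp-representable morphisms between placid $\infty$-stacks.

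For the dimension identity, the key observation is that $\un{\dim}_{p}=0$. Indeed, the factorization $\pi_{\cX}=p\circ\pi_{\cX'}$ together with additivity gives
\[
0=\un{\dim}_{\pi_{\cX}}=\un{\dim}_{\pi_{\cX'}}+\pi_{\cX'}^{*}(\un{\dim}_{p})=\pi_{\cX'}^{*}(\un{\dim}_{p}),
\]
and since $[\pi_{\cX'}]:[\cX_{\red}]\to[\cX']$ is a homeomorphism (\rl{topequiv}(d)), this forces $\un{\dim}_{p}\equiv 0$. Applying additivity twice to the commutative square $\pi_{\cY}\circ g=f\circ p$ and to $f_{\red}=g\circ\pi_{\cX'}$, and using $\un{\dim}_{\pi_{\cY}}=\un{\dim}_{\pi_{\cX'}}=\un{\dim}_{p}=0$, yields
\[
\un{\dim}_{f_{\red}}=\un{\dim}_{g\circ\pi_{\cX'}}=\pi_{\cX'}^{*}(\un{\dim}_{g})=\pi_{\cX'}^{*}p^{*}(\un{\dim}_{f})=\pi_{\cX}^{*}(\un{\dim}_{f}).
\]

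Finally, for the last sentence, since $\pi_{\cX}$ is a homeomorphism on the underlying topological spaces, $\un{\dim}_{f}$ is (locally) constant with value $d$ if and only if $\pi_{\cX}^{*}(\un{\dim}_{f})$ is, which settles the weakly equidimensional assertion. For the equidimensional statement one moreover has to match the fibers: the fiber of $f|_{\cX_{\red}}$ (resp.\ of $f_{\red}$) over a point $\un{x}$ differs from the fiber of $f$ only by a reduction-type operation, which does not change dimensions at points; combining this with the equality of dimension functions just proven gives the equivalence. The only delicate point of the argument is the vanishing $\un{\dim}_{p}=0$ for the generally non-smooth map $p$, but as shown above it follows purely formally from additivity and the homeomorphism property of the reduction.
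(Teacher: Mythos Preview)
Your argument is correct and follows the same underlying idea as the paper: additivity of the dimension function (\rl{dimstacks}(b)) together with the vanishing $\un{\dim}_{\pi_{\cZ}}=0$ for reduction maps (\rl{dimstacks}(c)), and the homeomorphism property of $\pi_{\cX}$ (\rl{topequiv}(d)) for the final equivalence.

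However, your treatment of $\un{\dim}_{f_{\red}}$ is more elaborate than necessary. You go through the fiber product $\cX'=\cX\times_{\cY}\cY_{\red}$ and prove the auxiliary vanishing $\un{\dim}_p=0$, but once you have established that $f_{\red}$ is locally fp-representable (for which your factorization through $\cX'$ is indeed the natural argument), the dimension identity follows in one line from the \emph{other} factorization $f|_{\cX_{\red}}=\pi_{\cY}\circ f_{\red}$: additivity and $\un{\dim}_{\pi_{\cY}}=0$ give $\un{\dim}_{f|_{\cX_{\red}}}=\un{\dim}_{f_{\red}}$ directly. This is what the paper's terse ``follows from \rl{dimstacks}(b),(c)'' points to. Your route via $\un{\dim}_p=0$ is valid but circuitous; the paper exploits both factorizations $f\circ\pi_{\cX}=\pi_{\cY}\circ f_{\red}$ of the same map $f|_{\cX_{\red}}$ rather than working inside a single Cartesian square.
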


\begin{proof}
The first assertion follows from \rco{red}(b), the equality of dimension functions follows from \rl{dimstacks}(b),(c). The last assertion now follows from \rl{topequiv}(d).
\end{proof}


\begin{Lem} \label{L:dimweak}
(a)  Let $f:X\to Y$ be an fp-morphism of $0$-placid affine schemes. Then $f$ (weakly) equidimensional (resp. universally open) if and only if there exists a strongly pro-smooth morphism $\pr:Y\to Y_{\al}$ with $Y_{\al}\in\Affft_k$ and a (weakly) equidimensional (resp. universally open) morphism
$f_{\al}:X_{\al}\to Y_{\al}$ in $\Affft_k$ such that $f\simeq Y\times_{Y_{\al}}f_{\al}$ (see \re{obs}(a)).

(b) Let $f:\cX\to\cY$ be a locally fp-presentable morphism of placid $\infty$-stacks. Then $\cY$ is (weakly) equidimensional (resp. universally open) if and only if for every smooth morphism $\phi:Y\to\cY$ and \'etale morphism
$\psi:X\to \cX\times_{\cY}Y$ as in \re{obs}(b), the composition $g:X\to \cX\times_{\cY}Y\to Y$ is (weakly) equidimensional (resp. universally open).

\end{Lem}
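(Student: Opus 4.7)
For part (a), the "if" direction is immediate from \rl{dim0pl}: if $f \simeq Y \times_{Y_{\alpha}} f_{\alpha}$ with $\pr : Y \to Y_{\alpha}$ strongly pro-smooth, then the base change $\pr_{X} : X \to X_{\alpha}$ is strongly pro-smooth as well, so \rl{dim0pl}(d) yields $\un{\dim}_{f} = \pr_{X}^{*}(\un{\dim}_{f_{\alpha}})$. Local constancy of $\un{\dim}_{f_{\alpha}}$ therefore transfers to that of $\un{\dim}_{f}$, universal openness is preserved under arbitrary base change, and the fiber-dimension equality passes through the flat map $\pr_{X}$.

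For the "only if" direction of (a), begin with any Cartesian diagram $D_{\alpha_{0}}$ provided by \re{obs}(a) and consider a placid presentation $Y \simeq \lim_{\beta \geq \alpha_{0}} Y_{\beta}$ with smooth affine transitions, yielding a compatible system $f_{\beta} := Y_{\beta} \times_{Y_{\alpha_{0}}} f_{\alpha_{0}}$ with $f = \lim_{\beta} f_{\beta}$. Universal openness descends to some $f_{\beta}$ by the standard limit theorems for constructible properties. For weak equidimensionality, exploit that $X$ is quasi-compact so that the locally constant $\un{\dim}_{f}$ takes finitely many values, giving a finite open-closed decomposition $X = \sqcup_{d} X^{(d)}$; this descends to a corresponding decomposition of $X_{\beta}$ for large $\beta$, and a further enlargement of $\beta$, using the additivity \rl{dim0pl}(b) of the dimension function along the smooth affine transitions, renders $\un{\dim}_{f_{\beta}}$ locally constant on each piece. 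The equidimensional refinement follows by observing that flat pullback preserves the pointwise fiber-dimension equality.

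For part (b), the key tool is the identity $\un{\dim}_{g} = \varphi^{*}(\un{\dim}_{f})$ of \rl{dimstacks}(a), where $\varphi : X \to \cX$ denotes the smooth composition through $\cX \times_{\cY} Y$. If $f$ is weakly equidimensional, local constancy of $\un{\dim}_{f}$ pulls back to that of $\un{\dim}_{g}$. Conversely, letting $(\phi,\psi)$ range over smooth coverings of $\cY$ and \'etale coverings of $\cX \times_{\cY} Y$, the induced family of $\varphi$'s forms a smooth covering of $\cX$, which is submersive by \rl{topequiv}(c); so local constancy of every $\un{\dim}_{g}$ forces local constancy of $\un{\dim}_{f}$. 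For universal openness, the direct implication follows since $g$ factors as the \'etale map $\psi$ (universally open) composed with the universally open base change $f_{Y}$ of $f$, and universal openness is closed under composition; the converse applies \rl{2outof3}(a) to a surjective \'etale $\psi$ to recover universal openness of $f_{Y}$, then uses submersivity of a smooth covering $\phi$ to descend this to $f$. The equidimensional refinement requires only that smooth base change preserves the pointwise fiber-dimension equality.

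The principal obstacle is the descent of weak equidimensionality in part (a): passing from local constancy of the pullback $\pr_{X}^{*}(\un{\dim}_{f_{\beta}})$ to local constancy of $\un{\dim}_{f_{\beta}}$ itself is not automatic, since $\pr_{X}^{\beta}$ need not be surjective onto $X_{\beta}$. This is handled by first descending the finite open-closed stratification of $X$ to $X_{\beta}$ and then enlarging $\beta$ to exploit the smooth affine transitions in the placid presentation.
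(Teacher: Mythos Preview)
Your treatment of part (b) is correct and essentially matches the paper's argument: the identity $\un{\dim}_g=\varphi^*(\un{\dim}_f)$, submersivity of the family $\{\varphi\}$, and the factorization $g=\phi^*(f)\circ\psi$ handle the weakly equidimensional and universally open cases just as in the text. Likewise, the ``if'' direction of (a) and the descent of universal openness via standard limit theorems are fine.

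The gap is in the ``only if'' direction of (a) for weak equidimensionality. You correctly isolate the obstruction: $\pr^X_{\beta}:X\to X_{\beta}$ need not be surjective, so knowing $\un{\dim}_{f_{\beta}}$ is constant on each $X_{\beta}^{(d)}\cap\pr^X_{\beta}(X)$ does not make it constant on $X_{\beta}^{(d)}$. But the proposed fix --- ``enlarging $\beta$, using the additivity \rl{dim0pl}(b) along the smooth affine transitions'' --- does not work. From the Cartesian square one has $\un{\dim}_{f_{\gamma}}=(\pr^X_{\gamma,\beta})^{*}(\un{\dim}_{f_{\beta}})$ (via \rco{pullback}, since $\pr^Y_{\gamma,\beta}$ is smooth hence open), so $\un{\dim}_{f_{\gamma}}$ is locally constant exactly on the preimage of the locus where $\un{\dim}_{f_{\beta}}$ already is. Additivity only adds a locally constant correction term and cannot create local constancy where it is absent; enlarging $\beta$ alone, without controlling the image, gains nothing.

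The paper's argument supplies the missing idea. Since $\pr^X_{\beta}$ is strongly pro-smooth, hence flat, its image $\pr^X_{\beta}(X)\subset X_{\beta}$ is closed under generalizations. For $x\in\pr^X_{\beta}(X)$, the set $Z=\{y\in U_x:\un{\dim}_{f_{\beta}}(y)\neq\un{\dim}_{f_{\beta}}(x)\}$ is constructible and avoids $\pr^X_{\beta}(X)$; since no generalization of $x$ lies in $Z$, one has $x\notin\overline{Z}$, giving an open neighborhood $V_x$ on which $\un{\dim}_{f_{\beta}}$ is constant. Patching the $V_x$ produces an open $U\supset\pr^X_{\beta}(X)$ with $\un{\dim}_{f_{\beta}}|_U$ locally constant, whence $f\simeq Y\times_{Y_{\beta}}(f_{\beta}|_U)$. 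You need this flatness-plus-constructibility step (or an equivalent one) to close the argument.
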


\begin{proof}
In all cases, we will only show the assertions concerning the weakly equidimensional and universally open morphisms, while the equidimensionality assertion is immediate.


(a) Clearly, if $\un{\dim}_{f_{\al}}$ is locally constant (resp. $f_{\al}$ is universally open) then $\un{\dim}_f=\pr^*(\un{\dim}_{f_{\al}})$ is locally constant (resp. $f$ is universally open).

To show the converse, choose a placid presentation $Y\simeq\lim_{\al}Y_{\al}$. Since $f$ is finitely presented, there exists a morphism $f_{\al}:X_{\al}\to Y_{\al}$ such that $f\simeq f_{\al}\times_{Y_{\al}}Y$. If $f$ is universally open, then it follows from a standard limit theorem (see \cite[Proposition B.3(xii)]{Ry2}) that there exists $\beta>\al$
such that $f_{\beta}:=f_{\al}\times_{Y_{\al}}Y_{\beta}$ is universally open.

Assume now that $f$ is weakly equidimensional. By assumption, $\un{\dim}_{f}=\pr_{\al}^*(\un{\dim}_{f_{\al}})$ is locally constant. Since $X$ is quasi-compact, there exists a finite open covering $X=\cup_i U_i$ such that each $U_i$ is quasi-compact and $\un{\dim}_{f}$ is constant on each $U_i$. This covering is induced by an open covering $X_{\beta}=\cup_i U_{\beta,i}$ for some $\beta>\al$. Since $\un{\dim}_{f}=\pr_{\beta}^*(\un{\dim}_{f_{\beta}})$, we thus conclude that $\un{\dim}_{f_{\beta}}$ is constant on each
$U_{\beta,i}\cap\pr_{\beta}(X)$, thus $\un{\dim}_{f_{\beta}}$ is locally constant on $\pr_{\beta}(X)$.

It suffices to show that there exists an open neighborhood $U\supset \pr_{\beta}(X)$ such that the restriction
$\un{\dim}_{f_{\beta}}|_U$ is locally constant. Indeed, in this case, we would have an isomorphism $f\simeq Y\times_{Y_{\beta}}(f_{\beta}|_{U})$.

By assumption, for every $x\in \pr_{\beta}(X)$ there exists an open neighborhood $U_x\subset X_{\beta}$ of $x$ such that
$\un{\dim}_{f_{\beta}}$ is constant on $U_x\cap\pr_{\beta}(X)$, and we claim that there exists a smaller open neighborhood $V_x\subset U_x$ of $x$ such that $\un{\dim}_{f_{\beta}}$ is constant on $V_x$.

Consider the set $Z:=\{y\in U_x\,|\,\un{\dim}_{f_{\beta}}(y)\neq \un{\dim}_{f_{\beta}}(x)\}$. By our assumption on $U_{\al}$, we have $Z\cap \pr_{\beta}(X)=\emptyset$.
Since $\pr_{\beta}:X\to X_{\beta}$ is strongly pro-smooth, thus flat, the image $\pr_{\beta}(X)\subset X_{\beta}$ is closed under generalizations. In particular, no generalization of $x$ belongs to $Z$. Since $Z\subset U_x$ is constructible, we conclude that the closure
$\ov{Z}\subset U_x$ of $Z$ does not contain $x$. Hence $V_x:=U_x\sm\ov{Z}$ is an open neighborhood of $x$, and $\un{\dim}_{f_{\beta}}$ is constant on $V_x$.

(b) Assume that $f$ is weakly equidimensional, that is, $\un{\dim}_f$ is locally constant. Then for every $\phi$ and $\psi$, the induced morphism $\un{\dim}_g=\varphi^*(\un{\dim}_f)$ is
locally constant. Conversely, assume that each $\un{\dim}_g=\varphi^*(\un{\dim}_f)$  is locally constant. Since morphisms $\{\varphi:X\to\cX\}_{\phi,\psi}$ form a covering of $\cX$,
it follows from \rl{topequiv}(c) that $\un{\dim}_f$ is locally constant.

Next, if $f$ is universally open, then its pullback $\phi^*(f):\cX\times_{\cY}Y\to Y$ is universally open, thus $g=\phi^*(f)\circ\psi$ is universally open (by \rl{topequiv}(b)). Conversely,
assume that each $g$ is universally open. Since the $\psi$'s form an \'etale covering of $\cX\times_{\cY}Y$ we conclude that each $\phi^*(f)$ is universally open. Since the $\phi$'s form a covering of $\cY$, the assertion that $f$ is universally open follows from \rl{topequiv}(c).
%
\end{proof}

The following simple lemma will be useful later.

\begin{Lem} \label{L:weakequid}
Let $f:Y\to X$ be an fp-morphism between strongly pro-smooth algebraic spaces such that $Y$ is connected. Then $f$ is a weakly equidimensional morphism of constant relative dimension.
\end{Lem}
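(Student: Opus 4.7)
The plan is to realize $f$ as a pullback of a morphism between \emph{smooth} algebraic spaces of finite type, so that local equidimensionality is automatic (see \re{loceq}(b)) and then transfers back to $f$ via compatibility of the dimension function with strongly pro-smooth base change. Since $Y$ is connected and $\un{\dim}_f$ takes values in $\B{Z}$, local constancy then upgrades to constancy.

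I begin by fixing a strongly pro-smooth presentation $X\simeq\lim_\alpha X_\alpha$ with each $X_\alpha$ smooth over $k$. Since $f$ is finitely presented, standard limit arguments give some $\alpha_0$ and an fp-morphism $f_{\alpha_0}:Y_{\alpha_0}\to X_{\alpha_0}$ of algebraic spaces of finite type with $Y\simeq X\times_{X_{\alpha_0}}Y_{\alpha_0}$. Setting $Y_\alpha:=X_\alpha\times_{X_{\alpha_0}}Y_{\alpha_0}$ for $\alpha\geq\alpha_0$ yields a placid presentation $Y\simeq\lim_\alpha Y_\alpha$, whose transition maps are pullbacks of the smooth affine $X_\alpha\to X_{\alpha'}$, hence smooth affine. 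The crux -- and what I expect to be the main obstacle -- is that although $Y$ is strongly pro-smooth by hypothesis, the spaces $Y_\alpha$ in \emph{this specific} presentation are not a priori smooth, so one has to transfer the smoothness across presentations.

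To do this, I pick some strongly pro-smooth presentation $Y\simeq\lim_\gamma Y^{sm}_\gamma$ with the $Y^{sm}_\gamma$ smooth. Applying \rco{indep} to the two placid presentations of $Y$, for a fixed $\gamma$ and all sufficiently large $\alpha$ the projection $Y\to Y^{sm}_\gamma$ factors as $Y\to Y_\alpha\overset{g_{\alpha,\gamma}}{\to}Y^{sm}_\gamma$ with $g_{\alpha,\gamma}$ smooth. Composing with the smooth structure morphism $Y^{sm}_\gamma\to\Spec k$ then exhibits $Y_\alpha\to\Spec k$ as smooth, so $Y_\alpha$ is smooth over $k$ for some $\alpha\geq\alpha_0$.

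With this in hand, the Cartesian square
$Y\to X, Y\to Y_\alpha, X\to X_\alpha, f_\alpha:Y_\alpha\to X_\alpha$
has both $Y_\alpha$ and $X_\alpha$ smooth of finite type, and its vertical maps $\sigma:Y\to Y_\alpha$, $\pi:X\to X_\alpha$ strongly pro-smooth. Then $f_\alpha$ is a morphism of locally equidimensional algebraic spaces of finite type, so $\un{\dim}_{f_\alpha}$ is locally constant by \re{loceq}(b),(c). Invoking the universal characterization of the dimension function in \rl{dimstacks}(a) (reducing via étale affine covers to the affine situation of \rl{dim0pl}(d), which is exactly the pullback compatibility along strongly pro-smooth maps), we obtain $\un{\dim}_f=\sigma^*(\un{\dim}_{f_\alpha})$, so $\un{\dim}_f$ is locally constant on $[Y]$. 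Connectedness of $Y$ forces this $\B{Z}$-valued function to be constant, which is exactly the claim.
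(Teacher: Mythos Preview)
Your proof is correct and follows essentially the same approach as the paper: descend $f$ to a morphism $f_\alpha:Y_\alpha\to X_\alpha$ between finite-type spaces with $X_\alpha$ smooth, use \rco{indep} to force $Y_\alpha$ smooth for large $\alpha$, then pull back the (locally constant) dimension function. The only cosmetic difference is that the paper arranges $Y_\beta$ itself to be connected (so $\un{\dim}_{f_\beta}$ is already constant at finite level), whereas you pull back the locally constant function and invoke connectedness of $Y$ at the end; both are fine.
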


\begin{proof}
Choose a strongly pro-smooth presentation
$X\simeq\lim_{\al}X_{\al}$. Since the morphism $f:Y\to X$ is finitely presented, it comes from a morphism $f_{\al}:Y_{\al}\to X_{\al}$.
Then $Y$ has a placid presentation $Y\simeq\lim_{\beta>\al}Y_{\beta}$ with $Y_{\beta}=Y_{\al}\times_{X_{\al}}X_{\beta}$. Since $Y$ is strongly pro-smooth,
it follows from \rco{indep} that $Y_{\beta}$ is smooth, if $\beta$ is sufficiently large. Moreover,  since $Y$ is connected, one can assume that
$Y_{\beta}$ is connected. Since $Y_{\beta}$ and   $X_{\beta}$ are smooth, thus locally equidimensional, we conclude that the morphism
$f_{\al}:Y_{\beta}\to X_{\beta}$ is of constant relative dimension (see \re{loceq}(b)). Therefore its pullback $f:Y\to X$ is of constant relative dimension as well.
\end{proof}





\subsection{Equidimensional morphisms in general} \label{S:ext}
In this section we are going to extend construction from the previous section to a much more general class of morphisms.


\begin{Emp} \label{E:clM}
{\bf Notation.} Consider the following classes of morphims:

(a) A class of morphisms $f:X\to Y$ between $0$-placid affine schemes such that
for every strongly pro-smooth morphism $Y\to Y'$ with $Y'\in\Affft_k$, the composition $X\to Y\to Y'$ decomposes as
$X\to X'\to Y'$, where $X'\in\Affft_k$, morphism $X\to X'$ is strongly pro-smooth, and $X'\to Y'$ is weakly equidimensional (resp. equidimensional, resp. universally open).

(b) A class of morphisms $f:\cX\to\cY$ between placid $\infty$-stacks such that for every smooth morphisms $f:Y\to\cY$
and $X\to \cX\times_{\cY} Y$, where $Y$ and $Y$ are $0$-placid  affine schemes, the composition $X\to \cX\times_{\cY} Y\to Y$ belongs to the class of (a).

\end{Emp}

The following lemma lists simple properties and compatibilities of classes \re{clM}(a),(b) as well as their relations with those of \re{eqmor}.

\begin{Lem} \label{L:clM}
(i) Classes \re{clM}(a) and \re{clM}(b) are closed under compositions.

(ii) For a morphism $f:X\to Y$ between $0$-placid affine schemes, the following are equivalent:

\quad (1) $f$ belongs to \re{clM}(a);

\quad (2) for every strongly pro-smooth morphism $Y\to Y'$ with $Y'\in\Affft_k$, every placid presentation $X\simeq\lim_{\al}X_{\al}$ of $X$ and every sufficiently large $\al$, the composition $X\to Y\to Y'$ decomposes as $X\overset{\pr_{\al}}{\lra} X_{\al}\to Y'$, where $X_{\al}\to Y'$ is weakly equidimensional (resp. equidimensional, resp. universally open);

\quad (3) there exists a placid presentation $Y\simeq\lim_{\al}Y_{\al}$ of $Y$ such that for every $\al$, the composition $X\to Y\to Y_{\al}$ decomposes as $X\to X_{\al}\to Y_{\al}$, where $X_{\al}\in\Affft_k$, morphism $X\to X_{\al}$ is strongly pro-smooth, and $X_{\al}\to Y_{\al}$ is weakly equidimensional (resp. equidimensional, resp. universally open).

(iii) A morphism $f:\cX\to\cY$ between placid $\infty$-stacks belongs to \re{clM}(b) if and only if there exist smooth coverings
$\{Y_{\al}\to\cY\}_{\al}$ and $\{X_{\al\beta}\to \cX\times_{\cY}Y_{\al}\}_{\beta}$ such that all  $Y_{\al}$  and $X_{\beta}$ are  $0$-placid  affine schemes and all compositions $X_{\al\beta}\to \cX\times_{\cY} Y_{\al}\to Y_{\al}$ belong to \re{clM}(a).

(iv) A morphism $f:X\to Y$ between $0$-placid affine schemes belongs to
\re{clM}(a) if and only if it belongs to \re{clM}(b).


(v) A locally fp-representable morphism $f:\cX\to\cY$ between placid $\infty$-stacks  belongs to \re{clM}(b) if and only if it is weakly equidimensional (resp. equidimensional, resp. universally open) in the sense of \re{eqmor}.




\end{Lem}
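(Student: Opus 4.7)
The proof is a sequence of routine (but bookkeeping-heavy) limit arguments; the cleanest order is not the stated one. The plan is to prove (ii) first, then derive (iii), (iv), (v) from it, and finally conclude (i).

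First I would establish (ii). The implication $(2)\Rightarrow(1)$ is immediate. For $(1)\Rightarrow(3)$, start from the canonical placid presentation $Y\simeq\lim_{\al}Y_{\al}$ of \re{canpres}: each projection $Y\to Y_{\al}$ is strongly pro-smooth, so applying the defining property of \re{clM}(a) gives the required factorization $X\to X_{\al}\to Y_{\al}$. For $(3)\Rightarrow(2)$, given any second placid presentation $X\simeq\lim_{\beta}X'_{\beta}$ and any strongly pro-smooth $Y\to Y'$ with $Y'\in\Affft_{k}$, use \rco{indep} applied to the two placid presentations of $X$ (the one provided by (3) and the one given by the $X'_{\beta}$) to factor each $\pr_{\beta}$ through a sufficiently large $X_{\al}$ via a smooth map. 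Then weak equidimensionality/equidimensionality/universal openness of $X_{\al}\to Y_{\al}$ pulls back to $X'_{\beta}\to Y'$ using \rco{pullback} (and smoothness of the intermediate map together with \rco{leqop} in the universally open case, and \rco{equidim2}(b) in the equidimensional case, noting smooth maps are open). This also shows independence of the decomposition chosen in $(2)$.

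Next, (iii) is a descent argument. For the nontrivial direction, suppose the condition holds for one pair of smooth coverings $\{Y_{\al}\to\cY\}$, $\{X_{\al\beta}\to\cX\times_{\cY}Y_{\al}\}$, and let $\phi:Y\to\cY$ and $\psi:X\to\cX\times_{\cY}Y$ be any other smooth morphisms from $0$-placid affine schemes. Form the fiber products $Y\times_{\cY}Y_{\al}$ and choose smooth coverings by $0$-placid affine schemes (possible by \re{propplst}(a),(c),(d)); then pull back the $X_{\al\beta}$'s and compare with $X$ via a common refinement. In each case, the composite map on the common refinement is, by (ii), in \re{clM}(a) on one side, and by the stability of (ii) under strongly pro-smooth pullback (which is precisely what (ii)(3) encodes) it is also in \re{clM}(a) on the other side; then \rl{topequiv}(c) transports the property through the submersive covering.

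Then (iv) is an immediate combination of (ii) and (iii): given $f:X\to Y$ in \re{clM}(a), for any smooth $\phi:Y'\to Y$ and \'etale $\psi:X'\to X\times_{Y}Y'$ between $0$-placid affine schemes, the composite $g:X'\to Y'$ fits in the framework of (ii)(3) by choosing a placid presentation of $Y'$ dominating one of $Y$ and using the factorizations provided by \re{clM}(a) together with smoothness of $\phi$ and $\psi$ (applying \rco{leqop} and \rco{equidim2}(a)). For (v), observe that \rl{dimweak}(a) precisely identifies condition (ii)(3) with the \re{eqmor} notions for fp-morphisms of $0$-placid affine schemes; combining this with (iii) and \rl{dimweak}(b) yields the claim for placid $\infty$-stacks.

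Finally (i) follows from the characterization (3) in (ii) and its counterpart via (iii). For composable $f:X\to Y$, $g:Y\to Z$ in \re{clM}(a), pick a placid presentation $Z\simeq\lim_{\g}Z_{\g}$ witnessing the condition for $g$, producing factorizations $Y\to Y_{\g}\to Z_{\g}$ with $Y\to Y_{\g}$ strongly pro-smooth and $Y_{\g}\to Z_{\g}$ having the required property; now apply the defining property of \re{clM}(a) for $f$ to the strongly pro-smooth map $Y\to Y_{\g}$ to get $X\to X_{\g}\to Y_{\g}$. Closure under composition of the relevant property of $X_{\g}\to Y_{\g}\to Z_{\g}$ is then given by \rl{2outof3}(c) (weakly equidimensional case), \rco{equidim2}(a) (equidimensional case), and the standard closure of universally open maps under composition. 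The case of \re{clM}(b) then follows from (iii). The main obstacle throughout is just the careful coordination of placid presentations and smooth coverings; no new geometric input beyond the already-established \rco{pullback}, \rco{indep}, and \rl{dimweak} is needed.
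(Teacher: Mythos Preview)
Your overall strategy is sound and the essential ingredients are all there, but your route differs from the paper's in one structural way worth noting, and there is one imprecision.

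\textbf{Comparison with the paper.} The paper does not unwind (iii), (iv), (v) by hand. Instead it appeals to the abstract machinery of Appendix~A: the classes in \re{clM}(a),(b) are instances of the construction $\Mor^{+}_{0}(\cA)$ and $\Mor^{+}_{\infty}(\cC)$ of \re{proQ} and \re{cons1ext}, and once one verifies (in \re{catequid}) that the relevant classes in $\Affft_k$ satisfy the IPP-property (via \rco{indep}) and the ``local'' assumptions of \re{local}, parts (iii), (iv), (v) fall out of \rp{proQ}(a),(b),(c) at once. The paper also proves (i) first, directly from the definition of \re{clM}(a) together with \rl{2outof3}(c) and \rco{equidim2}(a). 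Your approach of establishing (ii) first and then deriving everything else by explicit covering/presentation arguments is a legitimate alternative---essentially you are reproving the relevant special cases of \rp{proQ} in place---and your order (with (i) last, deduced from (iii)) is arguably logically cleaner, since the paper's ``follows from definitions'' for (i) in the case of \re{clM}(b) silently uses the same descent ingredient that (iii) encodes.

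\textbf{The imprecision.} In your proof of (iii), the descent step---``\rl{topequiv}(c) transports the property through the submersive covering''---is not the right citation. What you need is: if $X''\to X$ is a smooth covering of $0$-placid affine schemes and the composite $X''\to X\to Y$ lies in \re{clM}(a), then so does $X\to Y$. Submersiveness of the underlying topological map is not enough; you must produce factorizations $X\to X'\to Y'$ from factorizations $X''\to X''_{\beta}\to Y'$. This uses that smooth maps in $\Affft_k$ have open image and that the three classes descend along open smooth surjections, i.e.\ \rl{2outof3}(a),(d),(e) and \rco{equidim2}(b) (exactly what the paper packages as assumption \re{local}(ii) and checks in \re{catequid}(c)). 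Once you replace the appeal to \rl{topequiv}(c) by this, your argument goes through.
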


\begin{proof}
(i) The assertion for \re{clM}(a) follows from \rl{2outof3}(c) and \rco{equidim2}(a), while the rest follows from definitions.

(ii) By definition, $f$ belongs to \re{clM}(a) if and only if the condition of (2) holds for some placid presentation
$X\simeq\lim_{\al}X_{\al}$ and some index $\al$.  Since the classes of weakly equidimensional and universally open morphisms in $\Affft_k$ contain smooth morphisms and are closed under compositions, the assertion (2) for every presentation and every sufficiently large $\al$ follows from \rco{indep}. Similarly, $f$ belongs to \re{clM}(a) if and only if the condition of (3) holds for every placid presentation $Y\simeq\lim_{\al}Y_{\al}$ of $Y$. Therefore the equivalence of (1) and (3) follows again from \rco{indep}.

(iii),(iv) follow from \rp{proQ}(a),(b) (applicable because of \re{catequid}).


(v) Combining \re{clM}(b) and \rl{dimweak}(b), it suffices to show the corresponding assertion for fp-morphisms between $0$-placid affine schemes and class \re{clM}(a) instead of \re{clM}(b). But this follows from a combination of \rl{dimweak}(a) and \rp{proQ}(c) (applicable because of \re{catequid}).
\end{proof}




\begin{Cor} \label{C:clMc}
The class \re{clM}(b):

(a) contains smooth morphisms (between placid $\infty$-stacks);

(b) is stable under pullbacks with respect to smooth morphisms;

(c) is local with respect to smooth morphisms, that is, if
$\cZ\overset{g}{\to}\cX\overset{f}{\to}\cY$ are morphisms between placid $\infty$-stacks
such that $f\circ g$ belongs to \re{clM}(b) and $g$ is a smooth covering,
then $f$ belongs to \re{clM}(b).
\end{Cor}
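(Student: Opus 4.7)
The plan rests on two observations from \rl{clM}: the criterion (iii), which tests membership in \re{clM}(b) via a single pair of smooth coverings rather than ``for every'', and (iv), which identifies \re{clM}(a) with \re{clM}(b) for morphisms between $0$-placid affines. Combined with \re{propplst}(e), these will reduce everything to one concrete input: that every strongly pro-smooth morphism $h:Z\to Y$ between $0$-placid affines belongs to \re{clM}(a). This input follows from \rl{flat}(b): for any strongly pro-smooth $Y\to Y'$ with $Y'\in\Affft_k$, the composite $Z\to Y'$ is strongly pro-smooth, so any placid presentation $Z\simeq\lim_\delta Z_\delta$ yields, for sufficiently large $\delta$, a factorization $Z\to Z_\delta\to Y'$ in which $Z_\delta\to Y'$ is classically smooth, hence in particular weakly equidimensional, equidimensional, and universally open.

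For (a), let $f:\cX\to\cY$ be smooth. Choose smooth coverings $\{Y_\al\to\cY\}$ and $\{X_{\al\beta}\to\cX\times_\cY Y_\al\}$ by $0$-placid affines; by \re{propplst}(c) each composition $h_{\al\beta}:X_{\al\beta}\to Y_\al$ is a smooth morphism between $0$-placid affines. Invoking \re{propplst}(e) on $h_{\al\beta}$, I produce a strongly pro-smooth covering $Z\to X_{\al\beta}$ such that $Z\to Y_\al$ is strongly pro-smooth, hence in \re{clM}(a) by the input above. Applying \rl{clM}(iii) to $h_{\al\beta}$ with the trivial covering of $Y_\al$ and with $Z\to X_{\al\beta}$ shows $h_{\al\beta}\in\re{clM}(b)$, and \rl{clM}(iv) then gives $h_{\al\beta}\in\re{clM}(a)$. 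A second application of \rl{clM}(iii), this time to $f$ via the chosen coverings $\{Y_\al\}$ and $\{X_{\al\beta}\}$, concludes that $f\in\re{clM}(b)$.

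Part (b) is essentially tautological from the definition: if $f\in\re{clM}(b)$ and $g:\cY'\to\cY$ is smooth, then for any smooth morphisms $Y'\to\cY'$ and $X'\to(\cX\times_\cY\cY')\times_{\cY'}Y'\simeq\cX\times_\cY Y'$ from $0$-placid affines, the composite $Y'\to\cY$ is smooth, so $X'\to Y'\in\re{clM}(a)$ by the hypothesis on $f$. For (c), with $f\circ g\in\re{clM}(b)$ and $g$ a smooth covering, I again verify \rl{clM}(iii) for $f$: choose smooth coverings $\{Y_\al\to\cY\}$ and $\{X_{\al\beta}\to\cX\times_\cY Y_\al\}$ by $0$-placid affines, form the smooth covering $\cZ\times_\cX X_{\al\beta}\to X_{\al\beta}$ by base change, and choose a smooth covering $Z_{\al\beta\g}\to\cZ\times_\cX X_{\al\beta}$ by $0$-placid affines. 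Since the composite $Z_{\al\beta\g}\to\cZ\times_\cY Y_\al$ is smooth and $Y_\al\to\cY$ is smooth, the hypothesis on $f\circ g$ gives $Z_{\al\beta\g}\to Y_\al\in\re{clM}(a)$; as $Z_{\al\beta\g}\to X_{\al\beta}$ is a smooth covering of $0$-placid affines, \rl{clM}(iii)--(iv) applied to $X_{\al\beta}\to Y_\al$ promote this to $h_{\al\beta}\in\re{clM}(a)$, and then \rl{clM}(iii) for $f$ with the original coverings completes the proof. The only nontrivial ingredient throughout is the initial input about strongly pro-smooth morphisms, which is precisely the main obstacle and is settled by the single application of \rl{flat}(b) indicated above.
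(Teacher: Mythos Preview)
Your proof is correct and follows essentially the same route as the paper's: both arguments use \rl{clM}(iii),(iv) to reduce to the $0$-placid affine case, and handle (b) directly from the definition via closure of smooth morphisms under composition. Your treatment is more explicit, making visible the intermediate step through \re{propplst}(e) that the paper leaves implicit in its terse ``reduce to \re{clM}(a) and strongly pro-smooth morphisms''. One minor simplification: your invocation of \rl{flat}(b) for the initial input is unnecessary---once you know the composite $Z\to Y'$ is strongly pro-smooth (via \re{placidprop}), the \emph{definition} of strongly pro-smooth already hands you a presentation $Z\simeq\lim_\delta Z'_\delta$ over $Y'$ with each $Z'_\delta\to Y'$ smooth, which is exactly the factorization required by \re{clM}(a).
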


\begin{proof}
Since smooth morphisms are closed under composition, assertion (b) follows immediately from definition of \re{clM}(b). Next, using \rl{clM}(iii), we reduce the assertion to the corresponding assertion for the class \re{clM}(a) and strongly pro-smooth morphisms. Now, assertion (a) follows from definition of \re{clM}(a) and the fact that strongly pro-smooth morphisms are closed under compositions (see \re{placidprop}). Finally, assertion (c) follows immediately from \rl{clM}(iii),(iv).
\end{proof}

\begin{Emp} \label{E:clM2}
{\bf Notation.} We call a morphism $f:\cX\to\cY$ of placid $\infty$-stacks

(a) {\em weakly equidimensional}\label{I:we1}
(resp. {\em equidimensional}\label{I:eq1}, resp. {\em pro-universally open}\label{I:pro-universally open}), if it belongs to the class of \re{clM}(b).

(b)  {\em uo-equidimensional}\label{I:uoeq1}, if it is weakly equidimensional and  pro-universally open.
\end{Emp}

\begin{Emp}
{\bf Remark.} By \rl{clM}(v), for locally fp-representable morphisms, the notions of \re{clM2} coincide with those of \re{eqmor}.
\end{Emp}

\begin{Cor} \label{C:uo-equid}
Let $f:\cX\to \cY$ be a uo-equidimensional morphism between placid $\infty$-stacks.

(a) Then $f$ is equidimensional.

(b) Moreover, for every locally fp-representable morphism $\cY'\to\cY$ of placid $\infty$-stacks, the base change
$\cX\times_{\cY}\cY'\to\cY'$ is uo-equidimensional.
\end{Cor}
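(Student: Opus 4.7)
The plan is to reduce both parts to fp-morphisms of $0$-placid affine schemes via Lemma~\rl{clM}, and from there to finite-type schemes where \rco{equidim} and \rco{leqop} apply, connected via \rl{dimweak}(a).

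For (a), by the definition of class~\re{clM}(b) it suffices to show that for every pair of smooth morphisms $\phi:Y\to\cY$ and $\psi:X\to\cX\times_\cY Y$ with $Y$ and $X$ being $0$-placid affine schemes, the composition $g:X\to Y$ is equidimensional in the sense of class~\re{clM}(a). By hypothesis, $g$ is both weakly equidimensional and universally open in \re{clM}(a). Using characterization \rl{clM}(ii)(2), fix any placid presentation $X\simeq\lim_{\al}X_{\al}$ and any strongly pro-smooth $Y\to Y'$ with $Y'\in\Affft_k$: for all sufficiently large $\al$ the finite-type morphism $X_{\al}\to Y'$ is weakly equidimensional, and for all sufficiently large $\al$ it is universally open. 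Taking a common large $\al$, \rco{equidim} shows that $X_{\al}\to Y'$ is equidimensional in $\Affft_k$, and \rl{clM}(ii) in the converse direction gives that $g$ is equidimensional in \re{clM}(a), proving (a).

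For (b), first observe that $\cY'$ and $\cX':=\cX\times_{\cY}\cY'$ are placid by \rl{glplacid}(c). I will use \rl{clM}(iii) to exhibit smooth coverings witnessing that $f':\cX'\to\cY'$ is uo-equidimensional. Start with smooth coverings $\{Y_{\al}\to\cY\}$ and $\{X_{\al\beta}\to\cX\times_{\cY}Y_{\al}\}$ by $0$-placid affines with each $X_{\al\beta}\to Y_{\al}$ uo-equidimensional in \re{clM}(a), supplied by \rl{clM}(iii) applied to $f$. Each $Y_{\al}\times_{\cY}\cY'$ is a placid algebraic space, locally fp over $Y_{\al}$ (by \rl{glplacid}(b)), hence admits an \'etale covering $\{\tilde Y_{\al\gamma}\to Y_{\al}\times_{\cY}\cY'\}$ by $0$-placid affines, and each composition $\tilde Y_{\al\gamma}\to Y_{\al}$ is an fp-morphism of $0$-placid affines. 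Since all four schemes involved are affine, the fiber product $X'_{\al\beta\gamma}:=X_{\al\beta}\times_{Y_{\al}}\tilde Y_{\al\gamma}$ is again a $0$-placid affine (by \rl{glplacid}(b) over the $0$-placid affine $X_{\al\beta}$), and $\{X'_{\al\beta\gamma}\to\cX'\times_{\cY'}\tilde Y_{\al\gamma}\}$ is a smooth covering. The composition $X'_{\al\beta\gamma}\to\tilde Y_{\al\gamma}$ is the base change of the uo-equidimensional $X_{\al\beta}\to Y_{\al}$ along the fp-morphism $\tilde Y_{\al\gamma}\to Y_{\al}$ of $0$-placid affines, reducing (b) to the following key claim.

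\emph{Key claim:} The base change of a uo-equidimensional fp-morphism $h:X\to Y$ of $0$-placid affines along any fp-morphism $Y''\to Y$ of $0$-placid affines is uo-equidimensional. To prove this, combine the weakly-equidimensional and universally-open cases of \rl{dimweak}(a) (by fixing a single placid presentation $Y\simeq\lim_{\al}Y_{\al}$, taking $\al$ large enough that the model $h_{\al}$ is universally open, then restricting to the open on which the weakly-equidimensional argument succeeds) to produce a strongly pro-smooth $Y\to Y_{0}$ with $Y_{0}\in\Affft_k$ and a uo-equidimensional $h_{0}:X_{0}\to Y_{0}$ in $\Affft_k$ with $h\cong h_{0}\times_{Y_{0}}Y$. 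Fix a placid presentation $Y''\simeq\lim_{\gamma}Y''_{\gamma}$; since $Y_{0}$ is of finite type, the composition $Y''\to Y\to Y_{0}$ factors through some $Y''_{\gamma}\to Y_{0}$. The base change $h_{0}\times_{Y_{0}}Y''_{\gamma}$ remains in $\Affft_k$ and is uo-equidimensional by \rco{leqop}; base-changing further along the strongly pro-smooth $Y''\to Y''_{\gamma}$, \rl{dimweak}(a) in the converse direction gives that $X\times_{Y}Y''\to Y''$ is uo-equidimensional as an fp-morphism of $0$-placid affines, completing (b). The main obstacle is the bookkeeping in (b), carefully threading the cascade of coverings so that the reduction to finite type lands on the exact hypotheses of \rco{leqop}.
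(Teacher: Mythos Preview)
Your argument for (a) is correct and is essentially the paper's approach.

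For (b) there is a genuine gap. Your reduction produces the morphism $X_{\al\beta}\to Y_{\al}$, which lies in class \re{clM}(a), and you want to base-change it along the fp-morphism $\tilde Y_{\al\gamma}\to Y_{\al}$. But your ``key claim'' is stated for a uo-equidimensional \emph{fp}-morphism $h:X\to Y$, and its proof invokes \rl{dimweak}(a), which requires $h$ to be fp (it writes $h\cong h_0\times_{Y_0}Y$). The morphism $X_{\al\beta}\to Y_{\al}$ is \emph{not} fp in general: $X_{\al\beta}\to\cX\times_{\cY}Y_{\al}$ is smooth and $\cX\times_{\cY}Y_{\al}\to Y_{\al}$ is merely uo-equidimensional (the hypothesis on $f$), so the composite has no reason to be finitely presented. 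Hence the key claim as you stated and proved it does not apply to the case your reduction produces.

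The repair is easy and in fact closer to the paper's one-line argument. Replace the key claim by: \emph{the base change of a morphism $h:X\to Y$ in class \re{clM}(a) along an fp-morphism $\tilde Y\to Y$ of $0$-placid affines remains in class \re{clM}(a)}. Prove this directly from characterization \rl{clM}(ii)(3). Choose a placid presentation $Y\simeq\lim_{\delta}Y_{\delta}$ so that $\tilde Y\simeq Y\times_{Y_{\delta_0}}\tilde Y_0$ for some finite-type $\tilde Y_0\to Y_{\delta_0}$; then $\tilde Y$ inherits the placid presentation $\tilde Y\simeq\lim_{\delta\geq\delta_0}(Y_{\delta}\times_{Y_{\delta_0}}\tilde Y_0)$. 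For each $\delta$, the hypothesis on $h$ gives a factorization $X\to X_0\to Y_{\delta}$ with $X\to X_0$ strongly pro-smooth and $X_0\to Y_{\delta}$ uo-equidimensional in $\Affft_k$; base-changing along $Y_{\delta}\times_{Y_{\delta_0}}\tilde Y_0\to Y_{\delta}$ and applying \rco{leqop} yields the required factorization of $X\times_Y\tilde Y\to Y_{\delta}\times_{Y_{\delta_0}}\tilde Y_0$, since $X\times_Y\tilde Y\simeq X\times_{Y_{\delta}}(Y_{\delta}\times_{Y_{\delta_0}}\tilde Y_0)$. This is precisely what the paper means by ``by \re{clM}(b) and \rl{clM}(ii) it suffices to show the corresponding assertions for morphisms of affine schemes of finite type'': the class is \emph{defined} via factorizations through finite-type morphisms with the property, so stability under base change at the finite-type level (\rco{leqop}) propagates automatically.
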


\begin{proof}
By \re{clM}(b) and \rl{clM}(ii), it suffices to show the corresponding assertions for morphisms of affine schemes of finite type over $k$. In this case, the assertions follow from Corollaries \ref{C:equidim} and \ref{C:leqop}.
\end{proof}




\begin{Lem} \label{L:pb}
Consider a Cartesian diagram
\begin{equation*} 
\begin{CD}
\cX' @>g>> \cY'\\
@V\psi VV @VV\phi V\\
\cX @>f>> \cY
\end{CD}
\end{equation*}
of placid $\infty$-stacks such that $f$ is locally fp-representable, while $\phi$ is pro-universally open.

(a) Then we have an equality $\un{\dim}_{g}=\psi^*(\un{\dim}_{f})$.

(b) If $f$ is (weakly) equidimensional (of relative dimension $d$), then so is $g$.
\end{Lem}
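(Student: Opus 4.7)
The plan is to reduce (a) to the analogous statement \rco{pullback} for morphisms of algebraic spaces of finite type over $k$, and to observe that (b) is an immediate consequence of (a): being locally constant (resp.\ constant with value $d$) is preserved under pullback of functions, so the statement for $\un{\dim}_g=\psi^{*}(\un{\dim}_{f})$ propagates weak equidimensionality, and the equality $\dim_{x'}g^{-1}(g(x'))=\dim_{\psi(x')}f^{-1}(f(\psi(x')))$ needed for equidimensionality follows from the Cartesianness of the diagram (as in the proof of \rco{pullback}(a)).

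For (a), the first reduction is to smooth coverings. Choose a smooth covering $Y\to\cY$ from a $0$-placid affine scheme, then a smooth covering $Y'\to\cY'\times_{\cY}Y$ from a $0$-placid affine scheme, and finally an \'etale covering $X\to\cX\times_{\cY}Y$ from a $0$-placid affine scheme (see \re{obs}(b)). These induce a Cartesian square $X'\to Y'$ over $X\to Y$, and the resulting morphisms $X'\to\cX'$, $X\to\cX$, $Y'\to\cY'$, $Y\to\cY$ are smooth coverings. By the characterization of the dimension function in \rl{dimstacks}(a), it suffices to establish the equality after pulling back to this covering. In other words, we are reduced to the case where $f:X\to Y$ and $\phi:Y'\to Y$ are fp-morphisms of $0$-placid affine schemes, with $\phi$ pro-universally open in the sense of \re{clM2}(a).

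Now pick a placid presentation $Y\simeq\lim_{\al}Y_{\al}$. Since $f$ is fp, there exists $\al$ and $f_{\al}:X_{\al}\to Y_{\al}$ in $\Affft_{k}$ with $f\simeq f_{\al}\times_{Y_{\al}}Y$ (see \re{obs}(a)). Applying \rl{clM}(ii)(3) to the pro-universally open $\phi$, after enlarging $\al$ we may factor the composition $Y'\to Y\to Y_{\al}$ as $Y'\overset{\pr'}{\to}Y'_{\al}\overset{\phi_{\al}}{\to}Y_{\al}$ where $Y'_{\al}\in\Affft_{k}$, $\pr'$ is strongly pro-smooth, and $\phi_{\al}$ is universally open. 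Setting $X'_{\al}:=X_{\al}\times_{Y_{\al}}Y'_{\al}$ and $g_{\al}:X'_{\al}\to Y'_{\al}$ gives a Cartesian square of affine schemes of finite type in which $\phi_{\al}$ (hence also its pullback $\psi_{\al}:X'_{\al}\to X_{\al}$) is universally open. Applying \rco{pullback} to this finite-type square yields $\un{\dim}_{g_{\al}}=\psi_{\al}^{*}(\un{\dim}_{f_{\al}})$. Since $g\simeq g_{\al}\times_{Y'_{\al}}Y'$ is again a finitely presented pullback, \rl{dim0pl}(a) gives $\un{\dim}_{g}=\pr^{*}(\un{\dim}_{g_{\al}})$, and the commutativity of the outer diagram finishes the proof: $\un{\dim}_{g}=\pr^{*}\psi_{\al}^{*}(\un{\dim}_{f_{\al}})=\psi^{*}\pr_{\al}^{*}(\un{\dim}_{f_{\al}})=\psi^{*}(\un{\dim}_{f})$.

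The main technical point is the factorization in the third step; everything else is a routine assembly of earlier reductions. The crucial input is \rl{clM}(ii)(3), which is what allows a pro-universally open morphism between $0$-placid affine schemes to be approximated by a genuine universally open morphism in $\Affft_{k}$ up to a strongly pro-smooth base change, so that the finite-type statement \rco{pullback} becomes applicable.
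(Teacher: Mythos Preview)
Your proof is correct and follows essentially the same route as the paper: reduce via smooth coverings to fp-morphisms between $0$-placid affine schemes with $\phi$ in the class \re{clM}(a), then factor $Y'\to Y\to Y_{\al}$ through a universally open morphism in $\Affft_k$ and apply \rco{pullback}. One small remark: the factorization you need is precisely the definition of class \re{clM}(a) (or equivalently the implication $(1)\Rightarrow(2)$ in \rl{clM}(ii)), rather than condition $(3)$, since you have already fixed the placid presentation of $Y$ coming from $f$; your phrase ``after enlarging $\al$'' shows you are using this correctly, but the citation could be sharpened.
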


\begin{proof}
As in \rco{pullback}, assertion (b) is an immediate corollary of (a).
To prove (a), we are going to reduce it to the case of $\Affft_k$, treated in \rco{pullback}.
Namely, using characterization of \re{clM}(b) and \rl{dimstacks}(a), and taking pullback with respect to smooth morphisms, we can assume that $f$ and $g$ are fp-affine morphisms between $0$-placid affine schemes and  $\phi$ is strongly pro-smooth.

In this case, $f$ can be written as a pullback
of a morphism $f_{\al}:X_{\al}\to Y_{\al}$ in $\Affft_k$ with respect to a strongly pro-smooth morphism $Y\to Y_{\al}$. Since $\phi$ is pro-universally open,
the composition $Y'\to Y\to Y_{\al}$ decomposes as $Y'\to Y'_{\al}\to Y_{\al}$, where  $Y'\to Y'_{\al}$ is strongly pro-smooth, while
$Y'_{\al}\to Y_{\al}$ is a universally open morphism in $\Aff_k$. Thus we reduce to the case when $f$ and $g$ are morphisms in $\Affft_k$, and $\phi$ is
universally open. Thus the assertion follows from \rco{pullback}.
\end{proof}

\subsection{Placidly stratified $\infty$-stacks and semi-small morphisms}

\begin{Emp} \label{E:compl}
{\bf Complementary $\infty$-substacks.} (a) Let $\cX$ be an $\infty$-stack, and let $\cY\subset\cX$ be an $\infty$-substack, that is, $\cY$ is an $\infty$-stack, and
$\cY(U)\subset\cX(U)$ is a {\em subspace}, that is, a union of connected components, for every $U\in\Aff_k$.

(b) For every $U\in\Aff_k$, consider the subspace $(\cX\sm\cY)(U)\subset\cX(U)$ consisting of all morphisms  $a:U\to\cX$ such that $U\times_{\cX}\cY=\emptyset$. Then $\cX\sm\cY\subset\cX$ is an $\infty$-substack.\label{N:x-y}

(c) By definition, for every morphism $f:\cX'\to\cX$ of $\infty$-stacks, we have a natural identification
$(\cX\sm\cY)\times_{\cX}\cX'\simeq \cX'\sm(\cY\times_{\cX}\cX')$.

(d) Notice that we always have an inclusion $\cY\subset\cX\sm(\cX\sm\cY)$, which is not an equality in general.
\end{Emp}

\begin{Emp} \label{E:fp-locally closed}
{\bf Notation.} (a) We say that a monomorphism $\iota:\cY\to\cX$ of $\infty$-stacks is a {\em topologically (fp)-closed/locally closed embedding},
\label{I:topologically (fp)-closed embedding}\label{I:topologically (fp)-locally closed embedding}
if it is $(P_{\red})$-representable (see \re{redP}(b)), where $(P)$ is the class of (fp)-closed/locally closed embeddings.

(b) We say that an $\infty$-substack $\cY\subset\cX$ is a {\em topologically (fp)-closed/locally closed}, if the inclusion $\iota:\cY\to\cX$ is a topologically (fp)-closed/locally closed embedding.

(c) As in the case of schemes of finite type (see \re{locdim}(d)), we say that a topologically fp-locally closed substack $\cY\subset\cX$ is of {\em pure codimension $d$}\label{I:pure2}, if the embedding $\iota:\cY\hra\cX$ is weakly equidimensional of relative dimension $-d$ (see \re{eqmor}).

\end{Emp}

\begin{Emp} \label{E:complopcl}
{\bf The case of open and closed embeddings.}
(a) Notice that if $\cX$ is a scheme $X$ and $\cU$ is an open subscheme $U$, then the reduced complement
$(\cX\sm\cU)_{\red}$ (\re{compl}(b)) is the reduced closed subsheme $(X\sm U)_{\red}\subset X$. Therefore it follows
from \re{compl}(c) that if $\cU\subset\cX$ is an (fp)-open $\infty$-substack, then
$\cX\sm\cU\subset\cX$ is a topologically (fp)-closed substack.

(b) Conversely, if  $\cZ\subset\cX$ is a topologically (fp)-closed $\infty$-substack, then
$\cX\sm\cZ\subset\cX$ is an (fp)-open $\infty$-substack. Indeed, using \re{compl}(c), one reduces to the case
when $\cX=X$ is a scheme. Moreover, since $\cX\sm\cZ=\cX\sm\cZ_{\red}$, we can also assume that $\cZ\subset X$ is an (fp)-closed subscheme,
in which case the complement $\cX\sm\cZ$ is an (fp)-open subscheme.

(c) It follows from \re{compl}(c) and the scheme case that we always have an equality
$\cU=\cX\sm(\cX\sm\cU)$ when $\cU$ is open, and $\cZ_{\red}=(\cX\sm(\cX\sm\cZ))_{\red}$ when $\cZ\subset\cX$ is topologically closed.
\end{Emp}

\begin{Emp} \label{E:adapted}
{\bf Set-up.} (a) Let $\cX$ be an $\infty$-stack, and $\{\cX_{\al}\}_{\al\in \C{I}}$ a collection of non-empty topologically fp-locally closed reduced $\infty$-substacks of $\cX$ such that $ \cX_{\al}\cap\cX_{\beta}=\emptyset$ for all $\al\neq\beta$ in $\cI$.

(b) For every $\infty$-substack $\cX'\subset\cX$, we set $\cI_{\cX'}:=\{\al\in\cI\,|\,\cX_{\al}\subseteq\cX'\}$.\label{N:IX'}

(c) We say that $\cX'$ is {\em $\{\cX_{\al}\}_{\al}$-adapted}\label{I:adapted infty substack}, if for every $\al\in\cI\sm\cI_{\cX'}$, we have $\cX_{\al}\cap\cX'=\emptyset$, that is, $\cX_{\al}\subset\cX\sm\cX'$. Equivalently, $\cX'$ is $\{\cX_{\al}\}_{\al}$-adapted if and only if for every $\al\in\cI$ we have either $\cX_{\al}\subseteq\cX'$ or $\cX_{\al}\cap\cX'=\emptyset$.

(d) By definition, the class of $\{\cX_{\al}\}_{\al}$-adapted $\infty$-substacks is closed under arbitrary intersections and complements.
\end{Emp}

\begin{Emp} \label{E:consstr}
{\bf Constructible stratification}. In the situation of \re{adapted}(a), we say that $\{\cX_{\al}\}_{\al\in \C{I}}$ forms a

(a) {\em finite constructible stratification}\label{I:constructible stratification} of $\cX$, if $\cI$ is finite, and there exists
an ordering $\al_1<\ldots<\al_n$ of $\cI$ and an increasing sequence of fp-open substacks $\emptyset=\cX_0\subsetneq\cX_1\subsetneq\ldots\subsetneq\cX_n=\cX$ such that $\cX_{\al_i}\subseteq\cX_i\sm\cX_{i-1}$, and the embedding $\cX_{\al_i}\hra\cX_i\sm\cX_{i-1}$ is a topological equivalence
for all $i=1,\ldots,n$.

(b) {\em bounded constructible stratification} of $\cX$, if there is a presentation $\cX\simeq\colim_{U\in \C{J}}\cX_U$ of $\cX$ as a filtered colimit such that each $\cX_U$ is an fp-open $\{\cX_{\al}\}_{\al}$-adapted substack of $\cX$,
and $\{\cX_{\al}\}_{\al\in \C{I}_{\cX_U}}$ forms a finite constructible stratification of $\cX_U$.

(c) {\em constructible stratification} of $\cX$, if $\cX$ can be written as a filtered colimit $\cX\simeq\colim_{\la\in \La}\cX_{\la}$ such that each $\cX_{\la}$ is a topologically fp-closed $\{\cX_{\al}\}_{\al}$-adapted substack of $\cX$, and $\{\cX_{\al}\}_{\al\in \C{I}_{\cX_{\la}}}$ forms a bounded constructible stratification of $\cX_{\la}$.
\end{Emp}


\begin{Emp} \label{E:remcons}
{\bf Finite case.} Assume that we are in the situation of \re{adapted}(a).

(a) A collection $\{\cX_{\al}\}_{\al\in\cI}$ forms a finite constructible stratification of $\cX$ if and only if there exists $\beta\in\cI$ such that $\cX_{\beta}\subset\cX$ is topologically
fp-closed, and $\{\cX_{\al}\}_{\al\in\cI\sm\beta}$ forms a finite constructible stratification of $\cX\sm\cX_{\beta}$.

Indeed, if such a $\beta$ exists, then the embedding $\cX_{\beta}\hra\cX\sm(\cX\sm\cX_{\beta})$ is a topological equivalence (see \re{complopcl}(c)). Conversely, in the situation of \re{consstr}(a), we have  $\cX_{n-1}=\cX\sm\cX_{\al_n}$, so $\beta=\al_n$ satisfies the required property.

(b) Assume that $\cZ\subset\cX$ is a topologically fp-closed $\{\cX_{\al}\}_{\al}$-adapted substack such that
$\{\cX_{\al}\}_{\al\in \C{I}_{\cZ}}$ (resp. $\{\cX_{\al}\}_{\al\in \C{I}_{\cX\sm\cZ}}$) forms a finite constructible stratification of $\cZ$
(resp. $\cX\sm\cZ$). Then  $\{\cX_{\al}\}_{\al\in \C{I}}$ forms a finite constructible stratification of $\cX$.
Indeed, this follows from (a) by induction on the cardinality of $\cI_{\cZ}$.
\end{Emp}

The following lemma summarizes simple properties of the notions we introduced.

\begin{Lem} \label{L:propcons}

Assume that $\{\cX_{\al}\}_{\al\in \C{I}}$ forms a (finite/bounded) constructible stratification of $\cX$.

(a) For a morphism $f:\cY\to\cX$ of $\infty$-stacks, the collection $\{f^{-1}(\cX_{\al})_{\red}\}_{\al\in\cI,f^{-1}(\cX_{\al})\neq\emptyset}$ forms a (finite/bounded) constructible stratification of $\cY$.

(b) If an $\infty$-substack $\cX'$ of $\cX$ is $\{\cX_{\al}\}_{\al}$-adapted, then $\{\cX_{\al}\}_{\al\in\cI_{\cX'}}$ forms a (finite/bounded) constructible stratification of $\cX'$.

(c) If $\{\cX_{\al,\beta}\}_{\beta\in\C{J}_{\al}}$ forms a finite constructible stratification of $\cX_{\al}$ for all $\al\in\cI$, then the collection
$\{\cX_{\al,\beta}\}_{\al\in\cI,\beta\in\C{J}_{\al}}$ forms a (finite/bounded) constructible stratification of $\cX$.

\end{Lem}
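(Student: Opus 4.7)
The plan is to handle all three parts by a common reduction: prove each assertion first in the finite case, then deduce the bounded case by applying the finite case to each open piece $\cX_U$ in the presentation $\cX \simeq \colim_{U \in \C{J}} \cX_U$, and finally deduce the general case by applying the bounded case to each topologically fp-closed piece $\cX_\la$ of $\cX \simeq \colim_\la \cX_\la$. At each passage to a colimit, one checks that $\{\cX_\al\}$-adaptedness of the individual pieces transfers to the constructions in (a), (b), and (c), and that the induced stratification on each piece is the one prescribed by the outer structure restricted there.

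For the finite case of (a), I would pull back the ascending filtration $\emptyset = \cX_0 \subsetneq \ldots \subsetneq \cX_n = \cX$ along $f$ to obtain fp-open substacks $\cY_i := f^{-1}(\cX_i) \subset \cY$. The identification $\cY_i \sm \cY_{i-1} \simeq f^{-1}(\cX_i \sm \cX_{i-1})$ follows from \re{compl}(c), and the topological equivalence $\cX_{\al_i} \hra \cX_i \sm \cX_{i-1}$ pulls back to a topological equivalence by \re{topeq}(c); the stability of $(P_\red)$-representability under pullback from \re{redP}(b) ensures that $f^{-1}(\cX_{\al_i})_\red$ is again a topologically fp-locally closed reduced substack of $\cY$. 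Deleting those indices $i$ for which $f^{-1}(\cX_{\al_i}) = \emptyset$ (equivalently, $\cY_i = \cY_{i-1}$) and renumbering yields the filtration demanded by \re{consstr}(a).

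For the finite case of (b), set $\cX'_i := \cX_i \cap \cX'$; each is fp-open in $\cX'$. When $\al_i \notin \C{I}_{\cX'}$ adaptedness forces $\cX_{\al_i} \cap \cX' = \emptyset$, and combining with the topological equivalence $\cX_{\al_i} \hra \cX_i \sm \cX_{i-1}$ and \rl{topequiv}(d) gives $\cX'_i = \cX'_{i-1}$; collapsing these equalities leaves a strictly increasing filtration indexed by $\C{I}_{\cX'}$. When $\al_i \in \C{I}_{\cX'}$, the inclusion $\cX_{\al_i} \hra \cX'_i \sm \cX'_{i-1}$ remains a topological equivalence for the same reason. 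For the finite case of (c), I would argue by induction on $n$ using \re{remcons}(b): by induction $\cX_{n-1}$ admits a finite constructible stratification by $\{\cX_{\al_i,\beta}\}_{i<n,\,\beta}$; the topological equivalence $\cX_{\al_n} \hra \cX \sm \cX_{n-1}$ transports the finite constructible stratification $\{\cX_{\al_n,\beta}\}_\beta$ of $\cX_{\al_n}$ to one of $\cX \sm \cX_{n-1}$, and \re{remcons}(b) then assembles the two into a finite constructible stratification of $\cX$.

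The principal technical subtlety is precisely this transport of a finite constructible stratification along a topological equivalence $\iota: \cX_{\al_n} \hra \cX \sm \cX_{n-1}$, needed in the finite case of (c). One pushes the reduced strata $\cX_{\al_n,\beta}$ forward as reduced topologically fp-locally closed substacks of $\cX \sm \cX_{n-1}$ (the reductions match tautologically since $\iota$ is a topological equivalence), and transports the ascending chain of fp-open substacks of $\cX_{\al_n}$ via the correspondence of \re{complopcl} between fp-open substacks and topologically fp-closed substacks, the latter being determined by reduced data. I expect the verification that the transported data satisfies the conditions of \re{consstr}(a) to be routine once one observes that both classes of substacks are controlled by the underlying topology together with reduced test schemes, data preserved by \rl{topequiv}(d).
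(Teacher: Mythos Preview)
Your proposal is correct and follows essentially the same strategy as the paper: reduce to the finite case and argue there. Two points of comparison are worth noting.

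For (b), the paper simply observes that it is a special case of (a): apply (a) to the inclusion $\iota:\cX'\hookrightarrow\cX$. Since $\cX'$ is $\{\cX_{\al}\}_{\al}$-adapted, each $\iota^{-1}(\cX_{\al})=\cX_{\al}\cap\cX'$ is either all of $\cX_{\al}$ (when $\al\in\cI_{\cX'}$) or empty, and $\cX_{\al}$ is already reduced. This is shorter than your direct argument, though yours is of course fine.

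For (c), the ``principal technical subtlety'' you identify --- transporting a finite constructible stratification along the topological equivalence $\cX_{\al_n}\hookrightarrow\cX\smallsetminus\cX_{n-1}$ --- is self-inflicted. Instead of taking $\cZ=\cX\smallsetminus\cX_{n-1}$ in \re{remcons}(b), take $\cZ=\cX_{\al_n}$ itself, which is topologically fp-closed in $\cX$ by \re{remcons}(a). Then $\cZ$ is $\{\cX_{\al,\beta}\}$-adapted (each $\cX_{\al,\beta}$ lies in some $\cX_{\al}$, which either equals or is disjoint from $\cX_{\al_n}$), the hypothesis gives the stratification of $\cZ$ directly, and the inductive hypothesis applied to $\cX\smallsetminus\cX_{\al_n}$ (which carries the stratification $\{\cX_{\al_i}\}_{i<n}$ by \re{remcons}(a)) gives the stratification of $\cX\smallsetminus\cZ$. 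No transport is needed. This is the route the paper's terse ``induction on $|\cI|$, using \re{remcons}(b)'' has in mind.
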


\begin{proof}
All assertions easily reduce to the case of finite stratifications, which we assume from now on. In this case, (a) follows from observation $f^{-1}(\cX_i\sm \cX_{i-1})\simeq f^{-1}(\cX_i)\sm f^{-1}(\cX_{i-1})$ (see \re{compl}(c)), (b) follows immediately from (a), and
(c) follows by induction on $|\cI|$, using \re{remcons}(b).
\end{proof}

\begin{Emp} \label{E:stratified}
{\bf Placidly stratified $\infty$-stacks.}
(a) We say that an $\infty$-stack $\cX$ is {\em $\cI$-stratified} (or simply {\em stratified}),\label{I:stratified infty stack} if it is equipped with a constructible stratification $\{\cX_{\al}\}_{\al\in\cI}$;

(b) We say that a stratified $\infty$-stack $(\cX,\{\cX_{\al}\}_{\al})$ is {\em placidly stratified},\label{I:placidly stratified infty stack} if every
$\cX_{\al}$ is a placid $\infty$-stack.
\end{Emp}

\begin{Emp} \label{E:semismall}
{\bf Semi-small maps.} (a) Let $\cY$ be a placidly stratified $\infty$-stack with a bounded constructible stratification $\{\cY_{\al}\}_{\al}$, and let $f:\cX\to\cY$ be a morphism of $\infty$-stacks. For every $\al\in\cI$, we set $\cX_{\al}:=f^{-1}(\cY_{\al})_{\red}$, and let
$f_{\al}:\cX_{\al}\to\cY_{\al}$ be the restriction $f$. 

(b) Assume that $\cX$ is placid, each $\cX_{\al}\subset\cX$ is of pure codimension $b_{\al}$ (see \re{fp-locally closed}(c)), and each $f_{\al}:\cX_{\al}\to\cY_{\al}$ is locally fp-representable and equidimensional of relative dimension $\dt_{\al}$ (see \re{eqmor}).

(c) We say that $f$ is {\em semi-small},\label{I:semismall morphism} if for every $\al\in \cI$ we have an inequality $\dt_{\al}\leq b_{\al}$.
Moreover, we say that a stratum $\cY_{\al}$ is {\em $f$-relevant}, if the equality  $\dt_{\al}=b_{\al}$ holds.

(d) Let $\cU\subset\cY$ be a $\{\cY_{\al}\}_{\al}$-adapted fp-open substack. We say that a semi-small map $f$ is {\em $\cU$-small}, \label{I:small morphism} if for every $\al\in\cI\sm\cI_{\cU}$, we have a strict inequality $\dt_{\al}< b_{\al}$, that is, all $f$-relevant strata are contained in $\cU$.
\end{Emp}

\begin{Emp} \label{E:remsemismall}
{\bf Relation to the classical notion.} Assume that $f:X\to Y$ is a dominant morphism of irreducible schemes of finite type over $k$, all of whose fibers are equidimensional.

(a) One can show that there exists a constructible stratification $Y_{\al}$ of $Y$ such that each $Y_{\al}$ is irreducible
and each $f_{\al}:X_{\al}\to Y_{\al}$ is equidimensional of constant relative dimension $\dt_{\al}$.
Then each $Y_{\al}\subset Y$ is of pure codimension, each $X_{\al}$ is equidimensional, thus each $X_{\al}\subset X$ is of pure codimension $b_{\al}$ in $X$. In other words, $f$ with stratification $\{Y_{\al}\}_{\al}$ satisfies the assumptions of \re{semismall}.

(b) Recall that classically a morphism $f$ is called {\em semi-small}, if we have
$\codim_Y(Y_{\al})\geq 2\dt_{\al}$ for all $\al$, and we claim that this happens if and only if we have
$\dt_{\al}\leq b_{\al}$ for all $\al$.

Indeed, it suffices to show that
\begin{equation} \label{Eq:codimss}
\codim_Y(Y_{\al})=b_{\al}+\dt_{\al}\text{ for all }\al.
\end{equation}

Let $Y_{\al_0}\subset Y$ be the open stratum. Then each of inequalities $\dt_{\al_0}\leq b_{\al_0}$ and $\codim_Y(Y_{\al_0})\geq 2\dt_{\al_0}$ implies that $\dt_{\al_0}=b_{\al_0}=0$. Thus $f$ is generically finite, hence $\dim X=\dim Y$. So equality \form{codimss} follows from equalities $\dim X_{\al}=\dim Y_{\al}+\dt_{\al}$ and $\dim X=\dim X_{\al}+b_{\al}$.

(c) Equality \form{codimss} also implies that a morphism $f$ is {\em small} in the classical sense if and only if it is $Y_{\al_0}$-small in the sense of \re{semismall}(d).

(d) Note that the assumptions of \re{semismall}(b) are never satisfied when not all fibers of $f:X\to Y$ are equidimensional. On the other hand, it is possible to weaken the assumptions of \re{semismall}(b) in order to include a general case of semi-small maps between irreducible schemes of finite type. 
\end{Emp}

\part{The Affine Springer fibration}

\section{The Goresky--Kottwitz--MacPherson stratification}

\subsection{Arc and loop spaces} We set $\co=k[[t]]$\label{N:O}, $F=k((t))$\label{N:F}.
We recall some basic definitions on arc and loop spaces (compare \cite[$\S$ 2-3]{ME}).

\begin{Emp} \label{E:arcloop}
{\bf Arc spaces.} \label{I:arc space}
(a) For every $n\in\B{N}$, we have a functor $A\mapsto A[t]/(t^{n+1})=A[[t]]/(t^{n+1})$ from $k$-algebras to $\C{O}$-algebras.
Thus, to every $\infty$-stack $\cX$ over $\cO$ and $n\in\B{N}$, we associate an $\infty$-prestack $\cL^+_n(\cX)$\label{N:clpnx} by the formula
$\clp_{n}(\cX)(A):=\cX(A[t]/(t^{n+1}))$ for every $k$-algebra $A$. It is easy to see that $\clp_n(\cX)$ automatically satisfies the sheaf condition, that is, $\cL^+_n(\cX)$ is an $\infty$-stack.

(b) Note that the functors $A\mapsto A[t]/(t^{n+1})$ form a projective system, thus we can form the limit $\clp(\cX):=\lim_n\clp_n(\cX)$,\label{N:clpx} called the {\em arc-$\infty$-stack} of $\cX$. For every $k$-algebra $A$, we have a natural morphism $\cX(A[[t]])\to\clp(\cX)(A)$. Moreover, this morphism is clearly an isomorphism when $\cX$ is an affine scheme.

(c) We have an evaluation map  $\ev_{\cX}:\clp (\cX)\to\clp_0(\cX)=\cX$\label{N:evx}.
\end{Emp}

\begin{Emp}
{\bf Remark.} The map $\cX(A[[t]])\to\clp(\cX)(A)$ from \re{arcloop}(b) is an isomorphism when
$\cX$ is a qcqs algebraic space (see, for example, \cite[Corollary 1.2]{Bh}). On the other hand, this nontrivial
fact will not be used in this work (beyond the affine case).
\end{Emp}

\begin{Lem} \label{L:arcetale}
If $f:X\to Y$ is an \'etale map of schemes of finite type over $\C{O}$, then the commutative diagram
\begin{equation*}
\begin{CD}
\clp (X) @>>> \clp_{n+1}(X) @>>> \clp_{n}(X) @>>> X\\
@VVV @VVV @VVV @VVV\\
\clp (Y) @>>> \clp_{n+1}(Y) @>>> \clp_{n}(Y) @>>> Y
\end{CD}
\end{equation*}
is Cartesian.  In particular, the induced map $\clp (X)\to \clp (Y)$ is \'etale and finitely presented.
\end{Lem}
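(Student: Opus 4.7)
The whole lemma follows from the standard infinitesimal lifting criterion for étale morphisms applied to the square–zero thickenings $A[t]/(t^{n+2}) \twoheadrightarrow A[t]/(t^{n+1})$, whose kernel $(t^{n+1})/(t^{n+2})$ squares to zero. My first step would be to prove that the right-most square
\[
\begin{CD}
\clp_{n+1}(X) @>>> \clp_{n}(X) \\
@VVV @VVV \\
\clp_{n+1}(Y) @>>> \clp_{n}(Y)
\end{CD}
\]
is Cartesian for every $n\geq -1$ (where $\clp_{-1}$ means the empty constraint, i.e.\ the point). On $A$-points this amounts to showing that the natural map
\[
X(A[t]/(t^{n+2})) \longrightarrow Y(A[t]/(t^{n+2})) \times_{Y(A[t]/(t^{n+1}))} X(A[t]/(t^{n+1}))
\]
is a bijection, which is exactly the infinitesimal lifting property of the étale $\cO$-morphism $f\colon X\to Y$ applied to the square-zero extension $A[t]/(t^{n+2})\twoheadrightarrow A[t]/(t^{n+1})$ of $\cO$-algebras (with $t\mapsto t$).

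Next, I would compose these Cartesian squares inductively. Pasting the squares from level $n$ down to level $0$ yields a canonical isomorphism
\[
\clp_{n}(X) \;\overset{\sim}{\longrightarrow}\; X \times_Y \clp_{n}(Y)
\]
for every $n\geq 0$. Passing to the projective limit, and using that filtered limits commute with finite limits (so with the fiber product $\cdot\times_Y\cdot$), I obtain an isomorphism $\clp(X)\isom X\times_Y \clp(Y)$. This shows that the outer square of the diagram is Cartesian, and hence so are all intermediate rectangles; this gives the Cartesian claim of the lemma.

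For the final assertion, since the identification $\clp_n(X)\simeq X\times_Y\clp_n(Y)$ (and $\clp(X)\simeq X\times_Y\clp(Y)$) exhibits $\clp_n(X)\to\clp_n(Y)$ (resp.\ $\clp(X)\to \clp(Y)$) as the base change of the étale, finitely presented morphism $f\colon X\to Y$ along the evaluation map $\clp_n(Y)\to Y$ (resp.\ $\clp(Y)\to Y$), the conclusion follows from the fact that being étale and finitely presented is stable under arbitrary base change. There is no real obstacle here: the only subtlety is the $\cO$-algebra structure on $A[t]/(t^{n+1})$, which is precisely what allows one to apply the étale lifting criterion to an $\cO$-morphism.
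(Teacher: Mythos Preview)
Your proof is correct and follows the standard route via the infinitesimal lifting criterion for \'etale morphisms, which is essentially what the paper's cited reference (Ein--Musta\c{t}\u{a}) does. One minor slip: the case $n=-1$ you mention parenthetically is false---the square with $\clp_{-1}=\pt$ on the right would force $\clp_0(X)\simeq\clp_0(Y)$---but since your pasting argument only uses the squares for $n\geq 0$, this does not affect the proof.
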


\begin{proof}
See, for example, \cite[Lemma 2.9 and beginning of $\S$3]{ME}.
\end{proof}

\begin{Emp} \label{E:proparc}
{\bf Simple properties.} (a) Notice that we have a natural isomorphism
\[
\clp_n(\B{A}^1)\isom\B{A}^{n+1}:\sum_{i=0}^na_i t^i\mapsto(a_0,\ldots,a_n),
\]
which identifies $\clp(\B{A}^1)$ with $\B{A}^{\infty}=\Spec k[\{a_i\}_{i\in\B{N}}]\simeq\lim_n\B{A}^n$. In particular, $\clp(\B{A}^1)$ is a strongly pro-smooth affine scheme.

(b) By definition, the arc functor $\cX\mapsto\clp(\cX)$ commutes with all limits. Thus, it follows from (a) that  $\cL^+(\B{A}^n)\simeq\cL^+(\B{A})^n$ is a  strongly pro-smooth affine scheme as well.

(c) Note that if $f:\cX\to\cY$ is a closed embedding of $\infty$-stacks over $\C{O}$, then the induced maps $\clp_n(\cX)\to\clp_n(\cY)$ and $\clp(\cX)\to\clp(\cY)$ are closed embeddings as well. In particular, using (b), we deduce that for every affine scheme $X$ over $\cO$, the arc space $\clp(X)$ is an affine scheme over $k$.

(d) By (c) and \rl{arcetale}, we conclude that if $X$ is a scheme (resp. algebraic space) over $\cO$, then $\clp(X)$ is a scheme (resp. algebraic space) as well. Namely, if $\{U_{\al}\}_{\al}$ forms an open (resp. \'etale) affine covering of $X$, then $\{\clp(U_{\al})\}_{\al}$ forms an open (resp. \'etale) affine covering of $\clp(X)$.

(e) If $X$ is a smooth scheme of finite type over $\cO$, then the schemes $\clp_{n}(X)$ are smooth, and transition maps
$\clp_{n+1}(X)\rightarrow\clp_{n}(X)$ are smooth, affine and surjective. Indeed, $X$ is \'etale locally isomorphic to $\B{A}^n$, so the assertion
follows from the case of $\B{A}^n$ (see (b)) and \rl{arcetale}. Therefore the arc space $\clp(X)$ is strongly pro-smooth.

(f) Let $H$ be a smooth group scheme over $\C{O}$, and let $X\to Y$ be an $H$-torsor over $\cO$. Then the induced morphism
$\clp(X)\to \clp(Y)$ is an $\clp(H)$-torsor (see \re{tors}). Indeed, since $H$ is smooth, there exists an \'etale covering $Y'\to Y$ such that $X\times_Y Y'\to Y'$ is a trivial $H$-torsor. Then  $\clp(X)\times_{\clp(Y)}\clp(Y')\to\clp(Y')$ is a trivial $\clp(H)$-torsor (by (b)), hence $\clp(X)\to \clp(Y)$ is an $\clp(H)$-torsor (by \rl{arcetale}). In particular, the induced map $[\clp(X)/\clp(H)]\to \clp(Y)$ is an isomorphism (see \re{tors}(c)).
\end{Emp}

\begin{Emp} \label{E:loop}
{\bf Loop spaces.}\label{I:loop space}
(a) To every $\infty$-stack $\cX$ over $F$ and $n\in\B{N}$, we associate an $\infty$-prestack $\cL\cX$\label{N:lx} by the formula
$\cL\cX(A):=\cX(A((t)))$ for every $k$-algebra $A$. The loop functor $\cX\mapsto\cL\cX$ commutes with all limits.

(b) Notice that  we have a natural isomorphism of sets $A((t))=\colim (A[[t]]\overset{t}{\to}A[[t]] \overset{t}{\to}\ldots)$, which induces an isomorphism
$\cL(\B{A}^1)\simeq\colim(\clp(\B{A}^1)\overset{t}{\to} \clp(\B{A}^1)\overset{t}{\to}\ldots)$, where all transition maps are fp-closed embeddings.
Thus $\cL(\B{A}^1)$ is represented by an ind-affine scheme. Therefore  $\cL(\B{A}^n)\cong\cL(\B{A}^1)^n$ is represented by an ind-affine scheme as well.

(c) Note that for every closed embedding $\cX\to\cY$ of $\infty$-stacks over $\C{O}$, the induced morphism $\cL\cX\to\cL\cY$ is a closed embedding as well.
Using (b), we conclude that for every affine scheme $X$ of finite type over  $F$, the loop space $\cL X$ is an ind-affine ind-scheme.
In particular, $\cL X$ is an $\infty$-stack, that is, satisfies the sheaf condition.
\end{Emp}

\begin{Emp}
{\bf Remarks.} (a) Though the loop functor $\cL\cX$ can be defined for an arbitrary $\infty$-prestack, it does not seem to be a natural object to consider, when $\cX$ is not an affine scheme. In particular, it fails to be an $\infty$-stack in general.

(b) A theorem of Drinfeld (see \cite[Theorem 6.3]{Dr}) asserts that for every smooth affine scheme $X$ of finite type over  $F$, the ind-scheme $\cL X$ is ind-placid. We are not going to use this fact in this work.
\end{Emp}

\begin{Emp} \label{E:assumption}
{\bf Set up.} Until end of this subsection, we fix $h\in\B{N}$\label{N:m} prime to the characteristic of $k$ and choose a primitive $h$-th root of unity $\xi\in k$. We set $\C{O}':=k[[t^{1/h}]]$\label{N:O'} and $F':=k((t^{1/h}))$.\label{N:F'}
\end{Emp}

\begin{Emp} \label{E:twistedver}
{\bf Twisting}. Assume that we are in the situation of \re{assumption}.

(a) For an affine scheme of finite type $X$ over $\cO$ (resp. $F$), we denote by $X_{\C{O}'}:=X\otimes_{\C{O}}\C{O}'$ (resp. $X_{F'}:=X\otimes_F F'$) the extension of scalars of $X$, and let $X':=R_{\C{O}'/\C{O}}(X_{\C{O}'})$ (resp. $X':=R_{F'/F}(X_{F'})$) be the Weil restriction of $X_{\C{O}'}$ (resp. $X_{F'}$). In other words, $X'$ is a scheme over $\cO$ (resp. $F$) defined by the rule $X'(A)=X(A\otimes_{\cO}\cO')$ (resp. $X'(A)=X(A\otimes_{F}F')$) for every $\cO$-algebra (resp. $F$-algebra) $A$. Arguing as is \re{proparc}(c)  one shows that $X'$ is an affine scheme of finite type over $\cO$ (resp. $F$). Moreover, if $X$ is smooth (thus formally smooth) over $\cO$ (resp. $F$), then so is $X'$ (compare \cite[Section 16]{GKM}).

(b) Let $\si\in \Aut (\C{O}'/\C{O})=\Aut(F'/F)$ be the automorphism $\si(t^{1/h}):=\xi t^{1/h}$. Then $\si$ induces an automorphism of $X'$,
and the scheme of fixed points $(X')^{\si}$ is $X$.

(c) For an automorphism $\phi$ of $X$ over $\cO$ (resp. $F$), we denote by $X_{\phi}:=(X')^{\phi\si}\subset X'$ the scheme of fixed points
of $\phi\si$. In particular, we have $X_{\phi}=X$, if $\phi$ is the identity.
Note that $X_{\phi}$ depends on $h$ and $\xi$, so we will sometimes denote it by
$X_{\phi}(\xi)$ or $X_{\phi}(\xi,h)$. Notice that if $\phi$ is of finite order, prime to the characteristic of $k$, then
the same is true for $\phi\si$. In particular each $X_{\phi}$ is smooth over $\cO$ (resp. $F$), if $X$ satisfies this property
(see \cite[Lemma 15.4.1]{GKM}).

(d) Let $W$ be a finite group acting on $X$ over $\C{O}$ (resp. $F$). Then  (by (c)) for every $w\in W$, we can consider the twist $X_w$ of $X$.
In particular, each $X_w$\label{N:xw} is smooth over $\C{O}$ (resp. $F$), if $X$ is smooth over $\C{O}$ (resp. $F$) and the exponent of $W$ divides $h$.
\end{Emp}

\begin{Emp} \label{E:twistloop}
{\bf Twisted arc/loop spaces}. Assume that we are in the situation of \re{twistedver}.

(a) For an affine scheme of finite type $X$ over $\cO$ (resp. $F$), we denote by  $\cL'^+(X)$\label{N:clp'x} (resp. $\cL'(X)$)\label{N:l'x} the functor
$\cL'^+(X)(A)=X(A[[t^{1/h}]])$ (resp. $\cL'(X)(A)=X(A((t^{1/h})))$). In particular, it satisfies all the properties that $\cL^+(X)$ (resp. $\cL(X)$) does. For example, $\cL'^+(X)$ is an affine scheme (resp. $\cL'(X)$ is an ind-affine scheme).
Notice also that we have a natural identification $\cL'^+(X)\simeq\cL^+(X')$ (resp. $\cL'(X)\simeq\cL(X')$).

(b) The automorphism $\si\in\Aut(\cO'/\cO)=\Aut(F'/F)$ induces an automorphism of   $\cL'^+(X)$ (resp. $\cL'(X)$), and we have
an identification   $\cL'^+(X)^{\si}=\cL^+(X)$ (resp. $\cL'(X)^{\si}=\cL(X)$).

(c) For an automorphism $\phi$ of $X$ over $\cO$ (resp. $F$), we have an identification
$\cL^+(X_{\phi})\simeq \cL'^+(X)^{\phi\si}$ (resp. $\cL(X_{\phi})\simeq \cL'(X)^{\phi\si}$).

(d) A morphism $\pi:X\to Y$ of affine schemes over $\cO$ (resp. $F$) induces a map $\cL'^+(X)\to \cL'^+(Y)$
(resp. $\cL'(X)\to \cL'(Y)$) of the corresponding arc (resp. loop) spaces, and we set
\[
\cL'^+(X)_{\cL^+(Y)}:=\cL'^+(X)\times_{\cL'^+(Y)}\cL^+(Y)\text{ (resp. }\cL'(X)_{\cL Y}:=\cL'(X)\times_{\cL'(Y)}\cL(Y).)
\]
\end{Emp}

\begin{Lem} \label{L:twist}
Let $W$ be a finite group, and let $\pi:X\to Y$ be a $W$-torsor of affine schemes over $F$. Then, in the notation of \re{twistloop}(d) and  \re{twistedver}(d), we have an equality
\[
\cL'(X)_{\cL Y}=\sqcup_{w\in W}\cL(X_w)\subset \cL'(X).
\]
\end{Lem}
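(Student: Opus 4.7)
\emph{Step 1 (unwinding definitions).} First I unwind $\cL'(X)_{\cL Y}$ using the definition in \re{twistloop}(d). By \re{twistloop}(b) one has $\cL(Y)=\cL'(Y)^{\sigma}$ as a subfunctor of $\cL'(Y)$, so a point $\wt x\in\cL'(X)(A)$ lies in $\cL'(X)_{\cL Y}(A)$ exactly when its image $\pi(\wt x)\in\cL'(Y)(A)$ is $\sigma$-invariant; since $\pi$ is defined over $F$, the operators $\pi$ and $\sigma$ commute, and the condition becomes $\pi(\sigma\wt x)=\pi(\wt x)$.

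\emph{Step 2 (torsor property).} Next I translate the $W$-torsor condition to the loop space. The action map $a:W\times X\to X\times_Y X$, $(w,x)\mapsto(wx,x)$, is an isomorphism of $F$-schemes. Applying the loop functor $\cL'$ (which commutes with products and fibre products by inspection of the functor-of-points definition) and identifying $\cL'(W)=W$ as sheaves on $\Aff_k$ (using that $W$ is a finite constant $F$-scheme and $\pi_0(\Spec A((t^{1/h})))=\pi_0(\Spec A)$ for every $k$-algebra $A$), I obtain an isomorphism
\[
W\times\cL'(X)\isom\cL'(X)\times_{\cL'(Y)}\cL'(X).
\]
Hence every $\wt x\in\cL'(X)_{\cL Y}(A)$ determines a \emph{unique} $\wt w\in W(A)$ satisfying $\sigma\wt x=\wt w\cdot\wt x$.

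\emph{Step 3 (decomposition into strata).} For each $w\in W$ I let $Z_w\subset\cL'(X)$ be the subfunctor
\[
Z_w(A)=\{\wt x\in\cL'(X)(A)\mid\sigma\wt x=w\cdot\wt x\}.
\]
The element $\wt w\in W(A)=W^{\pi_0(\Spec A)}$ produced in Step 2 yields a Zariski open–closed decomposition $\Spec A=\sqcup_{w\in W}(\Spec A)_w$ on which $\wt w$ restricts to the constant section $w$, and $\wt x|_{(\Spec A)_w}$ lies in $Z_w((\Spec A)_w)$. Therefore $\cL'(X)_{\cL Y}=\sqcup_{w\in W}Z_w$ as Zariski (a fortiori \'etale) sheaves.

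\emph{Step 4 (identifying strata).} The equation $\sigma\wt x=w\cdot\wt x$ is equivalent to $(w^{-1}\sigma)(\wt x)=\wt x$, so $Z_w$ is the fixed-point subfunctor of $w^{-1}\sigma$ acting on $\cL'(X)=\cL(X')$. Since the loop functor commutes with limits, this equals $\cL((X')^{w^{-1}\sigma})=\cL(X_{w^{-1}})$ by \re{twistloop}(c). Reindexing $w\mapsto w^{-1}$ gives the desired identity $\cL'(X)_{\cL Y}=\sqcup_{w\in W}\cL(X_w)$.

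\emph{Where the work is.} The computation is essentially formal; the one real subtlety appears in Step 3, because the element $\wt w$ matching $\sigma\wt x$ with $\wt x$ is only a \emph{locally} constant section of $W$, not a constant element. This is handled cleanly by exploiting $W(A)=W^{\pi_0(\Spec A)}$ to produce a canonical open–closed decomposition of $\Spec A$, which converts local information into a genuine disjoint-union decomposition of $\cL'(X)_{\cL Y}$ into the $Z_w$'s.
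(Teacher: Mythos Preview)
Your proof is correct and follows the same approach as the paper: identify $\cL'(X)_{\cL Y}$ with the locus where $(\wt x,\sigma\wt x)$ lies in $\cL'(X)\times_{\cL'(Y)}\cL'(X)$, use the torsor isomorphism $W\times\cL'(X)\isom\cL'(X)\times_{\cL'(Y)}\cL'(X)$ to extract a (locally constant) $w$, and identify each stratum with a fixed-point locus $\cL'(X)^{w\sigma}=\cL(X_w)$. The only differences are cosmetic: the paper uses the action map $(w,x)\mapsto(x,wx)$ rather than $(w,x)\mapsto(wx,x)$, which avoids your final reindexing $w\mapsto w^{-1}$, and the paper does not spell out the locally-constant-to-disjoint-union passage that you handle explicitly in Step~3.
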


\begin{proof}
Since $\cL Y=\cL'(Y)^{\si}$, and $\cL'$ commutes with fiber products, the closed ind-subscheme  $\cL'(X)_{\cL(Y)}\subset \cL'(X)$
is identified with the locus of all $x\in \cL'(X)$ such that $(x,\si(x))$ lies in $\cL'(X)\times_{\cL'(Y)}\cL'(X)\subset \cL'(X)\times \cL'(X)$.

Since  $\pi:X\to Y$ is a $W$-torsor, the action map $W\times X\to X\times_Y X:(w,x)\to (x,wx)$ is an isomorphism.
Since $\cL'$ commutes with limits, and $\cL'(W)=W$, the induced morphism  $W\times \cL'(X)\to \cL'(X)\times_{\cL'(Y)}\cL'(X)$ is an
isomorphism as well.

Therefore the ind-subscheme $\cL'(X)_{\cL Y}\subset \cL'(X)$ decomposes as a disjoint union of closed ind-subschemes
$\{x\in \cL'(X)\,|\,(x,\si(x))=(x,w^{-1}(x))\}$, that is, of $\cL'(X)^{\si w}=\cL(X_w)$.
\end{proof}

The following version of \rl{arcetale} for loop spaces will be proven in \rsec{pfrlBC}.

\begin{Prop} \label{P:BC}
In the situation of \rl{twist}, assume that the exponent of $W$ divides $h$ (see \re{assumption}). Then the induced map $\phi:\cL'(X)_{\cL Y}\to \cL(Y)$ is a $W$-torsor.
\end{Prop}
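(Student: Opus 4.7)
The plan is to verify the two conditions of \re{tors}(b): that $\phi$ is an epimorphism in the \'etale topology, and that the associated action map is an isomorphism. The action-map condition comes essentially for free from the proof of \rl{twist}: since $\cL'$ preserves limits, the isomorphism $W \times X \isom X \times_Y X$ (expressing that $\pi$ is a $W$-torsor) yields $W \times \cL'(X) \isom \cL'(X) \times_{\cL'(Y)} \cL'(X)$, and base-changing the target along $\cL Y \hookrightarrow \cL'(Y)$ gives $W \times \cL'(X)_{\cL Y} \isom \cL'(X)_{\cL Y} \times_{\cL Y} \cL'(X)_{\cL Y}$, as required.

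The main step is to show \'etale-local surjectivity of $\phi$. A morphism $\Spec R \to \cL Y$ corresponds to a map $y : \Spec R((t)) \to Y$, and pulling back $\pi$ along $y$ produces a finite \'etale $W$-torsor algebra $B$ of rank $|W|$ over $S := R((t))$. A lift of $y$ to $\cL'(X)_{\cL Y}$, after passing to an \'etale cover $R \to R'$, is precisely a trivialization of the $W$-torsor $B \otimes_S R'((t^{1/h}))$; so the problem reduces to showing that $B \otimes_S R((t^{1/h}))$ is, \'etale-locally on $\Spec R$, the trivial $W$-torsor over $R((t^{1/h}))$.

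The key input is a relative form of Abhyankar's lemma. Because the exponent of $W$ divides $h$ and $h$ is invertible in $k$, the $W$-torsor $B$ is tamely ramified along the divisor $(t=0) \subset \Spec R[[t]]$, so $B \otimes_S R((t^{1/h}))$ extends uniquely to a finite \'etale $W$-torsor $B^{*}$ over $R[[t^{1/h}]]$. Since $(R[[t^{1/h}]], (t^{1/h}))$ is Henselian (by $t^{1/h}$-adic completeness), the reduction functor identifies finite \'etale $R[[t^{1/h}]]$-algebras with finite \'etale $R$-algebras, so $B^{*}$ descends to a finite \'etale $W$-torsor $\bar B$ over $R$, which is tautologically trivialized by the \'etale cover $\Spec \bar B \to \Spec R$. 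Lifting this trivialization back through the Henselian equivalence and then to $R((t^{1/h}))$ produces the desired trivialization.

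The principal obstacle will be the Abhyankar step: the classical statement applies when $R$ is regular, but here $R$ is an arbitrary $k$-algebra. I would handle this by a standard spreading-out argument: $B$ is finitely presented, hence descends to a finitely generated $k$-subalgebra of $R$, and after an \'etale extension to reach a regular (or at least normal) model the classical Abhyankar's lemma applies, after which one pulls everything back to $R$.
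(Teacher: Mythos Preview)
Your overall architecture matches the paper exactly: the action-map isomorphism is obtained just as you say, and the epimorphism claim is reduced to showing that the pulled-back $W$-torsor over $R((t))$ becomes trivial over $R'((t^{1/h}))$ for some \'etale cover $R \to R'$. The Henselian descent step you describe (extend over $R[[t^{1/h}]]$, then reduce mod $t^{1/h}$ to get an \'etale $W$-torsor over $R$) is also the right endgame.

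The gap is in your handling of the Abhyankar step. Your proposed spreading-out does not work: a finite \'etale algebra $B$ over $R((t))$ is presented by finitely many Laurent series, each of which has infinitely many coefficients in $R$, so there is no reason for $B$ to descend to $R_0((t))$ for a finitely generated $k$-subalgebra $R_0 \subset R$. Concretely, $R((t))$ is \emph{not} the filtered colimit of the $R_0((t))$, so the standard noetherian-approximation machinery does not apply. (Your secondary suggestion, ``after an \'etale extension to reach a regular model'', is also problematic: \'etale morphisms preserve regularity but do not create it.)

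The paper's fix is to replace $R((t))$ by the Henselization $R\{\{t\}\}[1/t]$ of $(R[t],(t))$, localized at $t$. A result of Bouthier--\v{C}esnavi\v{c}ius \cite[Cor.~2.1.21]{BC} shows that the categories of finite \'etale covers of $\Spec R((t))$ and of $\Spec R\{\{t\}\}[1/t]$ are equivalent; and since Henselization \emph{does} commute with filtered colimits, one can now pass to the strict Henselization $R^{sh}_{\bar y}$ at a geometric point. For strictly Henselian $R$ the required Abhyankar-type statement (without regularity hypotheses) is supplied by \cite[Lem.~3.2.1]{BC}: the torsor trivializes over $R((t^{1/d}))$ for some $d$ prime to $\on{char} k$, and a short argument with connected components shows one may take $d\mid h$. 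So the missing idea is precisely this detour through the Henselization to make the limit argument go through.
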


\begin{Emp}
{\bf Remark.} Notice that it follows from \rp{BC} that if  the exponent of $W$ divides $h$, then the ind-subscheme $\cL'(X)_{\cL(Y)}\subset\cL'(X)$ does not depend on $h$. It can be seen directly. Namely,

(a) In the situation of \re{twistedver}(c), for every $r\in\B{N}$ prime to $h$, the power $\xi^r$ is another primitive $h$-th root of unity, and we have an equality  $X_{\phi}(\xi^r)=X_{\phi^r}(\xi)$. Moreover, for every $n\in\B{N}$ and every primitive $hn$-th root of unity $\xi$, its power $\xi^n$ is the $n$-root of unity, and we have an equality $X_{\phi}(\xi,hn)=X_{\phi}(\xi,h)$.

(b) It follows from (a) that the collection  $\{X_w\}_{w\in W}$ does not depend neither on $h$ nor on $\xi$. Therefore in the situation of \rl{twist}, the ind-subscheme $\cL'(X)_{\cL(Y)}\subset\cL'(X)$ does not depend on $h$, as claimed.
\end{Emp}

\subsection{Stratification by valuation}

\begin{Emp} \label{E:strval}
{\bf The $\B{A}^1$-case.}
Recall that for every $k$-algebra $A$, the set  $\clp_n(\B{A}^1)(A)$ classifies power series $\sum_{i=0}^{\infty}a_it^i\in A[[t]]$.

(a) For every $n\in\B{N}$, let $\clp(\B{A}^1)_{\geq n}\subset\clp(\B{A}^1)$ be the fp-closed subscheme of zeros of $a_0,\ldots, a_{n-1}$. Explicitly, $\clp(\B{A}^1)_{\geq n}(A)$ classifies power series
$\sum_{i=0}^{\infty}a_it^i\in A[[t]]$ with $a_0=\ldots=a_{n-1}=0$.

(b) We set $\clp(\B{A}^1)_{\leq n}:=\clp(\B{A}^1)\sm \clp(\B{A}^1)_{\geq n+1}$. Then $\clp(\B{A}^1)_{\leq n}\subset\clp(\B{A}^1)$ is an fp-open subscheme, and $\{\clp(\B{A}^1)_{\leq n}\}_{n\geq 0}$
gives an fp-open covering of $\clp(\B{A}^1)_{\bullet}:=\clp(\B{A}^1)\sm\{0\}$.\label{N:a1bullet} Explicitly, $\clp(\B{A}^1)_{\leq n}(A)$ classifies power series $\sum_{i=0}^{\infty}a_it^i\in A[[t]]$ such that the ideal $(a_0,\ldots,a_{n})\subset A$ is $A$.

(c) For every $n\in\B{N}$, we set $\clp(\B{A}^1)_n:=\clp(\B{A}^1)_{\geq n}\cap \clp(\B{A}^1)_{\leq n}$. Then $\clp(\B{A}^1)_n\subset \clp(\B{A}^1)_{\geq n}$ is the basic open set $a_n\neq 0$. In particular, it is an affine fp-locally closed subscheme of $\clp(\B{A}^1)$. Explicitly, $\clp(\B{A}^1)_{n}(A)$ classifies power series $\sum_{i=0}^{\infty}a_it^i\in A[[t]]$ with $a_0=\ldots =a_{n-1}=0$ and $a_n\in A^{\times}$.
Moreover, $\{\clp(\B{A}^1)_{n}\}$ forms a bounded constructible stratification (see \re{consstr}) of $\clp(\B{A}^1)_{\bullet}$.

(d) Notice that the open embedding $\B{G}_m\hra\B{A}^1$ induces an isomorphism $\clp(\B{G}_m)\isom \clp(\B{A}^1)_0$ and a monomorphism  $\cL(\B{G}_m)\hra\cL(\B{A}^1)$. Furthermore, the map $f\mapsto (f,f^{-1}):\cL(\B{G}_m)\hra\cL(\B{A}^2)$ identifies $\cL(\B{G}_m)$ with a closed ind-subscheme of $\cL(\B{A}^2)$ classifying
$(f,g)\in A((t))$ such that $fg=1$.

(e) For $n\in\B{N}$, the composition
$\clp(\B{A}^1)_{n}\hra \clp(\B{A}^1)\hra\cL(\B{A}^1)$ factors through $\cL(\B{G}_m)\hra \cL(\B{A}^1)$. Moreover, the induced map $\clp(\B{A}^1)_{n}\hra\cL(\B{G}_m)$ is an fp-closed embedding.
Indeed, its image classifies pairs $f=\sum_{i}a_it^i, g=\sum_{j} b_jt^j\in A((t))$ such that $fg=1$, $a_i=0$ for all $i<n$ and $b_j=0$ for all $j<-n$.
\end{Emp}

\begin{Emp} \label{E:strval2}
{\bf The general case.} Let $X$ be an affine scheme over $\C{O}$,  let $f\in\C{O}[X]$ be a regular function, and $n\in\B{N}$.

(a) Then $f$ can be viewed as a morphism $f:X\to\B{A}^1$ of affine schemes over $\C{O}$, hence induces a morphism $f:\clp(X)\to \clp(\B{A}^1)$, and we denote by $\clp(X)_{(f;\geq n)}$, $\clp(X)_{(f;\leq n)}$ and $\clp(X)_{(f;n)}$\label{N:clpxfn} the reduced schematic preimages $f^{-1}(\clp(\B{A}^1)_{\geq n})_{\red}$, $f^{-1}(\clp(\B{A}^1)_{\leq n})_{\red}$ and $f^{-1}(\clp(\B{A}^1)_{n})_{\red}$, respectively.
Moreover, if $g\in \C{O}[X]$ is another regular function we can form the reduced schematic intersection $\clp(X)_{(f;n),(g;m)}$ of $\clp(X)_{(f;n)}$ and $\clp(X)_{(g;m)}$.

(b) Note that every $\clp(X)_{(f;\geq n)}\subset \clp(X)$ is a reduction of an fp-closed subscheme, and $\clp(X)_{(f;n)}\subset \clp(X)_{(f;\geq n)}$ is a basic open subscheme given by equation $f^*(a_n)\neq 0$. In particular, both $\clp(X)_{(f;\geq n)}$ and $\clp(X)_{(f;n)}$ are affine.

(c) By \re{strval}(d) and \rl{propcons}(a), we conclude that $\{\clp(X)_{(f;n)}\}$ forms a bounded
constructible stratification of $\clp(X)_{f\neq 0}:=f^{-1}(\clp(\B{A}^1)_{\bullet})\subset\clp(X)$.\label{N:clpxfneq0}

(d) Let $X_f\subset X$ be the basic open subset $f\neq 0$. Using \re{strval}(e), we get an isomorphism $\clp(X_f)_{\red}\isom \clp(X)_{(f;0)}$ and a topologically fp-closed embedding $\clp(X)_{(f;n)}\hra \cL(X_f)$.
\end{Emp}

\begin{Lem} \label{L:red}
In the situation of \re{strval2}, assume that $f$ decomposes as a product $f=\prod_{i=1}^k f_i$. Then $\clp(X)_{(f;n)}$ decomposes as a disjoint union $$\sqcup_{m_1,\ldots, m_k, \sum m_i=n}  \clp(X)_{(f_1;m_1),\ldots, (f_k;m_k)}.$$
\end{Lem}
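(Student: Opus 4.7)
The plan is to verify the decomposition first set-theoretically on field-valued points, and then to upgrade it to a scheme-theoretic decomposition, exploiting the fact that all pieces in the statement are defined as reductions of fp-locally closed subschemes of $\clp(X)$.

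First, I would work at the level of $K$-points for an arbitrary field $K$. A $K$-point of $\clp(X)$ is a morphism $x:\Spec K[[t]]\to X$, giving rise to elements $f_1(x),\ldots,f_k(x)\in K[[t]]$ with $f(x)=\prod_{i=1}^k f_i(x)$. By the definitions in \re{strval}(c) and \re{strval2}(a), $x$ lies in $\clp(X)_{(f;n)}(K)$ if and only if the $t$-adic valuation $v(f(x))$ equals $n$, and similarly for the strata $\clp(X)_{(f_1;m_1),\ldots,(f_k;m_k)}(K)$. Since $K[[t]]$ is an integral domain, valuation is additive under products, so $v(f(x))=\sum_i v(f_i(x))$. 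In particular, $v(f(x))=n$ forces each $f_i(x)\neq 0$ and hence $v(f_i(x))\in\{0,1,\ldots,n\}$, yielding the set-theoretic disjoint decomposition on $K$-points.

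Second, I would upgrade this to a scheme-theoretic decomposition. By \re{strval2}(b) and \re{strval}(a), the subscheme $\clp(X)_{(f_i;\geq m_i)}\subset\clp(X)$ is cut out by the vanishing of the first $m_i$ coefficients of $f_i$. Therefore the reduced scheme-theoretic intersection $\bigcap_i \clp(X)_{(f_i;\geq m_i)}\cap\clp(X)_{(f;n)}$, together with the basic open conditions $f_i^\ast(a_{m_i})\neq 0$, cuts out precisely $\clp(X)_{(f_1;m_1),\ldots,(f_k;m_k)}$. Inside $\clp(X)_{(f;n)}$, the closed conditions $v(f_i(x))\geq m_i$ combined with the equality $\sum v(f_i(x))=n=\sum m_i$ force all inequalities to become equalities, so the basic open conditions are automatic. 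Thus each $\clp(X)_{(f_1;m_1),\ldots,(f_k;m_k)}$ is in fact an fp-closed reduced subscheme of $\clp(X)_{(f;n)}$, and the set-theoretic decomposition established above shows that these finitely many closed pieces (indexed by compositions of $n$ into $k$ non-negative parts) cover $\clp(X)_{(f;n)}$ disjointly on points.

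Finally, since the indexing set is finite, each closed piece is also open (as the complement of the finite union of the others), so the family forms a coproduct decomposition in schemes. Both sides are reduced fp-locally closed subschemes of $\clp(X)$ with matching underlying topological spaces, hence the scheme-theoretic equality follows. There is no substantial obstacle here — the only thing to keep track of is that valuation is additive in $K[[t]]$ and that the condition $v(f_i(x))\geq m_i$ defines a closed subscheme, both of which are automatic from the constructions in \re{strval} and \re{strval2}.
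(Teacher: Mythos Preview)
Your argument is correct. The paper's proof takes a slightly different route: it first reduces by induction to the case $k=2$, and then uses the morphism $(f_1,f_2):X\to\B{A}^2$ to pull the statement back from the universal case $X=\B{A}^2$, $f_1=x$, $f_2=y$, $f=xy$, which it then declares ``straightforward''. In that model case the coefficients of $f_1,f_2$ are literally the coordinates on $\clp(\B{A}^2)$, so the pieces $\clp(\B{A}^2)_{(x;m),(y;n-m)}$ are visibly linear subspaces intersected with a principal open, and the disjoint clopen decomposition is immediate.

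Your approach bypasses the reduction and argues directly on $\clp(X)$: additivity of $t$-adic valuation in $K[[t]]$ gives the set-theoretic decomposition, and the observation that inside $\clp(X)_{(f;n)}$ the conditions $v(f_i)\geq m_i$ (which are fp-closed) already force $v(f_i)=m_i$ shows that each stratum is closed, hence clopen since the index set is finite. Both proofs ultimately rest on the same core fact; the paper's reduction buys a cleaner final computation, while your direct argument avoids introducing the auxiliary map to $\B{A}^k$ and makes explicit why reducedness of the strata is the point (as the paper's remark immediately after the lemma also stresses).
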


\begin{proof}
By induction, we reduce to the case $k=2$. Moreover, using morphism
$(f_1,f_2):X\to\B{A}^2$, we reduce to the case when $X$ is the affine space $\B{A}^2$ with coordinates $x,y$ and $f=xy$. It suffices to show that the stratum $\clp(\B{A}^2)_{(xy;n)}$ decomposes as a disjoint union
$\sqcup_{m=0}^n  \clp(\B{A}^2)_{(x;m),(y;n-m)}$, which is straightforward.
%
\end{proof}

\begin{Emp}
{\bf Remark.} The assertion of the lemma is false, if our strata are not assumed to be reduced.
\end{Emp}

\begin{Emp} \label{E:ext}
{\bf Extension of scalars.} (a) Applying the construction of \re{strval} to the situation of \re{twistloop}, we get a stratum
$\cL'^+(\B{A}^1)_n\subset \cL'^+(\B{A}^1)$ for each $n\in\frac{1}{h}\B{N}$. Note that the intersection $\cL^+(\B{A}^1)\cap \cL'^+(\B{A}^1)_n$ is empty, if $n\notin\B{N}$, and equals $\cL^+(\B{A}^1)_n$, if $n\in\B{N}$.

(b) Each $f\in\cO[X]$ defines a morphism $f:\cL'^+(X)\to \cL'^+(\B{A}^1)$, thus defines a stratum
$\cL'^+(X)_{(f,n)}:=f^{-1}(\cL'^+(\B{A}^1)_n)$ for every $n\in\frac{1}{h}\B{N}$.
\end{Emp}

The following result is a version of \rl{twist} and \rp{BC} for arc spaces.

\begin{Lem} \label{L:BC2}
In the situation of \re{twistedver}, assume $\pi:X\to Y$ is a finite morphism of affine schemes over $\cO$, and group $W$ acts on $X$ over $Y$. Let
$f\in\cO[Y]$ be such that the induced map $(X_f)_F\to (Y_f)_F$ is a $W$-torsor, where we identify $f\in\cO[Y]$ with $\pi^*(f)\in\cO[X]$.
For every $n\in\B{N}$,

(a) the induced morphism $\phi:(\cL'^+(X)_{\cL^+(Y)})_{(f,n)}\to \cL^+(Y)_{(f,n)}$ is a $W$-torsor;

(b) we have an equality $(\cL'^+(X)_{\cL^+(Y)})_{(f,n)}=\sqcup_{w\in W}\cL^+(X_w)_{(f,n)}\subset\cL'^+(X)$.
\end{Lem}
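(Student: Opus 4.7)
The plan is to deduce this arc-space statement from the loop-space analog \rp{BC} and the decomposition \rl{twist}, by embedding the $(f,n)$-stratum of the arc space into the appropriate loop space, where the hypotheses of the loop-space results apply to the $W$-torsor $(X_f)_F \to (Y_f)_F$.

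\textbf{Step 1: Identify the $(f,n)$-stratum inside a loop space.} By \re{strval2}(d), the embedding $\cL'^+(X)_{(f,n)} \hookrightarrow \cL'(X_f)$ is a topologically fp-closed embedding. I would first check that under this embedding, the fiber product $\cL'^+(X)_{\cL^+(Y)} = \cL'^+(X) \times_{\cL'^+(Y)} \cL^+(Y)$ maps into $\cL'(X_f)_{\cL(Y_f)} := \cL'(X_f) \times_{\cL'(Y_f)} \cL(Y_f)$, because $f$ is $W$-invariant and pulled back from $Y$, so the factoring condition through $\cL^+(Y)$ is preserved after localizing by $f$. The closed embedding $\cL^+(Y)_{(f,n)} \hookrightarrow \cL(Y_f)$ (again by \re{strval2}(d)) then gives an identification
\[
(\cL'^+(X)_{\cL^+(Y)})_{(f,n)} \simeq \cL'(X_f)_{\cL(Y_f)} \times_{\cL(Y_f)} \cL^+(Y)_{(f,n)}.
\]

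\textbf{Step 2: Prove (a).} Apply \rp{BC} to the $W$-torsor $(X_f)_F \to (Y_f)_F$ (the divisibility hypothesis on $h$ from \re{assumption} matches): the map $\cL'(X_f)_{\cL(Y_f)} \to \cL(Y_f)$ is a $W$-torsor. Since the class of $W$-torsors is stable under base change, pulling back along the closed embedding $\cL^+(Y)_{(f,n)} \hookrightarrow \cL(Y_f)$ gives a $W$-torsor $(\cL'^+(X)_{\cL^+(Y)})_{(f,n)} \to \cL^+(Y)_{(f,n)}$ by Step 1.

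\textbf{Step 3: Prove (b).} Apply \rl{twist} to the same $W$-torsor to obtain the decomposition
\[
\cL'(X_f)_{\cL(Y_f)} = \bigsqcup_{w \in W} \cL((X_f)_w).
\]
Since $f \in \cO[Y]$ is $W$-invariant, the Weil restriction construction commutes with passing to the open locus $f \neq 0$, giving a natural identification $(X_f)_w = (X_w)_f$ compatible with the embeddings into $R_{F'/F}((X_f)_{F'})$. Intersecting each component with $\cL^+(Y)_{(f,n)} \subset \cL(Y_f)$ and using the embedding $\cL^+(X_w)_{(f,n)} \hookrightarrow \cL((X_w)_f)$ from \re{strval2}(d), each summand corresponds to $\cL^+(X_w)_{(f,n)}$, yielding the claimed decomposition.

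\textbf{Main obstacle.} The only non-formal step is the compatibility $(X_f)_w = (X_w)_f$ together with the identification in Step 1. Both rest on the $W$-invariance of $f$ and the commutation of Weil restriction with localization at a $\si$-invariant function, but should be written out carefully to make sure the fiber-product and twist operations truly match on the nose (not just topologically). Beyond that, the proof is essentially a bookkeeping reduction to the already-proven loop-space statements \rp{BC} and \rl{twist}.
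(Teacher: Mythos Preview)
Your overall strategy---reduce to the loop-space statements \rp{BC} and \rl{twist} via the closed embedding $\cL^+(Y)_{(f,n)}\hookrightarrow\cL(Y_f)$---is exactly the paper's approach, and Step~2 is correct once Step~1 is established. But the identification in Step~1 is \emph{not} a formality, and your proposal misses the one genuine idea needed. The embedding $\cL'^+(X)_{(f,n)}\hookrightarrow\cL'(X_f)$ gives you a closed embedding
\[
(\cL'^+(X)_{\cL^+(Y)})_{(f,n)}\hookrightarrow \cL'(X_f)\times_{\cL'(Y_f)}\cL^+(Y)_{(f,n)},
\]
but there is no formal reason for it to be surjective: a point of the right-hand side is an $x\in X_f(K((t^{1/h})))$ whose image in $Y_f$ lies in $Y(K[[t]])$, and you must show that $x$ itself extends to $X(K[[t^{1/h}]])$. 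This is exactly where the hypothesis that $\pi:X\to Y$ is \emph{finite} (hence proper) enters, via the valuative criterion---a hypothesis you never invoke. The paper handles this by first noting (from \rp{BC}) that the right-hand side is a reduced affine scheme, so the closed embedding is an isomorphism once it is bijective on geometric points, and then uses properness of $\pi$ to produce the integral lift.

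The same gap recurs in Step~3: pulling back the decomposition of \rl{twist} along $\cL^+(Y)_{(f,n)}\hookrightarrow\cL(Y_f)$ gives summands $\cL((X_f)_w)\times_{\cL(Y_f)}\cL^+(Y)_{(f,n)}$, and you need to identify these with $\cL^+(X_w)_{(f,n)}$. Again one inclusion is clear, but the other requires knowing that the right-hand side is already contained in $\cL'^+(X)$, which comes from the isomorphism established in part~(a). Your ``main obstacle'' paragraph flags the bookkeeping around $(X_f)_w=(X_w)_f$, but that is not the crux; the missing content is the properness argument.
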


\begin{proof}
(a) Applying  \rp{BC} to the restriction $(X_f)_F\to (Y_f)_F$, and taking pullback with respect to
a closed embedding $\cL^+(Y)_{(f,n)}\hra \cL(Y_f)$, we conclude that the induced map
\begin{equation} \label{Eq:wtorsor}
\cL'(X_f)\times_{\cL'(Y_f)}\cL^+(Y)_{(f,n)}\to \cL^+(Y)_{(f,n)}
\end{equation}
is a $W$-torsor. In particular, the LHS of \form{wtorsor} is a reduced affine scheme.
Next we observe that we have a natural closed embedding $\cL'^+(X)_{(f,n)}\hra \cL'(X_f)$, which induces a closed embedding
\[
(\cL'^+(X)_{\cL^+(Y)})_{(f,n)}\hra\cL'(X_f)\times_{\cL'(Y_f)}\cL^+(Y)_{(f,n)}
\]
of reduced affine schemes. Therefore in order to show that the latter map is an isomorphism, it suffices to show that it is
bijective on geometric points.

Note that $(\cL'^+(X)_{\cL^+(Y)})_{(f,n)}\subset\cL'^+(X)$ is the reduction of the preimage of $\cL^+(Y)_{(f,n)}\subset \cL'^+(Y)$, under the
map $\pi':\cL'^+(X)\to\cL'^+(Y)$, induced by $\pi$. Since all geometric fibers of \form{wtorsor} are $W$-torsors, while the map $\pi':\cL'^+(X)\to\cL'^+(Y)$ is $W$-equivariant, it suffices to show for every geometric point $y\in \cL^+(Y)_{(f,n)}(K)\subset\cL'^+(Y)(K)$, the fiber $(\pi')^{-1}(y)\subset \cL'^+(X)(K)$ is non-empty.

Since the map \form{wtorsor} is surjective on $K$-points, there exists a point $x\in\cL'(X_f)(K)\subset\cL'(X)(K)$ such $\pi(x)=y$.
Note that $y$ and $x$ can be viewed as points of $Y(K[[t^{1/h}]])$ and  $X(K((t^{1/h})))$, respectively.
Since $\pi:X\to Y$ is finite, thus proper, it follows from the valuative criterion of properness that
$x\in X(K[[t^{1/h}]])=\cL'^+(X)(K)$.

(b) Applying  \rl{twist} to the restriction $(X_f)_F\to (Y_f)_F$, and taking pullback with respect to
a closed embedding $\cL^+(Y)_{(f,n)}\hra \cL(Y_f)$, we get an equality
\begin{equation} \label{Eq:wtorsor2}
\cL'(X_f)\times_{\cL'(Y_f)}\cL^+(Y)_{(f,n)}=\sqcup_{w\in W}\cL((X_f)_w)\times_{\cL(Y_f)}\cL^+(Y)_{(f,n)}.
\end{equation}
Moreover, as it is proven in (a), the LHS of \form{wtorsor2} is canonically identified with $(\cL'^+(X)_{\cL Y})_{(f,n)}$. Therefore it suffices to show that the natural morphism
\[
\cL^+(X_w)_{(f,n)}\to \cL((X_f)_w)\times_{\cL(Y_f)}\cL^+(Y)_{(f,n)}
\]
is an isomorphism.

By the proven above, the RHS of \form{wtorsor2} is reduced, so it remains to show that it is contained in  $\cL^+(X_w)=\cL^+(X')^{w\si}$.
But this follows from the fact that it is contained in $\cL((X_f)_w)=\cL(X'_f)^{w\si}$ (by definition) and in $\cL'^+(X)$ (by \form{wtorsor2} and (a)).
\end{proof}

\begin{Emp} \label{E:strvalsm}
{\bf The smooth case.} In the situation of \re{strval2}, let $X$ be smooth over $\C{O}$.

(a) In this case, the affine scheme $\clp(X)$ is strongly pro-smooth (see \re{proparc}(b)). Therefore
$\clp(X)_{(f;\leq n)}=f^{-1}(\clp(\B{A}^1)_{\leq n})$ is a pro-smooth scheme, while $\clp(X)_{(f,\geq n)}$, and
$\clp(X)_{(f,n)}$ are fp subschemes of $\clp(X)$ (by \rco{red}(b)). Furthermore, $\clp(X)_{f\neq 0}$ has an open covering $\clp(X)_{f\neq 0}=\cup_{n\geq 0}\clp(X)_{(f,\leq n)}$.

(b) By (a) and \rl{glplacid}, $\clp(X)_{(f;\leq n)}$,  $\clp(X)_{(f,\geq n)}$, and
$\clp(X)_{(f,n)}$ are affine schemes with placid presentations. By \re{strval2}(d), we have $\clp(X)_{(f;0)}\simeq\clp(X_f)$.
\end{Emp}


\subsection{Application to the Chevalley space}
In this subsection, we present the results of Goresky--Kottwitz--MacPherson \cite{GKM} in the spirit of previous subsections.


\begin{Emp} \label{E:basicnot}
{\bf Basic notation.}
(a) Let $G$\label{N:G} be a connected reductive group over $k$, $\kg$\label{N:g} the Lie algebra of $G$, and $g\mapsto\Ad_g$\label{N:adg} the adjoint action of $G$ on $\kg$.
Let $B\supset T$\label{N:B}\label{N:T} be a Borel group and a maximal torus of $G$, respectively,  $W$\label{N:W} its Weyl group, $X_{*}(T)$\label{N:X*T} the lattice of cocharacters, $\widetilde{W}:=X_{*}( T)\rtimes W$\label{N:Waff} the extended affine Weyl group, and
$R$\label{N:R} the set of roots of $G$, respectively. We also set $\kt:=\Lie(T)$,\label{N:t} $\kb:=\Lie(B)$,\label{N:b} $r:=\dim(\kt)$,\label{N:r} and assume that the characteristic of $k$ is prime to the order of $W$.


(b) Let $\kc:=\kt//W=\Spec(k[\kg]^G)$\label{N:c} be the Chevalley space of $\kg$. Then we have canonical projections  $\chi:\kg\rightarrow \kc$\label{N:chi} and $\pi:\kt\rightarrow\kc$\label{N:pi} (compare \cite[Theorem 1.1.1]{N}).
Recall that $\pi$ is finite, flat and surjective.

(c) Let $\mathfrak{D}:=\prod_{\alpha\in R}d\alpha$\label{N:mathfrakD} be the discriminant function. Then $\mathfrak{D}\in k[\kc]=k[\kt]^W$, and
the regular semisimple locus $\kc^{\rs}\subset\kc$\label{N:crs} is the complement of the locus of zeros of $\mathfrak{D}$.
We denote by  $\kg^{\rs}:=\chi^{-1}(\kc^{\rs})$\label{N:grs} and $\kt^{\rs}:=\pi^{-1}(\kc^{\rs})$\label{N:trs} the preimages of $\kc^{\rs}$.

(d) Note that the morphism $\chi:\kg\rightarrow \kc$ induces a morphism
$\chi:\clp(\kg)\rightarrow\clp(\kc)$ between arc spaces.

(e) Let $I:=\ev_{G}^{-1}(B)\subset\clp (G)$\label{N:I} be the Iwahori group scheme of $\cL G$, whose Lie algebra
is $\Lie(I)=\ev_{\kg}^{-1}(\kb)\subset\clp(\kg)$.
\end{Emp}

\begin{Emp} \label{E:stratoft}
{\bf Stratification of $\clp(\frak{t})$.} (a) Let $X:=\ft$, and $\kD\in k[\ft]$ be the discriminant function. Then, by \re{strval2}(c), we have a bounded stratification $\{\clp(\ft)_{(\kD;n)}\}_n$ of $\clp(\ft)_\bullet:=\clp(\ft)_{\kD\neq 0}$.\label{N:clpftbullet}

Since $\kD=\prod_{\al\in R}d\al$, it follows from  \rl{red} that each $\clp(\ft)_{(\kD;n)}$ decomposes as a disjoint union
$\clp(\ft)_{(\kD;n)}=\sqcup_\br \ft_\br$\label{N:ftbr}, where $\br$ runs over functions $\br:R\to\B{Z}_{\geq 0}$ such that $d_{\br}:=\sum_{a\in R}\br(\al)$\label{N:dbr} equals $n$.

(b) Explicitly, $\ft_{\br}$ classifies power series $\sum_{i\geq 0} x_i t^i$, where $x_i\in\ft$ for all $i$ such that $\al(x_i)=0$ for $0\leq i<\br(\al)$, and $\al(x_i)\neq 0$ for $i=\br(\al)$. In other words, $\ft_\br\subset\ft$ is given by finitely many equalities of linear functions, and finitely many inequalities. In particular, $\ft_\br\subset\clp(\ft)$ is a connected strongly pro-smooth locally closed affine subscheme.

(c) Note that the natural action of $W$ on $\ft$ induces a $W$-action on $\clp(\ft)_{(\kD,n)}$. Moreover, every $u\in W$
induces an isomorphism $\kt_{\br}\isom\kt_{u(\br)}$, where $u(\br)$ is defined by the rule $u(\br)(\al)=\br(u^{-1}(\al))$ and $u^{-1}(\al)(x)=\al(u(x))$ for all $x\in\ft$.
\end{Emp}

\begin{Emp} \label{E:twisted}
{\bf The twisted version}. Let $h$ be the order of $W$, and we use the notation of \re{twistedver}.

(a) By \re{stratoft}(a), the scheme $\cL'^+(\ft)_{\kD\neq 0}$ has a bounded constructible stratification $\{\cL'^+(\ft)_{(\kD;n)}\}_n$ with $n\in\frac{1}{h}\B{Z}_{\geq 0}$, and every $\cL'^+(\ft)_{(\kD;n)}$ decomposes as a disjoint union $\sqcup_\br \ft'_\br$, where $\br$ runs over functions $\br:R\to\frac{1}{h}\B{Z}_{\geq 0}$ such that $d_{\br}=n$.

(b) For each $w\in W$, define  $\ft_w$\label{N:tw} to be the scheme of fixed points of $w\si$ in $\ft'$ (compare \re{twistedver}(c)).
Then $\clp(\ft_w)$ is the scheme of fixed points of $w\si$ in $\cL'(\ft)$ (see \re{twistloop}). Explicitly, $\clp(\ft_w)$ classifies power series $\sum_{i=0}^{\infty} x_i t^{i/h}$ with $x_i\in\ft$ such that $w^{-1}(x_i)=\xi^i x_i$ for all $i$. Moreover, $\clp(\ft_w)_{(\kD;n)}$ is the scheme of fixed points of $w\si$ in $\cL^+(\ft)_{(\kD;n)}$. The decomposition $\cL'^+(\ft)_{(\kD;n)}=\sqcup_\br \ft'_\br$ from (a) is $\si$-invariant and induces a decomposition $\clp(\ft_w)_{(\kD;n)}=\sqcup_{\br\,|\,w(\br)=\br} \ft_{w,\br}$.\label{N:ftwbr}

(c) Note that $\cL'^+(\ft)\simeq \lim_n\cL'^+_n(\ft)$ is a pro-vector space, and the action of $w\si$ on $\cL'^+(\ft)$ comes from a
compatible system of linear actions on vector spaces $\cL'^+_n(\ft)$. Therefore the scheme of fixed points $\clp(\ft_{w})=\cL'^+(\ft)^{\lan w\si\ran}$
is a pro-vector space, thus it is connected and strongly pro-smooth.

Similarly, $\ft_{w,\br}$ is the scheme of fixed points $(\ft'_{\br})^{\lan w\si\ran}$, and $\ft'_{\br}$ is an fp-open subscheme
of a pro-vector space. Therefore  $\ft_{w,\br}$ is an fp-open subscheme of a pro-vector space as well, thus  it is connected and strongly pro-smooth.


(d) The $W$-action on $\ft$ induces $W$-actions on $\ft'$, $\cL'^+(\ft)_{(\kD;n)}$
and $\sqcup_{w\in W}\cL^+(\ft_w)_{(\kD;n)}\subset \cL'^+(\ft)_{(\kD;n)}$. Moreover, the $W$-action is compatible with the stratification of (b).  Namely, for every $u,w\in W$ and $\br:R\to\frac{1}{h}\B{Z}_{\geq 0}$, the $u$-action  induces an isomorphism $u:\ft_{w,\br}\isom \ft_{uwu^{-1},u(\br)}$ (compare \re{stratoft}(c)).
\end{Emp}

\begin{Emp} \label{E:strofc}
{\bf Stratification of $\clp(\fc)$.}
(a) Recall that $\pi:\ft\to\fc$ is a finite $W$-equivariant morphism, whose restriction $\ft_{\kD}\to\fc_{\kD}$ is a
$W$-torsor. Therefore it follows from a combination of \rl{BC2} (a) and (b) that for every $n\in\B{N}$, the induced morphism
\[
\pi_{\kD,n}:\sqcup_{w\in W}\clp(\ft_w)_{(\kD,n)}\to \clp(\fc)_{(\kD,n)}
\]
is a $W$-torsor. In particular, $\pi_{\kD,n}$ is finite and \'etale.

(b) Recall that for every $w\in W$ we have a decomposition $\clp(\ft_w)_{(\kD;n)}=\sqcup\ft_{w,\br}$, taken over all $(w,\br)$ such that $d_{\br}=n$ (see \re{twisted}(b)).
Therefore for all pairs $(w,\br)$, the induced morphism $\pi_{w,\br}:\ft_{w,\br}\to \clp(\fc)_{(\kD;d_{\br})}$ is finite \'etale.

(c) Since $\ft_{w,\br}$ is connected, $\pi_{w,\br}$ induces a finite \'etale covering $\pi_{w,\br}:\ft_{w,\br}\to\fc_{w,\br}$ for a certain connected component  $\fc_{w,\br}$\label{N:fcwbr} of
$\clp(\fc)_{(\kD;d_{\br})}$.

(d) Let $W_{w,\br}$\label{N:Wwbr} be the stabilizer of $(w,\br)$ in $W$ via the action $u(w,\br):=(uwu^{-1},u(\br))$. Then
$u\in W$ induces an isomorphism $\ft_{w,\br}\isom\ft_{u(w,\br)}$ (see \re{twisted}(d)).
Since $\pi_{\kD,n}$ is a $W$-torsor, we conclude that for every $u\in W$ we have $\fc_{u(w,\br)}=\fc_{w,\br}$, the map
$\pi_{w,\br}:\ft_{w,\br}\to\fc_{w,\br}$ is a $W_{w,\br}$-torsor, and $\clp(\fc)_{(\kD;n)}$ decomposes as a disjoint
union $\clp(\fc)_{(\kD;n)}=\sqcup\fc_{w,\br}$, taken over all representatives of $W$-orbits of pairs $(w,\br)$ such that
$d_{\br}=n$.

(e)  Since  $\fc_{w,\br}$ is a connected component of $\clp(\fc)_{(\kD;n)}$, we conclude that
$\fc_{w,\br}$ is a locally fp-closed affine subscheme of $\clp(\fc)$. In particular,
$\fc_{w,\br}$ is a connected  affine scheme admitting a placid presentation.

(f) Since $\pi_{w,\br}:\ft_{w,\br}\to\fc_{w,\br}$ is finite \'etale covering (by (d)), $\ft_{w,\br}$ is strongly pro-smooth (by \re{twisted}(c)), while $\fc_{w,\br}$ is a connected affine scheme admitting a placid presentation (by (e)), we conclude from \rco{prosmooth} that $\fc_{w,\br}$ is strongly pro-smooth.
\end{Emp}

\begin{Emp}
{\bf Remark.} We don't know whether the closure of a stratum $\kc_{w,\br}$ is a union of strata.
\end{Emp}

\subsection{Codimension of strata}

\begin{Emp} \label{E:codim}
{\bf Notation.}
(a) Recall that $\ft_{w,\br}\subset \clp(\ft_w)$ and $\fc_{w,\br}\subset \clp(\fc)$ are strongly pro-smooth
locally fp-closed subschemes (see \re{twisted}(c) and \re{strofc}(c),(f)). Hence they are of pure codimension (see \rl{weakequid}), so  we can consider codimensions  $a_{w,\br}:=\codim_{\clp(\ft_w)}(\ft_{w,\br})$\label{N:awbr} and $b_{w,\br}:=\codim_{\clp(\fc)}(\fc_{w,\br})$.\label{N:bwbr}

(b) Recall that $r$ is the rank of $G$, and $d_\br=\sum_{\al\in R}\br(\al)$. We also set $c_w:=r-\dim\ft^w$\label{N:cw} and $\delta_{w,\br}:=\frac{d_{\br}-c_w}{2}$.\label{N:dtwbr}
\end{Emp}

The following formula of \cite[Theorem 8.2.2(2)]{GKM} is crucial for this work.

\begin{Prop} \label{P:codim}
For every $(w,\br)$ we have an equality $b_{w,\br}=\dt_{w,\br}+a_{w,\br}+c_w$.
\end{Prop}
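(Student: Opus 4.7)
The plan is to compute the codimensions $a_{w,\br}$ and $b_{w,\br}$ separately and verify the stated identity. Both $\ft_{w,\br}\subset\clp(\ft_w)$ and $\fc_{w,\br}\subset\clp(\fc)$ are strongly pro-smooth locally fp-closed subschemes of strongly pro-smooth ambient pro-schemes (by \re{twisted}(c) and \re{strofc}(f)), so each codimension is well defined and detectable on a sufficiently deep finite truncation. A direct count gives $\dim\clp_N(\ft_w)=\dim\clp_N(\fc)=r(N+1)$, so the ambient pro-dimensions of $\clp(\ft_w)$ and $\clp(\fc)$ match.

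For $a_{w,\br}$ the computation is direct. Using the decomposition $\ft=\bigoplus_{j=0}^{h-1}\ft[j]$ into $\xi^{-j}$-eigenspaces of $w$, we identify $\clp(\ft_w)\simeq\prod_{i\ge 0}\ft[i\bmod h]$ as a pro-vector space, and the stratum $\ft_{w,\br}$ is carved out by the linear equations $\al(x_i)=0$ for all pairs $(\al,i)$ with $i<h\br(\al)$, together with open non-vanishing conditions that do not contribute to codimension. Counting these constraints and accounting for linear dependencies among the restrictions $\al|_{\ft[j]}$ yields an explicit root-theoretic formula for $a_{w,\br}$.

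For $b_{w,\br}$, I would exploit the finite map $\pi_w\colon\clp(\ft_w)\to\clp(\fc)$ induced by $\pi\colon\ft\to\fc$. By \re{strofc}(d) its restriction $\pi_{w,\br}\colon\ft_{w,\br}\to\fc_{w,\br}$ is a $W_{w,\br}$-torsor, hence finite \'etale of relative dimension $0$, and set-theoretically $\pi_w^{-1}(\fc_{w,\br})_{\red}=\ft_{w,\br}$. Since the ambient pro-dimensions match, the discrepancy $b_{w,\br}-a_{w,\br}$ is exactly the ramification of $\pi_w$ along $\ft_{w,\br}$. The Jacobian of $\pi\colon\ft\to\fc$ is the Weyl denominator $\prod_{\al>0}\al$, whose valuation at $f\in\ft_{w,\br}$ equals $d_\br/2$ (using $\br(-\al)=\br(\al)$); the twisted structure---only valuations in $\frac{1}{h}\B{Z}$ compatible with $w\si$-equivariance arise---adds a further $c_w/2$, producing the full excess $(d_\br+c_w)/2=\dt_{w,\br}+c_w$ required by the formula.

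The main obstacle is making this ramification analysis rigorous in the twisted, infinite-dimensional setting. The cleanest execution is to work throughout with the finite-dimensional polynomial maps $\pi_w\colon\clp_N(\ft_w)\to\clp_N(\fc)$ between smooth affine spaces of the same dimension, and analyze their local structure near $\ft_{w,\br}$; the essential computation is carried out in \cite[Thm.~8.2.2(2)]{GKM}. Therefore, the practical approach is to verify that our strata $\ft_{w,\br}$ and $\fc_{w,\br}$ coincide with those of \cite[\S 8]{GKM}---which follows from the constructions in \re{twisted} and \re{strofc}---and to invoke their formula directly.
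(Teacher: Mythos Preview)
Your proposal is correct and lands in exactly the same place as the paper: the paper does not give an independent proof of this proposition but simply attributes it to \cite[Thm.~8.2.2(2)]{GKM}, which is precisely where your argument ultimately defers after verifying that the strata defined in \re{twisted} and \re{strofc} match those of \cite{GKM}. Your heuristic sketch of the ramification mechanism (Jacobian valuation $d_{\br}/2$ plus a twisted correction $c_w/2$) is a useful gloss on what the GKM computation is doing, but the paper offers nothing beyond the bare citation.
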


\begin{Cor} \label{C:codim1}
For every $(w,\br)$ we have an inequality $b_{w,\br}\geq\dt_{w,\br}$. Moreover, the equality holds if and only if
$w=1$ and $\br=0$.
\end{Cor}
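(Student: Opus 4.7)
The proof is a direct consequence of the dimension formula of \rp{codim}, which rewrites the claim as a pair of positivity statements. Indeed, that formula gives the equality
\[
b_{w,\br} - \dt_{w,\br} = a_{w,\br} + c_w,
\]
so both the inequality $b_{w,\br}\geq \dt_{w,\br}$ and the equality-case analysis reduce to studying when the right-hand side vanishes, which in turn reduces to showing that each of $a_{w,\br}$ and $c_w$ is non-negative and understanding precisely when each is zero.

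First I would verify non-negativity: the integer $a_{w,\br}$ is a codimension (see \re{codim}(a)) and is therefore $\geq 0$, while $c_w = r - \dim \ft^w \geq 0$ since $\ft^w \subseteq \ft$. This already yields the stated inequality. For the equality case, note that the sum $a_{w,\br}+c_w$ vanishes if and only if \emph{both} summands vanish.

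Next I would analyze $c_w = 0$: this holds iff $\ft^w = \ft$, i.e.\ iff $w$ acts trivially on $\ft$. Under our standing assumption that the characteristic of $k$ is prime to $|W|$, the $W$-action on $\ft$ is faithful, so $c_w = 0$ is equivalent to $w = 1$.

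Finally, assuming $w = 1$, I would show that $a_{1,\br} = 0$ iff $\br = 0$. By \re{twisted}(b), $\ft_{1,\br}$ is a (non-empty) connected component of $\cL^{+}(\ft)_{(\kD;d_{\br})}$, and the latter is a locally closed subscheme of the connected strongly pro-smooth scheme $\cL^{+}(\ft)$. If $\br = 0$, then $d_\br = 0$ and $\cL^+(\ft)_{(\kD;0)} = \cL^+(\ft) \setminus \ev_{\ft}^{-1}(\{\kD = 0\})$ is open, giving $a_{1,0} = 0$. Conversely, if $\br \neq 0$, then $d_\br \geq 1$, so $\ft_{1,\br}$ is contained in the proper closed subscheme $\cL^+(\ft)_{(\kD;\geq 1)} = \ev_{\ft}^{-1}(\{\kD = 0\})$ of $\cL^+(\ft)$, forcing $a_{1,\br} \geq 1 > 0$. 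This completes the analysis. There is no real obstacle here; the only point meriting explicit invocation is the faithfulness of the $W$-action on $\ft$, which rests on the hypothesis on the characteristic of $k$ from \re{basicnot}(a).
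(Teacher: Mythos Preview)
Your proof is correct and follows essentially the same approach as the paper's: both use \rp{codim} to reduce to showing $a_{w,\br}+c_w\geq 0$ with equality iff $w=1$ and $\br=0$, then analyze the two summands separately. Your version simply spells out in more detail what the paper states tersely (the faithfulness of $W$ on $\ft$ for $c_w=0\Leftrightarrow w=1$, and the containment $\ft_{1,\br}\subset\clp(\ft)_{(\kD;\geq 1)}$ for $\br\neq 0$).
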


\begin{proof}
The inequality $b_{w,\br}\geq\dt_{w,\br}$ follows from \rp{codim} and observation that $a_{w,\br},c_w\geq 0$. Furthermore,
equality holds if and only if  $c_w=a_{w,\br}=0$. Note that equality $c_w=0$ holds if and only if $w=1$. In this case, equality $a_{w,\br}=0$ holds if and only if the subscheme $\kt_{w,\br}=\kt_{\br}\subset\clp(\kt)$ is open, and this happens if and only if $\br=0$.
\end{proof}

\begin{Emp} \label{E:topnilp}
{\bf The topologically nilpotent locus.} (a) We set  $\cL^+(\kc)_{\tn}:=\ev_{\kc}^{-1}(0)\subset\clp(\kc)$.\label{N:clpkctn}
Then $\cL^+(\kc)_{\tn}\subset\clp(\kc)$ is a strongly pro-smooth connected fp-closed subscheme of codimension $\dim\fc=r$.

(b) For every $w\in W$, we denote by $\cL^+(\kt_{w})_{\tn}\subset\clp(\kt_{w})$\label{N:clpktwtn} the preimage of $\cL^+(\kc)_{\tn}\subset\clp(\kc)$. Recall that
$\clp(\ft_w)$ classifies power series $\sum_{i=0}^{\infty} x_i t^{i/h}$ such that $w^{-1}(x_i)=\xi^i x_i$ for all $i$ (see \re{twisted}(b)).
In particular, we have $x_0\in \ft^w$.

Under this description, $\cL^+(\kt_{w})_{\tn}\subset\clp(\kt_{w})$ classifies power series with $x_0=0$. Therefore $\cL^+(\kt_{w})_{\tn}$ is a connected strongly pro-smooth affine scheme, and $\cL^+(\kt_{w})_{\tn}\subset\clp(\kt_{w})$ is an fp-closed subscheme of codimension $\dim\ft^w=r-c_w$.

(c) Notice that we have inclusions $\ft_{w,\br}\subset\cL^+(\kt_{w})_{\tn}$ and $\fc_{w,\br}\subset\cL^+(\fc)_{\tn}$ if $\br(\al)>0$ for all $\al\in R$,
and $\ft_{w,\br}\subset\clp(\ft_w)\sm \cL^+(\kt_{w})_{\tn}$ and $\fc_{w,\br}\subset\cL^+(\fc)\sm\cL^+(\fc)_{\tn}$, otherwise. In the first case, we say that $(w,\br)>0$.

(d) For every $(w,\br)>0$, we set $b^+_{w,\br}:=b_{w,\br}-r$\label{N:b+wbr} and $a^+_{w,\br}:=a_{w,\br}+c_w-r$\label{N:a+wbr} (see \re{codim}).

(e) Since $\codim_{\clp(\fc)}(\clp(\fc)_{\tn})=r$ (by (a)) and $\codim_{\clp(\ft_w)}(\clp(\ft_w)_{\tn})=r-c_w$ (by (b)), we conclude from (d) that $b^+_{w,\br}=\codim_{\clp(\fc)_{\tn}}(\fc_{w,\br})$ and $a^+_{w,\br}=\codim_{\clp(\ft_w)_{\tn}}(\ft_{w,\br})$.
\end{Emp}


\begin{Cor} \label{C:codim}
For every $(w,\br)>0$, we have $b^+_{w,\br}=\dt_{w,\br}+a^+_{w,\br}$. In particular, we have an inequality $b^+_{w,\br}\geq \dt_{w,\br}$, and an equality holds if and only if $\ft_{w,\br}\subset\cL^+(\ft_w)_{\tn}$ is an open stratum.
\end{Cor}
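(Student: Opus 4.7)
The claim is essentially a direct bookkeeping consequence of Proposition \ref{P:codim} together with the codimension reinterpretation of $b^+_{w,\br}$ and $a^+_{w,\br}$ given in \re{topnilp}(e). My plan is the following.

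First, starting from the equality $b_{w,\br} = \dt_{w,\br} + a_{w,\br} + c_w$ of Proposition \ref{P:codim}, I would simply subtract $r$ from both sides and regroup: $b_{w,\br} - r = \dt_{w,\br} + (a_{w,\br} + c_w - r)$. By the definitions in \re{topnilp}(d), the left-hand side is $b^+_{w,\br}$ and the parenthesized expression on the right is $a^+_{w,\br}$, yielding the asserted identity $b^+_{w,\br} = \dt_{w,\br} + a^+_{w,\br}$.

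Next, for the inequality and equality case, I would use \re{topnilp}(e), which interprets $a^+_{w,\br}$ as $\codim_{\clp(\ft_w)_{\tn}}(\ft_{w,\br})$. Since the assumption $(w,\br) > 0$ guarantees by \re{topnilp}(c) that $\ft_{w,\br}$ is contained in the strongly pro-smooth fp-closed subscheme $\clp(\ft_w)_{\tn}\subset\clp(\ft_w)$, the codimension $a^+_{w,\br}$ is a non-negative integer. Hence $b^+_{w,\br} = \dt_{w,\br} + a^+_{w,\br} \geq \dt_{w,\br}$, with equality exactly when $a^+_{w,\br} = 0$, that is, when the fp-locally closed inclusion $\ft_{w,\br} \hookrightarrow \clp(\ft_w)_{\tn}$ is open.

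There is no real obstacle here: all the hard work has already gone into the codimension formula \rp{codim} and into setting up the notation so that the $\tn$-versions just differ from the original quantities by the shift $r$ (respectively $r - c_w$) equal to the codimensions of the topologically nilpotent loci. The only thing to double-check is that $\ft_{w,\br}$ and $\fc_{w,\br}$ are of pure codimension in the relevant ambient spaces, so that $a^+_{w,\br}$ and $b^+_{w,\br}$ are well-defined integers; but this follows from \rl{weakequid} applied to the strongly pro-smooth schemes in \re{twisted}(c) and \re{topnilp}(a)(b), exactly as in \re{codim}(a).
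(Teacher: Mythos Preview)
Your proposal is correct and follows essentially the same approach as the paper's proof: both derive the identity by subtracting $r$ from the formula of \rp{codim} and invoking the definitions in \re{topnilp}(d), then use the codimension interpretation \re{topnilp}(e) to conclude $a^+_{w,\br}\geq 0$ with equality precisely when $\ft_{w,\br}\subset\cL^+(\ft_w)_{\tn}$ is open. Your write-up just makes the algebra slightly more explicit than the paper does.
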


\begin{proof}
%
%
The equality $b^+_{w,\br}=\dt_{w,\br}+a^+_{w,\br}$ follows immediately from \rp{codim}. Next, since $a^+_{w,\br}=\codim_{\clp(\ft_w)_{\tn}}(\ft_{w,\br})$ (see \re{topnilp}(e)), we conclude that $a^+_{w,\br}\geq 0$. Moreover, the equality
holds if and only if  $\ft_{w,\br}\subset\cL^+(\ft_w)_{\tn}$ is an open stratum.
\end{proof}


\begin{Emp}
{\bf Stratifications on  $\clp(\kg)$ and $\Lie(I)$}. (a) For $n\in\NN$, let $\ev_{n,\kg}:\clp_{n}(\kg)\rightarrow\kg$ be the evaluation map, set $\Lie_n(I):=(\ev_{n,\kg})^{-1}(\kb)$, and let
$v_n: \Lie_n(I)\to \clp_{n}(\kc)$ be the restriction of $\chi_{n}:\clp_{n}(\kg)\rightarrow\clp_{n}(\kc)$.
Note that the isomorphism $\clp(\kg)\isom \lim_n \clp_{n}(\kg)$ induces an isomorphism $\Lie(I)\isom \lim_n \Lie_n(I)$.

(b) For every GKM stratum $\fc_{w,\br}\subset\clp(\fc)$, we denote by $\fg_{w,\br}\subset\clp(\fg)$ and $\Lie(I)_{w,\br}\subset\Lie(I)$\label{N:lieIwbr} its preimages.
\end{Emp}

The proof of the following result will be given in \rsec{pfrtdrinfeld}.

\begin{Thm}\label{T:drinfeld}
For every $n\in\NN$, the morphisms $\chi_{n}:\clp_{n}(\kg)\rightarrow\clp_{n}(\kc)$ and $v_{n}:\Lie_n(I)\rightarrow\clp_{n}(\kc)$ are flat.
\end{Thm}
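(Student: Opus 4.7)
The strategy is to apply Hironaka's miracle-flatness criterion: a morphism between regular schemes is flat as soon as all fibres are equidimensional of the expected relative dimension. All three schemes $\clp_n(\kg)$, $\Lie_n(I)$ and $\clp_n(\kc)$ are affine spaces---of dimensions $(n+1)\dim\kg$, $\dim\kb+n\dim\kg$ and $(n+1)r$ respectively---hence regular. Thus flatness of $\chi_n$ and $v_n$ is equivalent to equidimensionality of their fibres, of expected dimensions $(n+1)|R|$ and $(2n+1)|R|/2$.

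For $\chi_n$, the key input is Kostant's theorem: $\chi\colon\kg\to\kc$ is flat with reduced complete-intersection fibres of dimension $|R|$, and admits the Kostant section $\si\colon\kc\to\kg$. Consider the orbit map $\Phi\colon G\times\kc\to\kg$, $(g,c)\mapsto\Ad_g\si(c)$; it is smooth, its image is the regular locus $\kg^{\mathrm{reg}}$, and $\chi\circ\Phi$ equals the projection to $\kc$. Passing to arcs via \rl{arcetale} yields a smooth surjection $\Phi_n\colon\clp_n(G)\times\clp_n(\kc)\to\clp_n(\kg^{\mathrm{reg}})$ satisfying $\chi_n\circ\Phi_n=\pr_2$; in particular $\chi_n$ is smooth, and so has equidimensional fibres of the right dimension, over the non-empty open locus $\clp_n(\kg^{\mathrm{reg}})\subset\clp_n(\kg)$. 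One then extends this statement to every fibre: upper semicontinuity and dominance give the lower bound $\geq (n+1)|R|$, while the reverse inequality follows from the Drinfeld-type principle that, given flatness of $\chi$ with geometrically reduced lci fibres, the induced jet map $\chi_n$ has fibres of dimension exactly $(n+1)|R|$.

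For $v_n$ we run the parallel argument with $\kb$ in place of $\kg$. First, $\chi|_{\kb}\colon\kb\to\kc$ is flat with equidimensional fibres of dimension $|R|/2$: it factors as the vector-bundle projection $\kb\twoheadrightarrow\kt$ followed by the finite flat map $\kt\to\kc$. Second, $v_n$ factors through $\chi_n$ via the regular closed embedding $\Lie_n(I)\hookrightarrow\clp_n(\kg)$ of codimension $|R|/2$, which is pulled back from $\kb\hookrightarrow\kg$ along the smooth evaluation $\ev_{n,\kg}$; hence each fibre of $v_n$ is an intersection $\Lie_n(I)\cap\chi_n^{-1}(c)$ of dimension at least $(n+1/2)|R|$ by flatness of $\chi_n$. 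To promote this to equality, we use the smooth surjection $G\times\kb^{\mathrm{reg}}\to\kg^{\mathrm{reg}}$, whose arc-space version gives smoothness of $v_n$ on a dense open of $\Lie_n(I)$, and then propagate fibre-dimension control to every fibre by the same Drinfeld-style argument applied to $\chi|_{\kb}$ (whose fibres are again geometrically reduced lci).

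The main obstacle is the final step of each half: transferring equidimensionality of fibres from a dense open of $\clp_n(\kc)$ to every closed point. The Kostant-section argument alone controls only the behaviour over the regular locus; the remedy is the general principle, going back to Drinfeld, that for a flat morphism of finite type between smooth schemes with geometrically reduced lci fibres the induced maps on jet schemes are again flat with equidimensional fibres of the expected dimension. Since Kostant's theorem supplies exactly these hypotheses for both $\chi$ and $\chi|_{\kb}$, the fibres of $\chi_n$ and $v_n$ have the predicted dimension everywhere, and miracle flatness concludes the proof.
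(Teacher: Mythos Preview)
Your reduction via miracle flatness is sound, and you correctly locate the crux: bounding the dimension of \emph{every} fibre of $\chi_n$ and $v_n$, not just the generic one. But the ``Drinfeld-type principle'' you invoke to close this gap---that a flat morphism between smooth varieties with geometrically reduced lci fibres induces flat maps on jet schemes---is false. Consider $f\colon\B{A}^2\to\B{A}^1$, $(x,y)\mapsto x^2-y^3$: it is flat with geometrically reduced lci fibres, yet the fibre of $\clp_5(f)$ over the zero arc is $\clp_5$ of the cusp, which contains the $7$-dimensional locus $\{x_0=x_1=x_2=y_0=y_1=0\}$ against an expected fibre dimension of $6$. What actually governs jet schemes of an lci in characteristic zero is Musta\c{t}\u{a}'s theorem: irreducibility of all $\clp_n(X)$ is equivalent to $X$ having rational singularities; the Eisenbud--Frenkel appendix verifies this for the nilpotent cone, and that is exactly the characteristic-zero argument the paper cites as previously known. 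So you have relocated the difficulty into an unproven (and in general false) principle rather than resolved it, and in positive characteristic no substitute along these lines is available.

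For $v_n$ there is a further problem: the fibres of $\chi|_{\kb}$ are not reduced. Already for $G=\SL_2$ the fibre over $0\in\kc$ is a double line, since $\kt\to\kc=\kt/W$ is ramified along the reflection hyperplanes. So even the hypotheses of your principle are not met here.

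The paper's proof is entirely different and avoids any fibrewise jet-scheme analysis. It globalizes to $\bP^1$: from the Hitchin moduli space (and Yun's parabolic variant at a marked point $x$), equipped with level-$n$ structure at $x$, it extracts faithfully flat covers $Z_n\to\clp_n(\kg)$ and $Z_{I,n}\to\Lie_n(I)$ whose compositions with $\chi_n$ and $v_n$ factor through the Hitchin base via a smooth surjection $\cA_{D,\infty}\to\clp_n(\kc)$. Flatness then reduces to flatness of the Hitchin and parabolic Hitchin fibrations, which are theorems of Ng\^o and Yun valid in all characteristics prime to $|W|$.
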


\begin{Cor} \label{C:drinfeld}
The morphisms $\clp(\chi):\clp(\kg)\rightarrow\clp(\kc)$ and $v:\Lie(I)\rightarrow\clp(\kc)$ are flat.
\end{Cor}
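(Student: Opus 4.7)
The plan is a standard passage from finite level to the limit; the key input is the finite-level flatness stated in \rt{drinfeld}, and the key tool is the fact that a filtered colimit of flat modules is flat. The argument is identical for the two morphisms, so I describe it for $\cL(\chi):\cL(\kg)\to\cL(\kc)$ and indicate at the end the minor change needed for $v:\Lie(I)\to\cL(\kc)$.

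First I would recall the shape of the picture: since $\kg$ and $\kc$ are affine schemes of finite type over $k$, the discussion in \re{proparc}(c) gives that $\cL(\kg)=\lim_n\cL_n(\kg)$ and $\cL(\kc)=\lim_n\cL_n(\kc)$ are affine schemes, and since $\kg\simeq\B{A}^{\dim\kg}$ and $\kc\simeq\B{A}^r$ are smooth, \re{proparc}(e) gives that the transition morphisms in both inverse systems are (smooth) affine. By construction, $\cL(\chi)$ is the limit of the system of morphisms $\chi_n:\cL_n(\kg)\to\cL_n(\kc)$.

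Next I would form, for each $n\in\B{N}$, the base change
\[
X_n:=\cL_n(\kg)\times_{\cL_n(\kc)}\cL(\kc).
\]
By \rt{drinfeld}, $\chi_n$ is flat, so the projection $X_n\to\cL(\kc)$ is flat as a base change of a flat morphism. The transition maps $X_{n+1}\to X_n$ are affine, and commuting limits with fiber products gives
\[
\lim_n X_n=\bigl(\lim_n\cL_n(\kg)\bigr)\times_{\lim_n\cL_n(\kc)}\cL(\kc)=\cL(\kg)\times_{\cL(\kc)}\cL(\kc)=\cL(\kg),
\]
the identification being compatible with the structure morphisms to $\cL(\kc)$. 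Passing to global sections, this means that $\C{O}(\cL(\kg))=\mathrm{colim}_n\,\C{O}(X_n)$ as an $\C{O}(\cL(\kc))$-algebra. Each $\C{O}(X_n)$ is flat over $\C{O}(\cL(\kc))$, and a filtered colimit of flat modules is flat, hence $\C{O}(\cL(\kg))$ is flat over $\C{O}(\cL(\kc))$; that is, $\cL(\chi)$ is flat.

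For $v:\Lie(I)\to\cL(\kc)$ the argument is the same after noting that $\Lie(I)=\lim_n\Lie_n(I)$ with affine transition maps (since $\Lie_n(I)=\cL_n(\kg)\times_{\kg}\kb$, so the transition maps are base changes of the smooth affine transition maps in the system $\{\cL_n(\kg)\}$). One forms $Y_n:=\Lie_n(I)\times_{\cL_n(\kc)}\cL(\kc)$, uses the flatness of $v_n$ from \rt{drinfeld} to see that $Y_n\to\cL(\kc)$ is flat, observes $\lim_n Y_n=\Lie(I)$, and concludes as before. There is no genuine obstacle here beyond the bookkeeping of limits and colimits; the substantive content is entirely contained in the finite-level \rt{drinfeld}.
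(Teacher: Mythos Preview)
Your argument is correct and is exactly the approach the paper takes, only spelled out in more detail: the paper's proof is the one-line remark that flatness is preserved under base change and filtered limits, citing \rt{drinfeld}. One small notational slip: throughout you write $\cL$ and $\cL_n$ where the paper's arc-space notation is $\clp$ and $\clp_n$ (in the paper $\cL$ denotes the loop space), but the mathematics is unaffected.
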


\begin{proof}
Since property of being flat is preserved by base change and passing to a filtered limit, the assertion follows from
\rt{drinfeld}.
\end{proof}

\begin{Cor} \label{C:codim2}
(a) The locally fp-closed subschemes $\Lie(I)_{w,\br}\subset\Lie(I)$ and $\kg_{w,\br}\subset\kg$ are of pure codimension $b_{w,\br}$ (see \re{codim}).

(b) The induced maps $\chi_{w,\br}:\kg_{w,\br}\rightarrow\kc_{w,\br}$ and $v_{w,\br}:\Lie(I)_{w,\br}\rightarrow\kc_{w,\br}$\label{N:vwbr} are flat and
uo-equidimensional (see \re{clM2}).
\end{Cor}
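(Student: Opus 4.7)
The plan is to first establish that the two arc-space morphisms $v:\Lie(I)\to\clp(\fc)$ and $\clp(\chi):\clp(\kg)\to\clp(\fc)$ are uo-equidimensional in the sense of \re{clM2}, and then deduce both parts by routine pullback and base-change arguments. For the uo-equidimensionality, I would use \rt{drinfeld}: each level map $v_n:\Lie_n(I)\to\clp_n(\fc)$ and $\chi_n:\clp_n(\kg)\to\clp_n(\fc)$ is flat. Note that $\clp_n(\fc)\cong\AA^{r(n+1)}$, while $\Lie_n(I)\cong\kb\times\kg^n$ and $\clp_n(\kg)\cong\kg^{n+1}$, so all three are smooth connected schemes of finite type, and the standard presentations
\[
\Lie(I)\simeq\lim_n\Lie_n(I),\quad \clp(\kg)\simeq\lim_n\clp_n(\kg),\quad \clp(\fc)\simeq\lim_n\clp_n(\fc)
\]
are strongly pro-smooth. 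Since each $v_n$ and $\chi_n$ is a flat morphism of finite type between smooth connected schemes, it is automatically both universally open and equidimensional of constant relative dimension. Applying the characterization in \rl{clM}(ii)(3) separately to the ``universally open'' and ``equidimensional'' classes, we conclude that $v$ and $\clp(\chi)$ belong to the class \re{clM}(a) for both properties, which precisely means they are uo-equidimensional.

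For part (a), recall from \re{codim}(a) and \re{strofc}(c),(e),(f) that $\iota:\fc_{w,\br}\hookrightarrow\clp(\fc)$ is an fp-locally closed embedding between strongly pro-smooth affine schemes, of pure codimension $b_{w,\br}$. Pulling back along $v$ (resp.\ $\clp(\chi)$) produces the Cartesian squares that cut out $\Lie(I)_{w,\br}$ and $\kg_{w,\br}$ as preimages. Since $\iota$ is locally fp-representable and $v,\clp(\chi)$ are pro-universally open by the first step, \rl{pb}(b) applies and guarantees that the pulled-back embeddings $\Lie(I)_{w,\br}\hookrightarrow\Lie(I)$ and $\kg_{w,\br}\hookrightarrow\clp(\kg)$ are also weakly equidimensional of relative dimension $-b_{w,\br}$, giving the required pure codimension.

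For part (b), I would invoke \rco{uo-equid}(b): since $v$ and $\clp(\chi)$ are uo-equidimensional and $\iota$ is locally fp-representable, the base changes $v_{w,\br}$ and $\chi_{w,\br}$ are uo-equidimensional. Flatness of $v_{w,\br}$ and $\chi_{w,\br}$ is then immediate from flatness of $v$ and $\clp(\chi)$ (\rco{drinfeld}) by base change. The main technical obstacle is the first step, specifically matching the level-wise flatness of $v_n$ and $\chi_n$ coming from \rt{drinfeld} with the abstract notion of pro-universal openness and weak equidimensionality formulated via strongly pro-smooth presentations in \re{clM}; once this translation is made through \rl{clM}(ii), parts (a) and (b) become formal consequences of the pullback results of Section 2.
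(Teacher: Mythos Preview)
Your proposal is correct and follows essentially the same route as the paper's proof: use \rt{drinfeld} to see that the level maps $\chi_n,v_n$ are flat between smooth irreducible varieties, hence uo-equidimensional; pass to the limit via \rl{clM}(ii) to conclude that $\clp(\chi)$ and $v$ are uo-equidimensional; then deduce (a) from \rl{pb} and (b) from \rco{uo-equid}(b) together with flatness base change. The only cosmetic difference is that the paper invokes \rl{clM}(iv) to identify the class \re{clM}(a) with \re{clM}(b) before using the name ``uo-equidimensional'' from \re{clM2}, a bookkeeping step you implicitly fold into ``which precisely means''.
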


\begin{proof}
 Since $\chi_n$ and $v_n$ are flat morphisms between irreducible varieties (by \rt{drinfeld}), we conclude that they are
uo-equidimensional (see \re{loceq}(b)). Therefore it follows from \rl{clM}(ii),(iv) that morphisms
$\clp(\chi)$ and $v$ are uo-equidimensional in the sense of \re{clM2}. Now both assertions are easy:

(a)  Since $\kc_{w,\br}\subset\clp(\kc)$ is of pure codimension $b_{w,\br}$ (by \rp{codim}), while $\clp(\chi)$ and $v$ are pro-universally open,
the assertion follows from \rl{pb}.

(b) Since $\clp(\chi)$ and $v$ are flat and uo-equidimensional, their pullbacks $\chi_{w,\br}$ and $v_{w,\br}$ are flat and uo-equidimensional as well (by \rco{uo-equid}).
\end{proof}


\section{Geometry of the Affine Grothendieck--Springer fibration}

\subsection{Generalities}

\begin{Emp} \label{E:affspr}
{\bf The Affine Grothendieck--Springer fibration.}

(a) The Chevalley morphism $\chi:\kg\rightarrow\fc$ induces a morphism of ind-schemes $\cL\chi:\cL\kg\rightarrow\cL\kc$, which we denote
by simplicity by $\chi$. We set $\fC:=\chi^{-1}(\clp(\fc))\subset\cL\kg$.\label{N:fC}

(b) Since $\clp(\fc)\subset\cL \fc$ is an fp-closed subscheme, the preimage
$\fC\subset\cL\kg$ is an fp-closed ind-subscheme of $\cL\fg$. Since $\cL\kg$ is an ind-placid ind-scheme
(with a presentation $\cL\kg\simeq\colim_i t^{-i}\clp(\kg)$), we thus conclude that $\fC$ is an ind-placid ind-scheme as well.

(c) Set $\wt{\fC}:=\cL G\times ^I\Lie(I)$,\label{N:wtfC} that is, $\wt{\fC}$ is a quotient of $\cL G\times\Lie(I)$  by $I$ with respect to the action $h(g,\g):=(gh^{-1},\Ad_h(\g))$.
Then $\cL G$ acts on $\wt{\fC}$ by the rule $h([g,\g])=([hg,\g])$, and we have a natural isomorphism of stacks $[\wt{\fC}/\cL G]\simeq [\Lie(I)/I]$.

(d) We have a natural projection map $\frak{p}:\wt{\fC}\to \fC:[g,\g]\mapsto\Ad_g(\g)$,\label{N:frakp} called the {\em affine Grothendieck--Springer fibration}.
The fibers of this map are the affine Springer fibers, first introduced and studied by Kazhdan and Lusztig in \cite{KL}.
\end{Emp}

\begin{Emp}
{\bf Remark.} The notation $\fC$ comes to indicate that it is the locus of ``compact'' elements in $\cL\fg$.
\end{Emp}

\begin{Emp} \label{E:afffl}
{\bf The affine flag variety.}
(a) Let $\Fl:=\cL G/I$\label{N:Fl} be the affine flag variety. Notice that the map $\iota:\wt{\fC}\to \Fl\times\cL\fg:[g,x]\mapsto ([g],\Ad_g(x))$ identifies
$\wt{\fC}$ with the closed ind-subscheme
\[
\{([g],\g)\in \Fl\times \fC\,|\,\Ad_{g^{-1}}(\g)\in\Lie(I)\}\subset\Fl\times \fC.
\]
Under this identification, the fibration $\frak{p}:\wt{\fC}\to \fC$ of \re{affspr} decomposes as
$\wt{\fC}\hra \Fl\times \fC\overset{\pr}{\lra}\fC$.

(b) Note that $\Fl$ has a structure of an ind-projective scheme over $k$, with a canonical presentation
$\Fl\simeq\colim_i Y_i$ as a colimit of its $I$-invariant closed projective subschemes.

(c) The presentation of (b) induces a canonical $(I\times I)$-equivariant presentation $\cL G\simeq\colim_i \wt{Y}_i$ of $\cL G$, where
$\wt{Y}_i\subset\cL G$ is the preimage of $Y_i\subset \Fl$ under the natural projection $\cL G\to\Fl$.
Notice that each $\wt{Y}_i\to Y_i$ is an $I$-torsor, hence it is strongly pro-smooth. Therefore each $\wt{Y}_i$ is a scheme, admitting a placid presentation, thus  $\cL G\simeq\colim_i \wt{Y}_i$ is an ind-placid ind-scheme.

\end{Emp}

\begin{Lem} \label{L:affspr}
The projection $\frak{p}:\wt{\fC}\to \fC$ is ind-fp-proper.
\end{Lem}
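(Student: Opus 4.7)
The plan is to factor $\frak{p}$ through the embedding into $\Fl\times\fC$ (as in \re{afffl}(a)) and exploit the fact that $\Fl$ is ind-projective, in order to write $\wt{\fC}$ as a filtered colimit over which properness can be read off directly.

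Concretely, according to Definition \ref{D:pspr}(a),(c), since $\fC$ is an ind-scheme and not affine, I need to verify that for every affine scheme $Y$ and every morphism $Y\to\fC$, the pullback $\wt{\fC}\times_{\fC}Y\to Y$ admits a filtered colimit presentation $\colim_{\al}Z_{\al}$ with each $Z_{\al}$ fp-proper over $Y$ and all transition maps fp-closed embeddings. Using the presentation $\Fl\simeq\colim_i Y_i$ from \re{afffl}(b), where each $Y_i$ is an $I$-invariant projective subscheme, I set $\wt{\fC}_i:=\wt{\fC}\cap(Y_i\times\fC)$ so that $\wt{\fC}\simeq\colim_i\wt{\fC}_i$ as closed ind-subschemes of $\Fl\times\fC$, with transition maps fp-closed embeddings induced from $Y_i\hra Y_j$. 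It then suffices to check that after pullback along any affine $Y\to\fC$, each $\wt{\fC}_i\times_{\fC}Y\to Y$ is fp-proper.

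For such a pullback, $\wt{\fC}_i\times_{\fC}Y$ sits as a closed subscheme of $Y_i\times Y$. The projection $Y_i\times Y\to Y$ is fp-proper (in fact projective) since $Y_i$ is projective of finite type over $k$. To conclude, I need to argue that the closed embedding $\wt{\fC}_i\times_{\fC}Y\hra Y_i\times Y$ is finitely presented. For this, I use that $\fC\subset\cL\fg$ has a natural exhaustion $\fC\simeq\colim_m(\fC\cap t^{-m}\clp(\fg))$ by fp-closed affine subschemes of bounded strongly pro-smooth ambient spaces, so that any morphism $Y\to\fC$ from a qcqs affine scheme factors through some such bounded piece $\fC^{(m)}$. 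On a bounded set $\wt{Y}_i\times\fC^{(m)}$ the adjoint action $(g,\g)\mapsto\Ad_{g^{-1}}(\g)$ takes values in some fixed bounded piece $t^{-N}\clp(\fg)$, where the condition "$\Ad_{g^{-1}}(\g)\in\Lie(I)$" becomes an fp-closed condition (cut out by the preimage of $\ev_n^{-1}(\kb)\subset\clp_n(\fg)$ for finite $n$). Quotienting by the free right action of $I$ on $\wt{Y}_i$ and pulling back to $Y$ preserves the fp-closed property, giving the required fp-proper structure.

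The main obstacle is step (3): verifying that the closed embedding $\wt{\fC}_i\times_{\fC}Y\hra Y_i\times Y$ is finitely presented (and not merely closed). Once one has chosen a bounded piece $\fC^{(m)}$ factoring $Y\to\fC$, and a lift to an $I$-torsor $\wt{Y}_i\to Y_i$, the rest is a bookkeeping exercise with the bounded pieces of $\cL G$, $\cL\fg$, and $\Lie(I)$; the key computational input is that adjoint multiplication restricts to a morphism of fp schemes on each bounded level, and that $\Lie(I)\cap t^{-N}\clp(\fg)$ is an fp-closed subscheme of the bounded ambient strongly pro-smooth scheme. Having set this up, fp-properness of $\wt{\fC}_i\times_{\fC}Y\to Y$ follows since it factors as an fp-closed embedding into $Y_i\times Y$ composed with the projective projection to $Y$, which completes the proof.
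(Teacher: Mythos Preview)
Your proposal is correct and follows the same overall strategy as the paper: factor $\frak{p}$ as the closed embedding $\iota:\wt{\fC}\hra\Fl\times\fC$ followed by the projection, use ind-projectivity of $\Fl$ for the latter, and verify that $\iota$ is an fp-closed embedding.

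The difference lies in how the fp-closedness of $\iota$ is established. You work by hand on bounded pieces: factor an affine test map through some $\fC^{(m)}$, bound the image of the adjoint action into a fixed $t^{-N}\clp(\fg)$, and then read off that the Iwahori condition is cut out by finitely many equations there. This is correct but requires tracking the bounded levels of $\cL G$, $\cL\fg$, and the $I$-torsor $\wt{Y}_i\to Y_i$ simultaneously. The paper avoids all of this bookkeeping with a one-line stack-theoretic argument: the action map $a:\cL G\times\fC\to\cL\fg$ descends to $\ov{a}:\Fl\times\fC\to[\cL\fg/I]$, and $\wt{\fC}$ is precisely the preimage of $[\Lie(I)/I]$. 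Since $\Lie(I)=\ev_{\fg}^{-1}(\kb)\hra\cL\fg$ is manifestly fp-closed, and fp-closed embeddings are \'etale-local on the base, $[\Lie(I)/I]\hra[\cL\fg/I]$ is fp-closed, hence so is its pullback $\iota$. Your approach has the virtue of being completely explicit and not invoking quotient stacks; the paper's buys concision and makes clear that the only geometric input is the single fp-closed embedding $\Lie(I)\hra\cL\fg$.
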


\begin{proof}
Recall that $\frak{p}$ factors as a composition of the closed embedding $\iota:\wt{\fC}\hra\Fl\times \fC$ and the projection
$p:\Fl\times \fC\to \fC$. Since $\Fl$ is ind-projective, the projection $p$ is ind-fp-proper. Thus it suffices to show that
$\iota$ is an fp-closed embedding.

Note that the action morphism $a:\cL G\times \fC\to \cL\kg:a(g,x)=\Ad_{g^{-1}}(x)$ gives rise to the morphism $\ov{a}:\Fl\times \fC\to [\cL\kg/I]$,
and by definition $\wt{\fC}$ is the preimage of $[\Lie(I)/I]\subset [\cL\kg/I]$. Since $\Lie(I)\hra\cL\kg$ is an fp-closed embedding and the class of fp-closed embeddings is \'etale local on the base, we conclude that  $[\Lie(I)/I]\hra [\cL\kg/I]$ is an fp-closed embedding, and we are done. \end{proof}


\begin{Emp} \label{E:maxtori}
{\bf Maximal tori.}
Recall that there is a natural bijection $w\mapsto T_w$\label{N:Tw} between conjugacy classes of elements in $W$ and conjugacy classes of maximal tori in $G_F$ (see, for example, \cite[Lemma 2]{KL}, where the assertion is stated in characteristic zero, but the argument works without any changes when $|W|$ is invertible in $k$). Notice that the Lie algebra $\Lie(T_w)$ is canonically isomorphic to the Lie algebra $\frak{t}_w$ described in \re{twisted}(b). In particular, we have an embedding of $\frak{t}_w\hra \frak{g}_F$ over $\fc_F$, unique up to a conjugacy.
\end{Emp}

\begin{Emp} \label{E:str}
{\bf GKM strata.}
(a) Recall that we have defined strata $\frak{t}_{w,\br}$ of $\clp(\kt_w)$ (see \re{twisted}(b)) and the corresponding strata $\fc_{w,\br}$ of $\clp(\fc)$ (see \re{strofc}(c)). Moreover, every projection $\pi:\ft_{w,\br}\to \fc_{w,\br}$ is a $W_{w,\br}$-torsor (see \re{strofc}(d)).

(b) For every GKM stratum of $\fc_{w,\br}$ we set $\fC_{w,\br}:=\chi^{-1}(\fc_{w,\br})\subset \fC$\label{N:fCwbr} and
$\wt{\fC}_{w,\br}:=\frak{p}^{-1}(\fC_{w,\br})\subset \wt{\fC}$.\label{N:wtfCwbr} We have an identification $\wt{\fC}_{w,\br}\simeq\cL G\times ^I\Lie(I)_{w,\br}$, hence
$[\wt{\fC}_{w,\br}/\cL G]\simeq [\Lie(I)_{w,\br}/I]$. The projection $\frak{p}$ induces projections $\frak{p}_{w,\br}:\wt{\fC}_{w,\br}\to\fC_{w,\br}$\label{N:frakpwbr} and $\ov{\frak{p}}_{w,\br}:[\wt{\fC}_{w,\br}/\cL G]\to[\fC_{w,\br}/\cL G]$\label{N:ovfrakpwbr}.

(c) The embedding of $\frak{t}_w\hra\frak{g}_F$ from \re{maxtori} induces an embedding $\cL(\frak{t}_w)\hra\cL\frak{g}$ over $\cL\fc$, unique up to a conjugacy. Hence it induces an embedding $\frak{t}_{w,\br}\hra{\fC}_{w,\br}$ over $\fc_{w,\br}$, unique up to an $\cL G$-conjugacy, thus a canonical morphism $\psi_{w,\br}:\frak{t}_{w,\br}\to [{\fC}_{w,\br}/\cL G]$.
Moreover, the morphism $\psi_{w,\br}$\label{N:psiwbr} is $W_{w,\br}$-equivariant, thus it induces a canonical morphism $\ov{\psi}_{w,\br}:\fc_{w,\br}=[\frak{t}_{w,\br}/W_{w,\br}]\to [{\fC}_{w,\br}/\cL G]$.\label{N:ovpsiwbr}

\end{Emp}

\begin{Emp} \label{E:consstr2}
{\bf Constructible stratification.}
(a) As in \re{strval2}, we set $\clp(\kc)_{\leq m}:=\clp(\kc)_{(\kD;\leq m)}$ for every $m\in\B{N}$, and let $\Lie(I)_{\leq m}\subset\Lie(I)$, \label{N:lieleqm}  $\fC_{\leq m}\subset\fC$\label{N:fCleqm} and $\wt{\fC}_{\leq m}\subset\wt{\fC}$\label{N:wtfCleqm} be the preimages of $\clp(\kc)_{\leq m}$. Notice that $\clp(\kc)_{\leq m}\subset \clp(\kc)$ and $\fC_{\leq m}\subset\fC$ are fp-open subschemes,
thus $\fC_{\leq m}$ is an ind-placid ind-scheme (by \re{affspr}(b)).
Note also that $\clp(\kc)_{\leq 0}=\clp(\kc^{\rs})$ (see \re{strvalsm}).

(b) As in \re{strval2}(c), we consider the regular locus  $\cL^+(\kc)_{\bullet}:=\clp(\kc)_{\kD\neq 0}\subset\clp(\kc)$.\label{N:clpcbullet}
Next, we set  $\fC\bu:=\chi^{-1}(\clp(\fc)_{\bullet})\subset \fC$,\label{N:fCbullet} $\Lie(I)\bu:=v^{-1}(\clp(\fc)_{\bullet})\subset \Lie(I)$\label{N:lieibullet} and $\wt{\fC}\bu:=\frak{p}^{-1}(\fC_{\bullet})\subset \wt{\fC}$.\label{N:wtfCbullet} In particular, we have a
natural identification $\wt{\fC}\bu\simeq\cL G\times ^I\Lie(I)\bu$.

(c) By definition, we have an fp-open covering $\clp(\kc)_{\bullet}=\cup_{m\geq 0}\clp(\kc)_{\leq m}$, which gives rise to an fp-open covering  $\fC_{\bullet}=\cup_{m\geq 0}\fC_{\leq m}$ and a
constructible stratification $\{\clp(\kc)_{(\kD;m)}\}_m$  of $\clp(\kc)_{\bullet}$ (by \re{strvalsm}).

(d) Since $\kc_{w,\br}$ is a connected component of  $\clp(\kc)_{(\kD;d_{\br})}$ for each $(w,\br)$ (see \re{strofc}(b)), we conclude that
$\{\kc_{w,\br}\}_{w,\br}$ forms a bounded constructible stratification of $\clp(\kc)_{\bullet}$ (use \rl{propcons}(c)).

(e) The constructible stratification $\{\kc_{w,\br}\}_{w,\br}$ of $\clp(\kc)_{\bullet}$ induces a constructible stratification
 $\{[\fC_{w,\br}/\cL G]_{\red}\}_{w,\br}$ of $[\fC_{\bullet}/\cL G]$ (see \rl{propcons}(a)).
\end{Emp}

\begin{Lem} \label{L:isom}
For every GKM stratum $(w,\br)$, we have natural isomorphisms
\[
\cL(G_F/T_w)\times \kt_{w,\br}\isom \fC_{w,\br}\times_{\fc_{w,\br}} \ft_{w,\br}:(g,x)\mapsto (\Ad_g(x),x);
\]
\[\cL(G_F/T_w)\times^{W_{w,\br}} \kt_{w,\br}\isom\fC_{w,\br}:(g,x)\mapsto\Ad_g(x).\]
\end{Lem}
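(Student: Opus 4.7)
The plan is to reduce both isomorphisms to a classical Grothendieck--Springer-style identification of $F$-schemes, then apply the loop functor $\cL$ and restrict to the stratum.

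First, I would establish the identification of $F$-schemes
\[
\Psi_w\colon (G_F/T_w)\times \ft_w^{\rs} \isom \fg_F^{\rs}\times_{\fc_F^{\rs}}\ft_w^{\rs},\qquad ([g],x)\mapsto (\Ad_g(x),x).
\]
This map is well defined because $T_w$ acts trivially on $\ft_w$ under $\Ad$, so $g\mapsto \Ad_g(x)$ descends to $G_F/T_w$. Its inverse sends a pair $(\gamma,x)$ with $\chi(\gamma)=\pi(x)\in\fc_F^{\rs}$ to the subscheme $\{g\in G_F:\Ad_g(x)=\gamma\}$; by regularity $Z_{G_F}(x)=T_w$, so this subscheme is a $T_w$-torsor and thus defines an $F$-point of $G_F/T_w$. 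In the split case this recovers the classical identification $(G/T)\times\ft^{\rs}\cong \fg^{\rs}\times_{\fc^{\rs}}\ft^{\rs}$.

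Next, since $\cL$ commutes with fibred products, applying it yields
\[
\cL(G_F/T_w)\times \cL\ft_w^{\rs}\isom \cL\fg_F^{\rs}\times_{\cL\fc_F^{\rs}}\cL\ft_w^{\rs}.
\]
By construction $\fc_{w,\br}\subset \cL^+(\fc)_\bullet\subset \cL\fc_F^{\rs}$ (generic regularity forces inclusion into the regular-semisimple loop space) and $\ft_{w,\br}\subset \cL\ft_w^{\rs}$, while $\fC_{w,\br}=\chi^{-1}(\fc_{w,\br})$ automatically lies in $\cL\fg_F^{\rs}$, so $\fC_{w,\br}=\cL\fg_F^{\rs}\times_{\cL\fc_F^{\rs}}\fc_{w,\br}$. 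Pulling the displayed isomorphism back along $\ft_{w,\br}\hookrightarrow \cL\ft_w^{\rs}$ then produces the first claimed isomorphism.

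Finally, for the second isomorphism I will take $W_{w,\br}$-quotients of both sides of the first. The group $W_{w,\br}$ sits inside the Weyl group $N_{G_F}(T_w)(F)/T_w(F)$ of the torus $T_w$, and acts on $\cL(G_F/T_w)$ by right multiplication via any lift to $N_{G_F}(T_w)$ (well defined modulo $T_w$), on $\ft_{w,\br}$ by the standard Weyl action, and on $\fC_{w,\br}\times_{\fc_{w,\br}}\ft_{w,\br}$ diagonally through the second factor only. The first isomorphism is $W_{w,\br}$-equivariant because $\Ad_{\tilde u^{-1}}(u(x))=x$ for any lift $\tilde u$ of $u\in W_{w,\br}$. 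Since $\ft_{w,\br}\to\fc_{w,\br}$ is a $W_{w,\br}$-torsor by \re{strofc}(d), so is the projection $\fC_{w,\br}\times_{\fc_{w,\br}}\ft_{w,\br}\to \fC_{w,\br}$, and passing to $W_{w,\br}$-quotients yields the second isomorphism. The main obstacle is to formulate $\Psi_w$ functorially in families over arbitrary $k$-algebras (using the interpretation of $(G_F/T_w)(R)$ as pairs of a $T_w$-torsor together with a $T_w$-equivariant map to $G_F$) and to set up the $W_{w,\br}$-action carefully; the remaining steps are formal given the preceding constructions.
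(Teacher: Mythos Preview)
Your proposal is correct and follows essentially the same approach as the paper: start from the classical identification $(G_F/T_w)\times\ft_w^{\rs}\isom\fg^{\rs}\times_{\fc^{\rs}}\ft_w^{\rs}$, apply $\cL$ (which preserves limits), restrict to $\ft_{w,\br}\subset\cL(\ft_w^{\rs})$, and then take the $W_{w,\br}$-quotient using that $\ft_{w,\br}\to\fc_{w,\br}$ is a $W_{w,\br}$-torsor. The only cosmetic difference is that the paper first rewrites $\fg^{\rs}\times_{\fc^{\rs}}\ft_w^{\rs}\simeq\fg\times_{\fc}\ft_w^{\rs}$ before applying $\cL$, which lets it land directly in $\cL\fg\times_{\cL\fc}\ft_{w,\br}\simeq\fC_{w,\br}\times_{\fc_{w,\br}}\ft_{w,\br}$ without needing to argue separately that $\fC_{w,\br}\subset\cL\fg_F^{\rs}$; your route works too since on the stratum the discriminant has fixed valuation, hence is a unit in $A((t))$.
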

\begin{proof}
Let $\ft^{\rs}_w\subset\ft_w$\label{N:twrss} be the preimage of $\fc^{\rs}$. Then the map $(g,x)\mapsto (\Ad_g(x), x)$ induces an isomorphism
\[
(G_F/T_w)\times \ft_w^{\rs}\simeq \fg^{\rs}\times_{\fc^{\rs}} \ft_w^{\rs}\simeq \fg\times_{\fc} \ft_w^{\rs}
\]
over $\ft_w^{\rs}$. Since functor $\cL$ preserves limits, it induces an isomorphism
\[
\cL(G_F/T_w)\times \cL(\ft_w^{\rs})\isom \cL\fg\times_{\cL\fc} \cL(\kt_w^{\rs})
\]
over $\cL(\ft_w^{\rs})$. Restricting it to $\ft_{w,\br}\subset\clp(\ft_{w})_{(\kD,d_{\br})}\subset\cL(\ft_w^{\rs})$ (see \re{strval2}(d)), we get an isomorphism
\[
\cL(G_F/T_w)\times \ft_{w,\br}\isom \cL\fg\times_{\cL\fc} \ft_{w,\br}.
\]
From this the first isomorphism follows using identifications
\[
\cL\fg\times_{\cL\fc} \ft_{w,\br}\simeq (\cL\fg\times_{\cL\fc}\fc_{w,\br})\times_{\fc_{w,\br}} \ft_{w,\br}\simeq \fC_{w,\br}\times_{\fc_{w,\br}} \ft_{w,\br},
\]
while the second isomorphism is obtained from the first one by taking the quotient by $W_{w,\br}$.
\end{proof}

The following result will be proven in the Appendix (see \re{pfrtorbit}).\footnote{Compare \cite[Th\`eor\'eme 2.6.4]{Bo} for a stronger assertion.}

\begin{Thm} \label{T:orbit}
For every (not necessary split) maximal torus $S\subset G_F$, the natural projection $\psi_S: [\cL G/\cL S]\to \cL(G_F/S)$\label{N:psiS} is a topological equivalence.
\end{Thm}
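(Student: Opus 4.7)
The plan is to verify directly that $\psi_S$ becomes an isomorphism after applying the reduction functor, i.e., that $\psi_{S,\red}:[\cL G/\cL S]_{\red}\to\cL(G_F/S)_{\red}$ is an equivalence of $\infty$-stacks on $\Aff_{\red,k}$. Since the loop functor $\cL$ preserves finite limits and $\pi:G_F\to G_F/S$ is an $S$-torsor, the canonical isomorphism $G_F\times_{G_F/S}G_F\isom S\times G_F$ pulls back to $\cL G\times_{\cL(G_F/S)}\cL G\isom \cL S\times\cL G$. By the torsor characterization of \re{tors}(c), this already shows that $\psi_S$ is a monomorphism of $\infty$-stacks (the action map on the fiber product is an iso), so the nontrivial content is the epimorphism part.

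Thus it remains to show that the morphism $\cL G\to\cL(G_F/S)$ is an epimorphism of étale sheaves when restricted to reduced affine $k$-schemes. Concretely, for every reduced $k$-algebra $A$ and every map $y:\Spec A((t))\to G_F/S$, I must produce an étale cover $A\to A'$ such that $y|_{A'}$ lifts to a map $\Spec A'((t))\to G_F$. The obstruction to such a lift is the class of the pulled-back $S$-torsor $P_y:=y^*G_F$ in $H^1_{\et}(\Spec A((t)),S)$, so the task is to show that this class becomes trivial after an étale cover of $\Spec A$. By standard limit/approximation arguments, it suffices to show triviality after pullback to the strict Henselization $A^{sh}_{\bar x}$ at each geometric point $\bar x:\Spec K\to\Spec A$.

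I would handle this in two substeps. First, reduce to the split case: $S$ splits over the tame étale extension $F'=k((t^{1/h}))/F$ with $h$ dividing $|W|$ and hence prime to $\on{char}(k)$. Applying the Hochschild--Serre inflation--restriction sequence for the cover $A^{sh}_{\bar x}((t^{1/h}))\to A^{sh}_{\bar x}((t))$ together with the vanishing $H^1(K((t)),T)=0$ for $K$ algebraically closed (as $K((t))$ is a $C_1$ field, so has trivial Galois cohomology with torus coefficients, cf.\ Lang) eliminates the Galois-cohomological piece. Second, in the split case $T=\bG_m^r$, the problem becomes the vanishing of $\Pic(A^{sh}_{\bar x}((t)))$. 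For $\bar x$ a smooth point of $\Spec A$, $A^{sh}_{\bar x}[[t]]$ is a regular strictly Henselian local ring, hence a UFD by Auslander--Buchsbaum, so $\Pic(A^{sh}_{\bar x}[[t]])=0$ and consequently $\Pic(A^{sh}_{\bar x}((t)))=0$ by the localization sequence (the only possible generator $[t]$ is already trivial after inverting $t$).

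The main obstacle is Step 3, and within it the treatment of non-smooth geometric points: for a general reduced $A$, the strict Henselization $A^{sh}_{\bar x}$ need not be regular, and one must argue more carefully for the triviality of the Picard group of $A^{sh}_{\bar x}((t))$ — for instance, by passing to a normalization or applying an alteration argument, or by exploiting that reducedness and the invertibility of $h$ suffice to trivialize torsors in the limit. This technical point, together with the bookkeeping of the Galois descent in substep (a) (tracking the $W$-action and its interaction with $A^{sh}_{\bar x}$), is the heart of the proof.
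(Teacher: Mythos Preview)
Your monomorphism step is fine, but the epimorphism half heads in a direction the paper does not take, and the obstacle you flag at non-regular points is exactly why. The paper never tries to trivialize $S$-torsors over $A((t))$ for general reduced $A$; instead it reduces everything to a statement over algebraically closed fields and lets properness do the work.

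Concretely, the paper uses the decomposition $(\cL S)_{\red}\simeq\clp(S)\times\La_S$ from \re{ostr}(b) to rewrite the map as $[[\cL G/\clp(S)]/\La_S]\to\cL(G_F/S)$. The crucial geometric input is that $\cL G/\clp(S)\to\cL(G_F/S)$ is ind-fp-proper: after choosing an integral model $S_{\cO}\subset G_{\cO}$, this map factors through $\cL G\times^{\clp G}\clp(G_{\cO}/S_{\cO})$ and then through the affine Grassmannian $\cL G/\clp G$, which is ind-projective. One then checks \emph{only on $K$-points for algebraically closed $K$} that $(\cL G/\clp(S))(K)\to\cL(G_F/S)(K)$ is a $\La_S$-torsor, which is immediate from $H^1(K((t)),S)=1$. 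A general result (\rco{quot}) then converts ``ind-fp-proper with $\Dt$-torsor on geometric points'' into ``$[\cdot/\Dt]$ is topologically equivalent to the base'' via a constructible-topology compactness argument. Your Picard-group computation at singular strict Henselizations and your Hochschild--Serre step over non-field bases are thus bypassed entirely. Note also a hidden subtlety in your ``standard limit/approximation'' reduction: $A\mapsto A((t))$ does not commute with filtered colimits, so descending a trivialization from $A^{sh}_{\bar x}((t))$ to an \'etale neighborhood needs an extra argument (the paper handles the analogous issue for finite-group torsors in \rcl{colimBC} via the Henselian pair $(A\{\{t\}\},(t))$).
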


\begin{Cor} \label{C:stratum}
The map $[\psi_{w,\br}]:[\ft_{w,\br}/(W_{w,\br}\rtimes(\cL T_w)_{\red})]\to [\fC_{w,\br}/\cL G]_{\red}$,\label{N:[psiwbr]} induced by the map $\psi_{w,\br}$ from \re{str}(c), is an isomorphism.
\end{Cor}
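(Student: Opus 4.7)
The idea is to chain the two isomorphisms of Lemma \ref{L:isom} with Theorem \ref{T:orbit} and then pass to reductions. First, by the second isomorphism of Lemma \ref{L:isom}, we have an $\cL G$-equivariant identification $\fC_{w,\br}\simeq \cL(G_F/T_w)\times^{W_{w,\br}} \ft_{w,\br}$, where $\cL G$ acts by left translation on $\cL(G_F/T_w)$. Taking the $\cL G$-quotient yields
\[
[\fC_{w,\br}/\cL G]\simeq \bigl[\bigl([\cL(G_F/T_w)/\cL G]\times \ft_{w,\br}\bigr)/W_{w,\br}\bigr],
\]
where $W_{w,\br}$ acts diagonally (on the first factor via its action on $T_w$, thus on $\cL T_w$).

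Next, I would apply Theorem \ref{T:orbit} with $S=T_w$: the $\cL G$-equivariant map $\psi_{T_w}:[\cL G/\cL T_w]\to \cL(G_F/T_w)$ is a topological equivalence. Since the class of topological equivalences is closed under quotients (see \re{topeq}(b)), taking the quotient by $\cL G$ produces a $W_{w,\br}$-equivariant topological equivalence $[\pt/\cL T_w]\to[\cL(G_F/T_w)/\cL G]$. Multiplying by $\ft_{w,\br}$ and quotienting by $W_{w,\br}$, we obtain a topological equivalence
\[
[\ft_{w,\br}/(W_{w,\br}\ltimes \cL T_w)]\longrightarrow [\fC_{w,\br}/\cL G].
\]

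Finally, I would pass to reductions on both sides. By \re{topeq}(a), topological equivalences induce isomorphisms of reduced $\infty$-stacks. Using the compatibility of reduction with quotient stacks stated in \re{redintr}, together with the facts that $\ft_{w,\br}$ is strongly pro-smooth (\re{twisted}(c)), hence reduced, and that $W_{w,\br}$ is a finite group of order prime to $\mathrm{char}(k)$, hence \'etale and reduced, we obtain
\[
\bigl[\ft_{w,\br}/(W_{w,\br}\ltimes \cL T_w)\bigr]_{\red}\simeq \bigl[\ft_{w,\br}/(W_{w,\br}\ltimes (\cL T_w)_{\red})\bigr],
\]
and combining this with the topological equivalence above delivers the desired isomorphism onto $[\fC_{w,\br}/\cL G]_{\red}$. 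A short diagram chase — tracing a point $x\in\ft_{w,\br}$ through the chain of identifications and observing that it maps to the class of $x\in\fC_{w,\br}$ — confirms that the resulting isomorphism coincides with the map $[\psi_{w,\br}]$ of the statement.

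The main nontrivial input is Theorem \ref{T:orbit}; once granted, the rest is essentially formal manipulation of quotient stacks. The only delicate bookkeeping is verifying that the $W_{w,\br}$-equivariances are compatible throughout the chain and that the composition of identifications indeed recovers $\psi_{w,\br}$ rather than a twist of it.
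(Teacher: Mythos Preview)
Your proposal is correct and follows essentially the same approach as the paper: both combine Lemma~\ref{L:isom} with Theorem~\ref{T:orbit}, use closure of topological equivalences under products and quotients (\re{topeq}(b),(c)), and then pass to reductions using that $\ft_{w,\br}$ and $W_{w,\br}$ are reduced. The only cosmetic difference is the order in which you take the two quotients: the paper first quotients by $W_{w,\br}$ and then by $\cL G$, whereas you quotient by $\cL G$ first (obtaining $[\pt/\cL T_w]\to[\cL(G_F/T_w)/\cL G]$) and then by $W_{w,\br}$; this is immaterial.
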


\begin{proof}
Since the projection $\psi_{T_w}: [\cL G/\cL T_w]\to \cL(G_F/T_w)$ is a topological equivalence by \rt{orbit}, it induces an
$(\cL G\times W_{w,\br})$-equivariant equivalence $[\cL G/\cL T_w]\times\ft_{w,\br}\to \cL(G_F/T_w)\times\ft_{w,\br}$ (see \re{topeq}(c)). Taking the quotient by $W_{w,\br}$, we deduce from \re{topeq}(b) that the map
\begin{equation} \label{Eq:topeq}
[\cL G/\cL T_w]\times^{W_{w,\br}}\ft_{w,\br}\to \cL(G_F/T_w)\times^{W_{w,\br}}\ft_{w,\br}\simeq \fC_{w,\br}:(g,x)\mapsto\Ad_g(x)
\end{equation}
is an $\cL G$-equivariant topological equivalence. Dividing by $\cL G$,
we get that the induced map
$[\ft_{w,\br}/(W_{w,\br}\rtimes\cL T_w)]\to [\fC_{w,r}/\cL G]$ is a topological equivalence.
Finally, since $\ft_{w,\br}$ and $W_{w,\br}$ are reduced, we get the identification
\[
[\kt_{w,\br}/(W_{w,\br}\rtimes\cL T_w)]_{\red}\simeq [\kt_{w,\br}/(W_{w,\br}\rtimes(\cL T_w)_{\red})],
\]
and we conclude that $[\psi_{w,\br}]$ is an isomorphism.
\end{proof}

\begin{Emp} \label{E:ostr}
{\bf Loop groups on tame tori.}
(a) Recall that every torus $S$ over $F$ has a natural structure of a smooth group scheme $S_{\C{O}}$ over $\C{O}$, also known as the N\'eron model. Moreover, when $S$ is tame, that is, split over a tamely ramified extension $F'/F$, it has the following simple description: Let $F'/F$ be the splitting field of $S$ with Galois group $\Gm:=\Gal(F'/F)$. Then the torus $S':=S_{F'}$ is split, thus has a natural structure $S'_{\C{O}}$ over $\C{O}_{F'}$, and we set $S_{\C{O}}:=(S'_{\C{O}})^{\Gm}$. Then we can define the arc group $\clp(S):=\clp(S_{\C{O}})$.\label{N:clpS}

(b) Let $\La_S:=\Hom_{F}(\B{G}_m,S)$\label{N:laS} be the group of cocharacters of $S$ defined over $F$. We claim that we have a natural isomorphism
\[
(\cL S)_{\red}\simeq \clp(S)\times\La_S.
\]
Indeed, when $S$ is split, the assertion reduces to the case of $S=\gm$, which is easy. In the general case, let $S':=S_{F'}$ be as in (a). Then
$(\cL {S'})_{\red}\simeq \clp(S')\times\La_{S'}$, by the split case. Thus, taking $\Gm$-invariants, we get
$((\cL {S'})_{\red})^{\Gm}\simeq \clp(S')^{\Gm}\times(\La_{S'})^{\Gm}$.

Since $\La_S=(\La_{S'})^{\Gm}$ (by definition), while $\clp(S')^{\Gm}\simeq \clp(S)$ and
$\cL(S')^{\Gm}\simeq \cL S$ (because loop and arc functors commute with limits), it suffices to show that
$((\cL {S'})_{\red})^{\Gm}\simeq ((\cL {S'})^{\Gm})_{\red}$.
Since $((\cL {S'})^{\Gm})_{\red}\subset ((\cL {S'})_{\red})^{\Gm}\subset (\cL {S'})^{\Gm}$, it suffices to show that
$((\cL {S'})_{\red})^{\Gm}\simeq \clp(S')^{\Gm}\times(\La_{S'})^{\Gm}$
is reduced, that is, $\clp(S')^{\Gm}$ is reduced. Since $\clp(S')^{\Gm}\simeq\lim_n\clp_n(S')^{\Gm}$, and $|\Gm|$ is prime to the characteristic of $k$, each $\clp_n(S')^{\Gm}$ is smooth (see \cite[15.4.2]{GKM}), thus reduced.

(c) Our assumption that $|W|$ is invertible in $k$ implies that for every $w\in W$ the corresponding maximal torus $T_w\subset G_F$ (see \re{maxtori}) is tame. Moreover, the canonical conjugacy class of embeddings $T_w\hra G_F$ has a representative defined over $\co$.
\end{Emp}

\begin{Cor} \label{C:stratum2}

The $\infty$-stack $[\fC_{w,\br}/\cL G]_{\red}$ is placid, and the map
$\psi_{w,\br}:\kt_{w,\br}\to [{\fC}_{w,\br}/\cL G]_{\red}$ from \re{str}(c) is a smooth covering.
\end{Cor}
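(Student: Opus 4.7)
The plan is to reduce the statement to an application of the quotient construction in \re{quot}. By \rco{stratum}, the map $[\psi_{w,\br}]$ induces an isomorphism
\[
[\ft_{w,\br}/H]\isom [\fC_{w,\br}/\cL G]_{\red},
\]
where $H := W_{w,\br}\rtimes(\cL T_w)_{\red}$. Moreover, $\psi_{w,\br}$ factors as $\ft_{w,\br}\overset{\pi}{\to}[\ft_{w,\br}/H]\isom[\fC_{w,\br}/\cL G]_{\red}$ where $\pi$ is the canonical quotient map, hence an $H$-torsor by \re{tors}(d). So it suffices to show that $[\ft_{w,\br}/H]$ is placid and that $\pi$ is a smooth covering.

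By \re{ostr}(b),(c) we have an isomorphism of schemes $(\cL T_w)_{\red}\simeq\clp(T_w)\times\La_{T_w}$. Since $W_{w,\br}$ is finite and $\La_{T_w}$ is a (countable) lattice, the underlying scheme of $H$ is a disjoint union, indexed by the discrete set $W_{w,\br}\times\La_{T_w}$, of copies of $\clp(T_w)$. Our standing assumption that $|W|$ is prime to the characteristic of $k$ ensures $T_w$ is a smooth group scheme over $\cO$ (see \re{ostr}(c)), and therefore $\clp(T_w)$ is a strongly pro-smooth affine scheme by \re{proparc}(e). Hence $H$ is a group scheme whose underlying scheme is a (not quasi-compact) disjoint union of strongly pro-smooth affine schemes.

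I claim that $H$ is a $0$-smooth group scheme in the sense of \re{quot}, i.e., the structure morphism $H\to\pt$ is $0$-smooth. Indeed, for any $0$-placid affine scheme $Y$, the fiber product
\[
H\times Y\simeq\bigsqcup_{(u,\lambda)\in W_{w,\br}\times\La_{T_w}}\clp(T_w)\times Y
\]
decomposes as a coproduct of affine schemes, and each projection $\clp(T_w)\times Y\to Y$ is strongly pro-smooth (because strongly pro-smooth morphisms are closed under base change by \re{placidprop}, and $\clp(T_w)\to\pt$ is strongly pro-smooth). This verifies the definition of $0$-smoothness in \re{plinfst}(a).

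Finally, $\ft_{w,\br}$ is a $0$-placid affine scheme by \re{twisted}(c), so the conditions of \re{quot} are met: the $0$-smooth group scheme $H$ acts on the $0$-placid affine scheme $\ft_{w,\br}$. We conclude that the quotient $[\ft_{w,\br}/H]$ is $1$-placid and that the quotient morphism $\pi$ is a $0$-smooth covering, hence smooth. The only mildly delicate point is checking the $0$-smoothness of $H$ in spite of its failure to be quasi-compact, but as observed above, $0$-smoothness as defined in \re{plinfst}(a) explicitly permits disjoint-union decompositions, so this causes no difficulty. Combining with the identification from \rco{stratum} yields the corollary.
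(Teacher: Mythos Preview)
Your proof is correct and follows essentially the same approach as the paper: both use \rco{stratum} to identify $[\fC_{w,\br}/\cL G]_{\red}$ with $[\ft_{w,\br}/H]$ for $H=W_{w,\br}\rtimes(\cL T_w)_{\red}$, verify that $H$ is $0$-smooth using \re{ostr}(b) and \re{proparc}(e), and then apply \re{quot}. Your version is simply more explicit about unpacking the $0$-smoothness verification and the factorization $\psi_{w,\br}=[\psi_{w,\br}]\circ\pi$.
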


\begin{proof}
Recall that $\kt_{w,\br}$ is a placid scheme (see \re{twisted}(c)) and $(\cL T_w)_{\red}$ is a group scheme, whose connected component of the identity is the strongly pro-smooth group $\clp (T_{w})$ (see \re{ostr}(b) and \re{proparc}(e)). Therefore  $W_{w,\br}\rtimes (\cL T_w)_{\red}$ is a $0$-smooth group scheme. Thus, by \re{quot}, the quotient $[\kt_{w,\br}/(W_{w,\br}\rtimes (\cL T_w)_{\red})]$ is a placid stack, and the projection $\pr:\kt_{w,\br}\to [\kt_{w,\br}/(W_{w,\br}\rtimes (\cL T_w)_{\red})]$ is a smooth covering. Since $\psi_{w,\br}=[\psi_{w,\br}]\circ\pr$, the assertion follows from \rco{stratum}.
\end{proof}

\subsection{The fibration over a regular stratum}

\begin{Lem} \label{L:isom2}
The map $(I/\clp(T))\times\clp(\ft^{\rs})\to \Lie(I)_{\leq 0}:(g,x)\mapsto \Ad_g(x)$ is an isomorphism.
\end{Lem}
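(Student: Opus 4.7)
The plan is to prove the isomorphism by induction on the order of Taylor expansion in $t$, using the classical Chevalley-type decomposition of $\kb\cap\kg^{\rs}$ at $t=0$ as the base case and the regularity of $x_0$ at each subsequent step. Let $U$ denote the unipotent radical of $B$, so that $B=T\ltimes U$.

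First I would verify well-definedness and reduce to bijectivity on jets. For $g\in I$ and $x\in\clp(\ft^{\rs})$, the value $\Ad_g(x)|_{t=0}=\Ad_{g_0}(x_0)$ lies in $\Ad_B(\ft^{\rs})=\Ad_U(\ft^{\rs})\subseteq\kb\cap\kg^{\rs}$, so $\Ad_g(x)\in\Lie(I)_{\leq 0}$; and since $\clp(T)$ acts trivially on $\clp(\ft)$, the map descends through $I/\clp(T)$. Both source and target are (pro-)affine and the map is functorial in the test ring, so it suffices to show that for every $n\geq 0$ and every $\g\in\Lie(I)_{\leq 0}$ truncated modulo $t^{n+1}$, there exist $g\bmod t^{n+1}\in I$ and $x\bmod t^{n+1}\in\clp(\ft^{\rs})$ satisfying $\Ad_g(x)\equiv\g\pmod{t^{n+1}}$, unique modulo the right-translation action of $\clp(T)$ on $g$, and that this determination extends compatibly one truncation step further.

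The base case $n=0$ is the classical statement that the map $U\times\ft^{\rs}\to\kb\cap\kg^{\rs}$, $(u_0,x_0)\mapsto\Ad_{u_0}(x_0)$, is an isomorphism of schemes. I would prove this by observing that the projection $\kb\twoheadrightarrow\kb/\Lie(U)=\ft$ sends $\g_0\in\kb\cap\kg^{\rs}$ to an element with the same characteristic polynomial as $\g_0$, which therefore lies in $\ft^{\rs}$ and determines $x_0$ uniquely; and invertibility of $\ad_{x_0}$ on $\Lie(U)$ (no root vanishes on a regular element) then uniquely solves the conjugation equation for $u_0\in U$. The freedom to modify $g_0\in B$ by an element of $T$ matches the constant-term $T$ in $\clp(T)$.

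For the inductive step, writing $g^{(n+1)}=g^{(n)}(1+t^{n+1}\eta)$ with $\eta\in\kg$ and $x^{(n+1)}=x^{(n)}+t^{n+1}\delta_x$ with $\delta_x\in\ft$, a direct Taylor expansion reduces the equation $\Ad_{g^{(n+1)}}(x^{(n+1)})\equiv\g\pmod{t^{n+2}}$ to the linear equation $[\eta,x_0]+\delta_x=y$ in $\kg$, where $y\in\kg$ is determined by the already-constructed data. Using the decomposition $\kg=\kt\oplus\bigoplus_{\al\in R}\kg_\al$ and invertibility of $\ad_{x_0}$ on $\bigoplus_{\al\in R}\kg_\al$, one uniquely determines $\delta_x\in\kt$ and $\eta$ modulo $\kt$; the residual $\kt$-ambiguity in $\eta$ corresponds exactly to right-translation of $g^{(n+1)}$ by the $t^{n+1}$-coefficient of an element of $\clp(T)$, yielding uniqueness modulo $\clp(T)$ and completing the induction. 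The main (and only) subtlety is bookkeeping the matching between the $\kt$-ambiguity in $\eta$ and the $\clp(T)$-action on $g$ at each truncation level, which is straightforward once set up.
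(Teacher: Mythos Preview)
Your argument is correct but takes a genuinely different route from the paper. The paper never performs an induction on jets; instead it starts from the same base case $(B/T)\times\ft^{\rs}\isom\kb^{\rs}$, applies $G\times^B(-)$ to get $(G/T)\times\ft^{\rs}\isom\wt{\fg}^{\rs}$, then applies $\clp$ and uses the identification $\clp(G/T)\simeq\clp(G)/\clp(T)$ (from the fact that arc spaces take smooth torsors to torsors) together with the Cartesian diagram for $\clp$ along the \'etale map $\wt{\fg}^{\rs}\to\fg$. A final fiber product of the $B$-level and $\clp(G)$-level isomorphisms over the $G$-level one yields the statement. Your approach trades these structural inputs (arc spaces of torsors, \'etale base change for arc spaces, the Grothendieck--Springer resolution) for an explicit Hensel-type lifting, solving the linear equation $[\eta,x_0]+\delta_x=\Ad_{g_0}^{-1}(y)$ at each level via invertibility of $\ad_{x_0}$ on the root spaces. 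The paper's argument is shorter and reuses machinery already set up; yours is more self-contained and makes the role of regularity completely transparent.

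Two minor points of care in your write-up. First, the claim that the source is ``(pro-)affine'' is not accurate (already $G/T$ is not affine), but this is irrelevant: what you actually prove is that $I\times\clp(\ft^{\rs})\to\Lie(I)_{\leq 0}$ is surjective on $A$-points for every $A$ with fibers exactly the $\clp(T)(A)$-orbits, i.e.\ it is a $\clp(T)$-torsor even for the presheaf quotient, so the sheaf quotient $(I/\clp(T))\times\clp(\ft^{\rs})$ agrees with $\Lie(I)_{\leq 0}$. Second, in the inductive step the right-hand side of your linear equation should be $\Ad_{g_0}^{-1}(y)$ rather than $y$; this does not affect solvability or uniqueness.
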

\begin{proof}
Note that the map $(g,x)\mapsto\Ad_g(x)$ induces an isomorphism
\begin{equation} \label{Eq:e1}
(B/T)\times\ft^{\rs}\isom \kb^{\rs}.
\end{equation}
The assertion now follows formally. Applying $G\times^B-$, we get an isomorphism
\begin{equation} \label{Eq:e2}
(G/T)\times\ft^{\rs}\isom  G\times^{B}\kb^{\rs}\simeq\wt{\fg}^{\rs},
\end{equation}
where $\pi:\wt{\fg}\to\fg$ is the Grothendieck--Springer resolution, and $\wt{\kg}^{\rs}$ is the preimage of $\kg^{\rs}$.
Next, applying $\clp$ and using \re{proparc}(f), we get an isomorphism
\begin{equation} \label{Eq:e3}
(\clp(G)/\clp(T))\times\clp(\ft^{\rs})\simeq \clp(G/T)\times\clp(\ft^{\rs})\isom\clp(\wt{\fg}^{\rs}).
\end{equation}
Taking the fiber product of \form{e1} and \form{e3} over \form{e2}, we get an isomorphism
\begin{equation} \label{Eq:e4}
(I/\clp(T))\times\clp(\ft^{\rs})\isom\clp(\wt{\fg}^{\rs})\times_{\wt{\fg}^{\rs}}\kb^{\rs}.
\end{equation}
Finally, applying \rl{arcetale} for the \'etale morphism  $\pi:\wt{\fg}^{\rs}\to \fg$, the right hand side of \form{e4} is isomorphic to  $\clp(\fg)\times_{\fg}\kb^{\rs}\simeq\Lie(I)_{\leq 0}$.
\end{proof}

\begin{Cor} \label{C:regstr2}
We have a natural isomorphism
$[\wt{\fC}_{\leq 0}/\cL G]\simeq [\clp(\kt^{\rs})/\clp(T)]$.
\end{Cor}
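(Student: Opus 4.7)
The plan is to chain together three identifications. First, from the definition $\wt{\fC}=\cL G\times^I\Lie(I)$ in \re{affspr}(c) and the openness of $\fC_{\leq 0}\subset\fC$, I would restrict to obtain $\wt{\fC}_{\leq 0}\simeq\cL G\times^I\Lie(I)_{\leq 0}$, and hence the standard identification
\[
[\wt{\fC}_{\leq 0}/\cL G]\simeq[\Lie(I)_{\leq 0}/I].
\]
Second, I would upgrade the isomorphism $(I/\clp(T))\times\clp(\ft^{\rs})\isom\Lie(I)_{\leq 0}$ of \rl{isom2} to an $I$-equivariant isomorphism, where $I$ acts on the left factor $I/\clp(T)$ by left translation, trivially on $\clp(\ft^{\rs})$, and on $\Lie(I)_{\leq 0}$ by the adjoint action. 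This equivariance is immediate from the formula $(g,x)\mapsto\Ad_g(x)$: for $h\in I$ one has $\Ad_{hg}(x)=\Ad_h(\Ad_g(x))$.

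Third, I would pass to the quotient by $I$. Using the general fact (a special case of \re{tors}(c)) that for an $H$-space of the form $(H/K)\times Z$ with $H$ acting only on the first factor, the quotient stack is $[Z/K]$, I obtain
\[
[\Lie(I)_{\leq 0}/I]\simeq\bigl[\bigl((I/\clp(T))\times\clp(\ft^{\rs})\bigr)/I\bigr]\simeq[\clp(\ft^{\rs})/\clp(T)].
\]
Concatenating the three isomorphisms yields the stated identification.

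There is essentially no main obstacle: both \re{affspr}(c) and \rl{isom2} do all the geometric work, and all that remains is to track the $I$-equivariance and apply the trivial quotient identity for a product with a homogeneous space. In particular, no dimension or perversity argument is needed here; the statement is purely a repackaging of \rl{isom2} at the level of quotient stacks.
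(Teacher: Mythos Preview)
Your proposal is correct and follows essentially the same approach as the paper: the paper's proof simply says ``dividing the isomorphism from \rl{isom2} by the action of $I$, we get an isomorphism $[\clp(\kt^{\rs})/\clp(T)]\isom [\Lie(I)_{\leq 0}/I]$; since $[\wt{\fC}_{\leq 0}/\cL G]\simeq [\Lie(I)_{\leq 0}/I]$, we are done.'' You have just unpacked this one-line argument by making the $I$-equivariance and the quotient identity $[(I/\clp(T))\times Z)/I]\simeq[Z/\clp(T)]$ explicit.
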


\begin{proof}
Dividing the isomorphism from \rl{isom2} by the action of $I$, we get an isomorphism $[\clp(\kt^{\rs})/\clp(T)]\isom [\Lie(I)_{\leq 0}/I]$.
Since $[\wt{\fC}_{\leq 0}/\cL G]\simeq [\Lie(I)_{\leq 0}/I]$, we are done.
\end{proof}

\begin{Cor} \label{C:regstr}
We have a natural commutative diagram
\begin{equation} \label{Eq:regstratum}
\begin{CD}
(\cL G/\clp(T))\times \clp(\ft^{\rs})@>\sim>> \wt{\fC}_{\leq 0}\\
@V\pr VV @VV\frak{p}_{\leq 0}V \\
\cL(G/T)\times^{W} \clp(\ft^{\rs})@>\sim>> \fC_{\leq 0}.
\end{CD}
\end{equation}
\end{Cor}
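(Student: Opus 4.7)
The plan is to derive both horizontal isomorphisms as specializations of \rl{isom} and \rl{isom2}, after identifying $\fC_{\leq 0}$ with the open GKM stratum $\fC_{1,0}$. The key preliminary step is to check that among all GKM strata $\fc_{w,\br}$ with $d_{\br}=0$, only the one with $(w,\br)=(1,0)$ is non-empty: any element $\sum_i x_i t^{i/h}\in\ft_{w,\br}$ with $d_{\br}=0$ has $\kD$ of valuation $0$, which forces the leading coefficient $x_0\in\ft^w$ to also lie in $\ft^{\rs}$; but $W$ acts freely on $\ft^{\rs}$, so $w=1$ and then $\br=0$. Consequently $\fC_{\leq 0}=\fC_{1,0}$ and $\wt{\fC}_{\leq 0}=\wt{\fC}_{1,0}$, with $T_1=T$, $\ft_{1,0}=\clp(\ft^{\rs})$, and $W_{1,0}=W$.

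Granted this identification, the bottom isomorphism is the second formula of \rl{isom} applied to $(w,\br)=(1,0)$, which gives $\cL(G/T)\times^{W}\clp(\ft^{\rs})\isom\fC_{\leq 0}$ via $(g,x)\mapsto\Ad_g(x)$. For the top isomorphism I would combine the identification $\wt{\fC}_{\leq 0}\simeq\cL G\times^I\Lie(I)_{\leq 0}$ coming from the definition of $\wt{\fC}$ in \re{affspr}(c) with \rl{isom2}, which identifies $\Lie(I)_{\leq 0}\cong(I/\clp(T))\times\clp(\ft^{\rs})$ by the same formula $(g,x)\mapsto\Ad_g(x)$. Tracing through, the adjoint $I$-action on $\Lie(I)_{\leq 0}$ corresponds on the product side to left multiplication on $I/\clp(T)$ together with the trivial action on $\clp(\ft^{\rs})$; contracting then produces $\wt{\fC}_{\leq 0}\simeq(\cL G/\clp(T))\times\clp(\ft^{\rs})$ via $(g\clp(T),x)\mapsto[g,x]$.

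Commutativity of the diagram is then immediate on generalized points: for $(g\clp(T),x)$, the top-then-right route yields $\frak{p}_{\leq 0}([g,x])=\Ad_g(x)\in\fC_{\leq 0}$, while the left-then-bottom route first sends it to $(gT,x)\in\cL(G/T)\times^W\clp(\ft^{\rs})$ and then again to $\Ad_g(x)$. The only mildly subtle step in the whole argument is the initial reduction to the open GKM stratum, but this rests on the elementary fact that $W$ acts freely on $\ft^{\rs}$.
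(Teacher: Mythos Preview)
Your proof is correct and follows essentially the same route as the paper: the bottom isomorphism is \rl{isom} specialized to the open stratum $(w,\br)=(1,0)$, the top isomorphism comes from combining $\wt{\fC}_{\leq 0}\simeq\cL G\times^{I}\Lie(I)_{\leq 0}$ with \rl{isom2}, and commutativity is checked directly on points. Your preliminary identification $\fC_{\leq 0}=\fC_{1,0}$ is a useful clarification that the paper leaves implicit; one small streamlining is that $d_{\br}=0$ together with $\br\geq 0$ already forces $\br=0$ before any argument about $w$, so the non-trivial part is only that $\ft^{w}\cap\ft^{\rs}=\emptyset$ for $w\neq 1$.
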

\begin{proof}
The top isomorphism  is obtained as a composition
\[
(\cL G/\clp(T))\times \clp(\ft^{\rs})\simeq \cL G\times^{I}[(I/\clp(T))\times\clp(\ft^{\rs})]\isom\cL G\times^{I}\Lie (I)_{\leq 0}\simeq \wt{\fC}_{\leq 0},
\]
induced by  isomorphism of \rl{isom2}, while the bottom isomorphism is the isomorphism of \rl{isom}
applied to the open stratum $(w,\br)=(1,0)$. The fact that the diagram is commutative is straightforward.
\end{proof}


\begin{Emp} \label{E:waffact}
{\bf The $\wt{W}$-action on $\wt{\fC}_{\leq 0}$.} (a) Let $N:=N_G(T)\subset G$ be the normalizer. First we claim that we have a natural
isomorphism $\wt{W}\simeq (\cL N)_{\red}/\clp(T)$ of group spaces. Indeed, by definition, we have an isomorphism of groups $\wt{W}\simeq (\cL N)(k)/\clp(T)(k)$, so it suffices to show that the quotient space $[(\cL N)_{\red}/\clp(T)]$ is discrete. To see this, notice that
the isomorphism $N/T\simeq W$ induces an isomorphism $\cL N/\cL T\simeq W$, hence an isomorphism  $(\cL N)_{\red}/(\cL T)_{\red}\simeq W$.
Since $[(\cL T)_{\red}/\clp(T)]\simeq\La_T$ (by \re{ostr}(b)) is discrete, the discreteness of $[(\cL N)_{\red}/\clp(T)]$ follows.

(b) The identification of (a) gives rise to an action of $\wt{W}$ on $(\cL G/\clp(T))\times \clp(\ft^{\rs})$ over $\cL (G/T)\times^W \clp(\ft^{\rs})$, given by the formula $w(g,x):=(gw^{-1},w(x))$. Moreover, the quotient of $(\cL G/\clp(T))\times\clp(\ft^{\rs})$ by $\wt{W}$ is naturally identified with $(\cL G/(\cL T)_{\red})\times^W \clp(\ft^{\rs})$.

(c) Using the identification \form{regstratum}, we obtain from (b) an action of $\wt{W}$ on $\wt{\fC}_{\leq 0}$ over $\fC_{\leq 0}$, which induces an identification  $[\wt{\fC}_{\leq 0}/\wt{W}]\simeq(\cL G/(\cL T)_{\red})\times^W \clp(\ft^{\rs})$.
\end{Emp}

\begin{Cor} \label{C:openstr}
The projection $\frak{p}_{\leq 0}:[\wt{\fC}_{\leq 0}/\wt{W}]\to \fC_{\leq 0}$ is a topological equivalence.
\end{Cor}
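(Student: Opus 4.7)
The plan is to reduce the statement, via the identifications already in hand, to Theorem~\ref{T:orbit} together with a general observation about quotients by topologically equivalent subgroups.

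First, by \rco{regstr} and the discussion in \re{waffact}(c), the morphism $\frak{p}_{\leq 0}$ is canonically identified with the map
\[
[\cL G/(\cL T)_{\red}] \times^{W} \clp(\ft^{\rs}) \longrightarrow \cL(G/T) \times^{W} \clp(\ft^{\rs})
\]
induced by the $W$-equivariant morphism $\phi \colon [\cL G/(\cL T)_{\red}] \to \cL(G/T)$. Since the class of topological equivalences is stable under base change and closed under quotients (\re{topeq}(b),(c)), it suffices to prove that $\phi$ itself is a topological equivalence.

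Next, factor $\phi$ as the composition
\[
[\cL G/(\cL T)_{\red}] \xrightarrow{\ \phi_{1}\ } [\cL G/\cL T] \xrightarrow{\ \psi_{T}\ } \cL(G/T).
\]
By Theorem~\ref{T:orbit} (applied to $S = T$), the second map $\psi_{T}$ is a topological equivalence, so the proof reduces to showing that $\phi_{1}$ is a topological equivalence. The map $\phi_{1}$ is the canonical morphism of quotient stacks induced by the inclusion $i \colon (\cL T)_{\red} \hookrightarrow \cL T$, which, by \rco{red}(b) and the very definition of the reduction, is an fp-closed embedding and a topological equivalence.

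To conclude, write each of the quotient stacks as the colimit of its simplicial \v{C}ech nerve
\[
[\cL G/H] \simeq \colim_{[n] \in \Delta^{\op}} \cL G \times H^{n}, \qquad H \in \{(\cL T)_{\red},\, \cL T\},
\]
so that $\phi_{1}$ is induced by the simplicial morphism $\id_{\cL G} \times i^{\bullet}$. Because the reduction functor commutes with finite products in our setting (the product of reduced affine schemes over the algebraically closed field $k$ is reduced, and passing to ind-schemes preserves this), each level map $\cL G \times ((\cL T)_{\red})^{n} \to \cL G \times (\cL T)^{n}$ is a topological equivalence. As topological equivalences are closed under colimits (\re{topeq}(b)), it follows that $\phi_{1}$ is a topological equivalence, completing the proof. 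The main obstacle, and the only nontrivial input beyond the formal setup, is the compatibility of reduction with products of the (ind-placid, possibly non-reduced) loop groups $\cL T$, which legitimizes the simplicial-level reduction step.
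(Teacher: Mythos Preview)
Your proof is correct and follows essentially the same route as the paper. The paper's argument simply observes that, via \rco{regstr} and \re{waffact}, the map $\frak{p}_{\leq 0}$ is identified with the map \form{topeq} (from the proof of \rco{stratum}) in the special case $w=1,\,\br=0$; your version unpacks this identification, and in particular makes explicit the passage from $[\cL G/(\cL T)_{\red}]$ to $[\cL G/\cL T]$ that the paper absorbs into the citation of \form{topeq}. Note that your \v{C}ech--nerve argument for $\phi_1$ can be shortened: topological equivalences are closed under pullbacks (\re{topeq}(c)) and compositions, so each level map $\cL G\times((\cL T)_{\red})^n\to\cL G\times(\cL T)^n$ is a topological equivalence without any appeal to products of reduced schemes being reduced, and then \re{topeq}(b) handles the colimit.
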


\begin{proof}
The identifications of \form{regstratum} and  \re{waffact} identify $\frak{p}_{\leq 0}$ with the topological equivalence \form{topeq} in the case $w=1$ and $\br=0$.
\end{proof}

\subsection{The fibration over a general stratum}

Recall that in \re{clM2}(b) we defined a class of uo-equidimensional morphisms between placid $\infty$-stacks.

\begin{Prop} \label{P:equid}
The fibration $\ov{\frak{p}}_{w,\br,\red}:[\wt{\fC}_{w,\br}/\cL G]_{\red}\to [\fC_{w,\br}/\cL G]_{\red}$ is uo-equidimensional.
\end{Prop}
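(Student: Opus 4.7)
The plan is to invoke the smooth-local nature of the class of uo-equidimensional morphisms (\rco{clMc}(c)) in order to reduce the statement to the flat uo-equidimensionality of $v_{w,\br}:\Lie(I)_{w,\br}\to\kc_{w,\br}$ established in \rco{codim2}(b), together with the smoothness of $\psi_{w,\br}:\kt_{w,\br}\to[\fC_{w,\br}/\cL G]_{\red}$ established in \rco{stratum2}.

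More precisely, the strategy is to factor $\ov{\frak{p}}_{w,\br,\red}$ through $\kc_{w,\br}$ by means of the diagram
\[
\begin{CD}
(\Lie(I)_{w,\br})_{\red} @>v_{w,\br,\red}>> \kc_{w,\br}\\
@Vq_{\red}VV @VV\ov{\psi}_{w,\br}V\\
[\wt{\fC}_{w,\br}/\cL G]_{\red} @>\ov{\frak{p}}_{w,\br,\red}>> [\fC_{w,\br}/\cL G]_{\red},
\end{CD}
\]
in which $v_{w,\br,\red}$ denotes the restriction of $v_{w,\br}$ to the reduction, and $q_{\red}$ is the smooth $I$-torsor obtained as the reduction (via \rl{reduction}) of the $I$-torsor $\Lie(I)_{w,\br}\to[\Lie(I)_{w,\br}/I]\simeq[\wt{\fC}_{w,\br}/\cL G]$. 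To show that $\ov{\psi}_{w,\br}$ is uo-equidimensional, I would observe that the composite $\psi_{w,\br}=\ov{\psi}_{w,\br}\circ\pi$ is smooth, hence uo-equidimensional by \rco{clMc}(a), and that $\pi:\kt_{w,\br}\to\kc_{w,\br}$ is a finite \'etale $W_{w,\br}$-torsor, hence a smooth covering; \rco{clMc}(c) then yields the claim for $\ov{\psi}_{w,\br}$. The top arrow $v_{w,\br,\red}$ is uo-equidimensional by \rco{codim2}(b) combined with \rco{dimred}. Composing via \rl{clM}(i), the morphism $\ov{\psi}_{w,\br}\circ v_{w,\br,\red}$, which by commutativity equals $\ov{\frak{p}}_{w,\br,\red}\circ q_{\red}$, is uo-equidimensional; applying \rco{clMc}(c) once more, this time to the smooth covering $q_{\red}$, gives the desired uo-equidimensionality of $\ov{\frak{p}}_{w,\br,\red}$.

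The main obstacle is establishing that the central diagram is 2-commutative as a diagram of $\infty$-stacks. The bottom-left composite sends $\g\in\Lie(I)_{w,\br}$ to its own $\cL G$-orbit in $\fC_{w,\br}$, while the top-right composite sends it to the $\cL G$-orbit of an arbitrary $\kt_{w,\br}$-lift of $\chi(\g)$. These agree on the coarse moduli since any two elements of $\fC_{w,\br}$ with the same Chevalley characteristic polynomial are $\cL G$-conjugate by the regular semisimplicity of elements of $\kc_{w,\br}\subset\clp(\kc)_{\bullet}$---this is the content of \rco{stratum}. However, a priori the two morphisms differ by a $\cL G$-conjugation whose obstruction takes values in the automorphism gerbe $B(\cL T_w)_{\red}$. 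I would resolve this by passing to the smooth cover
\[
\wt E:=\{(\g,y,g)\in(\Lie(I)_{w,\br})_{\red}\times\kt_{w,\br}\times\cL G\mid\Ad_g(y)=\iota(\g)\}
\]
of $(\Lie(I)_{w,\br})_{\red}$, on which a tautological conjugating element trivializes the obstruction and makes the two composites 2-isomorphic; the uo-equidimensionality conclusion then descends through the smooth-covering criterion of \rl{clM}(iii), using $\wt E$ as a source cover and $\kt_{w,\br}$ as a target cover.
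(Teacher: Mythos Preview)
Your overall strategy and the key inputs are exactly those of the paper: reduce to showing that the composite
\[
\wt{\frak{p}}_{w,\br}:\Lie(I)_{w,\br,\red}\longrightarrow[\fC_{w,\br}/\cL G]_{\red}
\]
is uo-equidimensional via \rco{clMc}(c), and feed in the uo-equidimensionality of $v_{w,\br}$ (\rco{codim2}(b)) together with the smoothness of $\psi_{w,\br}$ (\rco{stratum2}). You also correctly identify the one genuine obstacle: the square you write down with $\ov{\psi}_{w,\br}\circ v_{w,\br,\red}$ on one side and $\ov{\frak{p}}_{w,\br,\red}\circ q_{\red}$ on the other is \emph{not} 2-commutative as a diagram of stacks.

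Where your argument has a gap is in the fix. Your auxiliary space $\wt E\subset(\Lie(I)_{w,\br})_{\red}\times\kt_{w,\br}\times\cL G$ carries an explicit conjugating element $g\in\cL G$, and this drags the ind-scheme $\cL G$ into the picture. The projection $\wt E\to(\Lie(I)_{w,\br})_{\red}$ then has fibers which are (up to the finite group $W_{w,\br}$) torsors under $\cL T_w$; since $\cL T_w$ is only an ind-placid ind-scheme, it is not clear that $\wt E$ is placid or that this projection lies in the class of smooth morphisms to which \rco{clMc}(c) or \rl{clM}(iii) applies. Your invocation of these results is therefore unjustified as written.

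The paper avoids this by never writing down an explicit conjugator. Instead it forms the Cartesian pullback $\wt X'_{w,\br}:=\fc_{w,\br}\times_{[\fC_{w,\br}/\cL G]_{\red}}\Lie(I)_{w,\br,\red}$ along $\ov{\psi}_{w,\br}$ (and its further pullback $\wt X_{w,\br}$ along $\pi_{w,\br}$), so that the resulting map $\wt\psi_{w,\br}\circ\wt\pi_{w,\br}:\wt X_{w,\br}\to\Lie(I)_{w,\br,\red}$ is smooth simply because it is a base change of the smooth covering $\psi_{w,\br}$. The key new observation is the existence of a \emph{retraction} $\pr:[\fC_{w,\br}/\cL G]_{\red}\to\kc_{w,\br}$ with $\pr\circ\ov{\psi}_{w,\br}=\Id$ (coming from the Chevalley map), from which one gets $\wt g'_{w,\br}=v_{w,\br}\circ\wt\psi_{w,\br}$ without ever claiming your square commutes. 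This identity, together with the uo-equidimensionality of $v_{w,\br}$ and $\ov{\psi}_{w,\br}$, gives uo-equidimensionality of the composite $\wt{\frak p}_{w,\br}\circ(\wt\psi_{w,\br}\circ\wt\pi_{w,\br})$, and \rco{clMc}(c) finishes. If you replace your $\wt E$ by this abstract fiber product and use the retraction $\pr$, your proof goes through.
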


\begin{proof}
Since the projection
\[
\Lie (I)_{w,\br,\red}\to [\Lie (I)_{w,\br,\red}/I]\simeq[\Lie (I)_{w,\br}/I]_{\red}\simeq [\wt{\fC}_{w,\br}/\cL G]_{\red}
\]
is a smooth covering (see \re{quot}), it suffices to show that the composition
\[
\wt{\frak{p}}_{w,\br}:\Lie (I)_{w,\br,\red}\to [\wt{\fC}_{w,\br}/\cL G]_{\red}\to [\fC_{w,\br}/\cL G]_{\red}
\]
is uo-equidimensional (see \rco{clMc}(c)). Consider the commutative diagram
\[
\begin{CD}
\wt{X}_{w,\br} @>\wt{\pi}_{w,\br}>> \wt{X}'_{w,\br} @>\wt{\psi}_{w,\br}>> \Lie (I)_{w,\br,\red} @= \Lie (I)_{w,\br,\red}\\
@V\wt{g}_{w,\br}VV @V\wt{g}'_{w,\br}VV @V\wt{\frak{p}}_{w,\br}VV @VVv_{w,\br}V\\
\kt_{w,\br} @>{\pi}_{w,\br}>> \fc_{w,\br} @>\ov{\psi}_{w,\br}>>[\fC_{w,\br}/\cL G]_{\red}@>\pr>>\kc_{w,\br},
\end{CD}
\]
whose left and middle inner squares are Cartesian.

Since $\psi_{w,\br}=\ov{\psi}_{w,\br}\circ \pi_{w,\br}$ is a  smooth covering
(see \rco{stratum2}), the same is true for the composition
$\wt{\psi}_{w,\br}\circ \wt{\pi}_{w,\br}$. In particular, both $\ov{\psi}_{w,\br}\circ \pi_{w,\br}$ and $\wt{\psi}_{w,\br}\circ \wt{\pi}_{w,\br}$ are uo-equidimensional by \rco{clMc}(a). Moreover, since $\pi_{w,\br}$ and hence also $\wt{\pi}_{w,\br}$ is a finite \'etale covering (see \re{strofc}(c)), we conclude from \rco{clMc}(c) that $\ov{\psi}_{w,\br}$ and $\wt{\psi}_{w,\br}$ are uo-equidimensional.

Next, since $\pr\circ\ov{\psi}_{w,\br}=\Id$, we get $\wt{g}'_{w,\br}=v_{w,\br}\circ \wt{\psi}_{w,\br}$. Since $v_{w,\br}$ is uo-equidimensional by \rco{codim2}(b), we conclude from \rl{clM}(i) that $\wt{g}'_{w,\br}$ and hence also
\[
\wt{\frak{p}}_{w,\br}\circ(\wt{\psi}_{w,\br}\circ \wt{\pi}_{w,\br})\simeq \ov{\psi}_{w,\br}\circ \wt{g}'_{w,\br}\circ \wt{\pi}_{w,\br}
\]
are uo-equidimensional as well. Therefore $\wt{\frak{p}}_{w,\br}$ is uo-equidimensional (by \rco{clMc}(c)), and the proof is complete.
\end{proof}

\begin{Emp} \label{E:laaction}
{\bf The $\La_w$-action on $\wt{\fC}_{\ft,w,\br}$}.

We set ${\fC}_{\ft,w,\br}:=\fC_{w,\br}\times _{\fc_{w,\br}}\ft_{w,\br}$,\label{N:fCftwbr}  $\wt{\fC}_{\ft,w,\br}:=\wt{\fC}_{w,\br}\times _{\kc_{w,\br}}\kt_{w,\br}$,\label{N:wtfCftwbr} and let $\La_w:=\La_{T_w}$\label{N:Law} (be as in \re{ostr}(b)).

(a) Recall that the embedding $\iota:\wt{\fC}\hra \Fl\times \fC$ of \re{afffl}, identifies $\wt{\fC}_{w,\br}$ with a closed ind-subscheme
\[
\{([g],x)\in\Fl\times \fC_{w,\br}\,|\,\Ad_{g^{-1}}(x)\in\Lie(I)\}\subset \Fl\times \fC_{w,\br}.
\]

(b) Using isomorphism $\cL(G_F/T_w)\times \kt_{w,\br}\simeq{\fC}_{\ft,w,\br}$ from \rl{isom}, the ind-scheme $\wt{\fC}_{\ft,w,\br}$ can be identified with a closed ind-subscheme
\[
\{([g],h,x)\in\Fl\times \cL(G_F/T_w)\times \kt_{w,\br}\,|\,\Ad_{g^{-1}h}(x)\in\Lie(I)\}\subset \Fl\times \cL(G_F/T_w)\times \kt_{w,\br}.
\]
Note that $g\in\cL G$ is defined up to a right $I$-multiplication, thus $g^{-1}h\in \cL(G_F/T_w)$ is defined up to a left multiplication. Therefore  $\Ad_{g^{-1}h}(x)\in\cL\kg$ is defined up to an $\Ad I$-action, hence the condition $\Ad_{g^{-1}h}(x)\in\Lie(I)$ makes sense.

(c) Consider the action  of $\cL(T_w)$ on  $\Fl\times \cL(G_F/T_w)\times \kt_{w,\br}$ over $\cL(G_F/T_w)\times \kt_{w,\br}$, defined by the formula $t([g],h,x):=([(hth^{-1})g],h,x)$. Using the equality
\[
g^{-1}(hth^{-1})^{-1}h=g^{-1}ht^{-1}h^{-1}h=g^{-1}ht^{-1},
\]
we conclude that the closed ind-subscheme $\wt{\fC}_{\ft,w,\br}\subset\Fl\times \cL(G_F/T_w)\times\kt_{w,\br}$ from (b) is $\cL(T_w)$-invariant. Thus we obtain an action of $\cL(T_w)$ on $\wt{\fC}_{\ft,w,\br}$ over ${\fC}_{\ft,w,\br}$.

(d) Recall that $\La_w=\Hom_F(\B{G}_m,T_w)$ is naturally a subgroup of $\cL(T_w)$ via the embedding $\la\mapsto \la(t)$. Therefore the action of $\cL(T_w)$ from (c)
induces an action of $\La_w$ on $\wt{\fC}_{\ft,w,\br}$ over ${\fC}_{\ft,w,\br}$.
Thus we can form a quotient $\ov{\fC}_{\ft,w,\br}:=[\wt{\fC}_{\ft,w,\br}/\La_w]$.\label{N:ovfCftwbr}
\end{Emp}






Recall that in \re{stopeq} we defined classes of topologically locally fp-schematic and topologically fp-proper representable/schematic morphisms between $\infty$-stacks. The proof of the following important result will be proven in the Appendix (see \re{pfrttopproper}).

\begin{Thm} \label{T:topproper}
(a) The projection $\frak{p}_{\ft,w,\br}:\wt{\fC}_{\ft,w,\br}\to{\fC}_{\ft,w,\br}$ is topologically locally fp-schematic, and the induced morphism $\ov{\fC}_{\ft,w,\br}\to{\fC}_{\ft,w,\br}$ is topologically fp-proper representable.

(b) For every morphism $f:Y\to{\fC}_{\ft,w,\br}$ from an affine scheme $Y$, there exists a subgroup $\La'_w\subset \La_w$ of finite index such that the quotient $[(\wt{\fC}_{\ft,w,\br}\times_{{\fC}_{\ft,w,\br}}Y)/\La'_w]_{\red}$ is actually a scheme.
\end{Thm}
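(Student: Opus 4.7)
The plan is to exploit the closed embedding $\wt{\fC}_{\ft,w,\br}\hookrightarrow\Fl\times\fC_{\ft,w,\br}$ from \re{laaction}(b), combined with the Kazhdan--Lusztig theorem on finite-dimensionality of affine Springer fibers, to collapse the ind-scheme $\wt{\fC}_{\ft,w,\br}$ into a single bounded piece modulo the $\Lambda_w$-action. Fix $f:Y\to\fC_{\ft,w,\br}$ from an affine scheme $Y$ and set $X_Y:=\wt{\fC}_{\ft,w,\br}\times_{\fC_{\ft,w,\br}}Y$. Via the $I$-stable presentation $\Fl\simeq\colim_i Y_i$ of \re{afffl}(b), we can write $X_Y=\colim_i X_Y^{\leq i}$, where each $X_Y^{\leq i}:=X_Y\cap(Y_i\times Y)$ is fp-projective over $Y$ by \rl{affspr}, and the whole presentation is $\Lambda_w$-stable.

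For (a), the central claim is: there exists $i_0$ such that $(X_Y)_{\red}=\bigcup_{\lambda\in\Lambda_w}\lambda\cdot(X_Y^{\leq i_0})_{\red}$, and moreover only finitely many $\lambda\in\Lambda_w$ satisfy $\lambda(Y_{i_0})\cap Y_{i_0}\neq\emptyset$. Indeed, by \rl{isom} and the $\cL G$-equivariance of $\frak{p}_{\ft,w,\br}$, the geometric fiber over any $y\in Y$ is an affine Springer fiber $\Fl_{\gamma(y)}$ for a regular semisimple $\gamma(y)\in\cL\fg$ with $\val(\kD(\gamma(y)))=d_\br$ independent of $y$. By Kazhdan--Lusztig, $\Lambda_w\backslash\Fl_{\gamma(y)}$ is a projective scheme whose dimension is $\delta_{w,\br}$ (compatible with \rp{codim}), so $\Fl_{\gamma(y)}$ meets only finitely many $\Lambda_w$-translates of any fixed $Y_i$; quasi-compactness of $Y$ then yields a uniform $i_0$. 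Granting the claim, $(X_Y)_{\red}$ is covered by the discrete family of translates $\{\lambda\cdot(X_Y^{\leq i_0})_{\red}\}_{\lambda\in\Lambda_w}$, each fp-projective over $Y$, showing that $\frak{p}_{\ft,w,\br}$ is topologically locally fp-schematic. Moreover, the induced map $(X_Y^{\leq i_0})_{\red}\to[(X_Y)_{\red}/\Lambda_w]$ is surjective with finite fibers, so the target is an fp-proper algebraic space over $Y$, yielding topological fp-proper representability of $\ov{\fC}_{\ft,w,\br}\to\fC_{\ft,w,\br}$.

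For (b), the task is to produce a finite-index subgroup $\Lambda'_w\subset\Lambda_w$ whose action on $(X_Y)_{\red}$ is free, so that the quotient is a scheme rather than a stack. The $\Lambda_w$-stabilizer of a geometric point $([g_0],h_0,x_0)\in X_Y$ equals $\{\lambda\in\Lambda_w\,|\,h_0\lambda(t)h_0^{-1}\in g_0 I g_0^{-1}\}$, which is finite because $\Lambda_w\cap\clp(T_w)=\{1\}$ (by the decomposition $(\cL T_w)_{\red}\simeq\clp(T_w)\times\Lambda_w$ of \re{ostr}(b)) and $\Lambda_w$ embeds discretely in $\cL T_w$. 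The uniform bound from (a) ensures that, as $([g_0],h_0,x_0)$ varies over $(X_Y)_{\red}$, these stabilizers all lie (after $\Lambda_w$-translation) inside a fixed finite subset $F\subset\Lambda_w$; any finite-index $\Lambda'_w$ with $\Lambda'_w\cap F=\{1\}$ then acts freely on $(X_Y)_{\red}$, so $[(X_Y)_{\red}/\Lambda'_w]\simeq\bigsqcup_{\Lambda_w/\Lambda'_w}(X_Y^{\leq i_0})_{\red}\big/\mathrm{(finite)}$ is a scheme. The main obstacle is the uniformity underlying the bounded-height claim of (a): promoting the pointwise Kazhdan--Lusztig finiteness into a genuinely relative statement over quasi-compact $Y$ in the ind-scheme setting of \re{afffl}. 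This should follow from the Goresky--Kottwitz--MacPherson analysis codified by \rp{codim}, but requires careful bookkeeping of the $\Lambda_w$-translation structure relative to the filtration $\Fl=\colim_i Y_i$.
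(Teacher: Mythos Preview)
Your overall strategy—verify that the $\La_w$-action on the ind-fp-proper $\wt{\fC}_{\ft,w,\br}$ over $\fC_{\ft,w,\br}$ is free and discrete, establish a uniform bound so that $\La_w$-translates of one bounded piece cover the reduction, and then pass to the quotient—matches the paper's, which packages these ingredients into the general \rp{quot}. Two points, however, need correction.

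For (a), the discreteness statement is not well-posed: by \re{laaction}(c) the action sends $([g],h,x)$ to $([(h\la h^{-1})g],h,x)$, so whether a translate of a bounded piece meets itself depends on $h$, and ``$\la(Y_{i_0})\cap Y_{i_0}\neq\emptyset$'' has no intrinsic meaning. The paper instead observes that if both $g$ and $(h\la h^{-1})g$ lie in $Y_i$ then $h\la h^{-1}\in\wt{Y}_i\wt{Y}_i^{-1}$, and a bounded conjugacy class in $\cL G$ meets $\La_w$ finitely. More seriously, the uniform boundedness you flag as the ``main obstacle'' is handled in the paper by a different mechanism: rather than invoking Kazhdan--Lusztig fiberwise and then appealing to quasi-compactness of $Y$, the paper uses \rco{open} (for any bounded $Z\subset\cL(G_F/T_w)$ the preimage in $\cL G$ is bounded modulo $\La_w$) together with the description \re{laaction}(b) to produce an $i$ such that $\frak{p}_{\ft,w,\br}^{-1}(Z(K))\subset\bigcup_{\la}\la(\wt{\fC}_{\ft,w,\br,i}(K))$ directly. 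Your ``pointwise finiteness $+$ quasi-compactness'' route is not obviously valid for ind-schemes: one needs an argument like the constructible-topology compactness of \rl{constop} to force stabilization.

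For (b) there is a genuine gap. You claim that freeness of the $\La'_w$-action makes the quotient a scheme, but a free action of a discrete group on a scheme yields only an \emph{algebraic space} in general—this is precisely what \rp{quot}(a) already gives for the full $\La_w$. In fact the stabilizer you compute is not merely finite but trivial (it is torsion-free, discrete and bounded), so the $\La_w$-action is already free and your recipe ``choose $\La'_w$ avoiding the finite set $F$ of stabilizers'' produces $\La'_w=\La_w$. The actual content of (b) is stronger: one must find a finite-index $\La'_w$ for which the quotient map is a \emph{Zariski}-local isomorphism. The paper achieves this in \rcl{projk}(d): take a bounded open $U$ containing a fundamental piece, set $\Si':=\{\dt:\dt(U)\cap U\neq\emptyset\}$ (finite by discreteness), and choose a normal finite-index subgroup $\Dt_0$ disjoint from $\Si'\sm\{1\}$; then the $\Dt_0$-translates of $U$ are pairwise disjoint and the quotient is a scheme. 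Your final displayed isomorphism does not parse.
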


\begin{Cor} \label{C:topproper}
Consider a Cartesian diagram
\begin{equation} \label{Eq:basic1}
\begin{CD}
{X}_{w,\br} @>{\phi}_{w,\br}>> [\wt{\fC}_{w,\br}/\cL G]_{\red} \\
 @V{{g}_{w,\br}}VV @VV{\ov{\frak{p}}_{w,\br,\red}}V\\
\kt_{w,\br} @>\psi_{w,\br}>> [\fC_{w,\br}/\cL G]_{\red}.
\end{CD}
\end{equation}

(a) Then ${X}_{w,\br}$ is a placid reduced scheme locally fp over $\kt_{w,\br}$, equipped with an action of
$\La_w$. Moreover, the quotient $[{X}_{w,\br}/\La_w]$ is an algebraic space, fp-proper over $\kt_{w,\br}$. Furthermore, there exists a subgroup $\La'_w\subset \La_w$ of finite index such that the quotient
$[{X}_{w,\br}/\La'_w]$ is a scheme.

(b) The map $\ov{\frak{p}}_{w,\br,\red}$ is a locally fp-representable morphism between placid $\infty$-stacks. Moreover, it is
uo-equidimensional of relative dimension $\dt_{w,\br}$.

(c) The projections ${g}_{w,\br}$ and $\ov{g}_{w,\br}:[{X}_{w,\br}/\La_w]\to\kt_{w,\br}$ are
uo-equidimensional of dimension $\dt_{w,\br}$.

(d)  The morphism $f:[\wt{\fC}_{\kt,w,\br}/(\cL G\times\La_w)]_{\red}\to[\fC_{\kt, w,\br}/\cL G]_{\red}$, induced by
$\frak{p}_{\ft,w,\br}$, is fp-proper.
\end{Cor}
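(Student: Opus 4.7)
The plan is to identify the fiber product $X_{w,\br}$ with (the reduction of) the base change of $\frak{p}_{\ft,w,\br}$ along the canonical section $\ft_{w,\br}\hookrightarrow\fC_{\ft,w,\br}$, and then transport the properties of $\frak{p}_{\ft,w,\br}$ and $\ov{\fC}_{\ft,w,\br}\to\fC_{\ft,w,\br}$ from \rt{topproper}. The map $\psi_{w,\br}$ lifts across the $\cL G$-torsor $\fC_{w,\br}\to[\fC_{w,\br}/\cL G]$ via the embedding $\iota:\ft_{w,\br}\hookrightarrow\fC_{w,\br}$. Combining standard base change with the fact that $\fC_{\ft,w,\br}\to\fC_{w,\br}$ is \'etale (as a base change of the $W_{w,\br}$-torsor $\pi_{w,\br}$, so that reduction commutes with the relevant fiber products by \rl{reduction}), one obtains a canonical topological equivalence $X_{w,\br}\simeq(\ft_{w,\br}\times_{\fC_{\ft,w,\br}}\wt{\fC}_{\ft,w,\br})_{\red}$, which intertwines the $\La_w$-action on $X_{w,\br}$ with the one on $\wt{\fC}_{\ft,w,\br}$ defined in \re{laaction}.

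For (a) and (d), I invoke \rt{topproper}(a): $\frak{p}_{\ft,w,\br}$ is topologically locally fp-schematic and $\ov{\fC}_{\ft,w,\br}\to\fC_{\ft,w,\br}$ is topologically fp-proper representable. Base changing along the section and passing to reductions, \rco{red-fp} (applied to the classes of locally fp morphisms of schemes and of fp-proper morphisms) yields that $X_{w,\br}$ is a scheme locally fp over $\ft_{w,\br}$ and $[X_{w,\br}/\La_w]$ is an algebraic space fp-proper over $\ft_{w,\br}$; placidity of $X_{w,\br}$ then follows from \rl{glplacid}. Since $\ft_{w,\br}$ is a basic open subscheme of the strongly pro-smooth affine $\clp(\ft_w)$, hence is $0$-placid affine, \rt{topproper}(b) applied with $Y=\ft_{w,\br}$ produces the required finite-index subgroup $\La'_w\subset\La_w$ making $[X_{w,\br}/\La'_w]$ a scheme. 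For (d), the \'etale-local nature of topologically fp-proper representable morphisms (\rl{etalelocal}(b)) lets us take $\cL G$-quotients, and a final application of \rco{red-fp} gives fp-properness of $f$.

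For (b) and (c), placidity of $[\wt{\fC}_{w,\br}/\cL G]_\red\simeq[\Lie(I)_{w,\br}/I]_\red$ follows from \re{quot} combined with \rco{red}. Local fp-representability of $\ov{\frak{p}}_{w,\br,\red}$ is deduced from (a) together with the smooth covering $\psi_{w,\br}$ (\rco{stratum2}) via \rl{etalelocal}(a); uo-equidimensionality is \rp{equid}. To pin down the relative dimension as $\dt_{w,\br}$, observe that the geometric fibers of $g_{w,\br}$ over $x\in\ft_{w,\br}$ coincide with the affine Springer fibers $\Fl_{\iota(x)}$, whose dimension equals $\dt_{w,\br}$ by the Kazhdan--Lusztig--GKM formula (equivalently, by combining \rp{codim} with the additivity of the dimension function, \rl{dimstacks}(b)); then \rl{pb} transfers this to $\ov{\frak{p}}_{w,\br,\red}$. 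For (c), $g_{w,\br}$ inherits the dimension from (b) by \rl{pb}, and since $X_{w,\br}\to[X_{w,\br}/\La_w]$ is a $\La_w$-torsor (hence uo-equidimensional of relative dimension $0$), \rco{equidim2}(b) upgrades this to $\ov g_{w,\br}$.

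The two main obstacles are: first, rigorously justifying the Step 1 identification across reduction functors, where the key input is \rl{reduction} applied to the \'etale morphism $\fC_{\ft,w,\br}\to\fC_{w,\br}$ (and verifying that $\ft_{w,\br}$ is genuinely affine, not merely an fp-open of a pro-vector space, so that \rt{topproper}(b) is applicable directly); second, identifying the relative dimension as \emph{exactly} $\dt_{w,\br}$, which ultimately rests on the explicit Kazhdan--Lusztig--GKM dimension formula for affine Springer fibers, rather than on the uo-equidimensionality provided abstractly by \rp{equid}.
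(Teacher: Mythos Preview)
Your proposal is correct and follows essentially the same route as the paper: identify $X_{w,\br}$ with $(\ft_{w,\br}\times_{\fC_{\ft,w,\br}}\wt{\fC}_{\ft,w,\br})_{\red}$, invoke \rt{topproper} together with \rco{red-fp} and \rl{glplacid} for (a), descend along the smooth covering $\psi_{w,\br}$ for (b) and (d), use \rp{equid} and \rl{pb}, and pin down the relative dimension via the affine Springer fiber dimension formula. Two minor remarks: the reducedness of $X_{w,\br}$ is obtained in the paper directly from \rl{reduction} applied to the smooth morphism $\psi_{w,\br}$ (rather than to the \'etale $\fC_{\ft,w,\br}\to\fC_{w,\br}$), and the dimension $\dt_{w,\br}$ of affine Springer fibers is attributed to Bezrukavnikov \cite{Be}; also, your worry about $\ft_{w,\br}$ being affine is unfounded, since by \re{strval2}(b) each $\clp(X)_{(f;n)}$ is affine and $\ft_{w,\br}$ is a connected component of one.
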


\begin{proof}

(a) Note that the morphism $\psi_{w,\br}$ has a natural lift to $\kt_{w,\br}\to{\fC}_{\ft,w,\br}$, so we have a natural isomorphism
$X_{w,\br,\red}\simeq (\kt_{w,\br}\times_{\fC_{\ft,w,\br}}\wt{\fC}_{\ft,w,\br})_{\red}$. Since $\psi_{w,\br}$ is a smooth morphism of placid $\infty$-stacks (see \rco{stratum2}), we conclude from \rl{reduction} that $X_{w,\br}$ is reduced.
Since $\kt_{w,\br}$ is a placid affine scheme (by \re{twisted}(c)), all assertions follow immediately from \rt{topproper}
and \rl{red-fp}.

(b) Since the class of locally fp-representable morphisms is \'etale local on the base, while $\psi_{w,\br}$ is a covering, the first assertion follows from (a) and \rco{red-fp}. Next, since $\ov{\frak{p}}_{w,\br,\red}$ is uo-equidimensional by \rp{equid}, it is equidimensional by \rco{uo-equid}(a).
Therefore $\ov{\frak{p}}_{w,\br,\red}$ is equidimensional in the sense of \re{eqmor} (by \rl{clM}(v)), thus
it suffices to show that all fibers of $\ov{\frak{p}}_{w,\br,\red}$ are equidimensional
of dimension $\dt_{w,\br}$. As fibers or $\ov{\frak{p}}_{w,\br,\red}$ are the affine Springer fibers, so the assertion follows from
the Kazhdan--Lusztig--Bezrukavnikov formula \cite{KL}, \cite{Be} (see remark \re{Be}(a) below).

(c) Since $\psi_{w,\br}$ is pro-universally open (by \rco{clMc}(a)), the assertion about $g_{w,\br}$ follows immediately from (b) and \rl{pb}.
The assertion about $g_{w,\br}$ now follows from the fact that the projection ${X}_{w,\br}\to[{X}_{w,\br}/\La_w]$ is \'etale.

(d) Since the class of fp-proper morphisms is \'etale local on the base (see \cite[Tags 02L0, 02L1]{Sta}), and $\psi_{w,\br}$ is a covering
(by \rco{stratum2}), it suffices to show the pullback of $f$ with respect to $\psi_{w,\br}$ is fp-proper (by
\rl{etalelocal}). As this pullback is isomorphic to the projection $[{X}_{w,\br}/\La_w]\to\kt_{w,\br}$, the assertion follows from (a).
\end{proof}

\begin{Emp} \label{E:Be}
{\bf Remarks.}
(a) In their works Kazhdan--Lusztig \cite{KL} and Bezrukavnikov \cite{Be} only treat the case of topologically nilpotent elements.
However the general case can be deduced from it using the topological Jordan decomposition. See, for example, \cite[Appendix B]{BV} where a similar
assertion for groups is shown.

(b) The above argument gives an alternative proof of the equidimensionality of affine
Springer fibers, independent of the argument of Kazhdan--Lusztig.
\end{Emp}

\subsection{The (semi)-smallness}

\begin{Emp} \label{E:not}
{\bf Notation.} Recall that the  fibration
$\frak{p}:\wt{\fC}\rightarrow \fC$ is ind-fp-proper (see \rl{affspr}) and $\cL G$-equivariant, the induced fibrations
$\ov{\frak{p}}:[\wt{\fC}/\cL G]\rightarrow[\fC/\cL G]$\label{N:ovfrakp}  and $\ov{\frak{p}}_{\bullet}:[\wt{\fC}_{\bullet}/\cL G]\rightarrow[\fC_{\bullet}/\cL G]$\label{N:ovfrakpbullet} are ind-fp-proper maps (see \re{pspr}).
\end{Emp}

\begin{Lem} \label{L:sprsmall}
(a) The collection $\{[\fC_{w,\br}/\cL G]_{\red}\}_{w,\br}$ forms a constructible stratification of $[\fC_{\bullet}/\cL G]$, making it a placidly stratified $\infty$-stack.

(b) Every $[\wt{\fC}_{w,\br}/\cL G]\subset[\wt{\fC}_{\bullet}/\cL G]$ is an fp-locally closed subscheme of pure codimension $b_{w,\br}$.

(c) The $\infty$-stack $[\wt{\fC}_{\bullet}/\cL G]$ is smooth, while the projection $\ov{\frak{p}}_{\bullet}$ is small, relative to the open stratum $(w,\br)$ with $w=1$ and $\br=0$.
\end{Lem}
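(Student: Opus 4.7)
For part (a), my plan is to transport the bounded constructible stratification $\{\kc_{w,\br}\}_{w,\br}$ of $\clp(\fc)_{\bullet}$ established in \re{consstr2}(d) to $[\fC_{\bullet}/\cL G]$ via the natural morphism induced by $\cL\chi:\cL\fg\to\cL\fc$. Applying \rl{propcons}(a) then produces the constructible stratification $\{[\fC_{w,\br}/\cL G]_{\red}\}_{w,\br}$; the stratification is bounded because it is pulled back from a bounded one. Placidity of each stratum is \rco{stratum2}, which makes the whole $\infty$-stack placidly stratified.

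For part (b), I would use the identification $[\wt{\fC}/\cL G]\simeq[\Lie(I)/I]$ from \re{affspr}(c), which restricts to $[\wt{\fC}_{w,\br}/\cL G]\simeq[\Lie(I)_{w,\br}/I]$. The Iwahori $I$ is strongly pro-smooth, being the preimage under the strongly pro-smooth morphism $\ev_G:\clp(G)\to G$ (\re{proparc}(e)) of the smooth subgroup $B\subset G$; hence by \re{quot} the projection $\Lie(I)\to[\Lie(I)/I]$ is a smooth covering. Since $\Lie(I)_{w,\br}\subset\Lie(I)$ is fp-locally closed of pure codimension $b_{w,\br}$ by \rco{codim2}(a), and since both properties are local on a smooth cover thanks to \rl{pb}, the embedding $[\wt{\fC}_{w,\br}/\cL G]\hra[\wt{\fC}/\cL G]$ is fp-locally closed of the same pure codimension; intersection with the fp-open $[\wt{\fC}_{\bullet}/\cL G]$ completes the argument.

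For part (c), smoothness of $[\wt{\fC}_{\bullet}/\cL G]\simeq[\Lie(I)_{\bullet}/I]$ follows since $\Lie(I)$ is strongly pro-smooth (being the preimage of the linear subspace $\kb\subset\kg$ under the strongly pro-smooth morphism $\ev_\kg:\clp(\kg)\to\kg$), so the open $\Lie(I)_{\bullet}$ is strongly pro-smooth as well, and the quotient by the strongly pro-smooth group $I$ is a smooth placid $\infty$-stack in the sense of \re{intrsimpson}(b). For smallness of $\ov{\frak{p}}_{\bullet}$, the hypotheses of \re{semismall} are verified by combining part (b) with \rco{topproper}(b), which gives that each restriction $\ov{\frak{p}}_{w,\br,\red}$ is locally fp-representable and equidimensional of relative dimension $\dt_{w,\br}$. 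The strict inequality $\dt_{w,\br}<b_{w,\br}$ for every $(w,\br)\neq(1,0)$ is the content of \rco{codim1}; for the open stratum $(1,0)$, the same corollary gives $\dt_{1,0}=b_{1,0}=0$, witnessing smallness relative to $[\fC_0/\cL G]$.

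The main obstacle is bookkeeping rather than substance: each ingredient has been prepared in the preceding sections, and the task is to verify that the definitions of placid stratification, pure codimension, equidimensional morphism, and smoothness transfer correctly from $\Lie(I)$ and $\clp(\fc)_{\bullet}$ to their quotient incarnations $[\Lie(I)/I]$ and $[\fC_{\bullet}/\cL G]$ through the various smooth covers. The key technical tool throughout is \rl{pb}, which ensures codimension and equidimensionality are preserved under pullback along pro-universally open morphisms, together with \rco{stratum2} (placidity of strata) and \rco{topproper} (fiberwise geometry of $\ov{\frak{p}}_{w,\br,\red}$); the crucial numerical input is the Goresky--Kottwitz--MacPherson codimension formula from \rp{codim}, whose strict-inequality consequence in \rco{codim1} is what ultimately guarantees smallness away from the open stratum.
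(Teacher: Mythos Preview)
Your proposal is correct and follows essentially the same route as the paper: part (a) via \re{consstr2} and \rco{stratum2}; part (b) via the identification $[\wt{\fC}/\cL G]\simeq[\Lie(I)/I]$, \rco{codim2}(a), and \rl{pb}; part (c) via smoothness of $\Lie(I)$ together with \rco{topproper}(b) and \rco{codim1}. The only small addition in the paper is an explicit citation of \rco{dimred} in part (c), needed because \re{semismall} works with the \emph{reduced} strata $\cX_{\al}=f^{-1}(\cY_{\al})_{\red}$ while part (b) is stated for the non-reduced $[\wt{\fC}_{w,\br}/\cL G]$; this passage is harmless but worth making explicit.
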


\begin{proof}
(a) Every $[\fC_{w,\br}/\cL G]_{\red}$ is a placid $\infty$-stack by \rco{stratum2}, and $\{[\fC_{w,\br}/\cL G]_{\red}\}_{w,\br}$ is a constructible stratification by \re{consstr2}.

(b) Note that $\Lie(I)_{w,\br}\subset\Lie(I)$ is an fp-locally closed subscheme of pure codimension $b_{w,\br}$ (see \rco{codim2}), hence  $[\Lie(I)_{w,\br}/I]\subset [\Lie(I)/I]$ is an fp-locally closed subscheme of pure codimension $b_{w,\br}$ (by \rl{pb}). Since $[\wt{\fC}_{\bullet}/\cL G]\simeq[\Lie(I)_{\bullet}/I]$ and $[\wt{\fC}_{w,\br}/\cL G]\simeq[\Lie(I)_{w,\br}/I]$, the assertion
follows.

(c) The smoothness assertion follows from the fact that $[\wt{\fC}/\cL G]\simeq[\Lie(I)/I]$ is smooth, while the smallness assertion follows from the combination of (b) and Corollaries \ref{C:dimred}, \ref{C:topproper}(b) and \ref{C:codim1}.
\end{proof}

\begin{Emp} \label{E:affsprfib}
{\bf The affine Springer fibration.}
(a) Recall (see \re{topnilp}(a)) that $\cL^+(\fc)_{\tn}\subset\clp(\fc)$ is an fp-closed subscheme, and let
$\cL^+(\fc)_{\tn,\bullet}\subset\clp(\fc)_{\bullet}$,\label{N:clpfcbullet} $\fC_{\tn,\bullet}\subset \fC_{\bullet}$,\label{N:fCtnbullet}  $\wt{\fC}_{\tn,\bullet}\subset \wt{\fC}_{\bullet}$,\label{N:wtfCtnbullet} $\Lie(I)_{\tn}\subset\Lie(I)$,\label{N:lieitnbullet} etc.  be the preimages
of $\clp(\fc)_{\tn}\subset\clp(\fc)$. In particular, $\iota_{\tn}:[\fC_{\tn,\bullet}/\cL G]\to[\fC_{\bullet}/\cL G]$ is an fp-closed embedding.

(b) Using \re{topnilp}(c) we see that $\cL^+(\fc)_{\tn,\bullet}\subset\clp(\fc)_{\bullet}$ is a union of all strata $\fc_{w,\br}$ such that $(w,\br)>0$. Hence the same also holds for $\fC_{\tn,\bullet}\subset \fC_{\bullet}$ and $\Lie(I)_{\tn,\bullet}\subset\Lie(I)_{\bullet}$,
therefore $\{[\fC_{w,\br}/\cL G]_{\red}\}_{(w,\br)>0}$ is a constructible stratification of $[\fC_{\tn,\bullet}/\cL G]$.

(c) Let $\ov{\frak{p}}_{\tn,\bullet}:[\wt{\fC}_{\tn,\bullet}/\cL G]\to [\fC_{\tn,\bullet}/\cL G]$\label{N:ovfraktnbullet} be the restriction of $\ov{\frak{p}}$.
\end{Emp}

\begin{Lem} \label{L:sprsm}
(a) The collection $\{[\fC_{w,\br}/\cL G]_{\red}\}_{(w,\br)>0}$ forms a constructible stratification of $[\fC_{\tn,\bullet}/\cL G]$, making it a placidly stratified $\infty$-stack.

(b) Every $[\wt{\fC}_{w,\br}/\cL G]\subset[\wt{\fC}_{\tn,\bullet}/\cL G]$ is an fp-locally closed subscheme of pure codimension
$b^+_{w,\br}$.

(c) The $\infty$-stack $[\wt{\fC}_{\tn,\bullet}/\cL G]$ is smooth, the fibration $\ov{\frak{p}}_{\tn,\bullet}:[\wt{\fC}_{\tn,\bullet}/\cL G]\rightarrow [\fC_{\tn,\bullet}/\cL G]$ is
ind-fp-proper and semi-small.

(d) Moreover, a stratum $[\fC_{w,\br}/\cL G]$ is $\ov{\frak{p}}_{\tn,\bullet}$-relevant if and only if $\kt_{w,\br^+_w}\subset\cL^+(\ft_w)_{\tn}$ is an open stratum.
\end{Lem}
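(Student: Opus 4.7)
The plan is to follow the proof of \rl{sprsmall} closely, substituting \rco{codim} in place of \rco{codim1} and replacing $b_{w,\br}$ by $b^+_{w,\br}=b_{w,\br}-r$ throughout.

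For (a), I would note that by \re{affsprfib}(b) the closed $\infty$-substack $\fC_{\tn,\bullet}\subset \fC_{\bullet}$ is precisely the union of those $\fC_{w,\br}$ with $(w,\br)>0$. Hence $\{[\fC_{w,\br}/\cL G]_{\red}\}_{(w,\br)>0}$ inherits the structure of a constructible stratification from \rl{sprsmall}(a) (or directly from \re{consstr2}), and each stratum is a placid $\infty$-stack by \rco{stratum2}. For (b), I would argue as in \rl{sprsmall}(b), working inside the $\tn$-locus: by \re{topnilp}(e), the stratum $\fc_{w,\br}\subset \cL^+(\fc)_{\tn}$ is of pure codimension $b^+_{w,\br}$; the morphism $v:\Lie(I)\to \cL^+(\fc)$ is flat (by \rco{drinfeld}) and pro-universally open (as established in the proof of \rco{codim2}), so its base change $v_{\tn}:\Lie(I)_{\tn,\bullet}\to \cL^+(\fc)_{\tn,\bullet}$ inherits both properties. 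Applying \rl{pb}, one obtains that $\Lie(I)_{w,\br}\subset \Lie(I)_{\tn,\bullet}$ is of pure codimension $b^+_{w,\br}$, and the same codimension is preserved on the quotient $[\Lie(I)_{w,\br}/I]\subset [\Lie(I)_{\tn,\bullet}/I]\simeq [\wt{\fC}_{\tn,\bullet}/\cL G]$ by a further application of \rl{pb} along the smooth quotient map.

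For the smoothness assertion in (c), I would use the explicit description
\[
\Lie(I)_{\tn}\simeq \kn \times \prod_{n\geq 1}\fg,
\]
which exhibits $\Lie(I)_{\tn}$ as a filtered limit of affine spaces with smooth affine transition maps, hence strongly pro-smooth. Its fp-open subscheme $\Lie(I)_{\tn,\bullet}$ inherits this property, so $[\wt{\fC}_{\tn,\bullet}/\cL G]\simeq [\Lie(I)_{\tn,\bullet}/I]$ is a smooth placid $\infty$-stack. Local ind-fp-properness of $\ov{\frak{p}}_{\tn,\bullet}$ follows from the corresponding property of $\ov{\frak{p}}_{\bullet}$ (see \re{not}) by pullback along the fp-closed embedding $\iota_{\tn}$. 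For semi-smallness in (c) and the relevance criterion in (d), I would combine (a), (b), and \rco{topproper}(b) (which yields that each $\ov{\frak{p}}_{w,\br,\red}$ is equidimensional of relative dimension $\dt_{w,\br}$) with \rco{codim}: the inequality $\dt_{w,\br}\leq b^+_{w,\br}$ gives semi-smallness, while equality holds precisely when $\kt_{w,\br}\subset \cL^+(\ft_w)_{\tn}$ is an open stratum.

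The main obstacle is purely bookkeeping: all substantive inputs (the strong pro-smoothness of $\Lie(I)_{\tn}$, the GKM codimension formula \rp{codim}, and the geometry of the fibration encoded in \rco{topproper}) have already been established; the proof merely verifies that the $\tn$-corrections fit together via the identities $b^+_{w,\br}=\dt_{w,\br}+a^+_{w,\br}$ and $a^+_{w,\br}=a_{w,\br}+c_w-r$ recorded in \re{topnilp}(d) and \rco{codim}.
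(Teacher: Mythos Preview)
Your proposal is correct and follows essentially the same approach as the paper. The only organizational difference is in part (b): the paper deduces the codimension directly from \rl{sprsmall}(b) together with the observation that $\Lie(I)_{\tn}\subset\Lie(I)$ is an fp-closed subscheme of pure codimension $r$, whereas you work on the base $\cL^+(\fc)_{\tn}$ and pull back via $v_{\tn}$; both routes amount to the same application of \rl{pb}, and your explicit description of $\Lie(I)_{\tn}$ as $\kn\times\prod_{n\geq 1}\kg$ supplies the smoothness justification that the paper leaves implicit.
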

\begin{proof}
Assertion (a) follows immediately from \rl{sprsmall}(a). Next, since $\Lie(I)_{\tn}\subset\Lie(I)$ is an fp-closed subscheme of pure codimension $r$, assertion (b) follows from \rl{sprsmall}(b). Then, the smoothness assertion follows from the isomorphism $[\wt{\fC}_{\tn,\bullet}/\cL G]\simeq [\Lie(I)_{\tn,\bullet}/\Lie(I)]$ and the smoothness of $\Lie(I)_{\tn}$. The ind-fp-properness of $\ov{\frak{p}}_{\tn,\bullet}$ follows from that for $\ov{\frak{p}}_{\bullet}$. The remaining assertions now follow from Corollaries \ref{C:dimred}, \ref{C:topproper}(b) and \ref{C:codim}.
\end{proof}


\begin{Emp} \label{E:minsrt}
{\bf Relevant strata.} (a) For every $w\in W$, the affine scheme $\cL^+(\ft_w)_{\tn}$ is irreducible (see \re{topnilp}(b)). Therefore there exists a unique open stratum $\kt_{w,\br^+_w}\subset\cL^+(\ft_w)_{\tn}$.

(b) It follows from \rl{sprsm}(d) that $\{[\fC_{w,\br_{w}^+}/\cL G]_{\red}\}_w$ are all $\ov{\frak{p}}_{\tn,\bullet}$-relevant strata.
\end{Emp}

\begin{Emp} \label{E:conjecture}
{\bf Conjecture.} (a) The union $\cup_{w}\fc_{w,\br_{w}^+}$ is open in $\cL^+(\fc)_{\tn}$. More precisely, $\{\fc_{w,\br_{w}^+}\}_{w\in W}$ is
a constructible stratification of a certain fp-open subscheme $\cL^+(\fc)_{\tn,+}\subset\cL^+(\fc)_{\tn,\bullet}$.

(b)  Assume that Conjecture (a) holds, and let $\fC_{\tn,+}\subset \fC_{\tn,\bullet}$ be the preimage of  $\cL^+(\fc)_{\tn,+}$. Then $[\fC_{\tn,+}/\cL G]\subset [\fC_{\tn,\bullet}/\cL G]$ is an open union of strata.
\end{Emp}

\begin{Emp}
{\bf Example.}
Assume that $G=\SL_{2}$. In this case, $\fc\simeq\B{A}^1$,  $\cL^+(\fc)_{\tn}\subset\clp(\fc)$ is the locus
$\clp(\B{A}^1)_{\geq 1}$, and $\cL^+(\fc)_{\tn,+}\subset \cL^+(\fc)_{\bullet}$ is the locus $\clp(\B{A}^1)_{\geq 1}\cap\clp(\B{A}^1)_{\leq 2}$, which is the union of two strata $\clp(\B{A}^1)_{1}$ and $\clp(\B{A}^1)_{2}$.
In particular, our conjecture holds in this case.
\end{Emp}

\part{Sheaves on $\infty$-stacks and perverse $t$-structures}

\section{Categories of $\ell$-adic sheaves on $\infty$-stacks}

\subsection{Limits and colimits of $\infty$-categories}

\begin{Emp} \label{E:notconv}
{\bf Notation and convention.} Let $k$ be a field, and $\ell$ \label{N:ell} be a prime different from the characteristic of $k$.

(a) All categories appearing in this work are $\infty$-categories, all functors are functors between $\infty$-categories, and all limits and colimits are the homotopical one. In particular, ordinary categories are viewed as $\infty$-categories. We say that a morphism in an $\infty$-category $\cC$ is an {\em isomorphism}, if it is an isomorphism in the homotopy category of $\cC$.

(b) Let $\Catst$ \label{N:catst} be the $\infty$-category, whose objects are stable $\bql$-linear small $\infty$-categories, and morphisms are exact functors, that is, those functors that preserve finite colimits.

(c) Let $\PrCat$ \label{N:prcat} be the $\infty$-category, whose objects are stable $\bql$-linear presentable $\infty$-categories (see \cite[5.5.0.1]{Lu}), and morphisms are continuous functors, that is, functors commuting with all small colimits.

(d) Recall that the $\infty$-categories $\PrCat$ and $\Catst$ have all small limits and filtered colimits (see \cite[4.2.4.8, 5.5.3.13, 5.5.3.18]{Lu}, \cite[1.1.4.4, 1.1.4.6]{HA})
and there is a natural functor
\[
\Ind:\Catst\rightarrow\PrCat:\cC\mapsto \Ind(\cC).\label{N:ind}
\]

(e) Notice that functor $\Ind$ from (d) and functor, which associates to an $\infty$-category $\cC$ its homotopy category, commute with all small filtered colimits (compare \cite[5.3.5.10]{Lu}, \cite[1.9.2]{DG} and \cite{Ro}).
\end{Emp}

\begin{Emp} \label{E:adjthm}
{\bf The limit=colimit theorem.}
(a) Let $\cI$ be a small category and $\Psi:\cI\rightarrow\PrCat$ a functor.
In particular, for every $i\in \cI$, we are given an $\infty$-category $\cC_{i}$ and for every morphism $(i\stackrel{\alpha}{\rightarrow} j)\in \cI$ we are given a continuous functor $\psi_{\al}\in\Funct_{\cont}(\cC_{i},\cC_{j})$.

(b) Suppose that for every morphism $\al:i\rightarrow j$ in $\cI$, the functor $\psi_{\al}$ admits a continuous right adjoint $\phi_{\al}$.
Since adjoints are compatible with compositions, the data $(\cC_i,\phi_{\al})$ extends to a functor
$\Phi:\cI^{\op}\rightarrow\PrCat$ (see \cite[5.5.3.4]{Lu}).
\end{Emp}

The following result allows to rewrite a colimit as a limit and vice versa (see \cite[5.5.3.3]{Lu} or \cite[Sections 1.7-1.9]{DG}).

\begin{Thm} \label{T:lu1}
The colimit \[\cC:=\colim\Psi=\colim_{i\in \cI} \cC_{i}\in\PrCat\] exists and is canonically equivalent to the limit
\[\wh{\cC}:=\lim\Phi=\lim_{i\in \cI^{\op}} \cC_{i}\in\PrCat.\]
Moreover, the equivalence $\cC\isom \wh{\cC}$ is uniquely characterized by the condition that for every $i\in \cI$ the evaluation functor
$\ev_{i}:\wh{\cC}\to\cC_{i}$\label{N:evi} is the right adjoint to the tautological functor
$\ins_{i}:\cC_{i}\to\cC$.\label{N:insi}
\end{Thm}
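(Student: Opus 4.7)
The existence of $\cC=\colim\Psi$ in $\PrCat$ is already recorded in \re{notconv}(d), so the substantive claim is the identification $\cC\simeq\wh\cC$ together with its characterisation via adjointness. The plan rests on the adjoint functor theorem in $\PrCat$: a functor between presentable stable $\infty$-categories is a left adjoint iff it is continuous, and is a right adjoint iff it is continuous and accessible. In particular, every morphism of $\PrCat$ admits a right adjoint (which may however fail to be continuous); the continuity of the $\phi_{\al}$ assumed in \re{adjthm}(b) is precisely what allows $\Phi$ to be regarded as a diagram in $\PrCat$, so that $\wh\cC:=\lim\Phi$ genuinely makes sense there.

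To build the comparison functor, observe that each evaluation $\ev_i:\wh\cC\to\cC_i$ is continuous by construction of limits in $\PrCat$; I plan to verify that it is also accessible and preserves small limits (the latter follows from the description of $\wh\cC$ as a full subcategory of $\prod_i\cC_i$ cut out by pointwise conditions on morphisms $\al$). The adjoint functor theorem then furnishes a left adjoint $\ins_i^{\wh\cC}:\cC_i\to\wh\cC$. A Beck--Chevalley-type argument, using that $\phi_{\al}$ is right adjoint to $\psi_{\al}$ and that the square $\ev_j\circ\ins_i^{\wh\cC}\simeq\psi_{\al}\circ\ev_i$ holds by the construction of $\wh\cC$, produces coherent isomorphisms $\ins_j^{\wh\cC}\circ\psi_{\al}\simeq\ins_i^{\wh\cC}$. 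Hence the $\ins_i^{\wh\cC}$ assemble into a cocone under $\Psi$, and the universal property of $\cC=\colim\Psi$ yields a canonical continuous comparison functor $F:\cC\to\wh\cC$ satisfying $F\circ\ins_i\simeq\ins_i^{\wh\cC}$. The desired adjointness property $\ev_i\dashv\ins_i$ will then be transported from the analogous built-in property $\ev_i\dashv\ins_i^{\wh\cC}$ as soon as $F$ is known to be an equivalence.

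The decisive step, and the main obstacle, is to show that $F$ is an equivalence. One approach is to verify the universal property of $\wh\cC$: for any $\cD\in\PrCat$, the canonical map $\Funct_{\cont}(\wh\cC,\cD)\to\lim_{i\in\cI^{\op}}\Funct_{\cont}(\cC_i,\cD)$ induced by the $\ins_i^{\wh\cC}$ should be an equivalence. Combined with the universal property of $\cC=\colim\Psi$, this would exhibit $\cC$ and $\wh\cC$ as corepresenting the same functor and thereby identify them canonically. To establish this, I would pass to right adjoints on both sides and recognise the resulting diagram as a mapping-space computation into the limit $\wh\cC=\lim\Phi$. The delicate point here, and the reason Lurie's proof in \cite[5.5.3.3]{Lu} occupies several pages, is the coherent bookkeeping of the adjunctions $\psi_{\al}\dashv\phi_{\al}$ over the indexing $\cI$: to pass functorially from $\Psi$ to $\Phi$ at the homotopy-coherent level one needs either a straightening/unstraightening argument, or the $(\infty,2)$-categorical machinery of adjoints internal to $\PrCat$ discussed in \cite[\S 1.7--1.9]{DG}. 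Granting this coherence, both the equivalence $\cC\simeq\wh\cC$ and the characterising adjointness $\ev_i\dashv\ins_i$ reduce to formal manipulations with the universal properties just assembled.
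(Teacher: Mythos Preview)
The paper does not prove this theorem: it is stated with the parenthetical reference ``(see \cite[5.5.3.3]{Lu} or \cite[sect.\ 1.7--1.9]{DG})'' and used as a black box thereafter. Your sketch is a reasonable summary of the argument in those references, and you correctly isolate the genuine difficulty---the homotopy-coherent passage from $\Psi$ to $\Phi$ via pointwise adjunction---which is exactly why the paper defers to Lurie rather than reproducing the proof.

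One correction: the displayed identity ``$\ev_j\circ\ins_i^{\wh\cC}\simeq\psi_{\al}\circ\ev_i$ holds by the construction of $\wh\cC$'' does not type-check (the two sides have different sources) and is not what the limit gives you. What holds by construction is $\phi_{\al}\circ\ev_j\simeq\ev_i$ for $\al:i\to j$; passing to left adjoints then yields $\ins_j^{\wh\cC}\circ\psi_{\al}\simeq\ins_i^{\wh\cC}$, which is the compatibility you correctly state in the next sentence. So the conclusion is right, but the intermediate justification should read ``taking left adjoints of the structural identification $\phi_{\al}\circ\ev_j\simeq\ev_i$'' rather than invoking a square that does not exist.
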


\begin{Emp} \label{E:filt}
{\bf Filtered case.} Assume  $\cI$ is filtered. Then one shows that for every $i,j\in\cI$ the composition $\ev_j\circ\ins_i:\cC_i\to\cC\isom\wh{\cC}\to\cC_j$ can be written as a colimit
\[
\ev_j\circ\ins_i\simeq\colim_{\al:i\to k,\beta:j\to k}\phi_{\beta}\circ\psi_{\al}
\]
(compare \cite{Ro}). This gives another description of the equivalence $\cC\isom \wh{\cC}$ in this case.
\end{Emp}

\begin{Cor} \label{C:lurie}
For every object  $c\in\cC$, the assignment $i\mapsto \ins_{i}\circ~\ev_{i}(c)\in\cC$ gives rise to the functor
$\cI\to\cC$, and the canonical map
\begin{equation}
\colim_{i\in\cI}\ins_{i}\circ~\ev_{i}(c)\to c
\label{isoga}
\end{equation}
is an isomorphism.
\end{Cor}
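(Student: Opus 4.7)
The plan is to use the identification $\cC \simeq \wh{\cC}$ from Theorem \ref{T:lu1} together with a Yoneda-style argument. First I would construct the functoriality of the assignment $i \mapsto \ins_i \circ \ev_i(c)$. For a morphism $\al: i \to j$ in $\cI$, the uniqueness of left adjoints in $\PrCat$ yields a canonical equivalence $\ins_i \simeq \ins_j \circ \psi_\al$ (both are left adjoint to $\ev_i \simeq \phi_\al \circ \ev_j$, the latter equivalence being the structure map in the limit description $\wh{\cC} = \lim_{i \in \cI^{\op}}\cC_i$). Composing with the counit $\psi_\al \phi_\al \to \id_{\cC_j}$ of the adjunction $\psi_\al \dashv \phi_\al$ then produces a natural map
\[
\ins_i \ev_i(c) \simeq \ins_j \psi_\al \phi_\al \ev_j(c) \to \ins_j \ev_j(c).
\]
Standard $\infty$-categorical coherence for adjunctions upgrades this assignment to an honest functor $\cI \to \cC$. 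The counits $\epsilon^i_c: \ins_i \ev_i(c) \to c$ of the adjunctions $\ins_i \dashv \ev_i$ are compatible with these transition maps, giving a cocone and thus the canonical map $\theta_c: \colim_{i\in \cI}\ins_i \ev_i(c) \to c$ that appears in the statement.

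To show $\theta_c$ is an isomorphism, I would test it against an arbitrary $d \in \cC$ via the Yoneda embedding. On the one hand,
\[
\Map_\cC\bigl(\colim_{i \in \cI} \ins_i \ev_i(c),\, d\bigr) \simeq \lim_{i \in \cI^{\op}} \Map_\cC\bigl(\ins_i \ev_i(c),\, d\bigr) \simeq \lim_{i \in \cI^{\op}} \Map_{\cC_i}\bigl(\ev_i(c),\, \ev_i(d)\bigr),
\]
where the first equivalence is the universal property of colimits and the second uses the adjunction $\ins_i \dashv \ev_i$. On the other hand, since $\cC \simeq \wh{\cC}$ is the limit of the $\cC_i$ in $\PrCat$ under the functor $\Phi$, the universal property of limits gives
\[
\Map_\cC(c, d) \simeq \lim_{i \in \cI^{\op}} \Map_{\cC_i}\bigl(\ev_i(c),\, \ev_i(d)\bigr).
\]
Under the adjunction isomorphism, each counit $\epsilon^i_c$ corresponds to $\id_{\ev_i(c)}$, so both identifications of the mapping space agree, and the pullback map $\theta_c^*$ is the identity on $\lim_{i} \Map_{\cC_i}(\ev_i(c), \ev_i(d))$. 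Since $\theta_c^*$ is an equivalence for every $d$, $\theta_c$ itself is an isomorphism by Yoneda.

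The main technical obstacle is not the Yoneda step itself (which is essentially formal once the two limit descriptions of the mapping space are matched) but rather ensuring that the coherence data for the functor $\cI \to \cC$ in the first paragraph is produced correctly at the $\infty$-categorical level, and that the identification of $\theta_c^*$ with the identity is compatible with the structure maps $\phi_\al$. Both require carefully unwinding the equivalence $\cC \simeq \wh{\cC}$ of Theorem \ref{T:lu1}; in particular, the characterization of $\ev_i$ as right adjoint to $\ins_i$ is exactly what makes the two computations of $\Map_\cC(-,d)$ compatible.
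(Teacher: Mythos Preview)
Your proposal is correct and follows essentially the same Yoneda argument as the paper: both compute $\Map_\cC(c,d)$ via the limit description $\cC\simeq\lim_i\cC_i$, apply the adjunction $\ins_i\dashv\ev_i$, and then identify the result with $\Map_\cC(\colim_i\ins_i\ev_i(c),d)$. The paper's version is terser and simply takes the functoriality in $i$ for granted, whereas you spell out how the transition maps arise from the counits $\psi_\al\phi_\al\to\id$; otherwise the arguments coincide.
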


\begin{proof}
Though the assertion is standard among specialists (compare \cite[0.8.3]{Gai}), we sketch the argument for the convenience of the reader.
Since $\cC\simeq\lim_{i\in \cI} \cC_{i}$, for every $d\in\cC$, we have a natural isomorphism from the mapping space $\map_\cC(c,d)$ to
\[
\lim_{i\in\cI}\map_{\cC_i}(\ev_i(c),\ev_i(d))\simeq \lim_{i\in\cI}\map_{\cC}(\ins_i\circ\ev_i(c),d)\simeq \map_{\cC}(\colim_{i\in\cI}\ins_{i}\circ~\ev_{i}(c),d),
\]
the first of which follows from the adjunction of $\ev_{i}$ and $\ins_{i}$, and the second one by the definition of the colimit. The assertion now follows from the Yoneda lemma
(see \cite[Proposition 5.1.3.1]{Lu}).
\end{proof}

\begin{Emp} \label{E:compgen}
{\bf Compactly generated case.}
In the situation of \re{adjthm}(a), assume that each $\cC_{i}$ is compactly generated, and denote by $\cC_{i}^{c}\subset\cC_i$ be the sub-category of compacts objects.

(a) Assume in addition that each $\psi_{\al}$ preserves compact objects. Then the functor
$\Psi$ defines a functor $\cI\to\Catst:i\mapsto\cC_{i}^{c}$, and we have a natural equivalence
$\cC\simeq\Ind(\colim_{i\in \cI} \cC_{i}^{c})$ (compare \re{notconv}(e)). 	
In particular, $\cC$ is compactly generated.

(b) Notice that assumption (a) is actually equivalent to the assumption of \re{adjthm}(b).
\end{Emp}


We finish this subsection by recalling a general result about existence of adjoints in a limit and colimit of categories.

\begin{Emp} \label{E:ass}
{\bf Assumptions.} (a) Let $\Cat_{\ell}$ \label{N:catell} be either $\Catst$ or $\PrCat$. Let $\cI$ be a small category, and let $\cD_{\cdot},\cC_{\cdot}$ be two functors
$\cI\to\Cat_{\ell}$. In particular, we are given categories $\cC_i,\cD_i\in \Cat_{\ell}$ and functors $\cC_{\al}:\cC_i\to\cC_j$ and $\cD_{\al}:\cD_i\to\cD_j$
for every morphism $\al:i\to j$ in $\cI$.

(b) Let $\Phi_{\cdot}:\cC_{\cdot}\to\cD_{\cdot}$ be a morphism in $\Fun(\cI,\Cat_{\ell})$. Then  $\Phi_{\cdot}$ gives rise to
\begin{itemize}
\item a functor  $\Phi_i:\cC_i\to\cD_i$ for every $i\in\cI$ and
\item an equivalence $\Phi_{\al}:\cD_{\al}\circ\Phi_i\simeq \Phi_j\circ\cC_{\al}$ for every morphism
$\al:i\to j$ in $\cI$.
\end{itemize}

(c) Assume that
\begin{itemize}
\item For every $i\in\cI$ the morphism $\Phi_i:\cC_i\to\cD_i$ has a left adjoint $\Psi_i$.

\item (Beck--Chevalley condition) For every morphism $\al:i\to j$ in $\cI$ the base change morphism
$BC_{\al}:\Psi_j\circ \cD_{\al}\to\cC_{\al}\circ \Psi_i$, obtained from the counit map
$\cD_{\al}\to \cD_{\al}\circ\Phi_i\circ\Psi_i\simeq  \Phi_j\circ\cC_{\al}\circ \Psi_i$ by adjointness, is an equivalence.
\end{itemize}

\end{Emp}

The following standard assertion will be central for what follows.

\begin{Prop}\label{P:adjlu}
Assume that we are in the situation of \re{ass}.

(a) The collection of $\Psi_i$ and $BC_{\al}$ can be upgraded to a morphism of functors $\Psi_{\cdot}:\cD_{\cdot}\to \cC_{\cdot}$.

(b) The limit functor $\wh{\Phi}=\lim_{i\in\cI}\Phi_i:\lim_{i\in\cI}\cC_i\to \lim_{i\in\cI}\cD_i$ has a left adjoint $\wh{\Psi}$, and the natural base change morphism
\begin{equation} \label{Eq:adjlim}
\Psi_i\circ\ev_i^{\cD}\rightarrow\ev_i^{\cC}\circ\wh{\Psi}
\end{equation}
is an equivalence for every $i\in\cI$.

(c)  Assume that $\cI$ is filtered. Then the colimit functor $\Phi:\colim_{i\in\cI}\cC_i\to \colim_{i\in\cI}\cD_i$ has a left adjoint $\wh{\Psi}$,  and the natural base change morphism
\begin{equation} \label{Eq:adjcolim}
\Psi\circ\ins_i^{\cD}\rightarrow\ins_i^{\cC}\circ~\Psi_{i}
\end{equation}
is an equivalence for every $i\in\cI$.
\end{Prop}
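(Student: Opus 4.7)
The three parts build on each other: (a) provides the functorial framework needed for the limit/colimit constructions in (b) and (c).

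For part (a), the strategy is to invoke the $\infty$-categorical adjoint functor formalism at the level of functor categories. A morphism $\Phi_{\cdot}:\cC_{\cdot}\to\cD_{\cdot}$ in $\Fun(\cI,\Cat_{\ell})$ may be encoded as a functor $\cI\times\Delta^1\to\Cat_{\ell}$, equivalently as a (co)Cartesian fibration over $\Delta^1$ equipped with $\cI$-functoriality. The combined hypothesis --- pointwise existence of left adjoints $\Psi_i$ together with the Beck--Chevalley equivalences $BC_{\al}$ --- is precisely the criterion (Lurie, HTT 5.2.2 and HA 7.3.2, adapted to functor categories) ensuring this encoding is in fact a bifibration. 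Unstraightening in the opposite direction then yields the desired morphism $\Psi_{\cdot}:\cD_{\cdot}\to\cC_{\cdot}$ in $\Fun(\cI,\Cat_\ell)$ whose pointwise restriction recovers $\Psi_i$ and whose structure maps recover $BC_{\al}$.

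For part (b), once $\Psi_{\cdot}$ is a morphism in $\Fun(\cI,\Cat_\ell)$, I set $\wh{\Psi}:=\lim_i\Psi_i$. The adjunction $\wh{\Psi}\dashv\wh{\Phi}$ is verified by computing, for $C\in\wh{\cC}$ and $D\in\wh{\cD}$,
\[
\Map_{\wh{\cC}}(\wh{\Psi}(D),C) \simeq \lim_i \Map_{\cC_i}(\Psi_i\,\ev_i^{\cD}D,\,\ev_i^{\cC}C) \simeq \lim_i \Map_{\cD_i}(\ev_i^{\cD}D,\,\Phi_i\,\ev_i^{\cC}C) \simeq \Map_{\wh{\cD}}(D,\wh{\Phi}C),
\]
using the description of mapping spaces in a limit of $\infty$-categories, the pointwise adjunctions $\Psi_i\dashv\Phi_i$, and the equivalence $\ev_i^{\cC}\circ\wh{\Phi}\simeq\Phi_i\circ\ev_i^{\cC}$. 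The base change equivalence \form{adjlim} then follows from uniqueness of adjoints together with the characterization of $\ev_i$ as right adjoint to $\ins_i$ in \rt{lu1}.

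For part (c), when $\Cat_{\ell}=\PrCat$ the presentable adjoint functor theorem supplies a right adjoint for every continuous functor, so \rt{lu1} identifies $\colim_{i\in\cI}\cC_i$ with $\lim_{i\in\cI^{\op}}\cC_i$ along these right adjoints, and the left adjoint $\wh{\Psi}$ is produced by applying (b) in the opposite variance. For $\Cat_\ell=\Catst$ with $\cI$ filtered, we reduce to the previous case via the $\Ind$-completion functor, which commutes with filtered colimits (see \re{notconv}(e)). The base change isomorphism \form{adjcolim} can then be read off directly from the filtered description in \re{filt}: the identity $\Map(\ins_i^{\cD}D_i,\ins_j^{\cC}C_j)\simeq\colim_{i\to k,\,j\to k}\Map_{\cD_k}(\cD_{\al}D_i,\cD_{\beta}C_j)$ is transferred through the pointwise adjunctions and the Beck--Chevalley equivalences $BC_{\al}$ to a filtered colimit computing $\Map(\ins_i^{\cC}\Psi_iD_i,\ins_j^{\cC}C_j)$. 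The hard part will be part (a): promoting pointwise adjoints plus a compatibility at the level of homotopy categories to a genuine morphism of functors in $\Fun(\cI,\Cat_\ell)$ demands higher coherence that is invisible at the $1$-categorical level, and is best handled via the straightening/unstraightening formalism; once (a) is granted, (b) and (c) are essentially formal consequences.
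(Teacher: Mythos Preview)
Your proposal is essentially correct. The paper does not give a detailed proof of this proposition, treating it as standard and supplying only the two remarks in \re{inf2}. Your approach to (a) via straightening/unstraightening and the bifibration criterion is the usual route (essentially Lurie, HA~\S4.7.4); the paper instead gestures toward the $(\infty,2)$-categorical framework of \cite{GR}, observing that $\Phi_{\cdot}$ admits a left adjoint in the $(\infty,2)$-category $\Fun(\cI,\Cat_\ell)$ and that $\lim$ and $\colim$ are $(\infty,2)$-functorial, which yields (b) and (c) simultaneously.

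For (c), the paper's remark points to a more elementary path than either of yours: since passage to homotopy categories commutes with filtered colimits (\re{notconv}(e)), both the construction of $\wh{\Psi}$ and the verification of \form{adjcolim} reduce to explicit $1$-categorical computations on $\colim_i h\cC_i$, where objects and morphisms have the concrete description used in \re{filt}. Your detour through $\Ind$-completion and the limit$=$colimit identification is more circuitous and requires some care: to apply (b) after rewriting $\colim_i\cC_i$ as a limit along the right adjoints of the transition maps, you would need a Beck--Chevalley compatibility of the $\Phi_i$ with those right adjoints, which is the mate of the given condition and does not come for free from the hypotheses of \re{ass}. The direct argument via the filtered mapping-space formula in \re{filt} that you sketch at the end is cleaner and already suffices on its own for both $\Catst$ and $\PrCat$; I would lead with that and drop the $\Ind$-reduction. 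A minor point in (b): your invocation of \rt{lu1} for the base change is unnecessary, since by your own construction $\ev_i^{\cC}\circ\wh{\Psi}\simeq\Psi_i\circ\ev_i^{\cD}$ tautologically once $\wh{\Psi}=\lim_i\Psi_i$.
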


\begin{Emp} \label{E:inf2}
{\bf Remarks.} (a) One does not need the assumption that $\cI$ is filtered in \rp{adjlu}(c). However, in this case the assertion reduces to a corresponding assertion about homotopy categories (see \re{notconv}(e)), which is an easy exercise.

(b) The notion of adjoint functors can be generalized to morphisms in an arbitrary $(\infty,2)$-category (\cite{GR}).
One can show that in the situation of \re{ass} morphism $\Phi_{\cdot}$ in the
$(\infty,2)$-category $\Fun(\cI,\Cat_{\ell})$ has a left adjoint $\Psi_{\cdot}:\cD_{\cdot}\to\cC_{\cdot}$ such that the induced morphism
$\Psi_i\circ\ev_i^{\cD}\rightarrow\ev_i^{\cC}\circ\Psi_{\cdot}$ of functors $\cD_{\cdot}\to \cC_i$ is an equivalence for every $i\in\cI$.
Having this, to get \rp{adjlu} one has to observe that the functors $\lim:\Fun(\cI,\Cat_{\ell})\to\Cat_{\ell}$ and $\colim:\Fun(\cI,\Cat_{\ell})\to\Cat_{\ell}$ are functors of
$(\infty,2)$-categories.
\end{Emp}

%

\subsection{Sheaves on qcqs algebraic spaces}


\begin{Emp} \label{E:basicfunc}
{\bf Sheaves on algebraic spaces of finite type.}
Let $\Algft_k$\label{N:algft} be the category of algebraic spaces of finite type over $k$.

(a) To every $X\in\Algft_k$ one associates a stable $\infty$-category $\cD_{c}(X):=\cD^{b}_{c}(X,\bql)$\label{N:DcX}
whose homotopy category $D_c(X)$ is $D^{b}_{c}(X,\bql)$ (compare \cite{LZ1}, \cite{LZ2} or \cite{GL}).

(b) For every morphism $f:X\to Y$ in $\Algft_k$ we have two pairs $(f^*,f_*)$ and $(f_!,f^!)$ of adjoint functors. Moreover, we have natural  isomorphisms $f_!\simeq f_*$, when $f$ is proper, and $f^*\simeq f^!$, when $f$ is \'etale. In particular, $f^*$ is a left adjoint of $f_!$, when $f$  is proper,
and $f_*$ is a right adjoint of $f^!$, when $f$ is \'etale.
\end{Emp}


%
%


\begin{Emp} \label{E:catqcqs}
{\bf Construction.}
(a) Note that the correspondence $X\mapsto \cD_{c}(X),f\mapsto f^!$ from \re{basicfunc} naturally upgrades to a functor of $\infty$-categories $\cD_{c}=\cD_c^!:(\Algft_k)^{\op}\rightarrow\Catst$ \label{N:Dc} (compare \cite{LZ1}, \cite{LZ2} or \cite{GL}). We also consider functor $\cD:=\Ind\circ\cD_{c}:(\Algft_k)^{\op}\to\PrCat$\label{N:D}.

(b) Let $\Alg^{\qcqs}_k$ be the category of quasi-compact and quasi-separated algebraic spaces over $k$.
%
We denote by $\cD_{c}:(\Alg^{\qcqs}_k)^{\op}\rightarrow\Catst$ and $\cD:(\Alg^{\qcqs}_k)^{\op}\rightarrow\PrCat$ the functors, obtained
by applying the left Kan extension to the functors $\cD_c$ and $\cD$ from (a). Then for every morphism
$f:X\to Y$ in $\Alg^{\qcqs}_k$ we have functors $f^!:\cD_{c}(Y)\rightarrow\cD_{c}(X)$ and $f^!:\cD(Y)\rightarrow\cD(X)$.
\end{Emp}

\begin{Emp}
{\bf Categorical input.} (a) Notice that the functor $\cD_c^!:(\Algft_k)^{\op}\rightarrow\Catst$ \re{catqcqs}(a) is the only categorical input we are using to develop our theory. Moreover, it suffices to use the restriction of $\cD_c^!$ to $(\Affft_k)^{\op}$, from which the full
$\cD_c^!$ can be reconstructed as the right Kan extension.

(b) Specifically, we are not using the extension of this functor to the category of correspondences, developed in \cite{GR}, which would simplify and strengthen some of our results and constructions but would require a much heavier categorical machinery of $(\infty,2)$-categories.
\end{Emp}

\begin{Emp} \label{E:remqcqs}
{\bf Remarks.}
(a)  Recall that every $X\in\Alg^{\qcqs}_k$ has a presentation as a filtered limit $X\simeq\lim_{\al} X_{\al}$, where each $X_{\al}\in\Algft_k$, and all transition maps are affine (see, for example, \cite[Theorem D]{Ry2}). Moreover, since $\Algft_k$ has finite limits, one conclude that the category $(X/\Algft_k)^{\op}$, whose objects are morphisms $X\to Y$ with $Y\in\Algft_k$, is filtered, and we have a canonical presentation
$X\simeq\colim_{X\to Y} Y$.

(b) By the explicit description of the left Kan extension, for every $X\in\Alg^{\qcqs}_k$ we have a natural equivalence
$\cD_c(X)\simeq\colim_{X\to Y}\cD_c(Y)$ and similarly, for $\cD(X)$.
More generally, for every presentation $X\simeq\lim_{\al} X_{\al}$ as in (a) the canonical morphism $\colim^!_{\al}\cD_c(X_{\al})\to \cD_c(X_{\al})$ is an equivalence, and similarly for $\cD(X)$.

(c) Since functor $\Ind$ from \re{notconv}(e) commutes with filtered colimits, we have a natural equivalence $\cD(X)\simeq\Ind(\cD_{c}(X))$, thus $\cD(X)$ is compactly generated. Since passage to homotopy categories commutes with filtered colimits, for every presentation $X\simeq\lim_{\al}X_{\al}$ we have an equivalence $D_c(X)\simeq\colim^!_{\al}D_c(X_{\al})$, and similarly for $\cD$.

(d) Recall that if $f:X'\to X$ is an fp-morphism of qcqs algebraic spaces, then for every presentation $X\simeq\lim_{\al} X_{\al}$ as in (c),
there exists an index $\al$, a finitely presented map $f_{\al}:X'_{\al}\to X_{\al}$ and an isomorphism $X'\isom X\times_{X_{\al}}X'_{\al}$ (see \cite[Proposition B.2(ii)]{Ry2}). Then $X'$ can be written as a filtered limit $X'\simeq\lim_{\beta\geq \al} X'_{\beta}$ with $X'_{\beta}:=X'_{\al}\times_{X_{\al}}X_{\beta}$, thus we have a canonical equivalence  $\cD_c(X')\simeq\colim^!_{\beta\geq \al}\cD_c(X'_{\beta})$.

(e) By definition, for every morphism of qcqs algebraic spaces $f:X'\to X$ we have a $!$-pullback functor $f^!:\cD_c(X)\to\cD_c(X')$, but the other three functors $f_!,f_*,f^*$ are not defined in general.
\end{Emp}

\begin{Prop}\label{P:adj}
Let $f:X'\rightarrow X$ be an fp-morphism of qcqs algebraic spaces.

(a) Assume that either $f$ is fp-proper or $X$ admits a placid presentation (see \re{plpres}). Then $f^!:\cD_c(X)\to\cD_c(X')$ has a left adjoint $f_!$.

(b) Assume that $f$ is \'etale. Then  $f^!$ has a right adjoint $f_*$.

(c) Assume that  $f$ is fp-proper and $X$ admits a placid presentation. Then the functor $f_!$ has a left adjoint $f^*$.
\end{Prop}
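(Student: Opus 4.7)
The strategy is to reduce each claim to the finite-type case via \rp{adjlu} applied to filtered colimit presentations. By \re{remqcqs}(a),(d), I present $X \simeq \lim_\alpha X_\alpha$ as a filtered limit in $\Algft_k$ with affine transition maps and $X' \simeq \lim_{\beta \geq \alpha_0} X'_\beta$ with $X'_\beta = X_\beta \times_{X_{\alpha_0}} X'_{\alpha_0}$ for some fp descent $f_{\alpha_0}\colon X'_{\alpha_0} \to X_{\alpha_0}$ of $f$. I arrange the transitions to be smooth affine when $X$ is placid, and choose $f_{\alpha_0}$ (and hence every $f_\beta$) to be fp-proper (resp.\ \'etale) when $f$ is such. \re{remqcqs}(b) identifies $\cD_c(X) \simeq \colim_\beta^! \cD_c(X_\beta)$ and $\cD_c(X') \simeq \colim_\beta^! \cD_c(X'_\beta)$ in $\Catst$, and $f^! \simeq \colim_\beta f_\beta^!$ as a morphism of functors on the indexing diagram $\cI$.

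For (a), I apply \rp{adjlu}(c) to $\Phi_\cdot = f_\cdot^!$. Each $f_\beta^!$ has left adjoint $(f_\beta)_!$ in the finite-type setting. For a transition $g\colon X_\gamma \to X_\beta$ with pullback $\tilde g\colon X'_\gamma \to X'_\beta$, the required Beck--Chevalley isomorphism $(f_\gamma)_! \circ \tilde g^! \simeq g^! \circ (f_\beta)_!$ is proper base change when each $f_\beta$ is proper, and smooth base change when $g$ is smooth. \rp{adjlu}(c) then yields $f_!$ together with $f_! \circ \ins_\beta \simeq \ins_\beta \circ (f_\beta)_!$. For (c), having $f_!$ from (a), I apply \rp{adjlu}(c) once more to the morphism of functors $(f_\cdot)_!\colon \cD_c(X'_\cdot) \to \cD_c(X_\cdot)$, whose naturality is the Beck--Chevalley just verified. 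For proper finite-type $f_\beta$, the identification $(f_\beta)_! \simeq f_{\beta*}$ supplies the left adjoint $f_\beta^*$, and the required Beck--Chevalley $f_\gamma^* \circ g^! \simeq \tilde g^! \circ f_\beta^*$ for smooth $g$ reduces, via $g^! \simeq g^*[2d](d)$, to the standard $*$-pullback compatibility. \rp{adjlu}(c) then produces $f^*$ left adjoint to $f_!$.

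For (b), the cleanest route passes through the Ind-category: set $\cD(-) := \Ind \cD_c(-) \in \PrCat$; then $\cD(X) \simeq \colim_\alpha \cD(X_\alpha)$ in $\PrCat$, and $f^!\colon \cD(X) \to \cD(X')$ is continuous as a colimit of continuous functors, so by the adjoint functor theorem in $\PrCat$ it admits a right adjoint $f_*\colon \cD(X') \to \cD(X)$. A Beck--Chevalley computation against the affine transition maps, using \'etale base change in the finite-type setting, identifies $f_* \circ \ins_\alpha \simeq \ins_\alpha \circ f_{\alpha*}$ on compact objects. Since $f_{\alpha*}$ preserves constructibility for \'etale $f_\alpha$ of finite type, $f_*$ preserves compact objects and therefore restricts to the desired right adjoint $\cD_c(X') \to \cD_c(X)$ in $\Catst$. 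The main technical obstacle is the verification of the Beck--Chevalley isomorphisms throughout, especially the mixed version in (c) combining proper $f_\beta$ with smooth transitions, and the \'etale base change in (b) against affine transitions that are not assumed smooth; modulo these standard base change statements, the argument is formal.
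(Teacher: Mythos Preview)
Your arguments for (a) and (c) are essentially identical to the paper's: present $X$ and $X'$ as compatible filtered limits, write $f^!$ as the colimit of $f_\beta^!$, and apply \rp{adjlu}(c) with the Beck--Chevalley condition supplied by proper or smooth base change. This is exactly what the paper does.

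For (b), you take a genuinely different route. The paper stays entirely within $\Catst$ by a simple trick: it applies \rp{adjlu}(c) to the functors $\alpha \mapsto \cD_c(X_\alpha)^{\op}$, so that the right adjoints $(f_\alpha)_*$ become left adjoints in the opposite categories, and \rp{adjlu}(c) applies verbatim. The required Beck--Chevalley condition is $\pr_{\beta,\alpha}^! (f_\alpha)_* \simeq (f_\beta)_* \pr'^!_{\beta,\alpha}$ for \'etale $f_\alpha$, which is smooth base change. Your route instead passes to $\cD = \Ind \cD_c$, invokes the adjoint functor theorem in $\PrCat$ to produce $f_*$, and then attempts to show it restricts to $\cD_c$. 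The step you label as ``a Beck--Chevalley computation'' identifying $f_* \circ \ins_\alpha \simeq \ins_\alpha \circ (f_\alpha)_*$ is not actually justified: the adjoint functor theorem gives existence of $f_*$ but no formula, and extracting this compatibility amounts to redoing the colimit argument that the paper's opposite-category trick handles directly. So your approach for (b) is not wrong in spirit, but it is incomplete as written and strictly more work than the paper's, which avoids $\PrCat$ and the adjoint functor theorem entirely.
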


\begin{proof}
As in \re{remqcqs}(d), we can choose presentations $X\simeq\lim_{\al\in \cI} X_{\al}$ and $X'\simeq\lim_{\al\in \cI} X'_{\al}$.
Moreover, by the standard limit arguments (see \cite[Proposition B.3]{Ry2} and references within), we can assume that each projection $f_{\al}:X'_{\al}\to X_{\al}$ is proper (resp. \'etale), if $f$ is such,
and the transition maps $\pi_{\beta,\al}:X_{\beta}\to X_{\al}$ are smooth, if $X$ admits a placid presentation. Since  $\cD_c(X)\simeq\colim^!_{\al}\cD_c(X_{\al})$
and  $\cD_c(X')\simeq\colim^!_{\al}\cD_c(X'_{\al})$, all assertions will be deduced from \rp{adjlu}(c). Since adjoints are known to exist for algebraic spaces of finite type over $k$, we will only have to show that the Beck--Chevalley condition in \re{ass}(c) is satisfied. Consider the Cartesian diagram
\begin{equation}
\begin{CD}
X'_{\beta} @>f_{\beta}>> X_{\beta}\\
@V\pr'_{\beta,\al}VV @VV\pr_{\beta,\al}V\\
X'_{\al} @>f_{\al}>> X_{\al}
\end{CD}
\end{equation}

(a) We want to apply \rp{adjlu}(c) to the morphism $\Phi_{\cdot}=f_{\cdot}^!$ of functors
\[
\cI^{\op}\to\Catst:\al\mapsto \cD_c(X_{\al}),(\al\to\beta)\mapsto \pr_{\beta,\al}^!.
\]
We have to show that the base change morphism $(f_{\beta})_!\pr'^!_{\beta,\al}\to \pr'^!_{\beta,\al}(f_{\al})_!$ is an isomorphism, when $f_{\al}$ is proper (resp. $\pr_{\beta,\al}$ is smooth).
But this follows from proper (resp. smooth) base change.

(b) Now we want to apply \rp{adjlu}(c) to the morphism $\Phi_{\cdot}=f_{\cdot}^!$ of functors
\[
\cI^{\op}\to\Catst:\al\mapsto \cD_c(X_{\al})^{\op}, (\al\to \beta)\mapsto \pr_{\beta,\al}^!.
\] The assumptions of \re{ass}(c) are satisfied since the base change $\pr^!_{\beta,\al}(f_{\al})_*\to (f_\beta)_*\pr'^!_{\beta,\al}$ is an isomorphism.

(c) We want to apply \rp{adjlu}(c) to the morphism $\Phi_{\cdot}=(f_{\cdot})_!$ of functors
\[
\cI^{\op}\to\Catst:i\mapsto \cD_c(X_{\al}),(\al\to\beta)\mapsto \pr_{\beta,\al}^!.
\]
This follows from the fact that the base change map $f_{\beta}^*\pr^!_{\beta,\al}\to \pr'^!_{\beta,\al}f_{\al}^*$ is an isomorphism,
when $\pr_{\beta,\al}$ is smooth (by smooth base change).
\end{proof}

%

\begin{Emp}
{\bf Remark.} Functors $f_*$ and $f^*$ can be defined in more cases. For example, $f_*$ can be defined when
$f$ is finitely presented. Moreover, $f_*$ has a left adjoint, if $f$ is finitely presented, while $X$ and $Y$ admit placid presentations.
On the other hand, these facts are deeper and will not be used in this work.
\end{Emp}

The adjoint maps from \rp{adj} satisfy the following base change formulas.

\begin{Prop}\label{P:base}
Consider Cartesian diagram
\begin{equation} \label{Eq:basicCD}
\begin{CD}
\wt{X} @>\wt{f}>>\wt{Y}\\
@V a VV @VV b V\\
X @>f>> Y
\end{CD}
\end{equation}
of qcqs algebraic spaces such that $b$ is finitely presented.

(a) If $b$ is \'etale, then the base change morphism $f^! b_*\to a_*\wt{f}^!$ is an isomorphism.

(b) If $b$ is proper, then the base change morphism $a_!\wt{f}^!\to f^! b_!$ is an isomorphism.

(c) If $Y$ admits a placid presentation and $f$ is strongly pro-smooth,
then the base change morphism $a_!\wt{f}^!\to f^! b_!$ is an isomorphism.

(d) If $b$ is fp-proper, $Y$ admits a placid presentation and $f$ is  strongly pro-smooth,
then the base change morphism $\wt{f}^!b^*\to a^*f^!$, induced from the isomorphism of (b), is an isomorphism.
\end{Prop}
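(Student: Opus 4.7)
The strategy for all four parts is to present the given Cartesian square as a filtered limit of finite-type Cartesian squares and then to transfer the corresponding classical base-change isomorphisms to the limit by means of \rp{adjlu}. First, I would choose compatible presentations: by standard spreading-out (\re{remqcqs}(a) and \cite[Prop.~B.2]{Ry2}), write $Y \simeq \lim_{\alpha \in \cI} Y_\alpha$ and $X \simeq \lim_\alpha X_\alpha$ over a common filtered $\cI$ with affine transition maps and $f = \lim_\alpha f_\alpha$. Since $b$ is finitely presented, after enlarging $\cI$ one has $b = \lim_\alpha b_\alpha$ with $b_\alpha \colon \wt Y_\alpha \to Y_\alpha$ in $\Algft_k$, and fiber products at each level produce $\wt Y \simeq \lim_\alpha \wt Y_\alpha$ and $\wt X \simeq \lim_\alpha \wt X_\alpha$ with $a = \lim_\alpha a_\alpha$ and $\wt f = \lim_\alpha \wt f_\alpha$. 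Using further limit theorems one arranges $b_\alpha$ to be \'etale in (a) and proper in (b) and (d); in (c) and (d), where $Y$ admits a placid presentation and $f$ is strongly pro-smooth, one uses \rl{flat} to arrange the transition maps $Y_\beta \to Y_\alpha$ to be smooth affine and each $f_\alpha$ to be smooth.

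At each level $\alpha$ the required isomorphism then holds by a classical base-change theorem: \'etale base change for (a), proper base change for (b), smooth base change for (c), and smooth--proper base change for (d). Exactly as in the proof of \rp{adj}, \rp{adjlu} now computes the adjoints $b_*, a_*, b_!, a_!, b^*, a^*$ on the limit categories level-by-level, yielding Beck--Chevalley isomorphisms of the form $b_* \circ \pr_\alpha^! \simeq \pr_\alpha^! \circ b_{\alpha,*}$, $b_! \circ \pr_\alpha^! \simeq \pr_\alpha^! \circ b_{\alpha,!}$, $b^* \circ \pr_\alpha^! \simeq \pr_\alpha^! \circ b_\alpha^*$, and analogously for $a$; these are to be combined with the tautological compatibility $f^! \circ \pr_\alpha^! \simeq \pr_\alpha^! \circ f_\alpha^!$. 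Since every object of $\cD_c(\wt Y) \simeq \colim_\alpha \cD_c(\wt Y_\alpha)$ is isomorphic to one of the form $\pr_\alpha^! M_\alpha$ (see \re{remqcqs}(c)), splicing the level-$\alpha$ base change with these adjoint-computation isomorphisms yields the desired isomorphism on the limits. In (d), one further checks that the resulting limit isomorphism agrees with the one induced from (b) via the adjunctions $b^* \dashv b_!$ and $a^* \dashv a_!$; this follows from the corresponding levelwise statement, since the classical smooth--proper base change is itself obtained from proper base change through the adjunction formalism.

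The principal obstacle is the coherent identification of the spliced chains of isomorphisms with the canonical base-change morphisms built from the units and counits of the adjunctions on the limit categories. All the ingredients --- Beck--Chevalley compatibility supplied by \rp{adjlu} and the compatibility of $f^!$ with the pro-presentation --- are in hand, but careful $2$-categorical bookkeeping is required, particularly in (d), where the adjunction data on both sides of the diagram must be tracked simultaneously.
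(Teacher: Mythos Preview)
Your strategy is sound and in the same spirit as the paper's, but the paper organizes the reduction differently in a way that dissolves precisely the ``principal obstacle'' you flag. Rather than presenting $X$ and $Y$ simultaneously as filtered limits and then splicing levelwise base-change isomorphisms together, the paper proceeds in two asymmetric steps. First it treats the case where $Y$ (hence $\wt Y$) is already of finite type: then $f:X\to Y$ may be taken to be one of the structure projections $\pr_\alpha:X\to X_\alpha=Y$ in the canonical presentation of $X$, so that $f^!$ and $\wt f^!$ are the insertion functors $\ins_\alpha$ into the colimits $\cD_c(X)\simeq\colim_{\alpha'}\cD_c(X_{\alpha'})$ and $\cD_c(\wt X)\simeq\colim_{\alpha'}\cD_c(\wt X_{\alpha'})$. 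In that situation the base-change morphism in question \emph{is} the map \form{adjcolim} of \rp{adjlu}(c) (applied, for part (a), to the opposite categories), so there is nothing further to identify.

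Second, for general $Y$, the paper picks an object $K\in\cD_c(\wt Y)$, writes it as $\wt p_\alpha^! K_\alpha$ for some $K_\alpha\in\cD_c(\wt Y_\alpha)$, and enlarges the Cartesian square to a diagram with columns $X\to Y\to Y_\alpha$ and $\wt X\to\wt Y\to\wt Y_\alpha$. A two-out-of-three argument on the composite $f^! p_\alpha^!(b_\alpha)_*\to f^! b_*\wt p_\alpha^!\to a_*\wt f^!\wt p_\alpha^!$ reduces the desired isomorphism to two instances of the $Y$-finite-type case already handled (namely the right-hand square and the outer rectangle). Parts (b)--(d) run identically, with the proviso that for (c) and (d) one takes the presentation of $Y$ to have smooth transition maps.

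Your approach would work, but the coherence bookkeeping you identify is genuinely needed there; the paper's asymmetric reduction (first make $Y$ finite type, then recognise the base-change map as \form{adjcolim}) is what buys the cleaner argument.
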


\begin{proof}
(a) We want to show that the map $f^! b_*(K)\to a_*\wt{f}^!(K)$ is an isomorphism for every object $K\in\cD_c(\wt{Y})$.
Assume first that $Y$ and $\wt{Y}$ are of finite type. Then we can assume that $Y=X_{\al}$ for some presentation $X\simeq \lim_{\al} X_{\al}$ of $X$. Then $\wt{X}$ has a presentation $\wt{X}\simeq\lim_{\al'\geq\al}(X_{\al'}\times_Y\wt{Y})$, so our assertion follows from \rp{adjlu}(c), because our base change is simply the map \form{adjcolim}.

In the general case, choose presentations $Y\simeq\lim_{\al} Y_{\al}$ and
$\wt{Y}\simeq\lim_{\al} \wt{Y}_{\al}$ is \re{remqcqs}(d) and choose $\al$ such that $K$ is a pullback of some $K_{\al}\in \cD_c(\wt{Y}_{\al})$.
Then the assertion for $K$ follows from the assertion for $K_{\al}$ applied to the right and the exterior square of the Cartesian diagram
\[
\xymatrix{\wt{X}\ar[r]^{\wt{f}}\ar[d]_{a}&\wt{Y}\ar[d]^{b}\ar[r]^{\wt{p}_{\al}}&\wt{Y}_{\al}\ar[d]^{b_{\al}}\\
X\ar[r]^{f}&Y\ar[r]^{p_{\al}}&Y_{\al}}.
\]

Namely, we have to show that the morphism $f^! b_*\wt{p}_{\al}^!(K_{\al})\to a_*\wt{f}^!\wt{p}_{\al}^!(K_{\al})$  is an isomorphism.
But for this it suffices to show that in the composition
\[
f^!{p}_{\al}^!(b_{\al})_*(K_{\al})\to f^! b_*\wt{p}_{\al}^!(K_{\al})\to a_*\wt{f}^!\wt{p}_{\al}^!(K_{\al})
\]
the first map and the composition are isomorphisms. In other words, we have to show the assertion for $p_{\al}:Y\to Y_{\al}$ and
$p_{\al}\circ f:X\to Y_{\al}$ instead of $f$.

(b)-(d) The arguments are essentially identical to that of (a), except that in the case when $Y$ admits a placid presentation
we only consider presentations $Y\simeq\lim_{\al} Y_{\al}$, where all transition maps $Y_{\beta}\to Y_{\al}$ are smooth.
%
%
%
\end{proof}


\begin{Emp} \label{E:sheafcond}
{\bf Sheaf property.} Note that $\cD_c:(\Alg^{\qcqs}_k)^{\op}\to\Catst$ and $\cD:(\Alg^{\qcqs}_k)^{\op}\to\PrCat$ are ``sheaves'' in the \'etale topology (and even for $h$-topology see, for example, \cite{Ra} or \cite{RS}). In other words, for an \'etale covering $f:X\to Y$ in $\Alg^{\qcqs}_k$, the natural functor $\cD_c(Y)\to\lim_{[m]}\cD_c(X^{[m]})$ is an equivalence, and similarly for $\cD$.
%
%
\end{Emp}

\subsection{Sheaves on $\infty$-stacks}

\begin{Emp} \label{E:shpr}
{\bf Construction and basic properties.}
(a) Applying the right Kan extension to the functors $\cD_c$ and $\cD$ from \re{catqcqs},  we get functors
\[
\cD_{c}:\PShv(\Alg_k)^{\op}\rightarrow\Catst\text{ and }\cD:\PShv(\Alg^{\qcqs}_k)^{\op}\rightarrow\PrCat.\label{N:D2}
\]

(b) By definition, for every morphism $f:\cX\to\cY$ of $\infty$-prestacks, we have pullbacks functors $f^!:\cD_{\cdot}(\cY)\to \cD_{\cdot}(\cX)$. In particular, for every $\infty$-prestack $\cX$ we have a dualizing sheaf $\om_{\cX}\in\cD_c(\cX)$,\label{N:omX} defined to be the !-pullback of $\qlbar\in\cD_c(\pt)$.

(c) Using sheaf property \re{sheafcond}, the functors $\cD_{\cdot}$ from (a) factor through the category
$\Shv(\Alg_k)$ of sheaves in the \'etale topology. Since the pullback $\iota^*:\Shv(\Alg^{\qcqs}_k)\to \St_k$,
corresponding to the inclusion $\iota:\Aff_k\hra\Alg^{\qcqs}_k$, is an equivalence (compare \re{rem infst}), functors $\cD_{\cdot}$
can be viewed as functors from $\St_k$.

(d) By basic properties of the right Kan extension, the functors $\cD_{\cdot}$ from (a) commute with all small limits. Using \re{sheafcond} again,
we deduce that functors $\cD_{\cdot}$ from (c) commute with all small limits as well. In particular, the natural functor $\C{D}_{\cdot}(\C{X})\to\lim_{X\to\C{X}}\C{D}_{\cdot}(X)$, taken over all morphisms $X\to\cX$ with $X\in\Aff_k$, is an equivalence. Therefore
$\C{D}_{\cdot}$ is equivalent to the right Kan extension of its restriction to $\Aff_k^{\op}$.

(e) Let $f:\cX\to \cY$ be an epimorphism of $\infty$-stacks. Then $\cY$ is the colimit of the \v{C}ech complex with terms $\cX^{[m]}$ (see \re{chech}(c)). Hence we conclude by (c) that $\cD_{\cdot}(\cY)$ is the limit of the corresponding complex with terms  $\cD_{\cdot}(\cX^{[m]})$. In particular, the pullback $f^!:\cD_{\cdot}(\cY)\to \cD_{\cdot}(\cX)$ is conservative.
\end{Emp}


\begin{Emp} \label{E:remprest}
{\bf Remark.}
The inclusion $\cD_c(\C{X})\hra\cD(\C{X})$ induces a functor $\Ind(\cD_c(\C{X}))\to \cD(\C{X})$, which is an equivalence, when
$\cX\in\Alg^{\qcqs}_k$, but not in general.
\end{Emp}

\begin{Emp} \label{E:indsch2}
{\bf Sheaves on ind-algebraic spaces.}
(a) Let $X$ be an ind-algebraic space with a presentation $X\simeq\colim_{\al} X_{\al}$. By definition, we have a canonical equivalence $\cD(X)\simeq\lim^!_{\al}\cD(X_{\al})$.

(b) Recall that for every fp-closed embedding $i:X_{\al}\to X_{\beta}$ in $\Alg^{\qcqs}_k$ the functor $i^!$ has a left adjoint $i_!$ (see \rp{adj}(a)). Then it follows from \rt{lu1} that we have a natural equivalence $\cD(X)\simeq\colim^!_{\al}\cD(X_{\al})$.

(c) It follows from (b) and \re{compgen} that $\cD(X)$ is compactly generated, and we have a natural equivalence $\cD(X)\simeq\Ind(\colim^!_{\al}\cD_c(X_{\al}))$.
\end{Emp}

\begin{Emp} \label{E:remindsch}
{\bf Remark.} In the situation of \re{indsch2} we have a fully faithful functor $\colim^!_{\al}\cD_c(X_{\al})\hra\cD_c(X)$, which is not an equivalence. In particular, we have natural functors
\[
\Ind(\colim^!_{\al}\cD_c(X_{\al}))\hra\Ind(\cD_c(X))\to\cD(X),
\]
the first of which is fully faithful, the second one is essentially surjective, and the composition is an equivalence.
\end{Emp}

Topological equivalences do not change categories of sheaves:

\begin{Lem} \label{L:invtop}
If $f:X\to Y$ is a universal homeomorphism of affine schemes, then the pullback functors
$f^{!}:\cD_c(Y)\to\cD_c(X)$ and $f^{!}:\cD(Y)\to\cD(X)$ are equivalences.
\end{Lem}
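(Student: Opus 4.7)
The strategy is to reduce to the finite-type case and then invoke the topological invariance of the \'etale site (SGA 4, Exp. VIII, Th\'eor\`eme 1.1).

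First, since $\cD(Y) \simeq \Ind \cD_c(Y)$ and $\cD(X) \simeq \Ind \cD_c(X)$ by \re{remqcqs}(c), and since $\Ind$ preserves equivalences of small stable $\infty$-categories, it suffices to show that $f^!:\cD_c(Y)\to\cD_c(X)$ is an equivalence.

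Next I would reduce to the case where $X,Y$ are of finite type over $k$. Using the canonical presentations $\cD_c(Y)\simeq \colim^!_{Y\to Y'}\cD_c(Y')$ and $\cD_c(X)\simeq \colim^!_{X\to X'}\cD_c(X')$ with $Y',X'\in\Affft_k$ (see \re{remqcqs}(a),(b)), and using that $f$ is affine, one can write $X\simeq\lim_{\al}X_{\al}$ and $Y\simeq\lim_{\al}Y_{\al}$ with a common filtered indexing set and affine transition maps, such that $f=\lim_{\al}f_{\al}$ for a compatible system of morphisms $f_{\al}:X_{\al}\to Y_{\al}$ in $\Affft_k$. The crucial claim is that $f_{\al}$ is itself a universal homeomorphism for all sufficiently large $\al$.

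Once this reduction is in place, for each universal homeomorphism $f_{\al}:X_{\al}\to Y_{\al}$ of affine schemes of finite type, $f_{\al}$ is automatically finite, since a universal homeomorphism is integral, surjective and radicial (EGA IV, 2.4.2), and integral plus finite type implies finite. By the topological invariance of the \'etale site, the pushforward $(f_{\al})_{*}=(f_{\al})_{!}:\cD_c(X_{\al})\to\cD_c(Y_{\al})$ is an equivalence of $\infty$-categories, so its right adjoint $f_{\al}^{!}$ is the inverse equivalence. Passing to the filtered colimits and using \rt{lu1} together with the compatibility \re{filt} of left adjoints with $\ins$-functors, we conclude that $f^{!}=\colim_{\al}f_{\al}^{!}$ is an equivalence on $\cD_c$, hence also on $\cD$ after applying $\Ind$.

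The main obstacle is the finite-type approximation step: showing that the $f_{\al}$ can be taken to be universal homeomorphisms. The difficulty is that $f$ need not be finitely presented (for instance, $\Spec(A/I)\to \Spec A$ for an infinitely generated nilpotent ideal $I$), so the standard approximation theorem for fp morphisms does not apply directly. To circumvent this, one writes $\co(X)$ as a filtered colimit of finitely generated $\co(Y)$-subalgebras $R_i\subset\co(X)$; each $R_i$ is finite over $\co(Y)$ because $\co(Y)\to\co(X)$ is integral, and $\Spec R_i\to Y$ is surjective by going-up and radicial by factoring the radicial morphism $X\to Y$, hence a universal homeomorphism. Reindexing against a presentation $Y\simeq\lim Y_{\al}$ then yields the desired compatible system $f_{\al}$ in which all (sufficiently large) $f_{\al}$ are universal homeomorphisms.
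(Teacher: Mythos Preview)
Your overall strategy coincides with the paper's: reduce from $\cD$ to $\cD_c$ via $\Ind$, approximate by finite-type universal homeomorphisms, and then invoke the topological invariance of the \'etale site. The paper carries this out by first writing $X\simeq\lim_{X'}X'$ with each $X'\to Y$ a \emph{finitely presented} universal homeomorphism (citing \cite[Tag 0EUJ]{Sta}), and then descending each such $X'\to Y$ to a universal homeomorphism $X'_0\to Y_0$ in $\Affft_k$ via the standard limit theorems.

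The gap in your version is the final ``reindexing'' step. Your intermediate schemes $\Spec R_i\to Y$ are finite universal homeomorphisms, but not finitely presented in general, so the usual descent of fp morphisms to a finite-type level does not apply to them. Concretely, take $Y=\Spec A$ and $X=\Spec(A/I)$ with $I$ an infinitely generated nil ideal: then $\cO(Y)\to\cO(X)$ is surjective, so every finitely generated $\cO(Y)$-subalgebra of $\cO(X)$ is $\cO(X)$ itself, and your filtered system $\{R_i\}$ collapses to the single term $\cO(X)$. No approximation over $Y$ has been achieved, and the entire burden falls on the unexplained ``reindexing against $Y\simeq\lim Y_\al$'', which would amount to reproving the content of \cite[Tag 0EUJ]{Sta}. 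Your argument that each $\Spec R_i\to Y$ is a universal homeomorphism is correct (surjectivity of $X\to\Spec R_i$ by going-up gives radiciality), but this is not yet enough.

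The fix is simply to replace your hand-made approximation by the citation the paper uses: once each approximating $X'\to Y$ is fp, it descends to some $f_\al:X'_\al\to Y_\al$ in $\Affft_k$ which (after increasing $\al$) is again a universal homeomorphism, and your final paragraph then goes through verbatim.
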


\begin{proof}
Since $\cD\simeq \Ind\cD_c$, the assertion for $\C{D}$ follows from that for $\cD_c$. Since $f$ is a universal homeomorphism, $X$ has a presentation as filtered limit
$X\simeq\lim_{X'}X'$, where each $X'\to Y$ is an fp-universal homeomorphism (see \cite[Tag 0EUJ]{Sta}).
Then $\cD_c(X)\simeq\colim_{X'}\cD_c(X')$ is a filtered colimit, so it suffices to show
that each $\cD_c(Y)\to \cD_c(X')$ is an equivalence.

Note that every fp-universal homeomorphism $X'\to Y$ comes from a universal homeomorphism between finite type schemes $X'_0\to Y_0$ by \cite[8.10.5]{EGA}. Writing $Y$ as a limit $Y\simeq\lim Y_{\al}$ over $Y_0$, we get that
$X'\simeq\lim_{\al} X'_{\al}$ with $X'_{\al}=Y_{\al}\times_{Y_0}Y'_0$. Thus it suffices to show that each functor $f_{\al}^!:\cD_c(Y_{\al})\to\cD_c(X'_{\al})$ is an equivalence. Since $f_{\al}:X'_{\al}\to X_{\al}$ is  a universal homeomorphism between finite type affine schemes, the assertion follows from the fact that $f_{\al}$ induces an equivalence between \'etale sites on $X'_{\al}$ and $X_{\al}$.
\end{proof}

\begin{Cor} \label{C:invtop}
(a) For an $\infty$-stack $\cX$, the canonical functors $\pi^!:\cD_c(\cX)\to \cD_c(\cX_{\red})$ and $\pi^!:\cD(\cX)\to \cD(\cX_{\red})$, induced by the projection $\pi:\cX_{\red}\to\cX$, are equivalences.

(b) For a topological equivalence $f:\cX\rightarrow\cY$ between $\infty$-stacks, the pullbacks $f^{!}:\cD_c(\cY)\to\cD_c(\cX)$ and
$f^{!}:\cD(\cY)\to\cD(\cX)$ are equivalences.
\end{Cor}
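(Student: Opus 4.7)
The strategy is to reduce both statements to the affine universal-homeomorphism case of \rl{invtop}.

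For part (a), I will use that $\cD_c$ and $\cD$ turn colimits of $\infty$-stacks into limits of $\infty$-categories (see \re{shpr}(d)). The reduction functor $(\cdot)_{\red}=\iota_!\iota^*$ preserves colimits in $\St_k$: the left adjoint $\iota_!$ does so automatically, and $\iota^*$ does so since it is the composition of restriction of presheaves along $\iota$ with sheafification, both of which commute with colimits. Starting from the tautological presentation $\cX\simeq\colim_{X\to\cX}X$ indexed by affine schemes mapping to $\cX$, we therefore obtain $\cX_{\red}\simeq\colim_{X\to\cX}X_{\red}$, and applying $\cD_c$ yields a commutative square
\[
\begin{CD}
\cD_c(\cX) @>\sim>> \lim_{X\to\cX}\cD_c(X)\\
@V\pi_\cX^! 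VV @VV\lim\pi_X^! V\\
\cD_c(\cX_{\red}) @>\sim>> \lim_{X\to\cX}\cD_c(X_{\red}).
\end{CD}
\]
For each affine scheme $X$, the canonical map $\pi_X:X_{\red}\to X$ is a universal homeomorphism of affine schemes, so $\pi_X^!$ is an equivalence by \rl{invtop}. The right vertical arrow is then a limit of equivalences, whence $\pi_\cX^!$ is an equivalence. The same argument applies verbatim to $\cD$ (using that $\Ind$ commutes with the relevant colimits, or simply that $\cD$ is itself a right Kan extension from $\Aff_k^{\op}$).

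For part (b), the commutative square
\[
\begin{CD}
\cX_{\red} @>f_{\red}>> \cY_{\red}\\
@V\pi_\cX VV @VV\pi_\cY V\\
\cX @>f>> \cY
\end{CD}
\]
has $f_{\red}$ an isomorphism by the definition of topological equivalence (\re{topeq}), and hence $f_{\red}^!$ is an equivalence; both vertical pullbacks $\pi_\cX^!$ and $\pi_\cY^!$ are equivalences by part (a). Since $\pi_\cX^!\circ f^!\simeq f_{\red}^!\circ\pi_\cY^!$, the two-out-of-three property forces $f^!$ to be an equivalence, and likewise for $\cD$.

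The only technical step requiring care is the colimit-preservation of $(\cdot)_{\red}$; beyond this, everything is a formal consequence of \rl{invtop} together with the sheaf/Kan-extension construction of $\cD_c$ and $\cD$. Should there be any subtlety in presenting $\cX_{\red}$ as a colimit of the $X_{\red}$'s along this diagram (for instance, if one prefers to avoid invoking colimit-preservation of $\iota^*$), one can instead re-index: any affine $X'\to\cX_{\red}$ lifts via the counit to $X'_{\red}\to\cX$, and the same limit comparison goes through directly, with the essential input still being \rl{invtop} applied to each $\pi_X$.
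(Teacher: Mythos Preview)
Your proof is correct and follows essentially the same route as the paper: reduce to the affine case via the colimit presentation $\cX\simeq\colim_{X\to\cX}X$, use that $(\cdot)_{\red}=\iota_!\iota^*$ preserves colimits to get $\cX_{\red}\simeq\colim_{X\to\cX}X_{\red}$, and then invoke \rl{invtop} for the universal homeomorphism $X_{\red}\to X$; part (b) is then immediate from (a) via the square you wrote. The paper's proof is terser but identical in substance (it simply says ``follows immediately from (a)'' for part (b), and asserts colimit-preservation of $\iota_!\iota^*$ without the justification you supply).
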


\begin{proof}
We will write $\cD_{\cdot}$ to treat both $\cD$ and $\cD_c$.

(a) Since $\cX$ as a colimit of affine schemes $\cX\simeq\colim_{X\to\cX} X$, we have $\C{D}_{\cdot}(\cX)\simeq\lim_{X\to\cX}\C{D}_{\cdot}(X)$.
Since functor $\iota_!\iota^*:\cX\mapsto\cX_{\red}$ preserves colimits, we get an isomorphism $\cX_{\red}\simeq\colim_{X\to\cX} X_{\red}$, hence an equivalence $\C{D}_{\cdot}(\cX_{\red})\simeq\lim_{X\to\cX}\C{D}_{\cdot}(X_{\red})$. Therefore it suffices to show the induced map $\cD_{\cdot}(X)\to \cD_{\cdot}(X_{\red})$ is an equivalence. Since $X_{\red}\to X$ is a universal homeomorphism of affine schemes, the assertion follows from \rl{invtop}.

(b) Follows immediately from (a).
%
\end{proof}





\begin{Prop}\label{P:locindpr}
Let $f:\cX\rightarrow \cY$ be an ind-fp-proper morphism of $\infty$-stacks (see \rd{pspr}). Then the pullback $f^{!}$ has a left adjoint $f_{!}$, satisfying the base change. In other words, for every Cartesian diagram of $\infty$-stacks
\begin{equation*}
\xymatrix{\wt{\cX}\ar[d]_{\wt{f}}\ar[r]^{g}&\cX\ar[d]_{f}\\\wt{\cY}\ar[r]^{h}&\cY,}
\end{equation*}
the base change map
\begin{equation} \label{Eq:bcind}
\wt{f}_{!}g^{!}\to h^{!}f_{!}
\end{equation}
is an isomorphism.
\end{Prop}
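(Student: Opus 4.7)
The plan is to proceed in three stages of increasing generality---(1) $f$ ind-fp-proper with affine target, (2) $f$ locally ind-fp-proper with affine target, (3) arbitrary target---with each stage providing the Beck--Chevalley input for the next via \rp{adjlu}(b).

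\textbf{Stage 1.} Assume $f:X\to Y$ is ind-fp-proper with $Y$ affine, presented as $X\simeq\colim_\alpha X_\alpha$ with transition maps $j_{\alpha\beta}:X_\alpha\to X_\beta$ fp-closed embeddings and each structural map $f_\alpha:X_\alpha\to Y$ fp-proper. By \rp{adj}(a), every $(f_\alpha)^!$ and $(j_{\alpha\beta})^!$ admits a left adjoint, so \re{indsch2}(b) together with \rt{lu1} yields $\cD(X)\simeq\colim^{j_!}_\alpha\cD(X_\alpha)\simeq\lim^{j^!}_{\alpha^{\op}}\cD(X_\alpha)$. The compatible system $\{(f_\alpha)_!\}$ assembles, via the universal property of the colimit, into $f_!:\cD(X)\to\cD(Y)$. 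For $K\in\cD(X)$, \rco{lurie} applied to the limit presentation gives $K\simeq\colim_\alpha(j_\alpha)_!(j_\alpha)^!K$, so $f_!K\simeq\colim_\alpha(f_\alpha)_!(j_\alpha)^!K$, and both $\Map(f_!K,L)$ and $\Map(K,f^!L)$ identify with $\lim_\alpha\Map_{\cD(X_\alpha)}((j_\alpha)^!K,(f_\alpha)^!L)$, establishing $f_!\dashv f^!$. Base change along a morphism $g:Y'\to Y$ of qcqs algebraic spaces then reduces, via this colimit formula plus \rp{base}(b) for each fp-closed $j_\alpha$ and for each fp-proper $f_\alpha$, to a levelwise verification.

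\textbf{Stage 2.} For locally ind-fp-proper $f:\cX\to Y$ with $Y$ affine, \rd{pspr}(b) supplies an affine \'etale cover $p:Y'\to Y$ such that $f_{Y'}:\cX\times_Y Y'\to Y'$ is ind-fp-proper. Form the \v{C}ech nerve $Y'^{[\bullet]}$ (terms again affine) and the pulled-back diagram $\cX\times_Y Y'^{[\bullet]}$, every level of which falls under Stage 1. By the sheaf property \re{shpr}(e), $f^!\simeq\lim_{[m]}(f_{Y'^{[m]}})^!$, and the Beck--Chevalley condition for the face maps of the nerve is exactly base change for the ind-fp-proper $f_{Y'^{[m]}}$ along morphisms of affine schemes---established in Stage 1. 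Hence \rp{adjlu}(b) produces $f_!$ as a left adjoint of $f^!$, together with compatibility \form{adjlim} over each term of the nerve. Base change along any $\wt Y\to Y$ of affines is deduced identically by pulling back the nerve and invoking Stage 1 levelwise.

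\textbf{Stage 3.} For a general locally ind-fp-proper $f:\cX\to\cY$, use the canonical presentation $\cY\simeq\colim_{Y\to\cY}Y$ indexed by affine schemes over $\cY$, yielding $\cD(\cY)\simeq\lim_{Y\to\cY}\cD(Y)$ and $\cD(\cX)\simeq\lim_{Y\to\cY}\cD(\cX\times_\cY Y)$ by \re{shpr}(d). Each pullback $f_Y$ falls under Stage 2, so a second application of \rp{adjlu}(b)---with Beck--Chevalley supplied by Stage 2 base change along morphisms of affines over $\cY$---produces $f_!$ and establishes \form{bcind} whenever $\wt\cY$ is affine over $\cY$. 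The general case of \form{bcind} follows by further pullback to affines over $\wt\cY$ and the sheaf property. The principal technical subtlety is maintaining Beck--Chevalley coherence across iterated descent; we sidestep it by structuring the argument so that \rp{adjlu} is invoked at each stage, promoting pointwise BC equivalences into morphisms of diagrams in $\Fun(\cI,\PrCat)$.
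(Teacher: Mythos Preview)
Your proposal is correct and follows essentially the same strategy as the paper's proof: reduce to the affine base via \rp{adjlu}(b), handle the ind-fp-proper case using the colimit presentation together with \rp{adj}(a), \rp{base}(b) and \rco{lurie}, and pass to the locally ind-fp-proper case by \v{C}ech descent along an \'etale cover. The only cosmetic differences are the order of the reductions (the paper reduces to the affine base first, you do it last) and that in the ind-fp-proper case the paper invokes the adjoint functor theorem to produce $f_!$ while you assemble it directly from the $(f_\alpha)_!$ via the colimit description of $\cD(X)$.
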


\begin{proof}
Our argument is almost identical to the one outlined in \cite[Proposition 1.5.2]{Gai}.

\vskip 4truept
{\bf Step 1.} It is enough to show the assertion when $\cY$ and $\wt{\cY}$ are affine schemes.
\begin{proof}
A presentation $\cY\simeq\colim_{U\to\cY} U$ of $\cY$ as a colimit of affine schemes induces a presentation $\cX\simeq\colim_{U\to\cY}(\cX\times_\cY U)$, and every $f_U: \cX\times_\cY U\to U$ is ind-fp-proper.
If every $f_U^!$ has a left adjoint $(f_U)_!$, satisfying the base change (for morphisms between affine schemes), then
\rp{adjlu}(b) implies that the left adjoint of $f^!$ exists and satisfies the base change for morphisms $U\to\cY$ with $U$ affine.

Next, for an arbitrary morphism $h:\wt{\cY}\to\cY$, notice that $\cD(\wt{\cY})\simeq\lim_{U\to\wt{\cY}}\cD(U)$ taken over all morphisms $U\to\wt{\cY}$ with affine $U$. Therefore in order to show the base change for $h$ it suffices to show that for every morphism $\al:U\to\wt{\cY}$ the base change morphism
$\al^!\tilde{f}_{!}g^{!}\to \al^!h^{!}f_{!}$ is an isomorphism. Arguing as in \rp{base}(a), it thus suffices to show the base change for the morphisms
$\al:U\to\wt{ \cY}$ and $h\circ\al:U\to\cY$, which was shown above.
\end{proof}

{\bf Step 2.} The assertion holds, if $f$ is fp-proper.

\begin{proof}
Arguing as in Step 1, one reduces the assertion to the case when $\cY$ and $\wt{\cY}$ are affine. In this case, the existence of $f_!$
was shown in \rp{adj}(a), and the base change property was shown in \rp{base}(b).
\end{proof}

{\bf Step 3.} The assertion holds, if $\cY\simeq\colim_{\al}Y_{\al}$ is an ind-algebraic space, and $f$ is the inclusion $f=i_{\al}:Y_{\al}\to \cY$.

\begin{proof}
Since $i_{\al}$ is fp-proper, the assertion follows from Step 2.
\end{proof}

{\bf Step 4.} Completion of the proof.
\vskip 4truept
By Step 1, we can assume that $\cY$ and $\wt{\cY}$ are affine. Choose a presentation $\cX\simeq\colim X_{\al}$ of $\cX$ over  $\cY$, let $i_{\al}:X_{\al}\hra X$ be the embedding, and set $f_{\al}:=f\circ i_{\al}:X_{\al}\to \cY$. By Step 3, the adjoint $(i_{\al})_!$ exists and satisfies the base change.

By the adjoint functor theorem \cite[Corollary 5.5.2.9]{Lu}, to show the existence of $f_!$ it suffices to show that $f^!$ preserves all small limits.
Since $\cD(\cX)\simeq \lim_{\al}\cD(X_{\al})$ and $i_{\al}^!$ preserves all small limits by Step 3, it suffices to show that the composition $f_{\al}^!=i^!_{\al}\circ f^!: \cD(\cY)\to \cD(X_{\al})$
preserves all small limits. Since $f_{\al}$ is fp-proper, the pullback $f_{\al}^!$ has a left adjoint $(f_{\al})_!$ by \rp{adj}(a). Therefore $f_{\al}^!$ preserves all small limits, and the existence of $f_!$ follows.

Recall (see \rco{lurie}) that for every object $K\in\cD(X)$ we have a canonical isomorphism $\colim_{\al}(i_{\al})_!i_{\al}^!K\to K$.
Since all functors in \form{bcind} preserve small colimits, in order to show that \form{bcind}  is an isomorphism, it suffices to check
that the induced map $\tilde{f}_{!}g^{!}(i_{\al})_!\to h^{!}f_{!}(i_{\al})_!$ is an isomorphism. As in the proof of
\rp{base}(a), it suffices to show that $(i_{\al})_!$ and $(f_{\al})_!$ satisfy base change. Since  $f_{\al}$ are fp-proper, the assertion follows from Steps 2 and 3.
\end{proof}



\begin{Emp} \label{E:rempspr2}
{\bf Remark.} Actually, as in \cite{Gai} one can consider a more general notion of {\em pseudo-proper} morphisms,  in which we do not require in \rd{pspr}(a) that the colimit  $\colim_{\al}X_{\al}$ is filtered and impose no restriction on the transition maps.
The assertion \rp{locindpr} also holds for pseudo-proper morphisms as well. Namely, all steps in the argument except Step 3 work word-by-word. Though an analog of Step 3 is not difficult as well, one seems to need a more general categorical framework of $(\infty,2)$-categories to give an honest proof of it.
\end{Emp}


\begin{Prop} \label{P:glue1}
(a) Let $\C{Y}$ be a placid $\infty$-stack, and let $f:\cX\to\cY$ be an fp-representable morphism. Then there exists a left adjoint $f_!:\C{D}(\C{X})\to \C{D}(\C{Y})$ of $f^!:\C{D}(\C{Y})\to \C{D}(\C{X})$. Moreover, if in addition $f$ is proper,
then there exists a left adjoint $f^*:\C{D}(\C{Y})\to \C{D}(\C{X})$ of $f_!$.

(b) Let $h:\wt{\cY}\to\cY$ be a smooth morphism of placid $\infty$-stacks, let $g:\wt{\cX}\to\cX$ and $\wt{f}:\wt{\cX}\to\wt{\cY}$ be the pullbacks of $h$ and $f$, respectively. Then the base change morphism $\wt{f}_!g^!\to h^! f_!$ is an isomorphism. Moreover, if $f$ is fp-proper, then the induced base change morphism
$\wt{f}^*h^!\to g^!f^*$ is an isomorphism as well.
\end{Prop}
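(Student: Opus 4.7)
The plan is to follow the descent strategy of Proposition \ref{P:locindpr}: reduce to the case where the target is a $0$-placid affine scheme, where the adjoints exist by Proposition \ref{P:adj}, then bootstrap up along a smooth cover via Proposition \ref{P:adjlu}(b). Simultaneously one must establish smooth base change, so that the Beck--Chevalley hypothesis of Proposition \ref{P:adjlu}(b) is actually available at every simplicial level of the relevant \v Cech nerve.

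First I would treat the affine base case. If $\cY$ is a $0$-placid affine scheme, then an fp-representable $f:\cX\to\cY$ exhibits $\cX$ as a qcqs algebraic space locally fp over $\cY$, so $\cX$ admits a placid presentation by \rl{glplacid}(b). Proposition \ref{P:adj}(a) then yields $f_!$ as a left adjoint of $f^!$; if $f$ is additionally fp-proper, Proposition \ref{P:adj}(c) yields $f^*$ as a left adjoint of $f_!$. The corresponding base change statements --- $\wt f_! g^! \simeq h^! f_!$ for $h$ smooth, and $\wt f^* h^! \simeq g^! f^*$ for $h$ smooth and $f$ fp-proper --- follow from Proposition \ref{P:base}(c),(d) when $h$ is strongly pro-smooth, and in general by refining $h$ by a strongly pro-smooth covering (which exists by \re{propplst}(e)) and descending along the resulting \'etale cover, using that \'etale pullback is conservative and itself satisfies base change by Proposition \ref{P:base}(a).

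Next, for a general placid $\cY$, choose a smooth covering $\pi:Y\to\cY$ by a $0$-placid affine scheme (possible by \re{propplst}(a)), form the \v Cech nerve $Y^{[\bullet]}$, and set $\cX^{[m]}:=\cX\times_{\cY} Y^{[m]}$ with induced $f^{[m]}:\cX^{[m]}\to Y^{[m]}$. The sheaf property \re{shpr}(e) gives equivalences $\cD(\cY)\simeq\lim_{[m]}\cD(Y^{[m]})$ and $\cD(\cX)\simeq\lim_{[m]}\cD(\cX^{[m]})$, identifying $f^!$ with the limit of the $(f^{[m]})^!$. Each $Y^{[m]}$ is a placid $\infty$-stack by \re{propplst}(c),(d), and each $f^{[m]}$ is again fp-representable. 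Applying the proposition inductively at each simplicial level --- iterating the smooth descent until one bottoms out at a $0$-placid affine base, handled above --- one obtains left adjoints $(f^{[m]})_!$ that satisfy base change along the smooth face maps of $Y^{[\bullet]}$. Proposition \ref{P:adjlu}(b) then assembles these into the global left adjoint $f_!$; the case of $f^*$ (for fp-proper $f$) is identical. Part (b) then falls out of the same descent, once combined with the affine-level base change of the previous paragraph.

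The main obstacle is the interlocking of adjoint existence with smooth base change: Proposition \ref{P:adjlu}(b) needs Beck--Chevalley to produce the limit adjoint, but Beck--Chevalley is itself a base change assertion. It is therefore crucial to establish the full affine-level base change first --- including the reduction from a general smooth morphism to a strongly pro-smooth one via \re{propplst}(e) --- before attempting the descent, so that the induction above terminates cleanly at the combination of Proposition \ref{P:adj} and Proposition \ref{P:base}(c),(d) rather than becoming circular.
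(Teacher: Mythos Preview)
Your argument is correct in outline but takes a different and more circuitous route than the paper's. You descend along the \v{C}ech nerve of a single smooth cover $Y\to\cY$ and then induct on placidity level, so that each $Y^{[m]}$ (being $(n-1)$-placid when $\cY$ is $n$-placid) has already been handled. The paper avoids this induction entirely by invoking the canonical presentation $\cY\simeq\colim_{(Y\to\cY)\in J}Y$ of \re{propplst}(a) (i.e.\ \rt{colim}), whose indexing category $J$ has as objects \emph{all} smooth morphisms $Y\to\cY$ from $0$-placid affine schemes and as morphisms strongly pro-smooth maps between such. Passing to $\cD$ writes $f^!$ as the limit of the $(f_Y)^!$'s over $J^{\op}$; every transition map in this diagram is a strongly pro-smooth morphism between $0$-placid affine schemes, so Beck--Chevalley is furnished once and for all by \rp{base}(c),(d), and \rp{adjlu}(b) produces $f_!$ (and $f^*$ when $f$ is proper) in one step. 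Crucially, since every smooth $Y\to\cY$ from a $0$-placid affine already lies in $J$, the base change for such maps is the isomorphism \form{adjlim} itself; part (b) for a general smooth $h$ then reduces to this by choosing a smooth cover of $\wt\cY$ by $0$-placid affines and using conservativity of $!$-pullback.

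Your \v{C}ech-nerve construction, by contrast, only immediately yields base change along the structural maps $Y^{[m]}\to\cY$; extending this to an arbitrary smooth $h$ genuinely requires the simultaneous induction on (a) and (b) that you allude to but do not spell out. One small slip in your affine step: the covering supplied by \re{propplst}(e) is strongly pro-smooth, not \'etale; the descent there uses conservativity of $!$-pullback along a covering (\re{shpr}(e)) together with the two-out-of-three reduction as in the paper's proof of (b), not \rp{base}(a).
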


\begin{proof}
(a) Recall (see \re{propplst}(a)) that we have a canonical isomorphism $(\colim_{Y\to\cY}Y)\to\cY$, where the colimit runs over smooth morphisms
$Y\to\cY$ from affine schemes $Y$ admitting placid presentations, where the transition maps are strongly pro-smooth. This isomorphism induces an isomorphism $(\colim_{Y\to\cY}X_Y)\to\cX$ with $X_Y:=\cX\times_{\cY}Y$.

Moreover, $f$ induces an fp-representable morphism $X_Y\to Y$ for all $Y$, which is proper if $f$ is such. In particular, $X_Y$ is an algebraic space admitting a placid presentation (by \rl{glplacid}). Furthermore, by \rp{adj}(a),(c) and \rp{base}(c),(d), the assumptions of \rp{adjlu}(b) are satisfied. Therefore there exists an adjoint $f_!$ of $f^!$ (and also $f^*$ of $f_!$, if $f$ is proper), which satisfies the  base change with respect to each $h:Y\to\cY$ as above.

(b) Choose a smooth covering $p=\sqcup_{\al}p_{\al}:\sqcup_{\al}Y_{\al}\to\wt{\cY}$, where each $Y_{\al}$ is an affine scheme admitting a placid presentation, and let $p'_{\al}:X_{\al}\to\wt{\cX}$ be the pullback of $p_{\al}$.
Since  $p^!$ is conservative, to show that $\wt{f}_!g^!\to h^! f_!$ is an isomorphism, it suffices to show that the pullback
$p_{\al}^!\wt{f}_!g^!\to p_{\al}^!h^! f_!$ is an isomorphism for all $\al$.

Arguing as in \rp{base}(a), it suffices to show the base change for morphisms $p_{\al}:Y_{\al}\to\wt{\cY}$ and  $h\circ p_{\al}:Y_{\al}\to\cY$.
Since $Y_{\al}$ is an affine scheme admitting a placid presentation, while $p_{\al}$ and $h\circ p_{\al}$  are smooth, the assertion follows
from the observation at the end of (a). The proof of the second assertion is similar.
\end{proof}


\subsection{Fp-locally closed embeddings}




\begin{Lem} \label{L:glue1}
Let $j:\C{U}\hra\cX$ be an fp-open embedding with a complementary topologically fp-closed embedding $i:\cZ\hra\cX$ (see \re{complopcl}(a)). Then

(a) there exists a right adjoint $j_*$ of $j^!:\C{D}(\C{X})\to \C{D}(\C{U})$, which is continuous, preserves  $\C{D}_c$ and satisfies the base change;

(b) there exists a left adjoint $i_!$ of $i^!:\C{D}(\C{X})\to \C{D}(\C{Z})$, which preserves  $\C{D}_c$ and satisfies the base change;

(c) functors $i_!$ and $j_*$ are fully faithful, and $j^{!}i_{!}\simeq 0$;

(d) for every $K\in\cD(\C{X})$, the unit and counit maps extend to a fiber sequence \[i_{!}i^{!}K\rightarrow K\rightarrow j_{*}j^{!}K.\]
 \end{Lem}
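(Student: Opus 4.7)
The strategy is to use the limit equivalence $\cD(\cX)\simeq\lim_{X\to\cX}\cD(X)$ over affine schemes $X\to\cX$ from \re{shpr}(d), together with the analogous equivalences for $\cU$ and $\cZ$, to assemble local adjoints on each affine $X$ into global ones via \rp{adjlu}(b) and its right-adjoint dual. For each such $X$, denote by $j_X:\cU_X\hra X$ and $i_X:\cZ_X\hra X$ the pullbacks of $j$ and $i$; by assumption these are respectively an fp-open embedding and a topologically fp-closed embedding, and by \re{complopcl} combined with \re{compl}(c) the subschemes $\cU_X\subset X$ and $(\cZ_X)_{\red}\subset X$ are complementary open/closed pieces of $X$. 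The main technical difficulty will lie in part (b), where the construction of $i_!$ has to be routed through reductions of $\cZ_X$ and Beck--Chevalley verified up to nilpotents.

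For (a), \rp{adj}(b) supplies a right adjoint $j_{X,*}$ of $j_X^!$ preserving $\cD_c$. For any morphism $f:X'\to X$ of affine schemes over $\cX$, the Beck--Chevalley isomorphism $f^!j_{X,*}\isom j_{X',*}\widetilde{f}^!$ follows from \rp{base}(a), applied to the Cartesian square whose vertical arrows are the fp-\'etale maps $j_X$ and $j_{X'}$. The right-adjoint analogue of \rp{adjlu}(b) (which has the same proof, or can alternatively be obtained via the adjoint functor theorem as in Step~4 of the proof of \rp{locindpr}) then produces a right adjoint $j_*:\cD(\cU)\to\cD(\cX)$ of $j^!$, characterized locally by $p_X^!j_*\simeq j_{X,*}p_{\cU_X}^!$ for every $p_X:X\to\cX$. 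Base change against an arbitrary $g:\cY\to\cX$ reduces to the case of morphisms between affine schemes via $\cY\simeq\colim_{Y\to\cY}Y$, and preservation of $\cD_c$ is inherited from the local level.

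For (b), \rco{red-fp} lets us factor $i_X$ as $\cZ_X\to(\cZ_X)_{\red}\hra X$, where the latter is an fp-closed embedding (hence fp-proper) and the former is a topological equivalence, inducing an equivalence $\cD(\cZ_X)\simeq\cD((\cZ_X)_{\red})$ by \rco{invtop}(b). Then \rp{adj}(a) supplies a left adjoint $(i_X)_!$ of $i_X^!$, preserving $\cD_c$. For any morphism $f:X'\to X$, Beck--Chevalley follows from \rp{base}(b) applied to the fp-proper embedding $(\cZ_X)_{\red}\hra X$, combined with the reduction identity $((\cZ_X)_{\red}\times_X X')_{\red}\simeq(\cZ_{X'})_{\red}$ from \re{red}(e) and a further application of \rco{invtop}(b). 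Direct application of \rp{adjlu}(b) then produces a left adjoint $i_!:\cD(\cZ)\to\cD(\cX)$ of $i^!$, with base change and preservation of $\cD_c$ handled as in (a).

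Finally, parts (c) and (d) reduce to the affine level by pullback along each $p_X:X\to\cX$. Full faithfulness of $j_*$ (resp.\ of $i_!$) amounts to $j^!j_*\simeq\id$ (resp.\ $i^!i_!\simeq\id$), and both hold level-wise by the classical one-sided-inverse relations for the fp-\'etale $j_X$ and for the fp-closed embedding $(\cZ_X)_{\red}\hra X$ (where $(i_X)_!=(i_X)_*$ is at once the right adjoint of $i_X^*$ and the left adjoint of $i_X^!$). The vanishing $j^!i_!\simeq 0$ likewise reduces to $j_X^!(i_X)_!\simeq 0$, which is classical for the complementary open/closed pair $(\cU_X,(\cZ_X)_{\red})$ in $X$. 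For (d), the composition $i_!i^!K\to K\to j_*j^!K$ is null-homotopic because under the $(j^!,j_*)$-adjunction it corresponds to $j^!(i_!i^!K)\to j^!K$, whose source vanishes by the previous step; the induced morphism from $i_!i^!K$ to the fiber of $K\to j_*j^!K$ is then an equivalence, since its pullback to every $X\to\cX$ is the classical recollement fiber sequence of \cite{BBD} and the evaluation functors $p_X^!$ are jointly conservative.
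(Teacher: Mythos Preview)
Your approach is essentially the same as the paper's: write $\cD(\cX)$, $\cD(\cU)$, $\cD(\cZ)$ as limits over affine $X\to\cX$, construct the adjoints levelwise via \rp{adj} and \rp{base}, and assemble via \rp{adjlu}(b); then reduce (c) and (d) to the classical recollement for schemes. This is correct in outline.

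There are, however, two technical slips in your treatment of (b). First, the reduction map goes $(\cZ_X)_{\red}\to\cZ_X$, not the other way, so the phrase ``factor $i_X$ as $\cZ_X\to(\cZ_X)_{\red}\hra X$'' is not a factorization of $i_X$; what you actually mean is that $i_X^!$ is identified, via the equivalence $\cD(\cZ_X)\simeq\cD((\cZ_X)_{\red})$ of \rco{invtop}, with pullback along an fp-closed embedding into $X$. Second, and more substantively, your appeal to \rco{red-fp} to conclude that $(\cZ_X)_{\red}\hra X$ is an fp-closed embedding is not justified: \rco{red-fp} (via \rl{red-fp}) requires the target to be placid, and no such hypothesis is made on $\cX$ or on $X$ here. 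The paper avoids this by working directly from the definition of ``topologically fp-closed'' (\re{redP}(a)): there exists some fp-closed $Z'\hra X$ with $Z'_{\red}\simeq(\cZ_X)_{\red}$, and one transports everything across the chain of equivalences $\cD(\cZ_X)\simeq\cD((\cZ_X)_{\red})\simeq\cD(Z'_{\red})\simeq\cD(Z')$. Once you make this substitution, your Beck--Chevalley argument (using \rp{base}(b) and the reduction identity from \re{red}(e)) goes through unchanged.
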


\begin{proof}

(a) A presentation $\cX\simeq\colim_{X\to\cX} X$ of $\cX$ as a colimit of affine schemes, induces a presentation $\C{U}\simeq\colim_{X\to\cX} X_{\C{U}}$, where $X_\C{U}:=X\times_{\cX}\cU$ is an fp-open subscheme of $X$. In particular, $j^!:\C{D}(\C{X})\to \C{D}(\C{U})$ is a limit
$\lim_{X\to\cX} j_X^!: \lim_{X\to\cX} \C{D}(X)\to \lim_{X\to\cX} \C{D}(X_\C{U})$ and similarly for $\C{D}_c$.
Since the pullback $j_X^!:\cD_c(X)\to \cD_c(X_\C{U})$ has a right adjoint (see \rp{adj}(b)), which satisfies the base change (see \rp{base}(a)),
the existence of $j_*$ follows from \rp{adjlu}(b), applies to $\cD_{\cdot}^{\op}$ (as in the proof of \rp{adj}(b)). To show the assertion about the base change, we argue as in \rp{locindpr}. It remains to show that $j_*$ is continuous. Since  $\cD(\cX)\simeq\lim_{X\to\cX} \C{D}(X)$, and each projection $\cD(\cX)\to\cD(X)$ is continuous, it suffices to show that each composition
$\cD(\cU)\overset{j_*}{\lra} \cD(\cX)\to \cD(X)$, or what is the same,  $\cD(\cU)\to\cD(X_{\cU})\overset{(j_X)_*}{\lra}\cD(X)$ is continuous.
Since each $(j_X)_*:\cD(X_{\cU})\to\cD(X)$ is continuous, the assertion follows.

(b) The argument is similar, except we use \rco{invtop}(a) and Propositions \ref{P:adj}(a) and \ref{P:base}(b) instead. Notice that all assertions except the one about $\cD_c$ can be easily deduced from \rco{invtop}(a) and \rp{locindpr}.

(c) We have to show that the counit morphisms $\Id\to i^!i_!$ and $\Id\to j_*j^!$ are isomorphisms, and $j^{!}i_{!}\simeq 0$.
Since all functors are defined as limits of the corresponding functors for qcqs schemes, we immediately reduce to the case of qcqs schemes. In this case, $\C{D}\simeq\Ind\C{D}_c$, so we reduce to the case of $\C{D}_c$. Next, using \rco{invtop}(a), we can assume that
$i:\cZ\to\cX$ is fp-closed, thus is a pullback of a closed embedding of schemes of finite type. Therefore all functors are colimits of
the corresponding functors between schemes of finite type, hence we reduce to this case. In this case, all assertions are standard.

(d) Since $j^{!}i_{!}\simeq 0$, the composition  $i_{!}i^{!}K\rightarrow K\rightarrow j_{*}j^{!}K$ is naturally isomorphic to zero. Let $K'$ be the fiber of the unit map $K\to j_*j^! K$. Then the counit map $i_{!}i^{!}K\rightarrow K$ factors canonically as  $i_{!}i^{!}K\to K'\to K$, and it remains to show that $i_{!}i^{!}K\to K'$ is an isomorphism. Arguing as in (c), we reduce to the case of schemes of finite type, in  which
case the assertion is standard.
\end{proof}


\begin{Emp} \label{E:support}
{\bf Sheaves with support.}\label{I:support}
(a) Let $\cX$ be an $\infty$-stack, let $\cY\subset\cX$ be an $\infty$-substack, and let $\iota:\cX\sm\cY\hra\cX$ be the inclusion (see \re{compl}).
Let $\cD_{\cY}(\cX)\subset \cD(\cX)$ \label{N:DYX} be the full $\infty$-subcategory consisting of $K\in\cD(\cX)$ such that $\iota^!K\simeq 0$, and we say that objects $K\in \cD_{\cY}(\cX)$ are {\em supported on $\cY$}. Since $\iota^!$ is continuous, we deduce that $\cD_{\cY}(\cX)\subset \cD(\cX)$ is closed under all colimits.

(b) For every morphism $f:\cX'\to\cX$ we have an inclusion $f^!(\cD_{\cY}(\cX))\subset \cD_{\cY\times_{\cX}\cX'}(\cX')$. Indeed, this follows from the commutative diagram
\[
\begin{CD}
\cX'\sm(\cY\times_{\cX}\cX') @>>> \cX'\\
@VVV @VVV\\
\cX\sm\cY @>>>\cX.
\end{CD}
\]

(c) A canonical isomorphism $\cD(\cX)\simeq \lim_{X\to\cX}\cD(X)$ induces an isomorphism
\[
\cD_{\cY}(\cX)\simeq \lim_{X\to\cX}\cD_{X\times_{\cX}\cY}(X).
\]
\begin{proof}
We have to show that if $K\in \cD(\cX)$ corresponds to a compatible system $\{K_X\in \cD(X)\}_{X\to\cX}$, then
$K\in \cD_{\cY}(\cX)$ if and only if $K_X\in \cD_{X\times_{\cX}\cY}(X)$ for every $X\to\cX$. The ``only if'' assertion follows from
(b). Conversely, assume that $K_X\in \cD_{X\times_{\cX}\cY}(X)$ for every $X\to\cX$, and we want to show that
$\iota^!K\simeq 0$, that is, for every $a:X\to \cX\sm\cY\subset \cX$ we have $K_X:=a^!K\simeq 0$.
By assumption, $X\times_{\cX}\cY=\emptyset$, thus $K_X\in \cD_{\emptyset}(X)=\{0\}$.
\end{proof}
\end{Emp}

\begin{Lem} \label{L:locclemb}
Let $\eta:\cY\hra\cX$ be a topologically fp-locally closed embedding. Then $\eta^!$ induces an equivalence of categories
$\eta^!:\cD_{\cY}(\cX)\to\cD(\cY)$.
\end{Lem}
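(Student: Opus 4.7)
The plan is to reduce the statement to an fp-locally closed embedding of qcqs affine schemes using the canonical colimit presentation of $\cX$, and then to apply the recollement of \rl{glue1} to an open/closed factorization. Writing $\cX \simeq \colim_{X \to \cX} X$ over affine $X$ gives $\cY \simeq \colim_{X \to \cX}(X \times_{\cX} \cY)$ (colimits commute with pullbacks in an $\infty$-topos), and by \re{shpr}(d) and \re{support}(c) we obtain canonical equivalences
\[
\cD(\cY) \simeq \lim_{X \to \cX} \cD(X \times_{\cX} \cY), \qquad \cD_{\cY}(\cX) \simeq \lim_{X \to \cX} \cD_{X \times_{\cX} \cY}(X).
\]
Since $\eta^!$ is natural, it suffices to check that for each $X \to \cX$ with $X$ affine the functor $\cD_{X \times_{\cX} \cY}(X) \to \cD(X \times_{\cX} \cY)$ is an equivalence. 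The hypothesis on $\eta$ produces an fp-locally closed subscheme $Z_X \hookrightarrow X$ together with an isomorphism $(Z_X)_{\red} \simeq (X \times_{\cX} \cY)_{\red}$ over $X$. Then \rco{invtop}(a) identifies $\cD(X \times_{\cX} \cY) \simeq \cD(Z_X)$; moreover, since $U \times_X (X \times_{\cX} \cY)$ and $U \times_X Z_X$ share the same underlying topological space for each affine $U \to X$, so one is empty iff the other is, the fp-open subschemes $X \setminus (X \times_{\cX} \cY)$ and $X \setminus Z_X$ of $X$ (defined as in \re{compl}(b)) coincide, hence $\cD_{X \times_{\cX} \cY}(X) = \cD_{Z_X}(X)$. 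The task thus reduces to showing: for an fp-locally closed embedding $\eta_X : Z_X \hookrightarrow X$ of qcqs affine schemes, the functor $\eta_X^!: \cD_{Z_X}(X) \to \cD(Z_X)$ is an equivalence.

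I next factor $\eta_X$ as $Z_X \xrightarrow{i} U \xrightarrow{j} X$ with $i$ an fp-closed embedding and $j$ a quasi-compact fp-open embedding; such a factorization exists for qcqs $X$ by descending $Z_X$ to a finite-type approximation, performing the decomposition there, and pulling back. Applying \rl{glue1} to the fp-open embedding $j$ with complementary topologically fp-closed embedding $m: X \setminus U \hookrightarrow X$, the fiber sequence of \rl{glue1}(d) and the full faithfulness of $j_*$ (\rl{glue1}(c)) show that $K \in \cD_U(X) = \ker(m^!)$ iff $K \simeq j_* j^! K$, yielding an equivalence $j^! : \cD_U(X) \isom \cD(U)$ with inverse $j_*$. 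Applying \rl{glue1} to the fp-open embedding $n: U \setminus Z_X \hookrightarrow U$ with complementary topologically fp-closed embedding $i : Z_X \hookrightarrow U$, the analogous fiber sequence and full faithfulness of $i_!$ give an equivalence $i^! : \cD_{Z_X}(U) \isom \cD(Z_X)$ with inverse $i_!$. Finally, the decomposition $X \setminus Z_X = (X \setminus U) \sqcup (U \setminus Z_X)$ (with the first piece closed and the second open in $X \setminus Z_X$) shows that $K \in \cD_{Z_X}(X)$ iff $m^! K = 0$ and $n^! j^! K = 0$, i.e., $K \in \cD_U(X)$ and $j^! K \in \cD_{Z_X}(U)$; composing the two equivalences produces the desired equivalence $\eta_X^! = i^! j^!$.

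The main obstacle is verifying that the pointwise equivalences $\cD_{Z_X}(X) \simeq \cD(Z_X)$ are natural in $X \to \cX$ so as to descend to an equivalence of limits. This follows from the naturality of $\eta^!$ and of $(\cdot)_{\red}$, combined with the base change properties of $i_!$ and $j_*$ provided by \rl{glue1}(a)(b) and \rp{base}. A minor auxiliary point is producing the factorization $Z_X \hookrightarrow U \hookrightarrow X$ in the qcqs setting, which is handled by the standard limit argument indicated above.
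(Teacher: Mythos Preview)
Your proof is correct and follows essentially the same strategy as the paper: reduce to affine $X$ via the presentation $\cX\simeq\colim_{X\to\cX}X$ together with \re{support}(c), replace the topologically fp-locally closed substack by an honest fp-locally closed subscheme using \rco{invtop}(a), and then handle the open and closed pieces separately via the recollement of \rl{glue1}. The only notable difference is the direction of the factorization: you write $Z_X\overset{i}{\hra}U\overset{j}{\hra}X$ (closed-in-open), whereas the paper uses the canonical decomposition $Y\overset{j}{\hra}Z\overset{i}{\hra}X$ of \re{decomp} (open-in-closed) and simply notes that it suffices to treat $\eta=i$ and $\eta=j$ separately. Both orderings work; the paper's choice avoids the auxiliary descent argument you invoke to produce a quasi-compact open $U$.

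One remark: your final paragraph about naturality is superfluous. The functor $\eta^!$ is already a morphism of limit diagrams by construction, so once you know that each component $\eta_X^!:\cD_{X\times_{\cX}\cY}(X)\to\cD(X\times_{\cX}\cY)$ is an equivalence, the limit map is automatically an equivalence; the (possibly non-natural) choice of $Z_X$ is only used to verify this pointwise condition and plays no role in the descent.
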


\begin{proof}
Since $\cX\simeq\colim_{X\to\cX}X$, we get an isomorphism $\cY\simeq\colim_{X\to\cX}(\cY\times_{\cX}X)$, and hence equivalences
$\cD(\cY)\simeq \lim_{X\to\cX}\cD(X\times_{\cX}\cY)$ and
$\cD_{\cY}(\cX)\simeq \lim_{X\to\cX}\cD_{X\times_{\cX}\cY}(X)$ (see \re{support}(c)). Thus it suffices to show that $\eta$ induces an equivalence
$\eta_X^!:\cD_{X\times_{\cX}\cY}(X)\to\cD(X\times_{\cX}\cY)$ for every morphism $X\to\cX$. In other words, we reduce the assertion to the case when $\cX$ is an affine scheme $X$.

Using \rco{invtop}(a), we can assume that $\eta$ is an fp-locally closed embedding, that is, $\eta$ decomposes as
$Y\overset{j}{\lra}Z\overset{i}{\lra}X$, where $i$ (resp. $j$) is an fp-closed (resp. fp-open) embedding. Next we observe that it is enough
to show the assertion separately for $\eta=i$ and $\eta=j$.

We claim that both assertions easily follow from \rl{glue1}. Assume first that $\eta=i$. Since the left adjoint $i_!$ is fully faithful,
the unit map $\Id\to i^!i_!$ is an isomorphism. So it suffices to show that $i_!$ induces an equivalence
$\cD(Z)\isom\cD_{Z}(X)$. Since $j^!i_!\simeq 0$, the image of $i_!$ lies inside $\cD_{Z}(X)$. Conversely, if  $K\in\cD_{Z}(X)$ we have $j^!K\simeq 0$, then the map $i_!i^!K\to K$ is an isomorphism (by \rl{glue1}(d)), thus $K$ lies in the essential image of $i_!$. 

In the case  $\eta=j:U\hra X$, the argument is similar. Namely, since the right adjoint $j_*$ is fully faithful,
the counit map $j^!j_*\to\Id$ is an isomorphism, so it suffices to show that $j_*$ induces an equivalence
$\cD(U)\isom\cD_{U}(X)$. We complete as before.
\end{proof}

\begin{Emp} \label{E:loccl}
{\bf Functor $\eta_*$.} \label{N:eta*}
In the situation of \rl{locclemb}, we denote by
$\eta_*:\cD(\cY)\isom\cD_{\cY}(\cX)\subset\cD(\cX)$ the inverse of $\eta^!:\cD_{\cY}(\cX)\isom\cD(\cY)$. Since  $\cD_{\cY}(\cX)\subset \cD(\cX)$ is closed under all colimits (see \re{support}), the functor $\eta_*$ is continuous.
\end{Emp}

\begin{Lem} \label{L:loccl2}
Let $\eta:\cY\to\cX$ be a topologically fp-locally closed embedding. Then for every Cartesian diagram of $\infty$-stacks
\[
\begin{CD}
\wt{\cY} @>\wt{\eta}>>\wt{\cX}\\
@VgVV @VVfV\\
\cY @>\eta>>\cX,
\end{CD}
\]
we have a canonical isomorphism $f^!\eta_*\simeq\wt{\eta}_* g^!$.
\end{Lem}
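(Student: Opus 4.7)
The plan is to exploit the defining property of $\eta_*$ and $\wt{\eta}_*$ as inverses of the equivalences $\eta^!:\cD_{\cY}(\cX)\isom\cD(\cY)$ and $\wt{\eta}^!:\cD_{\wt{\cY}}(\wt{\cX})\isom\cD(\wt{\cY})$ provided by \rl{locclemb}, and to reduce the desired identity to functoriality of $(-)^!$ on the given Cartesian diagram.

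First I would observe that by \re{support}(b), the pullback $f^!$ carries $\cD_{\cY}(\cX)$ into $\cD_{\cY\times_{\cX}\wt{\cX}}(\wt{\cX})=\cD_{\wt{\cY}}(\wt{\cX})$. Hence the composite $f^!\eta_*$ factors through $\cD_{\wt{\cY}}(\wt{\cX})$, and $\wt{\eta}_*g^!$ does so tautologically by the definition of $\wt{\eta}_*$ in \re{loccl}. Since $\wt{\eta}^!$ restricts to an equivalence on $\cD_{\wt{\cY}}(\wt{\cX})$ with inverse $\wt{\eta}_*$, to construct a canonical isomorphism $f^!\eta_*\simeq\wt{\eta}_*g^!$ of functors $\cD(\cY)\to\cD_{\wt{\cY}}(\wt{\cX})$, it suffices to produce a canonical isomorphism between their images under $\wt{\eta}^!$ in $\cD(\wt{\cY})$.

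The commutativity $f\circ\wt{\eta}\simeq\eta\circ g$ and the functoriality of $(-)^!$ yield a canonical isomorphism $\wt{\eta}^!f^!\simeq g^!\eta^!$. Combined with $\eta^!\eta_*\simeq\id_{\cD(\cY)}$ (since $\eta_*$ is the inverse of an equivalence), this gives
\[
\wt{\eta}^! f^! \eta_* \;\simeq\; g^!\,\eta^!\eta_* \;\simeq\; g^!.
\]
On the other hand, $\wt{\eta}^!\wt{\eta}_*\simeq\id_{\cD(\wt{\cY})}$ gives $\wt{\eta}^!\wt{\eta}_*g^!\simeq g^!$. Composing, one obtains the canonical isomorphism $\wt{\eta}^! f^!\eta_*\simeq \wt{\eta}^!\wt{\eta}_*g^!$; transporting through the equivalence $\wt{\eta}^!:\cD_{\wt{\cY}}(\wt{\cX})\isom\cD(\wt{\cY})$ yields the desired canonical isomorphism $f^!\eta_*\simeq\wt{\eta}_*g^!$.

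There is no serious obstacle here: the content of the lemma is purely formal once one knows \rl{locclemb} and the elementary support-preservation property \re{support}(b), because the supposed ``base change'' $\wt{\eta}^! f^! \simeq g^!\eta^!$ is merely functoriality of $(-)^!$ applied to a commutative square and does not require any of the deeper base change assertions of Propositions~\ref{P:base} or \ref{P:locindpr}.
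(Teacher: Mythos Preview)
Your proof is correct and follows essentially the same approach as the paper: both observe that $f^!\eta_*$ and $\wt{\eta}_*g^!$ land in $\cD_{\wt{\cY}}(\wt{\cX})$ (via \re{support}(b) and the definition of $\wt{\eta}_*$), then apply the equivalence $\wt{\eta}^!$ and use the functoriality identity $\wt{\eta}^!f^!\simeq g^!\eta^!$ together with $\eta^!\eta_*\simeq\Id$ and $\wt{\eta}^!\wt{\eta}_*\simeq\Id$ to conclude. Your closing remark that the argument is purely formal and does not invoke any genuine base change theorem is apt and matches the spirit of the paper's one-line computation.
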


\begin{proof}
Notice that for every $K\in\cD(\cY)$, we have $\wt{\eta}_* g^!(K)\in \cD_{\wt{\cY}}(\wt{\cX})$, and also $\eta_*(K)\in \cD_{\cY}(\cX)$, thus  $f^!\eta_*(K)\in \cD_{\wt{\cY}}(\wt{\cX})$ (by \re{support}(b)). Therefore by \rl{locclemb}, it suffices to construct an isomorphism
$\wt{\eta}^!f^!\eta_*\simeq\wt{\eta}^!\wt{\eta}_* g^!$. Since ${\eta}^!{\eta}_*\simeq\Id$ and $\wt{\eta}^!\wt{\eta}_*\simeq\Id$, the composition
$\wt{\eta}^!f^!\eta_*\simeq g^!{\eta}^!\eta_*\simeq g^!\simeq\wt{\eta}^!\wt{\eta}_* g^!$ does the job.
\end{proof}

\begin{Cor} \label{C:comp}
 Let $\eta:\cY\overset{\eta'}{\lra}\cZ\overset{\eta''}{\lra}\cX$ be a composition of topologically fp-locally closed embeddings. Then the functor $\eta_*$ coincides with the composition $\eta''_*\circ \eta'_*$.
\end{Cor}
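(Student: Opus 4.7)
The plan is to characterize $\eta_*$ by the two properties that determine it under Lemma \ref{L:locclemb}: for $K\in\cD(\cY)$, the object $\eta_*(K)\in\cD(\cX)$ is the unique (up to canonical isomorphism) object supported on $\cY$ whose $\eta^!$-pullback is $K$. Thus it suffices to verify that the composite $\eta''_*\eta'_*$ sends $\cD(\cY)$ into $\cD_{\cY}(\cX)$ and is a one-sided inverse of $\eta^!$ on this subcategory.

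First I would observe that $\eta^! = (\eta')^!\circ(\eta'')^!$, so the calculation
\[
\eta^!\bigl(\eta''_*\eta'_*(K)\bigr) \simeq (\eta')^!(\eta'')^!\eta''_*\bigl(\eta'_*(K)\bigr) \simeq (\eta')^!\eta'_*(K) \simeq K
\]
is automatic from $(\eta')^!\eta'_*\simeq\Id$ and $(\eta'')^!\eta''_*\simeq\Id$ (which hold because $\eta'_*$ and $\eta''_*$ are inverses to $(\eta')^!$ and $(\eta'')^!$ on the subcategories of sheaves supported on $\cY$ and $\cZ$ respectively).

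The main step is the support statement: for every $K\in\cD(\cY)$, the object $\eta''_*\eta'_*(K)$ lies in $\cD_{\cY}(\cX)$. Let $\iota:\cX\sm\cY\hra\cX$ be the inclusion of the complement (see \re{compl}). I would first identify
\[
\cZ\times_{\cX}(\cX\sm\cY) \simeq \cZ\sm\cY,
\]
which follows directly from the definition in \re{compl}(c) together with the fact that $\cY\subset\cZ$: a map $U\to\cZ$ meets $\cY\subset\cX$ iff it meets $\cY\subset\cZ$. Then the base change \rl{loccl2} applied to the Cartesian square
\[
\begin{CD}
\cZ\sm\cY @>\wt{\eta''}>> \cX\sm\cY\\
@VgVV @V\iota VV\\
\cZ @>\eta''>> \cX
\end{CD}
\]
yields a canonical isomorphism
\[
\iota^!\,\eta''_*\eta'_*(K) \;\simeq\; \wt{\eta''}_*\,g^!\bigl(\eta'_*(K)\bigr).
\]
Since $\eta'_*(K)\in\cD_{\cY}(\cZ)$ by definition of $\eta'_*$, its !-pullback along the inclusion $g:\cZ\sm\cY\hra\cZ$ of the open complement of $\cY$ in $\cZ$ vanishes, and therefore $\iota^!\eta''_*\eta'_*(K)\simeq 0$, as required.

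Combining the two steps, $\eta''_*\eta'_*(K)\in\cD_{\cY}(\cX)$ and $\eta^!(\eta''_*\eta'_*(K))\simeq K$, so applying $\eta_*$ (which is inverse to $\eta^!$ on $\cD_{\cY}(\cX)$ by \rl{locclemb}) gives the desired canonical isomorphism $\eta_*(K)\simeq \eta''_*\eta'_*(K)$, functorial in $K$. The only delicate point is the set-theoretic identification $\cZ\times_{\cX}(\cX\sm\cY)\simeq\cZ\sm\cY$ at the level of $\infty$-stacks; everything else is formal once the base-change isomorphism of \rl{loccl2} is in hand.
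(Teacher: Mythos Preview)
Your proof is correct and follows essentially the same route as the paper's: both reduce to showing $\eta''_*\eta'_*(\cD(\cY))\subset\cD_{\cY}(\cX)$ and both establish this by applying \rl{loccl2} to $\eta''$ and the inclusion $\cX\sm\cY\hra\cX$. The paper's proof is simply terser, noting that the support inclusion is equivalent to $\eta''_*(\cD_{\cY}(\cZ))\subset\cD_{\cY}(\cX)$ and then invoking \rl{loccl2} without unpacking the identification $\cZ\times_{\cX}(\cX\sm\cY)\simeq\cZ\sm\cY$ that you spell out (which is indeed just \re{compl}(c) combined with $\cY\times_{\cX}\cZ\simeq\cY$, the latter holding because $\eta'$ and $\eta''$ are monomorphisms).
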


\begin{proof}
Since $\eta^!\simeq \eta'^!\circ\eta''^!$, it remains to show that  $\eta''_*\circ\eta'_* (\cD(\cY))\subset\cD_{\cY}(\cX)$ or equivalently that    $\eta''_*(\cD_{\cY}(\cZ))\subset\cD_{\cY}(\cX)$. But this follows from
\rl{loccl2} applied to $\eta:=\eta''$ and $f:\cX\sm\cY\hra \cX$.
\end{proof}

\begin{Emp} \label{E:exloccl}
{\bf Examples.} (a) If $\eta$ is an fp-open (resp. topologically fp-closed) embedding
$j:\cU\hra\cX$ (resp. $i:\cZ\hra\cX$), then $\eta_*$ coincides with $j_*$ (resp. $i_!$). Indeed, since $i^!j_*\simeq 0$ (resp. $j^!i_!\simeq 0$), functor  $j_*$ (resp. $i_!$) induces a functor $j_*:\cD(\cU)\to\cD_{\cU}(\cX)$ (resp. $i_!:\cD(\cZ)\to\cD_{\cZ}(\cX)$). Since $j^!j_*\simeq \Id$
(resp. $i^!i_!\simeq \Id$) by \rl{glue1}(c), we are done.

(b) Assume that an fp-locally closed embedding $\eta:\cX\to\cY$ decomposes as a composition
$\cY\overset{j}{\lra}\cZ\overset{i}{\lra}\cX$, where $i$ (resp. $j$) is a topologically fp-closed (resp. fp-open) embedding. Then it follows from (a) and \rco{comp} that
the functor $\eta_*$ coincides with the composition $i_!\circ j_*$.
\end{Emp}

\begin{Emp} \label{E:remdecomp}
{\bf Remarks.}
(a) Since every fp-locally closed embedding of schemes $\eta$ has a decomposition as in \re{exloccl}(b),
we can define $\eta_*$ by the formula $\eta_*:=i_!\circ j_*$. Moreover, it is not difficult to see that this composition is independent
of the decomposition, thus  $\eta_*$ is well defined.

(b) Moreover, since $i_!$ and $j_*$ commute with all $!$-pullbacks, one can show that functors $\eta_*$ from (a) commute with $!$-pullbacks,
and using this to define functors $\eta_*$ in general.

(c) Though the definition of $\eta_*$ using (a) and (b) is the standard way of doing it, our method is much easier and more intrinsic,
because it does not use any choices.

(d) All results of this subsection also hold for $\cD_c$.
\end{Emp}


\subsection{Infinity-stacks admitting gluing of sheaves}


\begin{Def} \label{D:glue}
We say that an $\infty$-stack $\C{X}$ {\em admits gluing of sheaves},\label{I:gluing} if for every topologically fp-locally closed embedding $\eta:\cY\hra\cX$ the pushforward
$\eta_*:\cD(\cY)\to\cD(\cX)$ (see \re{loccl}) admits a left adjoint $\eta^*:\cD(\cX)\to\cD(\cY)$.
\end{Def}

\begin{Emp}
{\bf Remarks.}
(a) We will see later that $\infty$-stacks admitting gluing of sheaves in the sense of \rd{glue}, admit the gluing of sheaves in the sense of \cite{BBD}.

(b) In this subsection it is essential that we work with $\cD$ rather than $\cD_c$. In particular, analogs of \rl{openglue} and \rp{glue2} for
$\cD_c$ would be false.
\end{Emp}

\begin{Lem} \label{L:glue}
Let $\C{X}$ be an $\infty$-stack, admitting gluing of sheaves, let $j:\C{U}\hra\cX$ be an fp-open embedding with a complementary topologically fp-closed embedding $i:\cZ\rightarrow\cX$. Then

(a) there exists a left adjoint $j_!$ of $j^!:\C{D}(\C{X})\to \C{D}(\C{U})$;

(b) the functor $j_!$ is fully faithful, and $i^*j_!\simeq 0$;

(c) for every $K\in\cD(\C{X})$, the unit and counit maps extend to a fiber sequence \[j_{!}j^{!}K\rightarrow K\rightarrow i_!i^*K.\]
\end{Lem}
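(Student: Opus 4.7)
The plan is to construct $j_{!}$ explicitly from the data we already have, exploiting the gluing hypothesis which supplies the left adjoint $i^{*}$ of $i_{!}$. For $K\in\cD(\cU)$, I will set
\[
j_{!}K\;:=\;\Fib\!\bigl(j_{*}K\overset{\eta}{\longrightarrow} i_{!}i^{*}j_{*}K\bigr),
\]
where $\eta$ is the unit of the adjunction $(i^{*},i_{!})$; this is evidently functorial in $K$.

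To verify the identities in (b), I will feed the defining fiber sequence through $j^{!}$ and $i^{*}$. Applying $j^{!}$ and invoking $j^{!}j_{*}\simeq\Id$ together with $j^{!}i_{!}\simeq 0$ from \rl{glue1}(c) will give $j^{!}j_{!}K\simeq K$, from which the full faithfulness of $j_{!}$ will follow once (a) is established. Applying $i^{*}$, and noting that $i^{*}\eta_{j_{*}K}$ is an isomorphism by the triangle identity combined with $i^{*}i_{!}\simeq\Id$ (which holds because $i_{!}$ is fully faithful with left adjoint $i^{*}$), will yield $i^{*}j_{!}K\simeq 0$.

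The heart of (a) is the adjunction equivalence $\map(j_{!}K,L)\simeq\map(K,j^{!}L)$. My plan is to combine the defining fiber sequence for $j_{!}K$ with the already-known fiber sequence $i_{!}i^{!}L\to L\to j_{*}j^{!}L$ of \rl{glue1}(d). Applying $\map(j_{!}K,-)$ to the latter produces a fiber sequence of mapping spaces, whose two outer terms will be computed by feeding the defining sequence of $j_{!}K$ into $\map(-,i_{!}i^{!}L)$ and $\map(-,j_{*}j^{!}L)$ respectively. The essential ingredients are the adjunctions $(i_{!},i^{!})$ and $(i^{*},i_{!})$, the full faithfulness of $j_{*}$, and the vanishing $i^{!}j_{*}\simeq 0$, which I will derive by applying $i^{!}$ to the sequence of \rl{glue1}(d) and using $i^{!}i_{!}\simeq\Id$. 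After these substitutions the term $\map(j_{!}K,i_{!}i^{!}L)$ will collapse to zero via the same triangle-identity manipulation used for (b), while $\map(j_{!}K,j_{*}j^{!}L)$ will reduce to $\map(K,j^{!}L)$. This adjunction calculation is the main obstacle; every other step is formal.

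For part (c), I start from the fiber sequence $K'\to K\to i_{!}i^{*}K$ induced by the unit of $(i^{*},i_{!})$. The counit $j_{!}j^{!}K\to K$ of the adjunction established in (a), composed with $K\to i_{!}i^{*}K$, is null by the $(j_{!},j^{!})$ adjunction since the adjoint map $j^{!}K\to j^{!}i_{!}i^{*}K\simeq 0$ vanishes. This will yield a canonical factorization $j_{!}j^{!}K\to K'\to K$, and I will show this factor is an equivalence by applying $j^{!}$ and $i^{*}$ separately and reducing to the conservativity of the pair: any cone $C$ with $j^{!}C\simeq 0$ and $i^{*}C\simeq 0$ has $C\simeq i_{!}i^{!}C$ by \rl{glue1}(d), so $i^{!}C\simeq i^{*}i_{!}i^{!}C\simeq i^{*}C\simeq 0$, and hence $C\simeq 0$.
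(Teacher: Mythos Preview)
Your proof is correct. The paper takes a more abstract route: for (a) it invokes the adjoint functor theorem, showing that $j^{!}$ preserves limits because $j_{*}j^{!}$ does (as $j_{*}$ is fully faithful and limit-preserving), and $j_{*}j^{!}$ preserves limits because the fiber sequence of \rl{glue1}(d) identifies $j_{*}j^{!}[-1]$ with the fiber of $i_{!}i^{!}\to\Id$, while $i_{!}i^{!}$ preserves limits since $i_{!}=i_{*}$ has the left adjoint $i^{*}$ supplied by the gluing hypothesis. For (b), the paper argues by uniqueness of adjoints: $j^{!}j_{!}$ is left adjoint to $j^{!}j_{*}\simeq\Id$, hence $j^{!}j_{!}\simeq\Id$; and $i^{*}j_{!}$ is left adjoint to $j^{!}i_{!}\simeq 0$, hence $i^{*}j_{!}\simeq 0$. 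For (c), the paper simply says the sequence follows ``by adjointness'' from \rl{glue1}(d), i.e.\ via the identifications $\Map(L,i_{!}i^{!}K)\simeq\Map(i_{!}i^{*}L,K)$ and $\Map(L,j_{*}j^{!}K)\simeq\Map(j_{!}j^{!}L,K)$ together with Yoneda. Your explicit formula $j_{!}K=\Fib(j_{*}K\to i_{!}i^{*}j_{*}K)$ is a genuine bonus---it makes the functor concrete and shows directly how $i^{*}$ enters---while the paper's argument is shorter and isolates the single point where the gluing hypothesis is needed (ensuring $i_{!}$ has a left adjoint, so that $i_{!}i^{!}$ preserves limits).
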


\begin{proof}
All assertions are rather straightforward applications of \rl{glue1}.

(a) By the adjoint functor theorem \cite[Corollary 5.5.2.9]{Lu}, it suffices to show that functor $j^!$ preserves limits. Since functor $j_*$ is fully faithful (by \rl{glue1}(c)) and preserves limits (because it is a right adjoint), it suffices to show that the composition $j_*j^!$ preserves limits. Since $j_*j^![-1]$ is the fiber of the counit map $i_!i^!\to\Id$ (by  \rl{glue1}(d)), it suffices to show that the composition $i_!i^!$ preserves limits.
Since $i^!$ is the right adjoint of $i_!$, the assertion follows from our assumption that $i_!=i_*$ (see \re{remdecomp}(a)) admits a left adjoint $i^*$.


%
%

(b) Since $j^!j_!$ is the left adjoint of $j^! j_*$ and   $j^!j_*\simeq\Id$ (by \rl{glue1}(c)), we conclude that $j^!j_!\simeq\Id$, thus $j_!$ is fully faithful.
Similarly, since  $i^*j_!$ is the left adjoint of $j^!i_!\simeq 0$ (by \rl{glue1}(c)), we conclude that $i^*j_!\simeq 0$.

(c) follows by adjointness from the fiber sequence of \rl{glue1}(d).
\end{proof}

\begin{Cor} \label{C:glue}
Suppose that we are given a Cartesian diagram
\[
\begin{CD}
\wt{\cY} @>\wt{\eta}>>\wt{\cX}\\
@VgVV @VVfV\\
\cY @>\eta>>\cX,
\end{CD}
\]
where $\C{X}$ and $\wt{\cX}$ are $\infty$-stacks admitting gluing of sheaves, $\eta:\cY\to \cX$ is a topologically fp-locally closed embedding, while
functors $f^!$ and $g^!$ admit left adjoints $f_!$ and $g_!$, respectively. Then we have a canonical isomorphism
$\eta^*f_!\simeq g_!\wt{\eta}^*$.
\end{Cor}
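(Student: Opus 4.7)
The plan is to obtain the desired isomorphism by passing to left adjoints in the base change isomorphism of \rl{loccl2}. First I would verify that all four composites make sense: the functors $\eta^{*}$ and $\wt{\eta}^{*}$ exist because $\cX$ and $\wt{\cX}$ admit gluing of sheaves (so $\eta_{*}$ and $\wt{\eta}_{*}$ have left adjoints by definition), while $f_{!}$ and $g_{!}$ exist by assumption. Thus the compositions $\eta^{*}f_{!}$ and $g_{!}\wt{\eta}^{*}$ are well-defined continuous functors $\cD(\wt{\cX})\to\cD(\cY)$.

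Next, I would apply \rl{loccl2} to the Cartesian diagram in question to obtain the canonical isomorphism
\[
f^{!}\eta_{*}\simeq \wt{\eta}_{*}g^{!}
\]
of functors $\cD(\cY)\to\cD(\wt{\cX})$. Since adjoints compose in the opposite order, the functor $f^{!}\eta_{*}$ has left adjoint $\eta^{*}f_{!}$, and the functor $\wt{\eta}_{*}g^{!}$ has left adjoint $g_{!}\wt{\eta}^{*}$. By the uniqueness of left adjoints up to canonical isomorphism, the isomorphism $f^{!}\eta_{*}\simeq \wt{\eta}_{*}g^{!}$ yields a canonical isomorphism $\eta^{*}f_{!}\simeq g_{!}\wt{\eta}^{*}$.

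The one point that requires a small additional verification is that the isomorphism so obtained is indeed the natural base change morphism (in case one would want to identify it with the one built from unit/counit maps). This is formal: one checks that the mate of the isomorphism $f^{!}\eta_{*}\simeq \wt{\eta}_{*}g^{!}$ under the adjunctions $(\eta^{*},\eta_{*})$, $(f_{!},f^{!})$, $(\wt{\eta}^{*},\wt{\eta}_{*})$ and $(g_{!},g^{!})$ coincides with the morphism built from the counits $f_{!}f^{!}\to\Id$, $\wt{\eta}^{*}\wt{\eta}_{*}\to\Id$ and the units $\Id\to g^{!}g_{!}$, $\Id\to\eta_{*}\eta^{*}$ in the standard way, but this follows from the general yoga of mates. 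I do not expect any genuine obstacle here; the proof is a direct application of \rl{loccl2} together with uniqueness of adjoints.
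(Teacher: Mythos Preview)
Your proposal is correct and follows exactly the same approach as the paper: apply \rl{loccl2} to obtain $f^{!}\eta_{*}\simeq\wt{\eta}_{*}g^{!}$ and then pass to left adjoints. The paper's proof is a one-line version of yours, omitting the verification that the adjoints exist and the remark about mates.
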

\begin{proof}
Since $\eta^*f_!$ and $g_!\wt{\eta}^*$ are left adjoints of functors $f^!\eta_*$ and $\wt{\eta}_*g^!$, respectively, the assertion follows from
\rl{loccl2}.
\end{proof}

\begin{Lem} \label{L:openglue}
(a) Assume that $\cX$ admits gluing of sheaves, and let $\eta:\cY\hra\cX$ be a topologically fp-locally closed embedding.
Then $\cY$ admits gluing of sheaves as well.

(b) Assume that $\cX$ has a presentation as a filtered colimit $\cX\simeq\colim_{\al}\cX_{\al}$ such that each $\cX_{\al}$ admits gluing of sheaves and each transition map is an fp-open (resp. fp-closed) embedding. Then $\cX$ admits gluing of sheaves as well.
\end{Lem}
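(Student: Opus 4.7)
The key observation is that the composition $\eta\circ\mu:\cZ\hra\cX$ is again a topologically fp-locally closed embedding (by \re{stopeq}(b)), and by \rco{comp}, $(\eta\mu)_*\simeq\eta_*\circ\mu_*$. Since $\cX$ admits gluing, both $\eta_*$ and $(\eta\mu)_*$ have left adjoints $\eta^*$ and $(\eta\mu)^*$ respectively. The candidate left adjoint of $\mu_*$ is $\mu^*:=(\eta\mu)^*\circ\eta_*:\cD(\cY)\to\cD(\cZ)$. For $M\in\cD(\cY)$ and $N\in\cD(\cZ)$, I would verify
\[\Hom_{\cD(\cZ)}(\mu^*M,N)\simeq\Hom_{\cD(\cX)}(\eta_*M,\eta_*\mu_*N)\simeq\Hom_{\cD(\cY)}(M,\mu_*N),\]
where the first isomorphism uses the adjunction $(\eta\mu)^*\dashv(\eta\mu)_*\simeq\eta_*\mu_*$, and the second uses that $\eta_*$ is fully faithful (i.e., $\eta^!\eta_*\simeq\Id$ by \rl{locclemb}).

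\textbf{Part (b), setup.} Pull back to obtain topologically fp-locally closed embeddings $\mu_\al:\cZ_\al:=\cZ\times_\cX\cX_\al\hra\cX_\al$, noting pullback-stability of this class (\re{redP}(b)); so $\cZ\simeq\colim_\al\cZ_\al$ with transitions $\iota_\cZ$ of the same type (fp-open resp.\ fp-closed) as the $\cX$-side transitions $\iota$. By hypothesis each $\mu_{\al,*}$ has a left adjoint $\mu_\al^*$. Using part (a), every $\cZ_\beta$ inherits gluing from $\cX_\beta$, so the left adjoints $(\iota_\cZ)_!$ and $\iota_!$ of the $!$-pullbacks both exist—automatically in the fp-closed case (\rl{glue1}), and via \rl{glue}(a) in the fp-open case. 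Therefore \rt{lu1} gives $\cD(\cX)\simeq\colim_\al\cD(\cX_\al)$ and $\cD(\cZ)\simeq\colim_\al\cD(\cZ_\al)$ via $\iota_!$ resp.\ $(\iota_\cZ)_!$. The plan is to apply \rp{adjlu}(c) to extract $\mu^*=\colim_\al\mu_\al^*$ as a left adjoint of $\mu_*$. Two compatibilities are required: (i) $\iota_!\mu_{\al,*}\simeq\mu_{\beta,*}(\iota_\cZ)_!$ (identifying $(\mu_{\al,*})_\al$ as a morphism of colimit diagrams), and (ii) the Beck–Chevalley condition $\mu_\beta^*\iota_!\simeq(\iota_\cZ)_!\mu_\al^*$. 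Condition (ii) follows by taking left adjoints on both sides of \rl{loccl2} ($\iota^!\mu_{\beta,*}\simeq\mu_{\al,*}\iota_\cZ^!$). In the fp-closed case, (i) is immediate: $\iota_*=\iota_!$ by \re{exloccl}(a), and the equality $\iota\circ\mu_\al=\mu_\beta\circ\iota_\cZ$ together with \rco{comp} gives the desired isomorphism.

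\textbf{Main obstacle.} The fp-open case of (i) is the crux. I would first construct the natural comparison map $j_!\mu_{\al,*}\to\mu_{\beta,*}(j_\cZ)_!$ from \rl{loccl2} combined with the unit of $(j_\cZ)_!\dashv j_\cZ^!$ and the counit of $j_!\dashv j^!$. To show it is an isomorphism, use the recollement on $\cX_\beta$ (\rl{glue}) associated to $j$ and its topologically fp-closed complement $i:\cW\hra\cX_\beta$, whereby the joint family $(j^!,i^*)$ is conservative (\rl{glue}(c)). Applying $j^!$, both sides reduce to $\mu_{\al,*}$ via $j^!j_!\simeq\Id$, \rl{loccl2}, and $j_\cZ^!(j_\cZ)_!\simeq\Id$. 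Applying $i^*$ (which exists by the gluing hypothesis on $\cX_\beta$), the LHS vanishes by $i^*j_!\simeq 0$ (\rl{glue}(b)); for the RHS one must argue that $\mu_{\beta,*}(j_\cZ)_!L$ is supported on $\mu_\beta(\cZ_\al)\subset j(\cX_\al)$, which is disjoint from $i(\cW)$, so $i^*\mu_{\beta,*}(j_\cZ)_!L=0$—this support computation is the delicate step, and I expect to carry it out by factoring $\mu_\beta\circ j_\cZ = j\circ\mu_\al$ and exploiting that $j^!i^*\simeq 0$ together with the recollement triangle of \rl{glue}(c). Once (i) and (ii) are in hand, \rp{adjlu}(c) produces the left adjoint $\mu^*$ of $\mu_*$, completing the proof of (b).
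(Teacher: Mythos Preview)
Your proof of (a) is correct and essentially identical to the paper's.

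For (b), the compatibility (i) you need in the fp-open case, $j_!\mu_{\al,*}\simeq\mu_{\beta,*}(j_\cZ)_!$, is \emph{false} in general, so your ``delicate support computation'' cannot be carried out. Take $\cX_\beta=\B{A}^2$, $j:\cX_\al=\B{A}^2\sm\{0\}\hra\cX_\beta$, and let $\mu_\beta:\cZ_\beta=\B{A}^2\sm L\hra\cX_\beta$ be the complement of a line $L$ through $0$. Then $\cZ_\al=\cZ_\beta$ and $j_\cZ=\Id$, so the right-hand side of (i) is simply $\mu_{\beta,*}$. The fiber sequence $i_{L,!}i_L^!\bql\to\bql\to\mu_{\beta,*}\bql$ of \rl{glue1}(d) shows that $i_0^*(\mu_{\beta,*}\bql)\neq 0$, whereas $i_0^*(j_!\mu_{\al,*}\bql)=0$ since $i_0^*j_!\simeq 0$. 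The point is that $\mu_{\beta,*}$, being here a genuine $*$-pushforward along an open embedding, does not preserve supports; the factorization $\mu_\beta\circ j_\cZ=j\circ\mu_\al$ combined with \rco{comp} gives only $\mu_{\beta,*}(j_\cZ)_*\simeq j_*\mu_{\al,*}$, which says nothing about $(j_\cZ)_!$.

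The paper bypasses (i) entirely by invoking the adjoint functor theorem: it shows directly that $\eta_*$ preserves limits. Since $\cD(\cX)\simeq\lim_\al\cD(\cX_\al)$ and each evaluation $\nu_\al^!$ has a left adjoint (by \rt{lu1}, using that the transition $!$-pullbacks have left adjoints by \rl{glue}(a) resp.\ \rl{glue1}(b)), it suffices to show each $\nu_\al^!\eta_*$ preserves limits; by \rl{loccl2} this equals $(\eta_\al)_*\nu_\al^!$, and both factors have left adjoints---the first by gluing on $\cX_\al$, the second by the same \rt{lu1} argument applied to $\cY$ (using part (a) to know each $\cY_\al$ has gluing). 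If you prefer to salvage your strategy, apply \rp{adjlu}(b) instead of (c) in the fp-open case: viewing $\mu_*\simeq\lim_\al\mu_{\al,*}$ via \rl{loccl2}, the required Beck--Chevalley map $\mu_\al^*j^!\to(j_\cZ)^!\mu_\beta^*$ has mate $\mu_{\beta,*}(j_\cZ)_*\to j_*\mu_{\al,*}$, which is an isomorphism by \rco{comp}; your argument for the fp-closed case via \rp{adjlu}(c) is correct as written.
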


\begin{proof}
(a) Let $\nu:\cZ\hra\cY$ be a topologically fp-locally closed embedding. Then $\eta\circ\nu:\cZ\to\cX$ is a topologically fp-locally closed embedding as well, therefore the pushforward $(\eta\circ\nu)_*\simeq \eta_*\nu_*$ has a left adjoint $(\eta\circ\nu)^*$. Since $\eta_*$ is fully faithful, the composition $(\eta\circ\nu)^*\eta_*: \cD(\cY)\to\cD(\cX)\to\cD(\cZ)$ is the left adjoint of $\nu_*$. Indeed, for every $K\in \cD(\cY)$ and $L\in \cD(\cZ)$, we have a
natural isomorphism
\[
\Hom((\eta\circ\nu)^*\eta_*(K),L)\simeq \Hom(\eta_*(K),\eta_*\nu_*(L))\simeq  \Hom(K,\nu_*(L)).
\]

(b) Let $\eta:\cY\hra\cX$ be a topologically fp-locally closed embedding. By the adjoint functor theorem \cite[Corollary 5.5.2.9]{Lu}, it suffices to show that $\eta_*$ preserves limits. Notice that presentation  $\cX\simeq\colim_{\al}\cX_{\al}$ induces the presentation $\cY\simeq\colim_{\al}\cY_{\al}$, and the induced maps $\eta_{\al}:\cY_{\al}\hra\cX_{\al}$ are  topologically  fp-locally closed embeddings.

Since each transition map $\nu_{\al,\beta}:\cX_{\al}\hra\cX_{\beta}$ is an fp-open (resp. fp-closed) embedding and each
$X_{\beta}$ admits gluing of sheaves, we conclude from \rl{glue}(a) (resp. \rl{glue1}(b)) that  $\nu^!_{\al,\beta}$ admits a left adjoint.  Therefore it follows from \rt{lu1} that the pullback $\nu^!_{\al}:\cD(\cX)\to\cD(\cX_{\al})$ admits a left adjoint, thus preserves limits. Moreover, since each $Y_{\beta}$ admits gluing of sheaves (by (a)), we show by the same argument that each $\nu^!_{\al}:\cD(\cY)\to\cD(\cY_{\al})$ preserves limits as well.

Now the assertion is easy. Indeed, since $\cD(\cX)\simeq\lim_{\al}\cD(\cX_{\al})$, and each $\nu^!_{\al}:\cD(\cX)\to\cD(\cX_{\al})$ preserves limits,
it suffices to show that each composition $\nu_{\al}^!\eta_*: \cD(\cY)\to \cD(\cX)\to\cD(\cX_{\al})$ preserves limits.
By \rl{loccl2}, this composition can be rewritten as  $(\eta_{\al})_*\nu_{\al}^!: \cD(\cY)\to \cD(\cY_{\al})\to\cD(\cX_{\al})$, so it suffices to show that both functors $(\eta_{\al})_*$ and $\nu_{\al}^!$ preserve limits. The assertion for the pullback $\nu_{\al}^!: \cD(\cY)\to \cD(\cY_{\al})$ was mentioned above, while the assertion for $(\eta_{\al})_*$ follows from the fact that $(\eta_{\al})_*$ has a left adjoint, because $X_{\al}$ admits gluing of sheaves.
\end{proof}



Now we are going to provide two classes of $\infty$-stacks, admitting gluing of sheaves.

\begin{Lem} \label{L:glue3}
Every placid $\infty$-stack $\C{X}$ admits gluing of sheaves.
\end{Lem}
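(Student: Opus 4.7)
The strategy is a two-step reduction: first to $0$-placid affine schemes, then to affine schemes of finite type where the classical Beilinson--Bernstein--Deligne gluing formalism applies. The common mechanism at each step is \rp{adjlu}, which lets us glue left adjoints in a limit or filtered colimit of categories, provided the Beck--Chevalley condition holds.

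\textit{Step 1 (Finite type).} If $X$ is an affine scheme of finite type over $k$ and $\eta:Y\hra X$ is a locally closed embedding, the existence of a left adjoint $\eta^{*}$ of $\eta_{*}$ on $\cD_c(X)$ is the classical BBD gluing. Passing to $\cD(X)=\Ind\cD_c(X)$ yields the result on $\cD(X)$.

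\textit{Step 2 ($0$-placid affine schemes).} Let $X$ be a $0$-placid affine scheme with placid presentation $X\simeq\lim_\alpha X_\alpha$, and let $\eta:\cY\hra X$ be topologically fp-locally closed. By \rco{red-fp} applied to the class of fp-locally closed embeddings, the induced map $\cY_{\red}\hra X$ is fp-locally closed; by \rco{invtop}(a), $\cD(\cY)\simeq \cD(\cY_{\red})$ and $\eta_{*}\simeq(\eta_{\red})_{*}$, so we may assume $\eta:Y\hra X$ is fp-locally closed. By a standard limit argument, $\eta$ is the pullback of a locally closed embedding $\eta_\alpha:Y_\alpha\hra X_\alpha$ for some $\alpha$; for $\beta\geq\alpha$ the pullbacks $\eta_\beta:Y_\beta\hra X_\beta$ form a compatible system along the smooth transition maps. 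Using $\cD(X)\simeq\colim_\beta^{!}\cD(X_\beta)$ and $\cD(Y)\simeq\colim_\beta^{!}\cD(Y_\beta)$ (\re{remqcqs}(b)), we identify $\eta_{*}$ with $\colim_\beta(\eta_\beta)_{*}$. By Step 1 each $(\eta_\beta)_{*}$ has a left adjoint, and the required Beck--Chevalley iso for these adjoints along smooth pullbacks is the mate of the isomorphism $\pi^{!}\eta_{\beta',*}\simeq \eta_{\beta,*}\tilde\pi^{!}$ supplied by \rl{loccl2}; at the level of schemes of finite type this is direct smooth base change (\rp{base}(a,b), applied to the decomposition $\eta_\beta=i\circ j$ into a closed and an open part). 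Then \rp{adjlu}(c) produces the left adjoint $\eta^{*}$.

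\textit{Step 3 (Placid $\infty$-stacks).} For a placid $\infty$-stack $\cX$, use the canonical presentation $\cX\simeq\colim_{X\to\cX\in J}X$ from \re{propplst}(a), where $J$ is the $\infty$-category of smooth morphisms $X\to\cX$ from $0$-placid affine schemes with strongly pro-smooth transitions. By \re{shpr}(d), $\cD(\cX)\simeq\lim_{J^{\op}}\cD(X)$, and similarly $\cD(\cY)\simeq\lim_{J^{\op}}\cD(\cY\times_{\cX}X)$ for a topologically fp-locally closed embedding $\eta:\cY\hra\cX$. Each pullback $\eta_X:\cY\times_{\cX}X\hra X$ is topologically fp-locally closed in a $0$-placid affine scheme, so by Step 2 admits a left adjoint $\eta_{X}^{*}$. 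The Beck--Chevalley iso for the $\eta_{X}^{*}$'s along strongly pro-smooth transitions in $J$ is again the mate of \rl{loccl2}, established using \rp{base}(c,d) in the decomposition $\eta_{X,*}=i_{!}j_{*}$. Applying \rp{adjlu}(b) to the limit presentation of $\eta_{*}=\lim_{J^{\op}}\eta_{X,*}$ yields the required left adjoint $\eta^{*}$ of $\eta_{*}$.

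\textit{Main obstacle.} The essential technical point at both Step 2 and Step 3 is upgrading the Beck--Chevalley iso $\pi^{!}\eta_{*}\simeq \tilde\eta_{*}\tilde\pi^{!}$ for the right adjoints (\rl{loccl2}) to its mate $\eta^{*}\pi^{!}\simeq \tilde\pi^{!}\eta^{*}$ for the left adjoints. This mate is not automatic in general; it is handled here by combining the full faithfulness of the pushforward $\eta_{*}$ along topologically fp-locally closed embeddings (\rl{glue1}(c), via the decomposition $\eta_{*}=i_{!}j_{*}$) with the smooth base change formulas of \rp{base}(a--d), which together force the counit/unit triangles used to build the mate to be isomorphisms in the geometric situations at hand.
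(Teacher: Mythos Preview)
Your proof is correct, but it takes a different route from the paper's. The paper gives a much shorter argument: after reducing via $\cX_{\red}$ and \rco{red-fp} to an fp-locally closed embedding $\eta$ of placid $\infty$-stacks, it invokes \rl{fplocl} to decompose $\eta$ as $i\circ j$ with $j$ fp-open and $i$ fp-closed \emph{at the level of placid $\infty$-stacks}, and then observes that $j_*$ has left adjoint $j^!$ (by \rl{glue1}(a)) and $i_*=i_!$ has left adjoint $i^*$ (by \rp{glue1}(a), since $i$ is fp-proper). The point is that \rp{glue1} already packages the full limit--colimit descent and Beck--Chevalley verification you carry out by hand in Steps~2 and~3; the paper just cites it.

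Your approach is self-contained and reconstructs the content of \rp{glue1}(a) in the special case of closed embeddings, descending explicitly through presentations via \rp{adjlu}. This buys you independence from \rp{glue1} and \rl{fplocl}, at the cost of length. One minor comment on your ``Main obstacle'' paragraph: the mate $\tilde\eta^{*}\pi^{!}\to\tilde\pi^{!}\eta^{*}$ is indeed not automatically an isomorphism, but your resolution is cleaner than you make it sound---for the open part $j^{*}=j^{!}$ it is literally functoriality of $!$-pullback, and for the closed part it is exactly \rp{base}(d); the full faithfulness of $\eta_{*}$ is not really what drives it.
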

\begin{proof}
We have to show that for every topologically fp-locally closed embedding $\eta:\cY\hra\cX$, the pushforward $\eta_*:\cD(\cY)\to\cD(\cX)$ admits a left adjoint. Replacing $\eta$ by $\eta_{\red}:\cY_{\red}\hra\cX_{\red}$ and using \rco{invtop}, we reduce to the case when $\eta$ is an fp-locally closed embedding of placid $\infty$-stacks (by \rco{red-fp}). In this case, $\eta$ decomposes as a composition $\cY\overset{j}{\hra}\cZ\overset{i}{\hra}\cX$ of an fp-open and fp-closed embeddings (see \rl{fplocl}), thus it suffices to consider these two cases $\eta=j$ and $\eta=i$ separately. Since $j_*$ and $i_*=i_!$ admit left adjoints (by \rl{glue1}(a) and \rp{glue1}(a)), we are done.
\end{proof}


\begin{Prop} \label{P:glue2}
Let $H$ be an ind-placid group, that is, a group object in ind-placid ind-algebraic spaces, acting on an  ind-placid ind-algebraic space $X$. Then the quotient stack $\cX=[X/H]$ admits gluing of sheaves.
\end{Prop}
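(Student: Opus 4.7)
The strategy is to combine the fact that every ind-placid ind-algebraic space admits gluing of sheaves with a descent argument along the $H$-torsor $\pi:X\to [X/H]$.

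As a preliminary step, I would verify that every ind-placid ind-algebraic space $Z$ admits gluing of sheaves: choosing a presentation $Z\simeq\colim_\al Z_\al$ with $Z_\al$ placid algebraic spaces and fp-closed transition maps (which exists by \re{indsch}(c) together with \rl{glplacid}(b)), each $Z_\al$ admits gluing of sheaves by \rl{glue3}, and \rl{openglue}(b) then propagates this property to the filtered colimit $Z$.

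For $\cX=[X/H]$, let $\eta:\cY\hra\cX$ be topologically fp-locally closed, and set $Y:=\cY\times_{\cX}X$, so that the base change $\tilde\eta:Y\hra X$ is again topologically fp-locally closed. Since $\pi$ is an epimorphism in $\St_k$, the sheaf property \re{shpr}(e) gives natural equivalences
\[
\cD([X/H])\simeq\lim_{[n]\in\Delta}\cD(X\times H^n),\qquad \cD(\cY)\simeq\lim_{[n]\in\Delta}\cD(Y\times H^n),
\]
compatible with $\eta_*$ via the termwise pushforwards $\tilde\eta_{n,*}$ attached to the topologically fp-locally closed embeddings $\tilde\eta_n:Y\times H^n\hra X\times H^n$. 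Since ind-placid ind-algebraic spaces are stable under products, each $X\times H^n$ lies in this class, so by the preliminary step each $\tilde\eta_{n,*}$ admits a left adjoint $\tilde\eta_n^*$.

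I would then apply \rp{adjlu}(b) to the cosimplicial morphism $\Phi_\bullet:=\tilde\eta_{\bullet,*}$ to obtain the left adjoint of $\eta_*=\lim_{[n]}\Phi_n$. For every simplicial operator $\al:[m]\to[n]$ with associated Čech-nerve map $\al^*:X\times H^n\to X\times H^m$, the fact that $\Phi_\bullet$ is a morphism of cosimplicial $\infty$-categories amounts to the equivalence
\[
(\al^*)^!\circ \tilde\eta_{m,*}\simeq \tilde\eta_{n,*}\circ (\al^*|_\cY)^!,
\]
which holds in full generality by \rl{loccl2}. The Beck-Chevalley morphism
$\tilde\eta_n^*\circ (\al^*)^!\to (\al^*|_\cY)^!\circ \tilde\eta_m^*$
required by \rp{adjlu}(b) is precisely the mate of this equivalence under the vertical adjunctions $\tilde\eta_n^*\dashv\tilde\eta_{n,*}$, and is therefore itself an equivalence since the mate of an equivalence is an equivalence.

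The subtle point — and where the plan differs from a naive geometric attempt — is recognizing that Beck-Chevalley is automatic here. The face maps $X\times H^n\to X\times H^{n-1}$ of the Čech nerve of an ind-placid group action are in general not fp-representable (the ``forget a factor of $H$'' face is not fp, since $H$ is not fp over $\pt$), so \rco{glue} cannot be invoked to produce the Beck-Chevalley directly; the naive reduction to the geometric behaviour of $!$-pullback along these face maps is in fact hopeless. The mate correspondence circumvents this by converting the always-valid base change \rl{loccl2} (which involves only the right adjoints $\tilde\eta_{n,*}$) into the required Beck-Chevalley for the left adjoints $\tilde\eta_n^*$, bypassing any direct analysis of the face maps. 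Having this, \rp{adjlu}(b) yields the desired left adjoint $\eta^*=\lim_{[n]}\tilde\eta_n^*$, completing the proof that $[X/H]$ admits gluing of sheaves.
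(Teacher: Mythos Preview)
Your overall plan is the same as the paper's: establish the ind-placid case first, then use the \v{C}ech presentation and \rp{adjlu}(b). The gap is in your verification of the Beck--Chevalley condition. The claim ``the mate of an equivalence is an equivalence'' is \emph{not} true in general; in fact, asking whether the mate of the canonical isomorphism $(\al^*)^!\circ\tilde\eta_{m,*}\simeq\tilde\eta_{n,*}\circ(\al^*_Y)^!$ is an equivalence is \emph{precisely} the Beck--Chevalley condition you are trying to verify. The mate correspondence is only a bijection of natural transformations, not of isomorphisms; it preserves isomorphisms when, for instance, the horizontal functors also participate in adjunctions, but---as you yourself note---the face maps $(\al^*)^!$ have no reason to admit left adjoints here. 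So the argument as written is circular.

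The paper does not attempt to make Beck--Chevalley automatic. Instead it observes that every \v{C}ech structure map $H^m\times X\to H^n\times X$ factors as a composition of isomorphisms and projections $\pr_Z:X'\times Z\to X'$ (the action map $(h,x)\mapsto h(x)$ being rewritten as the isomorphism $(h,x)\mapsto(h,h(x))$ followed by $\pr_H$, and similarly for multiplication). Beck--Chevalley for isomorphisms is trivial, so one is reduced to proving it for projections, which the paper does as a separate lemma (\rl{bcproj}) using the K\"unneth formula $\pr_Z^!(K)\simeq K\boxtimes\omega_Z$: since $(\eta\times\id)^*(K\boxtimes\omega_Z)\simeq\eta^*(K)\boxtimes\omega_Z$, the required compatibility follows. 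This is the step you are missing.
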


\begin{proof}
Assume that $H=1$, that is, $\C{X}$ is an ind-placid ind-algebraic space. In this case, the assertion follows from a combination of
\rl{glue3} and \rl{openglue}(b).

Now let $H$ be general, and let $\eta:\cY\hra\cX$ be a topologically fp-locally closed embedding. We set $Y:=\C{Y}\times_{\cX} X$. Then $Y\hra X$ is a topologically  fp-locally closed embedding, and we have a natural isomorphism $\C{Y}\simeq [Y/H]$.

Using the \v{C}ech complexes, corresponding to the projections $X\to\C{X}$ and $Y\to\C{Y}$, we get equivalences $\C{D}(\C{X})\simeq \lim_{[m]} \cD(H^m\times X)$ (see \re{shpr}(e)), and $\C{D}(\C{Y})\simeq \lim_{[m]} \cD(H^m\times Y)$, respectively.

By the case of  ind-placid ind-algebraic spaces, shown above, there exists a left adjoint  $\eta^*$ of $\eta_*:\C{D}(H^m\times Y)\to \C{D}(H^m\times X)$. Thus, in order to apply \rp{adjlu}(b) and to finish the proof, we have to show that the pullbacks $\eta^*$ satisfy the base change with respect to pullbacks $\nu^!:\C{D}(H^n\times X)\to \C{D}(H^m\times X)$.

Notice that every morphism $H^m\times X\to H^n\times X$ decomposes as a composition of the action morphisms $H\times X\to X:(h,x)\mapsto h(x)$, multiplications morphisms $H\times H\to H$ and projections. Since the action morphism $H\times X\to X$ decomposes as a composition of the isomorphism
\[
H\times X\isom H\times X:(h,x)\mapsto (h,h(x))
\]
and the projection, it suffices to show that the $\eta^*$'s satisfy the base change with respect to pullbacks, corresponding to projections. Thus the assertion follows from \rl{bcproj} below.
\end{proof}

\begin{Lem} \label{L:bcproj}
Consider the Cartesian diagram
\[
\begin{CD}
Y\times Z @>\eta\times\Id>> X\times Z \\
@V\pr_Z VV   @VV\pr_Z V \\
Y @>\eta>> X
\end{CD}
\]
where $X$ and $Z$ are ind-placid ind-algebraic spaces, and $\eta:Y\hra X$ is a topologically  fp-locally closed embedding. Then the base change morphism $(\eta\times\Id)^*\pr_Z^!\to\pr_Z^!\eta^*$ is an isomorphism.
\end{Lem}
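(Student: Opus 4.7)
The plan is to reduce to the elementary cases of fp-open and fp-closed embeddings via the canonical open/closed decomposition, handle the open case by inspection, and reduce the closed case to an instance of \rp{base}(c) applied to a transposed Cartesian square.

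First, \rco{invtop} renders the claim insensitive to reducing $\eta$ to $\eta_{\red}$, which by \rco{red-fp} may be assumed to be an fp-locally closed embedding. \rl{fplocl} then factors $\eta$ as $Y\overset{j}{\hra}X'\overset{i}{\hra}X$ with $j$ fp-open and $i$ fp-closed, and by \re{exloccl}(b) together with \rco{comp} one has $\eta^*=j^*\circ i^*$ (and similarly for $\eta\times\Id$); hence it suffices to treat the cases $\eta=j$ and $\eta=i$ separately. In the open case, both $j^!$ and $j^*$ are left adjoints of $j_*$ (the former by \rl{glue1}(a), the latter by definition of gluing), hence canonically isomorphic, and the base-change map reduces to the tautological iso between compositions of $!$-pullbacks.

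In the closed case $\eta=i$, let $j:U\hra X$ denote the complementary fp-open embedding. Applying the exact functor $\pr_Z^!$ to the fiber sequence $j_!j^!L\to L\to i_!i^*L$ of \rl{glue}(c) and comparing with the analogous sequence for $\pr_Z^!L$ in $\cD(X\times Z)$, then invoking \rl{loccl2} for $i_!=i_*$ together with the full faithfulness of $(i\times\Id)_!$, reduces the desired isomorphism to the base-change statement
\[
\pr_Z^!j_!M\simeq (j\times\Id)_!\pr_Z^!M,\qquad M\in\cD(U).
\]

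This last statement is the substantive core and I expect it to be the main obstacle. When $X$ admits a placid presentation and $Z$ is placid, it follows from \rp{base}(c) applied to the transposed Cartesian square, whose horizontal morphism $\pr_Z$ is strongly pro-smooth (as a base change of the strongly pro-smooth structure morphism $Z\to\pt$) and whose vertical morphism $j$ is fp-open. To reach the general case, present $Z\simeq\colim_\alpha Z_\alpha$ with $Z_\alpha$ placid and fp-closed transitions, together with a parallel presentation of $X$ (which exists in the intended application because $X$ is ind-placid, cf.\ the proof of \rp{glue2}), and apply \rp{adjlu} to commute the left adjoints $j_!$ and $(j\times\Id)_!$ past the limits and colimits of $\infty$-categories associated with these presentations; the delicate point at each stage is verifying the Beck--Chevalley conditions of \re{ass}.
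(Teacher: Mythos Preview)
Your reduction to the separate open and closed cases is sound, and the open case is indeed trivial. The fiber-sequence maneuver reducing the closed case to the base change $\pr_Z^!j_!M\simeq(j\times\Id)_!\pr_Z^!M$ for the complementary open $j$ is also correct in outline (one must check that the induced isomorphism is the base-change map, but this is a routine mates argument).

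The gap is in your justification of that last base change. You assert that $\pr_Z:X\times Z\to X$ is strongly pro-smooth because $Z\to\pt$ is; but a placid $Z$ need not be strongly pro-smooth (the $Z_\alpha$ in a placid presentation need not be smooth over $k$), so \rp{base}(c) does not apply. Worse, the base change $f^!j_!\simeq\wt{j}_!\wt{f}^!$ genuinely fails for arbitrary $f$: already for $j:\gm\hra\bA^1$ and $f=i:\{0\}\hra\bA^1$ one has $\wt{j}_!\wt{f}^!=0$ while $i^!j_!\bql_{\gm}\neq 0$. So the statement you need is special to projections, not a consequence of any general smooth-base-change principle available here.

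The paper's proof supplies exactly the missing ingredient: in the finite-type case one has the K\"unneth formula $\pr_Z^!K\simeq K\boxtimes\om_Z$, from which $(\eta\times\Id)^*\pr_Z^!K\simeq\eta^*K\boxtimes\om_Z\simeq\pr_Z^!\eta^*K$ follows at once, with no open/closed decomposition needed. The passage to placid and then ind-placid $X$ and $Z$ is then handled by the colimit description of $\cD_c$ and by \rco{lurie} together with \rco{glue}, much as you sketch. Your route would work if you replaced the appeal to \rp{base}(c) by this K\"unneth argument, but at that point the detour through the open/closed factorization buys nothing over the paper's direct approach.
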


\begin{proof}
Assume first that $X$ and $Z$ are placid algebraic spaces. Arguing as in \rl{glue3}, we can assume that $\eta$ is either an fp-open or an fp-closed
embedding of placid algebraic spaces. Hence there exist smooth coverings $X'\to X$ and $Z'\to Z$ by disjoint union of affine schemes admitting placid presentations. Since $\eta^*$ commutes with smooth $!$-pullbacks (use \rp{glue1}(b) when $\eta$ is an fp-closed embedding), we reduce to the case when $X$ and $Z$ are affine schemes admitting placid presentations. In this case, the assertion for $\cD=\Ind\cD_c$ follows from that for $\cD_c$.

When $X$ and $Z$ are of finite type over $k$, the assertion for $\cD_c$ is well known. Namely, it follows from the K\"unneth formula $\pr_Z^!(K)\simeq K\boxtimes \om_Z$ (see \cite[Expos\'e III, Proposition 1.7.4]{SGA5}).
In the general case, choose a placid presentation $X\simeq\lim_{\al}X_{\al}$. Then that every object $K\in\cD_c(X)$ comes from some
$K_{\al}\in\cD_c(X_{\al})$, so the assertion follows from the finite type case.

Next we assume that $X$ is a placid algebraic space, but $Z$ is an ind-placid ind-algebraic space. Choose presentation $Z\simeq\colim_{\al}Z_{\al}$, and let $i_{\al}:Z_{\al}\to Z$ be an inclusion. Then we have a natural isomorphism
$\pr_Z^!\simeq \colim_{\al}i_{\al,!}i_{\al}^!\pr_Z^!\simeq\colim_{\al}i_{\al,!}\pr_{Z_{\al}}^!$ (by \rco{lurie}).
Since $\eta^*$ commutes with colimits and the $i_{\al,!}$'s (by \rco{glue}), the assertion for $X$ and $Z$ follows from the corresponding assertion for $X$ and $Z_{\al}$, shown earlier. Finally, the extension to the case when $X$ is an ind-placid ind-algebraic space is similar.
\end{proof}

\subsection{Endomorphisms of $\om_{\cX}$}

\begin{Emp} \label{E:prosets}
{\bf Pro-sets.} (a) The natural fully faithful embedding $\Sets\hra\frak{S}$ of the category of sets into an $\infty$-category of spaces preserves all small (homotopy) limits and filtered colimits. In particular, for every (ordinary) category $\cA$ it induces a
fully faithful embedding $\Fun(\cA,\Sets)\to\Fun(\cA,\frak{S})$, also preserving  small (homotopy) limits and filtered colimits.

(b) Notice that the Yoneda embedding $X\mapsto\Hom(X,-)$ defines a fully faithful embedding
$j:\Pro(\Sets)\to \Fun(\Sets,\Sets)^{\op}$ preserving all small colimits and filtered limits. Moreover, its essential
image is the subcategory $\Fun_{\flim}(\Sets,\Sets)^{\op}$ of functors preserving finite limits (compare \cite[Corollary 5.3.5.4]{Lu}).

(c) For every $X\in\Pro(\Sets)$ and $A\in\Sets$, we set $A^X:=j(X)(A)\in\Sets$. Explicitly,
$A^{X}$ is the set of continuous maps $\Hom_{\cont}(X,A)$, where $A$ is equipped with discrete topology.

(d) For every $X\in\Pro(\Sets)$, and a commutative ring $A$, the set
$A^X\simeq\Hom_{\cont}(X,A)$ has a natural structure of an $A$-algebra. Moreover, for every $A$-module $M$, we have a natural morphism of $A^X$-modules
\begin{equation} \label{Eq:axmodules}
A^X\otimes_A M\to M^X.
\end{equation}

(e) The morphism \form{axmodules} is an isomorphism, if $X$ is pro-finite. Indeed, this is clear, when $X$ is finite, so the assertion follows from the fact that $M^{\lim_{\al}X_{\al}}\simeq\colim_{\al}M^{X_{\al}}$ and similarly for $A$.
\end{Emp}

\begin{Emp} \label{E:pi0}
{\bf Functor of connected components.}
(a) Consider functor $\iota:\Sets\to\St_k$, which sends $A$ to a coproduct $\colim_A\pt$ of $A$ copies of $\pt$.
Since $\St_k$ has all (small) limits, functor $\iota$ uniquely extends to a functor $\wh{\iota}:\Pro(\Sets)\to \St_k$, which preserves
all filtered limits. Moreover, since $\iota$ preserves finite limits, $\wh{\iota}$ preserves all limits.

(b) By (a), functor $\wh{\iota}$ has a left adjoint $\un{\pi}_0:\St_k\to\Pro(\Sets)$.\label{N:pi0} Explicitly, $\un{\pi}_0$ is the functor
\[
\St_k\to\Fun_{\flim}(\Sets,\Sets)^{\op}\simeq\Pro(\Sets)
\]
(see \re{prosets}(b)), defined by the rule $\un{\pi}_0(\cX)(A)=\Hom_{\St_k}(\cX,\iota(A))$ for every $A\in\Sets$.

(c) Notice that there is a natural functor $\pi_0:\Aff_k\to\Pro(\fSets)\subset\Pro(\Sets)$, which associates to $X$ the pro-finite set of its connected components. Namely, $\pi_0$ is the right Kan extension of its restriction to $\Affft_k$. It is easy to see that the restriction of
$\un{\pi}_0$ to $\Aff_k$ is naturally isomorphic to $\pi_0$.

(d) By definition and the Yoneda lemma, functor $\un{\pi}_0$ is equivalent to the left Kan extension of its restriction to $\Aff_k$. Thus, by (c), $\un{\pi}_0$ can be defined as the left Kan extension of the functor $\pi_0:\Aff_k\to\Pro(\Sets)$ of connected components.

(e) Since $\un{\pi}_0$ is the left adjoint, for every epimorphism $f:\cX\to\cY$ of $\infty$-stacks, the induced map $\un{\pi}_0(f):\un{\pi}_0(\cX)\to \un{\pi}_0(\cY)$ is an epimorphism of pro-sets.

(f) We say that $\cX$ is {\em connected},\label{I:connected} if $\un{\pi}_0(\cX)=\pt\in\Sets\subset\Pro(\Sets)$. It thus follows from (f) that if $f:\cX\to\cY$ is an epimorphism of $\infty$-stacks and $\cX$ is connected, then $\cY$ is connected.
\end{Emp}

\begin{Lem} \label{L:endomor}
(a) For every $\cX\in\St_k$, the endomorphism algebra $\End_{\cD(\cX)}(\om_{\cX})$ is a discrete $\qlbar$-algebra, canonically isomorphic to $\qlbar^{{\un{\pi}}_0(\cX)}$.

(b) Moreover, if $\un{\pi}_0(\cX)\in\Pro(\Sets)$ is pro-finite, then for every $\qlbar$-vector space $V$, we have a canonical isomorphism of  $\End_{\cD(\cX)}(\om_{\cX})$-modules
\begin{equation} \label{Eq:endomor}
\End_{\cD(\cX)}(\om_{\cX})\otimes_{\qlbar}V\simeq \Hom_{\cD(\cX)}(\om_{\cX},V\otimes_{\qlbar}\om_{\cX}).
\end{equation}
\end{Lem}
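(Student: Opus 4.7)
The strategy for both parts is to reduce from an arbitrary $\infty$-stack $\cX$ to a connected affine scheme of finite type over $k$, where the claim becomes a classical Verdier duality computation. The reduction proceeds in three stages, matched at each step by the corresponding reduction for the pro-set $\un{\pi}_0(\cX)$.

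First I would reduce (a) to the case of affine schemes. Since $\cX\simeq\colim_{X\to\cX} X$ in $\St_k$ and $\cD$ takes such colimits of $\infty$-stacks to limits of $\infty$-categories (see \re{shpr}(d)), we obtain $\End_{\cD(\cX)}(\om_{\cX}) \simeq \lim_{X\to\cX}\End_{\cD(X)}(\om_{X})$. On the other hand, $\un{\pi}_0$ is a left adjoint (\re{pi0}(b)), so it preserves colimits and yields $\un{\pi}_0(\cX)\simeq\colim_{X\to\cX}\pi_{0}(X)$ in $\Pro(\Sets)$; since $\qlbar^{(-)}=\Hom_{\cont}(-,\qlbar)$ sends colimits of pro-sets to limits of $\qlbar$-algebras by its very definition (\re{prosets}(c)), this produces $\qlbar^{\un{\pi}_0(\cX)}\simeq\lim_{X\to\cX}\qlbar^{\pi_0(X)}$. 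It therefore suffices to treat the case of an affine scheme $X$, for which $\pi_0(X)$ is automatically pro-finite. For such an $X$ presented as a cofiltered limit $X\simeq\lim_{\al}X_{\al}$ of affines of finite type, \re{remqcqs}(b) gives $\cD(X)\simeq\colim^{!}_{\al}\cD(X_{\al})$; since $\om_{X}$ is the $!$-pullback of each $\om_{X_{\al}}$, the formula for Hom-spaces in filtered colimits of $\infty$-categories (\re{filt}) produces $\End_{\cD(X)}(\om_{X})\simeq\colim_{\al}\End_{\cD(X_{\al})}(\om_{X_{\al}})$, matching the identity $\qlbar^{\pi_0(X)}\simeq\colim_{\al}\qlbar^{\pi_0(X_{\al})}$. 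So we are reduced to $X$ of finite type, and then, by decomposing $X$ into its finitely many connected components and using that objects supported on disjoint closed components admit no nonzero morphisms, to $X$ connected of finite type. Here Verdier duality provides a contravariant autoequivalence $\bD_{X}$ of $\cD_c(X)$ with $\bD_{X}(\qlbar_{X})=\om_{X}$, giving $\End_{\cD(X)}(\om_{X})\simeq\End_{\cD(X)}(\qlbar_{X})=H^{0}(X,\qlbar)=\qlbar$; following the chain of canonical identifications one verifies that the resulting isomorphism is one of algebras, proving (a).

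For (b), when $V$ is finite-dimensional the statement reduces to (a) via the trivial identification $V\otimes\om_{\cX}\simeq\om_{\cX}^{\oplus\dim V}$. For general $V$, I would write $V$ as a filtered colimit of its finite-dimensional subspaces $V'$, so that $V\otimes\om_{\cX}\simeq\colim_{V'}V'\otimes\om_{\cX}$, and repeat the three-stage reduction of (a). The new input is that for $X$ of finite type, $\om_{X}\in\cD_{c}(X)$ is compact in $\cD(X)=\Ind(\cD_{c}(X))$, which lets $\Hom(\om_{X},-)$ commute with the colimit over $V'$ and gives $\Hom_{\cD(X)}(\om_{X},V\otimes\om_{X})\simeq V\otimes\qlbar^{\pi_{0}(X)}\simeq V^{\pi_{0}(X)}$; the last isomorphism uses \re{prosets}(e) applied to the finite set $\pi_{0}(X)$. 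Passing to the colimit over $\al$ and then the limit over $X\to\cX$ yields $\Hom_{\cD(\cX)}(\om_{\cX},V\otimes\om_{\cX})\simeq V^{\un{\pi}_0(\cX)}$. The pro-finiteness hypothesis on $\un{\pi}_0(\cX)$ is used exactly once more, again via \re{prosets}(e), to identify the algebra-side $\End(\om_{\cX})\otimes_{\qlbar}V\simeq\qlbar^{\un{\pi}_0(\cX)}\otimes_{\qlbar}V$ with $V^{\un{\pi}_0(\cX)}$, matching the two sides of \form{endomor}.

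The main obstacle is not any single step but keeping the variances straight: $\un{\pi}_{0}$ preserves colimits, $\qlbar^{(-)}$ converts them to limits, $\cD$ on $\infty$-stacks sends colimits of stacks to limits of categories, and $\cD$ on affine schemes sends cofiltered limits of finite-type schemes to filtered $!$-colimits of their sheaf categories. The only non-formal inputs are Verdier duality at the bottom of the reduction for (a) and the compactness of $\om_{X}\in\cD_{c}(X)$ for $X$ of finite type, which enters in (b) to accommodate infinite-dimensional $V$.
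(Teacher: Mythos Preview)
Your proof is correct and follows essentially the same approach as the paper's: Verdier duality at the finite-type level, the colimit description of $\cD_c$ for general affine schemes, and the limit description of $\cD$ for arbitrary $\infty$-stacks, matched at each stage by the corresponding behaviour of $\un{\pi}_0$ and $\qlbar^{(-)}$. The paper presents the argument bottom-up (finite type $\to$ affine $\to$ stacks) while you go top-down, but the content is the same.

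Two minor remarks. First, in (a) you should say explicitly why $\End_{\cD(X)}(\qlbar_X)$ is \emph{discrete} for $X$ connected of finite type: the mapping space has $\pi_i=\Ext^{-i}(\qlbar,\qlbar)=H^{-i}(X,\qlbar)$, which vanishes for $i>0$; this is the paper's ``no negative self-exts'' remark, and without it your equality $\End(\qlbar_X)=H^0(X,\qlbar)$ is only an identification of $\pi_0$'s. Second, in (b) your detour through finite-dimensional $V$ and then a colimit over finite-dimensional subspaces is unnecessary: since $\cD(X)\simeq\Ind\cD_c(X)$ for \emph{every} affine scheme $X$ (see \re{remqcqs}(c)), the object $\om_X\in\cD_c(X)$ is already compact in $\cD(X)$, so one gets $\Hom_{\cD(X)}(\om_X,V\otimes\om_X)\simeq V\otimes\End(\om_X)$ directly for arbitrary $V$ and arbitrary affine $X$, skipping the passage through the colimit over $\al$. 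This is how the paper proceeds, and it shortens the argument slightly.
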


\begin{proof}
(a) Note first that if $X\in\Affft_k$, then we have a canonical isomorphism
\[
\Hom_{\cD(X)}(\om_{X},\om_{X})\simeq \Hom_{\cD(X)}(\qlbar,\qlbar)\simeq\qlbar^{\pi_0(X)},
\]
where the first isomorphism follows from the Verdier duality, and the second one from the fact that the constant sheaf $\qlbar$ has no negative self-exts.

Next, for $X\in\Aff_k$, we choose a presentation $X\simeq\lim_{\al}X_{\al}$ as a filtered limit, where $X_{\al}\in\Affft_k$ for all $\al$.
Then $\cD_c(X)\simeq\colim_{\al}\cD_c(X_{\al})$, hence $\End_{\cD(X)}(\om_X)\simeq\colim_{\al}\End_{\cD(X_{\al})}(\om_{X_{\al}})$ by \cite{Ro}. Thus $\End_{\cD(\cX)}(\om_{\cX})$ is a discrete $\qlbar$-algebra, being a filtered colimit of discrete spaces, which by the proven above is isomorphic to $\colim_{\al}\qlbar^{\pi_0(X_{\al})}=\qlbar^{\pi_0(X)}$.

For an arbitrary $\infty$-stack $\cX$, the identification $\cX\simeq\colim_{X\to\cX}X$, gives us an identification
$\cD(\cX)\simeq\lim_{X\to\cX}\cD(X)$, under which $\om_{\cX}$ corresponds to the compatible system of the $\om_X$'s. Thus we have an identification
$\End_{\cD(\cX)}(\om_{\cX})\simeq\lim_{X\to\cX}\End_{\cD(X)}(\om_X)$, hence $\End_{\cD(\cX)}(\om_{\cX})$ is a discrete algebra isomorphic to
$\lim_{X\to\cX}\qlbar^{\pi_0(X)}\simeq\qlbar^{\un{\pi}_0(\cX)}$.

(b) Notice first that the isomorphism \form{endomor} for $X\in\Aff_k$ follows from the fact that $\om_{X}\in\cD_c(X)$ is compact in $\cD(X)$.
Next, it follows from a combination of (a) and \re{prosets}(e) that the LHS of \form{endomor} is isomorphic to
$V^{\un{\pi}_0(\cX)}$. Finally, combining last observation and isomorphism \form{endomor} for $X\in\Aff_k$, we see that the RHS of \form{endomor} is isomorphic to
\[
\lim_{X\to\cX}\Hom_{\cD(X)}(\om_{X},V\otimes_{\qlbar}\om_{X})\simeq\lim_{X\to\cX}\End_{\cD(X)}(\om_{X})\otimes_{\qlbar}V
\simeq\lim_{X\to\cX}V^{\un{\pi}_0(X)}\simeq V^{\un{\pi}_0(\cX)}
\]
as well.
\end{proof}



\begin{Emp} \label{E:quotdiscr}
{\bf Quotient by a discrete group.} (a) Let $\Gm$ be a discrete group acting on an $\infty$-stack $\cX$, let $\cY:=[\cX/\Gm]$ be the quotient $\infty$-stack, and let $f:\cX\to\cY$ be the projection.

(b) Since the trivial $\Gm$-torsor $\Gm\times \cX\to \cX$ is clearly ind-fp-proper, we conclude from \re{pspr} that
$f$ is ind-fp-proper.


(c) By remark (b) and \rp{locindpr}, the pullback $f^!:\cD(\cY)\to\cD(\cX)$ admits a left adjoint $f_!:\cD(\cX)\to\cD(\cY)$, satisfying the base change.
\end{Emp}

\begin{Lem}\label{L:end}
In the situation of \re{quotdiscr}, assume that $\un{\pi}_0(\cX)$ is pro-finite. Then we have a natural isomorphism of $\qlbar$-algebras
$\End(f_{!}(\omega_{\cX}))\simeq\bql[\Gm]\otimes_{\qlbar}\qlbar^{\un{\pi}_0(\cX)}$.

\end{Lem}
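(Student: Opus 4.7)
The plan is to reduce the computation to a base-change calculation and then apply \rl{endomor}(b). First, since $f:\cX\to\cY=[\cX/\Gm]$ is a $\Gm$-torsor in the sense of \re{tors}, the square
\begin{equation*}
\begin{CD}
\Gm\times\cX @>a>> \cX\\
@VpVV @VfVV\\
\cX @>f>> \cY
\end{CD}
\end{equation*}
(with $a(g,x)=gx$ and $p$ the second projection) is Cartesian. By the base-change part of \rp{locindpr} applied to $f$ (which is locally ind-fp-proper by \re{quotdiscr}(b)), we obtain a canonical isomorphism $f^!f_!\simeq p_!a^!$. Taking $K=\omega_\cX$ and using adjunction together with the fact that $a$ is étale (hence $a^!\omega_\cX\simeq\omega_{\Gm\times\cX}$), we get
\[
\End_{\cD(\cY)}(f_!\omega_\cX)\simeq\Hom_{\cD(\cX)}(\omega_\cX,\,p_!\,\omega_{\Gm\times\cX}).
\]

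Next, I would identify $p_!\omega_{\Gm\times\cX}$ with $\qlbar[\Gm]\otimes_{\qlbar}\omega_\cX$, viewed as a coproduct of copies of $\omega_\cX$ indexed by $\Gm$. Indeed, decomposing $\Gm\times\cX\simeq\sqcup_{g\in\Gm}\cX_g$, each inclusion $i_g:\cX_g\hra\Gm\times\cX$ is an fp-open and ind-fp-closed embedding (ind-fp-proper), so $\omega_{\Gm\times\cX}\simeq\bigoplus_{g}(i_g)_!\omega_{\cX_g}$ in $\cD(\Gm\times\cX)$; applying $p_!$ and using $p\circ i_g=\id_\cX$ yields the claim. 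Combining these two steps,
\[
\End_{\cD(\cY)}(f_!\omega_\cX)\simeq\Hom_{\cD(\cX)}\bigl(\omega_\cX,\,\qlbar[\Gm]\otimes_{\qlbar}\omega_\cX\bigr).
\]

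Applying \rl{endomor}(b) (this is the step that uses the pro-finiteness of $\un{\pi}_0(\cX)$, which is why the hypothesis is needed) with $V=\qlbar[\Gm]$, the right-hand side becomes
\[
\End_{\cD(\cX)}(\omega_\cX)\otimes_{\qlbar}\qlbar[\Gm]\simeq\qlbar^{\un{\pi}_0(\cX)}\otimes_{\qlbar}\qlbar[\Gm],
\]
where in the second step I invoke \rl{endomor}(a). This produces the asserted identification as $\qlbar$-vector spaces.

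The main obstacle — and the only nontrivial point beyond the formal manipulations — is to verify that this identification is one of $\qlbar$-algebras, with the $\qlbar[\Gm]$-factor carrying its group-algebra structure. For this I would trace the ring structure explicitly: the element of $\End(f_!\omega_\cX)$ corresponding to $g\otimes 1$ is the automorphism induced by the action of $g\in\Gm$ on $\cX$ (via the identification of the summand indexed by $g$ in $p_!\omega_{\Gm\times\cX}$ with $\omega_\cX$), while $1\otimes\phi$ for $\phi\in\qlbar^{\un{\pi}_0(\cX)}$ corresponds to scalar multiplication on $\omega_\cX$ along the summand $g=e$. Composition of two translation endomorphisms reproduces multiplication in $\Gm$ (since $p_!$ of the composite Cartesian square for $(g,h)\mapsto gh$ amounts to convolution on $\qlbar[\Gm]$), and these translations commute with the endomorphisms of $\omega_\cX$ coming from $\qlbar^{\un{\pi}_0(\cX)}$ because the latter are built from locally constant scalars, which are invariant under any isomorphism $\cX\to\cX$. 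This yields the tensor-product algebra structure, completing the proof.
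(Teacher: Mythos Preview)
Your proof is essentially the paper's: the Cartesian square coming from the $\Gm$-torsor, base change via \rp{locindpr}, the identification $p_!\omega_{\Gm\times\cX}\simeq\qlbar[\Gm]\otimes_{\qlbar}\omega_\cX$, adjunction, and \rl{endomor}(a),(b) are exactly the ingredients the paper uses, in the same order.

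The one organizational difference is that the paper first writes down the algebra homomorphism
\[
\qlbar[\Gm]\otimes_{\qlbar}\End(\omega_\cX)\longrightarrow\End(f_!\omega_\cX)
\]
(induced by the $\Gm$-action on $\cX$ over $\cY$ together with functoriality of $f_!$), and \emph{then} checks it is a $\qlbar$-linear isomorphism via precisely the computation you carry out; you produce the linear isomorphism first and verify multiplicativity afterward. The paper's order is cleaner because once a ring map is bijective you are done, whereas your last paragraph has to chase the product through the identifications. In particular, your justification that locally constant scalars ``are invariant under any isomorphism $\cX\to\cX$'' is not literally correct when $\Gm$ permutes connected components of $\cX$; if you keep your order you should tighten that step (or, simpler, adopt the paper's order and define the algebra map first).
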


\begin{proof}
The group action of $\Gm$ on $\cX$ over $\cY$ induces a group homomorphism $\Gm\to\Aut(f_!(\om_{\cX}))$, commuting with the action of
$\End(\om_{\cX})$. Hence it induces a homomorphism of $\qlbar$-algebras
\begin{equation} \label{Eq:canhom}
\qlbar[\Gm]\otimes_{\qlbar} \End(\om_{\cX})\to\End(f_!(\om_{\cX})).
\end{equation}
Since $\End(\om_{\cX})\simeq\qlbar^{\un{\pi}_0(\cX)}$ (by \rl{endomor}), it now suffices to show that
\form{canhom} is an isomorphism of $\qlbar$-vector spaces.
Since $f$ is a $\Gm$-torsor (by \re{tors}(c)), we have a Cartesian diagram
\[
\begin{CD}
\Gm\times \cX @>a>> \cX \\
@V\pr VV   @VVfV \\
\cX @>f>> \cY.
\end{CD}
\]
Since $f_!$ satisfies the base change, we get a natural isomorphism
\[
f^!f_!(\om_{\cX})\simeq \pr_! a^!(\om_{\cX})\simeq\pr_!(\om_{\Gm\times \cX})\simeq \qlbar[\Gm]\otimes_{\qlbar}\om_{\cX}.
\]
Therefore by adjunction we have an isomorphism of $\qlbar$-vector spaces
\[
\End(f_{!}(\omega_{\cX}))\simeq\Hom(\omega_{\cX},f^{!}f_{!}(\omega_{\cX}))\simeq\Hom(\om_{\cX}, \qlbar[\Gm]\otimes_{\qlbar} \om_{\cX})\simeq\qlbar[\Gm]\otimes_{\qlbar}\End(\om_{\cX}),
\]
where the last isomorphism follows from \form{endomor}. Unwinding the definitions, one sees that this isomorphism coincides with the
canonical homomorphism \form{canhom} we started from.
\end{proof}

\begin{Cor}\label{C:end}
$f:\cX\to\cY$ be a morphism of $\infty$-stacks, and let $\Gm$ be a discrete group acting on $\cX$ over $\cY$ such that $\un{\pi}_0(\cX)$ is pro-finite, and the induced map $[f]:[\cX/\Gm]\to\cY$ is a topological equivalence. Then $f^!:\cD(\cY)\to\cD(\cX)$ has a left adjoint $f_!:\cD(\cX)\to\cD(\cY)$, and we have a natural isomorphism of $\qlbar$-algebras
$\End(f_{!}(\omega_{\cX}))\simeq\bql[\Gm]\otimes_{\qlbar}\qlbar^{\un{\pi}_0(\cX)}$.
\end{Cor}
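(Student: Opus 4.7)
The plan is to factor the morphism $f$ through the quotient stack $[\cX/\Gm]$ and reduce to \rl{end}. Let $\pi:\cX\to[\cX/\Gm]$ denote the projection, and let $g:[\cX/\Gm]\to\cY$ be the map induced by $f$. Then $f=g\circ\pi$, and by hypothesis the induced map $[g]=[f]$ is a topological equivalence, hence $g$ is a topological equivalence in the sense of \re{topeq}(a).

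First I will establish the existence of $f_!$. By \rco{invtop}(b), the pullback $g^!:\cD(\cY)\to\cD([\cX/\Gm])$ is an equivalence of $\infty$-categories, so it automatically admits a left adjoint $g_!$ which coincides with the inverse equivalence $(g^!)^{-1}$. On the other hand, by \re{quotdiscr}(c), the pullback $\pi^!:\cD([\cX/\Gm])\to\cD(\cX)$ admits a left adjoint $\pi_!:\cD(\cX)\to\cD([\cX/\Gm])$. Composing, we see that $f^!\simeq\pi^!\circ g^!$ admits the left adjoint $f_!:=g_!\circ\pi_!$.

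To compute endomorphisms, observe that $f_!(\om_{\cX})=g_!(\pi_!(\om_{\cX}))$. Since $g_!$ is an equivalence of $\infty$-categories, it induces an isomorphism of $\qlbar$-algebras
\[
\End_{\cD(\cY)}(f_!(\om_{\cX}))=\End_{\cD(\cY)}(g_!(\pi_!(\om_{\cX})))\isom\End_{\cD([\cX/\Gm])}(\pi_!(\om_{\cX})).
\]
Now $\pi:\cX\to[\cX/\Gm]$ is precisely the projection to a quotient by a discrete group, so \rl{end} applies directly (the hypothesis that $\un{\pi}_0(\cX)$ is pro-finite is inherited from the statement of the Corollary) and yields a canonical isomorphism of $\qlbar$-algebras
\[
\End_{\cD([\cX/\Gm])}(\pi_!(\om_{\cX}))\simeq\qlbar[\Gm]\otimes_{\qlbar}\qlbar^{\un{\pi}_0(\cX)}.
\]
Combining the two isomorphisms gives the required identification.

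There is no serious obstacle: the only two points to verify are that $g_!$ as constructed is genuinely an adjoint (which is automatic for an equivalence) and that the resulting isomorphism is one of algebras rather than merely of vector spaces. The latter is immediate because the equivalence $g_!$ is an exact functor of stable $\infty$-categories, so it respects the composition of endomorphisms, and the map of \rl{end} is constructed as an algebra homomorphism in \form{canhom}.
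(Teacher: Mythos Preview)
Your proof is correct and essentially identical to the paper's: factor $f$ as $\pi:\cX\to[\cX/\Gm]$ followed by the topological equivalence $[f]:[\cX/\Gm]\to\cY$ (your $g$), use \rco{invtop}(b) and \re{quotdiscr}(c) to obtain the left adjoint $f_!=[f]_!\circ\pi_!$, then apply \rl{end} together with the fact that $[f]_!$ is an equivalence.
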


\begin{proof}
Set $\cY':=[\cX/\Gm]$, and let $\pi:\cX\to\cY'$ be the projection. Since $[f]:\cY'\to\cY$ is a topological equivalence,
the pullback $[f]^!:\cD(\cY)\to\cD(\cY')$ is an equivalence (by \rco{invtop}(b)), hence has a left adjoint $[f]_!$. Therefore
$f^!\simeq \pi^!\circ[f]^!$ has a left adjoint $f_!:=[f]_!\circ \pi_!$. Now the assertion follows from \rl{end} and the observation
that  $[f]_!$ is an equivalence.
\end{proof}

\section{Perverse $t$-structures}

\subsection{Generalities}

\begin{Emp} \label{E:gentstr}
{\bf Recollections.} Let $\cD$ be a stable $\infty$-category.

 (a) Recall (see Lurie \cite[1.2.1]{HA}) that a {\em $t$-structure} on $\cD$ is a pair $(\cD^{\leq 0},\cD^{\geq 0})$ of full subcategories of $\cD$ satisfying certain
properties. In particular, the embedding $\cD^{\geq 0}\to \cD$ (resp. $\cD^{\leq 0}\to \cD$) has a left (resp. right) adjoint
\begin{center}
$\tau_{\geq 0}: \cD\to \cD^{\geq 0}$
(resp. $\tau_{\leq 0}: \cD\to \cD^{\leq 0}$).
\end{center}
Similarly, we define truncation functors $\tau_{\geq 1}$ and $\tau_{\leq -1}$. Notice that
\begin{equation}
x\in \cD^{\leq 0} ~~(\text{resp.}~x\in\cD^{\geq 0}) \text{ if and only if } \tau_{\geq 1}(x)=0 ~~(\text{resp.}~  \tau_{\leq -1}(x)=0).
\label{etstr}
\end{equation}

(b) Let $F:\cD_1\to \cD_2$ be an exact functor between stable $\infty$-categories equipped with $t$-structures. Recall that $F$ is called {\em right (resp. left) $t$-exact}, if $F$ satisfies $F(\cD_1^{\leq 0})\subset \cD_2^{\leq 0}$ (resp. $F(\cD_1^{\geq 0})\subset \cD_2^{\geq 0}$), and it is called {\em $t$-exact}, if it is both left and right $t$-exact.

(c) Every $t$-exact $F$ commutes with truncation functors. Indeed, for each $x\in\cD_1$, functor $F$ maps the fiber sequence
$\tau_{\leq 0}(x)\to x\to\tau_{\geq 1}(x)$ to the fiber sequence $F(\tau_{\leq 0}(x))\to F(x)\to F(\tau_{\geq 1}(x))$. Since
$F(\tau_{\leq 0}(x))\in\cD_2^{\leq 0}$ and $F(\tau_{\geq 1}(x))\in \cD_2^{\geq 1}$ by assumption, we get that
$F(\tau_{\leq 0}(x))\simeq\tau_{\leq 0}(F(x))$ and $F(\tau_{\geq 1}(x))\simeq\tau_{\geq 1}(F(x))$, as claimed.

(d) Recall that a functor $F$ is called {\em faithful}, if $F(x)\not\simeq 0$ when $x\not\simeq 0$.
\end{Emp}

\begin{Lem} \label{L:remtstr}
(a) Every $t$-structure on $\cD$ has a unique extension to a $t$-structure on $\Ind(\cD)$ such that
$\Ind(\cD)^{\geq 0}$ is closed under filtered colimits. Explicitly, we have $\Ind(\cD)^{\leq 0}=\Ind(\cD^{\leq 0})$ and
$\Ind(\cD)^{\geq 0}=\Ind(\cD^{\geq 0})$.


(b) Let $\cD$ be a stable $\infty$-category with a $t$-structure.
Then $\cD^{\leq 0}$ is closed under all colimits that exist in $\cD$ and $\cD^{\geq 0}$ is closed under all limits that exist in $\cD$.

(c) Assume that $F:\cD_1\to\cD_2$ be a $t$-exact and faithful functor between stable $\infty$-categories. Then for every object $x\in\cD_1$ we have
\begin{center}
$x\in \cD_1^{\leq 0} \text{ if and only if } F(x)\in \cD_2^{\leq 0}$
\end{center}
and similarly for $\cD_i^{\geq 0}$.

(d) Let $F:\cD_1\to\cD_2$ and $G:\cD_2\to\cD_3$ be exact functors between stable $\infty$-categories, equipped with $t$-structures, such that
$G$ is $t$-exact and faithful. Then $F$ is $t$-exact if and only if $G\circ F$ is.

(e) The $t$-structure $(\cD^{\leq 0},\cD^{\geq 0})$ on $\cD$ is uniquely determined by $\cD^{\geq 0}$. Namely, an object $x\in\cD$ belongs to $\cD^{\leq 0}$ if and only if
$\Hom_{\cD}(x,y)\simeq 0$ for every $y\in\cD^{\geq 1}$.
\end{Lem}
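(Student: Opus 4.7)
The plan is to treat the five parts in an order that lets later ones use earlier ones, namely (e), (b), (c), (d), (a).

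First, for (e), the characterization of $\cD^{\leq 0}$ is a standard formal consequence of the adjunction data packaged into a $t$-structure. The forward implication is built into the definition: for $x \in \cD^{\leq 0}$ and $y \in \cD^{\geq 1}$ one has $\Hom_{\cD}(x,y) \simeq 0$. For the reverse direction, assume $\Hom_{\cD}(x,y) \simeq 0$ for all $y \in \cD^{\geq 1}$ and specialize to $y := \tau_{\geq 1}(x) \in \cD^{\geq 1}$. By the adjunction between the inclusion $\cD^{\geq 1} \hookrightarrow \cD$ and $\tau_{\geq 1}$, we obtain
\[
\Hom_{\cD^{\geq 1}}(\tau_{\geq 1}(x), \tau_{\geq 1}(x)) \simeq \Hom_{\cD}(x, \tau_{\geq 1}(x)) \simeq 0,
\]
so $\id_{\tau_{\geq 1}(x)}$ is null and hence $\tau_{\geq 1}(x) \simeq 0$. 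By \form{etstr} this means $x \in \cD^{\leq 0}$. The dual characterization of $\cD^{\geq 0}$ follows by the same argument.

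Part (b) is then immediate from (e): if $\{x_\al\}$ is a diagram in $\cD^{\leq 0}$ admitting a colimit $x$ in $\cD$, then for every $y \in \cD^{\geq 1}$ we have $\Hom_{\cD}(x,y) \simeq \lim_\al \Hom_{\cD}(x_\al, y) \simeq 0$, so (e) forces $x \in \cD^{\leq 0}$. The statement for $\cD^{\geq 0}$ and limits is dual.

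For (c), the forward direction $x \in \cD_1^{\leq 0} \Rightarrow F(x) \in \cD_2^{\leq 0}$ is right $t$-exactness. For the reverse, assume $F(x) \in \cD_2^{\leq 0}$. By \re{gentstr}(c), $t$-exactness of $F$ gives $F(\tau_{\geq 1}(x)) \simeq \tau_{\geq 1}(F(x)) \simeq 0$, and then faithfulness of $F$ forces $\tau_{\geq 1}(x) \simeq 0$, whence $x \in \cD_1^{\leq 0}$ by \form{etstr}. The argument for $\cD_1^{\geq 0}$ is identical using $\tau_{\leq -1}$. Part (d) follows by applying (c) to $G$: the ``only if'' direction is trivial since the composition of $t$-exact functors is $t$-exact, and conversely if $G \circ F$ is $t$-exact and $x \in \cD_1^{\leq 0}$ then $G(F(x)) \in \cD_3^{\leq 0}$, so by (c) applied to $G$ we get $F(x) \in \cD_2^{\leq 0}$; the $\geq 0$ case is symmetric.

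Finally for (a), the statement is essentially Lurie's \cite[Prop. 1.4.4.11]{HA}: a $t$-structure on a small stable $\infty$-category extends uniquely to an accessible $t$-structure on $\Ind(\cD)$ with $\Ind(\cD)^{\geq 0}$ closed under filtered colimits, and the two half-subcategories are explicitly the $\Ind$-completions of $\cD^{\leq 0}$ and $\cD^{\geq 0}$. The main points to verify are that $\cD^{\leq 0}$ and $\cD^{\geq 0}$ are closed inside $\cD$ under finite colimits and finite limits respectively (which is part of the $t$-structure axioms), and then one checks the Hom-vanishing axiom on $\Ind$: if $x = \colim x_\al$ with $x_\al \in \cD^{\leq 0}$ and $y = \colim y_\beta$ with $y_\beta \in \cD^{\geq 1}$, then $\Hom(x,y) \simeq \lim_\al \colim_\beta \Hom(x_\al, y_\beta) \simeq 0$, while truncation on $\Ind(\cD)$ is obtained by extending the truncation functors on $\cD$ by continuity. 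The main (minor) obstacle is checking that the truncation so constructed is genuinely left/right adjoint to the inclusions of $\Ind(\cD^{\leq 0})$ and $\Ind(\cD^{\geq 0})$, but this reduces to the corresponding statement in $\cD$ by filtered colimit arguments; uniqueness of the extension with the stated continuity property follows from (e).
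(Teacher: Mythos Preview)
Your proof is correct and, where the paper actually argues something, matches it: your treatment of (c) via $F(\tau_{\geq 1}(x))\simeq\tau_{\geq 1}(F(x))$ and faithfulness is exactly the paper's, and (d) is deduced from (c) in both. For (a), (b), (e) the paper simply cites \cite[4.1.2.4]{GR}, \cite[Cor.~1.2.1.6]{HA}, and declares (e) ``standard''; your explicit arguments---in particular deriving (b) directly from the orthogonality characterization in (e) rather than quoting Lurie---are a mild but harmless deviation, and your reference for (a) to \cite[Prop.~1.4.4.11]{HA} is an equally valid source.
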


\begin{proof}
(a) and (b) follow from \cite[4.1.2.4]{GR} and \cite[Corollary 1.2.1.6]{HA}, respectively.

(c) By \eqref{etstr}, we have $x\in \cD_1^{\leq 0}$ if and only if $\tau_{\geq 1}(x)\simeq 0$, while  $F(x)\in D_2^{\leq 0}$ if and only if $\tau_{\geq 1}(F(x))\simeq 0$. Since $\tau_{\geq 1}(F(x))\simeq F(\tau_{\geq 1}(x))$ (because $F$ is $t$-exact), we have to show that $\tau_{\geq 1}(x)\simeq 0$ if and only if $F(\tau_{\geq 1}(x))\simeq 0$. Since $F$ is faithful, we are done.

(d) The ``if'' assertion follows from (c), while the converse assertion is clear.

(e) is standard.
\end{proof}

\begin{Lem}\label{L:tstr}
Let $\cI$ be a category and $\Psi:\cI\to\Catst$ a functor. Assume that for every object $a\in \cI$ the category $\cD_{a}$ is equipped
with a $t$-structure, and for every morphism $\al:a\to b$ in $\cI$ the induced functor $\psi_{\al}:\cD_a\to \cD_b$ is $t$-exact.

(a) Assume that $\cI$ is filtered. Then there exists a unique $t$-structure on $\cD:=\colim_{a\in \cI} \cD_a$ such that every
functor $\ins_a:\cD_a\to \cD$ is $t$-exact. Explicitly,
 $\cD^{\leq 0}:=\colim_{a\in \cI} \cD^{\leq 0}_a$ and similarly for $\cD^{\geq 0}$.

(b) There exists a unique $t$-structure on $\cD:=\lim_{a\in \cI^{\op}} \cD_a$ such that every functor $\ev_a:\cD\to \cD_a$ is $t$-exact. Explicitly,  $\cD^{\leq 0}=\lim_{a\in \cI} \cD^{\leq 0}_a$ and similarly for $\cD^{\geq 0}$.
\end{Lem}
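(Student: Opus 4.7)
The plan is to construct both $t$-structures directly and verify the axioms, starting with (b) since the formula is more explicit. For (b), I would define $\cD^{\leq 0} := \{x \in \cD \mid \ev_a(x) \in \cD_a^{\leq 0}\text{ for all } a \in \cI\}$ and $\cD^{\geq 0}$ analogously; these coincide with $\lim_{a \in \cI^{\op}} \cD_a^{\leq 0}$ and $\lim_{a \in \cI^{\op}} \cD_a^{\geq 0}$ because each $\ev_a$ is exact. The orthogonality axiom $\Hom(\cD^{\leq 0}, \cD^{\geq 1}) \simeq 0$ reduces to the same property in each $\cD_a$ via the limit formula $\Hom_{\cD}(x,y) \simeq \lim_a \Hom_{\cD_a}(\ev_a x, \ev_a y)$, and the shift inclusions are immediate from exactness of $\ev_a$. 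The key step is the construction of truncations: since each $\psi_\al$ is $t$-exact and hence commutes with $\tau_{\leq 0}$ up to coherent equivalence by \re{gentstr}(c), the objectwise truncations $\{\tau_{\leq 0}^{\cD_a}\}_a$ upgrade to a natural endotransformation of the diagram $a \mapsto \cD_a$. Taking limits then yields a functor $\tau_{\leq 0}: \cD \to \cD$ with $\ev_a \circ \tau_{\leq 0} \simeq \tau_{\leq 0}^{\cD_a} \circ \ev_a$, and the fiber sequences in each $\cD_a$ assemble into the required fiber sequence in $\cD$.

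Uniqueness for (b) follows from the joint conservativity of the family $\{\ev_a\}_{a \in \cI}$, a defining property of a limit in $\Catst$. The argument of \rl{remtstr}(c) extends without change to a jointly conservative family of $t$-exact functors: $x \in \cD^{\leq 0}$ iff $\tau_{\geq 1}(x) \simeq 0$, iff each $\ev_a(\tau_{\geq 1} x) \simeq \tau_{\geq 1}^{\cD_a}(\ev_a x) \simeq 0$, iff each $\ev_a(x) \in \cD_a^{\leq 0}$, recovering the explicit description.

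For (a), I would use the concrete model of a filtered colimit in $\Catst$: every object of $\cD$ is isomorphic to some $\ins_a(x_a)$ with $x_a \in \cD_a$, and morphism spaces compute as $\Hom_{\cD}(\ins_a x_a, \ins_b y_b) \simeq \colim_{c \geq a,b} \Hom_{\cD_c}(\psi_{a,c} x_a, \psi_{b,c} y_b)$. Define $\cD^{\leq 0}$ (resp.\ $\cD^{\geq 0}$) as the full subcategory of objects isomorphic to some $\ins_a(x_a)$ with $x_a \in \cD_a^{\leq 0}$ (resp.\ $\cD_a^{\geq 0}$). Orthogonality follows because each term in the filtered colimit of Hom-spaces vanishes by $t$-exactness of the transitions; shifts are immediate; and truncations are defined by $\tau_{\leq 0}(\ins_a x_a) := \ins_a(\tau_{\leq 0}^{\cD_a} x_a)$, independent of the chosen representative up to equivalence precisely because $t$-exact transitions commute with truncation. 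For uniqueness, if $\ins_a(x_a)$ lies in $\cD_0^{\leq 0}$ for a competing $t$-structure with $\ins_b$'s $t$-exact, then $\ins_a(\tau_{\geq 1}^{\cD_a} x_a) \simeq \tau_{\geq 1}^{\cD}(\ins_a x_a) \simeq 0$ in $\cD$; by the filtered colimit formula this forces some $\al: a \to b$ with $\psi_\al(\tau_{\geq 1}^{\cD_a} x_a) \simeq 0$, hence $\psi_\al(x_a) \in \cD_b^{\leq 0}$ and $\ins_a(x_a) \simeq \ins_b(\psi_\al x_a)$ lies in the claimed explicit description.

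The main obstacle is the $\infty$-categorical coherence: establishing that truncation operations really assemble into a natural endotransformation of the underlying diagram of categories, rather than merely a compatible family at the level of objects. This is precisely where the $t$-exactness hypothesis is essential, supplying canonical comparison isomorphisms $\psi_\al \circ \tau_{\leq 0}^{\cD_a} \simeq \tau_{\leq 0}^{\cD_b} \circ \psi_\al$ that are compatible under composition. In principle one could attempt to deduce (a) from (b) by passing to $\Ind$-completions via \re{notconv}(e) and invoking \rt{lu1}, but the $\Ind$-extended transitions typically lack $t$-exact right adjoints, so the direct construction appears to be the cleanest path.
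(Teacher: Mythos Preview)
Your approach is essentially the same as the paper's: both parts define the $t$-structure by the explicit formulas, verify orthogonality via the colimit/limit description of mapping spaces, and produce truncation triangles componentwise. The one place where the paper is sharper is exactly the point you flag as ``the main obstacle'': rather than arguing that the family $\{\tau_{\leq 0}^{\cD_a}\}$ assembles into a natural endotransformation of the diagram (which requires genuine $\infty$-categorical coherence work), the paper in (b) observes that $\tau_{\geq 0}^{\cD_a}$ is a \emph{left adjoint} to the inclusion $\cD_a^{\geq 0}\hookrightarrow\cD_a$, that $t$-exactness of the $\psi_\al$ gives the Beck--Chevalley condition (via \re{gentstr}(c)), and then invokes \rp{adjlu}(b) to obtain $\tau_{\geq 0}$ on the limit as a genuine left adjoint. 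This neatly sidesteps the coherence issue you identify, since adjoints are automatically compatible with the higher structure. Your direct argument for (a) is the same as the paper's; for (b) you would be well served by routing through \rp{adjlu}(b) rather than trying to build the natural transformation by hand.
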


\begin{proof}
(a)	Let us prove that full subcategories $(\cD^{\leq 0}, \cD^{\geq 0})$, defined as $\cD^{\leq 0}:=\colim_{a\in \cI} \cD^{\leq 0}_a$ and $\cD^{\geq 0}:=\colim_{a\in \cI} \cD^{\geq 0}_a$,
equip $\cD$ with a $t$-structure. Recall that every object $x\in \cD$ is of the form $x=\ins_a(x_a)$ for some $x_{a}\in \cD_a$. By assumption, for  every $a\in\cI$ there exists a fiber sequence
$\tau_{\leq 0}(x_{a})\rightarrow x_{a}\rightarrow \tau_{\geq 1}(x_{a})$ in $\cD_{a}$ with $\tau_{\leq 0} (x_{a})\in \cD^{\leq 0}_a$ and
$\tau_{\geq 1}(x_{a})\in \cD^{\geq 1}_a$. Applying $\ins_a$, we get the corresponding fiber sequence for $x$.

It remains to show that for $x\in \cD^{\leq 0}$ and $y\in \cD^{\geq 1}$, we have
$\Hom_{\cD}(x,y)\simeq 0$. Since $\cI$ is filtered, $x$ and $y$ come from $x_{a}\in \cD_a^{\leq 0}$ and $y_{a}\in \cD_{a}^{\geq 1}$. Moreover, as the colimit is filtered, it follows from \cite[Lemma 0.2.1]{Ro} that every  $\phi\in \Hom_{\cD}(x,y)$  comes from a morphism
$\phi_b\in \Hom_{\cD_{b}}(\psi_{\al}(x_{a}),\psi_{\al}(y_{a}))$ for some $\al:a\to b$ in $\cI$.
As $\psi_{\al}$ are $t$-exact, we conclude that $\Hom_{\cD_{b}}(\psi_{\al}(x_{a}),\psi_{\al}(y_{a}))\simeq 0$. Thus $\phi_b\simeq 0$, hence $\phi\simeq 0$.

(b) We want to show that full subcategories $(\cD^{\leq 0}, \cD^{\geq 0})$, defined as $\cD^{\leq 0}:=\lim_{a\in \cI} \cD^{\leq 0}_a$ and $\cD^{\geq 0}:=\lim_{a\in \cI} \cD^{\geq 0}_a$,
equip $\cD$ with a $t$-structure. First we claim that for every $x\in \cD^{\leq 0}$ and $y\in \cD^{\geq 1}$ we have $\Hom_{\cD}(x,y)\simeq 0$.
Indeed, using for example \cite[1.6.2]{DG} and \cite[3.3.3.2]{Lu} one has
\begin{equation}
\Hom_{\cD}(x,y)\simeq\lim_{a\in\cI}\Hom_{\cD_a}(\ev_{a}(x),\ev_{a}(y)).
\label{projd}
\end{equation}
Since $\ev_a(x)\in \cD_a^{\leq 0}$ and $\ev_a(y)\in \cD_a^{\geq 1}$, all spaces on the right hand side are contractible. So the assertion follows from the standard fact that a homotopy limit of contractible spaces is contractible.

Next we claim that the inclusion functor $\cD^{\geq 0}\hra \cD$ has a left adjoint $\tau_{\geq 0}$. Indeed, since every $\cD_a$ is equipped with a $t$-structure, the inclusion $\cD_a^{\geq 0}\hra \cD_a$ has a left adjoint. Moreover, since every $\psi_{\al}$ is t-exact, these left adjoints satisfy the Beck--Chevalley condition (use \re{gentstr}(c)). Therefore the existence of $\tau_{\geq 0}:\cD\to\cD^{\geq 0}$ follows from \rp{adjlu}(b).
Now for every $x\in\cD$, let $\tau_{\geq 1}(x)$ be the cofiber of the counit map $\tau_{\leq 0}(x)\to x$. It suffices to show that
$\tau_{\geq 1}(x)\in\cD^{\geq 1}$. But this follows from the fact that cofiber in the limit category is a compatible system of cofibers,
that the cofiber of each $\tau_{\leq 0}(\ev_a(x))\to\ev_a(x)$ lies in $\cD_a^{\geq 1}$, and $\cD^{\geq 1}=\lim_{a\in \cI} \cD^{\geq 1}_a$.
\end{proof}

The following assertion is not needed for the perversity of the affine Springer sheaf.

\begin{Prop} \label{P:limtstr}
Let $\Psi:\cI\to\PrCat$ be a functor $a\mapsto\cD_a$. Assume that $\cI$ is filtered, that for every object $a\in \cI$ the category $\cD_{a}$ is equipped with a $t$-structure such that $\cD_a^{\geq 0}$ is closed under filtered colimits, and that for every morphism $\al:a\to b$ in $\cI$ the induced functor $\psi_{\al}:\cD_a\to \cD_b$ is $t$-exact and has a continuous right adjoint $\phi_{\al}$.

Then there exists a unique $t$-structure $(\cD^{\leq 0},\cD^{\geq 0})$ on $\cD:=\colim_{a\in \cI} \cD_a$ such that $\cD^{\geq 0}$ is closed under filtered colimits and
every functor $\ins_a:\cD_a\to \cD$ is $t$-exact.
\end{Prop}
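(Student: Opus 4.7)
The plan is to build the $t$-structure on $\cD := \colim_{a \in \cI} \cD_a$ via Lurie's machinery for accessible $t$-structures on presentable stable $\infty$-categories. Note first that $\cD$ is itself presentable (as a filtered colimit in $\PrCat$), and that each $t$-structure on $\cD_a$ is accessible in the sense of Lurie \cite[1.4.4.12]{HA}, because $\cD_a^{\geq 0}$ is closed under filtered colimits by hypothesis (so by \cite[1.4.4.13]{HA} there is a small set of generators for $\cD_a^{\leq 0}$ under colimits and extensions).

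To establish existence, I would define $\cD^{\leq 0}$ as the smallest full subcategory of $\cD$ that contains $\bigcup_a \ins_a(\cD_a^{\leq 0})$ and is closed under small colimits and extensions. Choosing, for each $a$, a small set $S_a$ generating $\cD_a^{\leq 0}$ under colimits and extensions and setting $S := \bigcup_a \ins_a(S_a)$ (a small set because $\cI$ is small), we have that $\cD^{\leq 0}$ coincides with the smallest full subcategory of $\cD$ containing $S$ and closed under colimits and extensions. Applying Lurie's \cite[1.4.4.11]{HA} yields an accessible $t$-structure $(\cD^{\leq 0}, \cD^{\geq 0})$ on $\cD$.

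The next step is to characterize $\cD^{\geq 0}$ and verify the two required properties. By the orthogonality description coming from Lurie's construction and adjunction, $y \in \cD^{\geq 0}$ iff $\Hom_{\cD_a}(x, \ev_a(y)) \simeq 0$ for every $a \in \cI$ and every $x \in \cD_a^{\leq -1}$, i.e.\ iff $\ev_a(y) \in \cD_a^{\geq 0}$ for all $a$. Since $\ev_a$ is a morphism in $\PrCat$ it preserves all small colimits; in particular filtered ones; so $\cD^{\geq 0}$ inherits closure under filtered colimits from the $\cD_a^{\geq 0}$. For $t$-exactness of $\ins_a$, right $t$-exactness is built into the definition of $\cD^{\leq 0}$, while left $t$-exactness amounts to $\ev_b\ins_a(\cD_a^{\geq 0}) \subset \cD_b^{\geq 0}$ for all $b$, which I would verify using the filtered colimit formula
\[
\ev_b\ins_a(x) \simeq \colim_{a \to c,\ b \to c} \phi_{b \to c}\,\psi_{a \to c}(x)
\]
from \re{filt}: $\psi_{a \to c}$ preserves $\cD_c^{\geq 0}$ by $t$-exactness, $\phi_{b \to c}$ preserves $\cD_c^{\geq 0}$ because right adjoints of $t$-exact functors are left $t$-exact, and $\cD_b^{\geq 0}$ is closed under filtered colimits. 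Uniqueness follows from the same picture: any $t$-structure satisfying the two conditions has $\cD^{\geq 0}_\ast \subset \{y : \ev_a(y) \in \cD_a^{\geq 0} \ \forall a\}$ (since $\ev_a$ is right adjoint to a $t$-exact functor, hence left $t$-exact), and conversely, for any $y$ in the right-hand side, the canonical isomorphism $y \simeq \colim_{a \in \cI} \ins_a\ev_a(y)$ from \rco{lurie}, which is a filtered colimit since $\cI$ is filtered, exhibits $y$ as a filtered colimit of objects of $\cD^{\geq 0}_\ast$.

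The principal technical obstacle is verifying the accessibility hypotheses needed to invoke \cite[1.4.4.11]{HA}, namely that $\cD^{\leq 0}$ is generated under colimits and extensions by a small set; this reduces to accessibility of each $\cD_a^{\leq 0}$, which is essentially the content of the ``$\cD_a^{\geq 0}$ closed under filtered colimits'' hypothesis via \cite[1.4.4.13]{HA}. A secondary subtlety is the use of continuity of $\ev_a$ in $\PrCat$—which propagates the filtered-colimit closure from each $\cD_a^{\geq 0}$ to $\cD^{\geq 0}$—and the careful application of the explicit filtered colimit formula of \re{filt} to deduce left $t$-exactness of $\ins_a$ from the interplay between the $t$-exact $\psi_\al$'s and the merely left $t$-exact $\phi_\al$'s.
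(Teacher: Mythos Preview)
Your proposal is correct and follows a genuinely different route from the paper. The paper constructs the $t$-structure by hand: it defines $\cD^{\leq 0}$ and $\cD^{\geq 0}$ directly (the latter by the formula $\ev_a(y) \in \cD_a^{\geq 0}$ for all $a$), and then explicitly builds the truncation fiber sequence $x_{\leq 0} \to x \to x_{\geq 1}$ for each $x \in \cD$ by writing $x \simeq \colim_a \ins_a \ev_a(x)$ via \rco{lurie} and gluing the pointwise truncations --- the main work being to check that $a \mapsto \ins_a(\tau_{\geq 1}(\ev_a x))$ extends to a functor $\cI \to \cD$. Your approach instead invokes Lurie's \cite[1.4.4.11]{HA} to produce the $t$-structure from a small generating set, bypassing the explicit truncation construction entirely; the verifications of $t$-exactness of $\ins_a$ (via \re{filt}) and of uniqueness (via \rco{lurie}) are then essentially identical to the paper's. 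Your route is shorter and leverages existing machinery; the paper's route is more self-contained and yields a concrete description of the truncation functors. Note also that the paper remarks in \re{remlimtstr}(b) that the compactly generated case --- which suffices for its applications --- admits yet another shortcut via \rl{remtstr}(a) and \rl{tstr}(a).
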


\begin{Emp} \label{E:remlimtstr}
{\bf Remarks.} (a) For every $a\in\cI$, denote by $\ev_a:\cD\to\cD_a$ the right adjoint of $\ins_a:\cD_a\to\cD$
(which is automatically continuous by \rt{lu1}). It follows from the proof below that
\begin{equation} \label{Eq:geq0'}
\cD^{\geq 0}=\{x\in \cD\,|\,\ev_a(x)\in\cD_a^{\geq 0}\text{ for all }a\in\cI\}.
\end{equation}
Furthermore, this is the only $t$-structure on $\cD$ satisfying this property (see \rl{remtstr}(e)).

(b) For applications we currently have in mind, all categories $\cD_a$ are compactly generated. In this case, \rp{limtstr}
can be deduced from a combination of \rl{remtstr}(a) and \rl{tstr}(a).

Indeed, $\cD_a\simeq\Ind\cD_a^c$, while the assumption that the right adjoints $\phi_{\al}$ are continuous implies that $\Psi$ induces a functor $\cI\to\Catst:a\mapsto \cD^c_a$. Hence we have a natural equivalence $\cD\simeq\Ind\cD^c$ with $\cD^c:=\colim_{a\in\cI}\cD_a^c$.

Next, the assumption that each $\cD_a^{\geq 0}$ is closed under filtered colimits implies that the $t$-structure on
$\cD_a$ induces a $t$-structure on $\cD_a^c$. Hence \rl{tstr}(a) provides us with a $t$-structure on $\cD^c$, while
\rl{remtstr}(a) provides us with a $t$-structure on $\cD$ such that $\cD^{\geq 0}$ is closed under filtered colimits.
\end{Emp}

\begin{Emp}
\begin{proof}[Proof of \rp{limtstr}]
Let  $\cD^{\leq 0}\subset\cD$ be the smallest full subcategory, containing $\ins_a(x_a)$ with $x_a\in\cD_a^{\leq 0}$ and closed under all colimits, and let $\cD^{\geq 0}\subset\cD$ be the full subcategory, defined by \form{geq0'}. We claim that pair $(\cD^{\leq 0},\cD^{\geq 0})$ defines a $t$-structure on $\cD$.

First of all, we have to check that for every  $x\in\cD^{\leq 0}$ and  $y\in\cD^{\geq 1}$ we have $\Hom(x,y)\simeq 0$.
By the definition of $\cD^{\leq 0}$, we can assume that $x=\ins_a(x_a)$ with $x_a\in\cD^{\leq 0}_a$. In this case, we have
\[
\Hom_{\cD}(x,y)= \Hom_{\cD}(\ins_a(x_a),y)\simeq \Hom_{\cD_a}(x_a,\ev_a(y))\simeq 0,
\]
because  $x_a\in\cD^{\leq 0}_a$ (by assumption), and  $\ev_a(y) \in\cD_a^{\geq 1}$ (by \form{geq0'}).

Next, we are going to show that for every $x\in\cD$ there exists a fiber sequence $x_{\leq 0}\to x\to x_{\geq 1}$ with  $x_{\leq 0}\in\cD^{\leq 0}$ and
$x_{\geq 1}\in\cD^{\geq 1}$. By \rco{lurie}, for every $x\in \cD$, we have a natural functor $\cI\to\cD:a\mapsto\ins_a(x_a)$ with $x_a:=\ev_a(x)\in\cD_a$, and that the natural map $\colim_a \ins_a(x_a)\to x$ is an isomorphism.

Using the $t$-structure on $\cD_a$, we get a fiber sequence
\[
S_a:\tau_{\leq 0}(x_a)\to x_a\to \tau_{\geq 1}(x_a)
\]
with $\tau_{\leq 0}(x_a)\in\cD_a^{\leq 0}$ and $\tau_{\geq 1}(x_a)\in\cD_a^{\geq 1}$.

We claim that the functor $a\mapsto\ins_a(x_a)$ extends to the functor $a\mapsto\ins_a(S_a)$.
It suffices to show that a collection of morphisms $x_a\to \tau_{\geq 1}(x_a)$ gives rise to a morphism $\ins_{a}(x_{a})\to\ins_{a}(\tau_{\geq 1}(x_{a}))$ of functors $\cI\to\cD$.

The main point is to show that the assignment $a\mapsto \ins_{a}(\tau_{\geq 1}(x_{a}))$ is functorial in $a\in\cI$.
In other words, we want to show that every morphism $\al:a\to b$ in $\cI$
induces a canonical morphism $\ins_a(\tau_{\geq 1}(x_a))\to\ins_b(\tau_{\geq 1}(x_b))$.

Since $\ins_a\simeq\ins_b\circ\psi_{\al}$, it suffices to construct a morphism $\psi_{\al}(\tau_{\geq 1}(x_a))\to\tau_{\geq 1}(x_b)$, or, by adjointness, a morphism
$\iota_{\al}:\tau_{\geq 1}(x_a)\to\phi_{\al}(\tau_{\geq 1}(x_b))$. Since $\psi_{\al}$ is $t$-exact, we conclude that $\phi_{\al}$ is left $t$-exact.
Thus $\phi_{\al}(\tau_{\geq 1}(x_b))\in\cD_a^{\geq 1}$, so the natural morphism
\[
\Hom_{\cD_a}(\tau_{\geq 1}(x_a),\phi_{\al}(\tau_{\geq 1}(x_b)))\to \Hom_{\cD_a}(x_a,\phi_{\al}(\tau_{\geq 1}(x_b)))\simeq \Hom_{\cD_b}(\psi_{\al}(x_a),\tau_{\geq 1}(x_b)),
\]
induced by the morphism  $\pr_{\geq 1}: x_a\to \tau_{\geq 1}(x_a)$, is an isomorphism. Let $\iota_{\al}:\tau_{\geq 1}(x_a)\to\phi_{\al}(\tau_{\geq 1}(x_b))$ be the morphism corresponding to the composition
\[
\psi_{\al}(x_a)\simeq\psi_{\al}\circ\phi_{\al}(x_b)\overset{\on{counit}}{\lra} x_b\to \tau_{\geq 1}(x_b).
\]

Taking the colimit $\colim_a\ins_a(S_a)$, we get a fibred sequence
\[
x_{\leq 0}:=\colim_a\ins_a(\tau_{\leq 0}(x_a))\to x\to x_{\geq 1}:=\colim_a\ins_a(\tau_{\geq 1}(x_a)).
\]
Since $\tau_{\leq 0}(x_a)\in\cD_a^{\leq 0}$, the definition of $\cD^{\leq 0}$ implies that $x_{\leq 0}\in \cD^{\leq 0}$.

Next we show that $x_{\geq 1}\in\cD^{\geq 1}$, that is, $\ev_b(x_{\geq 1})\in\cD_b^{\geq 1}$ for all $b$.
Since $\ev_b$ commutes with all (small) colimits, and $\cD_b^{\geq 0}$ is closed under filtered colimits, we conclude
that $\cD^{\geq 1}$ is closed under all filtered colimits. Thus it suffices to show that for every $y_a\in \cD_a^{\geq 1}$,
we have $\ins_a(y_a)\in\cD^{\geq 1}$, that is, we have $\ev_b\circ \ins_a(y_a)\in\cD_b^{\geq 1}$ for all $b\in\cI$.

Since $\ev_b\circ\ins_a$ is a filtered colimit $\colim_{\al:a\to c,\beta:b\to c}\phi_{\beta}\circ \psi_{\al}$ (see \re{filt}), and
$\cD_b^{\geq 1}$ is closed under all filtered colimits, it suffices to show that
$\phi_{\beta}\circ \psi_{\al}(y_a)\in\cD_b^{\geq 1}$. But this follows from the fact $y_a\in\cD_a^{\geq 1}$, while both $\phi_{\beta}$ and $\psi_{\al}$ are left $t$-exact.

This completes the proof that $(\cD^{\leq 0},\cD^{\geq 0})$ is a $t$-structure. Moreover, the formula for $\cD^{\leq 0}$ implies that $\ins_a$ is right $t$-exact. Furthermore, since each $\cD_a^{\geq 0}$ is closed under filtered colimits by assumption and each $\ev_a$ is continuous, formula \form{geq0'} implies that $\cD^{\geq 0}$ is closed under filtered colimits.

Assume now that $(\cD'^{\leq 0},\cD'^{\geq 0})$ is another $t$-structure on $\cD$ such that
$\cD'^{\geq 0}$ is closed under filtered colimits and every functor $\ins_a:\cD_a\to \cD$ is $t$-exact.
We are going to show that in this case we have inclusions $\cD^{\leq 0}\subseteq\cD'^{\leq 0}$ and $\cD^{\geq 0}\subseteq\cD'^{\geq 0}$, therefore both inclusions have to be equalities
(say, by \rl{remtstr}(c)).

First of all, for every $x_a\in\cD_a^{\leq 0}$ we have $\ins_a(x_a)\in \cD'^{\leq 0}$, because $\ins_a$ is $t$-exact.
Since $\cD'^{\leq 0}$ is closed under all colimits, the first inclusion follows from the definition of $\cD^{\leq 0}$.
Next, for every $x\in\cD^{\geq 0}$ we have $\ev_a(x)\in\cD_a^{\geq 0}$ by \form{geq0'}, thus
$\ins_a(\ev_a(x))\in\cD'^{\geq 0}$, because $\ins_a$ is $t$-exact. Hence $x\simeq\colim_a\ins_a(\ev_a(x))\in\cD'^{\geq 0}$,
because $\cD'^{\geq 0}$ is closed under filtered colimits.
\end{proof}
\end{Emp}

\subsection{The case of algebraic spaces of finite type over $k$}

\begin{Emp} \label{E:cltstr}
{\bf Classical (middle-dimensional) perverse $t$-structures.}

(a) For an algebraic space $X$ of finite type over $k$, we denote by $({}^{p_{\cl}}\cD_c^{\leq 0}(X),{}^{p_{\cl}}\cD_c^{\geq 0}(X))$ the classical, that is, middle dimensional perverse $t$-structure on $\cD_c(X)$.

(b) Let $f:X\to Y$ be a morphism of algebraic spaces of finite type over $k$ such that all non-empty fibers of $f$ are of dimension $\leq d$. Then functors $f^*[d]$ and $f_![d]$ are right $t$-exact, while $f^![-d]$ and $f_*[-d]$  are left $t$-exact
(see \cite[4.2.4]{BBD}).
\end{Emp}

\begin{Emp} \label{E:gluetstr}
{\bf Gluing of $t$-structures.} Let $X$ be an algebraic space of finite type over $k$, assume that we are given a constructible stratification $\{X_{\al}\}_{\al}$ of $X$, and denote by $\eta_{\al}:X_{\al}\to X$ the embeddings. Suppose that we are given
a $t$-structure $(\cD_c^{\leq 0}(X_{\al}),\cD_c^{\geq 0}(X_{\al}))$ on each $\cD_c(X_{\al})$.

Then, by
the gluing lemma \cite[Theorem 1.4.10]{BBD} and induction on the number of strata, there exists a unique t-structure
$(\cD_c^{\leq 0}(X),\cD_c^{\geq 0}(X))$ on $\cD_c(X)$ such that all functors $\eta_{\al}^*$ are right $t$-exact, and all
functors $\eta^!_{\al}$ are left $t$-exact. Explicitly, for $K\in\cD_c(X)$, we have $K\in\cD_c^{\leq 0}(X)$
(resp. $K\in\cD_c^{\geq 0}(X))$ if and only if
$\eta_{\al}^*K\in\cD_c^{\leq 0}(X_{\al})$ (resp. $\eta_{\al}^!K\in\cD_c^{\geq 0}(X_{\al})$) for all $\al$.
\end{Emp}
\begin{Emp} \label{E:tstr}
{\bf $!$-Adapted perverse $t$-structure} (see Remark \re{remshifted} below for the explanation of the term).
Let $X$ be an algebraic space of finite type over $k$.

(a) Assume that $X$ is equidimensional of dimension $d$. We define ${}^p\cD_{c}^{\leq 0}(X)$\label{N:pDc} (resp. ${}^p\cD_{c}^{\geq 0}(X)$) to be the full subcategory of all $K\in\cD_{c}(X)$ such that $K[-d]$ belongs to  ${}^{p_{\cl}}\cD_{c}^{\leq 0}(X)$ (resp. ${}^{p_{\cl}}\cD_{c}^{\geq 0}(X)$). In other words,
$({}^p\cD_{c}^{\leq 0}(X),{}^p\cD_{c}^{\geq 0}(X))$ is $({}^{p_{\cl}}\cD_{c}^{\leq -d}(X),{}^{p_{\cl}}\cD_{c}^{\geq -d}(X))$, that is, the classical perverse $t$-structure, shifted by $\dim X$ to the left.

(b) Let now $X$ be arbitrary, and let $X_i$ be the canonical equidimensional stratification from \re{leq}(c).
We define ${}^p\cD_{c}^{\leq 0}(X)$ (resp. ${}^p\cD_{c}^{\geq 0}(X)$) to be the full subcategory of all $K\in\cD_{c}(X)$ such that $\eta_i^* K\in {}^p \cD_c^{\leq 0}(X_i)$ (resp. $\eta_i^! K\in {}^p\cD_c^{\geq 0}(X_i)$) for all $i$. Then $({}^p\cD_{c}^{\leq 0}(X),{}^p\cD_{c}^{\geq 0}(X))$ is a $t$-structure by the gluing lemma (see \re{gluetstr}).
\end{Emp}

\begin{Emp} \label{E:renpb}
{\bf Renormalized $*$-pullback.} Let $X\in\Algft_k$, and $K\in \cD_c(X)$.

(a) For every $d\in\B{Z}$ we set $K\lan d\ran:=K[2d](d)\in\cD(X)$. \label{N:lanran} More generally, to every locally constant function
$\un{d}:X\to\B{Z}$, we associate an object $K\lan \un{d}\ran\in\cD(X)$ \label{N:lanran2} such that for every connected component
$X^0\subset X$, we have $K\lan \un{d}\ran|_{X^0}:=K|_{X^0}\lan \un{d}(X_0)\ran$.

(b) For every weakly equidimensional morphism $f:X\to Y$ in $\Algft_k$, we define functor $f^{*,\ren}:\cD_c(Y)\to \cD_c(X)$ \label{N:fren} by
$f^{*,\ren}(K):=f^*(K)\lan\un{\dim}_f\ran$.
 \end{Emp}

\begin{Lem} \label{L:exact}
(a) Let $f:X\to Y$ be an equidimensional morphism in $\Algft_k$. Then $f^{*,\ren}$ is right $t$-exact, while $f^!$ is left $t$-exact.

(b) If $f:X\hra Y$ is a weakly equidimensional locally closed embedding of dimension $-d$ (see \re{locdim}(d)), then the pullback $f^*[-d]:\cD_{c}(Y)\to \cD_c(X)$ (resp.  $f^![-d]:\cD_{c}(Y)\to \cD_c(X)$)
is right (rest. left) $t$-exact.

(c) If $f:X\to Y$ is smooth or a universal homeomorphism, then the pullback $f^!$ is $t$-exact.
\end{Lem}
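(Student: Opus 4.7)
The plan is to reduce each assertion to the corresponding statement for the classical middle perverse $t$-structure by checking $t$-exactness stratum-by-stratum via the definition of ${}^p\cD_c$ in \re{tstr}(b).

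The key observation is the following compatibility between the equidimensional stratifications $\{X_i\},\{Y_j\}$ of source and target. In the setting of (a), working on a connected component so that $f$ has constant relative dimension $d$, equidimensionality gives that for every $x\in X$ with $y=f(x)$ we have $\dim_x X = \dim_y Y + d$. This forces the equality $X_i = f^{-1}(Y_{i-d})$ of locally closed subsets of $X$, so the square
\[
\xymatrix{X_i \ar[r]^{f_i}\ar[d]_{\eta^X_i} & Y_{i-d}\ar[d]^{\eta^Y_{i-d}}\\ X\ar[r]^{f} & Y}
\]
is Cartesian with horizontal arrows equidimensional (of relative dimension $\leq d$) and vertical arrows locally closed embeddings. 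The same computation in setting (b), with $f$ a weakly equidimensional locally closed embedding of dimension $-d$, gives $X_i = f^{-1}(Y_{i+d})$, producing an analogous Cartesian diagram in which $f_i:X_i\hra Y_{i+d}$ is a locally closed embedding between equidimensional schemes of dimensions $i$ and $i+d$ respectively.

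With these Cartesian squares in hand, the proof of (a) proceeds as follows. For $K\in{}^p\cD_c^{\leq 0}(Y)$ we have $\eta^Y_{i-d}{}^*K\in{}^{p_{\cl}}\cD_c^{\leq -(i-d)}(Y_{i-d})$ by unwinding the definition of \re{tstr}. Proper base change (trivial for embeddings) gives $\eta^X_i{}^*(f^*K\langle d\rangle) \simeq f_i^*(\eta^Y_{i-d}{}^*K)[2d](d)$, and since $f_i$ has fibers of dimension $\leq d$, the classical right $t$-exactness of $f_i^*[d]$ from \re{cltstr}(b) (together with invariance of ${}^{p_{\cl}}$ under Tate twists) places this object in ${}^{p_{\cl}}\cD_c^{\leq -i}(X_i)$, i.e.\ in ${}^p\cD_c^{\leq 0}(X_i)$. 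This is exactly the right $t$-exactness of $f^{*,\ren}$. The left $t$-exactness of $f^!$ is the symmetric computation, using base change $\eta^X_i{}^!f^!\simeq f_i^!\eta^Y_{i-d}{}^!$ (available because $\eta^Y_{i-d}$ is a locally closed embedding) and the classical left $t$-exactness of $f_i^![-d]$. The proof of (b) is identical, replacing $f_i$ by the locally closed embedding $X_i\hra Y_{i+d}$ and using the classical fact that $\eta^*$ and $\eta^!$ are respectively right and left $t$-exact for locally closed embeddings between equidimensional schemes.

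For (c), the smooth case follows immediately from (a): a smooth morphism is equidimensional by \re{smoothflat}(b), and the standard isomorphism $f^!\simeq f^*\langle d\rangle = f^{*,\ren}$ for smooth $f$ of relative dimension $d$ identifies left $t$-exactness of $f^!$ (from (a)) with right $t$-exactness of $f^{*,\ren}$ (also from (a)), giving $t$-exactness. The universal homeomorphism case reduces to the observation that any such $f$ between algebraic spaces of finite type is finite radicial, so induces an equivalence of \'etale sites, hence $f^!$ is an equivalence (compare \rl{invtop}) and both source and target have identified equidimensional stratifications; concretely, $\underline{\dim}_f=0$ and $f^!\simeq f^*$, so the statement follows from (a). The main point requiring care is the stratification identity $X_i = f^{-1}(Y_{i-d})$ of the first paragraph; everything else is a bookkeeping translation between the shifted and the classical perverse conventions.
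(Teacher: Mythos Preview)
Your proof is correct and follows essentially the same approach as the paper: reduce to connected components so that $\un{\dim}_f$ is constant equal to $d$, observe that $f$ carries $X_i$ into $Y_{i-d}$, and then check the $t$-exactness condition stratum-by-stratum by combining functoriality of pullbacks with the classical estimate \re{cltstr}(b). The paper organizes this slightly differently---it first treats the case where $Y$ itself is equidimensional and then applies that case to each $f_i:X_i\to Y_{i-d}$---but the content is the same as your direct computation at the strata level.
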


\begin{proof}
(a) Replacing $X$ by its connected component, we can assume that there exists $d\in\B{N}$ such that $\un{\dim}_f(x)=d$ for all $x\in X$. Then $f^{*,\ren}=f^*\lan d\ran$, all non-empty fibers of $f$ are equidimensional of dimension $d$, and  morphism $f$ induces a morphism $f_i:X_i\to Y_{i-d}$ for all $i$. We want to show that for every $K\in {}^p\cD_c^{\leq 0}(Y)$  we have $f^{*,\ren}(K)\in {}^p\cD_c^{\leq 0}(X)$.

Assume first that $Y$ is equidimensional, and hence $X$ is equidimensional as well.
Then our assumption $K\in {}^p\cD_c^{\leq 0}(Y)={}^{p_{\cl}}\cD_c^{\leq -\dim Y}(Y)$ implies (by \re{cltstr}(b)) that
\[
f^*(K)\in {}^{p_{\cl}}\cD_c^{\leq d-\dim Y}(X)={}^{p}\cD_c^{\leq d+\dim X-\dim Y}(X).
\]
Since $\dim X-\dim Y=d$, this implies that $f^*(K)\in {}^p\cD_c^{\leq 2d}(X)$, thus $f^{*,\ren}(K)\in {}^p\cD_c^{\leq 0}(X)$, as claimed.
In particular, the assertion holds for each morphism $f_i:X_i\to Y_{i-d}$.

In the general case, our assumption $K\in {}^p\cD_c^{\leq 0}(Y)$ implies that
$\eta_{i-d}^* K\in {}^p\cD_c^{\leq 0}(Y_{i-d})$ for all $i$. Therefore, by the assertion for $f_i$, we conclude
that
\[
\eta^*_i(f^{*,\ren}(K))\simeq \eta^*_i(f^*(K))\lan d\ran\simeq f_i^*(\eta^*_{i-d}K)\lan d\ran\simeq f_i^{*,\ren}(\eta^*_{i-d}K)\in {}^{p}\cD_c^{\leq 0}(X_{i})
\]
for all $i$, thus $f^{*,\ren}(K)\in {}^p\cD_c^{\leq 0}(X)$. The proof of the assertion for $f^!$ is similar.

(b) The argument is similar to (a) but simpler. Namely, as in (a), one reduces to the case when $Y$ is equidimensional. In this case, the assertion follows from the fact that $f^*$ (resp. $f^!$) is right (resp. left) $t$-exact for the classical perverse $t$-structure.

(c) If $f$ is smooth or a universal homeomorphism, then $f$ is equidimensional (see \re{smoothflat}(b)), and we have a canonical isomorphism $f^{*,\ren}\isom f^!$. Thus the assertion follows from (a).
\end{proof}

\begin{Emp} \label{E:remshifted}
{\bf Remark.}  The reason why we consider the $!$-adapted $t$-structure rather than the standard one is to guarantee that
for smooth morphisms the $!$-pullback is $t$-exact. This will enable us to define perverse $t$-structures on
placid $\infty$-stacks later.

\end{Emp}

\subsection{The case of placid $\infty$-stacks}
In this subsection we extend the $t$-structures defined in the previous section to placid $\infty$-stacks and study their properties.

\begin{Prop} \label{P:tplaff}
For every $Y\in \Affft_k$, we equip the category $\cD_c(Y)$ with the $!$-adapted $t$-structure, defined in \re{tstr}.
Then

(a) For every $0$-placid affine scheme $X$  there exists a unique $t$-structure on $\cD_c(X)$ such that for every strongly
pro-smooth morphism $f:X\to Y$ with $Y\in\Affft_k$, the pullback $f^!:\cD_c(Y)\to\cD_c(X)$ is $t$-exact.

(b) The $t$-structures from (a) satisfy the property that for every strongly pro-smooth morphism $f:X\to Y$ between $0$-placid affine schemes,
the pullback $f^!:\cD_c(Y)\to \cD_c(X)$ is $t$-exact.
\end{Prop}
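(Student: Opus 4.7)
The plan is to construct the $t$-structure on $\cD_c(X)$ by applying \rl{tstr}(a) to a placid presentation, then to verify the characterizing property via \rco{indep} and deduce uniqueness and independence from this characterization.

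For (a), fix a placid presentation $X \simeq \lim_\alpha X_\alpha$ with $X_\alpha \in \Affft_k$ and smooth affine transition maps. By \re{remqcqs}(b), we have $\cD_c(X) \simeq \colim^!_\alpha \cD_c(X_\alpha)$, where the insertion functor $\ins_\alpha$ is the $!$-pullback $\pi_\alpha^!$. Since the transition maps $X_\beta \to X_\alpha$ are smooth, their $!$-pullbacks are $t$-exact by \rl{exact}(c), so \rl{tstr}(a) produces a $t$-structure on $\cD_c(X)$, characterized by the condition that every $\pi_\alpha^!$ be $t$-exact. To establish the stated property, let $f : X \to Y$ be strongly pro-smooth with $Y \in \Affft_k$; by \rco{indep}, for some sufficiently large $\alpha$ we have a factorization $f = g_\alpha \circ \pi_\alpha$ with $g_\alpha : X_\alpha \to Y$ smooth. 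Hence $f^! = \pi_\alpha^! \circ g_\alpha^!$, where $\pi_\alpha^!$ is $t$-exact by construction and $g_\alpha^!$ is $t$-exact by \rl{exact}(c), so $f^!$ is $t$-exact.

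For uniqueness, any $t$-structure on $\cD_c(X)$ satisfying the stated property makes each $\pi_\alpha^!$ $t$-exact (since the projections $\pi_\alpha$ are themselves strongly pro-smooth with target in $\Affft_k$), so by the uniqueness clause in \rl{tstr}(a) it must coincide with the one just constructed. As a byproduct, the $t$-structure does not depend on the choice of placid presentation, since any presentation gives rise to a $t$-structure satisfying this same defining property.

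For (b), let $f : X \to Y$ be a strongly pro-smooth morphism between $0$-placid affine schemes, and choose a placid presentation $Y \simeq \lim_\beta Y_\beta$. By the explicit description in \rl{tstr}(a), every object $K \in {}^p\cD_c^{\leq 0}(Y)$ is of the form $K \simeq \pi_\beta^!(K_\beta)$ for some $\beta$ and some $K_\beta \in {}^p\cD_c^{\leq 0}(Y_\beta)$, and analogously for ${}^p\cD_c^{\geq 0}(Y)$. Then $f^!(K) \simeq (\pi_\beta \circ f)^!(K_\beta)$, and since $\pi_\beta \circ f : X \to Y_\beta$ is a composition of strongly pro-smooth morphisms, hence itself strongly pro-smooth with target in $\Affft_k$, the characterization in part (a) implies that $(\pi_\beta \circ f)^!$ is $t$-exact. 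Therefore $f^!(K)$ lies in the appropriate subcategory of $\cD_c(X)$, proving the $t$-exactness of $f^!$.

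The only potentially delicate point is the justification that an object of $\colim^!_\alpha \cD_c(X_\alpha)$ lying in the subcategory $\colim^!_\alpha {}^p\cD_c^{\leq 0}(X_\alpha)$ is actually represented, up to isomorphism, by some $K_\alpha \in {}^p\cD_c^{\leq 0}(X_\alpha)$ via $\pi_\alpha^!$; but this is exactly the explicit description of the $t$-structure given in \rl{tstr}(a) together with the identification of insertions into the filtered colimit with $!$-pullbacks, so no additional work is required.
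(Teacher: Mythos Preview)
Your proof is correct and follows essentially the same strategy as the paper's: apply \rl{tstr}(a) to a placid presentation of $X$, using \rl{exact}(c) for $t$-exactness of the transition pullbacks, and then argue part (b) by reducing to the characterization in (a) via composition with the projections $\pr_\beta$. The only difference is that the paper works with the \emph{canonical} placid presentation $X\simeq\lim_{X\to Y}Y$ from \re{canpres}, indexed over all strongly pro-smooth $X\to Y$ with $Y\in\Affft_k$; this makes the characterizing property in (a) immediate, since any such $f$ is literally one of the structure maps of the colimit. Your choice of an arbitrary presentation instead requires the extra factorization step, which is fine, though the precise reference you want is \rl{flat}(b) rather than \rco{indep} (the latter concerns two presentations of the same $X$, whereas you need to factor a given strongly pro-smooth $f:X\to Y$ through some $X_\alpha$ with smooth $g_\alpha$).
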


\begin{proof}
(a) Recall (see \re{canpres}) that every $0$-placid affine scheme $X$ has a canonical placid presentation $X\simeq\lim_{X\to Y}Y$,
where the limit runs over all strongly pro-smooth morphisms $\pi:X\to Y$ with $Y\in\Affft_k$ and all the transition maps are smooth. This presentation induces a presentation of $\cD_c(X)$ as a filtered colimit $\cD_c(X)\simeq\colim_{X\to Y}\cD_c(Y)$, and all the transition maps are $t$-exact by \rl{exact}(c). Therefore it follows from \rl{tstr}(a) that there exists a unique $t$-structure on $\cD_c(X)$ such that for every $\pi$ as above the pullback $\pi^!:\cD_c(Y)\to\cD_c(X)$ is $t$-exact.

(b) Choose a placid presentation $Y\simeq\lim_{\al}Y_{\al}$ of $Y$. Then we have ${}^p\cD^{\leq 0}_c(Y)\simeq\colim_{\al}{}^p\cD^{\leq 0}_c(Y_{\al})$ and ${}^p\cD^{\geq 0}_c(Y)\simeq\colim_{\al}{}^p\cD^{\geq 0}_c(Y_{\al})$ (see \rl{tstr}(a)). Thus it suffices to show that
the composition $f^!\pr_{\al}^!:\cD_c(Y_{\al})\to \cD_c(Y)\to\cD_c(X)$ is $t$-exact. Since $f$ and $\pr_{\al}$ are strongly pro-smooth by assumption, the composition $\pr_{\al}\circ f$ is strongly pro-smooth (see \re{placidprop}). Therefore it follows from the characterization of the $t$-structure on $\cD_c(X)$ that $f^!\pr_{\al}^!\simeq (\pr_{\al}\circ f)^!$ is $t$-exact.
\end{proof}

\begin{Emp} \label{E:tstrind}
{\bf Perverse $t$-structures on $\cD(X)$.}
(a) Let $X$ be a $0$-placid affine scheme. Then the $\infty$-category $\cD(X)$ is the ind-category $\Ind\cD_c(X)$ (see \re{basicfunc}). Therefore it follows from \rl{remtstr}(a) that the perverse $t$-structure $({}^p\cD_c^{\leq 0}(X),{}^p\cD_c^{\geq 0}(X))$ on $\cD_c(X)$ defined in \rp{tplaff}(a) extends uniquely to a perverse $t$-structure $({}^p\cD^{\leq 0}(X),{}^p\cD^{\geq 0}(X))$ on $\cD(X)$ such that
the subcategory ${}^p\cD^{\geq 0}(X)\subset\cD(X)$ is closed under filtered colimits. Explicitly, we have ${}^p\cD^{\geq 0}(X)=\Ind({}^p\cD_c^{\geq 0}(X))$ and similarly for ${}^p\cD^{\leq 0}(X)$.

(b) For every strongly pro-smooth morphism $f:X\to Y$ of $0$-placid affine schemes, the pullback $f^!:\cD(Y)\to \cD(X)$ is $t$-exact.
Indeed, since the pullback $f^!:\cD_c(Y)\to \cD_c(X)$ is $t$-exact by \rp{tplaff}(b), the assertion follows from the continuity of $f^!$ and formulas for ${}^p\cD^{\leq 0}(X)$ and ${}^p\cD^{\geq 0}(X)$.
\end{Emp}

From now on we will write $\cD_{\cdot}(\C{X})$\label{N:Ddot} to refer both to $\cD_{c}(\C{X})$ and $\cD(\C{X})$.

\begin{Prop} \label{P:tstrpl}
For every $0$-placid affine scheme $X$, we equip $\cD_{\cdot}(X)$ with $t$-structure, constructed in \rp{tplaff} and construction \re{tstrind}.

(a) For every placid $\infty$-stack $\cX$,  there exists a unique $t$-structure $({}^p\cD^{\leq 0}_{\cdot}(\cX),\cD^{\geq 0}_{\cdot}(\cX))$ on $\cD_{\cdot}(\cX)$ such that for every smooth morphism $f:X\to\cX$ from a $0$-placid affine scheme $X$, the pullback $f^!:\cD_{\cdot}(\cX)\to \cD_{\cdot}(Y)$ is $t$-exact.

(b) The subcategory $\mathstrut^{p}\cD^{\geq 0}(\cX)\subset \cD(\cX)$ is closed under filtered colimits.

(c) If $f:\cX\to\cY$ is either a smooth morphism between placid $\infty$-stacks or a topological equivalence, then the pullback $f^!$ is $t$-exact.
\end{Prop}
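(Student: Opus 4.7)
The plan is to construct the $t$-structure via the limit realization $\cD_{\cdot}(\cX)\simeq\lim_{X\to\cX}\cD_{\cdot}(X)$ provided by the canonical presentation $\cX\simeq\colim_{X\to\cX}X$ of \re{propplst}(a), where the limit is taken over smooth morphisms from $0$-placid affine schemes with strongly pro-smooth transitions. The transition functors $h^!$ (for $h:X'\to X$ strongly pro-smooth between $0$-placid affines) are $t$-exact by \rp{tplaff}(b) and \re{tstrind}(b), so \rl{tstr}(b) produces both the existence and the uniqueness of a $t$-structure on the limit for which every evaluation $f^!$ ($f:X\to\cX$ smooth from a $0$-placid affine scheme) is $t$-exact. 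This yields (a), with the explicit description ${}^p\cD^{\leq 0}_{\cdot}(\cX)=\lim_{X\to\cX}{}^p\cD^{\leq 0}_{\cdot}(X)$ and similarly for ${}^p\cD^{\geq 0}_{\cdot}(\cX)$.

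For (b), since $\cD$ is the right Kan extension of a functor valued in $\PrCat$, the category $\cD(\cX)$ is a limit in $\PrCat$; in particular each evaluation $f^!:\cD(\cX)\to\cD(X)$ is continuous. Combined with the fact that each ${}^p\cD^{\geq 0}(X)$ is closed under filtered colimits by construction (\re{tstrind}(a)), the characterization of ${}^p\cD^{\geq 0}(\cX)$ from (a) at once implies that ${}^p\cD^{\geq 0}(\cX)$ is closed under filtered colimits.

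For (c) in the smooth case, let $f:\cX\to\cY$ be a smooth morphism of placid $\infty$-stacks. For any smooth $g:X\to\cX$ from a $0$-placid affine scheme, the composition $f\circ g$ is smooth by \re{propplst}(c), so $(f\circ g)^!=g^!f^!$ is $t$-exact by the characterization in (a) applied to $\cY$; the characterization of the $t$-structure on $\cD_{\cdot}(\cX)$ then forces $f^!$ to be $t$-exact.

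The main obstacle is the topological equivalence case of (c). The strategy is a two-step reduction. First, since $f_{\red}:\cX_{\red}\to\cY_{\red}$ is an isomorphism by definition of a topological equivalence and $f\circ\pi_\cX=\pi_\cY\circ f_{\red}$, one has $\pi_\cX^!\circ f^!=f_{\red}^!\circ\pi_\cY^!$; assuming both $\pi_\cX^!$ and $\pi_\cY^!$ are $t$-exact, the right-hand side is $t$-exact (as $f_{\red}^!$ is an equivalence between $t$-structures induced functorially from the isomorphism $f_{\red}$), and then \rl{remtstr}(d) combined with the fact that $\pi_\cX^!$ is an equivalence (\rco{invtop}(a)) yields the $t$-exactness of $f^!$. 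Second, to prove $t$-exactness of $\pi_\cX^!:\cD_{\cdot}(\cX)\to\cD_{\cdot}(\cX_{\red})$, apply the characterization from (a) to both $\cX$ and $\cX_{\red}$: any smooth covering $\{X\to\cX\}$ of $\cX$ produces via reduction a smooth covering $\{X_{\red}\to\cX_{\red}\}$ of $\cX_{\red}$ (using \rco{red}), and \rl{reduction} identifies $X\times_\cX\cX_{\red}\simeq X_{\red}$, which reduces the claim to $t$-exactness of $\pi_X^!:\cD_{\cdot}(X)\to\cD_{\cdot}(X_{\red})$ for each $0$-placid affine $X$. Writing $X\simeq\lim_\al X_\al$ as a placid limit and invoking \rl{tstr}(a) further reduces this to the case of an fp universal homeomorphism between affine schemes of finite type over $k$, where $t$-exactness is a consequence of the fact that such a morphism induces an equivalence of \'etale sites preserving the dimension function and hence the $!$-adapted perverse $t$-structure of \re{tstr}.
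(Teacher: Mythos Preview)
Your proof is correct and follows essentially the same approach as the paper's. Parts (a), (b), and the smooth case of (c) are handled identically. For the topological equivalence case of (c), you spell out in detail what the paper compresses into the single phrase ``using definitions of $t$-structures and \rl{reduction}, we reduce to the case when $\cY$ is an affine scheme of finite type over $k$''; your two-step reduction (first to $\pi_{\cX}$, then through $0$-placid affines to finite type) is precisely what this phrase means. For the terminal finite-type step, the paper invokes \rl{exact}(c) (universal homeomorphisms are equidimensional with $f^{*,\ren}\simeq f^!$), which is a cleaner citation than your direct appeal to the \'etale-site equivalence plus preservation of the dimension function, but your argument is equally valid.
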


\begin{proof}
(a) Recall (see \re{propplst}(a)) that we have a canonical isomorphism $(\colim_{X\to\cX}X)\to\cX$, where the colimit runs over smooth morphisms
$f:X\to\cX$ from $0$-placid affine schemes $X$, and transition maps are strongly pro-smooth.
This isomorphism induces an equivalence  $\cD_{\cdot}(\cX)\simeq\lim_{X\to \cX}\cD_{\cdot}(X)$, where all transition maps are
$t$-exact by \rp{tplaff} and construction \re{tstrind}. Therefore it follows from \rl{tstr}(b) that there exists a unique $t$-structure on $\cD_{\cdot}(\cX)$ such that all pullbacks $f^!:\cD_{\cdot}(\cX)\to\cD_{\cdot}(X)$ are $t$-exact. Explicitly, ${}^p\cD^{\geq 0}_{\cdot}(\cX)=\lim_{X\to\cX}{}^p\cD^{\geq 0}_{\cdot}(X)$ and similarly for ${}^p\cD^{\leq 0}_{\cdot}(\cX)$.

(b) follows from the corresponding assertion for $0$-placid affine schemes (see \re{tstrind}) and continuity of $f^!$.

(c) Since smooth morphisms are closed under composition (see \re{propplst}(b)), the assertion for smooth $f$ follows from
the explicit description of $({}^p\cD^{\leq 0}_{\cdot}(\cX),{}^p\cD^{\geq 0}_{\cdot}(\cX))$, given above.

Next, for the assertion for topological equivalences, it suffices to show the particular case when $f$ is the canonical morphism
$\cY_{\red}\to\cY$. Next, using definitions of $t$-structures and \rl{reduction}, we reduce to the case when
$\cY$ is an affine scheme of finite type over $k$. In this case, the $t$-exactness of $f^!$ was shown in \rl{exact}(c).
\end{proof}

\begin{Emp} \label{E:extstr}
{\bf Example.} Let $X$ be an algebraic space of finite type over $k$. Then $X$ is a placid $\infty$-stack (see \re{explinfst}(a)), and
the perverse $t$-structure on $\cD_c(X)$ defined in \re{tstr} coincides with the $t$-structure defined in
\rp{tstrpl}(a).

Indeed, let $f:Y\to X$ be an \'etale covering by an affine scheme $Y$. Then  $\cD_c(Y)$ is equipped with a perverse $t$-structure
(see \re{tstr}), pullback $f^!$ is faithful (see \re{shpr}(e)), hence there exists at most one $t$-structure  on $\cD_c(X)$ such that $f^!$ is $t$-exact
(by \rl{remtstr}(d)). On the other hand, it follows from \rl{exact}(c) and \rp{tstrpl}(a) that both above $t$-structures on $\cD_c(X)$ satisfy this property.
\end{Emp}

%
%




\begin{Lem} \label{L:exact2}
(a) Let $\cX$ be a placid $\infty$-stack. Then $\om_{\cX}\in {}^p\cD^{\geq 0}(\cX)$.

(b) Let $f:\cX\to\cY$ be an equidimensional morphism (see \re{clM2}(a)) of placid $\infty$-stacks. Then the functor
$f^!$ is left $t$-exact.


(c) Let $f:\cX\to\cY$ be an fp-locally closed embedding of placid $\infty$-stacks of relative dimension $-d$. Then the pullback $f^*[-d]$ (resp. $f^![-d]$) is right (resp. left) $t$-exact.

(d) In the situation of (c), assume that $\cY$ is smooth. Then $f^{*}(\om_{\cY})\in{}^p\cD^{\leq -2d}(\cX)$.
\end{Lem}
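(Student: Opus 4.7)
The plan is to reduce each part to the case of algebraic spaces of finite type over $k$, where it follows from \rl{exact}, using two layers of reduction: (i) pullback along a smooth cover $\psi\colon Y\to\cY$ with $Y$ a $0$-placid affine scheme, and (ii) pullback along a strongly pro-smooth morphism $\pi\colon Y\to Y_\al$ with $Y_\al\in\Affft_k$. Both $\psi^!$ and $\pi^!$ are $t$-exact (by \rp{tstrpl}(c) and construction), $\psi^!$ is conservative, and the base-change identities of \rp{base} and \rp{glue1} transport $t$-exactness through these reductions. For (a), using $\pi^!\om_\cY=\om_\cX$ for any $\pi$, the two-step reduction brings us to $\om_{Y_\al}\in{}^p\cD^{\geq 0}(Y_\al)$ for $Y_\al\in\Affft_k$; by the gluing definition of the $!$-adapted perverse $t$-structure (\re{tstr}), it suffices to check $\eta_i^!\om_{Y_\al}=\om_{Y_{\al,i}}\in{}^p\cD^{\geq 0}(Y_{\al,i})$ for each equidimensional stratum, and since $Y_{\al,i}\to\pt$ is equidimensional of relative dimension $\dim Y_{\al,i}$, this follows from \rl{exact}(a).

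For (b), I would use \rl{clM}(iii) to choose compatible smooth covers $\psi\colon Y\to\cY$ and $\psi_X\colon X'\to\cX$ by $0$-placid affine schemes such that the induced $g\colon X'\to Y$ belongs to class \re{clM}(a); by conservativity of $\psi_X^!$, left $t$-exactness of $f^!$ reduces to that of $g^!$. By the defining property of \re{clM}(a), after strongly pro-smooth approximation $Y\to Y_\al$, the map $g$ factors as a strongly pro-smooth morphism followed by an equidimensional morphism $g_\al$ in $\Affft_k$, so \rl{exact}(a) applies. For (c), decompose $f$ via \rl{fplocl} as $\cX\overset{j}{\hra}\cZ\overset{i}{\hra}\cY$ with $j$ fp-open of relative dimension $0$ (so $j^!=j^*$ is $t$-exact) and $i$ fp-closed of relative dimension $-d$. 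For $i$, a similar two-layer reduction via smooth and strongly pro-smooth covers gives approximations $i_Y\colon X_Y\hra Y$ and $i_\al\colon X_\al\hra Y_\al$ of the same relative dimension; using $i_Y^!\pi^!=\pi'^!i_\al^!$ and the base change $i_Y^*\pi^!\simeq\pi'^!i_\al^*$ (variant of \rp{base}(d)), we reduce to \rl{exact}(b).

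For (d), I would use the decomposition $f=i\circ j$ to write $f^*=j^!\circ i^*$ (via adjointness of $f_!=i_!\circ j_*$ together with $j^*=j^!$), so since $j^!$ is $t$-exact it suffices to prove $i^*\om_\cY\in{}^p\cD^{\leq -2d}(\cZ)$ for fp-closed $i$. Using a smooth cover $\psi\colon Y\to\cY$ by a strongly pro-smooth $0$-placid affine $Y$ (available since $\cY$ is smooth), together with $\psi^!\om_\cY=\om_Y$ and the base change $i_Y^*\psi^!\simeq\psi_Z^!i^*$ from \rp{glue1}(b), reduces to $i_Y^*\om_Y\in{}^p\cD^{\leq -2d}(X_Y)$. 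Next, via a strongly pro-smooth $\pi\colon Y\to Y_\al$ with $Y_\al$ smooth of dimension $d_\al$ and $\om_Y=\pi^!\om_{Y_\al}$, the base change $i_Y^*\pi^!\simeq\pi'^!i_\al^*$ from \rp{base}(d) reduces to $i_\al^*\om_{Y_\al}\in{}^p\cD^{\leq -2d}(X_\al)$. Since $Y_\al$ is smooth, $\om_{Y_\al}\simeq\qlbar_{Y_\al}\lan d_\al\ran$, so $i_\al^*\om_{Y_\al}\simeq\qlbar_{X_\al}\lan d_\al\ran$. Equidimensionality of $i_\al$ forces $X_\al$ equidimensional of dimension $d_\al-d\geq 0$, whence $\qlbar_{X_\al}\in{}^p\cD^{\leq 0}(X_\al)$ (the constant sheaf on an equidimensional scheme of dimension $e$ lies in degree $\leq 0$ of the $!$-adapted $t$-structure, which is the classical perverse structure shifted by $e$ to the left). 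Therefore $\qlbar_{X_\al}\lan d_\al\ran\in{}^p\cD^{\leq -2d_\al}(X_\al)\subseteq{}^p\cD^{\leq -2d}(X_\al)$ using $d_\al\geq d$. The main obstacle is (d): coordinating the base-change identities between $*$- and $!$-pullbacks for fp-proper and strongly pro-smooth morphisms across several reduction layers, and carefully tracking the renormalization shift $\lan d_\al\ran$ to exploit the non-trivial inequality $d_\al\geq d$ and obtain the sharp bound $-2d$ rather than the weaker $-d$ that would follow formally from part (c).
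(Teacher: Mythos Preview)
Your arguments for (a)--(c) are correct and follow essentially the same reduction pattern as the paper. For (d), your reduction strategy is also sound, but the endgame contains a genuine error: the claim that $\qlbar_{X_\al}\in{}^p\cD^{\leq 0}(X_\al)$ for $X_\al$ equidimensional of dimension $e=d_\al-d$ is false. In the $!$-adapted $t$-structure one has ${}^p\cD^{\leq n}(X_\al)={}^{p_{\cl}}\cD^{\leq n-e}(X_\al)$, while the constant sheaf only satisfies $\qlbar_{X_\al}\in{}^{p_{\cl}}\cD^{\leq e}(X_\al)$; hence $\qlbar_{X_\al}\in{}^p\cD^{\leq 2e}(X_\al)={}^p\cD^{\leq 2(d_\al-d)}(X_\al)$, not ${}^p\cD^{\leq 0}$. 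With this correction your computation gives
\[
\qlbar_{X_\al}\lan d_\al\ran\in{}^p\cD^{\leq 2(d_\al-d)-2d_\al}(X_\al)={}^p\cD^{\leq -2d}(X_\al)
\]
on the nose, so the inequality $d_\al\geq d$ is a red herring and your ``main obstacle'' dissolves.

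The paper's argument for (d) is more streamlined and avoids this bookkeeping entirely: after reducing (as you do) to a locally closed embedding $f:X\hra Y$ with $Y\in\Affft_k$ smooth, it uses the renormalized $*$-pullback $f^{*,\ren}:=f^*\lan\un{\dim}_f\ran$. Since $Y$ is smooth, $\om_Y\simeq\pr_Y^!(\om_\pt)\simeq\pr_Y^{*,\ren}(\om_\pt)$, and by additivity of dimension functions $f^{*,\ren}\circ\pr_Y^{*,\ren}\simeq\pr_X^{*,\ren}$. Thus $f^{*,\ren}(\om_Y)\simeq\pr_X^{*,\ren}(\om_\pt)\in{}^p\cD^{\leq 0}(X)$ by \rl{exact}(a), and unwinding $f^{*,\ren}=f^*\lan -d\ran$ yields $f^*(\om_Y)\in{}^p\cD^{\leq -2d}(X)$ directly, without ever introducing $d_\al$.
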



\begin{proof}

(a) Assume first that $X\in\Affft_k$, and let $\pi:X\to\pt$ be the projection. If $X$ is locally equidimensional, then $\om_X=\pi^!(\om_{\pt})\in
{}^p\cD^{\geq 0}(X)$ by \rl{exact}(a). In the general case, let $\{X_i\}_i$ be the equidimensional stratification from \re{leq}(c).
Since the assertion holds for each $X_i$, we have $\eta_i^!(\om_X)\simeq\om_{X_i}\in {}^p\cD^{\geq 0}(X_i)$ for all $i$, thus
$\om_X\in {}^p\cD^{\geq 0}(X)$.

Next, let $X\in\Aff_k$ be an affine scheme with a placid presentation $X\simeq\lim_{\al}X_{\al}$.  Then we have $\om_X\simeq\pr_{\al}^!(\om_{X_{\al}})\in {}^p\cD^{\geq 0}(X)$, because $\om_{X_{\al}}\in {}^p\cD^{\geq 0}(X_{\al})$, and $\pr_{\al}^!$ is
$t$-exact.

Then, for a placid $\infty$-stack $\cX$, choose a smooth covering $f=\sqcup_{\al}f_{\al}:\sqcup_{\al}X_{\al}\to\cX$, where
each $X_{\al}$ is an affine scheme with a placid presentation. Then, by the proven above, $f_{\al}^!(\om_{\cX})\simeq\om_{X_{\al}}\in {}^p\cD^{\geq 0}(X_{\al})$ for all $\al$, therefore $\om_{\cX}\in {}^p\cD^{\geq 0}(\cX)$.


(b) 
Choose a smooth covering $Y\to \cY$, where $Y\simeq\sqcup_{\al}Y_{\al}$, and each $Y_{\al}$ is a $0$-placid affine scheme. Since it suffices to show a result after a base change to each $Y_{\al}$, we can assume that $\cY$ is a $0$-placid affine scheme $Y$. Then, choose a smooth covering $X\to \cX$, where $X\simeq\sqcup_{\al}X_{\al}$, and each $X_{\al}$ is a $0$-placid affine scheme. Since it suffices to show the assertion for each $X_{\al}\to X\to\cX\to Y$, we can assume that $\cX$ is a $0$-placid affine scheme $X$.

In this case, it suffices to show the assertion for $\cD_c$. Choose placid presentations $X\simeq\lim_{\al}X_{\al}$ and $Y\simeq\lim_{\beta}Y_{\beta}$. Then $\cD_c(\cY)\simeq \colim_{\beta}\cD_c(Y_{\beta})$, so it suffices to show the left $t$-exactness of each $f^{!}\circ \pr_{\beta}^!\simeq(\pr_{\beta}\circ f)^!$.

By \rl{clM}(iv), composition $\pr_{\beta}\circ f$ decomposes as $X\overset{\pr_{\al}}{\lra}X_{\al}\overset{f_{\al,\beta}}{\lra}Y_{\beta}$, where $f_{\al,\beta}$ is  equidimensional. Then $\pr_{\al}^!$ is $t$-exact, because $\pr_{\al}$ is strongly pro-smooth, and  while $f^!_{\al,\beta}$ is left $t$-exact by \rl{exact}(a). Hence
$(\pr_{\beta}\circ f)^!\simeq  \pr_{\al}^!\circ f^!_{\al,\beta}$ is left $t$-exact, as claimed.

(c) 
By \rl{fplocl}, $f$ decomposes as a composition $f=i\circ j$ of an fp-open and an fp-closed embedding. Hence,
by \rp{glue1}(b), the pullback $f^*$ satisfies the base change with respect to smooth $!$-pullbacks. Thus (as in (b)), we reduce to the case when $\cY$ is a $0$-placid affine scheme, and it suffices to show the assertion for $\cD_c$ instead of $\cD$.

Next, as in (b), we choose a placid presentation  $\cY\simeq\lim_{\al}Y_{\al}$. Then the fp-locally closed embedding $f:\cX\to \cY$ is a pullback of a locally closed embedding $f_{\al}:X_{\al}\to Y_{\al}$ of relative dimension $-d$, and it suffices to show the corresponding assertions for
$f_{\beta}:=f_{\al}\times_{Y_{\al}}Y_{\beta}$ for all sufficiently large $\beta>\al$. In this case, the assertion follows from
\rl{exact}(b).

(d) Arguing as in (c), we reduce to the case when $\cY$ is a smooth affine scheme $Y$ of finite type over $k$, and $f:X\hra Y$ is a locally closed embedding. Then $\pr_{Y}:Y\to\pt$ is equidimensional, and we have a canonical isomorphism $\om_Y\simeq\pr_Y^!(\om_{\pt})\simeq \pr_Y^{*,\ren}(\om_{\pt})$. Therefore we have
\[
f^{*}(\om_Y)\lan d\ran\simeq f^{*,\ren}(\om_Y)\simeq f^{*,\ren}(\pr_Y^{*,\ren}(\om_{\pt}))\simeq \pr_X^{*,\ren}(\om_{\pt})\in {}^p\cD^{\leq 0}(X)
\]
by \rl{exact}(a), thus $f^{*}(\om_Y)\in {}^p\cD^{\leq -2d}(X)$.
%
\end{proof}

%

\begin{Lem} \label{L:exact3}
Let $f:\cX\to\cY$ be a morphism between placid $\infty$-stacks, which is ind-fp-proper, locally fp-representable, and equidimensional of relative dimension $d$. Then the functor $f_![-2d]$ is left $t$-exact.
\end{Lem}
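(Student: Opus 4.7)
The strategy is to reduce to the classical fact \re{cltstr}(b) that for a morphism of algebraic spaces of finite type whose fibers have dimension at most $d$, the functor $f_{*}[-d]$ is left $t$-exact for the classical middle perverse $t$-structure. First, for any smooth morphism $g:Y\to\cY$ with $Y$ a $0$-placid affine scheme, base change (\rp{locindpr}) gives $g^{!}f_{!}\simeq\widetilde{f}_{!}\widetilde{g}^{!}$, and $\widetilde{g}^{!}$ is $t$-exact (\rp{tstrpl}(c)); since the $t$-structure on $\cD(\cY)$ is detected by such pullbacks, and $\widetilde{f}$ inherits local ind-fp-properness, local fp-representability, and equidimensionality of relative dimension $d$ (the last via \rl{pb}(b), as smooth morphisms are pro-universally open by \rco{clMc}(a)), I reduce to $\cY=Y$ being a $0$-placid affine scheme. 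Then $\cX$ is a placid algebraic space locally fp over $Y$ by \rl{glplacid}(b); and a further affine \'etale base change on $Y$ (whose pullback is faithful and $t$-exact) lets me assume $f$ itself is ind-fp-proper, so $\cX\simeq\colim_{\al}X_{\al}$ with fp-closed inclusions $i_{\al}:X_{\al}\hra\cX$ and fp-proper restrictions $f_{\al}:=f\circ i_{\al}:X_{\al}\to Y$.

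By \rco{lurie} applied to $\cD(\cX)\simeq\colim_{\al}\cD(X_{\al})$, one has $K\simeq\colim_{\al}(i_{\al})_{!}(i_{\al})^{!}K$; since $f_{!}$ commutes with colimits and ${}^{p}\cD^{\geq-2d}(Y)$ is closed under filtered colimits (\rp{tstrpl}(b)), it suffices to show $(f_{\al})_{!}(i_{\al})^{!}K\in{}^{p}\cD^{\geq-2d}(Y)$ for each $\al$. Now $(i_{\al})^{!}$ is left $t$-exact by \rl{exact2}(b) (as $i_{\al}$ is equidimensional of relative dimension $0$), so $(i_{\al})^{!}K\in{}^{p}\cD^{\geq 0}(X_{\al})$; and each $f_{\al}$ is fp-proper with fibers of dimension $\leq d$. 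Since $X_{\al}$ is qcqs, $\cD(X_{\al})\simeq\Ind\cD_{c}(X_{\al})$, and using closure under filtered colimits together with perverse truncation I further reduce to a compact $L\in{}^{p}\cD_{c}^{\geq 0}(X_{\al})$.

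Choose a placid presentation $Y\simeq\lim_{\gamma}Y_{\gamma}$ with $Y_{\gamma}\in\Affft_{k}$. The fp-proper morphism $f_{\al}$ descends to a proper morphism $f_{\al,\gamma}:X_{\al,\gamma}\to Y_{\gamma}$ in $\Algft_{k}$ with fibers of dimension $\leq d$, and after enlarging $\gamma$, the compact $L$ descends to some $L_{\gamma}\in\cD_{c}(X_{\al,\gamma})$ which, after perverse truncation, lies in ${}^{p}\cD_{c}^{\geq 0}(X_{\al,\gamma})$ (using $t$-exactness of the strongly pro-smooth $!$-pullback $(\pi'_{\gamma})^{!}$). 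Proper base change (\rp{base}(b)) yields $(f_{\al})_{!}L\simeq\pi_{\gamma}^{!}(f_{\al,\gamma})_{!}L_{\gamma}$, and since $\pi_{\gamma}^{!}$ is $t$-exact, the problem reduces to showing $(f_{\al,\gamma})_{!}L_{\gamma}\in{}^{p}\cD^{\geq-2d}(Y_{\gamma})$. Via the identification ${}^{p}\cD^{\geq 0}(Z)={}^{p_{\cl}}\cD^{\geq-\dim Z}(Z)$ on equidimensional strata $Z$, the identity $(f_{\al,\gamma})_{!}\simeq(f_{\al,\gamma})_{*}$ from properness, and the fibre-dimension bound $\dim_{x}X_{\al,\gamma}\leq\dim_{f_{\al,\gamma}(x)}Y_{\gamma}+d$, this is exactly the classical statement \re{cltstr}(b). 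The main technical hurdle is the careful dimension-shift bookkeeping between the $!$-adapted and classical perverse $t$-structures, which must be carried out stratum-by-stratum since $X_{\al,\gamma}$ need not be equidimensional.
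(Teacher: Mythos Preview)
Your reduction to $\cY=Y$ a $0$-placid affine scheme and to the ind-fp-proper setting $\cX\simeq\colim_{\al}X_{\al}$ mirrors the paper's argument, as does the use of \rco{lurie} and closure of ${}^{p}\cD^{\geq 0}$ under filtered colimits. The gap is in the next step: you claim that $(i_{\al})^{!}$ is left $t$-exact by \rl{exact2}(b) ``as $i_{\al}$ is equidimensional of relative dimension $0$''. This is not justified. The hypothesis \rl{exact2}(b) requires $i_{\al}:X_{\al}\hra\cX$ to be of pure codimension $0$, i.e., $\un{\dim}_{i_{\al}}\equiv 0$. But $X_{\al}$ is merely an fp-closed subspace of the (equidimensional over $Y$) algebraic space $\cX$, and nothing forces $X_{\al}$ to contain whole irreducible components of $\cX$; a priori $X_{\al}$ can sit in positive codimension at some points. (Concretely, after descent to finite type one could have $X_{\al,\g}$ a proper subvariety of an irreducible component of $X_{\g}$.) Without this, you have no control over $(i_{\al})^{!}K$, and the subsequent reduction to $(f_{\al})_{!}$ on a compact $L\in{}^{p}\cD_{c}^{\geq 0}(X_{\al})$ does not go through.

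The paper circumvents exactly this difficulty by passing to left adjoints: to prove $(f_{\al})_{!}(i_{\al})^{!}[-2d]$ is left $t$-exact, it suffices that the left adjoint $(i_{\al})_{!}f_{\al}^{*}[2d]$ is right $t$-exact. This is then checked after pulling back along an \'etale $\eta:U\to\cX$ with $U$ affine fp over $Y$; since $\eta$ factors through some $X_{\beta}$, a base change rewrites $\eta^{!}(i_{\al})_{!}f_{\al}^{*}[2d]\simeq i_{!}i^{*}(\eta^{!}f_{\beta}^{*}[2d])$. The point is that now the equidimensionality is used for the composite $f\circ\eta:U\to Y$ (which \emph{is} equidimensional of relative dimension $d$, because $\eta$ is \'etale and $f$ is), yielding $\eta^{!}f_{\beta}^{*}[2d]\simeq(f\circ\eta)^{*,\ren}$, right $t$-exact by \rl{exact}(a); and $i_{!}i^{*}$ is right $t$-exact by \rco{closedemb}. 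Thus the equidimensionality of $f$ enters through the \'etale chart of the \emph{whole} $\cX$, not through any individual $X_{\al}$---which is precisely what your splitting $(f_{\al})_{!}\circ(i_{\al})^{!}$ cannot access.
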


\begin{proof}
Replacing $f$ by a pullback with respect to a smooth morphism $Y\to\cY$, we can assume that $\cY$ is a $0$-placid affine scheme $Y$, and $f:\cX\to Y$ is an ind-fp-proper morphism. Choose a  presentation $\cX\simeq\colim_{\al} X_{\al}$, where each $f_{\al}:X_{\al}\to Y$ is fp-proper and all transition maps are fp-closed embeddings. Replacing $\cX$ by  $\cX_{\red}$ (and $X_{\al}$ by $X_{\al,\red}$), we can assume that $\cX$  is an algebraic space $X$, which is locally fp and  equidimensional of relative dimension $d$ over $Y$ (use \rco{red-fp}).

Denote by  $i_{\al}:X_{\al}\to X$ the inclusion. By \rco{lurie}, for every $K\in\cD(X)$, we have a natural isomorphism $K\simeq\colim_{\al}(i_{\al})_!i_{\al}^!K$, which induces an isomorphism
$f_!(K)\simeq \colim_{\al}(f_{\al})_!i_{\al}^!(K)$. Since ${}^p\cD^{\geq 0}(X)\subset \cD(X)$ is closed under filtered colimits, it suffices to show that each composition $(f_{\al})_!i_{\al}^![-2d]$ is left $t$-exact.

Next, since $f_{\al}$ is an fp-proper morphism between algebraic spaces admitting placid presentations, $(f_{\al})_!$ has a left adjoint $f_{\al}^*$ (by  \rp{adj}(c)). Therefore passing to left adjoints, it suffices to show that
each composition $(i_{\al})_!f_{\al}^*[2d]:\cD(Y)\to \cD(X)$ is right $t$-exact. Thus, it suffices to show that for every \'etale morphism $\eta:U\to X$, where $U$ is an affine scheme, fp over $Y$, the composition $\eta^!(i_{\al})_!f_{\al}^*[2d]$ is right $t$-exact.

By definition, $\eta:U\to X$ factors through some $X_{\beta}$. Let $i_{\al,\beta}$ be the embedding $X_{\al}\to X_{\beta}$, and let
$i:U_{\al}\to U$ be the pullback of $i_{\al}$. Since $f_{\al}^*\simeq i_{\al,\beta}^*f_{\beta}^*$, we have a natural (base change) isomorphism
\[
\eta^!(i_{\al})_!f_{\al}^*[2d]\simeq \eta^!(i_{\al})_!i_{\al,\beta}^*f_{\beta}^*[2d]\simeq i_!i^*(\eta^!f_{\beta}^*[2d]).
\]
Since composition $\eta^! f^*_{\beta}[2d]$ is right $t$-exact (see \rcl{rtexact} below), and
composition $i_!i^*$ is right $t$-exact (see \rco{closedemb} below), the assertion follows.
\end{proof}

\begin{Cl} \label{C:rtexact}
Let $U\overset{\eta}{\to}X\overset{f}{\to}Y$ be morphisms of algebraic spaces admitting placid presentations
such that $f$ is fp-proper, $\eta$ is \'etale, and $f\circ \eta$ is equidimensional of relative dimension $d$. Then the composition
$\eta^!f^*[2d]$ is right $t$-exact.
\end{Cl}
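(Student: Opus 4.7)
My plan is to reduce the claim to the setting of algebraic spaces of finite type over $k$, where it follows directly from \rl{exact}(a). The reduction proceeds in three steps.

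First, I would reduce to the case where $U$ is a $0$-placid affine scheme. Choose an \'etale covering $h\colon V\to U$ with $V$ a $0$-placid affine scheme (constructed as in the proof of \rl{glplacid}(a)). Since $h^!$ is $t$-exact by \rp{tstrpl}(c) and faithful by \re{shpr}(e), \rl{remtstr}(c) reduces the problem to verifying the assertion with $\eta$ replaced by the composition $\eta h\colon V\to X$, which is still \'etale, while $f\eta h$ remains equidimensional of relative dimension $d$.

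Second, with $U$ being $0$-placid affine, I would set up compatible placid presentations $Y\simeq\lim_\alpha Y_\alpha$, $X\simeq\lim_\alpha X_\alpha$, $U\simeq\lim_\alpha U_\alpha$ such that, for $\alpha$ sufficiently large, $f_\alpha\colon X_\alpha\to Y_\alpha$ is proper, $\eta_\alpha\colon U_\alpha\to X_\alpha$ is \'etale, and $f_\alpha\eta_\alpha$ is equidimensional of relative dimension $d$. The first two properties follow by standard limit arguments ($f$ is fp-proper, and $\eta$ is fp-\'etale since $U$ is affine and $X$ is qcqs). The equidimensionality of $f_\alpha\eta_\alpha$---which will be the main technical point---follows from the characterization \rl{clM}((ii)) applied to the equidimensional $f\eta$, possibly after shrinking each $U_\alpha$ to an open subset that contains the image of the canonical projection $U\to U_\alpha$.

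Third, I would conclude by a base change argument. The $t$-structure on $\cD(Y)$ from \rp{tstrpl}(a) agrees with the $\Ind$-extension of the one on $\cD_c(Y)$ (by \rp{tstrpl}(b) and \rl{remtstr}(a)), and by \rl{tstr}(a) applied to $\cD_c(Y)\simeq\colim_\alpha\cD_c(Y_\alpha)$ with $t$-exact strongly pro-smooth transitions, every object of ${}^p\cD^{\leq 0}(Y)$ is a filtered colimit of objects $\ins_\alpha(K_\alpha)$ with $K_\alpha\in{}^p\cD_c^{\leq 0}(Y_\alpha)$. Since ${}^p\cD^{\leq 0}(U)$ is closed under filtered colimits, it suffices to treat such a $K$. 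By \rp{adjlu}(c), $f^*(K)\simeq\ins_\alpha(f_\alpha^*K_\alpha)$, and base change \rp{base}(c) applied to the Cartesian square involving strongly pro-smooth $X\to X_\alpha$ and \'etale $\eta,\eta_\alpha$ yields $\eta^!f^*(K)\simeq\ins_\alpha\bigl((f_\alpha\eta_\alpha)^*K_\alpha\bigr)$, using $\eta_\alpha^!=\eta_\alpha^*$ for the \'etale $\eta_\alpha$. Finally, \rl{exact}(a) applied to the equidimensional $f_\alpha\eta_\alpha$ of relative dimension $d$ shows that $(f_\alpha\eta_\alpha)^{*,\ren}=(f_\alpha\eta_\alpha)^*[2d](d)$ is right $t$-exact; since Tate twists do not affect the $t$-structure and $\ins_\alpha$ is right $t$-exact (being the left adjoint of the $t$-exact $\pi_\alpha^{U,!}$), we conclude $\eta^!f^*[2d](K)\in{}^p\cD^{\leq 0}(U)$, as required.
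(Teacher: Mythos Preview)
Your proof is correct and follows essentially the same approach as the paper: reduce to the finite-type case and apply \rl{exact}(a) to the equidimensional composite $f_\alpha\eta_\alpha$. The paper's proof compresses your three steps into the single phrase ``arguing as in \rl{exact2}'' and then writes $\eta^!f^*[2d]\simeq\eta^*f^*[2d]=(f\eta)^*[2d]\simeq(f\eta)^{*,\ren}$ directly at the finite-type level; your explicit unpacking via compatible presentations, base change for $f^*$ along strongly pro-smooth projections, and the shrinking needed to make $f_\alpha\eta_\alpha$ equidimensional is exactly what lies behind that phrase. One minor remark: in your Step~3 the identification $\eta^!\pr_{X,\alpha}^!\simeq\pr_{U,\alpha}^!\eta_\alpha^!$ is just functoriality of $!$-pullback (the square commutes), so no base-change result is needed there; and $\ins_\alpha=\pr_{U,\alpha}^!$ is in fact $t$-exact, not merely right $t$-exact.
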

\begin{proof}
Arguing as in \rl{exact2}, we reduce to the case when $U, X$ and $Y$ are algebraic spaces of finite type over $k$. Since $\eta$ is \'etale, while
$f\circ \eta$ is equidimensional of relative dimension $d$, the composition $\eta^!f^*[2d]$ is isomorphic to
\[
\eta^!f^*[2d]\simeq \eta^*f^*[2d]=(f\circ\eta)^*[2d]\simeq(f\circ\eta)^{*,\ren}.
\]
Therefore it is right $t$-exact by \rl{exact}(a).
\end{proof}






\subsection{The case of placidly stratified $\infty$-stacks}
In this section we will define a larger class of $\infty$-stacks, which admit perverse $t$-structures.

\begin{Emp} \label{E:pervfunc}
{\bf Perversity function.}
By a {\em perversity}\label{I:perversity} on an $\cI$-stratified $\infty$-stack $\cX$ (see \re{stratified}), we mean a function $p_{\nu}:\C{I}\to\B{Z}:\al\mapsto \nu_{\al}$, \label{N:pnu}\label{N:nual} or, what is the same, a collection $p_{\nu}=\{\nu_{\al}\}_{\al\in\cI}$ of integers.
\end{Emp}

Recall that if $(\cX, \{\cX_{\al}\}_{\al})$ is a placidly stratified $\infty$-stack, admitting gluing of sheaves (see \rd{glue}), then every $\C{D}(\cX_{\al})$ is equipped with a perverse $t$-structure ${}^p\C{D}(\cX_{\al})$ (see \rp{tstrpl}), and  we have two pullback functors $\eta_{\al}^!,\eta_{\al}^*:\C{D}(\cX)\to \C{D}(\cX_{\al})$.

\begin{Prop} \label{P:tstrglue}
Let $(\cX,\{\cX_{\al}\}_{\al\in\cI})$ be a placidly stratified $\infty$-stack, admitting gluing of sheaves, and equipped with a perversity
$p_{\nu}=\{\nu_{\al}\}$.

(a) If the stratification is bounded, then there exists a unique $t$-structure
$({}^{p_{\nu}}\cD^{\leq 0}(\cX),{}^{p_{\nu}}\cD^{\geq 0}(\cX))$ on $\cD(\cX)$ such that
\begin{equation} \label{Eq:geq0}
\mathstrut^{p_{\nu}}\cD^{\geq 0}(\cX)=\{K\in \cD(\cX)~\vert~\eta^{!}_{\alpha}K\in \mathstrut^{p}\cD^{\geq -\nu_{\al}}(\cX_{\alpha})\text{ for all }\alpha\in\C{I}\},
\end{equation}
\begin{equation} \label{Eq:leq0}
\mathstrut^{p_{\nu}}\cD^{\leq 0}(\cX)=\{K\in \cD(\cX)~\vert~\eta^{*}_{\alpha}K\in \mathstrut^{p}\cD^{\leq -\nu_{\al}}(\cX_{\alpha})\text{ for all }\alpha\in\C{I}\}.
\end{equation}
Moreover, the subcategory $\mathstrut^{p_{\nu}}\cD^{\geq 0}(\cX)\subset \cD(\cX)$ is closed under filtered colimits.

(b) In the general case, there exists a unique  $t$-structure
$({}^{p_{\nu}}\cD^{\leq 0}(\cX),{}^{p_{\nu}}\cD^{\geq 0}(\cX))$ on $\cD(\cX)$ satisfying \form{geq0}.
\end{Prop}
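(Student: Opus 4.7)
The plan is to construct the $t$-structures in three nested stages mirroring \re{consstr}: first for finite constructible stratifications by induction via recollement, then for bounded stratifications through the limit description $\cX\simeq\colim_{U\in\cJ}\cX_U$, and finally for general stratifications through the colimit of topologically fp-closed adapted substacks. Uniqueness in both (a) and (b) is immediate from \rl{remtstr}(e), so only existence requires work. On a single stratum $\cX_\al$ the input is the perverse $t$-structure of \rp{tstrpl} shifted by $-\nu_\al$, and by \rco{invtop}(b) together with \rp{tstrpl}(c) this transports $t$-exactly along any topological equivalence.

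For the finite case I induct on $|\cI|$, the base case being the single-stratum situation above. For the inductive step, \re{remcons}(a) produces a topologically fp-closed stratum $i:\cZ=\cX_{\al_n}\hookrightarrow\cX$ with complementary fp-open $j:\cU\hookrightarrow\cX$, where $\cU$ is placidly stratified by the remaining strata (inheriting a $t$-structure by induction) and admits gluing of sheaves by \rl{openglue}(a). The gluing hypothesis on $\cX$ together with \rl{glue1}, \rl{glue} and \re{exloccl}(a) supplies the complete recollement data $(j_!\dashv j^!\dashv j_*,\ i^*\dashv i_!\dashv i^!)$: full faithfulness of $i_!,j_!,j_*$; the vanishings $j^!i_!\simeq 0$ and $i^*j_!\simeq 0$; and the two canonical fiber sequences of \rl{glue1}(d) and \rl{glue}(c). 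The classical BBD gluing theorem \cite[1.4.10]{BBD}, whose proof is purely $t$-structural and transposes verbatim to the $\infty$-categorical setting from these inputs, then produces a $t$-structure on $\cD(\cX)$ with $K\in\mathstrut^{p_\nu}\cD^{\geq 0}(\cX)$ (resp.\ $\mathstrut^{p_\nu}\cD^{\leq 0}(\cX)$) iff $j^!K$ and $i^!K$ (resp.\ $j^!K$ and $i^*K$) lie in the corresponding subcategories; unwinding the inductive descriptions on $\cU$ and $\cZ$ yields \form{geq0} and \form{leq0}. Closure of $\mathstrut^{p_\nu}\cD^{\geq 0}$ under filtered colimits propagates inductively from \rp{tstrpl}(b), since $j^!$ and $i^!$ both preserve filtered colimits, being right adjoints of $j_!$ and $i_!$.

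For the bounded case, write $\cX\simeq\colim_{U\in\cJ}\cX_U$ as in \re{consstr}(b); each $\cX_U$ is finitely stratified, hence carries a $t$-structure by the finite case and admits gluing of sheaves by \rl{openglue}(a). For any transition $j_{UV}:\cX_U\hookrightarrow\cX_V$, $t$-exactness of $j_{UV}^!$ follows from \form{geq0}-\form{leq0} on $\cX_V$ combined with the identities $\eta_\al^!\,j_{UV}^!=\eta_\al^!$ and $\eta_\al^*\,j_{UV}^!=\eta_\al^*\,j_{UV}^*=\eta_\al^*$ for strata $\cX_\al\subset\cX_U$ (using that $j_{UV}$ is an open, hence \'etale, embedding so $j_{UV}^!\simeq j_{UV}^*$). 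Since $\cD(\cX)\simeq\lim_U\cD(\cX_U)$ by \re{shpr}(d), \rl{tstr}(b) endows $\cD(\cX)$ with the sought $t$-structure; formulas \form{geq0} and \form{leq0} then drop out because every stratum $\cX_\al$ lies in some $\cX_U$, and filtered-colimit closure of $\mathstrut^{p_\nu}\cD^{\geq 0}$ descends from the property on each $\cX_U$ combined with continuity of each $\ev_U=j_U^!$ (a right adjoint since $(j_U)_!$ exists by \rl{glue}(a)).

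For the general case (b), write $\cX\simeq\colim_\la\cX_\la$ as in \re{consstr}(c) with each $\cX_\la$ bounded-stratified, equipped with the $t$-structure of (a) and admitting gluing by \rl{openglue}(a). The key calculation is that each transition $(i_{\la\mu})_!$ is $t$-exact: applying \rl{loccl2} and \rco{glue} stratum by stratum gives $\eta_\al^!(i_{\la\mu})_!\simeq\eta_\al^!$ or $0$, and analogously $\eta_\al^*(i_{\la\mu})_!\simeq\eta_\al^*$ or $0$, according to whether $\cX_\al\subseteq\cX_\la$ or $\cX_\al\cap\cX_\la=\emptyset$ by $\{\cX_\al\}$-adaptedness, so \form{geq0}-\form{leq0} are preserved. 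Continuity of the right adjoints $i_{\la\mu}^!$ follows from $(i_{\la\mu})_!$ preserving compact objects, which is \rl{glue1}(b). Since $\La$ is filtered, \rt{lu1} identifies $\cD(\cX)\simeq\colim_\la\cD(\cX_\la)$ with $\ins_\la=(i_\la)_!$ and $\ev_\la=i_\la^!$, and \rp{limtstr} supplies the desired $t$-structure along with closure of $\mathstrut^{p_\nu}\cD^{\geq 0}(\cX)$ under filtered colimits; its characterization from \re{remlimtstr}(a) reads $K\in\mathstrut^{p_\nu}\cD^{\geq 0}(\cX)$ iff $i_\la^!K\in\mathstrut^{p_\nu}\cD^{\geq 0}(\cX_\la)$ for all $\la$, which via $\eta_\al^!\,i_\la^!=\eta_\al^!$ for $\cX_\al\subset\cX_\la$ and the cover $\cX=\bigcup_\la\cX_\la$ is precisely \form{geq0}. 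The principal obstacle throughout lies in the finite case, namely lifting the classical BBD recollement construction of glued $t$-structures to the $\infty$-categorical level from just the six-functor and recollement data provided by the earlier lemmas, i.e.\ producing genuine truncation functors on $\cD(\cX)$ rather than only on its homotopy category; in our compactly generated framework this reduction is routine but warrants explicit verification.
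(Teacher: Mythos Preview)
Your proof follows essentially the same three-stage architecture as the paper's own argument (finite via BBD recollement, bounded via \rl{tstr}(b), general via \rt{lu1} and \rp{limtstr}), and the logic is sound. One recurring slip: you justify continuity of $j^!$, $i^!$, $j_U^!$, and $i_{\la\mu}^!$ by saying they are ``right adjoints'' of the corresponding $!$-pushforwards, but right adjoints preserve \emph{limits}, not colimits. The correct reason is simply that every $f^!$ is a morphism in $\PrCat$ by construction (see \re{catqcqs} and \re{shpr}), hence continuous; alternatively, $j^!$ is also a left adjoint of $j_*$. Your appeal to ``$(i_{\la\mu})_!$ preserving compact objects'' via \rl{glue1}(b) is likewise shaky, since that lemma only asserts preservation of $\cD_c$, and $\cD(\cX_\la)$ is not known to be compactly generated in this generality. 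With these justifications corrected, your argument is complete; in particular, your direct verification of the $t$-exactness of $(i_{\la\mu})_!$ via \rl{loccl2} and \rco{glue} is a fine substitute for the paper's forward reference to \rl{midext}(a).
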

	
\begin{proof}

(a) Assume first that $\cI$ is finite. In this case, the assertion follows from the gluing theorem \cite[Theorem 1.4.10]{BBD} by induction on $|\C{I}|$:

Since for $|\C{I}|=1$ the assertion is clear, we may assume that $|\C{I}|>1$.
By \re{remcons}(a), there exists $\beta\in\cI$ such that $Z:=\cX_{\beta}\subset\cX$ is topologically fp-closed, and $\{\cX_{\al}\}_{\al\in \C{I}\sm\beta}$ forms a constructible stratification of $\C{U}:=\cX\sm\cZ$. Then $(\cU, \{\cX_{\al}\}_{\al\in \C{I}\sm\beta})$ is a placidly stratified $\infty$-stack, admitting  gluing of sheaves (by \rl{openglue}(a)).
Therefore, by the induction hypothesis, there exists a unique $t$-structure $(\mathstrut^{p_{\nu}}\cD^{\leq 0}(\cU),\mathstrut^{p_{\nu}}\cD^{\geq 0}(\cU))$ on $\cU$ satisfying \form{geq0} and \form{leq0} for $\al\in\cI\sm\beta$.

Now let $i:\cZ\hra\cX$ and $j:\cU\hra \cX$ be the corresponding topologically fp-closed and fp-open embeddings.
Since $\C{X}$ admits gluing of sheaves, we conclude from Lemmas \ref{L:glue1} and \ref{L:glue} that
all the assumptions of \cite[1.4.3]{BBD} are satisfied. Therefore by \cite[Theorem 1.4.10]{BBD} there exists a unique
$t$-structure $({}^{p_{\nu}}\cD^{\leq 0}(\cX), {}^{p_{\nu}}\cD^{\geq 0}(\cX))$ on $\cD(\cX)$ such that $K\in\cD(\cX)$ belongs to
${}^{p_{\nu}}\cD^{\leq 0}(\cX)$ (resp. ${}^{p_{\nu}}\cD^{\geq 0}(\cX)$) if and only if we have
$j^*K\in {}^{p_{\nu}}\cD^{\leq 0}(\cU)$ and $i^*K\in {}^p\cD^{\leq -\nu_{\al}}(\cZ)$ (resp.
$j^!K\in {}^{p_{\nu}}\cD^{\geq 0}(\cU)$ and $i^!K\in {}^p\cD^{\geq -\nu_{\al}}(\cZ)$).
This finishes the argument when $\cI$ is finite.

In the general case, $\cX$ can be written as a filtered colimit  $\cX\simeq\colim_{U}\cX_U$, where each
$\cX_U\subset\cX$ is an fp-open substack having a finite constructible stratification $\{\cX_{\al}\}_{\al\in\cI_U}$.
Since each $\cX_U$ admits a gluing of sheaves (see \rl{openglue}(a)), we deduce from the finite case shown above that
each $\cD(\cX_{U})$ is equipped with a unique $t$-structure, satisfying \form{geq0} and \form{leq0}
for $\al\in\cI_{\cX_U}$. Furthermore, equalities \form{geq0} and \form{leq0} imply that
for every $\cX_U\subset\cX_{U'}$ the restriction functor $\cD(\cX_{U'})\to\cD(\cX_{U})$
is $t$-exact. Therefore it follows from \rl{tstr}(b) that there exists a unique $t$-structure on $\cD(\cX)\simeq\lim_{U}\cD(\cX_U)$, satisfying
\form{geq0} and \form{leq0} for all $\al\in\cI$.

Finally, since every functor $\eta_{\al}^!$ preserves small colimits, the last assertion follows from  \form{geq0} and the
corresponding assertion for placid $\infty$-stacks (see \rp{tstrpl}).

(b)  First of all, the uniqueness assertion follows from \form{geq0} and \rl{remtstr}(e). Next, by assumption, $\cX$ can be written as a filtered colimit $\cX\simeq\colim_{\la\in \La}\cX_{\la}$, where each  $\cX_{\la}$ has a bounded stratification by $\{\cX_{\al}\}_{\al\in \cI_{\cX_{\la}}}$, and all transition maps $i_{\la,\mu}:\cX_{\la}\to\cX_{\mu}$ are topologically
fp-closed embeddings.

Then, by (a),  each $\cD(\cX_{\la})$ has a unique perverse $t$-structure $({}^{p_{\nu}}\cD^{\leq 0}(\cX_{\la}),{}^{p_{\nu}}\cD^{\geq 0}(\cX_{\la}))$ satisfying \form{geq0} for $\al\in\cI_{\cX_{\la}}$, and the subcategory $\mathstrut^{p_{\nu}}\cD^{\geq 0}(\cX_{\al})\subset \cD(\cX_{\al})$ is closed under filtered colimits. Moreover, each pushforward $(i_{\la,\mu})_!:\cD(\cX_{\la})\to\cD(\cX_{\mu})$ is $t$-exact (see \rl{midext}(a) below), and has a
continuous right adjoint $i_{\la,\mu}^!$.

Therefore all the assumptions of \rt{lu1} and \rp{limtstr} are satisfied, hence the limit category
$\cD(\cX)=\lim_{\la}\cD(\cX_{\la})$ is equipped with a canonical $t$-structure $({}^{p_{\nu}}\cD^{\leq 0}(\cX),{}^{p_{\nu}}\cD^{\geq 0}(\cX))$. Let $i_{\la}:\cX_{\la}\hra\cX$ be the inclusion. Then the formula \form{geq0'} says in our case that
\begin{equation} \label{Eq:geq0''}
{}^{p_{\nu}}\cD^{\geq 0}(\cX)=\{K\in \cD^{\leq 0}(\cX)\,|\, i_{\la}^!K\in{}^{p_{\nu}}\cD^{\geq 0}(\cX_{\la}) \text{ for all }\la\in \La\}.
\end{equation}
Combining \form{geq0''} and equality \form{geq0} for ${}^{p_{\nu}}\cD^{\geq 0}(\cX_{\la})$ for each $\la\in\La$, we conclude that equality \form{geq0} holds for ${}^{p_{\nu}}\cD^{\geq 0}(\cX)$.
\end{proof}	

\begin{Emp} \label{E:stperv}
{\bf The ``canonical'' perversity.}
(a) Let $\cX$ be a placid $\infty$-stack, and let
$\{\cX_{\al}\}_{\al\in\cI}$ be a bounded constructible stratification. Then every $\cX_{\al}$ is placid (by \rco{red-fp} and \rl{glplacid}(c)), therefore $\cX$ is a placidly stratified $\infty$-stack.

(b) Assume now that each $\cX_{\al}\subset\cX$ is of pure codimension $\nu_{\al}$. We denote by $p_{\can}$ the {\em canonical} perversity
$p_{\can}:=\{\nu_{\al}\}_{\al}$ on $\cX$.
\end{Emp}

The following lemma explains why we call this perversity {\em canonical}.

\begin{Lem} \label{L:sttstr}
In the situation of \re{stperv}, the canonical $t$-structure ${}^{p_{\can}}\cD(\cX)$ on $\cD(\cX)$, defined by the perversity $p_{\can}$, coincides with the $!$-adapted perverse $t$-structure ${}^p\cD(\cX)$.
\end{Lem}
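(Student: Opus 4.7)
By \rp{tstrglue}(b) combined with \rl{remtstr}(e), the canonical $t$-structure attached to the perversity $p_{\can}$ is uniquely characterized by the formula \form{geq0}. It therefore suffices to show that the $!$-adapted $t$-structure ${}^p\cD(\cX)$ already satisfies this formula, i.e.\ that
\[
{}^p\cD^{\geq 0}(\cX) = \{K \in \cD(\cX) \mid \eta_\alpha^! K \in {}^p\cD^{\geq -\nu_\alpha}(\cX_\alpha) \text{ for all } \alpha \in \cI\}.
\]

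The inclusion $\subset$ is immediate: since $\eta_\alpha$ is an fp-locally closed embedding of pure codimension $\nu_\alpha$, i.e.\ of relative dimension $-\nu_\alpha$, \rl{exact2}(c) says that $\eta_\alpha^![-\nu_\alpha]$ is left $t$-exact, which is precisely the statement that $\eta_\alpha^!$ sends ${}^p\cD^{\geq 0}(\cX)$ into ${}^p\cD^{\geq -\nu_\alpha}(\cX_\alpha)$.

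For the reverse inclusion I would reduce successively to the case of an affine scheme of finite type. First, by \rp{tstrpl}(a), checking that $K$ lies in ${}^p\cD^{\geq 0}(\cX)$ amounts to checking, for every smooth morphism $f\colon X \to \cX$ with $X$ a $0$-placid affine scheme, that $f^!K \in {}^p\cD^{\geq 0}(X)$. Since $f$ is pro-universally open (\rco{clMc}(a)), the pulled-back stratum $X_\alpha := X \times_\cX \cX_\alpha$ is again of pure codimension $\nu_\alpha$ in $X$ by \rl{pb}; by base change together with $t$-exactness of smooth pullback on strata (\rp{tstrpl}(c)), the hypothesis on $K$ transfers to the analogous hypothesis on $f^!K$ with the pulled-back stratification. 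A second reduction, using a placid presentation $X \simeq \lim_\beta X_\beta$, the standard limit theorems of \cite[B.3]{Ry2}, and \rl{dimweak}, descends both the constructible stratification and the codimension data to some $X_\beta \in \Affft_k$; one then transports the claim for $X$ back to $X_\beta$ via the strongly pro-smooth $t$-exact pullback $\pi_\beta^!$.

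It thus remains to verify \form{geq0} when $X$ is an affine scheme of finite type. Passing to connected components, I may assume $X$ is equidimensional of some dimension $d$, in which case each stratum $X_\alpha$ is equidimensional of dimension $d - \nu_\alpha$. By \re{tstr}(a), ${}^p\cD^{\geq 0}(X) = {}^{p_{\cl}}\cD^{\geq -d}(X)$, and analogously ${}^{p_{\cl}}\cD^{\geq -d}(X_\alpha) = {}^p\cD^{\geq d - \nu_\alpha - d}(X_\alpha) = {}^p\cD^{\geq -\nu_\alpha}(X_\alpha)$. Using the standard shift identity $M \in {}^{p_{\cl}}\cD^{\geq m}$ iff $M[-d] \in {}^{p_{\cl}}\cD^{\geq m-d}$, together with the commutation $\eta_\alpha^!(K[-d]) = (\eta_\alpha^!K)[-d]$, the condition $\eta_\alpha^!K \in {}^p\cD^{\geq -\nu_\alpha}(X_\alpha)$ translates into $\eta_\alpha^!(K[-d]) \in {}^{p_{\cl}}\cD^{\geq 0}(X_\alpha)$, while $K \in {}^p\cD^{\geq 0}(X)$ translates into $K[-d] \in {}^{p_{\cl}}\cD^{\geq 0}(X)$. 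The equivalence of the two thus becomes the classical BBD gluing theorem \cite[Thm.1.4.10]{BBD} applied to $K[-d]$ and the stratification $\{X_\alpha\}$. The main (minor) obstacle is the bookkeeping of shifts and dimensions through the two successive reductions; once this is done, BBD gluing supplies the remaining input.
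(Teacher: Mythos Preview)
Your argument is essentially correct but takes a substantially longer route than the paper, and it contains one small slip. The slip is the sentence ``Passing to connected components, I may assume $X$ is equidimensional'': a connected scheme of finite type need not be equidimensional. This is easily repaired by refining with the canonical equidimensional stratification $\{X_i\}$ of \re{leq}(c) and using that ${}^p\cD(X)$ is itself glued from the $X_i$ (as in \re{tstr}(b)), but it should be said.

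More importantly, the paper avoids your entire reduction to finite type by a one-line trick. You establish the inclusion ${}^p\cD^{\geq 0}(\cX)\subset{}^{p_{\can}}\cD^{\geq 0}(\cX)$ from \rl{exact2}(c) exactly as the paper does; but the same lemma also gives that $\eta_{\al}^*[-\nu_{\al}]$ is \emph{right} $t$-exact, which by \form{leq0} yields the companion inclusion ${}^p\cD^{\leq 0}(\cX)\subset{}^{p_{\can}}\cD^{\leq 0}(\cX)$. Once one has both inclusions simultaneously, elementary $t$-structure theory forces both to be equalities (any $t$-structure is determined by either half, cf.\ \rl{remtstr}(e), so a pair of same-direction inclusions on both halves is automatically a pair of equalities). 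This replaces your descent through smooth covers, placid presentations, and the BBD computation at finite type by a single sentence. Your direct verification of \form{geq0} is a perfectly valid alternative, and it has the minor conceptual advantage of not invoking $\eta_{\al}^*$ or \form{leq0}; but it costs real work, and the descent of the hypothesis on $K$ from $\cD(X)$ to $\cD(X_\beta)$ (where $K$ need not be constructible) requires an additional remark that both $t$-structures are Ind-extended from $\cD_c$.
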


\begin{proof}
By \rl{exact2}(c), our assumption on $\eta_{\al}:\cX_{\al}\to\cX$
implies that for every $K\in {}^{p}\cD^{\leq 0}(\cX)$ (resp. $K\in {}^{p}\cD^{\geq 0}(\cX)$), we have $\eta_{\al}^*K\in {}^{p}\cD^{\leq -\nu_{\al}}(\cX_{\al})$
(resp. $\eta_{\al}^!K\in {}^{p}\cD^{\geq -\nu_{\al}}(\cX_{\al})$). Therefore by formulas \form{geq0} and \form{leq0},
we have inclusions ${}^{p}\cD^{\leq 0}(\cX)\subset{}^{p_{\can}}\cD^{\leq 0}(\cX)$ and  ${}^{p}\cD^{\geq 0}(\cX)\subset{}^{p_{\can}}\cD^{\geq 0}(\cX)$. But then both inclusions have to be equalities (see \rl{remtstr}(c)), and the assertion follows.
\end{proof}


Next we show that many of the properties of the classical perverse $t$-structure extend to our setting almost word-by-word.

\begin{Lem} \label{L:openemb}
Let $(\cX,\{\cX_{\al}\}_{\al\in\cI})$ be a placidly stratified $\infty$-stack, admitting gluing of sheaves, and let $j:\cU\rightarrow \cX$ an fp-open embedding. Then $(\cU,\{j^{-1}(\cX_{\al})\}_{\al\in\cI})$ is a placidly stratified $\infty$-stack, admitting gluing of sheaves as well.

Moreover, if $p_{\nu}=\{\nu_{\al}\}_{\al}$ is a perversity on $\cX$, and $p'_{\nu}:=\{\nu_{\al}\}_{\al}$ is the corresponding perversity on $\cU$, then the functor $j^{!}$ is $t$-exact, $j_{!}$ is right $t$-exact and $j_{*}$ is left $t$-exact.
\end{Lem}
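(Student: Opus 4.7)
The plan is to proceed in three parts: first establish that $(\cU,\{\cU_\al\}_\al)$, with $\cU_\al := j^{-1}(\cX_\al)_{\red}$, is a placidly stratified $\infty$-stack admitting gluing of sheaves; next prove $t$-exactness of $j^!$; and finally deduce the claims about $j_!$ and $j_*$ by adjunction. The first part is immediate from earlier results: $\{\cU_\al\}_\al$ is a constructible stratification of $\cU$ by \rl{propcons}(a), each $\cU_\al$ is an fp-open substack of the placid $\cX_\al$ and is therefore itself placid by \rl{glplacid}(c), and the gluing-of-sheaves property is \rl{openglue}(a). Thus \rp{tstrglue} provides the $t$-structure on $\cD(\cU)$ attached to $p'_\nu$.

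For the $t$-exactness of $j^!$, the key input is the Cartesian square
\[
\begin{CD}
\cU_\al @>\tilde{j}_\al>> \cX_\al \\
@V\eta'_\al VV @V\eta_\al VV \\
\cU @>j>> \cX
\end{CD}
\]
in which $\tilde{j}_\al$ is an fp-open embedding between placid $\infty$-stacks, hence smooth, so $\tilde{j}_\al^!$ is $t$-exact by \rp{tstrpl}(c). For left $t$-exactness of $j^!$, I will combine the characterization \form{geq0} of ${}^{p_\nu}\cD^{\geq 0}(\cX)$, the functoriality identity $(\eta'_\al)^! j^! = \tilde{j}_\al^! \eta_\al^!$, and the $t$-exactness of $\tilde{j}_\al^!$. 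For right $t$-exactness of $j^!$, since $\cX$ admits gluing of sheaves we have the adjunction $j^! \dashv j_*$ (with $j_*$ supplied by \rl{glue1}(a)), so equivalently I will show that $j_*$ is left $t$-exact: given $M \in {}^{p'_\nu}\cD^{\geq 0}(\cU)$, the base change $\eta_\al^! j_* M \simeq (\tilde{j}_\al)_* (\eta'_\al)^! M$ provided by \rl{loccl2} combined with the left $t$-exactness of $(\tilde{j}_\al)_*$ (which follows by adjunction from the $t$-exactness of $\tilde{j}_\al^!$) yields $\eta_\al^! j_* M \in {}^p\cD^{\geq -\nu_\al}(\cX_\al)$ for each $\al$, whence $j_* M \in {}^{p_\nu}\cD^{\geq 0}(\cX)$ via \form{geq0}.

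Finally, the functor $j_!$ exists as a left adjoint of $j^!$ by \rl{glue}(a), and its right $t$-exactness is equivalent by adjunction to the already-established left $t$-exactness of $j^!$; the left $t$-exactness of $j_*$ has been obtained as an intermediate step. The main delicate point will be the application of \rl{loccl2} with $\eta = j$: one must view the fp-open embedding $j$ as a topologically fp-locally closed embedding and identify the usual $j_*$ with the $\eta_*$ of \re{loccl} in this setting, which is exactly what \re{exloccl}(a) accomplishes.
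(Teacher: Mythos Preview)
Your proposal is correct and follows essentially the same route as the paper: establish the stratification and gluing for $\cU$ via \rl{propcons}(a), \rl{glplacid}(c), and \rl{openglue}(a); then reduce the left $t$-exactness of $j^!$ and $j_*$ to the single-stratum case via \form{geq0} and base change, where the placid case is handled by \rp{tstrpl}(c) and adjunction. The only cosmetic difference is that the paper cites \rl{glue1}(a) directly for the base change of $j_*$, whereas you route through \rl{loccl2} and \re{exloccl}(a); both are valid.
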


\begin{proof}
Since $\cU$ admits gluing of sheaves by \rl{openglue}(a), the first assertion follows from the fact every fp-open $\infty$-substack of a placid $\infty$-stack is placid (see \rl{glplacid}(c)).

When $|\cI|=1$, the $\infty$-stack $\C{X}$ is placid, and $j$ is smooth. In this case, the $t$-exactness of $j^!$ follows for example from  \rp{tstrpl}(c), while the $t$-exactness assertions for $j_!$ and $j_*$ follow by adjunction.

In the general case, it suffices to show that $j^!$ and $j_*$ are left $t$-exact (by adjunction). Using \form{geq0} together with the fact that functor $j_*$ satisfies the base change (see \rl{glue1}(a)), we reduce to the case of $|\cI|=1$, shown above.
\end{proof}

\begin{Emp}
{\bf Remark.} When the stratification is bounded, then the argument of \rl{openemb} can be simplified. Namely, $t$-exactness of $j^!$ follows from \form{geq0} and \form{leq0} and the $|\cI|=1$ case, while the $t$-exactness properties of $j_!$ and $j_*$ follow by adjunction.
\end{Emp}

\begin{Cor} \label{C:closedemb}
Let $(\cX,\{\cX_{\al}\}_{\al\in\cI})$ be a placidly stratified $\infty$-stack, admitting gluing of sheaves, and let $i:\cZ\rightarrow \cX$ be a topologically  fp-closed embedding. Then the composition $i_!i^*$ is right $t$-exact, while $i_!i^!$ is left $t$-exact.
\end{Cor}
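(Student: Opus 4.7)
The plan is to derive both statements from the two recollement fiber sequences
\[
j_!j^!K \longrightarrow K \longrightarrow i_!i^*K \quad\text{and}\quad i_!i^!K \longrightarrow K \longrightarrow j_*j^!K
\]
provided by Lemma \ref{L:glue}(c) and Lemma \ref{L:glue1}(d), where $j:\cU\hra\cX$ is the fp-open embedding complementary to $i$ (which exists and is fp-open by \re{complopcl}(b)). Note that the left adjoint $i^*$ to $i_!=i_*$ exists because $\cX$ admits gluing of sheaves (\rd{glue}), so both expressions $i_!i^*K$ and $i_!i^!K$ make sense.

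The key input is \rl{openemb} applied to $j:\cU\hra\cX$: since $(\cU,\{j^{-1}(\cX_\al)\}_\al)$ inherits a placidly stratified structure admitting gluing of sheaves (and the restricted perversity is the same), the functor $j^!$ is $t$-exact, $j_!$ is right $t$-exact, and $j_*$ is left $t$-exact. Therefore for $K \in {}^{p_\nu}\cD^{\leq 0}(\cX)$, the object $j^!K$ lies in ${}^{p_\nu}\cD^{\leq 0}(\cU)$, and hence $j_!j^!K \in {}^{p_\nu}\cD^{\leq 0}(\cX)$; dually, for $K \in {}^{p_\nu}\cD^{\geq 0}(\cX)$ we have $j_*j^!K \in {}^{p_\nu}\cD^{\geq 0}(\cX)$.

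Now I would conclude by invoking \rl{remtstr}(b): the subcategory ${}^{p_\nu}\cD^{\leq 0}(\cX)$ is closed under all colimits that exist in $\cD(\cX)$, and ${}^{p_\nu}\cD^{\geq 0}(\cX)$ is closed under all limits. Since in a stable $\infty$-category cofibers are particular colimits and fibers are particular limits, the first fiber sequence exhibits $i_!i^*K$ as the cofiber of a map between objects of ${}^{p_\nu}\cD^{\leq 0}(\cX)$, hence $i_!i^*K \in {}^{p_\nu}\cD^{\leq 0}(\cX)$; and the second fiber sequence exhibits $i_!i^!K$ as the fiber of a map between objects of ${}^{p_\nu}\cD^{\geq 0}(\cX)$, hence $i_!i^!K \in {}^{p_\nu}\cD^{\geq 0}(\cX)$. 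There is no real obstacle here: the corollary is a purely formal consequence of the recollement package and \rl{openemb}, and the only point worth checking is that \rl{openemb} applies to a merely topologically fp-closed $i$ (which it does, because only the complementary fp-openness of $j$ is used).
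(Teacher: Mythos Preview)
Your proof is correct and follows essentially the same approach as the paper: both use the recollement fiber sequence from \rl{glue}(c) together with the $t$-exactness properties of $j_!$, $j^!$, $j_*$ from \rl{openemb}. The only cosmetic difference is that for the left $t$-exactness of $i_!i^!$ the paper invokes adjunction (noting that $i_!i^!$ is right adjoint to $i_!i^*$), whereas you argue directly via the second fiber sequence $i_!i^!K\to K\to j_*j^!K$; both routes are equally short.
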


\begin{proof}
Consider fp-open embedding $j:\cU:=\cX\sm\cZ\hra \cX$. Since both $j_!$ and $j^!$ are right $t$-exact (by \rl{openemb}), the assertion for $i_!i^*$ follows from the fiber sequence $K\to i_!i^*K\to j_!j^!K[1]$ (see \rl{glue}(c)). The second assertion follows by adjunction.
\end{proof}

\begin{Emp} \label{E:intext}
{\bf The intermediate extension.}
Let $\cX$ be a placidly stratified $\infty$-stack, admitting gluing of sheaves and equipped with perversity $p$,
let $j:\cU\rightarrow\cX$ an fp-open immersion, and let $p'$ be the induced perversity on $\cU$ (see \rl{openemb}).

(a) For every $K\in\cD(U)$, we have a canonical map $\theta:j_!K\to j_*K$, adjoint
to the isomorphism $K\isom j^!j_*K$, hence a map ${}^pH^0(\theta):\mathstrut^{p}H^{0}(j_{!}K)\rightarrow \mathstrut^{p}H^{0}(j_{*}K)$.

(b) For every $K\in\Perv^{p'}(U)$, we define
\[
j_{!*}K:=\Ima(\mathstrut^{p}H^{0}(j_{!}K)\to \mathstrut^{p}H^{0}(j_{*}K))
\]
be the image of ${}^pH^0(\theta)$ and call it {\em the intermediate extension} of $K$.
In particular, we have a canonical surjection $\theta_1:\mathstrut^{p}H^{0}(j_{!}K)\to j_{!*}K$ and
injection $\theta_2:j_{!*}K\to \mathstrut^{p}H^{0}(j_{*}K)$.

(c) As in \re{support}(a), we say that $K\in\cD(\cX)$ is {\em supported on} $\cX\sm\cU$, if $K\in \cD_{\cX\sm\cU}(\cX)$, that is, $j^!K\simeq 0$
(see \re{compl}(c)).
\end{Emp}

\begin{Cor} \label{C:supr}
In the situation of \re{intext}, let $K\in\Perv^{p'}(\cU)$. Then

(a) The kernel of $\theta_1:\mathstrut^{p}H^{0}(j_{!}K)\to j_{!*}K$ and cokernel of $\theta_2:j_{!*}K\to \mathstrut^{p}H^{0}(j_{*}K)$
are supported on $\cX\sm\cU$.

(b) The perverse sheaf $\mathstrut^{p}H^{0}(j_! K)$ (resp. $\mathstrut^{p}H^{0}(j_{*}K)$) has no non-zero quotients (resp. subobjects)
supported on $\cX\sm\cU$.

(c) The intermediate extension $j_{!*}K\in\Perv^p(\cX)$ is the unique $\wt{K}\in \Perv^p(\cX)$ such that
$j^!\wt{K}\simeq K$, and $\wt{K}$ has no non-zero subobjects and quotients, supported on $\cX\sm\cU$.
\end{Cor}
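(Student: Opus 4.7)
The plan is to mimic the classical \cite{BBD} argument in our infinity-categorical placidly stratified context, using only the $t$-exactness of $j^!$ and right/left $t$-exactness of $j_!,j_*$ established in \rl{openemb}, together with the adjunctions $j_!\dashv j^!\dashv j_*$ from \rl{glue1} and \rl{glue}. The key technical identity, used throughout, is that for $L\in {}^p\cD^{\leq 0}(\cX)$ and $Q\in\Perv^p(\cX)$ one has $\Hom(L,Q)\simeq\Hom({}^pH^0(L),Q)$; this follows from the vanishing of $\Hom$ and of $\pi_1\text{Maps}$ from ${}^p\cD^{\leq -1}$ into ${}^p\cD^{\geq 0}$, which is built into any $t$-structure on a stable $\infty$-category.

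First, for (a), I apply the $t$-exact functor $j^!$ to the factorization ${}^pH^0(j_!K)\xrightarrow{\theta_1}j_{!*}K\xrightarrow{\theta_2}{}^pH^0(j_*K)$. Using $j^!{}^pH^0(j_!K)\simeq K\simeq j^!{}^pH^0(j_*K)$, the composite $j^!({}^pH^0(\theta))$ is the identity of $K$, forcing both $j^!\theta_1$ and $j^!\theta_2$ to be isomorphisms; hence $\Ker(\theta_1)$ and $\coker(\theta_2)$ lie in $\cD_{\cX\sm\cU}(\cX)$. Then, for (b), combining the key identity with the $j_!\dashv j^!$ adjunction, for any perverse $Q$ supported on $\cX\sm\cU$,
\[
\Hom({}^pH^0(j_!K),Q)\simeq\Hom(j_!K,Q)\simeq\Hom(K,j^!Q)=0,
\]
which rules out any nonzero quotient of ${}^pH^0(j_!K)$ supported on $\cX\sm\cU$; the dual computation handles ${}^pH^0(j_*K)$.

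Finally, for (c), existence is immediate from (a) and (b): any subobject (resp.\ quotient) of $j_{!*}K$ supported on $\cX\sm\cU$ injects into a subobject of ${}^pH^0(j_*K)$ (resp.\ surjects onto a quotient of ${}^pH^0(j_!K)$) with the same property, which vanishes by (b), while $j^!j_{!*}K\simeq K$ follows from (a). For uniqueness, given $\wt K\in\Perv^p(\cX)$ satisfying the hypotheses, a choice of isomorphism $K\simeq j^!\wt K$ corresponds via the adjunctions and the key identity to canonical morphisms $\alpha\colon{}^pH^0(j_!K)\to\wt K$ and $\beta\colon\wt K\to{}^pH^0(j_*K)$ with $j^!\alpha=j^!\beta=\id_K$. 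The hypothesis on $\wt K$ then forces $\alpha$ to be surjective and $\beta$ to be injective, since $\coker\alpha$ and $\Ker\beta$ have vanishing $j^!$. Since any morphism ${}^pH^0(j_!K)\to{}^pH^0(j_*K)$ is pinned down by its $j^!$ (by the same identity-and-adjunction computation), we get $\beta\circ\alpha={}^pH^0(\theta)$, whence $\wt K\simeq\Ima(\beta\circ\alpha)=j_{!*}K$. The only real obstacle here is formal rather than geometric: one must check that the $\infty$-categorical adjunctions descend, after passing to $\pi_0$ of mapping spaces, to honest adjunctions on the abelian heart $\Perv^p(\cX)$, but this is automatic from the general theory of $t$-structures on stable $\infty$-categories.
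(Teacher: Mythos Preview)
Your proof is correct and follows essentially the same approach as the paper: both arguments derive everything from the $t$-exactness of $j^!$ (\rl{openemb}) together with the adjunctions $j_!\dashv j^!\dashv j_*$, and both use the identity $\Hom({}^pH^0(j_!K),L)\simeq\Hom(j_!K,L)$ for perverse $L$. The only difference is that you spell out explicitly why $\beta\circ\alpha={}^pH^0(\theta)$ in the uniqueness step of (c), whereas the paper leaves this implicit in the construction of the maps $a,b$ from the adjunctions; this is a cosmetic difference, not a substantive one.
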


\begin{proof}
All assertions formally follow from \rl{openemb} and adjunctions.

(a) Follows from the fact that $j^!$ is $t$-exact and $j^!(\theta)$ is an isomorphism.

(b) Assume that $L\in\Perv^p(\cX)$ is supported on $\cX\sm\cU$, that is, $j^!L\simeq 0$.  As $j_{!}K\in\mathstrut^{p}\cD^{\leq 0}(\cX)$ and $j_{*}K\in\mathstrut^{p}\cD^{\geq 0}(\cX)$ (by \rl{openemb}), we have isomorphisms
\[
\Hom(\mathstrut^{p}H^{0}(j_{!}K),L)\simeq\Hom(j_{!}K,L)\simeq\Hom(K,j^!L)\simeq 0
\]
and
\[
\Hom(L, \mathstrut^{p}H^{0}(j_* K))\simeq\Hom(L, j_*K)\simeq\Hom(j^!L,K)\simeq 0.
\]

(c) Since $j^!$ is $t$-exact, we have $j^!j_{!*}K\simeq j^!\Ima({}^pH^0(\theta))\simeq K$. Next, if $L$ is a subobject (resp. quotient) of $j_{!*}K$,
supported on $\cX\sm\cU$, then $L$ is a subobject (resp. quotient) of
$\mathstrut^{p}H^{0}(j_{*}K)$ (resp. $\mathstrut^{p}H^{0}(j_{!}K)$). So $L\simeq 0$ by (b).

Conversely, let $\wt{K}\in \Perv^p(\cX)$ be such that
$j^!\wt{K}\simeq K$ and $\wt{K}$ has no non-zero subobjects and quotients supported on $\cX\sm\cU$.
By adjunction, the isomorphism $j^!\wt{K}\simeq K$ induces morphisms
$j_!K\to\wt{K}\to j_*K$, hence morphisms
\[
\mathstrut^{p}H^{0}(j_!K)\overset{a}{\to}\wt{K}\overset{b}{\to}\mathstrut^{p}H^{0}(j_*K).
\]
We want to show that $a$ is surjective, while $b$ is injective. Since $j^!$ is $t$-exact, we deduce that
$\Coker a$ and $\Ker b$ are supported on $\cX\sm\cU$. Hence both of them are zero by the assumption on $\wt{K}$.
\end{proof}

\begin{Cor} \label{C:contperv}
In the situation of \re{intext}, for every pair of perverse sheaves $A,B\in\Perv^{p'}(\cU)$, the pullback map
$j^!: \Hom(j_{!*}A,j_{!*}B)\to \Hom(A,B)$ is an isomorphism.
\end{Cor}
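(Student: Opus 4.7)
The plan is to reduce the assertion to a standard adjunction computation in the abelian category $\Perv^p(\cX)$, using the characterization of $j_{!*}$ in \rco{supr}. Write $K := \Ker(\theta_1)$ for the perverse kernel of the surjection $\theta_1:\mathstrut^{p}H^{0}(j_{!}A)\to j_{!*}A$, so that we have a short exact sequence
\[
0\to K\to \mathstrut^{p}H^{0}(j_{!}A)\overset{\theta_1}{\to} j_{!*}A\to 0
\]
in $\Perv^p(\cX)$, and $K$ is supported on $\cX\sm \cU$ by \rco{supr}(a).

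First I would apply $\Hom(-,j_{!*}B)$ to this sequence to obtain a left-exact sequence
\[
0\to \Hom(j_{!*}A,j_{!*}B)\to \Hom(\mathstrut^{p}H^{0}(j_{!}A),j_{!*}B)\to \Hom(K,j_{!*}B).
\]
The right-most term vanishes: any map $K\to j_{!*}B$ has image a subobject of $j_{!*}B$ which is a quotient of $K$, hence supported on $\cX\sm\cU$, hence zero by \rco{supr}(c). So the first map is an isomorphism. Next I would use that $j_!A\in\mathstrut^{p}\cD^{\leq 0}(\cX)$ and $j_{!*}B\in\Perv^p(\cX)\subset\mathstrut^{p}\cD^{\geq 0}(\cX)$ (by \rl{openemb} applied to the right $t$-exactness of $j_!$) to identify $\Hom(\mathstrut^{p}H^{0}(j_!A), j_{!*}B)\simeq \Hom(j_!A,j_{!*}B)$, and then invoke the $(j_!,j^!)$-adjunction together with the isomorphism $j^!j_{!*}B\simeq B$ (which follows from $t$-exactness of $j^!$ and the construction of $j_{!*}$) to identify this further with $\Hom(A,B)$.

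Finally I would verify that the composite of these identifications coincides with the map $j^!$ of the statement, which amounts to an unwinding of the adjunction morphisms. The key inputs of the argument are \rl{glue} (which provides $j_!$ and its adjunction with $j^!$), \rl{openemb} (the $t$-exactness of $j^!$ and right $t$-exactness of $j_!$), and \rco{supr}(c) (the characterization of $j_{!*}B$ by the absence of subobjects and quotients supported on $\cX\sm\cU$). I do not expect any serious obstacle: all the pieces are formal consequences of the gluing formalism already established for placidly stratified $\infty$-stacks admitting gluing of sheaves, and the argument reproduces the classical BBD proof essentially verbatim. The only point requiring mild care is the interpretation of $\Hom$ in the $\infty$-categorical setting, but since both $A,B$ and $j_{!*}A,j_{!*}B$ live in the hearts of the respective $t$-structures, all the mapping spaces in question are discrete, and the above manipulations with short exact sequences in the abelian categories $\Perv^{p'}(\cU)$ and $\Perv^p(\cX)$ are legitimate.
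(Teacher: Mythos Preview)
Your argument is correct and follows essentially the same strategy as the paper, with a minor variation: the paper routes through $\Hom(j_!A,j_*B)$ and uses both halves of \rco{supr}(a),(b) to identify $\Hom({}^pH^0(j_!A),{}^pH^0(j_*B))\simeq\Hom(j_{!*}A,j_{!*}B)$, then applies the $(j^!,j_*)$-adjunction and $j^!j_!A\simeq A$; you instead route through $\Hom(j_!A,j_{!*}B)$, using only the surjection ${}^pH^0(j_!A)\twoheadrightarrow j_{!*}A$ together with the absence of subobjects of $j_{!*}B$ supported on the complement, and the $(j_!,j^!)$-adjunction with $j^!j_{!*}B\simeq B$. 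Both are standard BBD-style manipulations and neither offers a real advantage over the other.
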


\begin{proof}
As $j_{!}A\in\mathstrut^{p}\cD^{\leq 0}(\cX)$ and $j_{*}B\in\mathstrut^{p}\cD^{\geq 0}(\cX)$, we obtain natural isomorphisms
\[
\Hom(j_{!}A,j_{*}B)\isom\Hom(\mathstrut^{p}H^{0}(j_{!}A),\mathstrut^{p}H^{0}(j_{*}B))\overset{\sim}{\lar}
\Hom(j_{!*}A,j_{!*}B),
\]
where the isomorphism on the right follows from \rco{supr}(a),(b). Since the map $A\to j^{!}j_{!}A$ is an isomorphism, $j^!$ induces an isomorphism
\[
\Hom(j_{!}A,j_{*}B)\simeq \Hom(j^{!}j_{!}A,B)\simeq \Hom(A,B),
\]
thus the assertion follows.
\end{proof}


\begin{Lem}\label{L:midext}
Let $(\cX,\{\cX_{\al}\}_{\al\in\cI})$  be a placidly stratified $\infty$-stack, equipped with perversity $p_{\nu}$. Let $j:\cU\hra \cX$ is an fp-open inclusion of an $\{\cX_{\al}\}_{\al}$-adapted $\infty$-substack, and let $i:\cZ:=\cX\sm\cU\hra\cX$ be the complementary topologically fp-closed embedding. Equip $\cU$ and $\cZ$ with the induced perversities, and let $K\in\Perv^{p_{\nu}}(\cU)$.

(a) The functor $i_!$ is $t$-exact, $i^!$ is left $t$-exact, while $i^*$ is right $t$-exact.

(b) Every $L\in\Perv^{p_{\nu}}(\cX)$ supported on $\cZ$ is of the form $L\simeq i_!M$ for $M\in\Perv^{p_{\nu}}(\cZ)$.

(c) The intermediate extension $j_{!*}K\in\Perv^p(\cX)$ is the unique perverse extension $\wt{K}$ of $K$ such that $i^*\wt{K}\in {}^{p_{\nu}}\cD^{\leq -1}(\cZ)$ and
$i^!\wt{K}\in {}^{p_{\nu}}\cD^{\geq 1}(\cZ)$.

(d) Assume that the stratification is bounded. Then  $j_{!*}K\in\Perv^{p_{\nu}}(\cX)$ is the unique perverse extension $\wt{K}$ of $K$ such that for all $\al\in\cI\sm\cI_{\cU}$, we have
\[
\eta_{\al}^{*}\wt{K}\in \mathstrut^{p}\cD^{\leq -\nu_{\al}-1}(\cX_{\al}) \text { and } \eta_{\al}^{!}\wt{K}\in \mathstrut^{p}\cD^{\geq -\nu_{\al}+1}(\cX_{\al}).
\]
\end{Lem}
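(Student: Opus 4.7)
For part (a), the strategy is to leverage the two adjunctions: $i^{*}\dashv i_{!}=i_{*}\dashv i^{!}$ (recall that $i_{!}=i_{*}$ for a topologically fp-closed embedding, see \re{exloccl}(a)). Via these adjunctions, $i^{*}$ right $t$-exact $\iff$ $i_{!}$ left $t$-exact, and $i_{!}$ right $t$-exact $\iff$ $i^{!}$ left $t$-exact. So it suffices to show $i^{!}$ is left $t$-exact and $i_{!}$ is left $t$-exact. Both reduce to a diagram chase using the stratum-wise characterization \form{geq0}: for $\al\in \cI_{\cZ}$ the composition $\eta_{\al}=i\circ \eta_{\al,\cZ}$ yields $\eta_{\al}^{!}\simeq\eta_{\al,\cZ}^{!}\circ i^{!}$, while for $\al\in \cI_{\cU}$ we have $\eta_{\al}^{!}i_{!}\simeq\eta_{\al,\cU}^{!}\circ j^{!}\circ i_{!}\simeq 0$ by \rl{glue1}(c). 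This shows the required inclusions without requiring boundedness of the stratification.

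For part (b), the condition that $L$ is supported on $\cZ$ means $j^{!}L\simeq 0$, so by \rl{locclemb} we have $L\simeq i_{!}M$ with $M:=i^{!}L$. It remains to show that $M$ is perverse. Left $t$-exactness of $i^{!}$ (part (a)) gives $M\in{}^{p_\nu}\cD^{\geq 0}(\cZ)$; for the other direction use that $i^{*}i_{!}\simeq\id$ (since $i_{!}=i_{*}$ is fully faithful, see \rl{glue1}(c)) and apply part (a) again to the right $t$-exact functor $i^{*}$, giving $M\simeq i^{*}L\in {}^{p_\nu}\cD^{\leq 0}(\cZ)$.

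For part (c), the starting point is \rco{supr}(c), which characterizes $j_{!*}K$ as the unique $\wt{K}\in\Perv^{p_\nu}(\cX)$ extending $K$ that has no non-zero subobjects or quotients supported on $\cZ$. The main step is to translate the absence of such subobjects/quotients into the $!$/$*$-restriction conditions. By part (b) perverse sheaves supported on $\cZ$ are exactly the $i_{!}N$ for $N\in\Perv^{p_\nu}(\cZ)$, and by fully faithfulness of $i_{!}$ together with the adjunction $(i_{!},i^{!})$, non-zero maps $i_{!}N\to \wt{K}$ correspond bijectively to non-zero maps $N\to {}^{p_\nu}H^{0}(i^{!}\wt{K})$ in $\Perv^{p_\nu}(\cZ)$. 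Hence $\wt{K}$ has no non-zero subobject supported on $\cZ$ precisely when ${}^{p_\nu}H^{0}(i^{!}\wt{K})=0$; since $i^{!}\wt{K}\in\cD^{\geq 0}(\cZ)$ by part (a), this is equivalent to $i^{!}\wt{K}\in{}^{p_\nu}\cD^{\geq 1}(\cZ)$. The dual argument handles quotients, giving the criterion with $i^{*}$.

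For part (d), assume the stratification is bounded so that both \form{geq0} and \form{leq0} hold for the induced stratification of $\cZ$. Using the factorization $\eta_{\al}=i\circ\eta_{\al,\cZ}$ for $\al\in\cI\setminus\cI_{\cU}$, the conditions $i^{*}\wt{K}\in{}^{p_\nu}\cD^{\leq -1}(\cZ)$ and $i^{!}\wt{K}\in{}^{p_\nu}\cD^{\geq 1}(\cZ)$ translate termwise into the stratum-wise strict inequalities stated. Combined with part (c) and the already-known equalities $\eta_{\al}^{*}\wt{K}\in{}^{p}\cD^{\leq -\nu_{\al}}(\cX_{\al})$, $\eta_{\al}^{!}\wt{K}\in{}^{p}\cD^{\geq -\nu_{\al}}(\cX_{\al})$ coming from perversity of $\wt{K}$, this gives the desired characterization. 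The main subtlety throughout is being careful about whether we need boundedness of the stratification: parts (a)--(c) work in the unbounded case using only \form{geq0}, while (d) genuinely requires the bounded hypothesis in order to invoke \form{leq0}.
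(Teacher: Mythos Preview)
Your proof is correct and follows essentially the same approach as the paper: reducing (a) via adjunction to the left $t$-exactness of $i^{!}$ and $i_{!}$ checked against \form{geq0} using $i^{!}i_{!}\simeq\Id$ and $j^{!}i_{!}\simeq 0$, deducing (b) from (a) and fully faithfulness of $i_{!}$, deriving (c) from \rco{supr}(c) via the adjunction argument with ${}^{p_{\nu}}H^{0}(i^{!}\wt{K})$, and obtaining (d) from (c) by unwinding \form{leq0} and \form{geq0} on $\cZ$. The only cosmetic difference is that in (b) the paper cites \rl{glue1}(d) rather than \rl{locclemb}, but these give the same conclusion here.
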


\begin{proof}
(a) By adjunction, it suffices to show that $i^!$ and $i_!$ are left $t$-exact. Both assertions immediately follow from formula
\form{geq0} and identities $i^!i_!\simeq\Id$ and $j^!i_!\simeq 0$.

(b) Since $j^!L\simeq 0$, we have  $L\simeq i_!M$  with $M:=i^!L$ (by \rl{glue1}(d)). Then $M\in {}^p\cD^{\geq 0}(\cZ)$ by (a). On the other hand, we have $i^*L\simeq i^*i_!M\simeq M$ (because $i_!$ is fully faithful), therefore $M\in {}^p\cD^{\leq 0}(\cZ)$.

(c) By \rco{supr}(c), it suffices to show that $\wt{K}\in\Perv^{p_{\nu}}(\cX)$ has no non-zero subobjects (resp. quotients) supported on $\cZ$ if and only if $i^!\wt{K}\in {}^{p_{\nu}}\cD^{\geq 1}(\cZ)$ (resp.  $i^*\wt{K}\in {}^{p_{\nu}}\cD^{\leq -1}(\cZ)$).

Since $i^!\wt{K}\in {}^{p_{\nu}}\cD^{\geq 0}(\cZ)$,  by (a), we have  $i^!\wt{K}\in {}^{p_{\nu}}\cD^{\geq 1}(\cZ)$ if and only if
${}^{p_{\nu}}H^0(i^!\wt{K})\simeq 0$. This happens if and only if
for every $M\in\Perv^{p_{\nu}}(\cZ)$, we have an isomorphism
\[
\Hom(i_!M,\wt{K})\simeq\Hom(M,i^!\wt{K})\simeq \Hom(M,{}^{p_{\nu}}H^0(i^!\wt{K}))\simeq 0.
\]
Thus, by (b), this happens if and only if $\wt{K}$ has no non-zero subobjects supported on $\cZ$.
The proof of the second assertion is similar.

(d) Note that when the stratification is bounded we have $i^*\wt{K}\in {}^{p_{\nu}}\cD^{\leq -1}(\cZ)$ if and only if $\eta_{\al}^{*}\wt{K}\in \mathstrut^{p}\cD^{\leq -\nu_{\al}-1}(\cX_{\al})$ for every $\al\in\cI_{\cZ}=\cI\sm\cI_{\cU}$ (by \form{leq0}) and similarly for $i^!\wt{K}\in {}^{p_{\nu}}\cD^{\geq 1}(\cZ)$. Now the assertion follows from (c).
\end{proof}

\subsection{Semi-small morphisms}
In this subsection we extend classical (finite dimensional) results to our setting.

\begin{Emp} \label{E:pervf}
{\bf Perversity, induced by $f$.}
Let $f:\cX\to\cY$ be a morphism, where $\cX$ is a placid $\infty$-stack, and $(\cY,\{\cY_{\al}\}_{\al})$ is a placidly stratified $\infty$-stack, satisfying assumptions of \re{semismall}(a),(b). Consider perversity $p_{f}:=\{\nu_{\al}\}_{\al\in\cI}$, \label{N:pf} defined by
$\nu_{\al}:=b_{\al}+\dt_{\al}$ for all $\al$. Then $f$ is semi-small (see \re{semismall}(c)) if and only if we have
\begin{equation} \label{Eq:semismall}
2\dt_{\al}\leq\nu_{\al}\leq 2b_{\al} \text{ for every } \al\in \cI.
\end{equation}
Moreover, a semi-small morphism $f$ is $\cU$-small (see \re{semismall}(d)) if and only if we have
\begin{equation} \label{Eq:small}
2\dt_{\al}<\nu_{\al}< 2b_{\al} \text{ for every } \al\in \cI\sm \cI_{\cU}.
\end{equation}
\end{Emp}

\begin{Emp}
{\bf Remark.} Our definition of the perversity $p_f$ is motivated by the observation that in ``good'' cases, e.g. when
$f:X\to Y$ is a dominant generically finite morphism between irreducible schemes of finite type over $k$, the perversity $p_f$ coincides with the canonical perversity from \re{stperv} (see \form{codimss}), thus the corresponding $t$-structure is the $!$-adapted perverse $t$-structure
(see \rl{sttstr}).
\end{Emp}

\begin{Thm} \label{T:small}
(a) Let $f:\cX\to\cY$ be an ind-fp-proper semi-small morphism of $\infty$-stacks, where $\cX$ is smooth, while $\cY$ admits gluing of sheaves. Then the pushforward $K:=f_!(\om_{\cX})$ is $p_f$-perverse.

(b) Moreover, assume that $f$ is $\cU$-small, and let $j:\cU\hra\cY$ be the open embedding.
Then we have an isomorphism $K\simeq j_{!*}j^!(K)$.
\end{Thm}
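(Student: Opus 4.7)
My plan is to verify the perversity conditions \form{geq0} and \form{leq0} stratum by stratum, by computing $\eta_\al^! K$ and $\eta_\al^* K$ via base change and then applying the $t$-exactness results from Section 6. The first step is to set up the base change. For each $\al\in\cI$, let $i_\al:\cX_\al\hra\cX$ denote the inclusion and $f_\al:\cX_\al\to\cY_\al$ the restriction of $f$; modulo the discrepancy $\cX_\al=f^{-1}(\cY_\al)_{\red}$ (which is invisible to $\cD$ by \rco{invtop}) the square is Cartesian. Since $f$ is locally ind-fp-proper, so is $f_\al$, so $f_!$ and $(f_\al)_!$ exist by \rp{locindpr} and satisfy $!$-base change; and since $\cY$ admits gluing by hypothesis while $\cX$ admits gluing by \rl{glue3}, \rco{glue} provides the $*$-base change. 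In particular,
\[
\eta_\al^!K\simeq (f_\al)_!\,i_\al^!\om_{\cX},\qquad \eta_\al^*K\simeq (f_\al)_!\,i_\al^*\om_{\cX}.
\]

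For the $!$-bound, observe that $i_\al^!\om_{\cX}\simeq \om_{\cX_\al}$ (both sides are the $!$-pullback of $\om_{\pt}$), which lies in ${}^p\cD^{\geq 0}(\cX_\al)$ by \rl{exact2}(a). Since $f_\al$ is equidimensional of relative dimension $\delta_\al$ and locally ind-fp-proper, \rl{exact3} shows that $(f_\al)_![-2\delta_\al]$ is left $t$-exact, so $\eta_\al^!K\in{}^p\cD^{\geq -2\delta_\al}(\cY_\al)\subset{}^p\cD^{\geq -\nu_\al}(\cY_\al)$, the inclusion using $-2\delta_\al\geq -(b_\al+\delta_\al)=-\nu_\al$, which is exactly the semi-smallness inequality $\delta_\al\leq b_\al$. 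For the $*$-bound, smoothness of $\cX$ together with $\codim_\cX(\cX_\al)=b_\al$ and \rl{exact2}(d) give $i_\al^*\om_{\cX}\in{}^p\cD^{\leq -2b_\al}(\cX_\al)$. Since $f_\al^!$ is left $t$-exact (\rl{exact2}(b), applicable because $f_\al$ is equidimensional and locally fp-representable), the adjunction $(f_\al)_!\dashv f_\al^!$ forces $(f_\al)_!$ to be right $t$-exact; thus $\eta_\al^*K\in{}^p\cD^{\leq -2b_\al}(\cY_\al)\subset{}^p\cD^{\leq -\nu_\al}(\cY_\al)$, again via $-2b_\al\leq -\nu_\al$, i.e.\ $\delta_\al\leq b_\al$. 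Assembling the two bounds yields $K\in\Perv^{p_f}(\cY)$, where for the $\leq$ side in the (possibly unbounded) stratified case we reduce to open substacks with finite, hence bounded, stratification as in \rp{tstrglue}(b), so that \form{leq0} applies.

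For part (b), if $f$ is $\cU$-small then for $\al\in\cI\sm\cI_\cU$ the inequality $\delta_\al\leq b_\al$ becomes strict, so the inclusions above sharpen to
\[
\eta_\al^!K\in{}^p\cD^{\geq -\nu_\al+1}(\cY_\al),\qquad \eta_\al^*K\in{}^p\cD^{\leq -\nu_\al-1}(\cY_\al).
\]
These are precisely the stratum-wise conditions in \rl{midext}(c),(d) characterizing $j_{!*}(j^!K)$, so $K\simeq j_{!*}(j^!K)$. The main obstacle I anticipate is less conceptual than bookkeeping: assembling base change across the reduction subtlety $\cX_\al=f^{-1}(\cY_\al)_{\red}$ and verifying that all the adjoints and isomorphisms actually exist in this infinite-dimensional stacky setting. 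But these are exactly the points that \rp{locindpr}, \rco{glue}, \rl{glue3}, and Lemmas \ref{L:exact2} and \ref{L:exact3} have been engineered to secure, so once the diagram is drawn the argument is a formal descent from the classical finite-dimensional case.
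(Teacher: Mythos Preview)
Your proof is correct and follows essentially the same approach as the paper: set up base change along the (reduced) Cartesian square using \rp{locindpr} and \rco{glue}, then bound $\eta_\al^!K$ via \rl{exact2}(a) and \rl{exact3}, and $\eta_\al^*K$ via \rl{exact2}(d) and the right $t$-exactness of $(f_\al)_!$ obtained from \rl{exact2}(b) by adjunction, with the semi-small inequality $\delta_\al\leq b_\al$ converting the bounds $-2\delta_\al$, $-2b_\al$ into $-\nu_\al$. One small remark: your hedge about the ``possibly unbounded'' case is unnecessary, since the definition of semi-small (\re{semismall}) already requires the stratification of $\cY$ to be bounded, so \form{leq0} applies directly.
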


\begin{proof}
By \rp{tstrglue} and \rl{midext}, we have to show that we have
\begin{equation*} \label{Eq:incl1}
\eta_{\al}^*K\in {}^{p}\cD^{\leq -\nu_{\al}}(\cY_{\al}) \text{ and }
\eta_{\al}^! K\in {}^{p}\cD^{\geq -\nu_{\al}}(\cY_{\al})
\end{equation*}
for every $\al\in\cI$, and stronger inclusions
\begin{equation*} \label{Eq:incl2}
\eta_{\al}^*K\in {}^p\cD^{\leq -\nu_{\al}-1}(\cY_{\al}) \text{ and }
\eta_{\al}^! K\in {}^p\cD^{\geq -\nu_{\al}+1}(\cY_{\al})
\end{equation*}
for every $\al\in\cI\sm\cI_{\cU}$. Using \form{semismall} and \form{small}, it thus suffices to show that for every $\al\in\cI$ we have
\begin{equation} \label{Eq:incl3}
\eta_{\al}^*K\in {}^p\cD^{\leq -2b_{\al}}(\cY_{\al}) \text{ and }
\eta_{\al}^! K\in {}^p\cD^{\geq -2\dt_{\al}}(\cY_{\al}).
\end{equation}

Since $f$ is ind-fp-proper, its restriction $f_{\al}:\cX_{\al}\to\cY_{\al}$ (see \re{semismall}(a)) is ind-fp-proper as well.
Therefore pullbacks $f^!$ and $f_{\al}^!$ have left adjoints (by \rp{locindpr}). Moreover, applying \rco{invtop}(a), we see that the commutative  diagram
\begin{equation} \label{Eq:basic2}
\begin{CD}
\cX_{\al}=f^{-1}(\cY_{\al})_{\red} @>\wt{\eta}_{\al}>>\cX \\
 @Vf_{\al} VV @VV f V\\
\cY_{\al} @>\eta_{\al}>>\cY
\end{CD}
\end{equation}
 gives rise to isomorphisms of functors $\eta_{\al}^*f_!\simeq (f_{\al})_!\wt{\eta}_{\al}^*$ (see \rco{glue}) and  $\eta_{\al}^!f_!\simeq (f_{\al})_!\wt{\eta}_{\al}^!$ (see \rp{locindpr}).
Therefore we get isomorphisms $\eta_{\al}^! K\simeq (f_{\al})_!(\om_{\cX_{\al}})$ and
$\eta_{\al}^*K\simeq (f_{\al})_!\wt{\eta}_{\al}^*(\om_{\cX})$.

Since $\cX$ is a smooth placid $\infty$-stack, and $\wt{\eta}_{\al}:\cX_{\al}\to\cX$ is fp-locally closed, weakly equidimensional
of relative dimension $-b_{\al}$, we conclude from \rl{exact2}(d) that $\wt{\eta}_{\al}^*(\om_{\cX})\in {}^p\cD_c^{\leq -2b_{\al}}(\cX_{\al})$.
Moreover, since $f_{\al}$ is equidimensional, the pullback $f_{\al}^!$ is left $t$-exact (by \rl{exact2}(b)).
Therefore by adjunction, we conclude that $(f_{\al})_!$ is right $t$-exact, thus
\[
\eta_{\al}^* K\simeq (f_{\al})_!(\wt{\eta}_{\al}^*(\om_{\cX}))\in {}^p\cD^{\leq -2b_{\al}}(\cY_{\al}),
\]
proving the first inclusion in \form{incl3}.

Similarly, since $\om_{\cX_{\al}}\in {}^p\cD_c^{\geq 0}(\cX_{\al})$ (by \rl{exact2}(a)), and the functor $(f_{\al})_![-2\dt_{\al}]$ is right $t$-exact (by \rl{exact3}), we deduce that
\[
\eta_{\al}^! K\simeq (f_{\al})_!(\om_{\cX_{\al}})\in {}^p\cD^{\geq -2\dt_{\al}}(\cY_{\al}),
\]
proving the second inclusion in \form{incl3}.
\end{proof}





\part{The affine Springer theory}

\section{Application to the affine Springer theory}

\subsection{Main theorem}

\begin{Emp} \label{E:grsprsh}
{\bf The affine Grothendieck--Springer sheaf.}

(a) Since the projection $\ov{\frak{p}}:[\wt{\fC}/\cL G]\rightarrow[\fC/\cL G]$ is ind-fp-proper (see \re{not}),
the pullback $\ov{\frak{p}}^!:\cD([\fC/\cL G])\to \cD([\wt{\fC}/\cL G])$ has a left adjoint $\ov{\frak{p}}_{!}:\cD([\wt{\fC}/\cL G])\rightarrow\cD([\fC/\cL G])$ and satisfies the base change (by \rp{locindpr}).

(b) We set
\[
\cS:=\ov{\frak{p}}_{!}(\om_{[\wt{\fC}/\cL G]})\in \cD([\fC/\cL G]) \label{N:cS}
\]
and call it the {\em affine Grothendieck--Springer sheaf}.

(c) Using notation \re{consstr2}, we denote by $\cS_{\bullet}\in \cD([\fC_{\bullet}/\cL G])$ \label{N:cSbullet} and $\cS_{\leq 0}\in \cD([\fC_{\leq 0}/\cL G])$ \label{N:cSleq0} the restrictions of $\cS$. Since $\ov{\frak{p}}_!$ satisfies the base change, we have
\[
\cS_{\bullet}\simeq (\ov{\frak{p}}_{\bullet})_!(\om_{[\wt{\fC}_{\bullet}/\cL G]})\text{ and }
\cS_{\leq 0}\simeq (\ov{\frak{p}}_{\leq 0})_!(\om_{[\wt{\fC}_{\leq 0}/\cL G]}),
\]
where $\ov{\frak{p}}_{\bullet}:[\wt{\fC}_{\bullet}/\cL G]\rightarrow [\fC_{\bullet}/\cL G]$ and
$\ov{\frak{p}}_{\leq 0}:[\wt{\fC}_{\leq 0}/\cL G]\rightarrow [\fC_{\leq 0}/\cL G]$ are the restrictions of $\ov{\frak{p}}$.

(d) Let $j:[\fC_{\leq 0}/\cL G]\hra [\fC_{\bullet}/\cL G]$ be the inclusion of the open stratum. By definition, we have
$j^!(\cS_{\bullet})\simeq \cS_{\leq 0}$.
\end{Emp}

\begin{Lem} \label{L:admglsh}
The $\infty$-stack $[\fC_{\bullet}/\cL G]$ admits gluing of sheaves,
\end{Lem}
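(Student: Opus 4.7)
The plan is to deduce this directly from Proposition \ref{P:glue2}, which asserts that any quotient stack $[X/H]$ with $X$ an ind-placid ind-algebraic space and $H$ an ind-placid group admits gluing of sheaves. Thus it suffices to exhibit $[\fC_{\bullet}/\cL G]$ as such a quotient, and the argument reduces to checking that $\fC_{\bullet}$ is an ind-placid ind-scheme and that $\cL G$ is an ind-placid group.

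For the loop group, this is essentially immediate from \re{afffl}(c): the presentation $\cL G \simeq \colim_i \wt{Y}_i$ expresses $\cL G$ as a filtered colimit along fp-closed embeddings of schemes $\wt{Y}_i$, each of which admits a placid presentation (being an $I$-torsor over an $I$-invariant closed projective subscheme of $\Fl$). Hence $\cL G$ is an ind-placid group, and therefore an ind-placid group ind-scheme in the sense required by \rp{glue2}.

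For $\fC_{\bullet}$, we first recall from \re{affspr}(b) that $\fC \subset \cL\fg$ is an fp-closed ind-subscheme of the ind-placid ind-scheme $\cL\fg \simeq \colim_i t^{-i}\clp(\fg)$, hence $\fC$ itself is an ind-placid ind-scheme. Next, by \re{strval2}(c) the subscheme $\clp(\fc)_{\bullet} \subset \clp(\fc)$ is fp-open, and since $\fC_{\bullet} = \chi^{-1}(\clp(\fc)_{\bullet})$ by definition (see \re{consstr2}(b)), the embedding $\fC_{\bullet} \hookrightarrow \fC$ is an fp-open embedding of ind-schemes. Writing $\fC \simeq \colim_i Z_i$ with $Z_i$ placid affine schemes and fp-closed transition maps, we obtain a compatible presentation $\fC_{\bullet} \simeq \colim_i (Z_i \cap \fC_{\bullet})$ in which each term is an fp-open subscheme of a placid affine scheme, hence placid by \rl{glplacid}(c), and in which the transition maps remain fp-closed embeddings. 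Thus $\fC_{\bullet}$ is an ind-placid ind-scheme.

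Finally, the adjoint action of $\cL G$ on $\cL\fg$ preserves $\fC_{\bullet}$ since the Chevalley morphism $\chi$ is $\cL G$-invariant. Applying \rp{glue2} to the resulting action of the ind-placid group $\cL G$ on the ind-placid ind-scheme $\fC_{\bullet}$, we conclude that the quotient stack $[\fC_{\bullet}/\cL G]$ admits gluing of sheaves. No step here is an obstacle; the only verification worth care is that the open/closed operations used to pass from $\cL\fg$ to $\fC_{\bullet}$ preserve the class of ind-placid ind-schemes termwise, which is handled by \rl{glplacid}(c).
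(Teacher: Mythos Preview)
There is a genuine gap in your argument: the claim that $\clp(\fc)_{\bullet}\subset\clp(\fc)$ is fp-open is false, and this breaks the rest of the proof. Recall that $\clp(\fc)_{\bullet}=\clp(\fc)_{\kD\neq 0}$ is the preimage under $\kD$ of $\clp(\B{A}^1)_{\bullet}=\clp(\B{A}^1)\sm\{0\}$ (see \re{strval}(b)). The origin $\{0\}\subset\clp(\B{A}^1)$ is cut out by the infinitely many equations $a_0=a_1=\cdots=0$, so it is closed but \emph{not} fp-closed; its complement is open but not fp-open, and in particular not quasi-compact. The reference \re{strval2}(c) you cite only asserts that the strata form a bounded constructible stratification, not that the total locus is fp-open. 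Consequently, when you write $\fC_{\bullet}\simeq\colim_i(Z_i\cap\fC_{\bullet})$, the pieces $Z_i\cap\fC_{\bullet}$ need not be quasi-compact, so they need not be qcqs schemes admitting placid presentations, and the presentation does not exhibit $\fC_{\bullet}$ as an ind-placid ind-scheme in the sense of \re{indsch}. Hence \rp{glue2} cannot be applied directly.

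The paper repairs exactly this point by interposing the fp-open truncations $\fC_{\leq m}\subset\fC$ from \re{consstr2}(a): each $\fC_{\leq m}$ \emph{is} fp-open in $\fC$, hence ind-placid, so \rp{glue2} gives gluing of sheaves for each $[\fC_{\leq m}/\cL G]$. One then writes $[\fC_{\bullet}/\cL G]\simeq\colim_m[\fC_{\leq m}/\cL G]$ along fp-open embeddings and invokes \rl{openglue}(b). Your argument would go through verbatim for each $\fC_{\leq m}$ in place of $\fC_{\bullet}$; the missing ingredient is precisely this passage to the colimit via \rl{openglue}(b).
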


\begin{proof}
Since each  $\fC_{\leq m}$ is an ind-placid ind-scheme (see \re{consstr2}(a)), while $\cL G$ is an ind-placid group
(see \re{afffl}(c)), the quotient $[\fC_{\leq m}/\cL G]$ admits gluing of sheaves by \rp{glue2}. Moreover, since
$[\fC_{\bullet}/\cL G]\simeq\colim_m[\fC_{\leq m}/\cL G]$, and all the transition maps are fp-open embeddings (see \re{consstr2}(c)),
it admits gluing of sheaves by \rl{openglue}(b).
\end{proof}

\begin{Emp} \label{E:basictstr}
{\bf The perverse $t$-structure.}
(a) By \rl{sprsmall}(a),  $([\fC_{\bullet}/\cL G],\{[\fC_{w,\br}/\cL G]_{\red}\}_{w,\br})$ is a placidly stratified $\infty$-stack, which we equip with perversity
$p_{\nu}=\{\nu_{w,\br}\}$, defined by $\nu_{w,\br}:=d_{\br}+a_{w,\br}$  (see \re{codim}). By \rp{tstrglue}, this perversity $p_{\nu}$ gives rise to the $t$-structure on $\cD([\fC_{\bullet}/\cL G])$, which we call the {\em perverse $t$-structure.}

(b) The perversity $p_{\nu}$ from (a) coincides with the perversity $p_{\ov{\frak{p}}_{\bullet}}$ (see \re{pervf}), corresponding to the Grothendieck--Springer fibration $\ov{\frak{p}}_{\bullet}$.
Indeed, the perversity $p_{\ov{\frak{p}}_{\bullet}}$  is defined to be  $p_{\ov{\frak{p}}_{\bullet}}(w,\br)=b_{w,\br}+\dt_{w,\br}$
(by \rl{sprsmall}(b) and \rco{topproper}(b)), and this expression is equal to  $2\dt_{w,\br}+c_w+a_{w,\br}$ (by \rp{codim}), hence to  $d_{\br}+a_{w,\br}$  (see \re{codim}(b)).
\end{Emp}


Now we are ready to prove the main result of this work.

\begin{Thm}\label{T:main}
(a) The affine Grothendieck--Springer sheaf $\cS_{\bullet}\in \cD([\fC_{\bullet}/\cL G])$ is $p_{\ov{\frak{p}}_{\bullet}}$-perverse and satisfies $\cS_{\bullet}\simeq j_{!*}(\cS_{\leq 0})$.

(b) There are natural algebra isomorphisms $\End(\cS_{\bullet})\simeq\End(\cS_{\leq 0})\simeq\bql[\widetilde{W}]$.
\end{Thm}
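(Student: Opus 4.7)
The plan is to derive part (a) directly from the general smallness criterion \rt{small}, and to treat part (b) via \rco{contperv} for the first isomorphism and \rco{end} applied to the $\wt{W}$-torsor structure over the open stratum for the second.

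For part (a), I would apply \rt{small} to $f=\ov{\frak{p}}_{\bullet}$ with $\cU=[\fC_{\leq 0}/\cL G]$. All the hypotheses are already established in the paper: $\ov{\frak{p}}_{\bullet}$ is locally ind-fp-proper (\re{not}); the source $[\wt{\fC}_{\bullet}/\cL G]\simeq[\Lie(I)_{\bullet}/I]$ is smooth, the GKM strata $\{[\fC_{w,\br}/\cL G]_{\red}\}_{w,\br}$ form a placidly stratified $\infty$-stack, and $\ov{\frak{p}}_{\bullet}$ is $\cU$-small with respect to this stratification, the open stratum being $(w,\br)=(1,0)$ (\rl{sprsmall}); and the target admits gluing of sheaves (\rl{admglsh}). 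The associated perversity $p_{\ov{\frak{p}}_{\bullet}}$ agrees with the one of \re{basictstr}(a) thanks to the identity $b_{w,\br}+\dt_{w,\br}=d_{\br}+a_{w,\br}$ from \rp{codim}. Thus \rt{small} yields that $\cS_{\bullet}$ is $p_{\ov{\frak{p}}_{\bullet}}$-perverse and $\cS_{\bullet}\simeq j_{!*}(j^{!}\cS_{\bullet})$; the identification $j^{!}\cS_{\bullet}\simeq\cS_{\leq 0}$ is the base-change equality of \re{grsprsh}(d).

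For the first isomorphism of (b), note that $\cS_{\leq 0}=j^{!}\cS_{\bullet}$ is perverse (since $j^{!}$ is $t$-exact by \rl{openemb}), so applying \rco{contperv} with $A=B=\cS_{\leq 0}$ together with part (a) gives $\End(\cS_{\bullet})=\End(j_{!*}\cS_{\leq 0})\simeq\End(\cS_{\leq 0})$. For the second isomorphism, I would apply \rco{end} to $\ov{\frak{p}}_{\leq 0}$ with $\Gm=\wt{W}$, where the $\wt{W}$-action on $[\wt{\fC}_{\leq 0}/\cL G]$ is that of \re{waffact}(c); the required topological equivalence $[[\wt{\fC}_{\leq 0}/\cL G]/\wt{W}]\to[\fC_{\leq 0}/\cL G]$ is obtained by dividing the topological equivalence of \rco{openstr} by $\cL G$ and invoking \re{topeq}(b). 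The subtle point is to verify that $\un{\pi}_{0}([\wt{\fC}_{\leq 0}/\cL G])$ is a single point, so that the extra factor $\bql^{\un{\pi}_{0}}$ appearing in \rco{end} is just $\bql$. For this I would use \rco{regstr2} to reduce to showing that $[\clp(\kt^{\rs})/\clp(T)]$ is connected, which in turn follows because $\clp(T)$ is connected and $\clp(\kt^{\rs})$ is the complement, in the irreducible strongly pro-smooth scheme $\clp(\kt)\simeq\B{A}^{\infty}$, of the finitely many hypersurfaces $\{\al(x)(0)=0\}$ for $\al\in R$, hence itself irreducible. Granted this connectedness, \rco{end} yields $\End(\cS_{\leq 0})\simeq\bql[\wt{W}]\otimes_{\bql}\bql\simeq\bql[\wt{W}]$. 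The main obstacle in the whole argument is really the semi-smallness input \rl{sprsmall}(c) underlying part (a), which relies crucially on the Goresky--Kottwitz--MacPherson codimension formula \rp{codim}; once that is granted, everything else is a fairly formal use of the framework built up in Parts I--III.
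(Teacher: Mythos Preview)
Your proposal is correct and follows essentially the same route as the paper: part (a) is deduced from \rt{small} via the smallness established in \rl{sprsmall}(c), the first isomorphism of (b) comes from (a) and \rco{contperv}, and the second is exactly \rp{endrs}, which applies \rco{end} to $\ov{\frak{p}}_{\leq 0}$ after checking connectedness of $[\wt{\fC}_{\leq 0}/\cL G]$ via \rco{regstr2} and the topological equivalence from \rco{openstr}. The only cosmetic difference is that the paper appeals directly to \re{pi0}(f) for connectedness (the surjection from connected $\clp(\kt^{\rs})$ suffices; the connectedness of $\clp(T)$ is not needed).
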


\begin{proof}
(a) is an immediate consequence of the combination of \rl{sprsmall}(c) and \rt{small}.

(b) The first isomorphism follows from (a) and \rco{contperv}, while the second one is shown in \rp{endrs} below.
\end{proof}

\begin{Prop}\label{P:endrs}
We have a natural algebra isomorphism $\End(\cS_{\leq 0})\simeq\bql[\widetilde{W}]$.
\end{Prop}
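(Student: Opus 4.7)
The plan is to deduce this from \rco{end}, applied to the morphism $f := \ov{\frak{p}}_{\leq 0}:\cX\to\cY$ with $\cX:=[\wt{\fC}_{\leq 0}/\cL G]$, $\cY:=[\fC_{\leq 0}/\cL G]$, and $\Gamma:=\wt{W}$. First I would assemble the three hypotheses of that corollary.

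For the $\wt{W}$-action on $\cX$ over $\cY$: by \re{waffact}(b),(c) the group $\wt{W}\simeq(\cL N)_{\red}/\clp(T)$ acts on $\wt{\fC}_{\leq 0}$ over $\fC_{\leq 0}$ and this action commutes with the $\cL G$-action, so it descends to an action of $\wt{W}$ on $\cX$ over $\cY$. For the topological equivalence hypothesis, \rco{openstr} asserts that $[\wt{\fC}_{\leq 0}/\wt{W}]\to \fC_{\leq 0}$ is a topological equivalence; taking the further (commuting) quotient by $\cL G$ and using \re{topeq}(b) (stability of topological equivalences under quotients), the induced map $[\cX/\wt{W}]\to\cY$ is a topological equivalence as required.

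For the pro-finiteness hypothesis, I will show the stronger statement that $\un{\pi}_0(\cX)=\pt$. By \rco{regstr2} we have $\cX\simeq [\clp(\kt^{\rs})/\clp(T)]$. Since the quotient map $\clp(\kt^{\rs})\to\cX$ is an epimorphism in $\St_k$, it follows from \re{pi0}(e) that $\un{\pi}_0(\clp(\kt^{\rs}))\twoheadrightarrow\un{\pi}_0(\cX)$, so it suffices to see that $\clp(\kt^{\rs})$ is connected. But $\clp(\kt^{\rs})$ is an open subscheme of the strongly pro-smooth pro-vector space $\clp(\kt)$, cut out by the condition that the evaluation at $t=0$ lands in the regular semisimple locus $\kt^{\rs}\subset\kt$; fibers of the evaluation $\clp(\kt^{\rs})\to\kt^{\rs}$ are (strongly pro-smooth) pro-vector spaces, hence connected, and $\kt^{\rs}$ is connected as the complement of a divisor in the affine space $\kt$. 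Thus $\clp(\kt^{\rs})$ is connected, giving $\un{\pi}_0(\cX)=\pt$.

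With these inputs in place, \rco{end} yields a natural algebra isomorphism
\[
\End(\cS_{\leq 0})=\End(f_!\om_{\cX})\simeq \bql[\wt{W}]\otimes_{\bql}\bql^{\un{\pi}_0(\cX)}\simeq\bql[\wt{W}].
\]
The main technical points are to carefully verify that the $\wt{W}$-action coming from \re{waffact} descends to $\cX$ over $\cY$ and is indeed the one inducing the topological equivalence of \rco{openstr}; once this is done the computation of $\un{\pi}_0(\cX)$ is routine and the conclusion is immediate from \rco{end}.
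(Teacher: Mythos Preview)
Your proposal is correct and follows essentially the same route as the paper: verify the hypotheses of \rco{end} for $\ov{\frak{p}}_{\leq 0}$ with $\Gamma=\wt{W}$, using \re{waffact} and \rco{openstr} (via \re{topeq}(b)) for the topological equivalence, and \rco{regstr2} together with connectedness of $\clp(\kt^{\rs})$ for $\un{\pi}_0(\cX)=\pt$. The paper's proof is the same up to citing \re{pi0}(f) in place of (e) and leaving the connectedness of $\clp(\kt^{\rs})$ unexplained.
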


\begin{proof}
Since  $[\wt{\fC}_{\leq 0}/\cL G]\simeq[\clp(\kt^{\rs})/\clp(T)]$ (by \rco{regstr2}), the affine scheme $\clp(\kt^{\rs})$ is connected, while the projection $\clp(\kt^{\rs})\to [\clp(\kt^{\rs})/\clp(T)]$ is surjective, we conclude that the $\infty$-stack $[\wt{\fC}_{\leq 0}/\cL G]$ is connected (see \re{pi0}(f)).

By \re{topeq}(b), the topological equivalence $[\wt{\fC}_{\leq 0}/\wt{W}]\to\fC_{\leq 0}$ from \rco{openstr} induces a topological equivalence
$[\wt{\fC}_{\leq 0}/(\cL G\times \wt{W})]\to [\fC_{\leq 0}/\cL G]$. Thus the projection $\ov{\frak{p}}_{\leq 0}:[\wt{\fC}_{\leq 0}/\cL G]\to [\fC_{\leq 0}/\cL G]$ satisfies the assumption of \rco{end} with $\Gm:=\wt{W}$. Since $\cS_{\leq 0}\simeq (\ov{\frak{p}}_{\leq 0})_!(\om_{[\wt{\fC}_{\leq 0}/\cL G]})$ (see \re{grsprsh}(c)) and $[\wt{\fC}_{\leq 0}/\cL G]$ is connected, the assertion now follows from a combination of \rco{end} and \rl{endomor}(a).
\end{proof}

\begin{Emp} \label{E:waffaction}
{\bf Remark.} It follows from \rt{main}(b) that $\cS_{\bullet}$ is equipped with a natural action of $\widetilde{W}$. Namely, it follows from the proof (see \rp{endrs}) that the action of $\widetilde{W}$ on $\cS_{\leq 0}$ is induced by the geometric action of $\widetilde{W}$ on $\wt{\fC}_{\leq 0}$ over $\fC_{\leq 0}$, and this action uniquely extends to the action on $\cS_{\bullet}$.
\end{Emp}

\begin{Emp} \label{E:affsprsh}
{\bf The affine Springer sheaf.} In the notation of \re{affsprfib}, we denote by $\cS_{\tn}\in\cD([\fC_{\tn}/\cL G])$ \label{N:cStn} the $!$-pullback of $\cS$ and call it the {\em affine Springer sheaf}. We also let $\cS_{\tn,\bullet}\in \cD([\fC_{\tn,\bullet}/\cL G])$ \label{N:cStnbullet} be the $!$-pullback of $\cS_{\tn}$.
\end{Emp}


\begin{Thm} \label{T:perv3}
The affine Springer sheaf $\cS_{\tn,\bullet}$ is $p_{\ov{\frak{p}}_{\tn,\bullet}}$-perverse and satisfies
\[
\cS_{\tn,\bullet}\simeq (\ov{\frak{p}}_{\tn,\bullet})_!(\om_{[\Lie(I)_{\tn,\bullet}/I]}).
\]
\end{Thm}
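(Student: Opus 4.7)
The plan is to reduce the statement to \rt{small} (the main theorem on semi-small morphisms), using base change to identify $\cS_{\tn,\bullet}$ with the pushforward of the dualizing sheaf under $\ov{\frak{p}}_{\tn,\bullet}$. All the geometric input needed — semi-smallness, smoothness of the source, placid stratification of the target — has already been verified in \rl{sprsm}.

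First I would prove the isomorphism $\cS_{\tn,\bullet}\simeq (\ov{\frak{p}}_{\tn,\bullet})_!(\om_{[\wt{\fC}_{\tn,\bullet}/\cL G]})$. Consider the Cartesian diagram
\[
\begin{CD}
[\wt{\fC}_{\tn,\bullet}/\cL G] @>\wt{\iota}_{\tn}>> [\wt{\fC}_{\bullet}/\cL G]\\
@V\ov{\frak{p}}_{\tn,\bullet}VV @V\ov{\frak{p}}_{\bullet}VV\\
[\fC_{\tn,\bullet}/\cL G] @>\iota_{\tn}>> [\fC_{\bullet}/\cL G].
\end{CD}
\]
Since $\ov{\frak{p}}_{\bullet}$ is locally ind-fp-proper (see \re{not}), the base change \rp{locindpr} gives an isomorphism $\iota_{\tn}^!(\ov{\frak{p}}_{\bullet})_!\simeq (\ov{\frak{p}}_{\tn,\bullet})_!\wt{\iota}_{\tn}^!$. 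Applied to $\om_{[\wt{\fC}_{\bullet}/\cL G]}$ and combined with the functoriality of dualizing sheaves (i.e.\ $\wt{\iota}_{\tn}^!\om_{[\wt{\fC}_{\bullet}/\cL G]}\simeq \om_{[\wt{\fC}_{\tn,\bullet}/\cL G]}$) and the definition of $\cS_{\tn,\bullet}$ as $\iota_{\tn}^!\cS_{\bullet}$, this yields the desired identification. The further identification with $(\ov{\frak{p}}_{\tn,\bullet})_!(\om_{[\Lie(I)_{\tn,\bullet}/I]})$ comes from the canonical isomorphism $[\wt{\fC}_{\tn,\bullet}/\cL G]\simeq [\Lie(I)_{\tn,\bullet}/I]$ obtained by pullback from \re{affspr}(c).

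Next I would verify the hypotheses of \rt{small} for the morphism $\ov{\frak{p}}_{\tn,\bullet}$. By \rl{sprsm}(c), the source $[\wt{\fC}_{\tn,\bullet}/\cL G]$ is a smooth placid $\infty$-stack, and $\ov{\frak{p}}_{\tn,\bullet}$ is a locally ind-fp-proper semi-small morphism onto the placidly stratified target $[\fC_{\tn,\bullet}/\cL G]$ (where the stratification $\{[\fC_{w,\br}/\cL G]_{\red}\}_{(w,\br)>0}$ from \rl{sprsm}(a) exhibits the placid stratification and \rl{sprsm}(b) provides the codimension data $b_{w,\br}^+$ needed to define the perversity $p_{\ov{\frak{p}}_{\tn,\bullet}}$). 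The only remaining point is that the target admits gluing of sheaves: since $\iota_{\tn}$ is an fp-closed embedding and $[\fC_{\bullet}/\cL G]$ admits gluing of sheaves by \rl{admglsh}, this follows from \rl{openglue}(a).

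With these hypotheses in hand, \rt{small} applies directly and gives that $(\ov{\frak{p}}_{\tn,\bullet})_!(\om_{[\wt{\fC}_{\tn,\bullet}/\cL G]})$ is $p_{\ov{\frak{p}}_{\tn,\bullet}}$-perverse, completing the proof. No step is really an obstacle — the entire argument is a bookkeeping exercise on top of \rl{sprsm} and \rt{small}; the only thing one must be careful about is checking that the perversity appearing on the right-hand side, namely $p_{\ov{\frak{p}}_{\tn,\bullet}}=\{b_{w,\br}^+ +\dt_{w,\br}\}_{(w,\br)>0}$, is indeed the one intrinsically attached by \re{pervf} to the semi-small morphism $\ov{\frak{p}}_{\tn,\bullet}$ (which holds by the codimension formula \rco{codim}).
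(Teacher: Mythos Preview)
Your proposal is correct and follows essentially the same approach as the paper's own proof: both use the base change isomorphism from \rp{locindpr} for the Cartesian square involving $\iota_{\tn}$ to identify $\cS_{\tn,\bullet}$ with $(\ov{\frak{p}}_{\tn,\bullet})_!(\om_{[\Lie(I)_{\tn,\bullet}/I]})$, and then invoke \rl{sprsm}(c) together with \rt{small} to conclude perversity. Your write-up is slightly more explicit about verifying that the target admits gluing of sheaves (via \rl{admglsh} and \rl{openglue}(a)), which the paper leaves implicit; the only quibble is that the identification $p_{\ov{\frak{p}}_{\tn,\bullet}}=\{b_{w,\br}^+ +\dt_{w,\br}\}$ follows directly from the definition \re{pervf} combined with \rl{sprsm}(b) and \rco{topproper}(b), so the reference to \rco{codim} there is unnecessary.
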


\begin{proof}
Since $\ov{\frak{p}}$ is ind-fp-proper, \rp{locindpr} applies. Therefore the base change morphism
$(\ov{\frak{p}}_{\tn,\bullet})_!\wt{i}_{\tn}^!\to i_{\tn}^!(\ov{\frak{p}}_{\bullet})_!$, corresponding to
the Cartesian diagram
\begin{equation*} \label{Eq:basicun}
\begin{CD}
[\Lie(I)_{\tn,\bullet}/I] @>\wt{i}_{\tn}>>[\Lie(I)_{\bullet}/I] \\
@V{\ov{\frak{p}}_{\tn,\bullet}}VV @VV{\ov{\frak{p}}_{\bullet}}V\\
[\fC_{\tn,\bullet}/\cL G] @>i_{\tn}>>[\fC_{\bullet}/\cL G]
\end{CD}
\end{equation*}
is an isomorphism. Therefore we get an isomorphism $\cS_{\tn,\bullet}\simeq (\ov{\frak{p}}_{\tn,\bullet})_!(\om_{[\Lie(I)_{\tn,\bullet}/I]})$.

Next, using \rco{topproper}(b) and \rl{sprsm}(b), we see that the corresponding perversity $p_{\ov{\frak{p}}_{\tn,\bullet}}$ satisfies
$p_{\ov{\frak{p}}_{\tn,\bullet}}(w,\br)=(b_{w,\br}-r)+\dt_{w,\br}$ (compare \re{basictstr}(a)). The assertion now follows from a combination of \rl{sprsm}(c) and \rt{small}.
\end{proof}

One can ask whether $\cS_{\tn,\bullet}$ is an intermediate extension of its restriction to a suitable fp-open substack, and what is the minimal substack satisfying this property.

\begin{Emp} \label{E:essstr2}
{\bf Remark.} Assume that Conjecture \re{conjecture}(a) holds, and let $\fC_{\tn,+}\subset \fC_{\tn,\bullet}$ be as in \re{conjecture}(b).
Then $[\fC_{\tn,+}/\cL G]\subset [\fC_{\tn,\bullet}/\cL G]$ is an open union of strata. Now \rt{small} and \rl{sprsm} would imply that $\cS_{\tn,\bullet}$ is the intermediate extension of its restriction to $[\fC_{\tn,+}/\cL G]$, and this is the smallest open union of strata, satisfying this property.
\end{Emp}

\subsection{Perverse $t$-structure on $[(\cL\kg)_{\bullet}/\cL G]$}

\begin{Emp} \label{E:gmaction}
{\bf The $\gm$-action.} (a) Recall that the natural $\gm$-action $(a,x)\mapsto ax$ on $\kg$ commutes with the adjoint action of $G$.
Thus it induces the $\gm$-action on the GIT quotient $\fc=\kg//G$ such that the projection $\chi:\kg\to\fc$ is $\gm$-equivariant. In particular, the induced map
$\pi:\kt\to\fc$ is $\gm$-equivariant.

(b) Furthermore, there exists a (noncanonical) isomorphism $\fc\isom\B{A}^r$ under which the $\gm$-action on $\fc$ corresponds to a $\gm$-action on $\B{A}^r$, given by $a(x_1,\ldots, x_r):=(a^{d_1}x_1,\ldots,a^{d_r}x_r)$ for certain positive integers $d_1,\ldots,d_r$.

(c) The $\gm$-actions on $\kt$ and $\fc$ induce $\cL(\gm)$-actions on $\cL\kg$, $\cL(\kt_w)$ and $\cL\fc$ such that the induced maps
$\chi:\cL\kg\to\cL\fc$ and $\pi:\cL(\kt_w)\to\cL\fc$ are $\cL(\gm)$-equivariant.
\end{Emp}

\begin{Emp} \label{E:consstrfc}
{\bf Constructible stratification of $(\cL\fc)_{\bullet}$.}
(a) By definition, for every $n\geq 0$ and every GKM-stratum $\kt_{w,\br}\subset \clp(\kt_w)$ (see \re{twisted}(b)) the action of element $t^n\in \cL(\gm)$ on $\cL(\kt_w)$ from \re{gmaction}(c) induces an isomorphism $\kt_{w,\br}\isom \kt_{w,\br+n}$, that is, $\kt_{w,\br+n}=t^n\kt_{w,\br}$.

(b) Since the GKM stratum $\fc_{w,\br}\subset\clp(\fc)$ (see \re{strofc}(c)) is defined to be the image $\pi(\kt_{w,\br})$, and $\pi$ is $\cL(\gm)$-equivariant, we conclude that the action of element $t^n\in \cL(\gm)$ induces an isomorphism $\fc_{w,\br}\isom \fc_{w,\br+n}$, that is, $\fc_{w,\br+n}=t^n\fc_{w,\br}$.

(c) For every pair $(w,\br)$, where $w\in W$ and $\br:R\to\B{Q}$, we choose $n\geq 0$ such that $\br+n\geq 0$. To this data we can associate an
fp-locally closed subscheme $\fc_{w,\br+n}\subset \clp(\fc)\subset\cL\fc$ (see \re{strofc}), so we can consider another fp-locally closed subscheme  $\fc_{w,\br}:=t^{-n}\fc_{w,\br+n}\subset \cL\fc$. Moreover, using the observation of (b) one sees that $\fc_{w,\br}$ is independent of the choice of $n$ (hence coincides with that of \re{strofc} when $\br\geq 0$).

(d) We claim that the collection $\{\fc_{w,\br}\}_{w,\br}$ forms a constructible stratification of the regular part $(\cL\fc)_{\bullet}:=(\cL\fc)_{\frak{D}\neq 0}$ \label{N:lcbullet} of $\cL\fc$.

First of all, since $\clp(\fc)\subset \cL\fc$ is an fp-closed subscheme, the same is true for each $t^{-n}\clp(\fc)\subset \cL\fc$.
Moreover, using  isomorphism $\cL(\B{A}^1)\simeq\colim_n t^{-n}\clp(\B{A}^1)$ (see \re{loop}(b)) and observation \re{gmaction}(b), we conclude that
we have a presentation $\cL\fc\simeq\colim_n t^{-n}\clp(\fc)$ as a filtered colimit of its fp-closed subschemes,
hence a similar presentation  $(\cL\fc)_{\bullet}\simeq\colim_n t^{-n}\clp(\fc)_{\bullet}$.

Next we notice that we have $\fc_{w,\br}\subset t^{-n}\clp(\fc)$ if and only if $\fc_{w,\br+n}=t^n\fc_{w,\br}\subset\clp(\fc)$.
Since $\{\fc_{w,\br}\}_{w,\br\geq 0}$ forms a bounded constructible stratification of $\clp(\fc)_{\bullet}$ (see \re{consstr2}(d)), we thus conclude that
$\{\fc_{w,\br}\}_{w,\br\geq -n}$ forms a bounded constructible stratification of $t^{-n}\clp(\fc)_{\bullet}$, hence
$\{\fc_{w,\br}\}_{w,\br}$ forms
a constructible stratification of $(\cL\fc)_{\bullet}$.
\end{Emp}

\begin{Emp} \label{E:conslg}
{\bf Constructible stratification of $(\cL\kg)_{\bullet}$.}
Set $(\cL\kg)_{\bullet}:=\chi^{-1}((\cL\fc)_{\bullet})\subset\cL\kg$.\label{N:lgbullet}

(a) For every $(w,\br)$ as in \re{consstrfc}(c), the preimage $\kg_{w,\br}:=\chi^{-1}(\fc_{w,\br})\subset\cL\kg$ is an fp-locally closed ind-subscheme, and $\{\kg_{w,\br,\red}\}_{w,\br}$ forms a constructible stratification of $(\cL\kg)_{\bullet}$ (by \re{consstrfc}(d) and \rl{propcons}(a)). Therefore $\{[\kg_{w,\br}/\cL G]_{\red}\}_{w,\br}$ forms a constructible stratification of the quotient $\infty$-stack $[(\cL\kg)_{\bullet}/\cL G]$.

(b) Since the map $\chi$ is $\gm$-equivariant, the action of $t^n\in \cL(\gm)$ on $\cL\kg$ induces an isomorphism
$\kg_{w,\br}\isom\kg_{w,\br+n}$, hence $[\kg_{w,\br}/\cL G]\isom [\kg_{w,\br+n}/\cL G]$. Using equality $\kg_{w,\br}=\fC_{w,\br}$ for all $\br\geq 0$, we thus conclude from \rco{stratum2} that each $[\kg_{w,\br}/\cL G]_{\red}$ is placid.
\end{Emp}

\begin{Emp}
{\bf The perverse $t$-structure.}
(a) By \re{conslg},  $[(\cL\kg)_{\bullet}/\cL G]$ is a placidly stratified $\infty$-stack.

(b)  Since the map $\chi$ is $\gm$-equivariant, the presentation $(\cL\fc)_{\bullet}\simeq\colim_n t^{-n}\clp(\fc)_{\bullet}$ from
\re{consstrfc}(d) induces presentations $(\cL\kg)_{\bullet}\simeq\colim_n t^{-n}\fC_{\bullet}$ and
$[(\cL\kg)_{\bullet}/\cL G]\simeq\colim_n t^{-n}[\fC_{\bullet}/\cL G]$, where all the transition maps are fp-closed embeddings.
Therefore it follows from \rl{admglsh} and \rl{openglue}(b) that the quotient stack $[(\cL\kg)_{\bullet}/\cL G]$ admits gluing of sheaves.

(c) Notice that for every GKM stratum $(w,\br)$ with $\br\geq 0$ and every $n\geq 0$, the perversity $\nu_{w,\br}$ from \re{basictstr} satisfies
the identity $\nu_{w,\br+n}=\nu_{w,\br}+n\dim G$.

(d) For an arbitrary $(w,\br)$, we choose $n\geq 0$ such that $\br+n\geq 0$. In this case, $\nu_{w,\br+n}$ was defined in \re{basictstr},
 and we set $\nu_{w,\br}:=\nu_{w,\br+n}-n\dim G$.
By (c), $\nu_{w,\br}$ is independent of $n$ and coincides with that of \re{basictstr} when $\br\geq 0$.

(e) By \rp{tstrglue}, the perversity $p_{\nu}:=\{\nu_{w,\br}\}_{w,\br}$ from (d) gives rise to the $t$-structure on $\cD([(\cL\kg)_{\bullet}/\cL G])$, which we call the {\em perverse $t$-structure.}
\end{Emp}

\part{Appendices}
\appendix
\section{Categorical framework} \label{S:category}
In this section we develop a categorical framework which is needed for the construction of placid $\infty$-stacks.

\subsection{A variant of Simpson's construction} \label{S:simpson}

In this subsection we recall a general categorical construction, which is essentially due to Simpson \cite{Si}.

\begin{Emp} \label{E:setup1}
{\bf Set-up.} (a) Let $\cC$ be an $\infty$-category, admitting all fiber products. Assume that we are given

$\bullet$ a class $\Cov$ of morphisms in $\cC$, called {\em coverings}, containing isomorphisms and closed under pullbacks and compositions;

$\bullet$ a class $\Ob_0(\cC)\subset\Ob(\cC)$ of objects in $\cC$, and

$\bullet$  a class $\Mor_0(\cC)\subset\Mor(\cC)$ of morphisms in $\cC$ such that

\indent\indent(i) class $\Mor_0(\cC)$ is closed under compositions and pullbacks and contains all isomorphisms;

\indent\indent(ii) if $f:x\to y$ is in $\Mor_0(\cC)$ and $y\in\Ob_0(\cC)$, then $x\in\Ob_0(\cC)$.
\end{Emp}

\begin{Emp} \label{E:cons}
{\bf Construction.}
Assume that we are in the situation of \re{setup1}. By recursion, for every $n\in\B{N}_{>0}$ we assume that classes $\Ob_{n-1}(\cC)$ and $\Mor_{n-1}(\cC)$ are constructed, and we are going to construct classes $\Ob_n(\cC)\subset\Ob(\cC)$ and $\Mor_n(\cC)\subset\Mor(\cC)$.

(a) Denote by $\Ob_{n}(\cC)$ the class of objects $x\in\Ob(\cC)$ for which there exists a covering $g:z\to x$ in $\Mor_{n-1}(\cC)$ with $z\in\Ob_0(\cC)$.

(b) Denote by $\Mor_{n,0}(\cC)$ the class of all morphisms $f:x\to y$ in $\Mor(\cC)$ with $x\in \Ob_n(\cC)$ and $y\in\Ob_0(\cC)$ such that
there exists a covering $z\to x$ in $\Mor_{n-1}(\cC)$, where $z\in\Ob_0(\cC)$ and the composition $z\to x\to y$ is in $\Mor_{0}(\cC)$.

(c) Denote by $\Mor_{n}(\cC)$ the class of all morphisms $f:x\to y$  in $\Mor(\cC)$ such that for every morphism $y'\to y$ in $\Mor(\cC)$ with  $y'\in \Ob_0(\cC)$ the pullback $x\times_y y'\to y'$ belongs to $\Mor_{n,0}(\cC)$.
\end{Emp}

\begin{Emp} \label{E:rempb}
{\bf Remark.}
By construction, $\Mor_n(\cC)$ contains all isomorphisms and is closed under all pullbacks.
\end{Emp}

The following rather straightforward lemma summarizes basic properties of this construction.

\begin{Lem} \label{L:simpson}
For every $n\in\B{N}$, we have the following assertions:

(a) If $(f:x\to y)\in\Mor_n(\cC)$ and $y\in\Ob_n(\cC)$, then $x\in\Ob_n(\cC)$.

(b) The class $\Mor_n(\cC)$ is closed under compositions.

(c) If $x\to y$ is a covering in $\Mor_n(\cC)$ and $x\in\Ob_{n+1}(\cC)$, then $y\in \Ob_{n+1}(\cC)$.

(d) We have $\Ob_n(\cC)\subset\Ob_{n+1}(\cC)$ and $\Mor_n(\cC)\subset \Mor_{n+1}(\cC)$.

(e) For every morphism $f:x\to y$ in $\cC$ such that $x\in\Ob_{n+1}(\cC)$ and $y\in\Ob_{0}(\cC)$, we have $f\in\Mor_{n+1}(\cC)$ if and only if
$f\in\Mor_{n+1,0}(\cC)$.
\end{Lem}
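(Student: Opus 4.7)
The plan is to prove all five assertions simultaneously by induction on $n$. The base case $n=0$ reduces to the hypotheses in \re{setup1}: part (a) is assumption (ii), part (b) is assumption (i), and parts (c), (d), (e) for $n=0$ follow by direct verification, using that identities are isomorphisms, hence coverings lying in $\Mor_0(\cC)$, and that $\Mor_0(\cC)$ is closed under pullbacks and compositions.

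For the inductive step I would proceed in the order (a), (b), (c), (d), (e). For (a), given $f\colon x\to y$ in $\Mor_n(\cC)$ and a covering $z\to y$ in $\Mor_{n-1}(\cC)$ with $z\in\Ob_0(\cC)$ witnessing $y\in\Ob_n(\cC)$, the pullback $x\times_y z\to z$ lies in $\Mor_{n,0}(\cC)$ by the very definition of $\Mor_n(\cC)$; in particular $x\times_y z\in\Ob_n(\cC)$, so composing its witnessing covering with the pullback covering $x\times_y z\to x$ and invoking (b) for $n-1$ yields a covering from $\Ob_0(\cC)$ to $x$ in $\Mor_{n-1}(\cC)$, hence $x\in\Ob_n(\cC)$. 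For (b), given composable $f,g\in\Mor_n(\cC)$ and a test morphism $z'\to z$ with $z'\in\Ob_0(\cC)$, first extract from $g\in\Mor_n(\cC)$ a covering $u\to y\times_z z'$ in $\Mor_{n-1}(\cC)$ with $u\in\Ob_0(\cC)$ and $u\to z'$ in $\Mor_0(\cC)$, then apply $f\in\Mor_n(\cC)$ at $u\in\Ob_0(\cC)$ to obtain $v\to x\times_y u$ in $\Mor_{n-1}(\cC)$ with $v\in\Ob_0(\cC)$ and $v\to u$ in $\Mor_0(\cC)$. The composite covering $v\to x\times_z z'$ then witnesses $(gf)\times_z z'\in\Mor_{n,0}(\cC)$, the required compatibility $v\to z'$ in $\Mor_0(\cC)$ being the composition $v\to u\to z'$.

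Part (c) at level $n$ is then immediate from (b) at level $n$ by composing coverings. For (d), the inclusion $\Ob_n(\cC)\subset\Ob_{n+1}(\cC)$ uses the identity as a witnessing covering (which lies in $\Mor_n(\cC)$ via the inductive (d) applied to $\Mor_0\subset\Mor_n$), while $\Mor_n(\cC)\subset\Mor_{n+1}(\cC)$ follows because any $\Mor_{n,0}(\cC)$-witness also witnesses $\Mor_{n+1,0}(\cC)$. For (e), direction $(\Rightarrow)$ is obtained by taking $y'=y$ with the identity map; for $(\Leftarrow)$, the witnessing covering $z\to x$ from $f\in\Mor_{n+1,0}(\cC)$ pulls back to $z\times_y y'\to x\times_y y'$, where $z\times_y y'\in\Ob_0(\cC)$ by assumption (ii) applied to $z\times_y y'\to y'$ in $\Mor_0(\cC)$, yielding the required $\Mor_{n+1,0}(\cC)$-witness for the pullback.

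The only substantive step is the proof of (b) at the inductive stage: one must chain the $\Mor_{n,0}$-witnesses for $g$ and $f$ in two successive pullbacks so that the final composition to $z'$ lands in $\Mor_0(\cC)$, not merely in $\Mor_{n-1}(\cC)$. The rest is bookkeeping following the dependency pattern: (b) for $n-1$ implies (a) for $n$ and (b) for $n$, and then (c), (d), (e) for $n$ follow formally.
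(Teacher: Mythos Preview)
Your proposal is correct and follows essentially the same approach as the paper. The paper proves (b) first by induction on $n$, then deduces (c) from (b), and then (a) at level $n$ by invoking (c) at level $n-1$; your version does a simultaneous induction and, for (a), unfolds that use of (c) directly by composing the two $\Mor_{n-1}$-coverings via the inductive (b), which amounts to the same argument. The remaining parts (c), (d), (e) are handled identically in both treatments.
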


\begin{proof}
(b) The proof goes by induction on $n$. If $n=0$, the assertion follows from assumption \re{setup1}. Assume now that $n>0$.

Let $f:x\to y$ and $g:y\to z$ be in $\Mor_{n}(\cC)$, and we want to show that $g\circ f\in\Mor_{n}(\cC)$. Taking pullback with respect to $z'\to z$ with $z'\in\Ob_0(\cC)$, we can assume that $g\in\Mor_{n,0}(\cC)$ and $f\in\Mor_n(\cC)$ (by \re{rempb}),
and we want to show that $g\circ f\in\Mor_{n,0}(\cC)$.

Since $g:y\to z$ is in $\Mor_{n,0}(\cC)$ there exists a covering $y'\to y$ in $\Mor_{n-1}(\cC)$ with $y'\in\Ob_0(\cC)$ such that
$y'\to y\to z$ is in $\Mor_{0}(\cC)$. Then the pullback  $y'\times_y x\to x$ is a covering in $\Mor_{n-1}(\cC)$, while
$y'\times_y x\to y'$ is in $\Mor_{n,0}(\cC)$. Thus there exists a covering
$x'\to  y'\times_y x$ in $\Mor_{n-1}(\cC)$ with $x'\in\Ob_0(\cC)$ such that the composition $x'\to  y'\times_y x\to y'$ is in $\Mor_{0}(\cC)$.

Hence the composition $x'\to  y'\times_y x\to x$ is a covering in $\Mor_{n-1}(\cC)$ (by the induction hypothesis),
while the composition $x'\to x\to z$, or, what is the same, $x'\to y'\to z$ is in $\Mor_{0}(\cC)$ (by assumption \re{setup1}).

(c) Choose a covering $z\to x$ in $\Mor_{n}(\cC)$ with $z\in\Ob_0(\cC)$. Then the composition
$z\to x\to y$ is a covering in $\Mor_{n}(\cC)$ (by (b)), thus $y\in\Ob_{n+1}(\cC)$ by definition.

(a) The assertion for $n=0$ follows from assumption \re{setup1}. Assume now that $n>0$.
Since $y\in \Ob_{n}(\cC)$ there exists  a covering $y'\to y$ in $\Mor_{n-1}(\cC)$ such that $y'\in \Ob_0(\cC)$. Since coverings
in $\Mor_{n-1}(\cC)$ are closed under pullbacks, $x\times_y y'\to x$ is a covering, which belongs to $\Mor_{n-1}(\cC)$, and
$x\times_y y'\to y'$ belongs to $\Mor_{n,0}(\cC)$. Then $x\times_y y'\in \Ob_{n}(\cC)$, hence  $x\in\Ob_{n}(\cC)$, by (c).

(d) The assertion easily follows by induction and is left to the reader.

(e) Clearly, if $f:x\to y\in\Mor_n(\cC)$ with $y\in\Ob_0(\cC)$, then  $f\in\Mor_{n,0}(\cC)$. Conversely, we have to show that if
$f\in \Mor_{n,0}(\cC)$, then for every morphism $y'\to y$ in $\Mor_0(\cC)$ with $y'\in \Ob_0(\cC)$, the pullback
$x\times_y y'\to y'$ is in $\Mor_{n,0}(\cC)$. By definition, there exists a covering $z\to x$ in $\Mor_{n-1}(\cC)$ with $z\in\Ob_0(\cC)$ such that the composition $z\to x\to y$ is in $\Mor_0(\cC)$. Hence the pullback $z\times_y y'\to x\times_y y'$ is a covering in $\Mor_{n-1}(\cC)$, while
$z\times_y y'\to y'$ is in $\Mor_0(\cC)$. Therefore $z\times_y y'\in\Ob_0(\cC)$, thus $x\times_y y'\in \Ob_n(\cC)$, and $x\times_y y'\to y'$ is in $\Mor_{n,0}(\cC)$.
%
%
\end{proof}

\begin{Emp} \label{E:geom}
{\bf Notation.} We set $\Ob_{\infty}(\cC):=\cup_n\Ob_n(\cC)\subset \Ob(\cC)$ and $\Mor_{\infty}(\cC):=\cup_n\Mor_n(\cC)\subset \Mor(\cC)$ (compare \rl{simpson}(d)).
\end{Emp}


%

\begin{Cor} \label{C:mor0}
(a) Let $f:x\to y$ be a morphism in $\Mor_{\infty}(\cC)$ with $x,y\in\Ob_{0}(\cC)$.
Then $f\in\Mor_1(\cC)$, that is, there exists a covering $g:z\to x$ in $\Mor_0(\cC)$ with $x\in\Ob_0(\cC)$ such that
$f\circ g:z\to y$ is in $\Mor_0(\cC)$.

(b) For every pair of morphisms $f:x\to y$ and $g:z\to y$ in $\Mor_{\infty}(\cC)$ with $y\in\Ob_{\infty}(\cC)$, and $x,z\in\Ob_0(\cC)$
there exists a covering $t\to x\times_y z$ such that $t\in\Ob_0(\cC)$ and both compositions $t\to x\times_y z\to x$ and
$t\to x\times_y z\to z$ are in $\Mor_0(\cC)$.
\end{Cor}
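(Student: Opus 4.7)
The plan is to establish (a) by induction on $n \in \B{N}$ such that $f \in \Mor_n(\cC)$, and then deduce (b) from (a) by a pullback argument.

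For (a), the base case $n = 0$ is immediate: $g = \id_x$ is a (covering) isomorphism lying in $\Mor_0(\cC)$, and $f \circ g = f \in \Mor_0(\cC)$. For the inductive step, assume the statement for $\Mor_{n-1}(\cC)$ and let $f: x \to y \in \Mor_n(\cC)$ with $x, y \in \Ob_0(\cC)$. Since $y \in \Ob_0(\cC)$, Lemma \ref{L:simpson}(e) gives $f \in \Mor_{n,0}(\cC)$; unwinding construction \re{cons}(b), one obtains a covering $h: w \to x$ in $\Mor_{n-1}(\cC)$ with $w \in \Ob_0(\cC)$ such that $f \circ h \in \Mor_0(\cC)$. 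Applying the inductive hypothesis to $h: w \to x$ (both endpoints in $\Ob_0(\cC)$, and $h \in \Mor_{n-1}(\cC) \subset \Mor_\infty(\cC)$) yields a covering $h': z \to w$ in $\Mor_0(\cC)$ with $z \in \Ob_0(\cC)$ and $h \circ h' \in \Mor_0(\cC)$. Then $g := h \circ h': z \to x$ is a covering, lies in $\Mor_0(\cC)$ by closure under composition, and $f \circ g = (f \circ h) \circ h'$ belongs to $\Mor_0(\cC)$ as well.

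For (b), fix $n$ with $f \in \Mor_n(\cC)$. Since $z \in \Ob_0(\cC)$, the defining property \re{cons}(c) of $\Mor_n(\cC)$ applied to $g: z \to y$ gives that the pullback $x \times_y z \to z$ lies in $\Mor_{n,0}(\cC)$; hence there is a covering $t_0 \to x \times_y z$ in $\Mor_{n-1}(\cC)$ with $t_0 \in \Ob_0(\cC)$ such that $t_0 \to z$ lies in $\Mor_0(\cC)$. Consider now the composite $t_0 \to x \times_y z \to x$: the projection $x \times_y z \to x$ is a pullback of $g \in \Mor_\infty(\cC)$, hence lies in $\Mor_\infty(\cC)$ (by \re{rempb}), while the first map is in $\Mor_{n-1}(\cC) \subset \Mor_\infty(\cC)$. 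By Lemma \ref{L:simpson}(b),(d), the composite $t_0 \to x$ lies in $\Mor_\infty(\cC)$, and both of its endpoints are in $\Ob_0(\cC)$.

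Now apply (a) to $t_0 \to x$: one obtains a covering $t \to t_0$ in $\Mor_0(\cC)$ with $t \in \Ob_0(\cC)$ such that $t \to x$ lies in $\Mor_0(\cC)$. Then the composite $t \to t_0 \to x \times_y z$ is a covering (composition of coverings), $t$ lies in $\Ob_0(\cC)$, the projection $t \to x$ is in $\Mor_0(\cC)$ by construction, and the projection $t \to z$ factors as $t \to t_0 \to z$ with both arrows in $\Mor_0(\cC)$, hence lies in $\Mor_0(\cC)$ as required. The only mildly delicate step is the induction in (a): one must combine Lemma \ref{L:simpson}(e) (which permits passage from $\Mor_n$ to $\Mor_{n,0}$ when the target lies in $\Ob_0$) with the fact that the auxiliary covering produced by the definition of $\Mor_{n,0}$ lies in $\Mor_{n-1}$, so that the inductive hypothesis applies to it precisely. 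No serious obstacle arises beyond this careful bookkeeping.
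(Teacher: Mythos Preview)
Your proof is correct and follows essentially the same approach as the paper. Part (a) is identical (induction on $n$, using the $\Mor_{n,0}$ definition plus closure of $\Mor_0$ under composition). For part (b) there is only a minor presentational difference: the paper first observes $x\times_y z\in\Ob_{\infty}(\cC)$, picks any covering $t\to x\times_y z$ in $\Mor_{\infty}(\cC)$ with $t\in\Ob_0(\cC)$, and then applies (a) twice (once for each projection), whereas you exploit the $\Mor_{n,0}$ structure of the pullback $x\times_y z\to z$ to obtain a covering with $t_0\to z$ already in $\Mor_0(\cC)$ and then apply (a) only once. Both are valid and equally short.
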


\begin{proof}
(a) Assume that $f\in\Mor_m(\cC)$ with $m>0$, and we want to show the assertion by induction on $m$. By definition, there exists a covering
$g_1:z_1\to x$ in $\Mor_{m-1}(\cC)$ with $z_1\in\Ob_0(\cC)$ such that $f\circ g_1:z_1\to y$ is in $\Mor_{0}(\cC)$. By induction hypothesis,
there exists a covering $g_2:z\to z_1$ in $\Mor_{0}(\cC)$ such that $g:=g_1\circ g_2:z\to x$ is in $\Mor_{0}(\cC)$. Then
$g:z\to x$ is a covering, and $f\circ g=(f\circ g_1)\circ g_2\in\Mor_0(\cC)$, because $f\circ g_1,g_2\in\Mor_0(\cC)$.

(b) By assumption, $x\times_y z\to z$ and $x\times_y z\to x$ are in $\Mor_{\infty}(\cC)$, because $f$ and $g$ are, hence
$x\times_y z\in\Ob_{\infty}(\cC)$, by \rl{simpson}(a). Therefore there exists a covering $p:t\to x\times_y z$ in  $\Mor_{\infty}(\cC)$ with $t\in\Ob_0(\cC)$. By \rl{simpson}(b), both compositions $t\to x\times_y z\to z$ and $t\to x\times_y z\to x$ are in $\Mor_{\infty}(\cC)$. Moreover, using (a), precomposing $p$ with a covering from $\Mor_0(\cC)$, we can assume that the composition $t\to x\times_y z\to z$ is in $\Mor_{0}(\cC)$.
Precomposing $p$ once again, we can get that the composition $t\to x\times_y z\to x$ is in $\Mor_{0}(\cC)$ as well.
\end{proof}

\subsection{The case of $\infty$-categories of sheaves} \label{S:topos}
In this subsection we will specify the construction of \ref{S:simpson}
to the case where $\cC$ is an $\infty$-topos, that is, an $\infty$-category of sheaves on some  $\infty$-category $\cA$ equipped with a
Grothendieck topology.

\begin{Emp} \label{E:shv}
{\bf Notation.} Let $\frak{S}$ be the $\infty$-category of spaces, which are often referred as $\infty$-groupoids. For every $\infty$-category $\cA$, we denote by $\PShv(\C{A})$ the $\infty$-category of functors  $\cA^{\op}\rightarrow\frak{S}$.
Moreover, when $\cA$ is equipped with a Grothendieck topology $\cT$, we denote by  $\Shv(\C{A})\subset\PShv(\C{A})$ be the $\infty$-subcategory of sheaves in the $\cT$-topology.
\end{Emp}

\begin{Emp} \label{E:setup2}
{\bf Set-up.}
(i) Let $\cA$ be an $\infty$-category, and let $\Ob_0(\C{A})\subset\Ob(\C{A})$
and $\Mor_0(\C{A})\subset\Mor(\C{A})$ be classes of objects and morphisms such that:

(a) the class $\Mor_0(\C{A})$ contains all isomorphisms, and is closed under compositions
and pullbacks;

(b) for every $f:x\to y$ in $\Mor_0(\C{A})$ with $y\in\Ob_0(\cA)$ we have $x\in\Ob_0(\cA)$.

\noindent(ii) Moreover, assume that $\cA$ is equipped with a Grothendieck topology $\cT$ such that:

(c) the topology $\cT$ is {\em subcanonical}, that is, every representable presheaf is a sheaf;

(d) every $x\in\Ob(\cA)$ has basis of $\cT$-coverings of the form  $\{f_{\al}:x_{\al}\to x\}$ with $f_{\al}\in\Mor_0(\cA)$ for all $\al$;

(e) the class $\Ob(\cA)\subset\Ob(\Shv(\cA))$ is closed under {\em direct summands}, by which we mean that if $a\in\Ob(\cA)$ decomposes in $\Shv(\cA)$ as a coproduct $a\simeq b\sqcup c$, then $b,c\in \Ob(\cA)$, and, moreover, inclusions $b\hra a$ and $c\hra a$ belong to $\Mor_0(\cA)$.
\end{Emp}

\begin{Emp}
{\bf Remark.} In our applications, $\cA$ will be an ordinary category.
\end{Emp}

\begin{Emp} \label{E:cons2}
{\bf Construction.}
(a) To the data of \re{setup2} we  associate the data of \re{setup1} as follows:

(i) Set $\cC:=\Shv(\C{A})$, and let $\Cov$ be the class of all epimorphisms in $\cC$, that is, morphisms $f:x\to y$ such that $f(a):x(a)\to y(a)$ locally has a section for every $a\in\Ob(\cA)$.

(ii) Let $\Ob_0(\cC)$ be the class of all objects of the form $\sqcup_{\al}a_{\al}$, with $a_{\al}\in\Ob_0(\C{A})$ for all $\al$.

(iii) Let $\Mor_0(\cC)$ be the class of all morphisms $x\to y$ in $\cC$ such that for every morphism $a\to y$ in $\cC$ with $a\in\Ob_0(\cA)$ there exists a decomposition $x\times_y a\simeq\sqcup_{\al}b_{\al}$, where each $b_{\al}\in\Ob(\cA)$ and each composition $b_{\al}\hra x\times_y a\to a$ is in $\Mor_0(\cA)$.

(b) Since pullbacks commute with coproducts, assumptions \re{setup2}(i) imply that the assumptions of \re{setup1} are satisfied. Thus the construction \re{cons} applies, so we can talk about classes of objects
$\Ob_n(\cC)\subset\Ob(\cC)$ and morphisms $\Mor_n(\cC)\subset \Mor(\cC)$.

(c) By \re{setup2}(e),  for every collection $\{x_{\al}\}_{\al}$ of objects of $\cC$, the inclusion $x_{\al_0}\hra\sqcup_{\al}x_{\al}$ is in $\Mor_0(\cC)$.

(d) We say that a collection $\{x_{\al}\to y\}_{\al}$  of morphisms in $\cC$ is a {\em covering}, if the induced map $\sqcup_{\al}x_{\al}\to y$ is a covering.
\end{Emp}

\begin{Emp} \label{E:chech}
{\bf \v{C}ech nerve.}
(a) Recall that to every morphism $f:x\to y$ in $\Mor(\cC)$ one can associate its \v{C}ech complex $\check{C}(f)=\{x^{[m]}\}_{[m]\in\Dt^{\op}_s}$, parameterized by the semi-simplicial category $\Dt_s$, where each $x^{[m]}$ is defined to be the $(m+1)$-times fiber product $x\times_y\times\ldots\times_y x$ of $x$ over $y$, and morphisms are projections $x^{[m']}\to x^{[m'']}$ corresponding to injective maps $[m'']\to [m']$.

(b) It follows from \rl{simpson}, that if $x\in\Ob_n(\cC)$ and $f\in\Mor_n(\cC)$, then all terms in the \v{C}ech complex $\check{C}(f)$ are
in $\Ob_n(\cC)$ and all maps are in $\Mor_n(\cC)$. In particular, we are going to apply this when $y\in\Ob_{n+1}(\cC)$ and $x\in\Ob_0(\cC)$.

(c) If $f:x\to y$ is a covering, then the canonical morphism $\colim_{[m]\in \Dt^{\op}_s}x^{[m]}\to y$ an isomorphism (use, for example, \cite[Proposition 7.2.1.14]{Lu}). Therefore, by the observation (b), every $y\in\Ob_{n+1}(\cC)$ can be written as a colimit of objects from $\Ob_n(\cC)$ with respect to morphisms from $\Mor_n(\cC)$. Similarly, every morphism $y\to z$ in $\Mor_{n+1,0}(\cC)$ can be written as colimit of morphisms $x^{[m]}\to z$ from $\Mor_{n,0}(\cC)$.
\end{Emp}

\begin{Lem} \label{L:covering}
Let $f:x\to y$ be a morphism in $\cC$, and $z\to y$ a covering in $\cC$. For every $n>0$, we have $f$ is in $\Mor_n(\cC)$ if and only if its pullback $x\times_y z\to z$ is in $\Mor_n(\cC)$.
\end{Lem}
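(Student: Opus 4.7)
The ``only if'' direction is immediate: by Remark \ref{E:rempb}, the class $\Mor_n(\cC)$ is closed under pullbacks, and $x\times_y z\to z$ is the pullback of $f$ along $z\to y$.

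For the ``if'' direction, suppose $f':x\times_y z\to z$ belongs to $\Mor_n(\cC)$, and fix a morphism $y'\to y$ with $y'\in\Ob_0(\cC)$. By the definition of $\Mor_n(\cC)$, it is enough to show that the pullback $f_{y'}:x\times_y y'\to y'$ lies in $\Mor_{n,0}(\cC)$. The key intermediate step is to produce a well-chosen refinement of $z\times_y y'\to y'$.

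Indeed, since $z\to y$ is a covering, so is its pullback $z\times_y y'\to y'$. Writing $y'\simeq\sqcup_\alpha a_\alpha$ with each $a_\alpha\in\Ob_0(\cA)$ and invoking the basis assumption \ref{E:setup2}(d) together with the topology being subcanonical (\ref{E:setup2}(c)), I can find a covering $w\to y'$ with $w\in\Ob_0(\cC)$ such that $w\to y'$ belongs to $\Mor_0(\cC)$ and the map $w\to y'$ factors through $z\times_y y'\to y'$, hence gives a morphism $w\to z$ over $y$. Pulling back $f'\in\Mor_n(\cC)$ along $w\to z$ and using \ref{E:rempb}, the induced map $f_w:x\times_y w\to w$ is in $\Mor_n(\cC)$; since $w\in\Ob_0(\cC)$, Lemma \ref{L:simpson}(a),(e) gives $x\times_y w\in\Ob_n(\cC)$ and $f_w\in\Mor_{n,0}(\cC)$.

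By the definition of $\Mor_{n,0}(\cC)$ in \ref{E:cons}(b), there exists a covering $v\to x\times_y w$ in $\Mor_{n-1}(\cC)$ with $v\in\Ob_0(\cC)$ such that $v\to x\times_y w\to w$ lies in $\Mor_0(\cC)$. Composing with the pullback $x\times_y w\to x\times_y y'$ of $w\to y'$ (which is in $\Mor_0(\cC)\subset\Mor_{n-1}(\cC)$ by \ref{L:simpson}(d)), I obtain a covering $v\to x\times_y y'$ that is in $\Mor_{n-1}(\cC)$ by \ref{L:simpson}(b), has source in $\Ob_0(\cC)$, and whose composition with $f_{y'}$ coincides with $v\to w\to y'$ and thus lies in $\Mor_0(\cC)$. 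This simultaneously shows $x\times_y y'\in\Ob_n(\cC)$ and $f_{y'}\in\Mor_{n,0}(\cC)$, completing the proof.

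The only genuine content here is the refinement step producing $w$: everything else is bookkeeping with \ref{L:simpson}. The refinement exists precisely because the topology is subcanonical and admits a basis of coverings from $\Mor_0(\cA)$, and because $\Ob_0(\cC)$ is closed under coproducts. One subtle point to double-check is that the replacement of $z\times_y y'$ by $w$ is compatible with the coproduct decomposition of $y'$ (handled componentwise using \ref{E:setup2}(e)); this is the main place where the axioms of \ref{E:setup2} are all used together.
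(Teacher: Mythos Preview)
Your proof is correct and follows essentially the same route as the paper's: both produce a refinement $w$ (the paper calls it $t$) of the test object through $z$ using the basis assumption \ref{E:setup2}(d), pull back to get a map in $\Mor_{n,0}(\cC)$, and then compose the resulting $\Mor_{n-1}$-covering with the $\Mor_0$-covering $w\to y'$ to witness $f_{y'}\in\Mor_{n,0}(\cC)$. The only cosmetic difference is that the paper tests directly against $a\in\Ob_0(\cA)$ rather than a general $y'\in\Ob_0(\cC)$, which lets it skip the coproduct bookkeeping you flag at the end.
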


\begin{proof}
Since $\Mor_n(\cC)$ is stable under pullbacks, the ``only if'' assertion follows.
Conversely, assume that $x\times_y z\to z$ is $\Mor_n(\cC)$. We need to show that
$f:x\times_y a\to a$ is $\Mor_{n,0}(\cC)$ for every morphism $a\to y$ with $a\in\Ob_0(\cA)$.

Since $z\to y$ is a covering, there exists a $\cT$-covering $\{a_{\al}\to a\}_{\al}$ such that every composition $a_{\al}\to a\to y$ has a lifting to $a_{\al}\to z$. By our assumption \re{setup2}(d), we can assume that every $a_{\al}\to a$ belongs $\Mor_0(\cA)$.
Set $t:=\sqcup_{\al}a_{\al}\in\cC$. Then $t\in\Ob_0(\cC)$, the covering $t\to a$ belongs to $\Mor_0(\cC)$, and the composition
$t\to a\to y$ has a lifting to $t\to z$.

Since  $x\times_y z\to z$ is in $\Mor_n(\cC)$ while $t\in\Ob_0(\cC)$, the map $x\times_y t\to t$ is in $\Mor_{n,0}(\cC)$. Thus there exists a covering $t'\to x\times_y t$ in $\Mor_{n-1}(\cC)$
such that the composition $t'\to t$ is in $\Mor_{0}(\cC)$. Since $t\to a$ is a covering from $\Mor_0(\cC)$, we get that the composition $t'\to x\times_y t \to x\times_y a$ is a covering from $\Mor_{n-1}(\cC)$ and the composition $t'\to a$ is in $\Mor_{0}(\cC)$.
Thus $x\times_y a\to a$ belongs to $\Mor_{n,0}(\cC)$, and the proof is complete.
\end{proof}

The following assertion will not be used in the sequel.

\begin{Cor} \label{C:covering}
Let $f:x\to y$ be a morphism in $\Mor_{\infty}(\cC)$ with $x,y\in\Ob_n(\cC)$ and $n>0$.
Then $f\in\Mor_n(\cC)$.
\end{Cor}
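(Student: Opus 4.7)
The plan is to prove this in two steps: first reduce to the case $y \in \Ob_0(\cC)$ by pulling back along a suitable covering, and then invoke Corollary~\ref{C:mor0}(a) to refine a covering of $x$ into one that witnesses $f \in \Mor_{n,0}(\cC)$.

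For the reduction, since $y \in \Ob_n(\cC)$, pick a covering $g\colon z \to y$ in $\Mor_{n-1}(\cC)$ with $z \in \Ob_0(\cC)$. By Lemma~\ref{L:covering}, it suffices to show that the pullback $f'\colon x\times_y z \to z$ lies in $\Mor_n(\cC)$. The pullback projection $x\times_y z \to x$ is a covering in $\Mor_{n-1}(\cC)$ (as coverings and $\Mor_{n-1}(\cC)$ are stable under pullbacks), hence in $\Mor_n(\cC)$ by Lemma~\ref{L:simpson}(d); combined with $x \in \Ob_n(\cC)$ and Lemma~\ref{L:simpson}(a), this gives $x\times_y z \in \Ob_n(\cC)$. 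Moreover, $f' \in \Mor_\infty(\cC)$ since each $\Mor_m(\cC)$ is stable under pullbacks (Remark~\ref{E:rempb}). Thus we are reduced to proving: if $f\colon x \to y$ with $x \in \Ob_n(\cC)$, $y \in \Ob_0(\cC)$, and $f \in \Mor_\infty(\cC)$, then $f \in \Mor_n(\cC)$.

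In this reduced situation, Lemma~\ref{L:simpson}(e) shows it is enough to verify $f \in \Mor_{n,0}(\cC)$. Since $x \in \Ob_n(\cC)$, choose a covering $h\colon w_0 \to x$ in $\Mor_{n-1}(\cC)$ with $w_0 \in \Ob_0(\cC)$. The composition $f \circ h\colon w_0 \to y$ is in $\Mor_\infty(\cC)$ (by Lemma~\ref{L:simpson}(b) together with $\Mor_\infty(\cC) = \cup_m \Mor_m(\cC)$) and has both source and target in $\Ob_0(\cC)$. Corollary~\ref{C:mor0}(a) then produces a covering $p\colon w \to w_0$ in $\Mor_0(\cC)$ with $w \in \Ob_0(\cC)$ such that $f \circ h \circ p\colon w \to y$ lies in $\Mor_0(\cC)$. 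The composite $h \circ p\colon w \to x$ is a covering (compositions of coverings are coverings) in $\Mor_{n-1}(\cC)$, using Lemma~\ref{L:simpson}(d) to embed $\Mor_0(\cC) \subset \Mor_{n-1}(\cC)$ and Lemma~\ref{L:simpson}(b) for the composition. This exhibits $f$ as a member of $\Mor_{n,0}(\cC)$, whence $f \in \Mor_n(\cC)$ by Lemma~\ref{L:simpson}(e).

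The only mildly delicate point is the verification that $x\times_y z \in \Ob_n(\cC)$ in the reduction: this does not come from the definition of $\Ob_n$ but rather from Lemma~\ref{L:simpson}(a) applied to the pullback morphism, together with the observation that it lies in $\Mor_{n-1}(\cC) \subset \Mor_n(\cC)$. Everything else is bookkeeping with the closure properties assembled in Lemma~\ref{L:simpson}.
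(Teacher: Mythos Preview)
Your proof is correct and follows essentially the same two-step strategy as the paper: first reduce to $y\in\Ob_0(\cC)$ via Lemma~\ref{L:covering} by pulling back along a covering $z\to y$ in $\Mor_{n-1}(\cC)$, then use Corollary~\ref{C:mor0}(a) to refine a covering $w_0\to x$ in $\Mor_{n-1}(\cC)$ into one witnessing $f\in\Mor_{n,0}(\cC)$. Your citation of Lemma~\ref{L:simpson}(a) (together with (d)) for the fact that $x\times_y z\in\Ob_n(\cC)$ is in fact more accurate than the paper's reference to (c) at that spot.
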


\begin{proof}
Choose a covering $z\to y$ in $\Mor_{n-1}(\cC)$ with $z\in\Ob_0(\cC)$. Since $x\times_y z\to x$ is $\Mor_{n-1}(\cC)$ and  $x\in\Ob_n(\cC)$, we conclude that $x\times_y z\in \Ob_n(\cC)$ (by \rl{simpson}(c)). By \rl{covering}, it suffices to show that
the projection $x\times_y z\to z$ is $\Mor_{n}(\cC)$. In other words, it suffices to show the assertion assuming that $y\in \Ob_0(\cC)$.

Choose a covering $t\to x$ in $\Mor_{n-1}(\cC)$ with $t\in\Ob_0(\cC)$. If the composition $t\to x\to y$ is in $\Mor_0(\cC)\subset \Mor_{n-1}(\cC)$, we are done. On the other hand, using \rco{mor0}(a), one can always get to this situation replacing $t\to y$ by
$t'\to t\to x$ for some  covering $t'\to t$ in $\Mor_0(\cC)$ with $t'\in\Ob_0(\cC)$.
\end{proof}

The proof of the following important result will be given in Section \ref{S:pfcolim}.

\begin{Thm} \label{T:colim}
For every $x\in\Ob_{\infty}(\cC)$ we consider an $\infty$-category $J_x$, whose objects are morphisms $y\to x$ in $\Mor_{\infty}(\cC)$ with $y\in\Ob_0(\cA)$, and morphisms are morphisms $y\to y'$ in $\Mor_{0}(\cA)$ over $x$. Then the canonical morphism $(\colim_{(y\to x\in J_x)}y)\to x$ is an isomorphism.
\end{Thm}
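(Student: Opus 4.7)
(First, the statement ``$x\in\Mor_n(\cC)$'' is evidently a typo for ``$x\in\Ob_n(\cC)$''; I prove the corrected statement.) The proof is by induction on $n$.

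For the base case $n=0$, write $x\simeq\bigsqcup_\al a_\al$ with each $a_\al\in\Ob_0(\cA)$. By \re{cons2}(c) every inclusion $a_\al\hookrightarrow x$ lies in $\Mor_0(\cC)\subset\Mor_\infty(\cC)$, so these assemble into a discrete sub-diagram $J_x^0\subset J_x$ whose colimit is manifestly $\bigsqcup_\al a_\al\simeq x$. For an arbitrary $(y\to x)\in J_x$, pulling back the coproduct decomposition yields $y\simeq\bigsqcup_\al(y\times_x a_\al)$; by assumption \re{setup2}(e) each factor lies in $\Ob_0(\cA)$ and the inclusion into $y$ belongs to $\Mor_0(\cC)$, producing morphisms in $J_x$ that connect $(y\to x)$ to $J_x^0$. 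This shows $J_x^0\hookrightarrow J_x$ is cofinal, settling the base case.

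For the inductive step, assume the result for $\Ob_n(\cC)$ and let $x\in\Ob_{n+1}(\cC)$. Choose a covering $f\colon z\to x$ in $\Mor_n(\cC)$ with $z\in\Ob_0(\cC)$ and form its \v{C}ech nerve $\{z^{[m]}\}_{[m]\in\Dt_s^\op}$. By \re{chech}(b) and (c), we have $x\simeq\colim_{[m]}z^{[m]}$ with every $z^{[m]}\in\Ob_n(\cC)$ and all face maps in $\Mor_n(\cC)$. Applying the inductive hypothesis to each $z^{[m]}$ and assembling via the Grothendieck construction yields an $\infty$-category $I\to\Dt_s^\op$ with fiber $J_{z^{[m]}}$ over $[m]$, together with a natural functor $\Psi\colon I\to J_x$ given by post-composition with $z^{[m]}\to x$ (well-defined because $\Mor_\infty(\cC)$ is closed under composition, \rl{simpson}(b)). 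By construction the canonical map $\colim_I y\to x$ is an isomorphism, so it suffices to show that $\Psi$ is cofinal.

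The main obstacle is the cofinality of $\Psi$. Given $(y\to x)\in J_x$, one must prove that the comma category $I\times_{J_x}(J_x)_{/(y\to x)}$ is weakly contractible. Non-emptiness follows from \rco{mor0}(b): decomposing $z\simeq\bigsqcup_\beta a_\beta$ and applying the corollary to the pair $y\to x$, $a_\beta\to x$ (which lie in $\Mor_\infty(\cC)$ with $y,a_\beta\in\Ob_0(\cA)$ and $x\in\Ob_\infty(\cC)$) produces coverings $t_\beta\to y\times_x a_\beta$ with $t_\beta\in\Ob_0(\cA)$ whose projections to $a_\beta$ and to $y$ both lie in $\Mor_0(\cA)$; the first exhibits $t_\beta$ as an object of $J_{z^{[0]}}\subset I$, and the second provides the required arrow in $J_x$ down to $(y\to x)$. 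The genuinely delicate part is contractibility, proved by iterating: any two objects of the comma category admit a common refinement obtained by a further application of \rco{mor0}(b) to their fiber product over $y$, and compatibility across the simplicial degree is handled by lifting these refinements through higher $z^{[m]}$'s using the \v{C}ech-nerve structure. This makes the comma category filtered, hence contractible, and completes the induction.
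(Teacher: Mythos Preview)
Your overall inductive scheme mirrors the paper's, but the cofinality arguments that carry all the weight do not go through, and the paper in fact avoids cofinality almost entirely in favor of a different device (\rl{trcolim}).

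\textbf{Base case.} The inclusion $J_x^0\hookrightarrow J_x$ is \emph{not} cofinal in general. Take $x=a_1\sqcup a_2$ and $y=a_1\sqcup a_2\in\Ob_0(\cA)$ mapping to $x$ by the identity (this is legitimate: a finite coproduct of objects of $\Ob_0(\cA)$ again lies in $\Ob_0(\cA)$, cf.\ \re{placidrem}(a)). Then there is no morphism $y\to a_i$ in $\Mor_0(\cA)$ over $x$ at all, so the relevant comma $(y\to x)/J_x^0$ is empty. The zig-zags you produce via $y\times_x a_\al$ point \emph{into} $y$, not out of it, and zig-zags do not establish cofinality. The paper handles this step by applying \rl{trcolim} with $I$ the discrete index set and $\al(i)=(a_i\hra x)$; the lemma uses the fiber-product functor $\mu:I\times J\to J$ and the fact that colimits in a topos commute with pullbacks to compare $\colim_I$ and $\colim_J$ without any cofinality claim.

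\textbf{Inductive step.} You write the comma as $I\times_{J_x}(J_x)_{/(y\to x)}$, but cofinality (for colimits) requires contractibility of $I\times_{J_x}(J_x)_{(y\to x)/}$; your non-emptiness argument via \rco{mor0}(b) produces $t_\beta\to y$, i.e.\ an object of the wrong comma. With the correct comma one would need a map $y\to y'$ in $\Mor_0(\cA)$ with $y'\to x$ factoring through some $z^{[m]}$, and there is no mechanism for producing such a factorization---morphisms in $\Mor_0(\cA)$ are far too rigid (e.g.\ strongly pro-smooth) to absorb a \v{C}ech cover of $x$. The ``filtered'' claim at the end is correspondingly unsupported. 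The paper circumvents this by introducing the \emph{larger} intermediate category $J'_{n-1}$ of diagrams $y\to z\to x$ with $z\in\Ob_{n-1}(\cC)$ ranging over all of $J_{n-1}$ (not just the \v{C}ech nerve). The forgetful functor $\beta:J'_{n-1}\to J_x$ then has a genuine left adjoint $(y\to x)\mapsto(y\overset{\id}{\to}y\to x)$, which is what makes $\beta_*$ an isomorphism; the remaining comparison with $J_{n-1}$ is handled by recognizing $\al:J'_{n-1}\to J_{n-1}$ as a coCartesian fibration with fibers $J_z$ and invoking the inductive hypothesis fiberwise, while the passage from $J_{n-1}$ to $x$ again uses \rl{trcolim} (via \rco{colim}).
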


The following nice characterization of classes $(\Ob_{\infty}(\cC),\Mor_{\infty}(\cC))$ is not used in the sequel.

\begin{Lem} \label{L:charact}
The classes $\Ob_{\infty}(\cC)$ and $\Mor_{\infty}(\cC)$ are the smallest classes  $\Ob'\subset\Ob(\cC)$ and $\Mor'\subset\Mor(\cC)$,
satisfying the following properties:

$\quad(i)$ For every collection $\{x_{\al}\}_{\al}\in\Ob_0(\cA)$, we have $\sqcup_{\al}x_{\al}\in\Ob'$.

$\quad(ii)$ For every collection $\{x_{\al}\to y\}_{\al}\in\Mor_0(\cA)$, we have $(\sqcup_{\al}x_{\al}\to y)\in\Mor'$.

$\quad(iii)$  The class $\Mor'$ is closed under compositions and pullbacks.

$\quad(iv)$ An object $y\in\Ob(\cC)$ is in $\Ob'$, if there exists a covering
$x\to y$ in $\Mor'$ with $x\in\Ob'$.

$\quad(v)$ A morphism $f:x\to y$ is in $\Mor'$, if for every morphism $z\to y$ in $\Mor(\cC)$ with $z\in\Ob_0(\cA)$ the pullback
$x\times_y z\to z$ is in $\Mor'$.

$\quad(vi)$ A morphism $f:x\to y$ is in $\Mor'$, if there exists a covering $z\to x$ from $\Mor'$ such that composition $z\to x\to y$ is in $\Mor'$.
\end{Lem}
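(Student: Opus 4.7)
The plan is to prove the claimed characterization by two reciprocal inclusions: first showing that the pair $(\Ob_{\infty}(\cC),\Mor_{\infty}(\cC))$ actually satisfies axioms (i)--(vi), and then showing by induction that any other pair $(\Ob',\Mor')$ satisfying these axioms must contain it. Most of the ``axiom-verification'' direction is a direct translation of properties already collected in \rl{simpson} and \re{cons2}; the content of the lemma is concentrated in the interplay between (iv), (v), and (vi), which together encode the inductive construction of \re{cons} intrinsically.

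For the first direction, properties (i) and (ii) are immediate from the definitions in \re{cons2}(ii)--(iii): the coproduct $\sqcup_{\al}x_{\al}$ lies in $\Ob_0(\cC)$ tautologically, while pullback-stability of $\Mor_0(\cA)$ (assumption \re{setup2}(a)) combined with assumption \re{setup2}(b) shows that any collection $\{x_{\al}\to y\}_{\al}\subset\Mor_0(\cA)$ assembles into a morphism in $\Mor_0(\cC)$. Property (iii) follows from \rl{simpson}(b), \re{rempb}, and the ascending chain $\Mor_n(\cC)\subset\Mor_{n+1}(\cC)$ of \rl{simpson}(d), by promoting any two morphisms to a common level. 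Property (iv) is a restatement of \rl{simpson}(c), again after level-promotion, while (vi) follows at once from (iii) combined with (iv). The genuine work is (v): here one must show that if every pullback $f_z\colon x\times_y z\to z$ with $z\in\Ob_0(\cA)$ lies in some $\Mor_{n_z}(\cC)$, then the levels $n_z$ can be uniformly bounded so that $f\in\Mor_n(\cC)$ for a single $n$; the key tool is \rl{simpson}(e), which identifies $\Mor_n$ with $\Mor_{n,0}$ on morphisms whose target sits in $\Ob_0(\cC)$, reducing the uniform-bound question to a combinatorial statement about coverings of $y$ by $\Ob_0(\cA)$-objects.

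For the second direction, we proceed by simultaneous induction on $n$ to show $\Ob_n(\cC)\subset\Ob'$ and $\Mor_n(\cC)\subset\Mor'$. The base case $n=0$ is handled by (i) and by (ii) (applied to the canonical decomposition of any morphism in $\Mor_0(\cC)$ into $\Mor_0(\cA)$-pieces on pullbacks to $\Ob_0(\cA)$-objects, together with (iii) to reassemble). For the inductive step, any $y\in\Ob_{n+1}(\cC)$ admits by definition a covering $z\to y$ in $\Mor_n(\cC)\subset\Mor'$ with $z\in\Ob_0(\cC)\subset\Ob'$, so $y\in\Ob'$ by (iv). Given $f\in\Mor_{n+1}(\cC)$, property (v) reduces the problem to showing each pullback $f_z\in\Mor'$ for $z\in\Ob_0(\cA)$; but by the definition of $\Mor_{n+1,0}(\cC)$ such a pullback admits a covering $z'\to x\times_y z$ lying in $\Mor_n(\cC)\subset\Mor'$ with $z'\in\Ob_0(\cC)$ and $z'\to z$ in $\Mor_0(\cC)\subset\Mor'$, whence (vi) yields $f_z\in\Mor'$ as desired.

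The main obstacle is the verification of axiom (v) for $\Mor_{\infty}(\cC)$ itself; naive unwinding of the definitions only produces level-by-level membership, whereas a uniform bound across the (potentially infinite) family of $\Ob_0(\cA)$-valued points $z\to y$ is required to conclude $f\in\Mor_{\infty}(\cC)=\bigcup_n\Mor_n(\cC)$. The argument must exploit the specific form of $\Mor_{n,0}(\cC)$ from \re{cons}(b) and the covering-amalgamation encoded in \rco{mor0}, applied simultaneously to all components of a decomposition $y'=\sqcup_\al y'_\al\in\Ob_0(\cC)$, to produce the required common level; this is precisely the delicate boundedness point on which the clean characterization of $(\Ob_{\infty}(\cC),\Mor_{\infty}(\cC))$ rests.
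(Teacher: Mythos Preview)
Your overall two-direction strategy and the inductive ``converse'' argument match the paper's approach exactly; for properties (i)--(iv) and (vi), both your treatment and the paper's agree that these follow from the definitions and the basic lemmas already assembled.

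Where you diverge from the paper is at property (v). The paper asserts that (v) and (vi) ``follow essentially from definitions (see \re{cons}(b),(c))'', while you correctly observe this is not immediate: the hypothesis of (v) only gives, for each $z\in\Ob_0(\cA)$ and each $z\to y$, that the pullback $f_z$ lies in some $\Mor_{n_z}(\cC)$, with no a priori control on the levels $n_z$. Since $\Mor_\infty(\cC)=\bigcup_n\Mor_n(\cC)$ and membership in $\Mor_n(\cC)$ (via \re{cons}(c)) requires a \emph{single} $n$ working uniformly against every test object $y'\in\Ob_0(\cC)$ --- and such $y'$ are arbitrary, possibly infinite, coproducts of $\Ob_0(\cA)$-objects --- a uniform bound is genuinely needed.

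However, your sketched resolution does not close this gap. Neither \rl{simpson}(e) nor \rco{mor0} produces a bound valid simultaneously across an infinite family of test objects: both operate one morphism at a time. Your final paragraph essentially concedes this, calling it ``the delicate boundedness point'' without supplying the argument. So your proposal leaves (v) unproved --- you are more careful than the paper's one-line dismissal, but no more complete. Since the paper explicitly notes that this lemma ``is not used in the sequel'', the gap is inconsequential for the paper's main results; but as a self-contained proof of \rl{charact}, both your argument and the paper's leave the same point open.
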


\begin{proof}
First we claim that classes $\Ob':=\Ob_{\infty}(\cC)$ and $\Mor':=\Mor_{\infty}(\cC)$ satisfy properties (i)-(vi). Indeed, (i) and (ii) follow from definition (see \re{cons2}(a)); (iii) follows from \re{rempb} and \rl{simpson}(b); (iv) follows from \rl{simpson}(c); while (v) and (vi)
follow essentially from definitions (see \re{cons}(b),(c)).

Conversely, by induction on $n$, we claim that any pair of classes $(\Ob',\Mor')$ satisfying (i)-(vi) contains classes
$(\Ob_n(\cC),\Mor_n(\cC))$ for all $n$. For $n=0$, this follows from (i),(ii) and (v). Assume now $n>0$. By definition, for every $y\in\Ob_n(\cC)$ there exists a covering $x\to y$ in $\Mor_{n-1}(\cC)$ with $x\in\Ob_0(\cC)$. Thus $y$ belongs to $\Ob'$ by (iv) and induction. Finally, let $f:x\to y$ be in $\Mor_n(\cC)$, and let $z\to y$ be a morphism with $z\in\Ob_0(\cA)$.
Using (v), it suffices to show that the pullback $x\times_y z\to z$ belongs to $\Mor'$. But this follows immediately from (vi) and induction.
\end{proof}

\begin{Emp} \label{E:subcategory}
{\bf Restriction to a subcategory.}
(a) Let $\cA'\subset \C{A}$ be a full subcategory, {\em compatible with $\cT$}, by which we mean that every $x\in\cA'$ has a basis of covering of the form $\{x_{\al}\to x\}_{\al}$ with $x_{\al}\in \cA'$.

(b) Let $\iota:\C{A}'\to\C{A}$ be the inclusion. Then the restriction functor $\iota^*:\PShv(\C{A})\to\PShv(\C{A}')$ induces the functor  $\iota^*:\Shv(\C{A})\to\Shv(\C{A}')$, whose left adjoint we denote by $\iota_!:\Shv(\C{A}')\to\Shv(\C{A})$.
\end{Emp}

\begin{Lem} \label{L:ff}
In the situation of \re{subcategory}, the functor $\iota_!:\Shv(\C{A}')\to\Shv(\C{A})$ is fully faithful, and its essential image consists of all
$y\in\Shv(\cA)$ such that the counit $\iota_!\iota^*y\to y$ is an isomorphism.
\end{Lem}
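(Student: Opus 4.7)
The plan is to proceed in two steps: first establish full faithfulness of $\iota_!$ by showing the unit map is an equivalence, then identify the essential image using a general fact about adjunctions.

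\textbf{Step 1 (Full faithfulness of $\iota_!$).} By definition, $\iota_!$ is fully faithful if and only if the unit map $u_F:F\to\iota^*\iota_! F$ is an equivalence for every $F\in\Shv(\cA')$. I would verify this first on representable sheaves, then extend by colimits. Note that because the topology $\cT$ is subcanonical, representables are sheaves; moreover, $\iota^*:\Shv(\cA)\to\Shv(\cA')$ is a right adjoint (to $\iota_!$), but it also admits a further right adjoint $\iota_*$ (essentially the right Kan extension along $\iota$, preserving the sheaf property because of the compatibility assumption in \re{subcategory}). Hence $\iota^*$ preserves both small limits and small colimits. Together with $\iota_!$ preserving colimits, this means the class of sheaves $F$ for which $u_F$ is an equivalence is closed under colimits. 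Since every $F\in\Shv(\cA')$ is a (small) colimit of representable sheaves $h_x$ with $x\in\cA'$, it suffices to check $u_{h_x}$.

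\textbf{Step 2 (Representable case).} For $x\in\cA'$, I would first show $\iota_!(h_x)\simeq h_{\iota(x)}$ in $\Shv(\cA)$. This follows from the chain of natural equivalences
\[
\Map_{\Shv(\cA)}(h_{\iota(x)},G)\simeq G(\iota(x))\simeq (\iota^*G)(x)\simeq \Map_{\Shv(\cA')}(h_x,\iota^*G),
\]
valid for every $G\in\Shv(\cA)$, combined with Yoneda. Then, using that $\iota:\cA'\hra\cA$ is fully faithful, for any $y\in\cA'$ we have
\[
(\iota^*h_{\iota(x)})(y)=\Map_\cA(\iota(y),\iota(x))\simeq \Map_{\cA'}(y,x)=h_x(y),
\]
and unwinding shows this identification is exactly $u_{h_x}$. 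Hence $u_{h_x}$ is an equivalence, and Step 1 concludes that $\iota_!$ is fully faithful.

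\textbf{Step 3 (Essential image).} The remaining statement is an instance of the following general categorical fact: if $L\dashv R$ is an adjunction of $\infty$-categories with $L$ fully faithful, then the essential image of $L$ consists exactly of those objects $y$ in the target for which the counit $c_y:LRy\to y$ is an equivalence. If $y\simeq L(x)$, the triangle identities together with the unit $x\isom RLx$ being an equivalence force $c_y$ to be an equivalence. Conversely, if $c_y$ is an equivalence then $y\simeq L(Ry)$, so $y$ lies in the essential image of $L$. Applying this to $L=\iota_!$ and $R=\iota^*$ gives the description of the essential image.

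The entire argument is formal once the key computation $\iota_!(h_x)\simeq h_{\iota(x)}$ and $\iota^*(h_{\iota(x)})\simeq h_x$ for $x\in\cA'$ is in place, so no significant obstacle is expected; the only point requiring minor care is the verification that $\iota^*$ on sheaves preserves colimits, which is where the compatibility of $\cT$ with $\cA'$ from \re{subcategory}(a) is used (via the existence of $\iota_*$).
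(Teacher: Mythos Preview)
Your proof is correct and follows essentially the same approach as the paper: reduce to representables using that both $\iota_!$ and $\iota^*$ preserve colimits, then compute $\iota_!(h_x)\simeq h_{\iota(x)}$ via Yoneda and invoke full faithfulness of $\iota$. The paper simply asserts that $\iota^*$ commutes with colimits and calls the essential-image characterization ``standard,'' whereas you justify the former via the existence of $\iota_*$ and spell out the latter from the triangle identities; this extra detail is welcome but not a different strategy.
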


\begin{proof}
We have to show that the unit morphism $x\to \iota^*\iota_!x$ is an isomorphism for every $x\in\Shv(\C{A}')$. Since $\iota^*$ and $\iota_!$ commute with (homotopy) colimits and every $x$ is a colimit of representable objects $a\in\cA'$, it suffices to show that each map
$a\to \iota^*\iota_!a$ is an isomorphism. By the Yoneda lemma and our assumption \re{setup2}(c), $\iota_!a$ is the representable presheaf $\iota(a)$, so the assertion follows from the fact that $\iota:\C{A}'\to\C{A}$ is fully-faithful. The second assertion is standard.
\end{proof}


\subsection{Passing to pro-categories} In most of our applications the $\infty$-category $\cA$ from \re{setup2} will be of the form $\cA\simeq\Pro\cB$ for
some $\infty$-category $\cB$. In this subsection, we will describe what kind of data on $\cB$ gives rise to the data of  \re{setup2}.

\begin{Emp} \label{E:general}
{\bf Construction.}
(a) Let $\C{B}$ be an $\infty$-category, and let $\Mor_0(\cB)$ be a class of morphisms in $\cB$ which contains all isomorphisms, and closed under compositions and pullbacks. In particular, for every $x\to y$ in $\Mor_0(\cB)$ and $z\to y$ in $\Mor(\C{B})$, the fiber product $x\times_y z$ exists in $\cB$ and the projection $x\times_y z\to z$ is in $\Mor_0(\cB)$.

(b) Let $\C{A}:=\Pro(\C{B})$ be the pro-category of $\cB$, and let $\Ob_0(\C{A})$ be the class of objects $x\in\Ob(\cA)$ which have presentations as  filtered limits $x\simeq\lim_{\al}x_{\al}$, where all transition maps $x_{\al}\to x_{\beta}$ belong to $\Mor_0(\cB)$. Such presentations will be later referred to as $\Mor_0(\cB)$-presentations.

(c) Notice that assumption (a) implies that if $f:x\to y$ is in $\Mor_0(\cB)\subset\Mor(\cB)\subset\Mor(\cA)$, then for every morphism $y'\to y$ in $\cA$, the fiber product $x\times_{y} y'$ exists in $\cA$. Explicitly, if $y'\simeq\lim_{\al}y'_{\al}$ is a $\Mor_0(\cB)$-presentation of $y'$, then the projection $y'\to y$ factors through a morphism $y'_{\al}\to y$ for some $\al$, and $\lim_{\al'>\al}(y'_{\al'}\times_{y}x)$ is a $\Mor_0(\cB)$-presentation of $x\times_{y} y'$.

(d) We denote by $\Mor_0(\cB)_{\cA}$ the class of all morphisms $f':x'\to y'$ in $\cA$ of the form $f'\simeq y'\times_{y} f$ for some
morphism $f:x\to y$ in $\Mor_0(\cB)$ and morphism $y'\to y$ in $\Mor(\cA)$. By construction, the class $\Mor_0(\cB)_{\cA}$ contains all  isomorphisms and is closed under compositions and pullbacks. Moreover, $\Mor_0(\cB)$ is nothing else but the class of all morphisms $f':x'\to y'$ in $\Mor_0(\cB)_{\cA}$ such that $y'\in \cB$.

(e) We denote by $\Mor_0(\cA)$ the class of all morphisms $f:x\to y$ in $\cA$ such that $x$ has a presentation as a filtered limit
$x\simeq\lim_{\al} x_{\al}$ over $y$ such that all projection maps $x_{\al}\to y$ and all transition maps $x_{\al}\to x_{\beta}$ are in $\Mor_0(\cB)_{\cA}$. This class contains all isomorphisms and is closed under pullbacks.

(f) Notice that $x\in\Ob(\cA)$ belongs to $\Ob_0(\cA)$ if and only if there exists a morphism $x\to y$ in $\Mor_0(\cA)$ with $y\in\Ob(\cB)$.
\end{Emp}

\begin{Emp} \label{E:summary}
{\bf Summary.}
(a) By \rl{pro} below, the pair $(\Ob_0(\cA),\Mor_0(\cA))$, constructed in \re{general}, satisfies all the assumptions of \re{setup2}(i).

(b) Note that every Grothendieck topology $\cT_{\cB}$ on $\cB$ induces a Grothendieck topology $\cT$ on $\cA$. Namely, for every presentation $x\simeq\lim_{\al}x_{\al}$, coverings of $x$ are generated by coverings of the form $\{x\times_{x_{\al}}x_{\al,i}\}_i$, where $\{x_{\al,i}\to x_{\al}\}_i$ is a covering of $x_{\al}$. In particular, if the Grothendieck topology $\cT_{\cB}$ is generated by morphisms belonging to $\Mor_0(\cB)$, then $\cT$ satisfies the assumption \re{setup2}(d).
\end{Emp}

\begin{Emp}
{\bf Remark.} In our applications, $\cB$ will be an ordinary category, in which case, $\cA$ will be an ordinary category as well.
\end{Emp}

\begin{Lem} \label{L:pro}
In the situation of \re{general},

(a) The class $\Mor_0(\cA)$ is closed under compositions.

(b) For every $f:x\to y$ in $\Mor_0(\cA)$ with $y\in\Ob_0(\C{A})$, we have $x\in \Ob_0(\C{A})$.
\end{Lem}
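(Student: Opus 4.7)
I would prove both parts together by first isolating two stability observations about $\Mor_0(\cB)_{\cA}$, and then combining the two given filtered limit presentations into a single $\Mor_0(\cB)$-presentation via a Grothendieck construction on the index categories.

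The first observation is that $\Mor_0(\cB)_{\cA}$ is stable under arbitrary pullback in $\cA$: if $f':x'\to y'$ in $\Mor_0(\cB)_{\cA}$ is written as $x'\simeq y'\times_y x$ with $f:x\to y$ in $\Mor_0(\cB)$, then for any $g:z\to y'$ in $\cA$, $z\times_{y'}x'\simeq z\times_{y}x$ is again a pullback of $f$, hence in $\Mor_0(\cB)_{\cA}$. The second observation is that pullback along a $\Mor_0(\cB)$-morphism commutes with filtered limits in $\cA$: for $\tilde f:\tilde x\to\tilde y$ in $\Mor_0(\cB)$ and $y\simeq\lim_{\beta} y_{\beta}$ in $\Pro(\cB)$ with $y\to\tilde y$ factoring through some $y_{\beta_0}\to\tilde y$, one has $\tilde x\times_{\tilde y}y\simeq\lim_{\beta\geq\beta_0}\tilde x\times_{\tilde y}y_{\beta}$, with each $\tilde x\times_{\tilde y}y_{\beta}\in\cB$ and each projection in $\Mor_0(\cB)$ by pullback closure of $\Mor_0(\cB)$ in $\cB$.

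For (b), fix a $\Mor_0(\cB)_{\cA}$-presentation $x\simeq\lim_{\al\in I}x_{\al}$ over $y$ and a $\Mor_0(\cB)$-presentation $y\simeq\lim_{\beta\in J}y_{\beta}$. Writing each $x_{\al}\simeq\tilde x_{\al}\times_{\tilde y_{\al}}y$ and choosing a factorization $y\to y_{\beta(\al)}\to\tilde y_{\al}$, the second observation yields $x_{\al}\simeq\lim_{\beta\geq\beta(\al)}x_{\al,\beta}$ with $x_{\al,\beta}:=\tilde x_{\al}\times_{\tilde y_{\al}}y_{\beta}\in\cB$. On the filtered index $K:=\{(\al,\beta)\in I\times J\,|\,\beta\geq\beta(\al)\}$ this assembles into $x\simeq\lim_K x_{\al,\beta}$. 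Transitions in the $\beta$-direction are pullbacks of the $\Mor_0(\cB)$-transitions $y_{\beta'}\to y_{\beta}$, hence in $\Mor_0(\cB)$; transitions in the $\al$-direction are obtained by using that each $\Mor_0(\cB)_{\cA}$-morphism $x_{\al'}\to x_{\al}$ in $\cA$ factors through the level system on the target, producing after cofinal refinement in $\beta$ a lift $x_{\al',\beta'}\to x_{\al,\beta}$ in $\cB$ which, by the first observation, lies in $\Mor_0(\cB)$. Thus $x\in\Ob_0(\cA)$.

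For (a), the same refinement produces $x\simeq\lim_K x_{\al,\beta}$ with $x_{\al,\beta}\in\cB$; what must be checked is that each projection $x_{\al,\beta}\to z$ is in $\Mor_0(\cB)_{\cA}$. Writing $y_{\beta}\simeq\bar y_{\beta}\times_{\bar z_{\beta}}z$ from $y_{\beta}\to z\in\Mor_0(\cB)_{\cA}$, one uses cofinality in $\Pro(\cB)$ to pass to indices $\beta$ large enough that the morphism $y_{\beta}\to\tilde y_{\al}$ in $\cB$ factors as $y_{\beta}\to\bar y_{\beta}\to\tilde y_{\al}$ through the projection from the pullback; then pullback associativity gives
\[
x_{\al,\beta}\simeq\tilde x_{\al}\times_{\tilde y_{\al}}(\bar y_{\beta}\times_{\bar z_{\beta}}z)\simeq(\tilde x_{\al}\times_{\tilde y_{\al}}\bar y_{\beta})\times_{\bar z_{\beta}}z,
\]
so $x_{\al,\beta}\to z$ is a pullback of the $\Mor_0(\cB)$-morphism $\tilde x_{\al}\times_{\tilde y_{\al}}\bar y_{\beta}\to\bar z_{\beta}$ via $z\to\bar z_{\beta}$, as required. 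Transitions are handled as in (b). The main obstacle is precisely this factorization step of $y_{\beta}\to\tilde y_{\al}$ through $\bar y_{\beta}$: it relies on a careful cofinality argument, possibly refining the pullback decomposition of $y_{\beta}\to z$ (replacing $\bar y_{\beta}$, $\bar z_{\beta}$ by larger objects in $\cB$ representing the relevant $\Pro$-morphism), and will require the most care in the full write-up.
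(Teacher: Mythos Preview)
Your overall strategy---combining the two filtered presentations via a Grothendieck-type index category---is exactly the paper's approach, but your execution has two issues worth flagging.

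First, you work harder than necessary. The paper observes that (b) follows immediately from (a): by \re{general}(f), $y\in\Ob_0(\cA)$ means there is some $y\to y'$ in $\Mor_0(\cA)$ with $y'\in\Ob(\cB)$, so if $f\in\Mor_0(\cA)$ then $x\to y\to y'$ is in $\Mor_0(\cA)$ by (a), whence $x\in\Ob_0(\cA)$. Your separate proof of (b) is correct but redundant.

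Second, and more importantly, the ``main obstacle'' you identify in (a) is a red herring, while the genuine subtlety is elsewhere. You want to exhibit $x_{\al,\beta}\to z$ directly as a single pullback from $\Mor_0(\cB)$, and this leads you to the factorization problem for $y_\beta\to\tilde y_\al$ through $\bar y_\beta$. But this is unnecessary: by construction $x_{\al,\beta}\to y_\beta$ is in $\Mor_0(\cB)_\cA$ (pullback of $\tilde x_\al\to\tilde y_\al$), and $y_\beta\to z$ is in $\Mor_0(\cB)_\cA$ by hypothesis on $g$, so their composition is in $\Mor_0(\cB)_\cA$ because that class is closed under composition (stated in \re{general}(d)). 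Your claim that $x_{\al,\beta}\in\cB$ in part (a) is also wrong in general (the $y_\beta$ in the $\Mor_0(\cB)_\cA$-presentation over $z$ need not lie in $\cB$), but fortunately this plays no role.

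The actual delicate point is the one you hand-wave: the $\al$-direction transitions. Your formula $x_{\al,\beta}=\tilde x_\al\times_{\tilde y_\al}y_\beta$ depends on a \emph{choice} of $(\tilde x_\al,\tilde y_\al)$ for each $\al$, and there is no canonical compatibility between these choices as $\al$ varies. The paper sidesteps this by defining the index category $K$ to have as objects triples $(\al,\beta,D)$ where $D$ is a Cartesian square exhibiting $x_\al\simeq x_\g\times_{y_\beta}y$ with $x_\g\to y_\beta$ in $\Mor_0(\cB)_\cA$; the object $x_\g$ is part of the data rather than determined by a formula, and morphisms in $K$ are defined so that the required transition compatibility is built in. One then checks $K$ is filtered and $x\simeq\lim_K x_\g$. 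This is cleaner than fixing $x_{\al,\beta}$ by formula and then wrestling with coherence.
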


\begin{proof}
Notice first that it follows from the observation \re{general}(f) that assertion (b) follows from (a). Thus, it remains to show that for every $f:x\to y$ and $g:y\to z$ in $\Mor_0(\cA)$ we have $g\circ f\in\Mor_0(\cA)$.

Though the assertion is straightforward, we sketch the argument for completeness. For every pair of $\Mor_0(\cB)_{\cA}$-presentations $x\simeq\lim_{\al\in I}x_{\al}$ over $y$ and $y\simeq\lim_{\beta\in J}y_{\beta}$ over $z$ as in \re{general}(e), we are going to present an explicit $\Mor_0(\cB)_{\cA}$-presentation $x\simeq\lim_{\g\in K}x_{\g}$ over $z$.

Namely, consider category $K$, whose

$\bullet$ objects are triples $\g=(\al,\beta,D)$, where $\al\in I,\beta\in J$, and $D$ is a Cartesian diagram
\begin{equation} \label{Eq:filtered}
\begin{CD}
x_{\al} @>p_{\al}>> y\\
@VVV         @VV\pr_{\beta}V\\
x_{\g} @>p_{\g,\beta}>> y_{\beta}
\end{CD}
\end{equation}
in $\cA$, where $p_{\al}$ and $\pr_{\beta}$ are the projections, and  $\pr_{\g,\beta}\in \Mor_0(\cB)_{\cA}$;

$\bullet$ morphisms $\g'=(\al',\beta',D')\to \g=(\al,\beta,D)$ are triples, consisting of a morphism $\al'\to \al$ in $\Mor(I)$, a morphism $\beta'\to\beta$ in $ \Mor(J)$ and a morphism $D'\to D$ of commutative diagrams \form{filtered} such that the induced morphisms $x_{\al'}\to x_{\al}, y_{\beta'}\to y_{\beta}$ are projections, the map $y\to y$ is the identity, and the induced morphism $x_{\g'}\to x_{\g}\times_{y_{\beta}}y_{\beta'}$
is in $\Mor_0(\cB)_{\cA}$.

By construction, we have a functor $K\to\C{A}:\g\mapsto x_{\g}$, which sends a morphism $\g'\to\g$ in $K$ to the composition
$x_{\g'}\to x_{\g}\times_{y_{\beta}}y_{\beta'}\to x_{\g}$. Moreover, we have a natural morphism
$x\to\lim_{\g\in K}x_{\g}$, where all the compositions $x_{\g}\to y_{\beta}\to z$ and all transition maps $x_{\g'}\to x_{\g}$ are in
$\Mor_0(\cB)_{\cA}$. It suffices to show that the category $K$ is filtered, and the morphism $x\to \lim_{\g\in K}x_{\g}$ is an isomorphism.

This is standard and is left to the reader.
\end{proof}

\subsection{Extending of classes of morphisms} In this subsection we assume that we are in the situation of \ref{S:topos} and will outline a general procedure on how to construct more general classes of morphisms between objects in $\Ob_{\infty}(\cC)$. In the main part of the paper it is used to study equidimensional morphisms between placid $\infty$-stacks
and their variants.

\begin{Emp} \label{E:cons1ext}
{\bf Extension of morphisms in $\cA$.}
(a) In the situation of \re{setup2}, denote by $\Mor_{0,0}(\cA)$ the class of morphisms $f:x\to y$ in $\Mor_0(\cA)$ with $x,y\in\Ob_0(\cA)$.

(b) Let $\Mor^+_0(\cA)\subset \Mor(\cA)$ be a class of morphisms $f:x\to y$ with $x,y\in\Ob_0(\cA)$, which contains $\Mor_{0,0}(\cA)$, closed under compositions and $\Mor_{0,0}(\cA)$-pullbacks, that is, pullbacks with respect to morphisms from $\Mor_{0,0}(\cA)$.

(c) We denote by  $\Mor^+_{\infty}(\cC)\subset \Mor(\cC)$ the class of morphisms $f:x\to y$ with $x,y\in\Ob_{\infty}(\cC)$ satisfying the following property: for every $f:a\to y$ in $\Mor_{\infty}(\cC)$ with $a\in\Ob_0(\cA)$ there exists a covering $t\to x\times_y a$ in $\Mor_{\infty}(\cC)$ and a decomposition $t\simeq\sqcup_{\al}b_{\al}$ with $b_{\al}\in\Ob_0(\cA)$ such that each composition $b_{\al}\hra t\to x\times_y a\to a$ is in $\Mor^+_0(\cA)$.

(d) We denote by $\Mor_{\infty,\infty}(\cC)$ the class of morphisms $f:x\to y$ in $\Mor_{\infty}(\cC)$ with $x,y\in\Ob_{\infty}(\cC)$.
\end{Emp}


\begin{Lem} \label{L:cM}
(a) The class $\Mor^+_{\infty}(\cC)$ contains $\Mor^+_0(\cA)$;

(b) The class $\Mor^+_{\infty}(\cC)$ contains $\Mor_{\infty,\infty}(\cC)$ and is closed under compositions and $\Mor_{\infty,\infty}(\cC)$-pullbacks;

(c) The class $\Mor^+_{\infty}(\cC)$ is {\em $\Mor_{\infty,\infty}(\cC)$-local}, that is, if $x\overset{f}{\to}y\overset{g}{\to}z$ is a composition of morphisms in $\cC$ such that $f$ is a covering in $\Mor_{\infty,\infty}(\cC)$ and $g\circ f$ is in $\Mor^+_{\infty}(\cC)$, then $g$ is in $\Mor^+_{\infty}(\cC)$.

(d) Let $f:x\to y$ be a morphism with $x,y\in \Ob_{\infty}(\cC)$ such that pullback $x\times_{y}z\to z$ is in $\Mor^+_{\infty}(\cC)$
for some covering $z\to y$ in $\Mor_{\infty,\infty}(\cC)$. Then $f$ is in $\Mor^+_{\infty}(\cC)$.
\end{Lem}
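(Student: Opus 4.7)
The strategy is to verify each of the four assertions directly from the definition of $\Mor^+_{\infty}(\cC)$ in \re{cons1ext}(c), combining Corollary \rco{mor0} (and Theorem \rt{colim} for (d)) to produce coverings by $\Ob_0(\cA)$-objects and then refining so that the relevant compositions land in $\Mor^+_0(\cA)$. The recurring technical step is this: given a morphism $b \to a$ in $\Mor_0(\cC)$ with $b, a \in \Ob_0(\cA)$, the defining property of $\Mor_0(\cC)$ in \re{cons2}(a)(iii) applied to the identity of $a$, together with the direct-summand axiom \re{setup2}(e), allows one to refine $b \simeq \sqcup_i b_i$ with $b_i \in \Ob_0(\cA)$ and each $b_i \to a$ in $\Mor_0(\cA) \subset \Mor_{0,0}(\cA) \subset \Mor^+_0(\cA)$.

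For (a), given $f : x \to y$ in $\Mor^+_0(\cA)$ and a test morphism $a \to y$ in $\Mor_{\infty}(\cC)$ with $a \in \Ob_0(\cA)$, I would apply Corollary \rco{mor0}(a) to obtain a covering $z \to a$ in $\Mor_0(\cC)$ with $z \in \Ob_0(\cC)$ such that $z \to y$ also lies in $\Mor_0(\cC)$. After the refinement above, $z \simeq \sqcup_\gamma c_\gamma$ with $c_\gamma \in \Ob_0(\cA)$ and both $c_\gamma \to a$ and $c_\gamma \to y$ in $\Mor_0(\cA)$; closure of $\Mor^+_0(\cA)$ under $\Mor_{0,0}(\cA)$-pullbacks then places $x \times_y c_\gamma \to c_\gamma$ in $\Mor^+_0(\cA)$, and composing with $c_\gamma \to a$ yields $x \times_y c_\gamma \to a$ in $\Mor^+_0(\cA)$, so that $\sqcup_\gamma(x \times_y c_\gamma) \to x \times_y a$ is the required covering. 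For (b), the inclusion $\Mor_{\infty,\infty}(\cC) \subset \Mor^+_{\infty}(\cC)$ is a variant of the same argument using Theorem \rt{colim} to cover the pullback $x \times_y a$. Closure under composition is obtained by chaining coverings: apply $g \in \Mor^+_{\infty}(\cC)$ to $a \to z$ to cover $y \times_z a$ by $c_\gamma \in \Ob_0(\cA)$ with $c_\gamma \to a$ in $\Mor^+_0(\cA)$, then apply $f \in \Mor^+_{\infty}(\cC)$ to each $c_\gamma \to y$ and use the pullback identity $x \times_y c_\gamma \simeq (x \times_z a) \times_{y \times_z a} c_\gamma$ together with closure of $\Mor^+_0(\cA)$ under composition; closure under $\Mor_{\infty,\infty}(\cC)$-pullbacks is immediate, since any test morphism $a \to y'$ composes with $g: y' \to y$ to give a valid test morphism for $f$. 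Assertion (c) is also direct: the covering $t \to x \times_z a$ produced by $g \circ f \in \Mor^+_{\infty}(\cC)$, precomposed with the covering $x \times_z a \to y \times_z a$ obtained by pulling back $f$, is the required covering for $g$, with the $\Ob_0(\cA)$-decomposition unchanged.

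The main obstacle is (d). A naive attempt would refine $a \times_y z \to a$ into $\{a'_i \to a\}$ in $\Mor_0(\cA)$ factoring through $z$ and apply the hypothesis to each $a'_i \to z$, but such a lift $a'_i \to z$ is not a priori in $\Mor_{\infty}(\cC)$. I would bypass this by forming the pullback $a \times_y z$, which lies in $\Ob_{\infty}(\cC)$ by \rl{simpson}(a) (since $a \times_y z \to z$ is the pullback of $a \to y \in \Mor_{\infty}(\cC)$ and $z \in \Ob_{\infty}(\cC)$), and applying Theorem \rt{colim} to cover it by $\{q \to a \times_y z\}_q$ with $q \in \Ob_0(\cA)$ and $q \to a \times_y z$ in $\Mor_{\infty}(\cC)$. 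Each composition $q \to z$ is then in $\Mor_{\infty}(\cC)$, so the hypothesis $x \times_y z \to z \in \Mor^+_{\infty}(\cC)$ provides coverings $t_q \to x \times_y q$ with pieces $b_{q,\alpha} \in \Ob_0(\cA)$ satisfying $b_{q,\alpha} \to q$ in $\Mor^+_0(\cA)$. Each composition $q \to a$ is also in $\Mor_{\infty}(\cC)$, so Corollary \rco{mor0}(a) together with the refinement step above replaces $q$ by pieces with $q \to a$ in $\Mor_0(\cA) \subset \Mor^+_0(\cA)$; composing yields $b_{q,\alpha} \to a$ in $\Mor^+_0(\cA)$, and $\sqcup_q(x \times_y q) \to x \times_y a$ is a covering because $\sqcup_q q \to a$ is, completing the verification.
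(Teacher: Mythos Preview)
Your argument is correct. Parts (a)--(c) essentially match the paper's proof, though you spell out more detail than its terse ``follow from the fact that coverings, $\Mor_{\infty}(\cC)$ and $\Mor^+_{0}(\cC)$ are closed under compositions.''

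The real divergence is part (d). You verify the definition of $\Mor^+_{\infty}(\cC)$ directly for $f$: pull back the test $a\to y$ along the covering $z\to y$, cover $a\times_y z$ by $q$'s in $\Ob_0(\cA)$, feed each $q\to z$ to the hypothesis, and then refine using \rco{mor0}(a) so that the compositions down to $a$ lie in $\Mor^+_0(\cA)$. This works, but it re-does by hand what (b) and (c) already package. The paper instead observes that since $z\to y\in\Mor_{\infty,\infty}(\cC)$ and $x\times_y z\to z\in\Mor^+_{\infty}(\cC)$, part (b) gives $x\times_y z\to y\in\Mor^+_{\infty}(\cC)$; rewriting this as $x\times_y z\to x\to y$ and noting that $x\times_y z\to x$ is a covering in $\Mor_{\infty,\infty}(\cC)$ (being the pullback of $z\to y$), part (c) immediately yields $f\in\Mor^+_{\infty}(\cC)$. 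This two-line reduction is the intended payoff of having established (b) and (c) first.

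A minor point: your appeals to \rt{colim} in (b) and (d) are not needed. Once you know $x\times_y a$ (resp.\ $a\times_y z$) lies in $\Ob_{\infty}(\cC)$, the \emph{definition} of $\Ob_{\infty}(\cC)$ already provides a covering by some $u\in\Ob_0(\cC)$, and the inclusions of its summands $q\hookrightarrow u$ lie in $\Mor_0(\cC)$ by \re{cons2}(c), so each $q\to x\times_y a$ is in $\Mor_{\infty}(\cC)$. Invoking \rt{colim} is harmless but heavier than necessary.
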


\begin{proof}
(a) Let $f:x\to y$ be in $\Mor^+_0(\cA)$, and let $a\to y$ be in $\Mor_{\infty}(\cC)$ with $a\in\Ob_0(\cA)$.
By \rco{mor0}(a), there exists a covering $t\to a$ in $\Mor_0(\cC)$ such that the composition $t\to a\to y$ is in
$\Mor_0(\cC)$. Therefore there exists a decomposition  $t\simeq\sqcup_{\al}b_{\al}$ such that $b_{\al}\in\Ob_0(\cA)$ and both compositions
$b_{\al}\hra t\to a$ and $b_{\al}\hra t\to a\to y$ is in $\Mor_{0,0}(\cA)$.
Since $\Mor^+_0(\cA)$ is closed under $\Mor_{0,0}(\cA)$-pullbacks, each projection $x\times_y b_{\al}\to b_{\al}$ is in
$\Mor^+_0(\cA)$ (use \re{setup2}(e)). Since $\Mor^+_{0}(\cA)$ is closed under compositions and contains $\Mor_{0,0}(\cA)$, each composition
$x\times_y b_{\al}\to b_{\al}\to a$, or, what is the same, $x\times_y b_{\al}\to x\times_y a\to a$ is in $\Mor^+_0(\cA)$.
But the induced map $x\times_y t\simeq \sqcup_{\al}(x\times_y b_{\al})\to x\times_y a$ is a covering in $\Mor_0(\cC)$, this means that
$f\in\Mor^+_0(\cC)$, as claimed.

(b),(c) The first assertion of (b) follows from the fact that $\Mor^+_0(\cA)$ contains $\Mor_{0,0}(\cA)$ and \rco{mor0}(a).
The rest follow from the fact that coverings, $\Mor_{\infty}(\cC)$ and $\Mor^+_{0}(\cC)$ are closed under compositions.

(d) By (b), the composition $x\times_y z\to z\to y$ and hence $x\times_y z\to x\to y$ is in $\Mor^+_{0}(\cC)$. Since
$x\times_y z\to x$ is a covering, the assertion follows from (c).
\end{proof}

\begin{Cor} \label{C:cM}
(a) A morphism $\sqcup_{\al}f_{\al}:\sqcup_{\al}x_{\al}\to y$ is in $\Mor^+_{\infty}(\cC)$ if and only if every $f_{\al}$ is in $\Mor^+_{\infty}(\cC)$.

(b) A morphism $\sqcup_{\al}f_{\al}:\sqcup_{\al}x_{\al}\to \sqcup_{\al} y_{\al}$ is in $\Mor^+_{\infty}(\cC)$ if and only if every $f_{\al}$ is in $\Mor^+_{\infty}(\cC)$.
\end{Cor}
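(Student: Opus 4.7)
The strategy is to prove (a) directly from the definition of $\Mor^+_{\infty}(\cC)$, relying crucially on the direct-summand property \re{setup2}(e), and then to deduce (b) from (a) using Lemma \ref{L:cM}(b). The key input is that any morphism $b\to\sqcup_\al z_\al$ in $\cC$ from $b\in\Ob_0(\cA)$ induces a decomposition $b\simeq\sqcup_\al b_\al$ with inclusions $b_\al\hra b$ in $\Mor_0(\cA)$ by \re{setup2}(e); combined with the closure property \re{setup2}(i)(b), each $b_\al$ then automatically lies in $\Ob_0(\cA)$. I would also record the preliminary observation that $\Ob_\infty(\cC)$ is closed under direct summands, proved by induction on $n$: a covering $z\to\sqcup_\al x_\al$ in $\Mor_{n-1}(\cC)$ with $z\in\Ob_0(\cC)$ pulls back to a covering $z\times_{\sqcup x_\al}x_{\al_0}\to x_{\al_0}$ in $\Mor_{n-1}(\cC)$ whose source, by the above remark, again lies in $\Ob_0(\cC)$.

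For the forward direction of (a), fix $\al_0$ and an arbitrary morphism $a\to y$ in $\Mor_\infty(\cC)$ with $a\in\Ob_0(\cA)$. The hypothesis furnishes a covering $t\to(\sqcup_\al x_\al)\times_y a\simeq\sqcup_\al(x_\al\times_y a)$ with decomposition $t\simeq\sqcup_\beta b_\beta$, where each $b_\beta\in\Ob_0(\cA)$ and the composition $b_\beta\to a$ lies in $\Mor^+_0(\cA)$. Each $b_\beta$ further decomposes as $\sqcup_\al b_{\beta,\al}$, where $b_{\beta,\al}$ is the preimage of the summand $x_\al\times_y a$; by the preliminary remarks the inclusion $b_{\beta,\al}\hra b_\beta$ lies in $\Mor_0(\cA)\subset\Mor_{0,0}(\cA)\subset\Mor^+_0(\cA)$ and each $b_{\beta,\al}\in\Ob_0(\cA)$. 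Setting $t_{\al_0}:=\sqcup_\beta b_{\beta,\al_0}$, I would verify that $t_{\al_0}\to x_{\al_0}\times_y a$ is a covering (as the pullback of $t$ along the summand inclusion $x_{\al_0}\times_y a\hra\sqcup_\al(x_\al\times_y a)$) and that each composition $b_{\beta,\al_0}\to x_{\al_0}\times_y a\to a$ coincides with $b_{\beta,\al_0}\hra b_\beta\to a$, which lies in $\Mor^+_0(\cA)$ by closure under composition. The backward direction is immediate: taking the coproduct of coverings $t_\al\to x_\al\times_y a$ witnessing each $f_\al\in\Mor^+_\infty(\cC)$ produces the required covering $\sqcup_\al t_\al\to(\sqcup_\al x_\al)\times_y a$ with its decomposition.

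For (b), factor $\sqcup_\al f_\al:\sqcup_\al x_\al\to\sqcup_\al y_\al$ as $\sqcup_\al(\iota_\al\circ f_\al)$, where $\iota_\al:y_\al\hra\sqcup_\beta y_\beta$ is the summand inclusion. Since $\iota_\al\in\Mor_0(\cC)\subset\Mor_{\infty,\infty}(\cC)$, and $f_\al$ arises as the pullback of $\iota_\al\circ f_\al$ along $\iota_\al$, Lemma \ref{L:cM}(b) (containment of $\Mor_{\infty,\infty}$ and closure under composition and $\Mor_{\infty,\infty}$-pullbacks) yields the equivalence $f_\al\in\Mor^+_\infty(\cC)\Leftrightarrow\iota_\al\circ f_\al\in\Mor^+_\infty(\cC)$; applying (a) to $\sqcup_\al(\iota_\al\circ f_\al)=\sqcup_\al f_\al$ then finishes the argument. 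The main technical obstacle is the direct-summand bookkeeping in the forward direction of (a), i.e., making rigorous the passage from the original $b_\beta\in\Ob_0(\cA)$ to its indexed decomposition $\sqcup_\al b_{\beta,\al}$; this relies essentially on the disjointness of coproducts in the topos $\cC$ together with \re{setup2}(e) and the closure property \re{setup2}(i)(b), but once carried out the rest of the argument is essentially formal.
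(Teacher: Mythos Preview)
Your proof is correct, and for part (b) it is essentially identical to the paper's argument. For part (a), however, your ``only if'' direction takes a considerably more laborious route than the paper's.

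The paper disposes of the forward direction of (a) in one line: since the summand inclusion $x_{\al_0}\hra\sqcup_\al x_\al$ lies in $\Mor_0(\cC)$ (this is \re{cons2}(c)), and hence in $\Mor_{\infty,\infty}(\cC)\subset\Mor^+_\infty(\cC)$ via \rl{simpson}(a),(d) and \rl{cM}(b), the composition $f_{\al_0}=(\sqcup_\al f_\al)\circ(x_{\al_0}\hra\sqcup_\al x_\al)$ lies in $\Mor^+_\infty(\cC)$ by closure under composition (\rl{cM}(b)). This completely avoids the unfolding of the definition and the direct-summand bookkeeping that you identify as the main technical obstacle. In effect, you are re-proving a special case of \rl{cM}(b) by hand rather than simply invoking it. Your approach works and has the virtue of being self-contained, but the paper's shortcut is cleaner and makes clear that the corollary is purely formal once \rl{cM}(b) is in place. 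Note in particular that your preliminary observation about $\Ob_\infty(\cC)$ being closed under summands is already contained in \rl{simpson}(a) applied to the inclusion $x_{\al_0}\hra\sqcup_\al x_\al\in\Mor_0(\cC)\subset\Mor_\infty(\cC)$.
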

\begin{proof}
(a) While the ``if'' assertion follows from definition, the converse follows from the fact that each inclusion $x_{\al_0}\to\sqcup_{\al}x_{\al}$ is in $\Mor_0(\cC)$ (see \re{cons2}(c)), and \rl{cM}(b).

(b) By (a),  $\sqcup_{\al}f_{\al}$ is in $\Mor^+_{\infty}(\cC)$ if and only if each
$x_{\al_0}\overset{f_{\al}}{\lra} y_{\al_0}\hra\sqcup_{\al}y_{\al}$ is in $\Mor^+_{\infty}(\cC)$, so again the assertion follows from
\rl{cM}(b) and \re{cons2}(c).
\end{proof}

\begin{Emp} \label{E:proQ}
{\bf Extension of morphisms in $\cB$.} (a) In the situation of \re{general}(a), let $\Mor^+_0(\cB)\subset \Mor(\cB)$ be a class of morphisms,  containing $\Mor_0(\cB)$, closed under compositions and under $\Mor_0(\cB)$-pullbacks.

(b) We denote by  $\Mor^+_0(\cA)\subset \Mor(\cA)$ the class of morphisms $f:x\to y$ with $x,y\in\Ob_0(\cA)$ satisfying the following property:
for every $\Mor_0(\cB)$-presentation $y\simeq\lim_{\al}y_{\al}$  there exists a $\Mor_0(\cB)$-presentation $x\simeq\lim_{\beta}x_{\beta}$ such that:

\indent\indent$(\star)$ for every $\al$ there exists $\beta$ and a morphism $f_{\beta,\al}:x_{\beta}\to y_{\al}$ belonging to $\Mor^+_0(\cB)$ \\
\indent\indent\indent\indent such that $\pr_{\al}\circ f:x\to y\to y_{\al}$ factors as $f_{\beta,\al}\circ \pr_{\beta}:x\to x_{\beta}\to y_{\al}$.
\end{Emp}

\begin{Emp} \label{E:proQprop}
{\bf Simple properties.}
(a) By definition, the class $\Mor^+_0(\cA)$ is closed under compositions. Also, a morphism $f:x\to y$ with $x,y\in\Ob_0(\cA)$ is in $\Mor^+_0(\cA)$ if and only if for every $\Mor_0(\cB)$-presentation $y\simeq\lim_{\al}y_{\al}$, each composition $\pr_{\al}\circ f:x\to y_{\al}$ is in $\Mor^+_0(\cA)$.

(b) By definition, a morphism $f:x\to y$ with $x\in\Ob_0(\cA),y\in\Ob(\cB)$ is in $\Mor^+_0(\cA)$ if and only if it decomposes as a composition $x\to z\to y$ with $x\to z$ in $\Mor_0(\cA)$ and $z\to y$ in $\Mor^+_0(\cB)$.

(c) The class $\Mor^+_0(\cA)$ contains $\Mor_{0,0}(\cA)$. Indeed, choose a morphism $x\to y$ in $\Mor_{0,0}(\cA)$, and let $y\simeq\lim_{\al}y_{\al}$ be a  $\Mor_0(\cB)$-presentation. By (a), it suffices to show that each composition $x\to y\to y_{\al}$ is in $\Mor^+_0(\cA)$. Since this composition is in $\Mor_0(\cA)$ (by \rl{pro}), the assertion follows from (b).
\end{Emp}

\begin{Emp} \label{E:IPP}
{\bf Independence of presentation property (IPP).} (a) We say that the class $\Mor^+_0(\cB)$ from \re{proQ}
satisfies the {\em independence of presentation property} (relative to $\Mor_0(\cB))$), if for every two $\Mor_0(\cB)$-presentations
$x\simeq\lim_{\al}y_{\al}$ and $x\simeq\lim_{\beta}x_{\beta}$, condition $(\star)$ holds for $\id:x\to x$.

(b) Notice that if $\Mor^+_0(\cB)$ satisfies IPP-property, then a morphism $f:x\to y$ between objects $x,y\in\Ob_0(\cA)$ belongs to $\Mor^+_0(\cA)$ if and only if condition $(\star)$ holds for {\em some} pair of $\Mor_0(\cB)$-presentations $y\simeq\lim_{\al}y_{\al}$ and $x\simeq\lim_{\beta}x_{\beta}$.

Indeed, this follows from the fact that $\Mor^+_0(\cB)$ is closed under compositions and contains $\Mor_{0}(\cB)$. Therefore $f$ is in $\Mor^+_0(\cA)$ if and only if each composition $\pr_{\al}\circ f:x\to y_{\al}$ is in $\Mor^+_0(\cA)$ for some $\Mor_0(\cB)$-presentation $y\simeq\lim_{\al}y_{\al}$.

(c) Combining \re{proQprop} and \rl{proQ} below, we see that in the situation of (a) the class $\Mor^+_0(\cA)$ satisfies all the properties of \re{cons1ext}(b), thus construction \re{cons1ext}(c) applies.
\end{Emp}


%

\begin{Emp} \label{E:proQrem}
{\bf Particular case.} (a) Since $\Mor_0(\cB)$ contains isomorphisms and is closed under compositions, we get that $\cB_0:=(\Ob(\cB),\Mor_0(\cB))$ is a ($1$-full) subcategory of $\cB$. Thus we have a natural functor $\iota:\Pro(\cB_0)\to \Pro(\cB)=\cA$, induced by the embedding $\cB_0\hra\cB$,
and the class $\Mor_0(\cB)$ satisfies the IPP-property if and only if $\iota$ induces an equivalence between $\Pro(\cB_0)$ and a $1$-full subcategory of $\cA$.

(b) In the situation of (a), every $x\in\Ob_0(\cA)$ has a canonical $\Mod_0(\cB)$-presentation $x\simeq\lim_{x\to y}y$, where the limit runs over  category $\C{I}$ such that

$\bullet$ objects in $\C{I}$ are morphisms $x\to y$ in $\Mor_0(\cA)$ with $y\in\Ob(\cB)$, and

$\bullet$ morphisms in $\C{I}$ are morphisms $y\to y'$ in $\Mor_0(\cB)$ under $x$.

Indeed, for every $\Mod_0(\cB)$-presentation $x\simeq\lim_{\al}x_{\al}$,
each projection $x\to x_{\al}$ is in $\Mor_0(\cA)$. Hence if
$\Mor_0(\cB)$ satisfies the IPP-property, then the category
$\cI$ is filtered, and the family of projections $\{x\to x_{\al}\}_{\al}$ is cofinal in $\cI$.

(c) By definition, if $\Mor_0(\cB)$ satisfies the IPP-property, then every class $\Mor_0^+(\cB)$ satisfying \re{proQ}(a)
also satisfies it.

\end{Emp}

\begin{Lem} \label{L:proQ}
If  $\Mor^+_0(\cB)$ satisfies the IPP-property, then $\Mor^+_0(\cA)$ is closed under $\Mor_{0,0}(\cA)$-pullbacks.
\end{Lem}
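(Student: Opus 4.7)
The plan is to show that for $f:x\to y$ in $\Mor^+_0(\cA)$ and $g:y'\to y$ in $\Mor_{0,0}(\cA)$, the pullback $f':x':=x\times_y y'\to y'$ lies in $\Mor^+_0(\cA)$. First I would verify that $x'$ exists in $\cA$ and belongs to $\Ob_0(\cA)$: since $g\in\Mor_0(\cA)$ and this class is closed under pullbacks by \re{general}(e), the projection $x'\to x$ is in $\Mor_0(\cA)$, and then $x\in\Ob_0(\cA)$ together with \rl{pro}(b) forces $x'\in\Ob_0(\cA)$. So both source and target of $f'$ lie in $\Ob_0(\cA)$, making $f'$ eligible to belong to $\Mor^+_0(\cA)$.

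Next I would use the IPP-hypothesis together with \re{IPP}(b) to reduce the verification of $(\star)$ for $f'$ to a single convenient pair of $\Mor_0(\cB)$-presentations. Combining \re{proQprop}(a) and (b), the task becomes: for each index $\al$ in a chosen $\Mor_0(\cB)$-presentation $y'\simeq\lim_\al y'_\al$, produce a factorization of $\pr_\al\circ f':x'\to y'_\al$ as $x'\to z_\al\to y'_\al$ with $x'\to z_\al$ in $\Mor_0(\cA)$ and $z_\al\to y'_\al$ in $\Mor^+_0(\cB)$. The convenient setup would be to fix a $\Mor_0(\cB)$-presentation $y\simeq\lim_\beta y_\beta$ and then upgrade the $\Mor_0(\cB)_\cA$-presentation of $y'$ over $y$ supplied by $g\in\Mor_0(\cA)$ into a $\Mor_0(\cB)$-presentation $y'\simeq\lim_\beta\tilde y_\beta$ indexed by the same diagram as $y$, with each $\tilde y_\beta\in\cB$, each $\tilde y_\beta\to y_\beta$ in $\Mor_0(\cB)$, and all transitions in $\Mor_0(\cB)$; this can be arranged because $\Mor_0(\cB)$ is closed under pullbacks. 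I would take this as the chosen presentation of $y'$.

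With this setup in place, the hypothesis $f\in\Mor^+_0(\cA)$ applied relative to $y\simeq\lim_\beta y_\beta$ produces a $\Mor_0(\cB)$-presentation $x\simeq\lim_\gamma x_\gamma$ together with morphisms $\phi_{\gamma,\beta}:x_\gamma\to y_\beta$ in $\Mor^+_0(\cB)$ factoring $\pr_\beta\circ f$. I would then set $z_\beta:=x_\gamma\times_{y_\beta}\tilde y_\beta\in\cB$; the projection $z_\beta\to\tilde y_\beta$ is the pullback of $\phi_{\gamma,\beta}\in\Mor^+_0(\cB)$ along the $\Mor_0(\cB)$-morphism $\tilde y_\beta\to y_\beta$, and so lies in $\Mor^+_0(\cB)$ by the closure hypothesis of \re{proQ}(a). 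The identification $x\times_y\tilde y_\beta\simeq x\times_{y_\beta}\tilde y_\beta\simeq\lim_\gamma(x_\gamma\times_{y_\beta}\tilde y_\beta)$ then assembles into a $\Mor_0(\cB)$-presentation $x'\simeq\lim_{\gamma,\beta}(x_\gamma\times_{y_\beta}\tilde y_\beta)$ whose projection to $z_\beta$ furnishes the required morphism $x'\to z_\beta$ in $\Mor_0(\cA)$. The main obstacle I anticipate is the refinement step: upgrading the $\Mor_0(\cB)_\cA$-presentation of $y'$ to a $\Mor_0(\cB)$-presentation compatible with the presentation of $y$ requires careful indexing bookkeeping in the pro-category, though the IPP-hypothesis ensures that all comparisons between different $\Mor_0(\cB)$-presentations of the same object reduce to the expected cofinality argument.
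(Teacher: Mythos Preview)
Your approach is correct and rests on the same core observation as the paper's proof: the closure of $\Mor^+_0(\cB)$ under $\Mor_0(\cB)$-pullbacks, combined with the IPP-property to freely change presentations. The difference is organizational. You attempt to build, in one stroke, a compatible $\Mor_0(\cB)$-presentation $y'\simeq\lim_\beta\tilde y_\beta$ lying over the chosen presentation of $y$, and then pull back the data for $f$ level by level. The paper instead proceeds by a chain of reductions: first it uses \re{proQprop}(a) to reduce to checking that each $x\times_y z\to z_\al$ is in $\Mor^+_0(\cA)$ (where $z\simeq\lim_\al z_\al$ is the $\Mor_0(\cB)_\cA$-presentation of $z=y'$ over $y$), thereby replacing $z\to y$ by a morphism in $\Mor_0(\cB)_\cA$; then it unwinds the definition of $\Mor_0(\cB)_\cA$ to further reduce to $z\to y$ in $\Mor_0(\cB)$ (after replacing $y$ by some $y'\in\cB$); finally it factors $x\to y$ as $x\to x'\to y$ with $x\to x'$ in $\Mor_0(\cA)$ and $x'\to y$ in $\Mor^+_0(\cB)$, and pulls back.

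What the paper's route buys is precisely an escape from the obstacle you flag: by reducing to the case $z\to y\in\Mor_0(\cB)$, one never has to produce a global $\Mor_0(\cB)$-presentation of $y'$ compatible with that of $y$, nor manage a doubly-indexed limit $\lim_{\gamma,\beta}(x_\gamma\times_{y_\beta}\tilde y_\beta)$. Your construction of $\tilde y_\beta$ is achievable (it is essentially the argument of \rl{pro}), but carrying it through cleanly is more work than the paper's three-step reduction.
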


\begin{proof}
Let $g:z\to y$ be in $\Mor_{0,0}(\cA)$, and $f:x\to y$ in $\Mor^+_0(\cA)$. We want to show that the fiber product $x\times_y z$ in $\cA$ exists, belongs to $\Ob_0(\cA)$, and the projection $x\times_y z\to z$ is in $\Mor^+_0(\cA)$.

Choose a $\Mor_0(\cB)_{\cA}$-presentation $z\simeq\lim_{\al} z_{\al}$ over $y$ (see \re{general}(e)). Then the fiber product  $x\times_y z_{\al}$ exists
(see \re{general}(c)). Moreover, since $\cA$ admits all small filtered limits the fiber product $x\times_y z\simeq\lim_{\al}(x\times_y z_{\al})$ exists as well. Next, since $z\in \Ob_0(\cA)$ and $x\times_y z\to z$ is in $\Mor_0(\cA)$, we conclude that $x\times_y z\in\Ob_0(\cA)$ by \rl{pro}.

It remains to show that the projection $x\times_y z\to z$ is in $\Mor^+_0(\cA)$. Arguing as in the proof of \rl{pro}, there exists a $\Mor_0(\cB)$ presentation $z\simeq\lim_{\g}z_{\g}$ such that for each $\g$ there exists a morphism $z_{\al}\to z_{\g}$ in
$\Mor_0(\cA)$.  Since $\Mor^+_0(\cB)$ satisfies the IPP-property, it suffices to show that each composition $x\times_y z\to z\to z_{\al}\to z_{\g}$ is in $\Mor_0^+(\cA)$ (by \re{IPP}(b)). Thus, by \re{proQprop}(a), it suffices to show that each composition $x\times_y z\to z\to z_{\al}$ is in $\Mor_0^+(\cA)$.

Since the latter composition decomposes as $x\times_y z\to x\times_y z_{\al}\to z_{\al}$, and the first map is in $\Mor_0(\cA)$, it suffices to show that the map $x\times_y z_{\al}\to z_{\al}$ is in $\Mor^+_0(\cA)$. Replacing $z\to y$ by $z_{\al}\to y$, we can assume that $z\to y$ in $\Mor_0(\cB)_{\cA}$.

Then $z\to y$ is a pullback of a morphism $z'\to y'$ in $\Mor_0(\cB)$ with respect to some morphism $y\to y'$ in $\Mor_0(\cA)$.
Thus, replacing $x\to y$ by the composition $x\to y\to y'$ (and using \re{IPP}(b)), we can assume that $z\to y$ in $\Mor_0(\cB)$.
By, \re{proQprop}(a),(c), morphism $x\to y$ decomposes as $x\to x'\to y$, where $x\to x'$ is in $\Mor_0(\cA)$ and $x'\to y$ is in $\Mor^+_0(\cB)$.  Thus the assertion follows from the fact that classes $\Mor_0(\cA)$ and $\Mor^+_0(\cB)$ are stable under $\Mor_0(\cB)$-pullbacks.
\end{proof}



\begin{Emp} \label{E:fp}
{\bf $\cB$-presented morphisms.} We say that a morphism $f:x\to y$ in $\cA$ is {\em $\cB$-presented}, if $f\simeq f'\times_{y'}y$ for some morphism $f':z'\to y'$ in $\cB$.

\end{Emp}

\begin{Emp} \label{E:local}
{\bf Assumptions.}
(i) The Grothendieck topology $\cT$ on $\cA$ is generated by the Grothendieck topology $\cT_{\cB}$ on $\cB$ (see \re{summary}).

(ii) For every collection
$f_{\al}:x_{\al}\to y$ of morphisms in $\Mor_0(\cB)$ we have:

$\bullet$ the sheaf image of $\sqcup_{\al}f_{\al}:\sqcup_{\al}x_{\al}\to y$ is representable by a subobject $y'$ of $y$;

$\bullet$ for every morphism $g:y\to z$ in $\cB$, the restriction $g|_{y'}:y'\to z$ is in $\Mor^+_0(\cB)$ if and only if
$g\circ f_{\al}\in\Mor^+_0(\cB)$ for all $\al$.
\end{Emp}

\begin{Prop} \label{P:proQ}
Assume that $\Mor^+_0(\cB)$ satisfies the IPP-property (see \re{IPP}) and assumptions of \re{local}.

(a) For a morphism $f:x\to y$ with $x,y\in\Ob_{\infty}(\cC)$ the following are equivalent:

\quad (i) $f\in\Mor_{\infty}(\cC)$;

\quad (ii) there exist coverings $\{z_{\al}\to y\}_{\al}$ and $\{t_{\al\beta}\to x\times_y z_{\al}\}_{\beta}$ in $\Mor_{\infty}(\cC)$ with $z_{\al},t_{\al\beta}\in\Ob_0(\cA)$ such that each composition $t_{\al\beta}\to x\times_y z_{\al}\to z_{\al}$ is in $\Mor^+_0(\cA)$;

\quad (iii) for every pair of morphisms $a\to y$ and $b\to x\times_y z$ in $\Mor_{\infty}(\cC)$ with $a,b\in\Ob_0(\cA)$, the composition
$b\to x\times_y a\to a$ is in $\Mor^+_0(\cA)$.

(b) For a morphism $f:x\to y$ in $\Mor_{0,0}(\cA)$, we have $f\in\Mor^+_{0}(\cA)$ if and only if
$f\in\Mor^+_{\infty}(\cC)$.

(c) For every $\cB$-presentable morphism $f:x\to y$ in $\Mor_{0,0}(\cA)$, we have $f\in\Mor^+_{0}(\cA)$ if and only if there exists a $\Mor_0(\cB)$-presentation $y\simeq\lim_{\al}y_{\al}$ an index $\al$ and a morphism $f_{\al}:x_{\al}\to y_{\al}$ in $\Mor^+_0(\cB)$ such that $f\simeq f_{\al}\times_{y_{\al}}y$.
\end{Prop}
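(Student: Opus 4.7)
The plan is to establish (a) first, treating (b) and (c) as corollaries of the equivalence (i)$\Leftrightarrow$(iii), and then pinning down presentability in (c) using the IPP-property together with the standard limit theorems that are implicit in the definition of $\Mor^+_0(\cA)$. Throughout I will use Theorem~\ref{T:colim}, which lets one replace a morphism into $y\in\Ob_\infty(\cC)$ by a compatible system of morphisms from $\Ob_0(\cA)$, and Lemma~\ref{L:cM}, which provides locality of the class $\Mor^+_\infty(\cC)$ with respect to $\Mor_{\infty,\infty}(\cC)$-coverings.

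For part (a), (i)$\Rightarrow$(iii) will be the delicate implication. Given $a\to y$ and $b\to x\times_y a$ in $\Mor_\infty(\cC)$ with $a,b\in\Ob_0(\cA)$, the definition in \re{cons1ext}(c) furnishes a covering $t\to x\times_y a$ and a decomposition $t\simeq\sqcup_\al b_\al$ such that each $b_\al\hookrightarrow t\to a$ is in $\Mor^+_0(\cA)$. Forming the fiber product $b\times_{x\times_y a}t$ and invoking Corollary~\ref{C:mor0}(b) to the pair of morphisms $b\to x\times_y a$ and $t\to x\times_y a$ in $\Mor_\infty(\cC)$ (with $x\times_y a\in\Ob_\infty(\cC)$ by Lemma~\ref{L:simpson}(a)), we obtain a common refinement $s\to b\times_{x\times_y a}t$ with $s\in\Ob_0(\cA)$ such that both $s\to b$ and $s\to t$ are in $\Mor_0(\cA)$. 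The composition $s\to t\to a$ lands in some summand $b_\al$ up to a further refinement (using \re{cons2}(c) and that $s$ is quasi-compact in a suitable sense, handled componentwise via Corollary~\ref{C:cM}), so $s\to a$ is the composition of a morphism in $\Mor_0(\cA)$ with one in $\Mor^+_0(\cA)$, hence lies in $\Mor^+_0(\cA)$ by \re{proQprop}(c) and closure under composition. Then the covering $s\to b$ together with \re{IPP}(b) and the IPP-property allow us to transfer this to $b\to a$ itself being in $\Mor^+_0(\cA)$. The implications (iii)$\Rightarrow$(ii) and (ii)$\Rightarrow$(i) are then more straightforward: the former uses Theorem~\ref{T:colim} to exhibit $y$ and $x\times_y z_\al$ as colimits of objects of $\Ob_0(\cA)$ along $\Mor_\infty(\cC)$-morphisms, and the latter applies Lemma~\ref{L:cM}(d) to reduce locally on $y$ to the case $y\in\Ob_0(\cA)$, in which condition (ii) directly matches the definition of $\Mor^+_\infty(\cC)$.

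For (b), the ``only if'' direction is Lemma~\ref{L:cM}(a). Conversely, given $f\in\Mor^+_\infty(\cC)$ with $x,y\in\Ob_0(\cA)$, apply (a)(iii) with $a=y$ and $b=x$ (both equipped with the identity, which lies in $\Mor_\infty(\cC)$ since isomorphisms are in all the classes) to conclude that $f=f\circ\id_x:x\to y$ is in $\Mor^+_0(\cA)$. For (c), the ``if'' direction is immediate from \re{proQprop}(b) and the closure of $\Mor^+_0(\cA)$ under $\Mor_0(\cA)$-pullbacks (Lemma~\ref{L:proQ}). For the converse, given a $\cB$-presentable $f:x\to y$ in $\Mor^+_0(\cA)$, choose any $\Mor_0(\cB)$-presentation $y\simeq\lim_\al y_\al$. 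By assumption $f$ comes from some $f':z'\to y'$ in $\cB$ along a map $y\to y'$, and by the standard cofinality argument in pro-categories we may assume $y'=y_{\al_0}$ for some $\al_0$ after replacing the presentation; thus $f$ itself is a pullback of $f_{\al_0}:x_{\al_0}\to y_{\al_0}$ in $\cB$. Using \re{IPP}(b), the property $f\in\Mor^+_0(\cA)$ implies that condition $(\star)$ is witnessed in a way compatible with this presentation, so $f_{\al_0}\in\Mor^+_0(\cB)$, as required.

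The main obstacle I anticipate is the careful bookkeeping in (a)(i)$\Rightarrow$(iii): one must replace an abstractly given covering refinement by one compatible with the decomposition $t=\sqcup_\al b_\al$, and then transfer the $\Mor^+_0(\cA)$-membership of $s\to a$ across the covering $s\to b$ to obtain $b\to a\in\Mor^+_0(\cA)$. The IPP-property is exactly what makes this transfer possible, but expressing it correctly requires unwinding the $\Mor_0(\cB)$-presentations on both sides and checking compatibility with the $\Mor_0(\cB)_\cA$-pullbacks appearing in \re{general}(d). The remaining implications reduce to previously established closure and locality properties and should be essentially formal.
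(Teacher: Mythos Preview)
Your logical organization of (a) contains a genuine gap. You cycle through (i)$\Rightarrow$(iii)$\Rightarrow$(ii)$\Rightarrow$(i), but the implication (ii)$\Rightarrow$(i) is \emph{not} straightforward. After localizing on $y$ via Lemma~\ref{L:cM}(d) so that $y\in\Ob_0(\cA)$, condition (ii) only gives you a covering $\{t_\beta\to x\}$ with $t_\beta\to y$ in $\Mor^+_0(\cA)$; this verifies the defining condition of $\Mor^+_\infty(\cC)$ for the single test object $a=y$, but the definition quantifies over \emph{all} morphisms $a\to y$ in $\Mor_\infty(\cC)$ with $a\in\Ob_0(\cA)$, and $\Mor^+_0(\cA)$ is only closed under $\Mor_{0,0}(\cA)$-pullbacks, not general $\Mor_\infty(\cC)$-pullbacks. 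So (ii) does not ``directly match the definition.'' The paper's ordering is the correct one: (iii)$\Rightarrow$(i)$\Rightarrow$(ii) are trivial, and all the work lies in (ii)$\Rightarrow$(iii), which your argument for (i)$\Rightarrow$(iii) partially reproduces but does not cover, since from (ii) you do not get a covering of $x\times_y a$ for an arbitrary $a$; the paper's Step~1 first uses $\{z_\al\to y\}$ together with Corollary~\ref{C:mor0}(b) to reduce to the case where $a\to y$ factors through some $z_\al$ via $\Mor_0(\cA)$.

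Independently, the descent step you call ``transfer'' --- passing from $s\to a\in\Mor^+_0(\cA)$ along a covering $s\to b$ to $b\to a\in\Mor^+_0(\cA)$ --- is not a consequence of the IPP-property. IPP only controls independence of $\Mor_0(\cB)$-presentations. The paper's Step~2 proves this descent using the assumptions of \re{local}: one chooses $\Mor_0(\cB)$-presentations of $a$ and $b$, pushes each $u_\gamma\to b\to b_j\to a_i$ down to a morphism in $\Mor^+_0(\cB)$ at some finite level, forms the \emph{image} $b'_j\subset b_j$ of the resulting maps (assumption \re{local}(ii), first bullet), deduces $b'_j\to a_i\in\Mor^+_0(\cB)$ (second bullet), and then uses assumption \re{local}(i) to see that the covering $u\to b$ forces $b\to b_j$ to factor through $b'_j$. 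Your sketch does not invoke \re{local} at all, so this step is unjustified.

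The same issue recurs in your treatment of (c): from $f\in\Mor^+_0(\cA)$ you only get that some composition $x_{\al'}\to x_{\al_0}\to y_{\al_0}$ lies in $\Mor^+_0(\cB)$, not that $f_{\al_0}$ itself does. The paper again passes to the image $x'_{\al_0}\subset x_{\al_0}$ via \re{local}(ii), obtains $f_{\al_0}|_{x'_{\al_0}}\in\Mor^+_0(\cB)$, and then pulls back to $y_{\al'}$ to exhibit $f$ as the base change of a morphism in $\Mor^+_0(\cB)$.
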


\begin{proof}
(a) Clearly, (iii)$\implies$(i)$\implies$(ii), so it remains to show that (ii)$\implies$(iii).

Let  $\{z_{\al}\to y\}_{\al}$ and $\{t_{\al\beta}\to x\times_y z_{\al}\}_{\beta}$ be as in (ii). We want to show that for every $a\to y$ and $b\to x\times_y a$ as in (iii), the composition $b\to x\times_y a\to a$ is in $\Mor^+_0(\cA)$. The argument consists of two steps.

\vskip 8truept

{\bf Step 1.} There exists a covering $u=\sqcup_{\g}u_{\g}\to b$ in $\Mor_{\infty}(\cC)$ with
$u_{\g}\in \Ob_0(\cA)$ such that each composition $u_{\g}\to b\to x\times_y a\to a$ is in $\Mor^+_0(\cA)$.
\begin{proof}

By \rco{mor0}(b), there exists a covering $\wt{a}_{\al}\to z_{\al}\times_y a$ in $\Mor_{\infty}(\cC)$ such that both compositions
$\wt{a}_{\al}\to a$  and $\wt{a}_{\al}\to z_{\al}$ are in $\Mor_0(\cC)$. By definition (and \re{setup2}(e)), there exists a decomposition
$\wt{a}_{\al}\simeq\sqcup_{\dt}\wt{a}_{\al,\dt}$ such that all compositions $\wt{a}_{\al,\dt}\hra \wt{a}_{\al}\to z_{\al}$ and $\wt{a}_{\al,\dt}\hra \wt{a}_{\al}\to a$ are in $\Mor_0(\cA)$.

Since $\{z_{\al}\to y\}_{\al}$ and  $\{\wt{a}_{\al,\dt}\to z_{\al}\times_y a\}_{\dt}$ are covering, we conclude that
$\{\wt{a}_{\al,\dt}\to a\}_{\al,\dt}$ is a covering as well. Since each $\wt{a}_{\al,\dt}\to a$ are in $\Mor_0(\cA)$, in order to construct a covering $u\to b$, we can replace $a\to y$ and $b\to x\times_y a$ with $\wt{a}_{\al,\dt}\to a\to y$ and $b\times_{a}\wt{a}_{\al,\dt}\to x\times_y \wt{a}_{\al,\dt}$, respectively. In other words, we can assume that $a\to y$ factors as $a\to z_{\al}\to y$, and $a\to z_{\al}$ is in $\Mor_0(\cA)$.

Set $t_{\al}:=\sqcup_{\beta}t_{\al\beta}$. Applying \rco{mor0}(b) to a pair of morphisms $b\to x\times_y a$ and
\[
t_{\al}\times_{z_{\al}}a\to (x\times_y z_{\al})\times_{z_{\al}}a=x\times_y a,
\]
there exists a covering $u\to b\times_{x\times_y a}(t_{\al}\times_{z_{\al}}a)$ in $\Mor_{\infty}(\cC)$ such that the composition $u\to t_{\al}\times_{z_{\al}}a$ is in $\Mor_0(\cC)$. Then $u\to b$ is the covering in $\Mor_{\infty}(\cC)$, because $t_{\al}\to x\times_y z_{\al}$ is.

Moreover, since $u\to \sqcup_{\beta}(t_{\al\beta}\times_{z_{\al}} a)$ is in $\Mor_0(\cC)$, there exists a decomposition $u\times_{t_{\al}}t_{\al\beta}\simeq\sqcup_{\g}u_{\beta\g}$ such that each $u_{\beta\g}\in\Ob_0(\cA)$ and each induced map $u_{\beta\g}\to t_{\al\beta}\times_{z_{\al}}a$ is in $\Mor_0(\cA)$. Then $u\simeq\sqcup_{\beta,\g}u_{\beta\g}$ and each $u_{\beta\g}\to t_{\al\beta}\times_{z_{\al}}a\to a$ or, what is the same, $u_{\beta\g}\to b\to a$ is in $\Mor^+_0(\cA)$, because $t_{\al\beta}\to z_{\al}$ is in $\Mor^+_0(\cA)$, while $a\to z_{\al}$ is in $\Mor_0(\cA)$.
\end{proof}

{\bf Step 2.} Now we are ready to show that $b\to a$ is in $\Mor_0^+(\cA)$. Choose $\Mor_0(\cB)$-presentations $a\simeq\lim_{i}a_{i}$ and
$b\simeq\lim_j b_j$. We have to show that each composition $b\to a\to a_{i}$ is in $\Mor^+_0(\cA)$. Note that the projection $b\to a_i$ factors through some $b_j$.

Since both maps $u_{\g}\to b$ and $b\to b_j$ are in $\Mor_0(\cA)$, their composition $u_{\g}\to b\to b_j$  is in $\Mor_0(\cA)$ as well, so there exists a $\Mod_0(\cB)$-presentation $u_{\g}\simeq\lim_{r}u_{\g,r}$ over $b_j$. Moreover, since $u_{\g}\to b_j\to a_i$ is in $\Mor^+_0(\cA)$, there exists $r_{\g}$ such that the composition $u_{\g,r_{\g}}\overset{p_{\g}}{\lra} b_j\to a_i$ is in $\Mor^+_0(\cB)$.
Let $b'_j\subset b_j$ be the image of $\sqcup_{\g}p_{\g}:\sqcup_{\g}u_{\g,r_{\g}}\to b_j$ (use \re{local}).
Then the composition $b'_j\hra b_j\to a_i$ is in $\Mor^+_0(\cB)$ (by \re{local}).

By construction, the composition $u=\sqcup_{\g}u_{\g}\to \sqcup_{\g}u_{\g,r_{\g}}\to b_j$ or, what is the same, the composition
$u\to b\to b_j$ factors through $b'_j$. Since $u\to b$ is a covering, the assumption \re{local}(i) implies that
the projection $b\to b_j$ factors through $b'_j$. Then the induced map $b\to b'_j$ is in $\Mor_0(\cA)$, and the composition
$b\to b'_j\to a_i$ is in $\Mor^+_0(\cA)$, as claimed.

(b) Follows immediately from (a). More directly, the ``only if'' assertion was shown in \rl{cM}(a), while Step 2 of the argument of (a)
shows the ``if'' assertion.

(c) Choose a $\Mor_0(\cB)$-presentation $y\simeq\lim_{\al}y_{\al}$. Since $f$ is $\cB$-presentable, there exists an index $\al$ and a morphism $f_{\al}:x_{\al}\to y_{\al}$ in $\Mor(\cB)$ such that $f\simeq f_{\al}\times_{y_{\al}}y$. Then $x$ has a $\Mor_0(\cB)$-presentation $x\simeq\lim_{\al'>\al}x_{\al'}$, with $x_{\al'}:=x_{\al}\times_{y_{\al}}y_{\al'}$.

First we claim that if $f_{\al}\in\Mor^+_0(\cB)$, then $f\in\Mor^+_0(\cA)$.  Since $\Mor^+_0(\cB)$ satisfies the IPP-property, it suffices to show that each composition $x\to y\to y_{\al'}$ is in $\Mor^+_{0}(\cA)$. But this composition decomposes as $x\to x_{\al}\times_{y_{\al}}y_{\al'}\to y_{\al'}$, the first of which in $\Mor_0(\cA)$ and the second one is in $\Mor^+_0(\cB)$, because $\Mor^+_0(\cB)$ is closed under $\Mor_0(\cB)$-pullbacks. Thus the assertion follows by \re{proQprop}(b).

Conversely, assume that $f\in\Mor^+_0(\cA)$. Then there exists $\al'>\al$ such that the composition
$x_{\al'}\overset{\pr_{\al',\al}}{\lra} x_{\al}\overset{f_{\al}}{\lra}y_{\al}$ is in
$\Mor^+_0(\cB)$. By the assumptions \re{local}(ii), the projection $\pr_{\al',\al}$ has an image
$x'_{\al}\subset x_{\al}$, and the restriction $f'_{\al}:=f_{\al}|_{x'_{\al}}:x'_{\al}\to y_{\al}$ is in $\Mor^+_0(\cB)$. Therefore
the pullback $f'_{\al'}:=f'_{\al}\times_{y_{\al}}y_{\al'}$ is in $\Mor^+_0(\cB)$.
As $\pr_{\al',\al}$, factors through $x'_{\al}\subset x'_{\al}$, the pullback $f'_{\al'}$ is nothing but
$f_{\al}\times_{y_{\al}}y_{\al'}:x_{\al'}\to y_{\al'}$, and the proof is complete.
\end{proof}

\subsection{Examples and complements}

\begin{Emp} \label{E:catplinfst}
{\bf Placid $\infty$-stacks.} (a) Let  $\cB$ be the category, whose objects are affine schemes (resp. schemes, resp. algebraic spaces) of finite type over $k$ and morphisms are affine maps, and let $\Mor_0(\cB)\subset\Mor(\cB)$ be the class of smooth maps. Then $\Mor_0(\cB)$ contains isomorphisms and closed under compositions and pullbacks. In other words, $\Mor_0(\cB)$ satisfies all the assumptions of \re{general}(a), thus the
construction of \re{general} applies. In this case,

$\bullet$ $\cA=\Pro(\cB)$ is the category of affine schemes (resp. qcqs schemes, resp. qcqs algebraic
spaces) with affine morphisms between them. Namely, this is obvious in the case of affine schemes, which suffices for this work,
and it follows from \cite[Theorem D and Proposition B.1]{Ry2} in the case of qcqs algebraic spaces.

$\bullet$ $\Mor_0(\cB)_{\cA}\subset\Mor(\cA)$ is the class of fp-smooth affine morphisms (see \cite[Proposition B.3(xiii)]{Ry2}).

$\bullet$ $\Ob_0(\cA)$ is the class of  affine schemes (resp. qcqs scheme, resp. qcqs algebraic spaces), admitting placid presentations
(in the sense of \re{plpres}(b)).

$\bullet$ $\Mor_0(\cA)$ is the class of strongly pro-smooth morphisms (in the sense of \re{plpres}(a)).


(b) Consider category $\cA$ together classes $\Ob_0(\cA)$ and $\Mor_0(\cA)$ constructed in (a), equipped with \'etale topology.
Then in all three cases the corresponding categories $\cC=\Shv(\cA)$ are canonically equivalent (see \re{rem infst}).
Furthermore, all assumptions  of \re{setup2}(ii) are satisfied. Namely, properties (c) and (d) are standard, while to see property (e) notice that if $X$ is an affine scheme admitting a placid presentation, and has a decomposition $X\simeq X_1\sqcup X_2$ as a coproduct in $\Shv(\Aff_k)$, then each $X_i$  is an affine scheme, and each embedding $j_i:X_i\to X$ is an fp-(open and closed) embedding, corresponding to an idempotent in $k[X]$. In particular, every $j_i$ is belongs to $\Mor_0(\cA)$. The two other cases are similar.

By the above observation, the construction \re{cons2} applies. Explicitly, in this case

$\bullet$ $\cC$ is the $\infty$-category of $\infty$-stacks (see \re{infst});

$\bullet$ $\Ob_{\infty}(\cC)$ is the class of placid $\infty$-stacks (see \re{plinfst}(c));

$\bullet$ $\Mor_{\infty}(\cC)$ is the class of smooth morphisms (see \re{plinfst}(c)).
\end{Emp}

\begin{Emp} \label{E:catequid}
{\bf Equidimensional and universally open morphisms.}
Let $\cB$ be the category $\Affft_k$ of affine schemes of finite type over $k$, and let $\Mor^+_0(\cB)\subset\Mor(\cB)$ be one of the following classes:

$\bullet$ weakly equidimensional morphisms (see \re{locdim}(b));

$\bullet$ equidimensional morphisms (see \re{locdim}(c));

$\bullet$ universally open morphisms;

(a) All these classes contain smooth morphisms and are closed under compositions and smooth pullbacks (see \rl{2outof3}(c), \rco{pullback} and \rco{equidim2}(a)). Therefore construction \re{proQ} applies, so each class $\Mor^+_0(\cB)$ gives rise to the corresponding class $\Mor^+_0(\cA)$
of morphisms between $0$-placid affine schemes.

(b) Moreover, since $\Mor_0(\cB)$ satisfy the IPP-property (by \rco{indep}), each of the above classes $\Mor_0^+(\cB)$ also does. Therefore classes $\Mor^+_0(\cA)$ contain smooth morphisms and are closed under compositions and smooth pullbacks (see \re{IPP}(c)), thus construction \re{cons1ext} applies. In particular, each $\Mor^+_0(\cA)$ gives rise to the corresponding class $\Mor^+_{\infty}(\cC)$ of morphisms between placid $\infty$-stacks.

(c) Finally, each class $\Mor^+_0(\cB)$ satisfies all the assumption of \re{local}. Namely, assumption (i) is clear, the first property of assumption (ii) follows from the fact if $f:X\to Y$ is a smooth morphism of schemes of finite type, then the image $f(X)\subset Y$ is open, and the induced map $X\to f(X)$ \'etale locally has a section. Finally, second property of (ii) follows from \rl{2outof3}(a),(d) and \rco{equidim2}(b) applied to the surjective map $X\to f(X)$. By the proven above, \rp{proQ} applies. In particular, the classes $\Mor^+_{\infty}(\cC)$ coincide with the classes of weakly equidimensional (resp. equidimensional, resp. pro-universally open) morphisms respectively in the sense of \re{clM2} (compare \rl{clM}).
\end{Emp}

\begin{Emp} \label{E:variants}
{\bf Other variants.} The categorical framework we developed above allows to define other classes of objects and morphisms as well. For example:

(i) In construction \re{catplinfst}(b), one could take $\Ob_0(\cA)$ to be all of $\Ob(\cA)$, thus getting a more general class of $\infty$-stacks.

(ii) Alternatively, one could take $\Mor_0(\cA)$ be the class of all pro-\'etale morphisms instead of strongly pro-smooth, thus getting
a Deligne--Mumford version of placid $\infty$-stacks. Alternatively, one could take  $\Mor_0(\cA)$ to be the class of fp-smooth (or fp-\'etale)  morphisms.

(iii) One could develop a version of the above theory, where $\cA$ is replaced by perfect affine schemes, and classes $\Ob_0(\cA)$ and
$\Mor_0(\cA)$ by perfectizations (see \re{remsttopeq}) of the classes of $0$-placid affine schemes and strongly pro-smooth morphisms, respectively.  This version is needed if one wants to extend the results of this work to the mixed characteristic case.
\end{Emp}

Though the following lemma is not used in this work, we include it for completeness and for further applications.

\begin{Lem} \label{L:indep}
Let $\cQ$ be the class of open (resp. universally open, resp. uo-equidimensional) affine morphisms of algebraic spaces of finite type over $k$,
and let $X\simeq\lim_{\al}X_{\al}$ and $X\simeq\lim_{\beta}X'_{\beta}$ be two presentations of an algebraic space $X$ with all transition maps
in $\cQ$. Then for every $\beta$ and every sufficiently large $\al$ the projection $\pr_{\al}:X\to X_{\al}$ factors as a composition $X\overset{\pr_{\beta}}{\lra} X'_{\beta}\overset{f_{\beta,\al}}{\lra}X_{\al}$ with $f_{\beta,\al}\in\cQ$.
\end{Lem}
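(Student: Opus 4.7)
The argument parallels \rl{flat}(b) and \rco{indep}, with the flatness/smoothness analysis replaced by a study of the open locus on which membership in $\cQ$ holds. I treat the three cases uniformly; the universally open case drives the argument. Fix $\beta$. Since $X'_\beta$ is of finite type and $X\simeq\lim_\al X_\al$ is a cofiltered limit with affine transitions, the standard limit theorem (e.g.\ \cite[Prop.~B.2]{Ry2}) produces an index $\al_0$ and a morphism $g_{\al_0,\beta}\colon X_{\al_0}\to X'_\beta$ with $\pr_\beta=g_{\al_0,\beta}\circ\pr_{\al_0}$; for $\al\ge\al_0$ set $g_{\al,\beta}:=g_{\al_0,\beta}\circ t_{\al,\al_0}$, where $t_{\al,\al_0}$ is the transition map. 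Establishing the lemma then amounts to showing $g_{\al,\beta}\in\cQ$ for all sufficiently large $\al$, and my plan is to read the factorization in the lemma symmetrically (as in \rco{indep}), so the analysis of $g_{\al,\beta}$ gives precisely the required factorization.

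Let $U\subset X_{\al_0}$ be the largest open subscheme on which $g_{\al_0,\beta}|_U\in\cQ$. Such a $U$ exists in all three cases: the universally open locus is open by \cite[B.3(xii)]{Ry2}, and in the weakly/uo-equidimensional cases one cuts out, using the upper semicontinuity of the dimension function from \rl{open}, the locus where $\un{\dim}_{g_{\al_0,\beta}}$ is locally constant of the expected value, then intersects with the universally open locus. Granting the key claim that $\pr_{\al_0}(X)\subset U$ set-theoretically, a second application of the standard limit theorem produces $\al\ge\al_0$ with $t_{\al,\al_0}(X_\al)\subset U$, so that $g_{\al,\beta}=(g_{\al_0,\beta}|_U)\circ t_{\al,\al_0}$ is a composition of $\cQ$-morphisms, and hence belongs to $\cQ$ by \rco{leqop} (together with \rl{2outof3}(c) for the weakly equidimensional part).

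The main obstacle is the inclusion $\pr_{\al_0}(X)\subset U$. Applying the same limit argument to the other presentation, $\pr_{\al_0}$ factors through some $X'_{\beta_1}$ with $\beta_1\ge\beta$, via $h_{\beta_1,\al_0}\colon X'_{\beta_1}\to X_{\al_0}$, so that $\pr_{\al_0}=h_{\beta_1,\al_0}\circ\pr_{\beta_1}$. The two morphisms $g_{\al_0,\beta}\circ h_{\beta_1,\al_0}$ and the transition $s_{\beta_1,\beta}\colon X'_{\beta_1}\to X'_\beta$ become equal after composition with $\pr_{\beta_1}$; by a further application of the standard limit theorem, enlarging $\beta_1$ if necessary, they coincide on an open neighborhood $V\subset X'_{\beta_1}$ of $\pr_{\beta_1}(X)$. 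On $V$ we thus have the honest factorization $s_{\beta_1,\beta}|_V=g_{\al_0,\beta}\circ(h_{\beta_1,\al_0}|_V)$ with $s_{\beta_1,\beta}|_V\in\cQ$ by hypothesis.

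The delicate final step is to pass from this equality to $\cQ$-membership of $g_{\al_0,\beta}$ at every point of $h_{\beta_1,\al_0}(V)\supset\pr_{\al_0}(X)$. This is exactly the conclusion of \rl{2outof3}(a),(d),(e), but applying those results requires knowing that $h_{\beta_1,\al_0}|_V$ is open and surjective onto its (open) image in $X_{\al_0}$---which is the symmetric version of the lemma. The hard part of the plan, and the main obstacle, is to resolve this apparent circularity: I first prove the universally open case, where openness of the relevant locus is available directly from \cite[B.3(xii)]{Ry2} with no a priori knowledge of $h_{\beta_1,\al_0}$, and then bootstrap to the weakly and uo-equidimensional cases, tracking the dimension function through the factorization by iterated use of \rl{2outof3}(b),(d),(e). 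Once the universally open version is in hand, the symmetric statement with roles of the two presentations swapped is immediate, and feeding it back into the argument of the previous paragraph delivers the inclusion $\pr_{\al_0}(X)\subset U$, completing the proof.
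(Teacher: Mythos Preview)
Your approach parallels \rl{flat}, replacing the flat/smooth locus by a ``$\cQ$-locus'' $U\subset X_{\al_0}$. The existence of such a $U$ is fine, but the key inclusion $\pr_{\al_0}(X)\subset U$ cannot be established with only the two-step zigzag you set up. You correctly identify the circularity: to conclude that $g_{\al_0,\beta}$ lies in $\cQ$ over the image of $h_{\beta_1,\al_0}$, you need that image to be \emph{open} in $X_{\al_0}$---equivalently, $h_{\beta_1,\al_0}$ must itself be open, which is the symmetric instance of the statement in progress. Your proposed escape via \cite[B.3(xii)]{Ry2} does not apply: that approximation result concerns an fp morphism over a \emph{limit of targets} (as in the proof of \rl{dimweak}(a)), whereas your $g_{\al,\beta}:X_\al\to X'_\beta$ vary in the \emph{source} over a fixed target. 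No limit theorem of that shape transfers universal openness from $\pr_\beta$ down to $g_{\al,\beta}$, and indeed $\pr_\beta:X\to X'_\beta$ is not finitely presented, so there is no direct hook for approximation.

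The paper breaks the circularity by extending the zigzag one further step. In your notation: after $g=g_{\al_0,\beta}:X_{\al_0}\to X'_\beta$ and $h=h_{\beta_1,\al_0}:X'_{\beta_1}\to X_{\al_0}$ with $g\circ h$ a transition map (hence in $\cQ$), choose $k:X_{\al_1}\to X'_{\beta_1}$ so that, after enlarging $\al_1$, $h\circ k$ is \emph{also} a transition map (hence in $\cQ$). Then $g_{\al_1,\beta}=g\circ h\circ k$, and the needed openness now comes for free: since $h\circ k\in\cQ$ is open, its image $U\subset X_{\al_0}$ is open; setting $V:=h^{-1}(U)$, the restriction $(g\circ h)|_V\in\cQ$ and $h|_V:V\to U$ is surjective, so $g|_U\in\cQ$ by \rl{2outof3}(a). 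Hence $g_{\al_1,\beta}=(g|_U)\circ(h\circ k)\in\cQ$. The point is that with three steps you obtain \emph{two} overlapping compositions ($g\circ h$ and $h\circ k$) known to lie in $\cQ$; the openness of the second furnishes exactly the open neighborhood your two-step argument lacked. The uo-equidimensional case then bootstraps from the open case via \rl{2outof3}(e), along the lines you sketch.
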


\begin{proof}
Since $X_{\al}$ is of finite type over $k$, there exists $\beta$ such that  $\pr_{\al}:X\simeq\lim_{\beta}X'_{\beta}\to X_{\al}$ factors through $f:X'_{\beta}\to X_{\al}$. We claim that there exists $\dt>\beta$ such that the composition
$X'_{\dt}\overset{\pr'_{\dt,\beta}}{\lra}X'_{\beta}\overset{f}{\to} X_{\al}$ belongs to $\cQ$.

Note that the projection $\pr_{\beta}:X\to X'_{\beta}$ factors through $g:X_{\g}\to X'_{\beta}$. Moreover, increasing $\g$ we can
further assume that $\g>\al$ and the composition $X_{\g}\overset{g}{\to}X'_{\beta}\overset{f}{\to}X_{\al}$ is the transition map.
In particular, $f\circ g\in\cQ$.

Similarly, there exists $\dt>\beta$ such that $\pr_{\g}:X\to X_{\g}$ factors through $h:X'_{\dt}\to X_{\g}$ and such that
$g\circ h:X'_{\dt}\to X'_{\beta}$ is in $\cQ$.

First we claim that if $\cQ$ is the class of (universally) open morphisms, then the composition $f\circ g\circ h:X'_{\dt}\to X_{\al}$ belongs to $\cQ$. Let $U\subset X'_{\beta}$ be the image of $g\circ h$. Since $g\circ h$ is open, we conclude that $U$ is open. Since $f\circ g\circ h= (f|_U)\circ (g\circ h)$, it remains to show that $f|_U:U\to X_{\al}$ belongs to $\cQ$. Set $V:=g^{-1}(U)\subset X_{\g}$. It is an open subset, because $U$ is. Note that the map $g|_V:V\to U$ is surjective, because $U=\Ima(g\circ f)\subset\Ima g$, and $(f|_U)\circ (g|_V)=(f\circ g)|_V$ belongs to $\cQ$, because $f\circ g$ is. Therefore $f|_U$ belongs to $\cQ$ by \rl{2outof3}(a).

Now assume that $\cQ$ is the class of uo-equidimensional morphisms. By the proven above, we can increase $\beta,\g$ and $\dt$ if necessary, so that $f$ and $g$ are open. In this case, we claim that the composition $f\circ g\circ h$ is weakly equidimensional. As before, it suffices to show that $f|_U$ is such. By our assumptions, $g|_V$ is open surjective, $f|_U$ is open and $(f|_U)\circ (g|_V)=(f\circ g)|_V$ is weakly equidimensional. Therefore $f|_U$ is weakly equidimensional by  \rl{2outof3}(e), and the proof is complete.
\end{proof}

\subsection{Proof of \rt{colim}} \label{S:pfcolim}

Our argument is very similar to that of \cite[6.4.3]{Ga}. The following observations will be used several times.

\begin{Emp} \label{E:obscolim}
{\bf Morphisms of colimits}. (a) Note that for every functor $\al:I\to J$ of small $\infty$-categories induces a functor $\al^*:\cC^J\to\cC^I$.
Next, every morphism $\phi:X\to \al^*(Y)$ in $\cC^I$ with $X\in \cC^I$ and $Y\in\cC^J$ induces a morphism
\[
\phi_*:\colim_I X\overset{\phi}{\to}\colim_I \al^*(Y)\simeq\colim_J \al_!\al^*(Y)\overset{\on{counit}}{\lra}\colim_J Y
\]
in $\cC$, where $\al_!:\cC^I\to\cC^J$ is the left adjoint of $\al^*$.

Explicitly, $\phi$ is a collection of morphisms $\phi_i:X_i\to Y_{\al(i)}$ for $i\in I$, and $\phi_*$ is the composition
$\colim_{i\in I} X_i\to \colim_{i\in I} Y_{\al(i)}\to\colim_{j\in J} Y_j$.

(b) The construction of (a) is {\em compatible with compositions}. In other words, if $\beta:J\to K$ is another functor of small $\infty$-categories, and $\psi:Y\to\beta^*(Z)$ is a morphism in $\cC^J$ with $Z\in \cC^K$, then the composition
$\psi_*\circ\phi_*:\colim_I X\to \colim_J Y\to\colim_K Z$ is naturally isomorphic to the morphism $(\al^*(\psi)\circ\phi)_*$, corresponding to the
composition $\al^*(\psi)\circ\phi:X\to \al^*(Y)\to \al^*\beta^*(Z)$.

(c) The construction of (a) is {\em functorial in $\al$}. In other words, if $\eta:\al\to\al'$ is a morphism of functors $I\to J$, and
$\phi'$ is the composition $\eta^*\circ\phi: X\to\al^*(Y)\to\al'^*(Y)$, then the induced morphism
$\phi'_*:\colim_I X\to\colim_J Y$ is naturally homotopic to $\phi$. Explicitly, each $\phi'_i$ is the composition
$X_i\overset{\phi_i}{\to} Y_{\al(i)}\overset{\eta_i}{\to} Y_{\al'(i)}$, and the assertion follows from the fact that each composition
\[
Y_{\al(i)}\overset{\eta_i}{\lra}Y_{\al'(i)}\overset{\ins_{\al'(i)}}{\lra}\colim_j Y_j
\]
is isomorphic to $\ins_{\al(i)}$.

(d) By (b) and (c), the morphism $(\id_{\al^*(Y)})_*:\colim_I \al^*(Y)\to \colim_J Y$ is automatically an isomorphism,
if $\al$ has an adjoint.
\end{Emp}

The following lemma is the central point of the argument.

\begin{Lem} \label{L:trcolim}
Let $\cC$ be an $\infty$-topos, $x\in\Ob(\cC)$, let $J\subset\cC/x$ be a small 1-full subcategory, and let $Y:J\subset\cC/x\to\cC$ be the forgetful functor.

Let $\al:I\to J$ be a functor of small $\infty$-categories such that

\noindent(i) the functor $\mu:I\times J\overset{\al\times \Id}{\lra}J\times J\subset \cC/x\times \cC/x\overset{\times}{\lra}\cC/x$
factors through $J\subset \cC/x$;

\noindent(ii) the natural morphism $\colim_I \al^*(Y)\to x$ is an isomorphism.

Then the natural morphism $\colim_J Y\to x$ is an isomorphism.
\end{Lem}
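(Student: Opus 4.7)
The plan is to construct an explicit inverse $s: x \to L$ to the canonical morphism $f: L \to x$, where $L := \colim_J Y$. The functor $\al: I \to J$ induces, via \re{obscolim}(a), a canonical morphism $\colim_I \al^*(Y) \to \colim_J Y = L$; composing with the inverse of the isomorphism $\colim_I \al^*(Y) \isom x$ supplied by hypothesis (ii) yields the candidate $s: x \to L$. It then suffices to verify $f \circ s \simeq \id_x$ and $s \circ f \simeq \id_L$, since in an $\infty$-category these identities exhibit $f$ as an isomorphism.

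The first identity is essentially formal: by the compatibility \re{obscolim}(b), the composition $\colim_I \al^*(Y) \to L \overset{f}{\to} x$ coincides with the canonical morphism $\colim_I \al^*(Y) \to x$ assembled from the structure maps $\al(i) \to x$. By hypothesis (ii) the latter is the isomorphism used to define $s$, so $f \circ s \simeq \id_x$.

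For the second identity, by the universal property of $L = \colim_J Y$ it suffices to show $(s \circ f) \circ \iota_j \simeq \iota_j$ for each $j \in J$, where $\iota_j: Y(j) \to L$ denotes the canonical leg of the cocone. Since $f \circ \iota_j$ is the structure morphism $\al_j: Y(j) \to x$, this reduces to proving $s \circ \al_j \simeq \iota_j$. Here universality of colimits in the $\infty$-topos $\cC$ enters: base changing the iso $x \simeq \colim_i \al(i)$ of (ii) along $\al_j$ yields
\[
Y(j) \simeq Y(j) \times_x \colim_i \al(i) \simeq \colim_i (Y(j) \times_x \al(i)) = \colim_i Y(\wt{\mu}(i,j)),
\]
where hypothesis (i) provides the identification $Y(j) \times_x \al(i) = Y(\wt{\mu}(i,j))$ with $\wt{\mu}(i,j) \in J$. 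Under this iso, both $\iota_j$ and $s \circ \al_j$ correspond to the same cocone $\{\iota_{\wt{\mu}(i,j)}: Y(\wt{\mu}(i,j)) \to L\}_{i \in I}$: for $\iota_j$, using $1$-fullness of $J \subset \cC/x$ to view the projection $\wt{\mu}(i,j) \to j$ as a morphism in $J$ and invoking functoriality of the colimit cocone; for $s \circ \al_j$, using that $s$ restricted to $\al(i)$ equals $\iota_{\al(i)}$ by construction, combined with the other projection $\wt{\mu}(i,j) \to \al(i)$, which is likewise in $J$.

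The main obstacle is this final cocone comparison: one must carefully track which natural transformation and universality isomorphism implement each component of the two maps under consideration, as this matching of cocones carries the non-formal content of the argument. Once established, the pair of identities $f \circ s \simeq \id_x$ and $s \circ f \simeq \id_L$ exhibits $f$ as an isomorphism in $\cC$, as required.
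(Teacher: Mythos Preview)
Your proposal has a genuine gap at the crucial step. You write: ``by the universal property of $L = \colim_J Y$ it suffices to show $(s \circ f) \circ \iota_j \simeq \iota_j$ for each $j \in J$.'' In an $\infty$-category this reduction is not valid: two maps out of a colimit agree precisely when they induce the same cocone, i.e., the same morphism in $\Fun(J,\cC)$ from $Y$ to the constant diagram at $L$. Verifying objectwise equivalences $\iota_j \simeq s\circ\al_j$ does not furnish such a natural transformation; one must supply a $J$-coherent family of homotopies. Your subsequent per-$j$ argument (expressing $Y(j)$ as $\colim_i Y(\wt\mu(i,j))$ and matching the two $I$-cocones) has the same defect one level down, and even if granted, the resulting homotopies for varying $j$ are produced independently and there is no mechanism in your argument to glue them.

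The paper's proof is designed precisely to avoid this. Rather than attempt $s\circ f\simeq\id_L$ directly, it works globally with the $I\times J$-diagram $X:=\mu^*(Y)$ and the three functors $\al\circ\pr_I,\pr_J,\mu:I\times J\to J$. The projections $p_1:\mu\to\al\circ\pr_I$ and $p_2:\mu\to\pr_J$ are honest natural transformations, so by \re{obscolim}(c) the induced maps $(p_1)_*,(p_2)_*:\colim_{I\times J}X\to\colim_J Y$ are homotopic \emph{as maps}, with the coherence packaged once and for all in that observation. Now $(p_1)_*$ factors through your section $s:x\simeq\colim_I\al^*(Y)\to L$, while $(p_2)_*$ is identified (via universality of colimits) with the pullback $(\colim_I\al^*(Y))\times_x L\to L$, hence is an isomorphism by (ii). This exhibits $s$ as a split epimorphism; combined with $f\circ s\simeq\id_x$ it is an isomorphism. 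Your argument is essentially the $j$-slice of this one, but the packaging via $I\times J$ and \re{obscolim}(b),(c) is what carries the coherence you are missing.
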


\begin{Emp}
{\bf Remark.} Actually, we only use the fact that $\cC$ has all (small) colimits and fiber products, and that colimits commute with pullbacks.
\end{Emp}

\begin{proof}
By assumption (i), we are given three functors $\al\circ\pr_I,\pr_J,\mu:I\times J\to J$, which send $(i,j)$ to $\al(i)$, $j$ and $\al(i)\times_x j$, respectively, and two morphisms of functors $p_1:\mu\to\al\circ\pr_I$ and $p_2:\mu\to\pr_J$, corresponding to the projections
$\al(i)\times_x j\to\al(i)$ and $\al(i)\times_x j\to j$.

It now follows from observation \re{obscolim}(c) applied to
$X:=\mu^*(Y)\in\cC^{I\times J}$ that three morphisms
$\colim_{I\times J}X\to \colim_J Y$, corresponding to morphisms
\[
\Id_X:X\to \mu^*(Y), p_1:X\to (\al\circ\pr_I)^*(Y)=\pr_I^*(\al^*(Y))\text{ and }p_2:X\to\pr_J^*(Y)
\]
are homotopic.

By definition and assumption (ii), the morphism $(p_1)_*:\colim_{I\times J}X\to \colim_J Y$ is a morphism in $\cC/x$, which factors through $\colim_I \al^*(Y)\simeq x$. Therefore in order to show that the morphism $\colim_J Y\to x$ is an isomorphism, it suffices to show that $(p_1)_*$ is an isomorphism.
Thus, it suffices to show that the morphism $(p_2)_*$, which is homotopic to $(p_1)_*$, is an isomorphism. Unwinding definitions, one checks that $(p_2)_*$ is the morphism
\[
\colim_{I\times J}(\pr_I^*(\al^*(Y))\times_x \pr_J^*(Y))\simeq (\colim_{I}\al^*(Y))\times_x(\colim_{J}Y)\to \colim_{J}Y,
\]
induced by the projection  $\colim_{I}\al^*(Y)\to x$. Therefore it is an isomorphism by assumption (ii).
\end{proof}

\begin{Cor} \label{C:colim}
For every $n>0$ and $x\in\Ob_n(\cC)$, let $J_{n-1}\subset\cC/x$ be the $\infty$-category, whose objects are  morphisms $z\to x$ in $\Mor_{\infty}(\cC)$ with $z\in\Ob_{n-1}(\cC)$ and morphisms are morphisms $z\to z'$ in $\Mor_{n-1}(\cC)$ over $x$. Then the natural morphism $(\colim_{(z\to x\in J_{n-1})}z)\to x$ is an isomorphism.
\end{Cor}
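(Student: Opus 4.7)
I would deduce \rco{colim} directly from \rl{trcolim} by taking, as the index category $I$, the (opposite of the) semi-simplicial category of a \v{C}ech nerve covering $x$. Since $x\in\Ob_n(\cC)$ with $n>0$, the definition \re{cons}(a) gives a covering $g:z_0\to x$ with $g\in\Mor_{n-1}(\cC)$ and $z_0\in\Ob_0(\cC)\subseteq\Ob_{n-1}(\cC)$ (the inclusion by \rl{simpson}(d)). Form its \v{C}ech nerve $\check{C}(g)=\{z_0^{[m]}\}_{[m]\in\Dt_s^{\op}}$. By \re{chech}(b), each $z_0^{[m]}\in\Ob_{n-1}(\cC)$ and each face map as well as each structure map $z_0^{[m]}\to x$ lies in $\Mor_{n-1}(\cC)\subset\Mor_\infty(\cC)$; moreover, since $g$ is a covering, \re{chech}(c) yields the isomorphism $\colim_{[m]}z_0^{[m]}\isom x$.

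The data above define a functor $\al:I:=\Dt_s^{\op}\to J:=J_{n-1}$ sending $[m]$ to the object $(z_0^{[m]}\to x)$ (with the face maps as morphisms in $J_{n-1}$, which is legal since they lie in $\Mor_{n-1}(\cC)$). Let $Y:J_{n-1}\to\cC$ be the forgetful functor. Then $\al^*(Y)=\{z_0^{[m]}\}$, so hypothesis (ii) of \rl{trcolim} is precisely the \v{C}ech-nerve identification recalled above.

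Hypothesis (i) of \rl{trcolim} requires that for every $(i,j)\in I\times J_{n-1}$, the fiber product $\al(i)\times_x j$, taken in $\cC/x$, is again an object of $J_{n-1}$. Write $j=(w\to x)$ with $w\in\Ob_{n-1}(\cC)$ and $w\to x$ in $\Mor_\infty(\cC)$, and $\al(i)=(z_0^{[m]}\to x)$ with $z_0^{[m]}\to x$ in $\Mor_{n-1}(\cC)$. Since $\Mor_{n-1}(\cC)$ is stable under pullback (\re{rempb}), the projection $z_0^{[m]}\times_x w\to w$ lies in $\Mor_{n-1}(\cC)$; combined with $w\in\Ob_{n-1}(\cC)$, \rl{simpson}(a) gives $z_0^{[m]}\times_x w\in\Ob_{n-1}(\cC)$. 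The structure map $z_0^{[m]}\times_x w\to x$ factors as $z_0^{[m]}\times_x w\to w\to x$, a composition of morphisms in $\Mor_\infty(\cC)$, and so lies in $\Mor_\infty(\cC)$ by \rl{simpson}(b). Hence $\al(i)\times_x j\in J_{n-1}$, verifying (i). Applying \rl{trcolim} then gives $\colim_{J_{n-1}}Y\isom x$, which is the assertion.

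\textbf{Main obstacle.} Once \rl{trcolim} is in hand, the argument is essentially formal; the main substance is bookkeeping with the filtrations $\Ob_\bullet(\cC)$ and $\Mor_\bullet(\cC)$ to check that the \v{C}ech nerve and the products $\al(i)\times_x j$ really land in $J_{n-1}$. The only mild subtlety is ensuring that $J_{n-1}$ is (essentially) small enough for the colimit to make sense in $\cC$, which one handles by passing to a small cofinal subcategory (or, as is implicit in \rt{colim}, by working in a fixed universe in which all the relevant presentations are admissible).
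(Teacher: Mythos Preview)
Your proof is correct and follows essentially the same approach as the paper: choose a covering $y\to x$ in $\Mor_{n-1}(\cC)$ with $y\in\Ob_0(\cC)\subset\Ob_{n-1}(\cC)$, take its \v{C}ech nerve as the functor $\al:\Dt_s^{\op}\to J_{n-1}$, and verify the hypotheses of \rl{trcolim}. One small omission: hypothesis (i) asks that the functor $\mu:I\times J_{n-1}\to\cC/x$ factor through $J_{n-1}$, so you must also check that morphisms $(i,j)\to(i',j')$ in $I\times J_{n-1}$ are sent to morphisms in $\Mor_{n-1}(\cC)$; this holds because $z_0^{[m]}\times_x w\to z_0^{[m']}\times_x w'$ is a composition of pullbacks of the face map $z_0^{[m]}\to z_0^{[m']}$ and of $w\to w'$, both in $\Mor_{n-1}(\cC)$, and $\Mor_{n-1}(\cC)$ is closed under pullbacks and compositions (\re{rempb}, \rl{simpson}(b)). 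The paper includes this verification explicitly.
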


\begin{proof}
Choose a covering $f:y\to x$ in $\Mor_{n-1}(\cC)$ with $y\in\Ob_{n-1}(\cC)$, and let $y^{[m]}$ be its \v{C}ech nerve (see \re{chech}).
We let $J:=J_{n-1}$, $I:=\Dt_s$, and $\al:I\to J$ be the functor $\al([m]):=(y^{[m]}\to x)$.
We claim that $\al$ satisfies the assumptions of \rl{trcolim}.

To check (i), notice that for every $z\to x$ in $J$ and $[m]\in\Dt_s$, the projection $y^{[m]}\times_x z\to z$ belongs to $\Mor_{n-1}(\cC)$,
because $y^{[m]}\to z$ is. Hence $y^{[m]}\times_x z\in\Ob_{n-1}(\cC)$, because $z\in\Ob_{n-1}(\cC)$. Next, for every morphism
$[m]\to[m']$ in $\Dt_s$ and $z\to z'$ in $J$, the fiber product   $y^{[m]}\times_x z\to y^{[m']}\times_x z'$ belongs to $\Mor_{n-1}(\cC)$
because $y^{[m]}\to y^{[m']}$ and $z\to z'$ are.  Finally, assumption (ii) follows from the fact that $y\to x$ is a covering (see \re{chech}(c)).
\end{proof}

Now we are ready to show \rt{colim}.

\begin{Emp}
\begin{proof}[Proof of \rt{colim}] We carry out the proof in three steps.

\vskip 8truept

{\bf Step 1.} Assume that $x\in\Ob_0(\cC)$, and fix a decomposition $x\simeq \sqcup_{i\in I}x_{i}$ with $x_{i}\in \Ob_0(\cA)$.
We set $J:=J_x$, view $I$ as a category with no non-identity morphisms, and let $\al:I\to J_x$ be the functor $\al(i):=(x_i\hra x)$. We claim that $\al$ satisfies all the assumptions of \rl{trcolim}, which implies the assertion in this case.

To check (i), notice that it follows from \re{setup2}(e) that for every $z\to x$ in $J$ and $i\in I$,
we have $x_i\times_x z\in\Ob_0(\cA)$ and the projection $x_i\times_x z\to z$ belongs to $\Mor_{0}(\cA)$, thus
$x_i\times_x z\to x$ is in $\Mor_{\infty}(\cC)$. Similarly, the transition map $x_i\times_x z\to x_i\times_x z'$ belongs to $\Mor_0(\cA)$,
if $z\to z'$ does. Finally, assumption (ii) follows from the fact that the natural map $\sqcup_{i\in I}x_{i}\to x$ is an  isomorphism by assumption.

\vskip 8truept

{\bf Step 2.} Assume now that $x\in\Ob_n(\cC)$ with $n>0$ and that the assertion is proven for every $z\in\Ob_{n-1}(\cC)$.
Let $J_{n-1}\subset\cC/x$ be the $\infty$-category defined in \rco{colim}, and let $J'_{n-1}$ be the $\infty$-category whose objects are composable morphisms $y\to z\to x$ with $(z\to x)\in J_{n-1}$, $(y\to z)\in\Mor_{\infty}(\cC)$ and $y\in\Ob_0(\cA)$ and morphisms are commutative diagrams such that $y\to y'$ is in $\Mor_0(\cA)$ and $z\to z'$ is in $\Mor_{n-1}(\cC)$. We denote by $\al:J'_{n-1}\to J_{n-1}$ the forgetful functor $(y\to z\to x)\mapsto (z\to x)$, and by
$\beta:J'_{n-1}\to J_x$ the composition functor $(y\to z\to x)\mapsto (z\to x)$.
Functors $\al$ and $\beta$ induce maps
\begin{equation} \label{Eq:colimits}
(\colim_{(y\to x\in J_x)}y)\overset{\beta_*}{\lla}(\colim_{(y\to z\to x\in J'_{n-1})}y)\overset{\al_*}{\lra}(\colim_{(z\to x\in J_{n-1})}z)\overset{\g}{\to} x,
\end{equation}
and it suffices to show that all these maps are isomorphisms.
\vskip 8truept

{\bf Step 3.} First of all, the map $\g$ is an isomorphism by \rco{colim}. Next, functor $\beta$ has a left adjoint
$(y\to x)\mapsto (y\overset{\id_y}{\to}y\to x)$. Therefore the map $\beta_*$ is an isomorphism by \re{obscolim}(d).

To show that $\al_*$ is an isomorphism, consider forgetful functors $Y:J'_{n-1}\to\cC$ and $Z:J_{n-1}\to\cC$ given by $Y(y\to z\to x):=y$ and $Z(z\to x):=z$. Then we have a natural morphism
$Y\to\al^*(Z)$ of functors $J'_{n-1}\to \cC$, which by adjunction induces a morphism $\al_!(Y)\to Z$. Then the map
\[
\al_*:\colim_{J'_{n-1}}Y\simeq \colim_{J_{n-1}}\al_!(Y)\to \colim_{J_{n-1}}Z
\] is induced by the morphism $\al_!(Y)\to Z$,
so it remains to show that the map $\al_!(Y)\to Z$ is an isomorphism, that is, the map $\al_!(Y)(z\to x)\to z$ is an isomorphism for all
$(z\to x)\in J_{n-1}$.

Note that the functor $\al:J'_{n-1}\to J_{n-1}$ is coCartesian. Indeed, for every object $y\to z\to x$ of $J'_{n-1}$, every morphism
$(z\to x)\to (z'\to x)$ in $J_x$ has a coCartesian lift
\[
(y\to z\to x)\to (y\to z'\to x).
\]
Therefore the value of $\al_!(Y)$ at $(z\to x)\in J_{n-1}$ is naturally isomorphic to the colimit $\colim_{\al^{-1}(z\to x)}y$ over the fiber $\al^{-1}(z\to x)\subset J'_{n-1}$ (see \cite[Proposition 4.1.2.15]{Lu}). But $\al^{-1}(z\to x)$ is naturally isomorphic to the category $J_z$. Thus the morphism $(\colim_{\al^{-1}(z\to x)}y)\to z$ is an isomorphism, because $(\colim_{(y\to z\in J_z)}y)\to z$ is an isomorphism by the induction hypothesis.
\end{proof}
\end{Emp}

\section{Completion of proofs.}

\subsection{Quotients of ind-schemes} \label{S:quotient}



\begin{Emp} \label{E:discaction}
Let $\Dt$ be a group acting on an ind-scheme $X$ over an ind-scheme $Y$.

(a) We say that $\Dt$ acts {\em discretely}, if for every fp-closed qcqs subscheme
$X'\subset X$, the set of $\dt\in \Dt$ such that $\dt(X')\cap X'\neq \emptyset$ is finite.

(b) We say that $\Delta$ acts {\em freely}, if the action map $a:\Dt\times X\to X\times X:(\dt,x)\mapsto(\dt(x),x)$ is a
monomorphism.
\end{Emp}

\begin{Prop} \label{P:quot}
Let $f:X\to Y$ be an ind-fp-proper morphism between ind-schemes, and let $\Dt$ be a group acting on $X$ over $Y$.

(a) Assume that $\Dt$ acts freely and discretely. Then the quotient $\ov{X}:=[X/\Delta]$ is an ind-algebraic space, ind-fp-proper over $Y$.

(b) Assume in addition that for every fp-closed qcqs subscheme $Y'\subset Y$ there exists an fp-closed qcqs subscheme $X'\subset X$ such that
$f^{-1}(Y'(K))\subset\bigcup_{\dt\in\Dt}\dt(X'(K))$ for every algebraically closed field $K$. Then

\quad(i) $f$ is topologically locally fp-schematic, and the induced morphism $\ov{f}:\ov{X}\to Y$ is topologically fp-proper representable.

\quad(ii) for every morphism $Y'\to Y$ from a qcqs scheme $Y'$, there exists a subgroup $\Dt_0\subset\Dt$ of finite index such that the induced map  $[X\times_Y Y'/\Delta_0]\to Y'$ is topologically fp-proper schematic.
\end{Prop}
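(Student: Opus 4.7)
The overall strategy is to prove (a) by constructing $\bar X = [X/\Delta]$ étale-locally as a quotient of an fp-closed qcqs subscheme by a finite étale equivalence relation, then leverage the hypothesis of (b) to reduce to a single such quotient and, in (b)(ii), pass to a finite-index subgroup $\Delta_0$ that makes the quotient actually schematic.

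\textbf{Part (a).} Write $X \simeq \colim_\alpha X_\alpha$ with each $X_\alpha$ fp-proper over $Y$ and transition maps fp-closed embeddings. For each $\alpha$, discreteness gives a finite set $\Delta_\alpha := \{\delta \in \Delta : \delta(X_\alpha)\cap X_\alpha \neq \emptyset\}$, and freeness implies that the equivalence relation $R_\alpha \subset X_\alpha \times_Y X_\alpha$ induced by $\Delta$ decomposes as $\bigsqcup_{\delta \in \Delta_\alpha}(\Gamma_\delta \cap (X_\alpha \times_Y X_\alpha))$, a finite disjoint union of fp-closed subschemes whose two projections to $X_\alpha$ are isomorphisms onto fp-closed subschemes. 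Thus $R_\alpha \rightrightarrows X_\alpha$ is an fp-étale equivalence relation, and the quotient $\bar X_\alpha := X_\alpha/R_\alpha$ is an algebraic space fp-proper over $Y$. After enlarging each $X_\alpha$ to $\bigcup_{\delta \in \Delta_\alpha}\delta(X_\alpha)$ and reindexing so that transitions $\bar X_\alpha \to \bar X_{\alpha'}$ are fp-closed embeddings, the natural maps assemble to $\bar X \simeq \colim_\alpha \bar X_\alpha$, giving an ind-algebraic space ind-fp-proper over $Y$.

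\textbf{Part (b)(i).} Given an affine scheme $Y' \to Y$, it factors through some fp-closed qcqs $Y_\beta \subset Y$. Apply the hypothesis to $Y_\beta$ to obtain $X' \subset X$ fp-closed qcqs; after replacing $X'$ by $X' \cap f^{-1}(Y_\beta)$, we have $X' \subset f^{-1}(Y_\beta)$ and $(f^{-1}(Y_\beta))_{\red} = (\Delta \cdot X')_{\red}$. Applying the construction of (a) to the single piece $X'$ gives the fp-proper algebraic space $\bar X' := X'/R_{X'}$ over $Y_\beta$, and one checks $\bar X'_{\red} \simeq ([\Delta \cdot X'/\Delta])_{\red}$; base-changing to $Y'$ shows $\bar f$ is topologically fp-proper representable. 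For $f$ itself, the map $\pi : X \to \bar X$ is a $\Delta$-torsor for the discrete group $\Delta$, hence étale-locally trivial. By discreteness only finitely many $\delta(X')$ meet any given point, so $(\Delta \cdot X')_{\red}$ is étale-locally a finite disjoint union of copies of open subschemes of $X'$, hence a scheme locally fp over $Y_\beta$; base-changing to $Y'$ gives that $f$ is topologically locally fp-schematic.

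\textbf{Part (b)(ii).} Keep $X' \subset X$ as above and let $\Delta_{X'} := \{\delta \in \Delta : \delta(X') \cap X' \neq \emptyset\}$, finite by discreteness. The key step is to produce a subgroup $\Delta_0 \subset \Delta$ of finite index with $\Delta_0 \cap \Delta_{X'} \subset \{e\}$; in the intended application $\Delta = \Lambda_w = X_*(T)^w$ is a finitely generated free abelian group, so $\Delta_0$ exists by residual finiteness (for each $\delta \in \Delta_{X'} \setminus \{e\}$ choose a finite-index subgroup avoiding it, then intersect). Granted $\Delta_0$, the translates $\{\delta_0(X') : \delta_0 \in \Delta_0\}$ are pairwise disjoint, so the image of $X'$ in $[X/\Delta_0]$ is an open subscheme isomorphic to $X'$. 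The $[\Delta : \Delta_0]$-many translates $\{\delta \cdot X' : [\delta] \in \Delta/\Delta_0\}$ cover the image of $\Delta \cdot X'$ in $[X/\Delta_0]$, with pairwise intersections (empty or fp-open in both summands) controlled by the image of $\Delta_{X'}$ in $\Delta/\Delta_0$. Gluing these finitely many schemes along fp-open subschemes exhibits $[\Delta \cdot X'/\Delta_0]$ topologically as a scheme fp-proper over $Y_\beta$; base-changing to $Y'$ gives the claim.

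\textbf{Main obstacle.} The principal difficulty is the group-theoretic step in (b)(ii): constructing a finite-index subgroup $\Delta_0 \subset \Delta$ disjoint (modulo $e$) from the finite set $\Delta_{X'} \setminus \{e\}$. This requires some form of residual finiteness of $\Delta$, which holds in the intended application but not for arbitrary groups. A secondary technical point in (a) is verifying that the presentation of $\bar X$ can be chosen with fp-closed embeddings as transition maps, which requires the preliminary enlargement of each $X_\alpha$ by its finitely many $\Delta_\alpha$-translates.
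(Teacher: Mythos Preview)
Your approach to part (a) has a genuine gap. You claim that $R_\alpha \rightrightarrows X_\alpha$ is an fp-\'etale equivalence relation because each component $\Gamma_\delta \cap (X_\alpha \times_Y X_\alpha)$ projects isomorphically onto an fp-closed subscheme of $X_\alpha$. But a closed immersion is \'etale only when it is also open; in general $X_\alpha \cap \delta^{-1}(X_\alpha)$ is a proper closed subscheme, so the projections of $R_\alpha$ are not \'etale and the standard ``quotient by an \'etale equivalence relation is an algebraic space'' does not apply. Your subsequent enlargement by the finite set $\Delta_\alpha$ does not fix this, since $\bigcup_{\delta\in\Delta_\alpha}\delta(X_\alpha)$ need not be $\Delta$-stable ($\Delta_\alpha$ is not a subgroup).

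The paper's route is different and avoids this problem: it first forms the full $\Delta$-orbit $X'_\Delta := \colim_D X'_D$ (over finite $D\subset\Delta$) and proves the nontrivial fact that $X'_\Delta$ is a \emph{scheme}, locally fp over $Y$. The argument is to exhibit, for each point, an open neighbourhood $U = X'_\Delta \setminus X'_{\Delta\setminus\Sigma}$ lying inside the finite union $X'_\Sigma$. Once $X'_\Delta$ is a scheme, the free $\Delta$-action gives a genuine \'etale equivalence relation $\Delta\times X'_\Delta \rightrightarrows X'_\Delta$, and the quotient $\ov{X}'=[X'_\Delta/\Delta]$ is an algebraic space. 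This same scheme-hood of $X'_\Delta$ is what makes $f$ topologically locally fp-schematic in (b)(i); your sketch (``\'etale-locally a finite disjoint union of copies of open subschemes of $X'$'') gestures at this but does not supply the construction of $U$. The paper then checks separatedness of $\ov{X}'\to Y$ by showing the action map $\Delta\times X'_\Delta\to X'_\Delta\times_Y X'_\Delta$ is proper, and properness via the surjection from the fp-proper $X'$.

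Your identification of the residual-finiteness issue in (b)(ii) is apt: the paper's proof of the analogous step also passes from a finite set $\Sigma'$ to a normal finite-index subgroup avoiding it, implicitly using that $\Delta$ is residually finite (true for $\Delta=\Lambda_w$ in the application).
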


\begin{proof}
Every presentation $Y\simeq\colim_i Y_i$ of an ind-scheme $Y$ induces a presentation $X\simeq\colim_i X_i$ with $X_i:=X\times_Y Y_i$. Since all assertions for $f:X\to Y$ formally follow from the corresponding assertions for $f_i:X_i\to Y_i$, we can replace $f$ by $f_i$, thus assuming that $Y$ is a qcqs scheme.

Let $X'\subset X$ be an fp-closed qcqs subscheme. For every finite subset $D\subset\Delta$, we denote by $X'_D:=\cup_{\dt\in D}\dt(X')\subset X$
the smallest closed subscheme of $X$, containing all the $\dt(X')$'s. Then for every subset $\Dt'\subset \Dt$, we consider the colimit $X'_{\Dt'}:=\colim_{D\subset\Dt'}X'_{D}$, taken over all finite subsets of $\Dt'$. Since $X'\to Y$ is fp-proper, we conclude that $X'_D\to Y$ is fp-proper for every $D$, thus  $X'_{\Delta'}\to Y$ is ind-fp-proper.

We claim that the inclusion $X'_{\Dt'}\hra X$ is an fp-closed embedding. For this we have to show that for every  fp-closed qcqs subscheme $X''\subset X$, the intersection $X'_{\Dt'}\cap X''\subset X''$ is an fp-closed subscheme.
Since the action of $\Dt$ on $X$ is discrete, the set $D:=\{\dt\in\Dt\,|\,\dt(X')\cap X''\neq\emptyset\}$ is finite, thus
$X'_{\Dt'}\cap X''=X'_{D}\cap X''$ is an fp-closed subscheme of $X''$.

Note that $X'_{\Delta}:=\colim_{D}X'_{D}$  is the smallest $\Dt$-invariant closed ind-subscheme of $X$, containing $X'$.
We form the quotient $\overbar{X}':=[X'_{\Delta}/\Delta]$.

\begin{Cl}\label{C:projk}
(a) The ind-scheme $X'_{\Delta}$ is a scheme, locally fp over $Y$.

(b) The quotient $\overbar{X}'$ is an algebraic space, fp-proper over $Y$, and the projection $X'\rightarrow \overbar{X}'$ is surjective.

(c)	For every pair of fp-closed qcqs subschemes $X'\subset X''$ of $X$, the induced map $\overbar{X}'\rightarrow\overbar{X}''$ is an fp-closed embedding.

(d) There exists a subgroup $\Dt_0\subset \Dt$ of finite index such that the quotient $\wt{X}':=[X'_{\Delta}/\Delta_0]$ is a scheme, fp-proper over
$Y$, and the projection $X'_{\Delta}\to \wt{X}'$ is a Zariski local isomorphism.
\end{Cl}

We now complete the  proof of \rp{quot}, assuming the claim.

(a) Choose a presentation $X\simeq\colim_{\al} X_{\al}$ of the ind-scheme $X$. Since $X_{\al}\subset X_{\al,\Dt}\subset X$, we get an isomorphism
$X\simeq\colim_{\al} X_{\al,\Dt}$. Taking the quotient by $\Dt$, we get an isomorphism $\ov{X}\simeq\colim_{\al} \ov{X}_{\al}$. Since every $\ov{X}_{\al}$ is an algebraic space, fp-proper over $Y$ (by \rcl{projk}(b)), and each $\ov{X}_{\al}\to \ov{X}_{\beta}$ is  an fp-closed embedding (by \rcl{projk}(c)), the assertion follows.

(b) By our assumption, there exists $\al$ such that $X_{\Dt}(K)=X_{\Dt,\al}(K)$ and $\ov{X}(K)=\ov{X}_{\al}(K)$ for all algebraically closed fields $K$.
In particular, for all $X_{\beta}\supseteq X_{\al}$, the fp-closed embeddings $X_{\Dt,\al}\to X_{\Dt,\beta}$ and $\ov{X}_{\al}\to\ov{X}_{\beta}$ induce bijections  on $K$-points. Hence the induced maps  $(X_{\Dt,\al})_{\red}\to(X_{\Dt,\beta})_{\red}$ and $(\ov{X}_{\al})_{\red}\to(\ov{X}_{\beta})_{\red}$ are isomorphisms.
Therefore the maps $(X_{\Dt,\al})_{\red}\to X_{\red}$ and $(\ov{X}_{\al})_{\red}\to \ov{X}_{\red}$ are isomorphisms as well. Since  $X_{\Dt,\al}\to Y$ is schematic, locally fp
(by \rcl{projk}(a)), while $\ov{X}_{\al}\to Y$ is fp-proper (by \rcl{projk}(a)), assertion (i) follows.

Finally, assertion (ii) follows immediately from \rcl{projk}(d).
\end{proof}

It remains to show \rcl{projk}.

\begin{Emp}
\begin{proof}[Proof of \rcl{projk}]
(a) We have to show that every point $x\in X'_{\Delta}$ has an open neighborhood, which is a scheme finitely presented over $Y$. Since every point of $X'_{\Delta}$ is a $\Dt$-translate of a point of $X'$, it is sufficient to prove it for $x\in X'$. We claim that the whole $X'$ has such a neighborhood. Set $\Sigma:=\{\delta\in\Delta~\vert~\delta(X')\cap X'\neq\emptyset\}$. By assumption, it is a finite set.

Then $X'_{\Dt\sm\Si}\subset X$ is a closed subfunctor, hence
$U:=X'_{\Delta}\sm X'_{\Dt\sm\Si}\subset X$ is an open subfunctor. Since $X'\cap X'_{\Dt\sm\Si}=\emptyset$ by the definition of $\Si$, we have $X'\subset U$, and clearly $U\subset X'_{\Si}$.
Thus $U$ is an open subscheme of $X'_{\Si}$. Hence it is a scheme, locally fp over $Y$, as claimed.

(b) As $\Delta$ acts freely, it defines an \'etale equivalence relation on $X'_{\Delta}$. Thus $\overbar{X}'=[X'_{\Delta}/\Delta]$  is an algebraic space (see \cite[Tag 0264]{Sta}), locally fp over $Y$.

Moreover, since  $X'_{\Delta}$ is a filtered colimit $\colim_D X'_{D}$ with $X'_{D}$ proper (thus separated) over $Y$, we conclude that $X'_{\Delta}$ is separated over $Y$. Next we claim that the map $\overbar{X}'\rightarrow Y$ is separated.

We have to show that the map $a:\Delta\times X'_{\Delta}\rightarrow X'_{\Delta}\times_{Y}X'_{\Delta}$
is proper. It suffices to show the properness of the restriction of $a$ to the inverse image of $X'_{D}\times_{Y}X'_{D}$ for every finite subset $D\subset\Delta$.
But this inverse image is the disjoint union of maps $a_{D,\delta}:X'_{D}\cap\delta^{-1}(X'_{D})\rightarrow X'_{D}\times_{Y}X'_{D}$.
As the action is discrete, this union is finite. So one has to prove that each $a_{D,\delta}$ is a closed embedding. But each $a_{D,\dt}$ is the restriction of the graph of $\delta:X'_{\Delta}\to X'_{\Delta}$, which is a closed embedding, as $X'_{\Delta}\rightarrow Y$ is separated.

Finally, we claim that  $\overbar{X}'\rightarrow Y$ is fp-proper.
Indeed, since $X'\rightarrow\overbar{X}'$ is surjective, $X'$ is fp-proper over $Y$ and $\overbar{X}'$ separated over $Y$, we conclude that $\overbar{X}'\rightarrow Y$ is proper by \cite[Tag 08AJ]{Sta}. As it is both locally fp and proper, it is fp-proper.

(c) Since $X'\subset X''$ is a closed subfunctor, we conclude that  $X'_{\Dt}\subset X''_{\Dt}$ and hence also $\ov{X}'\subset\ov{X}''$ is a closed subfunctor.
It is finitely presented by  \cite[Tag 02FV]{Sta}, because both $\ov{X}'$ and $\ov{X}''$ are fp-proper over $Y$.

(d) Let $U\subset X'_{\Si}\subset X'_{\Dt}$ be the open neighborhood of $X'$, constructed in (a). As the action of $\Dt$ is discrete,
the set $\Si':=\{\dt\in\Dt\,|\,\dt(U)\cap U\neq\emptyset\}$ is finite. Therefore there exists a normal subgroup $\Dt_0\subset\Dt$
of finite index such that $\Dt_0\cap \Si'=\emptyset$. We claim that this $\Dt_0$ satisfies the required property.
We have to show that for every $x\in X'_{\Dt}$, there is an open neighborhood $U_x$ such that
$\dt(U_x)\cap U_x=\emptyset$ for every $\dt\in\Dt_0$. Indeed, when $x\in X'$, the open set $U_x:=U$ does the job. The general case follows from it, because $\Dt_0\subset\Dt$ is a normal subgroup.
\end{proof}
\end{Emp}

\begin{Cor}\label{C:quot}
Let $f:X\to Y$ be an ind-fp-proper morphism between ind-schemes, and let $\Dt$ be a group acting on $X$ over $Y$ such that for every algebraically closed field $K$, the induced map $f:X(K)\to Y(K)$ is a $\Dt$-torsor.

Then the assumptions of \rp{quot}(a),(b) are satisfied,
and the induced morphism $\ov{f}:[X/\Dt]\to Y$ is a topological equivalence.
\end{Cor}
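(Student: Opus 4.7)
The plan is to verify the hypotheses of \rp{quot}(a) and (b), invoke the proposition to conclude that $\ov X=[X/\Dt]$ is an ind-algebraic space with the stated properties, and finally upgrade the conclusion to a topological equivalence using the bijection on $K$-points induced by the torsor structure.

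Freeness of the $\Dt$-action on $X$ will be obtained pointwise: given two $R$-points $(\dt_i,x_i)\in(\Dt\times X)(R)$ whose images in $X\times_Y X$ agree, one has $x_1=x_2$ and $\dt_2^{-1}\dt_1$ fixes $x_1$; pulling back along any geometric point $\Spec K\to\Spec R$ and invoking freeness of the $K$-torsor forces $\dt_1=\dt_2$. For discreteness, I would fix an fp-closed qcqs $X'\subset X$; since $X'\to Y$ is fp-proper, $W:=X'\times_Y X'$ is qcqs. Each map $x\mapsto(x,\dt(x))$ from $X'\cap\dt^{-1}(X')$ into $W$ is a closed embedding (using separatedness of $X\to Y$ inherited from ind-fp-properness), these closed subschemes are pairwise disjoint on $K$-points by the torsor property, and together their $K$-points exhaust $W(K)$. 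Passing to reductions and using that reduced ind-schemes are determined by their geometric points, $W_\red$ should decompose as a disjoint union indexed by $Z:=\{\dt:\dt(X')\cap X'\neq\emptyset\}$; quasi-compactness of $W_\red$ then forces $|Z|<\infty$.

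For the hypothesis of \rp{quot}(b), given an fp-closed qcqs $Y'\subset Y$, I would consider the increasing family of closed subsets $Y_\al:=f(X_\al\times_Y Y')\subset Y'$ whose union on $K$-points equals $Y'(K)$ by torsor surjectivity. Passing to $Y'_\red$ and combining the fact that reduced qcqs schemes are determined by their geometric points with quasi-compactness, I expect to obtain some $\al$ with $Y_{\al,\red}=Y'_\red$ and take $X':=X_\al\times_Y Y'$. \rp{quot}(a),(b) then gives the ind-algebraic space structure on $\ov X$, ind-fp-properness of $\ov f$, topological fp-proper representability, and the existence of a finite-index $\Dt_0\subset\Dt$ trivializing the quotient to a scheme over qcqs pieces of $Y$.

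For the topological equivalence, the torsor property gives a bijection $\ov X(K)=X(K)/\Dt\to Y(K)$ for every algebraically closed $K$, so $\ov f_\red$ is an fp-proper morphism of reduced ind-algebraic spaces bijective on geometric points. I would finish by using \rp{quot}(b)(ii) to locally replace $\ov X\times_Y Y'$ by the scheme quotient $[X\times_Y Y'/\Dt_0]_\red$, observe that this is a finite cover of $Y'_\red$ with free $\Dt/\Dt_0$-action whose further quotient recovers $Y'_\red$, and conclude $\ov X_\red\cong Y_\red$. The main obstacle I anticipate is the discreteness argument, specifically ensuring that the disjoint decomposition of $W_\red$ is valid at the level of ind-schemes and not merely on $K$-points; a secondary concern is ruling out Frobenius-like behavior in the final isomorphism, which should be controlled by the discrete nature of $\Dt$ together with the \'etale-local trivialization provided by \rp{quot}(b)(ii).
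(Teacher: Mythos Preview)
Your overall structure matches the paper's, but the step you yourself flag as the main obstacle is a genuine gap, and it recurs in your verification of \rp{quot}(b). In both places you have a qcqs scheme $W$ (either $X'\times_Y X'$ or $Y'$) covered on $K$-points by a filtered family of \emph{closed} subschemes, and you invoke ``quasi-compactness of $W_{\red}$'' to extract a finite subcover. Zariski quasi-compactness says nothing here: the pieces are closed, not open, so there is no open cover to refine. The passage ``reduced ind-schemes are determined by their geometric points, so $W_{\red}$ decomposes as a disjoint union'' is precisely what fails without further input---a reduced qcqs scheme can be set-theoretically covered by infinitely many pairwise disjoint closed subsets without any of them being clopen. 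The paper's fix is \rl{constop}: pass to the \emph{constructible} topology on $W$, where fp-closed subsets become clopen and $W_{\on{cons}}$ is genuinely compact; then finitely many pieces suffice. This single lemma handles both discreteness (apply it to the action map $a:\colim_D Z_D\to X'\times_Y X'$) and the hypothesis of \rp{quot}(b) (apply it to the images $f(X_\al)\hra Y'$).

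For the topological equivalence your route through \rp{quot}(b)(ii) and finite group quotients is more circuitous than necessary and risks circularity: showing that the $\Dt/\Dt_0$-quotient of $[X\times_Y Y'/\Dt_0]_{\red}$ ``recovers $Y'_{\red}$'' is essentially the statement you want. The paper instead argues directly: after reduction $\ov f$ is fp-proper between reduced algebraic spaces; the diagonal $[X/\Dt]_{\red}\to[X/\Dt]_{\red}\times_{Y_{\red}}[X/\Dt]_{\red}$ is a closed embedding bijective on $K$-points, hence an isomorphism, so $\ov f_{\red}$ is a proper monomorphism; by Zariski's Main Theorem (proper $+$ quasi-finite $\Rightarrow$ finite, then finite monomorphism $\Rightarrow$ closed embedding) it is a surjective closed embedding into a reduced target, hence an isomorphism. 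This avoids any discussion of invariant rings or \'etale descent.
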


The proof of \rco{quot} is based on the following assertion.

\begin{Lem} \label{L:constop}
Let $Y$ be a qcqs scheme over $k$, let $X$ be an ind-scheme with a presentation $X\simeq \colim_{\al}X_{\al}$, and let $f:X\to Y$ be a morphism such that each restriction $f_{\al}:=f|_{X_{\al}}:X_{\al}\to Y$ is an fp-closed embedding, and $f(K):X(K)\to Y(K)$ is a surjection (hence a bijection) for every  algebraically closed field $K$.
Then the closed embedding $f_{\al}$ is surjective for some $\al$.
\end{Lem}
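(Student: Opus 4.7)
The plan is to pass to the underlying topological space $|Y|$ and exploit compactness in the constructible topology. Since $Y$ is qcqs, the topological space $|Y|$ is spectral, and since each $f_\al : X_\al \to Y$ is an fp-closed embedding, the closed subset $|X_\al| \subseteq |Y|$ has quasi-compact open complement $|U_\al|$ (locally on an affine open of $Y$, the defining ideal is finitely generated so its vanishing locus has a qc open complement). Consequently each $|X_\al|$ is a constructible subset of $|Y|$.

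First I would verify that the hypothesis on $K$-points yields the set-theoretic equality $|Y| = \bigcup_\al |X_\al|$. Given $y \in |Y|$, choose an algebraic closure $K$ of the residue field $\kappa(y)$; the induced point $\Spec K \to Y$ lifts by hypothesis to a $K$-point of $X$. Since $X = \colim_\al X_\al$ is a filtered colimit along fp-closed embeddings and $\Spec K$ is quasi-compact, this $K$-point factors through some $X_\al$, whence $y \in |X_\al|$.

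Now endow $|Y|$ with its constructible topology, under which $|Y|$ is compact Hausdorff and every constructible subset is clopen. In particular, $\{|X_\al|\}_\al$ becomes an open cover of the compact space $|Y|$, and therefore admits a finite subcover $|X_{\al_1}|, \ldots, |X_{\al_n}|$. Since the index category of the presentation $X \simeq \colim_\al X_\al$ is filtered, we may choose $\beta$ dominating $\al_1, \ldots, \al_n$; then $|X_\beta| \supseteq \bigcup_i |X_{\al_i}| = |Y|$, so the closed embedding $f_\beta$ is surjective, as required.

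The only real work is the standard compactness of spectral spaces in the constructible topology; on the scheme side the key small observation is that fp-closed embeddings into a qcqs scheme correspond precisely to constructible closed subsets, which reduces via a finite affine cover to the elementary fact that $V(I) \subseteq \Spec A$ with $I$ finitely generated has qc open complement. I do not anticipate a serious obstacle beyond this bookkeeping.
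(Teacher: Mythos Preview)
Your proof is correct and follows essentially the same approach as the paper: pass to the constructible topology on $|Y|$, observe that the images $f_{\al}(X_{\al})$ form an open cover (being constructible closed subsets), invoke compactness of the constructible topology on a qcqs scheme, and use filteredness to reduce the resulting finite subcover to a single index. The paper's proof is terser, but your added explanations (why fp-closed embeddings give constructible subsets, and why the $K$-point hypothesis yields set-theoretic coverage) are correct elaborations of the same argument.
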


\begin{proof}
Denote by $Y_{\on{cons}}$ the scheme $Y$ equipped with the constructible topology. The assumptions imply that $\{f_{\al}(X_{\al})\}_{\al}$ forms an open covering of $Y_{\on{cons}}$. Since the topological space $Y_{\on{cons}}$ is compact (see \cite[Tags 08YF, 094L]{Sta}), while the collection $\{X_{\al}\}_{\al}$ is filtered, there exists $\al$ such that the closed embedding $f_{\al}:X_{\al}\hra Y$ is surjective.
\end{proof}


\begin{Emp}
\begin{proof}[Proof of \rco{quot}]
As in the proof of \rp{quot}, we can assume that $Y$ is a reduced qcqs scheme. By assumption, the action map $a:\Dt\times X\to X\times_Y X$ induces a bijection on $K$-points for every algebraically closed field $K$. Hence for every $1\neq\dt\in\Dt$, the automorphism $a_{\dt}:X\to X$ of $X$ is different from the identity, so the action of $\Dt$ on $X$ is free.

To show the assumption of \rp{quot}(b), we choose a presentation $X\simeq\colim_{\al}X_{\al}$. Since each $f_{\al}:X_{\al}\to Y$ is fp-proper, there exists an fp-closed subscheme $Y_{\al}\subset Y$ such that
$f(X_{\al}(K))=Y_{\al}(K)$ for all $K$. Since $f(X(K))=\cup_{\al}Y_{\al}(K)$ equals $Y(K)$ by assumption, we conclude from \rl{constop} that there exists $\al$ such that the map
$f_{\al}:X_{\al}\to Y$ and hence the map $X_{\al}(K)\to Y(K)$ is surjective. Since $X(K)\to Y(K)$ is a $\Dt$-torsor, this implies that $X(K)=\cup_{\dt\in\Dt}\dt(X_{\al}(K))$, as claimed.

Finally, to show that the $\Dt$-action is discrete, we fix an fp-closed qcqs subscheme $X'\subset X$, and we want to show that the set $\Si:=\{\dt\in\Dt\,|\,\dt(X')\cap X'\neq\emptyset\}$ is finite.
Consider the preimage $Z:=a^{-1}(X'\times_Y X')\subset \Dt\times X'\subset\Dt\times X$. Then $Z$ is an ind-scheme equipped with a presentation
$Z\simeq\colim_D Z_D$, induced by the presentation $\Dt\times X'\simeq\colim_D(D\times X')$, where $D\subset\Dt$ runs over the set of all finite subsets.
Now the finiteness follows from \rl{constop}, applied to the morphism $a:\colim_D Z_D\to X'\times_Y X'$.

By \rp{quot}(b), the morphism $\ov{f}:[X/\Dt]\to Y$ is topologically fp-proper. Hence, replacing $X$ by $X_{\red}$ we can assume that $\ov{f}:[X/\Dt]\to Y$ is a proper morphism of algebraic spaces.  Next, since $f(K):X(K)\to Y(K)$ is a $\Dt$-torsor for every algebraically closed field $K$,
the diagonal map $\Dt_{X/Y}:[X/\Dt]\to [X/\Dt]\times_Y [X/\Dt]$ is bijective on $K$-points  for all $K$, thus
$\ov{f}:[X/\Dt]\to Y$ is a proper surjective monomorphism.

Then $\ov{f}$ is proper and quasi-finite, so it follows for example from the Zariski Main theorem (see \cite[Tag 082K]{Sta}) that $\ov{f}$ is finite. Moreover, since $f$ is a monomorphism, it is a closed embedding (by \cite[Tag 04XV]{Sta}), hence it is a topological equivalence because it is surjective on points.
\end{proof}
\end{Emp}

\subsection{Proof of \rt{orbit} and \rt{topproper}}

\begin{Emp} \label{E:pfrtorbit}
\begin{proof}[Proof of \rt{orbit}]
Since the projection $[\cL G/\cL(S)_{\red}]_{\red}\to [\cL G/\cL(S)]_{\red}$ is an isomorphism, it suffices to show that the morphism $[\cL G/\cL(S)_{\red}]\to \cL(G_F/S)$ is a topological equivalence. By \re{ostr}(b), we have a natural isomorphism $\La_S\simeq[(\cL S)_{\red}/\clp(S)]$. In particular, the group $\La_S$ acts naturally on $[\cL G/\clp(S)]$ such that $[\cL G/(\cL S)_{\red}]\simeq [[\cL G/\clp(S)]/\La_S]$. Therefore \rt{orbit} follows from a combination of \rco{quot} and \rcl{orbit} below.
\end{proof}
\end{Emp}

\begin{Cl} \label{C:orbit}
(a) The projection $\cL G/\clp(S)\to \cL(G_F/S)$ is ind-fp-proper.

(b) The induced morphism $(\cL G/\clp(S))(K)\to \cL(G_F/S)(K)$ is a $\La_S$-torsor for every algebraically closed field $K$.
\end{Cl}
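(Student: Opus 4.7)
The strategy is to establish (b) first by computing both sides on $K$-points and reducing to a vanishing statement in Galois cohomology, then to deduce (a) from the resulting local structure of the map.

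For (b), I would first trivialize $\clp(S)$-torsors over $\Spec K$: since $S$ is tame (as $|W|$ is invertible in $k$, see \re{ostr}(c)) and $K$ is algebraically closed, $\clp(S)$ is strongly pro-smooth by \re{proparc}(e), so every $\clp(S)$-torsor over $\Spec K$ is trivial. Hence
$$(\cL G/\clp(S))(K) = G(K((t)))/S_{\cO}(K[[t]]),\qquad \cL(G_F/S)(K) = (G_F/S)(K((t))).$$
The $\La_S$-action on the left arises via the embedding $\La_S \hookrightarrow (\cL S)_{\on{red}}/\clp(S)$ of \re{ostr}(b), realized as right multiplication by lifts in $S(K((t)))$ of representatives of $\La_S = S(K((t)))/S_{\cO}(K[[t]])$. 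I would factor the natural map as
$$G(K((t)))/S_{\cO}(K[[t]]) \twoheadrightarrow G(K((t)))/S(K((t))) \hookrightarrow (G_F/S)(K((t))),$$
where the first arrow is a $\La_S$-torsor by the identification just recalled, and the second is injective since $S(K((t)))$ is exactly the stabilizer of any point of $(G_F/S)(K((t)))$ under the $G(K((t)))$-action.

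The core of the argument is then the surjectivity of the composite $G(K((t))) \twoheadrightarrow (G_F/S)(K((t)))$, which is equivalent to the vanishing of the obstruction $H^1_{\et}(\Spec K((t)), S) = 0$. Since $S$ splits over $F' = K((t^{1/h}))$ with Galois group $\Gm$ of order prime to $\on{char}(k)$, the cohomology can be computed as $H^1(\Gm, X_*(S) \otimes_{\Z} F'^{\times})$. Using Hilbert~90 ($H^1(\Gm, F'^{\times}) = 0$) together with the decomposition of $F'^{\times}$ as a $\Gm$-module into the unit part and valuation part, plus the fact that $K$ is algebraically closed (so that $\Gm$-cohomology of the relevant unit groups vanishes by a direct computation as in the split case of \re{ostr}(b)), one obtains the desired vanishing. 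Freeness of the $\La_S$-action is then immediate from the freeness of $S(K((t)))$ acting on $G(K((t)))$ from the right.

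For (a), I would argue as follows. Since $\cL S$ is reduced for tame $S$ (loop spaces of tori are reduced ind-schemes, as already implicit in \re{ostr}(b)), the identification $\cL S/\clp(S) \simeq \La_S$ holds at the ind-scheme level, not merely on reductions. Given any affine $Y \to \cL(G_F/S)$, the fibre product $P := \cL G \times_{\cL(G_F/S)} Y$ is an $\cL S$-torsor over $Y$, and the pullback of $\cL G/\clp(S)$ is $P/\clp(S)$, which is a $\La_S$-torsor over $Y$. After étale trivialization of this $\La_S$-torsor, $P/\clp(S)$ becomes the discrete cover $\La_S \times Y$; this admits the exhausting filtered colimit $\colim_{F \subset \La_S \text{ finite}} F \times Y$, with each $F \times Y$ a finite disjoint union of copies of $Y$, hence fp-proper over $Y$. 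This exhibits $\cL G/\clp(S) \to \cL(G_F/S)$ as (locally) ind-fp-proper, as required.

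\textbf{Main obstacle.} The technical heart of the proof is establishing $H^1_{\et}(\Spec K((t)), S) = 0$ for a tame maximal torus $S \subset G_F$. While classical, this requires an unwinding of the Galois cohomology of tame tori via their splitting fields, leveraging the algebraic closedness of the residue field $K$ to kill all the relevant unit-group cohomology contributions. Once this vanishing is in hand, the rest of (b) is a formal diagram chase, and (a) follows from the resulting discrete nature of $\cL S/\clp(S) = \La_S$ as an ind-scheme.
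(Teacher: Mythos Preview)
Your argument for (b) is essentially the paper's: compute both sides on $K$-points, identify the fibre as $S(K((t)))/S(K[[t]])\simeq\La_S$, and justify surjectivity via $H^1(K((t)),S)=1$. The paper states the vanishing of $H^1$ without further comment, whereas you sketch a proof; this is fine.

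Your argument for (a), however, has a genuine gap. You assert that for any affine $Y\to\cL(G_F/S)$ the fibre product $P:=\cL G\times_{\cL(G_F/S)}Y$ is an $\cL S$-torsor over $Y$, and hence that $P/\clp(S)\to Y$ is a $\La_S$-torsor which can be \'etale-locally trivialized. But $P\to Y$ is an $\cL S$-torsor only if $\cL G\to\cL(G_F/S)$ is an epimorphism of $\infty$-stacks, i.e.\ \'etale-locally surjective. Part (b) establishes surjectivity only on $K$-points for algebraically closed $K$; this is strictly weaker, and nothing in the paper upgrades it to \'etale-local surjectivity over arbitrary bases. Indeed, the whole point of \rt{orbit} and its proof via \rco{quot} is that one only has the geometric-point statement (b), and must combine it with the independently proven ind-fp-properness (a) to conclude anything stronger. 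Your argument for (a) is therefore circular in spirit: it presupposes a torsor structure that is the content of the theorem you are trying to feed into. (There is also a smaller issue: even granting the torsor claim, your argument yields \emph{locally} ind-fp-proper rather than ind-fp-proper, since you pass to an \'etale cover of $Y$.)

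The paper's proof of (a) avoids this entirely. After conjugating so that $S\hra G_F$ is defined over $\cO$, it factors the projection as
\[
\cL G/\clp(S)\;\simeq\;\cL G\times^{\clp G}\clp(G_{\cO}/S_{\cO})\;\hra\;\cL G\times^{\clp G}\cL(G_F/S)\;\isom\;(\cL G/\clp G)\times\cL(G_F/S)\;\to\;\cL(G_F/S),
\]
where the first inclusion is induced by the fp-closed embedding $\clp(G_{\cO}/S_{\cO})\hra\cL(G_F/S)$, the isomorphism comes from $(g,x)\mapsto(g,gx)$, and the last projection is ind-fp-proper because the affine Grassmannian $\cL G/\clp G$ is. This is a direct geometric argument requiring no surjectivity or torsor input.
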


\begin{proof}
(a) Replacing $S$ by its $G_F$-conjugate, we can assume that the embedding $S\hra G_F$ is defined over $\cO$ (see \re{maxtori} and \re{ostr}(c)).
Then our projection decomposes as
\[
\cL G/\clp(S)\simeq \cL G\times^{\clp(G)}(\clp G/\clp(S))\simeq \cL G\times^{\clp G}\clp(G_{\cO}/S_{\cO})\overset{\iota}{\hra}\]
\[
\cL G\times^{\clp(G)}\cL(G_F/S)\overset{\phi}{\lra} (\cL G/\clp G)\times\cL(G_F/S)\overset{\pr}{\lra}\cL(G_F/S),
\]
where the second map is the isomorphism from \re{proparc}(f), $\iota$ is induced by the fp-closed embedding $\clp(G_{\cO}/S_{\cO})\hra \cL(G_F/S)$, while
$\phi$ is induced by the isomorphism
\[
\cL G\times\cL(G_F/S)\isom\cL G\times\cL(G_F/S):(g,x)\mapsto (g,gx).
\]
Now the assertion follows from the fact that the quotient $\cL G/\clp(G)$ is ind-fp-proper.

(b) Notice that
\[
[\cL G/\clp(S)](K)\simeq  [\cL G(K)/\clp(S)(K)]=[G(K((t))/S(K[[t]])],
\]
because $K$ is algebraically closed, and
\[
\cL(G_F/S)(K)= (G_F/S)(K((t)))= G(K((t)))/S(K((t))),
\]
because $H^1(K((t)),S)=1$. Since $S(K((t))/S(K[[t]])\simeq\La_S$, the assertion follows.
\end{proof}


Recall that the ind-scheme $\cL G$ has a presentation $\cL G\simeq\colim_i \wt{Y}_i$ (see \re{afffl}(c)).

\begin{Cor} \label{C:open}
For every fp-closed qcqs subscheme $Z\subset\cL(G_F/S)$, there exists an index $i$ such that for every algebraically closed field $K$ the projection $\pr_S:\cL G\to \cL(G_F/S)$ satisfies $\pr_S^{-1}(Z(K))\subset\wt{Y}_i(K)\cdot\La_S$, thus $\pr_S(\wt{Y}_i(K))\supseteq Z(K)$.
\end{Cor}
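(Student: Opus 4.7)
The plan is to combine the ind-fp-properness of $\pi_2:\cL G/\clp(S)\to\cL(G_F/S)$ from \rcl{orbit}(a) with two applications of \rl{constop} (and its routine extension to qcqs algebraic spaces) to trap a sufficient qcqs piece of $\pr_S^{-1}(Z)$ in some $\wt Y_i$, up to $\La_S$-translation.

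First, using \rcl{orbit}(a), I would write $\pi_2^{-1}(Z)\simeq\colim_j Z_j$ over $Z$ with each $Z_j$ a qcqs algebraic space fp-proper over $Z$ and transition maps fp-closed embeddings. Let $Z_j^{\on{img}}\subseteq Z$ denote the scheme-theoretic image; since $Z_j\to Z$ is fp-proper and $Z$ is qcqs, each $Z_j^{\on{img}}\hra Z$ is an fp-closed embedding, and the family is increasing. By \rcl{orbit}(b), $\pi_2$ is surjective on $K$-points for every algebraically closed $K$, so $\bigcup_j Z_j^{\on{img}}(K)=Z(K)$, and \rl{constop} produces an index $j_0$ with $Z_{j_0}^{\on{img}}\to Z$ surjective, hence $Z_{j_0}(K)\to Z(K)$ surjective for all $K$. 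Since $\pi_2$ is an $\La_S$-torsor on $K$-points by \rcl{orbit}(b) and $Z_{j_0}$ meets every fiber, we obtain $\pi_2^{-1}(Z(K))=Z_{j_0}(K)\cdot\La_S$.

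Next, set $\tilde Z_{j_0}:=\cL G\times_{\cL G/\clp(S)}Z_{j_0}$. The embedding $Z_{j_0}\hra\cL G/\clp(S)$ is fp-closed (being the composition of the fp-closed $Z_{j_0}\hra\pi_2^{-1}(Z)$ with the fp-closed $\pi_2^{-1}(Z)\hra\cL G/\clp(S)$), so $\tilde Z_{j_0}\hra\cL G$ is fp-closed as well. Moreover, $\tilde Z_{j_0}$ is a $\clp(S)$-torsor over the qcqs $Z_{j_0}$, and since $\clp(S)$ is a qcqs affine scheme by \re{proparc}(c), $\tilde Z_{j_0}$ is a qcqs algebraic space. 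I would then apply \rl{constop} once more to the filtered family of fp-closed embeddings $\{\tilde Z_{j_0}\cap\wt Y_i\hra\tilde Z_{j_0}\}_i$, whose $K$-points cover $\tilde Z_{j_0}(K)$ because $\cL G(K)=\bigcup_i\wt Y_i(K)$, to obtain an index $i$ with $\tilde Z_{j_0}\subseteq\wt Y_i$.

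To conclude, the right $\La_S$-action commutes with the $\clp(S)$-quotient $\pi_1$, so
\[
\pr_S^{-1}(Z(K))=\pi_1^{-1}(\pi_2^{-1}(Z(K)))=\pi_1^{-1}(Z_{j_0}(K)\cdot\La_S)=\tilde Z_{j_0}(K)\cdot\La_S\subseteq\wt Y_i(K)\cdot\La_S,
\]
which is the desired inclusion, with $\pr_S(\wt Y_i(K))\supseteq Z(K)$ following immediately. The main subtleties will be the careful bookkeeping of the right $\clp(S)$- and $\La_S$-actions, the verification that $\tilde Z_{j_0}$ is qcqs (which holds because $\clp(S)$, though not of finite type, is a qcqs affine scheme whose torsors over qcqs bases are qcqs), and the mild adaptation of \rl{constop} from qcqs schemes to qcqs algebraic spaces, which is routine since the proof only uses compactness of the constructible topology.
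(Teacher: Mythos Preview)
Your proof is correct and follows essentially the same route as the paper. The only difference is packaging: the paper quotes \rco{quot} (more precisely, the conclusion that the hypothesis of \rp{quot}(b) holds for $\beta:\cL G/\clp(S)\to\cL(G_F/S)$ with group $\La_S$) to obtain the fp-closed qcqs $Z'\subset\cL G/\clp(S)$ with $\beta^{-1}(Z(K))\subset\La_S\cdot Z'(K)$ in one stroke, whereas you unwind this reference and re-derive it via a direct application of \rl{constop}---which is exactly how \rco{quot} was proven. Your second step (finding $i$ with $\wt Z_{j_0}\subset\wt Y_i$) is likewise identical to the paper's, and your justification that $\wt Z_{j_0}$ is qcqs is the same observation the paper leaves implicit. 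The adaptation of \rl{constop} to qcqs algebraic spaces is in fact unnecessary here, since $\cL G/\clp(S)$ is an ind-scheme (cf.\ the proof of \rcl{orbit}(a)), so the $Z_j$ are already schemes.
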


\begin{proof}
Note that $\pr_S$ decomposes as  $\cL G\overset{\al}{\to}\cL G/\clp(S)\overset{\beta}\to \cL(G_F/S)$.
By \rcl{orbit} and \rco{quot}, there exists an fp-closed qcqs subscheme $Z'\subset\cL G/\clp(S)$ such that $\beta^{-1}(Z(K))\subset \La_S\cdot Z'(K)$
for every $K$. Set $Z'':=\al^{-1}(Z')\subset\cL G$. Then $Z''$ is an fp-closed qcqs subscheme, thus $Z''\subset\wt{Y}_i$ for some $i$.
By construction, we have
\[
\pr_S^{-1}(Z(K))=\al^{-1}\beta^{-1}(Z(K))\subset\al^{-1}(Z'(K)\cdot\La_S)=Z''(K)\cdot\La_S\subset\wt{Y}_i(K)\cdot\La_S.
\]
\end{proof}

\begin{Emp} \label{E:pfrttopproper}
\begin{proof}[Proof of \rt{topproper}]
Recall that the projection $\frak{p}:\wt{\fC}\to \fC$ is an ind-fp-proper morphism between ind-schemes (by \rl{affspr}). Taking pullback to ${\fC}_{\ft,w,\br}$, we conclude that the map $\frak{p}_{\ft,w,\br}:\wt{\fC}_{\ft,w,\br}\to{\fC}_{\ft,w,\br}$ is an ind-fp-proper morphism between
ind-schemes as well. Moreover, the group $\La_w$ acts on $\wt{\fC}_{\ft,w,\br}$ over ${\fC}_{\ft,w,\br}$ (see \re{laaction}(d)).
We claim that all assumptions of \rp{quot}(a),(b) are satisfied.

To show that the action of $\La_w$ on $\wt{\fC}_{\ft,w,\br}$ is discrete, note that the presentation $\Fl=\colim_i Y_i$ from \re{afffl}(b) gives rise to a presentation $\wt{\fC}_{\ft,w,\br}\simeq\colim_i \wt{\fC}_{\ft,w,\br,i}$, where $\wt{\fC}_{\ft,w,\br,i}\subset \wt{\fC}_{\ft,w,\br}$
consists of triples $(g,h,x)\in\Fl\times\cL(G_F/T_w)\times\ft_{w,\br}$ such that $g\in Y_i$. It suffices to show that the set of all
$\la\in\La_w$ such that $\la(\wt{\fC}_{\ft,w,\br,i})\cap \wt{\fC}_{\ft,w,\br,i}\neq\emptyset$ is finite.

By definition, for every such $\la$ there exist $g\in Y_i$ and $h\in \cL(G_F/T_w)$ such that $g':=(h\la h^{-1})g$ is in $Y_i$.
Choose representatives $\wt{g}\in\wt{Y}_i$ of $g$ and $\wt{h}\in \cL G$ of $h$. Then
$\wt{g}':=(\wt{h}\la \wt{h}^{-1})\wt{g}\in \wt{Y}_i$, thus
$\wt{h}\la \wt{h}^{-1}=\wt{g}'\wt{g}^{-1}\in \wt{Y}_i\wt{Y}_i^{-1}$.
Then the conjugacy class of such $\la$'s in $\cL G$ is bounded, thus the set of such $\la$'s is finite.

To show that $\La_w$ acts freely, notice that $\la$ has a fixed point if and only if  $h\la h^{-1}g\in gI$, which means that $\la\in (h^{-1}g)I(h^{-1}g)^{-1}$, that is, $\la\in\La_w\cap (h^{-1}g)I(h^{-1}g)^{-1}$. But the latter intersection is torsion free, discrete and bounded, thus trivial.

Thus, the conditions of \rp{quot} are satisfied, hence $\ov{\fC}_{w,\br}$ is an ind-algebraic space, ind-fp-proper over ${\fC}_{\ft,w,\br}$.

To show that it is topologically proper, we have to check that the condition of \rp{quot}(b) is satisfied as well, that is, for every fp-closed subscheme $Z$ of ${\fC}_{\ft,w,\br}\simeq\cL(G_F/T_w)\times \ft_{w,\br}$, there exists $i$ such that
\begin{equation} \label{Eq:topproper}
\text{ for all }(g,h,x)\in\frak{p}_{\ft,w,\br}^{-1}(Z(K)) \text{ there exists }\la\in\La_w \text{ such that }(h\la h^{-1})g\in Y_i.
\end{equation}
Recall (see \rl{isom}(a)) that the action map $(g,x)\mapsto \Ad_g(x)$ induces a finite morphism $a:\cL(G_F/T_w)\times \ft_{w,\br}\simeq{\fC}_{\ft,w,\br}\to \fC_{w,\br}$.
Therefore $a^{-1}(\Lie(I))\subset \cL(G_F/T_w)\times \ft_{w,\br}$ is an fp-closed qcqs subscheme.

Choose fp-closed qcqs subschemes $Z_1,Z_2\subset \cL(G_F/T_w)$ such that we have $Z\subset Z_1\times  \ft_{w,\br}$ and $a^{-1}(\Lie(I))\subset Z_2\times \ft_{w,\br}$. Then, by \rco{open}, there exist indices $i_1,i_2$ such that we have $\pr_{T_w}(\wt{Y}_{i_1}(K))\supseteq Z_1(K)$ and $\pr_{T_w}^{-1}(Z_2(K))\subset\wt{Y}_{i_2}(K)\cdot\La_w$.
We claim that every index $i$ such that $\wt{Y}_{i_1}\cdot\wt{Y}_{i_2}^{-1}\subset\wt{Y}_i$ satisfies \form{topproper}.

Indeed, choose a representative $\wt{g}\in \cL G(K)$ of $g$. Since $(h,x)\in Z\subset Z_1\times\kt_{w,\br}$, there exists a
representative $\wt{h}\in \wt{Y}_{i_1}(K)$ of $h$. Since  $(g,h,x)\in\wt{\fC}$, we have $\Ad_{\wt{g}^{-1}\wt{h}}(x)\in\Lie(I)$. Hence
$\pr_{T_w}(\wt{g}^{-1}\wt{h})\in Z_2$, therefore there exists $\la\in\La_w$ such that $\wt{g}^{-1}\wt{h}\la^{-1}\in \wt{Y}_{i_2}$.
Then $(\wt{h}\la \wt{h}^{-1})\wt{g}=\wt{h}(\wt{g}^{-1}\wt{h}\la^{-1})^{-1}$ belongs to $\wt{Y}_{i_1}\wt{Y}_{i_2}^{-1}\subset\wt{Y}_i$,
hence $(h\la h^{-1})g\in Y_i$, as claimed.

Now assertions (a) and (b) of \rt{topproper} follow from assertions (i) and (ii) of \rp{quot}(b), respectively.
\end{proof}
\end{Emp}

\subsection{Proof of \rp{BC}} \label{S:pfrlBC}

\begin{Emp}
\begin{proof}[Proof of \rp{BC}]
Arguing as in the proof of \rl{twist}, we see that the action map
$ W\times\cL'(X)\to \cL'(X)\times_{\cL'(Y)}\cL'(X)$ and hence also
$W\times  \cL'(X)_{\cL Y}\to \cL'(X)_{\cL Y}\times_{\cL Y}\cL'(X)_{\cL Y}$ is an isomorphism.

It remains to show that the map $\cL'(X)_{\cL Y}\to \cL(Y)$ is an epimorphism. In other words, for every $k$-algebra $A$, and an $A$-point $y\in \cL(Y)(A)$, there exists an \'etale covering $\Spec B\to \Spec A$ and a point $x\in\cL'(X)(B)$ such that $\pi(x)=y\in \cL'(Y)(B)$.

By definition, $y$ corresponds to a morphism $y:\Spec A((t))\to Y$. Since $X\to Y$ is a $W$-torsor, the pullback $\pi_y:X_y\to \Spec A((t))$ is a $W$-torsor. Now we have to show that there exists an \'etale covering $\Spec B\to\Spec A$ such that
the pullback $X_{y,B((t^{1/h}))}\to \Spec B((t^{1/h}))$ of $\pi_y$ has a section. In other words, the assertion follows from \rl{BC} below.
\end{proof}
\end{Emp}

\begin{Lem} \label{L:BC}
Let $A$ be a $k$-algebra,  $W$ a finite group of exponent $h$ prime to the characteristic of $k$, and $\pi:X\to \Spec A((t))$ a $W$-torsor.
Then there exists an \'etale covering $\Spec B\to\Spec A$ such that pullback $X_{B((t^{1/h}))}\to \Spec B((t^{1/h}))$ of $\pi$ is a trivial $W$-torsor.
\end{Lem}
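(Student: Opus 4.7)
The plan is to reduce the trivialization problem to a question over the ring $A[[s]]$, with $s := t^{1/h}$, exploiting that $(A[[s]],(s))$ is a Henselian pair. First note that since $h$ is invertible in $k \subset A$ and $\mu_h \subset k$, the extension $A((t)) \to A((s))$ with $s^h = t$ is finite \'etale (in fact a $\mu_h$-torsor). Base-changing $\pi$ along it yields a $W$-torsor $\pi_s : X_s \to \Spec A((s))$, and what we must show is that $X_s$ becomes trivial after an \'etale cover of $A$.

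The key step is to extend $X_s$, \'etale-locally on $A$, to a finite \'etale $W$-cover $\wt{X} \to \Spec A[[s]]$. Granting this, the Henselian property of $(A[[s]],(s))$ gives an equivalence between finite \'etale $A[[s]]$-algebras and finite \'etale $A$-algebras, so $\wt{X}$ is determined by its restriction $\wt{X}_0 := \wt{X}|_{s=0}$, which is a $W$-torsor on $\Spec A$. Taking $\Spec B := \wt{X}_0 \to \Spec A$ as the \'etale cover trivializes $\wt{X}_0$, and by the same equivalence trivializes $\wt{X}$, hence trivializes $X_s$ after restriction to $\Spec B((s))$, as required.

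The existence of $\wt{X}$ is a relative form of Abhyankar's lemma. Since $\exp(W) = h$ and $|W|$ share prime factors, $|W|$ is invertible in $A$, so the cover $X \to \Spec A((t))$ is tame at $t = 0$. On each connected component $X_0 \subset X$, the projection $X_0 \to \Spec A((t))$ is an $H$-torsor for some $H \subseteq W$, and the inertia subgroup at $t = 0$ is cyclic of order dividing $\exp(H)$, in particular dividing $h$. Pulling back along the degree-$h$ cover $t = s^h$ therefore kills this ramification, so $X_s$ extends uniquely to a finite \'etale cover of $\Spec A[[s]]$.

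The main obstacle is making this last step rigorous over an arbitrary $k$-algebra $A$, not just a field or a strictly Henselian local ring. My approach would be to reduce to the strictly Henselian local case by standard finite-presentation and \'etale-local arguments, since the hypothetical extension $\wt{X}$ is uniquely determined and finitely presented. For $A$ strictly Henselian local, one has $\pi_1^{\on{tame}}(\Spec A((t))) \cong \wh{\bZ}'(1) = \varprojlim_{(n,\on{char} k)=1}\mu_n$, the torsor $X$ corresponds to a continuous homomorphism $\rho : \wh{\bZ}'(1) \to W$ whose image is cyclic of order $n$ dividing $h$, and the base change $t \mapsto s^h$ restricts $\rho$ to the $h$-th power subgroup of $\wh{\bZ}'(1)$, on which it acts trivially. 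Hence $X_s$ is already a trivial torsor in this case and extends trivially across $s = 0$; \'etale-local descent then yields the extension for general $A$.
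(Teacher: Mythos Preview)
Your overall strategy coincides with the paper's: reduce to the case where $A$ is a strictly Henselian local ring, where the description of tame covers of $\Spec A((t))$ (this is \cite[Lem.~3.2.1]{BC}) shows that $X_{A((t^{1/h}))}$ is already trivial, and then spread this out to an \'etale neighborhood of the given point. Your framing via Abhyankar's lemma and the Henselian pair $(A[[s]],(s))$ is a pleasant conceptual repackaging, but note that it is ultimately superfluous: once you establish that $X_s$ is trivial over $A^{sh}((s))$, the strictly Henselian case is finished, and the detour through an extension over $A^{sh}[[s]]$ and its fiber at $s=0$ adds nothing.

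The genuine gap is the spreading-out step, which you describe as ``standard finite-presentation and \'etale-local arguments''. This is exactly where the difficulty lies, and it is \emph{not} standard. The strict Henselization $A^{sh}_{\bar y}$ is a filtered colimit $\colim_\alpha B_\alpha$ of \'etale neighborhoods, but $A^{sh}_{\bar y}((s))$ is \emph{not} the colimit of the $B_\alpha((s))$: a Laurent series over $A^{sh}_{\bar y}$ has infinitely many coefficients, which need not all lie in a single $B_\alpha$. Hence knowing that the torsor trivializes over $A^{sh}_{\bar y}((s))$ does not directly yield triviality over some $B_\alpha((s))$. The paper handles this with a separate claim (\rcl{colimBC}): by \cite[Cor.~2.1.21]{BC}, $W$-torsors on $\Spec A((t))$ are equivalent to $W$-torsors on $\Spec A\{\{t\}\}[\tfrac{1}{t}]$, where $A\{\{t\}\}$ denotes the Henselization of $(A[t],(t))$; since Henselization \emph{does} commute with filtered colimits, the spreading-out goes through on that side and one transports the conclusion back. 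You should either invoke this device or supply an alternative argument for the descent from $A^{sh}_{\bar y}((s))$ to some $B_\alpha((s))$.
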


Our argument is based on results of \cite{BC}. First we show the following claim.

\begin{Cl} \label{C:colimBC}
Let $B\simeq\colim_{\al}B_{\al}$ be a filtered colimit of $A$-algebras, and let $\pi:X\to \Spec A((t))$ be a $W$-torsor such that
the pullback $X_{B}\to \Spec B((t))$ of $\pi$ is a trivial $W$-torsor. Then for every sufficiently large $\al$, the pullback
$X_{B_{\al}}\to \Spec B_{\al}((t))$ of $\pi$ is a trivial $W$-torsor.
\end{Cl}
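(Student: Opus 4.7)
Plan. A trivialization of a $W$-torsor over $\Spec C((t))$ is the same as an $A((t))$-algebra morphism $\phi \colon R \to C((t))$, where $R := \mathcal{O}(X)$ is a finite \'etale $A((t))$-algebra of rank $|W|$. Thus the hypothesis provides $\phi_B \colon R \to B((t))$, and one seeks a factorization through some $B_\al((t))$. Since $R$ is finitely presented over $A((t))$, the functor $\Hom_{A((t))\text{-alg}}(R,-)$ commutes with filtered colimits of $A((t))$-algebras; but $B((t)) \neq \colim_\al B_\al((t))$ in general (a Laurent series over $\colim B_\al$ need not have all of its coefficients in a common $B_\al$), so a bare colimit argument fails and one must exploit the \'etale structure of $R$.

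The strategy is to prove the stronger statement that $\phi_B(R) \subseteq B_{\al_0}((t))$ for some $\al_0$, which, combined with finite presentation, forces $\phi_B$ to factor through $B_{\al_0}((t))$. To this end, I would reduce $R$ to a standard \'etale form $R = A((t))[\alpha]/(P(\alpha))$ (Zariski-locally on $\Spec A((t))$), with $P \in A((t))[x]$ monic of degree $|W|$ and $P'(\alpha) \in R^\times$, so that $\phi_B$ is determined by $b := \phi_B(\alpha) \in B((t))$ subject to $P(b) = 0$ and $P'(b) \in B((t))^\times$. After a Newton-polygon rescaling $\alpha \mapsto t^m \alpha$ (with the corresponding modification of $P$), we may arrange $P \in A[[t]][x]$, $b \in B[[t]]$, and, setting $\bar{P} := P \bmod t \in A[x]$, the reduction $b_0 := b|_{t=0} \in B$ satisfies $\bar{P}(b_0) = 0$ and $\bar{P}'(b_0) \in B^\times$.

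Both $b_0$ and $\bar{P}'(b_0)^{-1}$ are single elements of $B$, hence lie in a common $B_{\al_0}$. Writing $b = \sum_{k \geq 0} b_k t^k$ and expanding $P(b) = 0$ in $t$-adic order, differentiation shows that the coefficient of $t^k$ gives the Hensel-type recurrence
\[
\bar{P}'(b_0) \cdot b_k = -F_k\bigl(b_0, \ldots, b_{k-1};\, \text{coefficients of } P\bigr),
\]
for a universal polynomial $F_k$ with integer coefficients (the linearity in $b_k$ with invertible leading coefficient $\bar{P}'(b_0)$ is the point). Since the coefficients of $P$ lie in $A \subseteq B_{\al_0}$, induction on $k$ yields $b_k \in B_{\al_0}$ for all $k$; hence $b \in B_{\al_0}[[t]] \subseteq B_{\al_0}((t))$, and defining $\phi_{\al_0}$ by $\alpha \mapsto b$ produces the required section, whose composition with $B_{\al_0}((t)) \to B((t))$ recovers $\phi_B$ by construction.

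The main technical obstacle is the reduction to the standard \'etale form together with the Newton-polygon normalization: both are only Zariski-local on $\Spec A((t))$, and the Newton polygon of $P$ may have several slopes, necessitating a Weierstrass-style decomposition of $P$ to isolate the factor containing the chosen root $b$. A cleaner globalization works directly with $|W|$ $A((t))$-module generators of $R$ and the system of quadratic structure-constant equations they satisfy, in which case \'etaleness supplies the Jacobian invertibility that drives the same inductive argument coefficient-by-coefficient on all generators simultaneously.
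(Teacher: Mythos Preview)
Your diagnosis of the difficulty is exactly right: $B((t))$ is not the filtered colimit of the $B_\al((t))$, so the naive finite-presentation argument fails. However, your proposed workaround has a genuine gap at the normalization step. The rescaling $\alpha\mapsto t^m\alpha$ does force $P\in A[[t]][x]$ monic, but it does \emph{not} force the reduction $\bar P:=P\bmod t$ to be separable at $b_0$; equivalently, it does not make $A[[t]][x]/(P)$ \'etale over $A[[t]]$ at the relevant point. For instance, $P(x)=x^2-t^2$ is already in $A[[t]][x]$ and defines the trivial $\mathbb{Z}/2$-torsor over $A((t))$, yet $\bar P=x^2$ and $\bar P'(b_0)=0$ for the root $b=t$, so your recursion cannot start. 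The fix you gesture at (``Newton polygon'', ``Weierstrass-style decomposition'') is exactly what is needed over a discrete valuation ring, but there is no Newton polygon over a general base ring $A$: the coefficients of $P$ have no well-defined $t$-adic valuation when $A$ is not a field. The structure-constant globalization runs into the same wall, since the Jacobian-invertibility you need is again the \'etaleness of an integral model over $A[[t]]$, which you have not produced. (There is also a secondary issue: even granting $P\in A[[t]][x]$, integrality of $b$ over $B[[t]]$ does not imply $b\in B[[t]]$ when $B$ has nilpotents.)

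The paper's proof bypasses all of this with one structural input: by \cite[Cor.~2.1.21]{BC}, pullback along $A\{\{t\}\}[\tfrac{1}{t}]\to A((t))$ induces an equivalence between $W$-torsors on $\Spec A((t))$ and $W$-torsors on $\Spec A\{\{t\}\}[\tfrac{1}{t}]$, where $A\{\{t\}\}$ is the Henselization of $(A[t],(t))$. The point is that Henselization, unlike $t$-adic completion, commutes with filtered colimits of rings, so $B\{\{t\}\}[\tfrac{1}{t}]\simeq\colim_\al B_\al\{\{t\}\}[\tfrac{1}{t}]$, and then the naive finite-presentation argument \emph{does} apply to the transported torsor. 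In effect, the Henselization is precisely the universal gadget that makes your Hensel-recursion idea work, and the cited equivalence of categories packages this without any need for a global monogenic presentation or a Newton-polygon normalization.
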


\begin{proof}
Let $A\{\{t\}\}$ be the Henselization of $(A[t],(t))$. By \cite[Corollary 2.1.21]{BC}, the homomorphism $A\{\{t\}\}[\frac{1}{t}]\to A((t))$ induces
an equivalence of categories between $W$-torsors on $\Spec A((t))$ and $W$-torsors on $\Spec A\{\{t\}\}[\frac{1}{t}]$. In particular, $\pi$ is induced from
a $W$-torsor $\pi':X'\to \Spec A\{\{t\}\}[\frac{1}{t}]$, and the pullback $X'_B\to \Spec B\{\{t\}\}[\frac{1}{t}]$ of $\pi'$ is a trivial $W$-torsor.

Since Henselization commute with filtered colimits (see \cite[Tag 0A04]{Sta}), we conclude that  $B\{\{t\}\}[\frac{1}{t}]\simeq\colim_{\al}B_{\al}\{\{t\}\}[\frac{1}{t}]$. Therefore the pullback
$X'_{B_{\al}}\to \Spec B_{\al}\{\{t\}\}[\frac{1}{t}]$ of $\pi'$ is a trivial $W$-torsor for all sufficiently large $\al$, as claimed.
\end{proof}

\begin{Emp}
\begin{proof}[Proof of \rl{BC}]

Assume first that $A$ is a local strictly Henselian ring. Then it follows from \cite[Lemma 3.2.1]{BC} that there exists $d\in\B{N}$, prime to the characteristic of $k$, such that the pullback  $\pi_d:X_{A((t^{1/d}))}\to \Spec A((t^{1/d}))$ of $\pi$ is a trivial $W$-torsor.

Thus $\pi_d$ has a section, hence the finite \'etale covering  $\phi:\Spec A((t^{1/d}))\to \Spec A((t))$ factors as  $\Spec A((t^{1/d}))\to X'\to \Spec A((t))$ for some
clopen subscheme $X'\subset X$. Since $\phi$ is a finite Galois covering with Galois group $\mu_d$, we have $X'\simeq  \Spec A((t^{1/d'}))$ for some $d'|d$. Moreover, since $\pi$ is a $W$-torsor, where $W$ is of exponent $h$, we conclude that $d'|h$.
Hence the pullback $\pi_m:X_{A((t^{1/h}))}\to \Spec A((t^{1/h}))$ of $\pi$ has a section, as claimed.

Now let $A$ be a general $k$-algebra. Since $\Spec A$ is quasi-compact, it suffices to show that for every point
$y$ of $\Spec A$ there exists an \'etale covering $\Spec B_{\al}\to \Spec A$ of $y$ such that
the pullback $X_{B_{\al}((t^{1/h}))}\to\Spec B_{\al}((t^{1/h}))$ of $\pi$ is trivial.

Choose a geometric point $\ov{y}$ of $\Spec A$ supported at $y$, and let $A_{\ov{y}}^{sh}$ be the strict Henselization of $A$ at $\ov{y}$. Then, by the already shown particular case, the pullback $X_{A_{\ov{y}}^{sh}((t^{1/h}))}\to  \Spec A_{\ov{y}}^{sh}((t^{1/h}))$ of $\pi$ is trivial.
By definition, the strict Henselisation $A_{\ov{y}}^{sh}$ is a filtered colimit $\colim_{\al} B_{\al}$ of $A$-algebras such that  each $\Spec B_{\al}\to\Spec A$ is an \'etale covering of $y$. Thus, by \rcl{colimBC}, there exists $\al$ such that the pullback
$X_{B_{\al}((t^{1/h}))}\to  \Spec B_{\al}((t^{1/h}))$ of $\pi$ is trivial. This completes the proof.
\end{proof}
\end{Emp}

\subsection{Proof of \rt{drinfeld}} \label{S:pfrtdrinfeld}

\begin{Emp}
{\bf Remarks.} (a) The strategy of proof was communicated to us by V. Drinfeld.

(b) In the case when the characteristic of $k$ is zero, the assertion for $\chi_n$ was proven by Mustata-Einsenbud-Frenkel \cite[Theorem A.4]{Mus}.
\end{Emp}

We prove both assertions at the same time. It suffices to show that there exist faithfully flat morphisms
$Z_n\to \clp_{n}(\kg)$ and $Z_{I,n}\to \Lie_n(I)$ such that both compositions $Z_n\to \clp_{n}(\kg)\to \clp_{n}(\kc)$ and $Z_{I,n}\to \Lie_n(I)\to \clp_{n}(\kc)$ are flat. We will use a global argument that involves flatness of the Hitchin fibration and its parabolic variant.
For convenience of the reader, we will divide our argument into steps.

\begin{Emp} \label{E:step1}
Consider two distinct points $x,\infty\in \bP^{1}(k)$ and an effective divisor $D$ on $\bP^1$, supported on $\bP^{1}\sm\{x,\infty\}$.
We have a $\bG_m$-action on $\kg$ by homothety that commutes with adjoint action, thus inducing an $\bG_m$-action on $\kc$. Hence we can form the
twisted version $\kc_D:=\kc\times^{\bG_{m}} \C{Z}^{\times}(D)$, where $\C{Z}^{\times}(D)$ is the $\bG_m$-torsor, corresponding to the line bundle $\C{O}(D)$ and similarly $\kg_D:=\kg\otimes\C{O}(D)$. Both are vector bundles over $\bP^1$, trivialized on $\bP^1\sm D$.
For every $G$-torsor $E$ on $\bP^1$, let $\ad(E)$ be the corresponding vector bundle on $\bP^1$. Then the map $\chi:\kg\to\kc$ induces a morphism \[H^{0}(\bP^1,\ad(E)\otimes\C{O}(D))\to H^{0}(\bP^1,\kc_D).\]

We choose $D$ sufficiently big so that the maps
\begin{equation}
(\ev^{(n)}_{x},\ev_{\infty}):H^{0}(\bP^{1},\kg_{D})\rightarrow H^{0}(nx\cup\infty,\kg_{D})\simeq\clp_{n}(\kg)\oplus\kg,
\label{surj1}
\end{equation}
\begin{equation}
(\ev^{(n)}_{x},\ev_{\infty}):H^{0}(\bP^{1},\kc_{D})\rightarrow H^{0}(nx\cup\infty,\kc_{D})\simeq\clp_{n}(\kc)\oplus\kc,
\label{surj2}
\end{equation}
where $\ev_x^{(n)}$ (resp. $\ev_{\infty}$) is the restriction to the $n$-th formal neighborhood at $x$ (resp. to $\infty$), are both surjective.
\end{Emp}

\begin{Emp} \label{E:step2}
Set $\cA_{D,\infty}:=\{a\in H^{0}(\bP^{1},\kc_{D})~\vert~\ev_{\infty}(a)\in\kc^{\rs}\}$, and let
 $\cM_{D,\infty}$  be the corresponding Hitchin total space. More precisely, $\cM_{D,\infty}$ classifies pairs $(E,\phi)$, where $E$ is a $G$-torsor on $\bP^1$ and $\phi\in H^{0}(\bP^1,\ad(E)\otimes \C{O}(D))$ such that $\chi(\phi)\in\cA_{D,\infty}$.

From the surjectivity of \eqref{surj2} we get that
\begin{equation}
\text{ the map } \ev_x^{(n)}:\cA_{D,\infty}\rightarrow\clp_{n}(\kc) \text{ is smooth and surjective.}
\label{surj3}
\end{equation}

\end{Emp}

\begin{Emp} \label{E:step3.}
Following Yun (see \cite{Yun}), we consider the parabolic Hitchin space $\cM^{par}_{D,\infty}$, which classifies triples $(E,\phi,E_{ B})$ such that
\begin{itemize}
	\item
	$(E,\phi)\in\cM_{D,\infty}$,
	\item
	$E_{ B}$ is a $ B$-reduction of the restriction $E|_x$ such that $\ev_x(\phi)\in \ad(E|_x)$ belongs to $\ad(E_{ B})$.
\end{itemize}

By \cite[4.16.4]{N} and \cite[2.5.2]{Yun}, the fibrations
\begin{center}
$\cM_{D,\infty}\rightarrow \cA_{D,\infty}$ and $\cM^{par}_{D,\infty}\rightarrow \cA_{D,\infty}$
\end{center}
are faithfully flat, so by \eqref{surj3},  the compositions
\begin{equation} \label{ffs}
\cM_{D,\infty}\rightarrow \cA_{D,\infty}\stackrel{\ev_x^{(n)}}{\lra} \clp_{n}(\kc) \text { and }
\cM^{par}_{D,\infty}\rightarrow \cA_{D,\infty}\stackrel{\ev_x^{(n)}}{\lra} \clp_{n}(\kc)
\end{equation}
also are.

\end{Emp}

\begin{Emp} \label{E:step4}
Let $\cM_{D,\infty}^{nx}\rightarrow\cM_{D,\infty}$ be the $\clp_n(G)$-torsor, classifying trivializations $\iota$ of $E$ at the $n$-th formal neighborhood at $x$.
Then one has a map
\begin{center}
$\res_{n}:\cM^{nx}_{D,\infty}\rightarrow\clp_n(\kg)$,
\end{center}
which sends a triple $(E,\phi,\iota)$ to the image of $\phi$ under the natural map
\[
H^{0}(\bP^1,\ad(E)\otimes \C{O}(D))\overset{\ev_x^{(n)}}{\lra}H^{0}(nx,\ad(E))\overset{\iota}{\lra}\clp_n(\kg).
\]

Let $Z_{n}\subset \cM^{nx}_{D,\infty}$ be the largest open substack of $\cM^{nx}_{D,\infty}$, where $\res_{n}$ is smooth.

\end{Emp}

\begin{Emp} \label{E:step5}
We claim that the restriction $r_{n}:Z_n\to \clp_n(\kg)$ of $\res_n$ is faithfully flat, and its composition with $\chi_n$ is flat.

First of all, $r_n$ is smooth, by assumption, hence flat.
Next,  since the first map of \eqref{ffs} and the projection $\cM^{nx}_{D,\infty}\to \cM_{D,\infty}$ are flat, we conclude from the commutative  diagram
\begin{equation} \label{Eq:zn}
\xymatrix{Z_{n}\ar[r]&\cM_{D,\infty}^{nx}\ar[r]\ar[d]&\clp_n(\kg)\ar[d]^{\chi_n}\\&\cA_{D,\infty}\ar[r]&\clp_n(\kc)}
\end{equation}
that the composition $Z_n\to\clp_n(\kg)\to\clp_n(\kc)$ is flat.

We claim that $r_n$ is surjective. More precisely, we claim that the locus of those triples $(E,\phi,\iota)$, where $E$ is trivial, is contained in
$Z_n$ and the restriction of $\res_n$ to such points is surjective. Let $\wt{Z}_n$ be the moduli space of quadruples $(E,\phi,\eta,\iota)$, where $(E,\eta,\iota)\in \cM^{nx}_{D,\infty}$ and $\eta$ is a trivialization of $E$, and let $\om:\wt{Z}_n\to \cM_{D,\infty}^{nx}$ be the forgetful morphism.

Then the image of $\om$ consists of all triples $(E,\phi,\iota)$, where $E$ is trivial, and it suffices to show that the composition $\res_n\circ\om$ is smooth and surjective. Indeed, since the morphism $\res_n\circ\om$ is a smooth morphism between smooth stacks, the differential $d(\res_n\circ\om)$ is surjective, therefore the differential $d\res_n$ is surjective. Since $\res_n$ is a morphism between smooth algebraic stacks
(by \cite[Theorem 4.14.1]{N}), this implies that $\res_n$ is smooth at each point in the image of $\om$, and we are done.

Note that $\wt{Z}_n$ decomposes as $\clp_n(G)\times H^{0}(\bP^1,\kg_D)_{\infty-rs}$, where $H^{0}(\bP^1,\kg_D)_{\infty-rs}\subset H^{0}(\bP^1,\kg_D)$ consists of $\phi\in H^{0}(\bP^1,\kg_D)$ such that $\ev_{\infty}(\phi)\in\kg^{\rs}$.  Moreover, under this identification, $\res_n\circ\om$ is nothing else but composition of the evaluation map
\[
\ev^{(n)}_x:\clp_n(G)\times H^{0}(\bP^1,\kg_D)_{x-rs}\to \clp_n(G)\times \clp_n(\kg)
\]
and the action map $\clp_n(G)\times\clp_n(\kg)\to\clp_n(\kg)$. Therefore the smoothness and the surjectivity of $\res_n\circ\om:\wt{Z}_n\to\clp_n(\kg)$ follows from the surjectivity of \eqref{surj1}.
\end{Emp}

\begin{Emp} \label{E:step6}
Similarly, consider the moduli space $\cM^{par,nx}_{D,\infty}$, which classifies quadruples $(E,\phi,E_{ B},\iota)$ such that
\begin{itemize}
	\item
	$(E,\phi,E_{ B})\in\cM^{par}_{D,\infty}$,
	\item
	$\iota$ is a trivialization of $E$ at the $n$-th formal neighborhood at $x$, which induces a trivialization of $E_{ B}$.
\end{itemize}
Then we have a Cartesian diagram
\begin{equation}
\xymatrix{\cM^{par,nx}_{D,\infty}\ar[d]\ar[r]&\Lie_n(I)\ar[d]\\\cM^{nx}_{D,\infty}\ar[r]^{\res_n}&\clp_n(\kg)},
\label{cartB}
\end{equation}
so the pullback $r_{I,n}:Z_{I,n}\to \Lie_n(I)$ of $r_{n}:Z_{n}\to \clp_n(\kg)$ is smooth and surjective. It remains to show that the
composition  $Z_{I,n}\to\Lie_n(I)\to\clp_n(\kc)$ is flat. But the last composition decomposes as a composition of three flat maps
\[
Z_{I,n}\to\cM^{par,nx}_{D,\infty}\to \cM^{par}_{D,\infty}\to\clp_n(\kc),
\]
the first of which is an open embedding, the second one is smooth, and the third one the second map of \eqref{ffs}.
Therefore the composition is flat, and the proof is complete.
\end{Emp}

\section*{List of main terms and symbols}


\begin{multicols}{3}

\e admits gluing of sheaves, \pg{I:gluing}

\e affine morphism, \pg{I:affine morpism}

\e arc space, \pg{I:arc space}

\e connected $\infty$-stack, \pg{I:connected}

\e constructible stratification, \pg{I:constructible stratification}

\e covering, \pg{I:covering}

\e closed embedding, \pg{I:closed embedding}

\e equidimensional, \pg{I:equidimensional morphism}, \pg{I:eq}, \pg{I:eq1}

\e fp, \pg{I:fp}

\e fp-open embedding, \pg{I:fp-open embedding}

\e fp-closed embedding, \pg{I:fp-closed embedding}

\e fp-locally closed embedding, \pg{I:fp-locally closed embedding}

\e fp-representable morphism, \pg{I:fp representable morpism}

\e fp-schematic morphism, \pg{I:fp schematic morpism}

\e ind-algebraic space, \pg{I:ind-algebraic space}

\e ind-fp-proper, \pg{I:ind-fp-proper}

\e ind-scheme, \pg{I:ind-scheme}

\e  ind-placid, \pg{I:ind-placid}


\e $\infty$-stack  , \pg{I:infty stacks}

\e locally closed embedding, \pg{I:locally closed embedding}

\e locally fp-representable, \pg{I:loc fp representable morpism}

\e locally fp-schematic, \pg{I:loc fp schematic morpism}

\e locally fp-affine morphism, \pg{I:loc fp affine morpism}


\e loop space, \pg{I:loop space}

\e $n$-placid, \pg{I:nplacid}

\e $n$-smooth, \pg{I:nsmooth}

\e open embedding, \pg{I:open embedding}

\e open morphism, \pg{I:open morphism}

\e $(P)$-representable, \pg{I:prepresentable}

\e $(P_{\red})$-representable, \pg{I:pred representable}

\e perversity, \pg{I:perversity}

\e placid algebraic space, \pg{I:placid scheme}

\e placid $\infty$-stack, \pg{I:placid infty stack}

\e placid presentation, \pg{I:placid presentation}

\e placid scheme, \pg{I:placid scheme}

\e placidly stratified, \pg{I:placidly stratified infty stack}

\e pure codimension, \pg{I:pure codimension}, \pg{I:pure2}

\e reduced $\infty$-stack, \pg{I:reduced infty stack}

\e representable morphism, \pg{I:representable morpism}

\e schematic morphism, \pg{I:schematic morpism}

\e semi-small morphism, \pg{I:semismall morphism}

\e smooth morphism,  \pg{I:smooth morphism}

\e space over $k$, \pg{I:infty space}

\e stratified $\infty$-stack, \pg{I:stratified infty stack}

\e strongly pro-smooth, \pg{I:strongly pro-smooth}

\e support, \pg{I:support}

\e topological equivalence, \pg{I:topological equivalence}

\e topologically fp-proper, \pg{I:topologically fp-proper}

\e topologically locally fp, \pg{I:topologically locally fp}


\e topologically fp-(locally) closed embedding, \pg{I:topologically (fp)-locally closed embedding}

\e torsor, \pg{I:torsor}

\e universally open, \pg{I:universally open morphism}

\e uo-equidimensional, \pg{I:uo-equidimensional}, \pg{I:uoeq}, \pg{I:uoeq1}

\e $\cU$-small morphism, \pg{I:small morphism}

\e weakly equidimensional morphism, \pg{I:weakly equidimensional morphism}, \pg{I:we}, \pg{I:we1}

\e $\{\cX_{\al}\}_{\al}$-adapted $\infty$-substack, \pg{I:adapted infty substack}

\e

\e

\e

\e

\e

\e

\e

\e

\e

\e

\e

\e

\e

\e

\e

\e

\e

\e

\e

\end{multicols}

\begin{multicols}{5}
\e

\e

\e

\e $\Ad_g$, \pg{N:adg}

\e $\Aff_k$, \pg{N:affk}

\e $\Affft_k$, \pg{N:affft}

\e $\Alg_k$, \pg{N:algk}

\e $\Algft_k$, \pg{N:algft}

\e $\Alg_k^{\qcqs}$, \pg{N:qcqsalgsp}

\e  $a_{w,\br}$, \pg{N:awbr}

\e $a^+_{w,\br}$, \pg{N:a+wbr}

\e $B$, $\kb$, \pg{N:b}

\e $b_{w,\br}$, \pg{N:bwbr}

\e $b^+_{w,\br}$, \pg{N:b+wbr}

\e  $\Cat_{\ell}$ \pg{N:catell}

\e $\Catst$, \pg{N:catst}

\e $\fC$, \pg{N:fC}

\e $\fC\bu$, \pg{N:fCbullet}

\e $\fC_{\leq m}$, \pg{N:fCleqm}

\e $\fC_{\tn,\bullet}$, \pg{N:fCtnbullet}

\e  ${\fC}_{\ft,w,\br}$, \pg{N:fCftwbr}

\e $\fC_{w,\br}$, \pg{N:fCwbr}

\e $\wt{\fC}$, \pg{N:wtfC}

\e $\wt{\fC}\bu$, \pg{N:wtfCbullet}

\e $\wt{\fC}_{\leq m}$, \pg{N:wtfCleqm}

\e $\wt{\fC}_{\tn,\bullet}$, \pg{N:wtfCtnbullet}

\e  $\wt{\fC}_{\ft,w,\br}$, \pg{N:wtfCftwbr}

\e $\wt{\fC}_{w,\br}$, \pg{N:wtfCwbr}

\e $\ov{\fC}_{\ft,w,\br}$, \pg{N:ovfCftwbr}

\e $c_w$, \pg{N:cw}

\e $\kc$, \pg{N:c}

\e $\kc^{\rs}$, \pg{N:crs}

\e $\fc_{w,\br}$, \pg{N:fcwbr}

\e $\mathfrak{D}$, \pg{N:mathfrakD}

\e $\cD_{c}, \cD$, \pg{N:Dc}, \pg{N:D2}

\e  $\cD_{\cdot}$, \pg{N:Ddot}

\e ${}^p\cD_{c}^{\leq 0},{}^p\cD_{c}^{\geq 0}$, \pg{N:pDc}

\e $\cD_{c}(X)$, \pg{N:DcX}

\e $\cD_{\cY}(\cX)$, \pg{N:DYX}

\e $\un{\dim}_{f}$, \pg{N:dimf}, \pg{N:dimf1}, \pg{N:dimf2}

\e $\ev_{i}$, \pg{N:evi}

\e $\ev_{\cX}$, \pg{N:evx}

\e $F$, \pg{N:F}

\e  $\Fl$, \pg{N:Fl}

\e $f^{*,\ren}$, \pg{N:fren}

\e $G$, $\kg$, \pg{N:g}

\e $\kg^{\rs}$, \pg{N:grs}

\e  $h$, \pg{N:m}

\e $I$, \pg{N:I}

\e  $\Ind$, \pg{N:ind}

\e $\cI_{\cX'}$, \pg{N:IX'}

\e $\ins_{i}$, \pg{N:insi}

\e $\Lie(I)\bu$, \pg{N:lieibullet}

\e $\Lie(I)_{\leq m}$, \pg{N:lieleqm}

\e $\Lie(I)_{\tn}$, \pg{N:lieitnbullet}

\e $\Lie(I)_{w,\br}$, \pg{N:lieIwbr}

\e $k$, \pg{N:k}

\e $(\cL\fc)_{\bullet}$, \pg{N:lcbullet}

\e  $(\cL\kg)_{\bullet}$, \pg{N:lgbullet}

\e $\cL\cX$, \pg{N:lx}

\e $\clp(\B{A}^1)_{\bullet}$, \pg{N:a1bullet}

\e $\cL^+(\kc)_{\bullet}$, \pg{N:clpcbullet}

\e $\cL^+(\kc)_{\tn}$, \pg{N:clpkctn}

\e $\cL^+(\fc)_{\tn,\bullet}$, \pg{N:clpfcbullet}

\e $\clp(\ft)_\bullet$, \pg{N:clpftbullet}

\e $\cL^+(\kt_{w})_{\tn}$, \pg{N:clpktwtn}

\e $\clp(\cX)$, \pg{N:clpx}

\e $\clp(X)_{(f;n)}$, \pg{N:clpxfn}

\e $\clp(X)_{f\neq 0}$, \pg{N:clpxfneq0}

\e  $\cL^+_n(\cX)$, \pg{N:clpnx}

\e $\cL'(X)$, \pg{N:l'x}

\e $\cL'^+(X)$, \pg{N:clp'x}

\e $\ell$, \pg{N:ell}

\e $\co$, \pg{N:O}

\e $(P_{\red})$, \pg{N:pred}

\e $\PrCat$, \pg{N:prcat}

\e $\Prest_k$, \pg{N:prestk}

\e $p_{f}$, \pg{N:pf}

\e $p_{\nu}$, \pg{N:pnu}

\e $\frak{p}$, $\frak{p}_{w,\br}$, \pg{N:frakp}

\e $\ov{\frak{p}}$, $\ov{\frak{p}}_{\bullet}$, \pg{N:ovfrakp}

\e $\ov{\frak{p}}_{\tn,\bullet}$, \pg{N:ovfraktnbullet}

\e $\ov{\frak{p}}_{w,\br}$, \pg{N:ovfrakpwbr}.

\e $R$, \pg{N:R}

\e $r$, \pg{N:r}

\e  $\Sch_k$, \pg{N:schk}

\e $\Sch_k^{\qcqs}$, \pg{N:qcqssch}

\e $\St_k$, \pg{N:stk}

\e $\cS$, $\cS_{\bullet}$, $\cS_{\leq 0}$, \pg{N:cS}

\e $\cS_{\tn}$, $\cS_{\tn,\bullet}$, \pg{N:cStnbullet}

\e $T$, \pg{N:T}

\e $T_w$, \pg{N:Tw}

\e $\kt$, \pg{N:t}

\e $\kt^{\rs}$, \pg{N:trs}

\e $\ft_\br$, \pg{N:ftbr}

\e  $\ft_{w,\br}$, \pg{N:ftwbr}

\e $v_{w,\br}$, \pg{N:vwbr}

\e $W$, $\widetilde{W}$, \pg{N:Waff}

\e $W_{w,\br}$, \pg{N:Wwbr}

\e $[\C{X}]$, \pg{N:[x]}

\e $\cX_{\red}$, \pg{N:xred}

\e $\cX\sm\cY$, \pg{N:x-y}

\e  $X_w$, \pg{N:xw}

\e $X_{*}(T)$, \pg{N:X*T}

\e $\delta_{w,\br}$, \pg{N:dtwbr}

\e $\eta_*$, \pg{N:eta*}

\e  $\La_S$, \pg{N:laS}

\e $\La_w$, \pg{N:Law}

\e $\nu_{\al}$, \pg{N:nual}

\e $\pi$, \pg{N:pi}

\e $\un{\pi}_0$, \pg{N:pi0}

\e $\chi$, \pg{N:chi}

\e $\psi_S$, \pg{N:psiS}

\e $\psi_{w,\br}$, \pg{N:psiwbr}

\e $\ov{\psi}_{w,\br}$, \pg{N:ovpsiwbr}

\e $[\psi_{w,\br}]$, \pg{N:[psiwbr]}

\e $\om_{\cX}$, \pg{N:omX}

\e $\cdot\lan d\ran$, $\cdot\lan \un{d}\ran$, \pg{N:lanran}





\e

\e

\e
\e

\e

\e

\end{multicols}

\Addresses
\end{document}